\documentclass[a4paper,11pt]{article}

\setlength{\oddsidemargin}{2mm}
\setlength{\evensidemargin}{2mm}
\setlength{\textwidth}{160mm}
\setlength{\headheight}{0mm}
\setlength{\headsep}{12mm}
\setlength{\topmargin}{6mm}
\setlength{\textheight}{205mm}

\usepackage{amsmath}
\usepackage{amssymb}
\usepackage{amsthm}
\usepackage{graphicx}
\usepackage{psfrag}
\usepackage{verbatim}
\usepackage{color}
\usepackage{enumerate, enumitem}
\usepackage[%
ocgcolorlinks,%
linkcolor=blue,%
filecolor=blue,%
citecolor=blue,%
urlcolor=blue]{hyperref}

\tolerance 5000
\parskip   4pt
\parindent 0pt

\usepackage[font={small,it}]{caption}

\usepackage[toc]{appendix}


\newcommand{\Ind}{1}

\newcommand{\bbH}{\mathbb{H}}
\newcommand{\bbL}{\mathbb{L}}

\newcommand{\bbE}{\mathbb{E}}
\newcommand{\bbV}{\mathbb{V}}

\newcommand{\Var}{\bbV{\rm ar}}
\newcommand{\Cov}{\bbC{\rm ov}}
\newcommand{\bbP}{\mathbb{P}}

\newcommand{\bbN}{\mathbb{N}}
\newcommand{\bbZ}{\mathbb{Z}}
\newcommand{\bbR}{\mathbb{R}}
\newcommand{\bbC}{\mathbb{C}}

\newcommand{\eps}{\epsilon}

\newcommand{\ol}{\overline}
\newcommand{\ubar}[1]{\underbar{$#1$}}

\newcommand{\wh}{\widehat}

\newcommand{\cX}{\mathcal X}

\newcommand{\cB}{\mathcal B}

\newcommand{\cW}{\mathcal W}
\newcommand{\cY}{\mathcal Y}
\newcommand{\cS}{\mathcal S}

\newcommand{\cN}{\mathcal N}

\newcommand{\cL}{\mathcal L}
\newcommand{\cR}{\mathcal R}

\newcommand{\cD}{\mathcal D}
\newcommand{\dH}{\rmd_{\bbH^\rmc}}
\newcommand{\Leb}{{\rm Leb}}

\newcommand{\frD}{\mathfrak D}

\newcommand{\frg}{\mathfrak g}
\newcommand{\fra}{\mathfrak a}

\newcommand{\frU}{\mathfrak U}
\newcommand{\frV}{\mathfrak V}

\newcommand{\rmc}{{\rm c}}

\newcommand{\rmd}{{\rm d}}
\newcommand{\rme}{{\rm e}}
\newcommand{\rmB}{{\rm B}}

\newcommand{\bfP}{{\mathbf P}}
\newcommand{\bfE}{{\mathbf E}}
\newcommand{\bfCov}{\mathbf{Cov}}
\newcommand{\bfVar}{\mathbf{Var}}

\newcommand{\rt}{T}
\newcommand{\rA}{A'}

\newcommand{\rr}{r}
\newcommand{\clR}{R}
\newcommand{\diam}{\mathop {\rm diam}}

\newcommand{\wt}{\widetilde}

\newcommand{\osc}{{\rm osc}}

\newcommand{\lto}{\longrightarrow}

\newcommand{\he}[2]{\overline{{#1}_{{#2}}}}
\newcommand{\heb}[2]{\overline{({#1})_{{#2}}}}
\newcommand{\hebb}[2]{\overline{\big({#1}\big)_{{#2}}}}

\newcommand{\OrenCO}[1]{}

\definecolor{purple}{rgb}{0.5,0,0.5}

\newtheorem{theorem}{Theorem}[section]
\newtheorem{cor}[theorem]{Corollary}
\newtheorem{lem}[theorem]{Lemma}
\newtheorem{prop}[theorem]{Proposition}
\newtheorem{thm}[theorem]{Theorem}
\newtheorem*{rem*}{Remark}
\newtheorem{rem}[theorem]{Remark}

\numberwithin{equation}{section}
\graphicspath{{pic/}}

\title
{Ballot Theorems for the
Two-Dimensional\\Discrete Gaussian Free Field}
\author
{Stephan Gufler\thanks{stephan.gufler@gmx.net} \\ Technion, Israel
\and Oren Louidor\thanks{oren.louidor@gmail.com}\\ Technion, Israel} 
\date{}

\begin{document}

\maketitle

\begin{abstract}
We provide uniform bounds and asymptotics for the probability that a two-dimensional discrete Gaussian free field on an annulus-like domain and with Dirichlet boundary conditions stays negative as the ratio of the radii of the inner and the outer boundary tends to infinity.
\end{abstract}

\setcounter{tocdepth}{2}
{\parskip=0pt \tableofcontents}

\section{Introduction and statement of the main results}

\subsection{Setup}
In this work we provide estimates for the probability that a two-dimensional discrete Gaussian free field (DGFF) on a domain which is a suitable generalization of an annulus, stays negative. We give both asymptotics and bounds for such probabilities in terms of the assumed Dirichlet boundary conditions, as the ratio of the radii of the inner and outer boundary tends to infinity. These results should be seen as the two-dimensional analogs of the classical ballot-type theorems, in which the probability that one-dimensional random walk (or bridge) with prescribed starting and ending positions stays negative for a given number of steps. We shall therefore keep referring to such probabilities and results as being of ballot-type, even in the context of the DGFF.

As in other approximately logarithmically correlated fields (such as branching Brownian motion or branching random walks), such ballot estimates prove crucial for the study of extreme value phenomena. Indeed, they have been used ad hoc in~\cite{BL-Full, Bi-LN} to study the microscopic structure of extreme values of the DGFF, and they are relied on in the analysis in many forthcoming works in the area~(\cite{Crit, Pinned, maximilian}). The purpose of this manuscript is therefore to derive estimates for such ballot probabilities in a
general and unified setting that could be used as a black box in a wide range of applications.

To describe the setup precisely, let us recall that the two-dimensional DGFF on a discrete finite  domain $\emptyset \subsetneq D \subset \bbZ^2$ with Dirichlet boundary conditions $w \in \bbR^{\partial D}$ is the Gaussian field $h^{D,w} = (h^{D,w}(x) :\: x \in \ol{D})$ with mean and covariances given 
for $x,y \in \ol{D}$ by 
\begin{equation}
\label{e:1.1a}
\bbE h^{D,w}(x) = \ol{w}(x)
\quad , \qquad
\Cov \big( h^{D,w}(x), h^{D,w}(y) \big) = G_D(x,y) \,.
\end{equation}
Above, $\ol{w}$ is a harmonic extension of $w$ to $\bbZ^2$ and $G_D$ is the discrete Green function associated with a planar simple random walk killed upon exit from $D$. Here and after, $\ol{D} := D \cup \partial D$ and $\partial D$ is the outer boundary of $D$, namely the set $\{x \in D^\rmc :\: |x-y| = 1 \text{ for some } y \in D\}$, with $|\cdot|$ always representing the Euclidean norm. Also, whenever $f$ is a real function from a finite non-empty subset of $\bbZ^2$, we denote by $\overline{f}$ its unique bounded harmonic extension to the whole discrete plane. We also set $h^D \equiv h^{D,0}$ (i.e. zero boundary conditions) and formally set $h^\emptyset \equiv 0$.

It follows from the Gibbs-Markov description of the law of $h^{D,w}$, that for any other non-empty domain $D'$ with $\ol{D'} \subset D$ and any $w' \in \bbR^{\partial D'}$, conditioned on $\{h^{D,w}(x) = w'(x) :\: x \in \partial D'\}$, the law of $h^{D,w}$ on $\ol{D'}$ is that of the DGFF $h^{D',w'}$. The definition of the DGFF extends naturally to an infinite domain which is not the whole discrete plane, but then the harmonic extension (and therefore the field itself) may not be unique. When $D = \bbZ^2$, the field does not exist, but for notational reasons we shall nevertheless denote
by $h$ a field that exists only under the formal conditional probability:
\begin{equation}
\label{e:1}
\bbP \big(h \in \cdot \,\big|\, h_{\partial D'} = w' \big)  \,,
\end{equation}
for $\emptyset \subsetneq D' \subsetneq \bbZ^2$, $w \in \bbR^{\partial D'}$, and in which case its law on $\ol{D'}$ is that of $h^{D',w'}$ as above.
Henceforth $f_A$ will stand for the restriction of a function $f$ to a subset of its domain $A$.

To get an asymptotic setup, we will consider discrete approximations of scaled versions of a given domain $W \subset \bbR^2$, which we define for $\ell \geq 0$ as
\begin{equation}
\label{e:discr}
W_\ell=\big\{x \in \bbZ^2 :\: \rmd(\rme^{-\ell}x, W^\rmc) > \rme^{-\ell}/2\big\} \,.
\end{equation}
Above $\rmd$ is the usual distance from a point to a set under the Euclidean Norm.
We shall sometimes loosely refer to such $W_\ell$ as a (discrete) set or domain of (exponential) ``scale'' $\ell$. We note that the scale $\ell$ need not be an integer.

We also define $W^+$, $W^-$ and $W^\pm$ as the interior of $W$, the interior of $W^\rmc$ and their union $W^+ \cup W^-$. For $\eta \geq 0$, let
\begin{equation}
W^{\eta} := \{x \in W :\: \rmd(x, W^\rmc) > \eta\} \,,
\end{equation}
Precedence is fixed so that $\partial U^{-,\eta}_{\ell} := \partial(((U^{-})^{\eta})_\ell)$.
We shall also loosely call $W^\eta$ or $W^\eta_\ell$ the ``bulk'' of $W$, resp.\ of $W_\ell$.
For $\eps>0$, we let $\frD_\epsilon$ denote the collection of all open subsets $W \subset \bbR^2$ whose topological boundary $\partial W$ consists of a finite number of connected components each of which has an Euclidean diameter at least $\epsilon$, and that satisfy
\begin{equation}
\label{e:1.5}
\rmB(0,\epsilon) \subset W \subset \rmB(0,\epsilon^{-1}) \,.
\end{equation}
Above and in the sequel $\rmB(x,r)$ is the open ball of radius $r$ about $x$ and $\rmB \equiv \rmB(0,1)$. We also set $\frD := \cup_{\epsilon > 0} \frD_\epsilon$.

We will study the DGFF on domains of the form $U_n \cap V^-_k$, where $U, V \in \frD$ with $U$, $V^-$ being connected and $n,k \geq 0$. Thanks to~\eqref{e:1.5} this can be thought of as a generalized version of the annulus $\rmB_n \cap \rmB^-_k$. It is well known~\cite{DZ14} that under zero boundary conditions, the maximum of the DGFF on domain of scale $\ell$, reaches height $m_\ell + O(1)$ with high probability, where
\begin{equation}
\label{e:ml}
m_\ell := 2\sqrt{g} \ell - \tfrac{3}{4}\sqrt{g} \log^+ \ell
\ , \quad g:=2/\pi \,.
\end{equation}
It is therefore natural to consider the boundary conditions $-m_n + u$ on $\partial U_n$ and $-m_k + v$ on $\partial V_k^-$, where $u \in \bbR^{\partial U_n}$ and $v \in \bbR^{\partial V_k^-}$. Indeed, when scales are exponential, the harmonic extension inside $U_n \cap V_k^-$ is roughly the linear interpolation between the values on the boundary. Therefore such boundary conditions will push down the mean of the field on the annulus at scale $\ell$ about the origin by precisely the height of the field maximum there on first order.

Ballot estimates and assumptions will often be expressed in terms of the (unique bounded) harmonic extensions $\ol{u}$, $\ol{v}$ of $u$, $v$ respectively. For bounds and conditions it will often be sufficient to reduce the boundary data to the value of $\ol{u}$ and $\ol{v}$ at test points, together with their oscillation in the bulk. We will typically take $0$ and $\infty$ as the test points for $\ol{u}$ and $\ol{v}$ respectively, and to this end define $f(\infty) := \lim_{|x| \to \infty} f(x)$.
We note that $f(\infty)$ always exists if $f:\bbZ^2\to\bbR$ is the bounded harmonic extension of a function on a finite domain (this is a consequence of Proposition~6.6.1 of~\cite{LaLi}).
The oscillations of $\ol{u}$ and $\ol{v}$ in the bulk are given by $\osc\,\ol{u}_\eta$ and $\osc\, \ol{v}_\zeta$, where $\ol{u}_\eta$ and $\ol{v}_\zeta$ stand for the restrictions of $\ol{u}$ and $\ol{v}$ to $U_{n}^{\eta}$ and $V_{k}^{-,\zeta}$ respectively, and $\osc f := \sup_{x,y\in D} |f(x)-f(y)|$, for $f: D \mapsto \bbR$. We shall also write $\osc_{D'} f$ for $\osc\,f_{D'}$. Functions which are used outside their domain of definition implicitly take their value there to be zero and indicators $1_A$ are assumed to be defined only on $A$.

Lastly, dependence of constants and asymptotic bounds on parameters are specified as subscripts. When present, such subscripts represent all the parameters on which the quantity depends. For example we shall write $c_\alpha$ for a constant that depends only on $\alpha$ and $a_n = o_\alpha(b_n)$, if there exists a sequence $c_n$ which depends only on on $\alpha$ such that $|a_n/b_n| \leq c_n \to 0$. As usual, constants change from one use to another and are always assumed to be positive and finite.

\subsection{Main results}
\label{s:results}

We start with uniform asymptotics for the ballot probability.
\begin{thm}
\label{t:1}
Fix $\eps\in(0,1)$, $\eta,\zeta \in[0,\eps^{-1}]$. Suppose that $U, V \in \frD_\epsilon$ such that $U$, $V^-$ are connected and $\rmB(0, \epsilon) \subset U^{\eta}$. For all $k,n \geq 0$, there exist
$\cL_n = \cL_{n, \eta, U} :\: \bbR^{\partial U_n} \to (0,\infty)$, 
$\cR_k = \cR_{k, \zeta, V} :\: \bbR^{\partial V^-_k} \to (0, \infty)$ such that as $n-k \to \infty$,
\begin{equation}
\label{e:2.4ub}
\bbP \Big( h_{U^{\eta}_{n} \cap V^{-,\zeta}_{k}} \leq 0 \, \Big|\,
h_{\partial U_n} = -m_n + u ,\,
h_{\partial V^-_k} = -m_k + v
\Big) 
= (2+o_{\epsilon}(1)) \frac{\cL_{n}(u) \cR_{k}(v)}{g(n-k)} \,,
\end{equation}
for all $u \in \bbR^{\partial U_n}$ and $v \in \bbR^{\partial V^-_k}$ satisfying
\begin{equation}
\label{e:uv-as}
\max \Big\{\ol{u}(0),\, \ol{v} (\infty),\, \osc\, \ol{u}_\eta,\, \osc\,\ol{v}_\zeta \Big\} \leq \epsilon^{-1} \ , \quad 
(1+\ol{u}(0)^-)(1+ \ol{v} (\infty)^-) \leq (n-k)^{1-\eps}  \,.
\end{equation}
\end{thm}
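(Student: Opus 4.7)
The plan is to reduce the two-dimensional ballot problem for the DGFF to a one-dimensional random walk ballot problem by exploiting the approximate branching/logarithmic structure of the field on concentric circles. Concretely, I would set up a system of ``shells'' at logarithmic scales $j\in\{k,k+1,\dots,n\}$ by taking (discrete approximations of) circles of radius $\rme^j$ around the origin; since $\rmB(0,\epsilon)\subset U^\eta$ and $V^-\supset \rmB(0,\epsilon)^\rmc$ eventually, these shells sit inside $U_n^\eta\cap V_k^{-,\zeta}$ for all scales strictly between $k$ and $n$. Let $h_j$ denote the discrete harmonic average of $h$ on the $j$-th shell. Iterating the Gibbs–Markov decomposition between consecutive shells expresses $h$, restricted to the annulus, as an independent sum of ``binding'' fields on each dyadic ring plus harmonic extensions determined by $(h_j)_j$ and the outer boundary data.

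The process $S_j:=-(h_j+m_j)$, conditioned on the boundary data, is then an approximate random walk: its increments $h_{j+1}-h_j$ are Gaussian with variance $g+O(\rme^{-c(n-j)}+\rme^{-c(j-k)})$, where $g=2/\pi$, and the quadratic contribution of $m_j$ is exactly what is needed to make $S_j$ mean-zero. The endpoints are fixed at $S_n\approx -\ol{u}(0)$ and $S_k\approx -\ol{v}(\infty)$ (up to $O(1)$ corrections governed by $\osc\,\ol{u}_\eta$ and $\osc\,\ol{v}_\zeta$, which are bounded by $\epsilon^{-1}$ in~\eqref{e:uv-as}). The classical random walk ballot theorem with step variance $g$, for a walk of length $n-k$ started and ended at points of order $o(\sqrt{n-k})$ below zero, delivers a ballot probability of $(2+o(1))\ol{u}(0)^-\ol{v}(\infty)^-/(g(n-k))$; the factor $2/g$ is exactly what appears in~\eqref{e:2.4ub}, and the endpoint factors match the leading behavior of $\cL_n(u)$ and $\cR_k(v)$.

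The main obstacle, and the step where most of the work will go, is decoupling the full-field non-positivity event from the random walk event $\{S_j\ge 0:\: k\le j\le n\}$ in a way that is uniform in $u,v$ under~\eqref{e:uv-as}. On each dyadic ring the DGFF, conditioned on $(S_j)_j$, is a centered DGFF plus a harmonic extension determined by the neighboring $S_j$'s; requiring $h\le 0$ on the ring is essentially the event that this centered field, whose maximum concentrates near $2\sqrt{g}\cdot 1-\tfrac34\sqrt{g}\cdot 0=2\sqrt{g}$ on a unit-scale annulus, lies below $S_j-(\text{local harmonic correction})$. An entropic repulsion argument, in which $S_j$ is forced to exceed a slowly growing barrier (e.g.\ $\log(j\wedge(n-j))$) to meet the local constraint, both (i) shows that the local and walk events are asymptotically multiplicatively independent given the walk, and (ii) produces the precise shape of $\cL_n(u)$ and $\cR_k(v)$: each is the expectation, over the first $O(1)$ shells on the corresponding side, of the joint indicator that the centered DGFF stays non-positive on those shells together with an entrance/exit weight encoding the random walk bridge conditioning. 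The log-correction $\tfrac34\sqrt{g}\log^+\ell$ in $m_\ell$ is crucial here: it is what makes $\cL_n,\cR_k$ asymptotically linear in $\ol{u}(0)^-$, $\ol{v}(\infty)^-$ rather than vanishing or diverging.

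Finally, I would verify that $\cL_n$ and $\cR_k$ as constructed depend only on $(n,\eta,U)$ and $(k,\zeta,V)$ respectively, are strictly positive (which follows from the Gaussian full-support of the binding fields and the bulk buffer provided by $\eta,\zeta$), and that the $o_\epsilon(1)$ error term can be made uniform in all boundary data satisfying~\eqref{e:uv-as}. The uniformity is enabled by the fact that once $j$ is a constant number of scales away from either boundary, the harmonic average $h_j$ has Gaussian tails with variance growing linearly in $|j-n|$ resp.\ $|j-k|$, so the dependence on the oscillations $\osc\,\ol{u}_\eta$ and $\osc\,\ol{v}_\zeta$ is absorbed into an $O(1)$ shift that is integrated against the ballot asymptotics.
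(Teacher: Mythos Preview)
Your concentric-shell decomposition and reduction to a random-walk bridge is exactly the engine the paper uses (their ``inward concentric decomposition'' and the resulting decorated random walk of Section~\ref{s:weak}). But there is a structural obstacle you do not address, and it is precisely where the paper's real work lies.

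The decorations $\cD_j$---which encode the maximum of the local DGFF on each annulus minus the walk---do not satisfy a symmetric Markov property with respect to $(\cS_j)$. What one actually gets (Assumption~\ref{i.a2}) is that $(\cS_j,\cD_j)_{j\le m}$ is conditionally independent of $(\cS_j)_{j>m}$ given $\cS_m$, but \emph{not} of $(\cD_j)_{j>m}$. Because of this asymmetry, the ballot asymptotics for the decorated walk (Theorem~\ref{t:3.4}) only deliver
\[
\bfP\Big(\max_{j}(\cS_j+\cD_j)\le 0\Big) = (2+o(1))\frac{\ell_{\rt,\rr}(a,b)\,b^-}{s_\rt}
\]
when the \emph{far} endpoint satisfies $b<-\rt^\delta$. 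In DGFF terms this yields the weak statement (Proposition~\ref{c:1.1}) only under the extra hypothesis $\ol v(\infty)\le -(n-k)^\eps$. The outward decomposition gives the symmetric weak statement requiring $\ol u(0)\le -(n-k)^\eps$. When both $\ol u(0)$ and $\ol v(\infty)$ are $O(1)$---which is allowed by~\eqref{e:uv-as}---neither applies directly, and your ``classical random walk ballot theorem'' does not close the gap because the local constraints near the inner boundary are not independent of the walk there.

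The paper resolves this by \emph{stitching}: condition on $h_{\partial \rmB^\pm_l}$ at the intermediate scale $l=(n+k)/2$, prove an entropic-repulsion estimate (Proposition~\ref{l:stitch-restr}) showing that on the ballot event one has $\he{h}{\partial\rmB^\pm_l}(0)+m_l\in[-\wedge_{n,l,k}^{1-\eps},-\wedge_{n,l,k}^{\eps}]$ with controlled oscillation, and then apply the two weak asymptotics on $U_n^\eta\cap\rmB_l^{-,\eps}$ and $\rmB_l^\eps\cap V_k^{-,\zeta}$ separately. The product $\cL_n(u)\cR_k(v)$ emerges from this splitting, not from a direct factorization of a single DRW estimate. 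Your entropic-repulsion step is invoked in the wrong place: you use it to justify replacing $\cD_j$ by a deterministic barrier, whereas its essential role is to guarantee that the field at scale $l$ is low enough for the weak asymptotics to kick in on each half.

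A secondary issue: $\cL_n$ and $\cR_k$ are not determined by ``the first $O(1)$ shells''. They involve the first $r_n\to\infty$ shells (Proposition~\ref{p:1-2}), and proving that the resulting functional is asymptotically independent of $V,k,v$ (Lemma~\ref{l:cont-V}) and of the specific $r_n$ (Proposition~\ref{t:LR-as}) is itself delicate.
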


The reader should compare the statement in the theorem to that in the usual ballot theorem for a one dimensional random walk, say with standard Gaussian steps. Indeed, letting $(S_\ell)_{\ell \geq 0}$ denote such a walk, it can be shown, e.g., using the reflection principle for Brownian Motion, that as $n-k \to \infty$,
\begin{equation}
\bfP \big(S_{(k,n)} \leq 0 :\: S_k = -u,\, S_n = -v\big) = (2+o(1)) \frac{F(u)F(v)}{n-k} \,,
\end{equation}
uniformly in $u,v \leq \epsilon^{-1}$ satisfying $(1+u^-)(1+v^-) \leq (n-k)^{1-\epsilon}$, where $\bfP$ is the underlying probability measure and $\epsilon > 0$. Moreover $F(w)$ can be explicitly constructed via
\begin{equation}
\label{e:1.10}
F(w) := \lim_{r \to \infty} \bfE \big(S_r^- ;\; S_{(0,r]} \leq 0 \,\big|\, S_0 = w\big) 
\ ; \quad w \in \bbR \,,
\end{equation}
and obeys 
\begin{equation}
\label{e:1.11}
F(w) > 0 \text{ for all } w \leq \epsilon^{-1}
\quad \text{and} \quad F(w) = (1+o(1)) w^- \text{ as } w \to -\infty \,.
\end{equation}

In analog to~\eqref{e:1.10} we have the following explicit construction for the functionals $\cL_n$ and $\cR_k$ from Theorem~\ref{t:1}.
\begin{prop}
\label{p:1-2}
Fix $\eps\in(0,1)$, $\eta,\zeta \in[0,\eps^{-1}]$. There exists $(r_n)_{n \geq 0}$ satisfying $r_n \to \infty$ when $n \to \infty$ such that the following holds: For $U$, $V$ as in Theorem~\ref{t:1} and all $k,n \geq 0$, the functions $\cL_n$ and $\cR_k$ can be defined via 
\begin{align}
 \cL_{n}(u)
 &:= \bbE \Big( \big(\he{h}{\rmB_{n-r_n}}(0) + m_{n-r_n} \big)^- ;\; h_{U_{n}^{\eta} \cap \rmB^-_{n-r_n}} \leq 0 
\,\Big|\, h_{\partial U_n} = -m_n + u ,\,h_{\partial\rmB^-_0} = \ol{u}(0) \Big) \\
\cR_{k}(v)&:= \\
 \lim_{n \to \infty} &\bbE \Big( \big(\he{h}{\rmB^-_{k+r_{n-k}}}(\infty) 
+ m_{k+r_{n-k}}\big)^-;\:
h_{\rmB_{k+r_{n-k}} \cap V_{k}^{-,\zeta} } \leq 0 \,\Big|\, h_{\partial \rmB_n} = -m_n  ,\,
h_{\partial V^-_k} = -m_k + v
\Big)\notag
\end{align}
for $u \in \bbR^{\partial U_n}$, $v \in \bbR^{\partial V^-_k}$.
\end{prop}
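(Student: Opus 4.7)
The plan is to verify that the right-hand sides are positive and finite, and that the limit defining $\cR_k(v)$ exists, for an appropriate sequence $r_n \to \infty$. The identification of these quantities with the $\cL_n, \cR_k$ of Theorem~\ref{t:1} will emerge from the proof of Theorem~\ref{t:1} itself, where they arise naturally as the inner and outer contributions produced by a Gibbs--Markov decomposition of the field across $\partial\rmB_{n-r_n}$ and $\partial\rmB_{k+r_{n-k}}$ respectively.

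For $\cL_n(u)$ the argument is routine. Finiteness follows from the pointwise bound $(x + m_{n-r_n})^- \leq m_{n-r_n} + |x|$ combined with standard Gaussian tail estimates for the harmonic average $\he{h}{\rmB_{n-r_n}}(0)$ under the conditioning, while positivity is obtained by exhibiting an explicit positive-probability configuration on which the negativity event holds strictly and the $(\cdot)^-$ factor is bounded below; this uses Gaussian concentration for the DGFF together with the assumed bound on $\ol{u}(0)$. The main content of the proposition is therefore the existence of the limit defining $\cR_k(v)$. Writing $\ell_n := k + r_{n-k}$, the idea is to apply Gibbs--Markov across $\partial\rmB_{\ell_n}$ so that, conditionally on $h_{\partial\rmB_{\ell_n}}$, the expectation factorizes into an exterior contribution (carrying the $(\cdot)^-$ weight and the outer ballot condition on $\rmB_n \cap \rmB^-_{\ell_n}$) and an interior ballot contribution on $\rmB_{\ell_n} \cap V_k^{-,\zeta}$. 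As $n \to \infty$ with $k$ fixed and $r_{n-k} \to \infty$, one expects the conditional law of the centered trace $h_{\partial\rmB_{\ell_n}} + m_{\ell_n}$ under the outer conditioning and the outer ballot event to stabilize, precisely in analogy with the one-dimensional construction~\eqref{e:1.10} of $F$; plugging the limiting trace into the interior ballot expectation yields the claimed limit.

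The main obstacle is the quantitative control of this stabilization. The key tool is the scale-by-scale random walk representation of the DGFF: for the centered harmonic averages $(\he{h}{\rmB^-_j}(\infty))_j$, the increments behave like those of a Gaussian random walk with step variance $g$, and the DGFF ballot event implies --- up to bounded corrections controlled by Gaussian concentration within each dyadic annulus --- a corresponding random walk ballot condition. This reduces the stabilization of the centered trace to a standard overshoot-convergence statement for a downward random walk conditioned to stay negative, with logarithmic corrections absorbed by the $-m_\ell$ centering. One then chooses $r_n$ growing slowly (e.g.\ $r_n \asymp \log n$) so that the residual Gibbs--Markov coupling across $\partial\rmB_{\ell_n}$ is negligible, and verifies a Cauchy criterion in $n$: for $n_2 > n_1$ large, further conditioning both expectations on $h_{\partial\rmB_{\ell_{n_1}}}$ shows that they differ by $o(1)$. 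Uniform-in-$n$ bounds on the pre-limit expectations then transfer to give positivity and finiteness of the limit.
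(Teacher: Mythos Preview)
Your high-level plan for $\cR_k$ --- prove a Cauchy criterion in $n$ by Gibbs--Markov coupling the fields for different outer radii --- is indeed the core of the paper's argument (Lemma~\ref{l:Rk}), but there is a misreading and a genuine gap in how you execute it.

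The misreading: the event in the definition of $\cR_k(v)$ is $\{h_{\rmB_{\ell_n}\cap V_k^{-,\zeta}}\leq 0\}$, which lives entirely on the \emph{inner} side of $\partial\rmB_{\ell_n}$; there is no ``outer ballot condition on $\rmB_n\cap\rmB^-_{\ell_n}$''. If you condition on $h_{\partial\rmB_{\ell_n}}$, the $(\cdot)^-$ weight becomes (essentially) deterministic and the ballot event depends only on the inner field, so the ``factorization into exterior and interior contributions'' you describe does not arise. What actually happens in the Cauchy step is different: for $n'>n$ one writes, by Gibbs--Markov at $\partial\rmB_n$, the law of $h$ under $\bbP_{V,k,v}^{\rmB,n',0}$ on $\rmB_n\cap V_k^-$ as the law under $\bbP_{V,k,v}^{\rmB,n,0}$ perturbed by an independent binding field $\varphi^{\rmB_{n'}\cap V_k^-,\rmB_n\cap V_k^-}$ plus the deterministic difference $\frg_{n'}-\frg_n$ of harmonic means.

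The gap: showing that this perturbation changes the expectation by $o(1)$ is not free. One needs a continuity-in-$t$ estimate of the form $|\cR^r_{k,n}(v,0,t)-\cR^r_{k,n}(v,0)|\leq C|t|^{1/2}\rme^{\alpha|t|}$ (Lemma~\ref{l:LUVt-R}), which in turn rests on absolute continuity of the law of $\max_{V_k^{-,\zeta}\cap\rmB_{k+r}}h$ (Lemma~\ref{l:h-cont}) --- and that uses the ballot upper bound Theorem~\ref{t:2.3}. Your sketch of ``Gaussian concentration within each dyadic annulus'' and ``overshoot convergence for a random walk'' does not supply this. The paper makes the random-walk heuristic precise via the \emph{outward concentric decomposition} (Section~\ref{s:w-outw}), which produces a decorated random walk $(\cS^{\rm o},\cD^{\rm o})$ satisfying Assumptions~\ref{i.a1}--\ref{i.a3}; it then compares $\cR^r_{k,n}$ to the DRW functional $\ell_{T,r}$ (Lemma~\ref{l:LUV-R}) and invokes the DRW Cauchy property (Proposition~\ref{t:LR-as}) and the triangular-array limit (Proposition~\ref{t:ta}). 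Finally, the sequence $(r_n)$ is not chosen casually as $\asymp\log n$: it must grow slowly enough, $r_n\leq n^{\delta/4}$, so that \emph{six} specific error bounds (items~\ref{i:mr-LUV}--\ref{i:mr-VV-fluct} in Section~\ref{s:DGFF-as}) hold simultaneously with constants depending only on $\eps$.
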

From now on, we shall assume that $\cL_n$ and $\cR_k$ are defined as in Proposition~\ref{p:1-2}.
 
While explicit constructions of the functionals $\cL_n$ and $\cR_k$ are illustrative, they are of less use in computations involving the right hand side of~\eqref{e:2.4ub}. The purpose of the next several propositions is therefore to provide bounds and asymptotics for them.
For what follows,
we let $\bbH(W)$ be the space of bounded harmonic functions on an open non-empty domain $W \subset \bbR^2$, which we equip with the supremum norm.

The first proposition shows that, viewed as functionals of the harmonic extension of their argument, $\cL_n$ and $\cR_k$ admit an infinite version after proper scaling.

\begin{prop}
\label{p:LR-infty}
Fix $\eps\in(0,1)$, $\zeta,\eta \in[0,\eps^{-1}]$ and let $U,V$ be as in Theorem~\ref{t:1}. There exist $\cL = \cL_{\eta, U}: \bbH (U^{\eta}) \to (0,\infty)$ and
$\cR = \cR_{\zeta, V}: \bbH(V^{-,\zeta}) \to (0,\infty)$ such that,
\begin{equation}
\label{e:1.12}
\begin{split}
\cL_{n}(u_n)  \overset{n \to \infty} \longrightarrow \cL(\wh{u}_\infty)
& \quad \text{whenever} \quad \max_{x\in U^\eta_n}\big|\ol{u_n}(x)-\wh u_\infty(\rme^{-n}x)\big|\overset{n \to \infty} \longrightarrow 0 \,, \\
 \cR_{k}(v_k)  \overset{k \to \infty} \longrightarrow \cR(\wh{v}_\infty)
& \quad \text{whenever} \max_{x\in V^{-,\zeta}_k}\big|\ol{v_k}(x)-\wh v_\infty(\rme^{-k}x)\big|\overset{k \to \infty} \longrightarrow 0\,.
\end{split}
\end{equation}
Above $u_n \in \bbR^{\partial U_n}$, $v_k \in \bbR^{\partial V_k^-}$ for all $k, n \geq 0$ and $\wh{u}_\infty \in \bbH(U^{\eta})$, $\wh{v}_\infty \in \bbH(V^{-,\zeta})$.
\end{prop}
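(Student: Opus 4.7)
The plan is to prove the claim for $\cL$, with the argument for $\cR$ being entirely analogous. Starting from the explicit formula in Proposition~\ref{p:1-2}, which expresses $\cL_n(u)$ as a conditional expectation of a truncated excess times a ballot indicator for the DGFF on the annulus $U_n\cap\rmB_{n-r_n}^-$, the goal is to show that $\cL_n(u_n)$ is Cauchy as $n\to\infty$ along any sequence satisfying the uniform convergence hypothesis in~\eqref{e:1.12}, and to define $\cL(\wh u_\infty)$ as the common limit; independence of the chosen sequence will then follow from the stability step below.

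I would first establish a stability estimate: $\cL_n(u)$ depends on $u$ only through the discrete harmonic extension $\ol u$ restricted to $U_n^\eta$, and is uniformly continuous in this restriction under the sup norm. For sequences $u_n,u'_n$ with $\max_{U_n^\eta}|\ol{u_n}-\ol{u'_n}|\to 0$, the two corresponding conditional laws of $h$ on $U_n\cap\rmB_0^-$ share the same covariance and differ only by a deterministic shift of the mean whose size on $U_n^\eta$ is controlled. A Cameron-Martin comparison then shows that the associated Radon-Nikodym derivative is close to $1$, yielding $|\cL_n(u_n)-\cL_n(u'_n)|\to 0$. Combined with uniform order-one bounds on $\cL_n$ that are implicit in Theorem~\ref{t:1}, this also ensures that any limit is finite and strictly positive.

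The core step is to show Cauchyness along a canonical sequence. Fix $\wh u_\infty\in\bbH(U^\eta)$ and choose $u^*_n\in\bbR^{\partial U_n}$ whose discrete harmonic extension approximates $\wh u_\infty(e^{-n}\cdot)$ on $U_n^\eta$; this is possible because discrete harmonic extensions converge to continuum ones under rescaling. For $n'>n$, I would compare $\cL_{n'}(u^*_{n'})$ to $\cL_n(u^*_n)$ via the Gibbs-Markov decomposition: peel off the DGFF on the outer shell between scales $n$ and $n'$, which induces random boundary values $u^{**}_n$ on $\partial U_n$. Conditioning on these, the remaining expectation is essentially $\cL_n(u^{**}_n)$, while $\ol{u^{**}_n}$ concentrates tightly around $\ol{u^*_n}$ on the bulk $U_n^\eta$ by harmonic-measure smoothing of the shell fluctuations. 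Applying the stability step then yields $|\cL_{n'}(u^*_{n'})-\cL_n(u^*_n)|\to 0$.

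The hard part will be the Gibbs-Markov comparison in the presence of the ballot conditioning, which is sensitive to $O(1)$ perturbations of the mean near the inner scale. One needs quantitative control on the induced perturbation $\ol{u^{**}_n}-\ol{u^*_n}$ on the bulk (via discrete harmonic-measure estimates), together with stability of $\cL_n$ when the effective boundary data is itself random and slightly correlated with the conditioning event. Both ingredients should be accessible from the uniform harmonic-measure bounds and ballot estimates developed earlier in the paper.
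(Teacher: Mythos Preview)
There is a genuine gap in the Cauchyness step. By Proposition~\ref{p:1-2}, the functional $\cL_n(u)$ depends on the field only through the ballot event on $U_n^\eta\cap\rmB^-_{n-r_n}$ and the observable $\he{h}{\partial\rmB^-_{n-r_n}}(0)$, all of which live at scales in $[n-r_n,\,n]$. Since $r_{n'}=o(n')$, for $n'$ large relative to $n$ one has $n'-r_{n'}>n+C_\eps$, so the shell $U_{n'}^\eta\cap\rmB^-_{n'-r_{n'}}$ on which $\cL_{n'}$ lives is \emph{disjoint} from $U_n$. Conditioning on $h_{\partial U_n}$ therefore does not leave an inner expectation of the form $\cL_n(u^{**}_n)$: the entire functional $\cL_{n'}$ is measurable with respect to the outer piece you propose to peel off. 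The two quantities probe the field at completely different scales, and no spatial Gibbs--Markov decomposition relates them directly.

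The paper's route is to pass through the decorated random walk. Lemma~\ref{l:LUV} shows that $\cL_n(u_n)$ is approximated by the DRW functional $\ell_{T_n,r_n+\lfloor\log\eps\rfloor}(\ol{u_n}(0),\ol{u_n}(0))$, where $(\cS_p,\cD_p)$ encode the field on the $p$-th annulus \emph{counted from the outer boundary}. The key scale-invariance input is Lemma~\ref{l:SDconv}: for each fixed $q$, the law of $(\cS_p,\cS_p+\cD_p)_{p=1}^q$ converges as $n\to\infty$ to a limit depending only on $\wh u_\infty$ and $\eta$. The triangular-array continuity of $\ell$ (Proposition~\ref{t:ta}) then yields the limit $\cL(\wh u_\infty)$. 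A secondary point: your stability step cannot rely on Cameron--Martin, since the Dirichlet energy of the mean shift is not controlled by its sup-norm on the bulk; the paper handles analogous comparisons via the absolute continuity of the shell maximum (Lemma~\ref{l:h-cont}, used in Lemma~\ref{l:LUVt}).
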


Next, in analog to~\eqref{e:1.11} we have the following two propositions:
\begin{prop}
\label{p:1-4}
Fix $\eps\in(0,1)$, $\eta,\zeta \in[0,\eps^{-1}]$ and let $U$, $V$ be as in Theorem~\ref{t:1}. There exists $c=c_\eps<\infty$ such that for all $k \geq 0$, $n\geq c$,
\begin{equation}
\begin{split}
\cL_n(u) & =\big(1+o_{\epsilon}(1)\big) \ol{u}(0)^- \quad \text{as } \ol{u}(0)^- \to \infty\text{ with } \osc\,\ol{u}_\eta \leq \eps^{-1}
\,, \\
\cR_k(v) & = \big(1+o_{\epsilon}(1)\big) \ol{v}(\infty)^- \quad \text{as } \ol{v}(\infty)^- \to \infty \text{ with } \osc\,\ol{v}_\zeta \leq \eps^{-1}\,,
\end{split}
\end{equation}
and also
\begin{equation}
\begin{split}
\cL(\wh{u}) & =\big(1+o_{\epsilon}(1)\big) \wh{u}(0)^- \quad \text{as } \wh{u}(0)^- \to \infty \text{ with } \osc\,\wh{u}_\eta \leq \eps^{-1}
\,, \\
\cR(\wh{v}) & = \big(1+o_{\epsilon}(1)\big) \wh{v}(\infty)^- \quad \text{as } \wh{v}(\infty)^- \to \infty \text{ with }\osc\, \wh{v}_\zeta\leq \eps^{-1} \,.
\end{split}
\end{equation}
\end{prop}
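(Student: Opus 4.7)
The plan is to prove the first assertion for $\cL_n$; the claim for $\cR_k$ follows by an analogous argument (with the roles of the inner and outer boundaries interchanged), and the infinite-volume statements for $\cL$ and $\cR$ then follow by combining the finite-$n$ asymptotics with Proposition~\ref{p:LR-infty}, applied to appropriate approximating sequences of boundary data.

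Write $M := \ol{u}(0)^-$ and set
\[
X := \he{h}{\rmB_{n-r_n}}(0) + m_{n-r_n}, \qquad A := \{h_{U_n^{\eta} \cap \rmB_{n-r_n}^-} \leq 0\},
\]
so that $\cL_n(u) = \bbE(X^-;A)$ under the conditioning $h_{\partial U_n} = -m_n + u$, $h_{\partial \rmB_0^-} = \ol{u}(0)$. Under this conditioning $X$ is Gaussian, and the first step is to estimate its first two moments. Log-interpolation for 2D discrete harmonic functions on the annular domain $U_n \cap \rmB_0^-$, combined with $\osc\,\ol{u}_\eta \leq \eps^{-1}$ and the defining identity of $\ol{u}(0)$ as the harmonic-measure average of $u$ from the origin, gives $\bbE X = -M + O_\eps(\log n)$; the error absorbs the mismatch between $m_n(n-r_n)/n$ and $m_{n-r_n}$ together with the local oscillation of $\ol{u}$. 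A Gibbs--Markov decomposition identifies $\Var(X)$ with a harmonic-measure-weighted double sum of Green's function values on $\partial \rmB_{n-r_n}$, yielding $\Var(X) = O(r_n)$.

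The upper bound is immediate: dropping the indicator and using standard Gaussian moment formulas,
\[
\cL_n(u) \leq \bbE(X^-) = M\bigl(1+o_\eps(1)\bigr)\quad\text{as } M \to \infty.
\]
For the lower bound, write $\cL_n(u) = \bbE(X^-) - \bbE(X^-; A^c)$ and apply Cauchy--Schwarz to obtain $\bbE(X^-; A^c) \leq \sqrt{\bbE(X^2)}\,\sqrt{\bbP(A^c)} = M(1+o(1))\,\sqrt{\bbP(A^c)}$. Decomposing $h = \mu + h^*$ with $h^*$ a centered DGFF on $U_n \cap \rmB_0^-$, log-interpolation gives $\mu \leq -M - m_{n-r_n} + O_\eps(\log n)$ on $U_n^{\eta} \cap \rmB_{n-r_n}^-$, while the classical exponential upper tail of the DGFF maximum applied to $h^*$ on the thin annulus (whose effective log-scale is $r_n$) gives $\bbP(\max h^* > m_{r_n} + t) \leq C\rme^{-ct}$. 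Combining, $\bbP(A^c) \leq C\rme^{-c(M + m_{n-r_n} - 2 m_{r_n} - O_\eps(\log n))}$, which decays exponentially in $M$ once $n \geq c_\eps$ is large enough that $m_{n-r_n} \geq 2 m_{r_n}$. Hence $\bbE(X^-; A^c) = o_\eps(M)$, closing the lower bound.

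The main obstacle is ensuring uniformity of the $o_\eps(1)$ in $n \geq c_\eps$: since both $\Var(X) = O(r_n)$ and the error in $\bbE X$ depend on $n$, one must verify that $M \to \infty$ eventually dominates $\log n$ and $\sqrt{r_n}$ for any admissible sequence. This is handled by choosing $c_\eps$ so that $m_{n-r_n} \geq 2 m_{r_n}$ throughout $n \geq c_\eps$, and by exploiting the slow growth of $r_n$ from Proposition~\ref{p:1-2}. The infinite-volume statement $\cL(\wh u) = (1+o_\eps(1))\wh u(0)^-$ then follows by applying the $\cL_n$-asymptotic to a sequence $(u_n)$ with $\ol{u_n} \to \wh u$ in the sense of Proposition~\ref{p:LR-infty} and $\ol{u_n}(0) = \wh u(0)$ (achievable, e.g., by taking $u_n$ to be a natural discretization of $\wh u$ on $\partial U_n$), and using the uniformity to exchange the limits $n \to \infty$ and $\wh u(0)^- \to \infty$.
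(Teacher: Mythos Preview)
Your direct Gaussian approach contains a genuine gap: it does not deliver the uniformity in $n$ that the notation $o_\eps(1)$ demands. Concretely, $\bfVar(X)\asymp r_n$ with $r_n\to\infty$, so your upper bound $\cL_n(u)\le\bbE(X^-)=M+O(\sqrt{r_n})$ is \emph{not} $M(1+o_\eps(1))$ uniformly over $n\ge c_\eps$: for any fixed $M$, taking $n$ large makes $\sqrt{r_n}/M$ arbitrarily large. The sentence ``$M\to\infty$ eventually dominates $\log n$ and $\sqrt{r_n}$'' is a non-sequitur, since $M$ and $n$ are independent parameters and the claim must hold for every $n\ge c_\eps$ once $M$ is large. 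The lower bound has the same defect, compounded by an incorrect tail estimate: $h^*$ is the centered DGFF on the full annulus $U_n\cap\rmB_0^-$, whose variance at points of $U_n^\eta\cap\rmB_{n-r_n}^-$ is of order $gn$, not $gr_n$; hence $\max h^*$ over the thin annulus is centered near $m_n$, not $m_{r_n}$. With the correct centering one obtains $\bbP(A^c)\le C\rme^{-c(M-C'r_n)}$, which is useless unless $M\gtrsim r_n$.

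The paper avoids this by passing through the decorated-random-walk functional. It writes
\[
\frac{\cL_n(u)}{\ol u(0)^-}=\frac{\cL^{r_n}_{n,0}(u,\ol u(0))-\ell_{T_n,r_n+\lfloor\log\eps\rfloor}(\ol u(0),\ol u(0))}{\ol u(0)^-}+\frac{\ell_{T_n,r_n+\lfloor\log\eps\rfloor}(\ol u(0),\ol u(0))}{\ol u(0)^-}\,,
\]
bounds the first term by $C_\eps(1+\ol u(0)^-)^{\eps-1}$ via Lemma~\ref{l:LUV} and the calibrated choice of $r_n$, and handles the second via Proposition~\ref{p:1.5}. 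The crucial point is that Proposition~\ref{p:1.5} provides convergence of $\ell_{T,r}(a,b)/a^-\to 1$ \emph{uniformly in $r$} provided $T\ge T_{(r)}$ (which property~(v) in the choice of $r_n$ guarantees). This uniformity comes from the Cauchy property~\eqref{e:20.39} of $\ell_{T,r}$ in $r$, itself a consequence of the DRW ballot asymptotics (Theorem~\ref{t:3.4}); it is precisely this ballot-theoretic input that captures the correlation between $X^-$ and the event $A$ in the regime $M\lesssim\sqrt{r_n}$, which a bare Cauchy--Schwarz split cannot see. The statement for $\cL$ is then deduced from the $\cL_n$ case via Proposition~\ref{p:LR-infty}, as you outline.
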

\begin{prop}
\label{p:1-5}
Fix $\eps\in(0,1)$, $\eta,\zeta \in[0,\eps^{-1}]$ and let $U$, $V$ be as in Theorem~\ref{t:1}. There exists $c = c_{\epsilon} > 0$ such that for all $k\geq 0$, $n\geq c$ and all $u \in \bbR^{\partial U_n}$ and $v \in \bbR^{\partial V_k^-}$ with $\max\{\ol{u}_\eta(0), \ol{v}_\zeta(\infty),  \osc\,\ol{u}_\eta, \osc\,\ol{v}_\zeta\} \leq \epsilon^{-1}$,
\begin{equation}
\label{e:1-5-nk}
 \cL_n(u) > c \  , \qquad \qquad \qquad
 \cR_k(v) > c \,.
\end{equation}
Also, for all $\wh{u} \in \bbH(U^{\eta})$ and $\wh{v} \in \bbH(V^{-,\zeta})$ with $\wh{u} \leq \epsilon^{-1}$ and $\wh{v} \leq \epsilon^{-1}$, we have
\begin{equation}
\label{e:1-5-infty}
 \cL(\wh{u}) > c \  , \qquad \qquad \qquad
 \cR(\wh{v}) > c \,.
\end{equation}
\end{prop}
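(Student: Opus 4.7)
The approach is to prove the finite-scale bounds $\cL_n(u)>c$ and $\cR_k(v)>c$ first, and then derive the infinite-scale versions by passing to the limit via Proposition~\ref{p:LR-infty}. The two finite-scale statements are symmetric, so I discuss only $\cL_n$.

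\textbf{Dichotomy on $\ol u(0)^-$.} Fix a threshold $M_\eps$ to be chosen large. If $\ol u(0)^-\geq M_\eps$, Proposition~\ref{p:1-4} directly yields $\cL_n(u)\geq \tfrac{1}{2}\ol u(0)^- \geq \tfrac12 M_\eps$ for $n\geq c_\eps$. Otherwise the hypotheses of the proposition, together with $\ol u(0)^- < M_\eps$, imply $\|\ol u_\eta\|_\infty \leq \Lambda_\eps$ for some $\Lambda_\eps$, and only this bounded regime requires work.

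\textbf{Bounded regime.} Let $X := \he{h}{\rmB_{n-r_n}}(0)+m_{n-r_n}$, so that the weight in the definition of $\cL_n(u)$ is $X^-$. Under our conditioning, $\ell\mapsto \he{h}{\rmB_\ell}(0)$ is approximately a Brownian bridge (with diffusivity $g$ per unit scale) from $\ol u(0)$ at $\ell=0$ to $-m_n+$ bounded at $\ell = n$, so $X$ is Gaussian with mean of order $1$ (in $\eps,\Lambda_\eps$) and variance of order $g\,r_n(n-r_n)/n$, which is $\Theta(r_n)$ in the relevant regime $r_n=o(n)$. I restrict the expectation defining $\cL_n(u)$ to
\[
E:= \bigl\{X\in[-2a\sqrt{r_n},-a\sqrt{r_n}]\bigr\} \cap \bigl\{h_{U_n^\eta\cap \rmB^-_{n-r_n}}\leq 0\bigr\}
\]
for a small $a=a_\eps>0$. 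On $E$ the weight is at least $a\sqrt{r_n}$, hence $\cL_n(u) \geq a\sqrt{r_n}\cdot\bbP(E\mid\text{boundary})$. Conditioning further on $h|_{\partial\rmB_{n-r_n}}$ via Gibbs--Markov decouples two ingredients: the Gaussian window event on $X$ has probability bounded below by a positive constant, while the conditional ballot probability on the annulus $U^\eta_n\cap \rmB^-_{n-r_n}$, given a typical ring configuration with avg displacement $d \asymp a\sqrt{r_n}$ below the critical level, is at least $c_\eps d/r_n = c_\eps a/\sqrt{r_n}$ by a direct ballot-type lower bound for the DGFF on an annulus (derivable independently of Proposition~\ref{p:1-5}, as an input to the proof of Theorem~\ref{t:1}). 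Combining, $\bbP(E\mid\text{boundary}) \gtrsim a/\sqrt{r_n}$, and so $\cL_n(u)\gtrsim a^2$, a positive constant depending only on $\eps$.

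\textbf{Infinite case and main obstacle.} For $\wh u \in \bbH(U^\eta)$ with $\wh u \leq \eps^{-1}$, set $u_n(x) := \wh u(\rme^{-n}x)$ on $\partial U_n$; by continuity of $\wh u$ on $\ol{U^\eta}$ the hypotheses of Proposition~\ref{p:LR-infty} are satisfied, giving $\cL_n(u_n)\to \cL(\wh u)$. Each $u_n$ verifies the finite-case hypotheses uniformly in $n$, so the lower bound $\cL_n(u_n)\geq c_\eps$ passes to the limit to yield $\cL(\wh u)\geq c_\eps$. The principal technical challenge is the direct ballot lower bound invoked above --- specifically, producing an estimate of the form $\bbP(\text{ballot on annulus}\mid\text{ring displacement }d)\gtrsim d/r_n$ uniformly in bounded exterior boundary data and in typical intermediate-ring fluctuations, \emph{without} invoking Theorem~\ref{t:1} itself (to avoid circularity). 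Such an estimate must be produced by Gaussian second-moment or comparison arguments adapted to the log-correlated structure of the DGFF, in the spirit of the classical ballot theorem for the random walk bridge.
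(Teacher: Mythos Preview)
Your approach is viable but takes a more laborious route than the paper's. The paper does not work with the expectation defining $\cL_n$ at all; instead it exploits that $\cL_n(u)$ already appears as the leading factor in the weak asymptotics (Proposition~\ref{c:1.1}):
\[
\bbP_{0,v}^{n,u}\big(h_{U^\eta_n\cap V^{-,\zeta}_0}\leq 0\big)=(2+o_\eps(1))\,\frac{\cL_n(u)\,\ol v(\infty)^-}{g\,n}
\]
for $v$ with $\ol v(\infty)^-\geq n^\eps$, and simply compares this with the ballot lower bound (Lemma~\ref{l:DGFF-lb}) to read off $\cL_n(u)\geq c_\eps(1+\ol u(0)^-)$ in one line. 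A short monotonicity step then removes the residual restriction on $\ol u(0)$. Neither a dichotomy on $\ol u(0)^-$ nor Proposition~\ref{p:1-4} is used. The obstacle you identify---a ballot lower bound for the DGFF on an annulus---is exactly Lemma~\ref{l:DGFF-lb}, and the paper proves it via the DRW reduction (Theorems~\ref{t:drw-i} and~\ref{t:lb}), not through second-moment or Gaussian comparison arguments; so it is available to cite without circularity. If you insisted on your direct route, you would additionally need to restrict to intermediate-ring data with bounded bulk oscillation (e.g.\ via Proposition~\ref{p:2.6}) before the annulus lower bound applies, a step you omit. Your passage from $\cL_n$ to $\cL$ via Proposition~\ref{p:LR-infty} matches the paper's.
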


Next, we address the continuity of $\cL_n$ and $\cR_k$ in the the domain, the values assigned to the field on the boundary, and $\eta$, $\zeta$. To measure distances between domains, if $W, W'$ are open non-empty and bounded subsets of $\bbR^2$, we let $\dH(W, W')$ be the Hausdorff distance of their complements inside any closed and bounded subset of $\bbR^2$ which includes both $W$ and $W'$. The reader can easily verify that the above definition is proper and that $\dH$ is indeed a metric on the space of open, non-empty and bounded subsets of $\bbR^2$ and in particular on $\frD$. 

\begin{prop}
\label{p:1-6}
Fix $\eps\in(0,1)$. There exist $c=c_\eps<\infty$, $\rho  = \rho_\epsilon:\bbR_+ \to \bbR_+$ with $\rho(t) \downarrow 0$ as $t \downarrow \infty$ such that the following holds:  For all $n> c$, all $U, U' \in \frD_\epsilon$ such that $U, U'$ are connected, all $\eta, \eta' \geq 0$ such that $\rmB(0,\epsilon) \subset U^{\eta} \cap U'^{\eta'}$, and all $u \in \bbR^{\partial U_n}$, $u' \in \bbR^{\partial U'_n}$ such that $\max\{\ol{u}_\eta(0), \ol{u}'_{\eta'}(0), \osc\,\ol{u}_\eta, \osc\,\ol{u}'_{\eta'}\} < \epsilon^{-1}$, we have 
\begin{equation}
\label{e:p:1-6-L}
\bigg| \frac{\cL_{n,\eta,U}(u)}{\cL_{n,\eta',U'}(u')} - 1 \bigg|
\leq \rho \Big(\dH\big(U, U'\big) + |\eta - \eta'|
+\|\ol{u}-\ol{u}'\|_{\bbL_\infty(U_{n}^{\eta} \cap U'^{\eta'}_{n})} + n^{-1}\Big) \,.
\end{equation}
Similarly, for all $k\geq 0$, all $V, V' \in \frD_\epsilon$ such that $V^-, V'^-$ are connected, all $\zeta, \zeta' \geq 0$ and all $v \in \bbR^{\partial V_k^-}$, $v' \in \bbR^{\partial V'^-_k}$ such that
$\max\{\ol{v}_\zeta(\infty), \ol{v}'_{\zeta'}(\infty), \osc\,\ol{v}_\zeta, \osc\,\ol{v}'_{\zeta'}\} < \epsilon^{-1}$, we have 
\begin{equation}
\label{e:p:1-6-R}
\bigg| \frac{\cR_{k,\zeta,V}(v)}{\cR_{k,\zeta',V'}(v')} - 1 \bigg|
\leq \rho \Big(\dH\big(V^-, V'^-\big)
+ |\zeta - \zeta'| +\|\ol{v}-\ol{v}'\|_{\bbL_\infty(V_{k}^{-,\zeta} \cap V'^{-,\zeta'}_{k})} + k^{-1} \Big) \,.
\end{equation}
The above holds also for $\cL$ and $\cR$ in place of $\cL_n$ and $\cR_k$, with $u, u', v, v'$, $\ol{u}, \ol{u}', \ol{v}, \ol{v}'$ and $\ol{u}_\eta, \ol{u}'_\eta, \ol{v}_\zeta, \ol{v}'_\zeta$ all replaced by $\wh{u}, \wh{u}', \wh{v}, \wh{v}'$ which are now harmonic functions on $U^{\eta}, U'^{\eta'}, V^{-,\zeta}, V'^{-,\zeta'}$ respectively, and with the $\bbL_\infty$ norm taken over $U^{\eta} \cap U'^{\eta'}$ and $V^{-,\zeta} \cap V'^{-, \zeta'}$ instead of $U_{n}^{\eta} \cap U'^{\eta'}_{n}$ and $V_{k}^{-,\zeta} \cap V'^{-,\zeta'}_{k}$.
\end{prop}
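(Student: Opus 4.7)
The plan is to prove~\eqref{e:p:1-6-L} for $\cL_n$; the bound~\eqref{e:p:1-6-R} is symmetric, and the statements for the infinite versions $\cL,\cR$ follow by passing to the limit via Proposition~\ref{p:LR-infty}, provided the modulus $\rho$ for $\cL_n$ is uniform in $n$. The overall strategy is to decompose $\rho$ into three separate contributions by perturbing in turn (i) the boundary data $u\leftrightarrow u'$ (for the same $U,\eta,n$), (ii) the pair $(U,\eta)\leftrightarrow(U',\eta')$ (for the same data structure), and (iii) the discretization scale, the last producing the $n^{-1}$ term. Throughout, the lower bound $\cL_n>c_\epsilon$ from Proposition~\ref{p:1-5} is what converts additive $o_\delta(1)$ errors into the desired multiplicative form in~\eqref{e:p:1-6-L}.

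For step~(i), adding a constant $c$ to $u$ shifts both conditioning values ($-m_n+u$ on $\partial U_n$ and $\ol u(0)$ on $\partial\rmB_0^-$) by $c$, and hence shifts the whole conditioned field on $U_n\cap\rmB_0^-$ by $c$. Monotonicity in $c$ then sandwiches $\cL_{n,\eta,U}(u')$ between $\cL_{n,\eta,U}(u\pm\delta)$ whenever $\|\ol u-\ol u'\|_\infty\leq\delta$ on the common interior. The difference of the two ``extreme'' quantities reduces to an expectation supported on the event that the maximum of $h(x)+m_{|x|}$ over $x\in U_n^{\eta}\cap\rmB^-_{n-r_n}$ lies in $[-\delta,0]$; by standard right-tail bounds on the DGFF maximum (cf.~\cite{DZ14}) this probability is $o_\delta(1)$ uniformly in $n$. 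For general $u,u'$ with $\ol u-\ol u'$ not constant, the non-constant part is handled by a Gibbs-Markov reduction to a smaller domain combined with the harmonic-measure estimate that appears in step~(ii).

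Step~(ii) is the main obstacle. Let $\delta=\dH(U,U')+|\eta-\eta'|$ and choose an intermediate scale $s=s(\delta)$ with $1\ll s\ll n$. Applying the Gibbs-Markov decomposition at scale $n-s$, conditionally on $h_{\partial\rmB_{n-s}}$ the inner field on $\rmB_{n-s}\cap\rmB^-_{n-r_n}$ is a DGFF whose law does not depend on the external structure $(U,\eta)$ vs.\ $(U',\eta')$. Hence the two problems only differ through (a) the conditional law of $h_{\partial\rmB_{n-s}}$, whose mean is a harmonic function which, by a Poisson-kernel estimate using $\dH(U,U')$ small and bounded oscillation of $\ol u,\ol u'$, differs in sup-norm by $O(\delta)$ between the two problems while the covariance is common; and (b) the restriction of the ballot event $\{h\leq 0\}$ to the outer region $U_n^\eta\setminus\rmB_{n-s}$ vs.\ $U'^{\eta'}_n\setminus\rmB_{n-s}$, which coincide except on a thin annular sliver of width $O(\delta\rme^n)$. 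For (a), one averages and reduces to step~(i) conditionally. For (b), an upper ballot bound of the type available from the proof of Theorem~\ref{t:1}, applied to the outer annular region, shows that removing or imposing the event on such a sliver produces only a multiplicative $o_\delta(1)$ error, provided $s$ is taken to tend to infinity with $1/\delta$ but still $s\ll n$. The $n^{-1}$ term in~\eqref{e:p:1-6-L} absorbs the mismatch between the continuous Hausdorff distance and the discrete lattice approximations $U_n,U'_n$; and because $\rho$ depends only on $\epsilon$ and the listed arguments (not on $n$), the transfer to $\cL$ and $\cR$ then follows from Proposition~\ref{p:LR-infty} and a straightforward diagonal argument.
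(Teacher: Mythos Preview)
Your overall strategy --- perturb in $u$, then in $(U,\eta)$, use Gibbs--Markov couplings and binding-field estimates, and pass to $\cL$ via Proposition~\ref{p:LR-infty} --- is the right general shape and matches the paper's philosophy. However, there is a structural gap that prevents the argument from closing as written.

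The functional $\cL_n$ is defined with $r=r_n\to\infty$, and every continuity estimate you invoke carries an $r$-dependent constant. In step~(i), the quantity you need is not a right-tail bound but a density-type estimate $\bfP(\max_{U_n^\eta\cap\rmB^-_{n-r}} h\in(-\delta,0])=O(\delta)$; the paper proves this (Lemma~\ref{l:h-cont}), but with a constant $C_{r,\eps}$ that blows up in $r$. Likewise, the Cauchy--Schwarz step $\bfE(\cS_{r}^-;\,\max\in(-\delta,0])\le (\bfE\cS_r^2)^{1/2}\bfP(\cdots)^{1/2}$ picks up a factor $\sqrt{r}$ from the second moment. Since you apply these with $r=r_n$, you cannot conclude uniformity in $n$. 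In step~(ii), the same issue recurs: after conditioning at $\partial\rmB_{n-s}$, the inner expectation is again a $\cL^{r_n-s}$-type object, and your reduction to step~(i) inherits the $r$-dependence.

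The paper resolves this by \emph{not} working directly with $r_n$. Instead it first shows (via Lemma~\ref{l:LUV} and the DRW Cauchy property, Proposition~\ref{t:LR-as}) that $\cL_n=\cL^{r_n}_n$ and $\cL^r_n$ differ only by $o_\eps(1)(1+\ol u(0)^-)$ as $n\to\infty$ then $r\to\infty$, uniformly in $(U,\eta,u)$. This reduces the problem to continuity of $\cL^r_n$ at \emph{fixed} $r$, which is Lemma~\ref{l:1-6-m}; there the $r$-dependent constants in Lemma~\ref{l:LUVt} (the analogue of your step~(i)) and in the binding-field/harmonic-value estimates (Lemmas~\ref{l:UU-fluct},~\ref{l:gU}) are harmless. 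One then sandwiches back to $\cL^{r_n}_n$. Your outline is missing this reduction, and that is the essential idea you need to add.

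Two smaller points: in step~(ii)(a), the covariance of $h_{\partial\rmB_{n-s}}$ is \emph{not} common to the $U$- and $U'$-problems --- it differs by the binding field $\varphi^{U_n\cap\rmB_0^-,(U\cap U')_n\cap\rmB_0^-}$, which is exactly what Lemma~\ref{l:UU-fluct} controls. And your citation of~\cite{DZ14} for step~(i) is misplaced: right-tail bounds do not give continuity of the maximum near a level.
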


Lastly, we turn to bounds on the ballot probability. The following lower bound is a simple consequence of Theorem~\ref{t:1}, Propositions~\ref{p:1-4},~\ref{p:1-5} and monotonicity.
\begin{cor}
Under the conditions of Theorem~\ref{t:1}, there exist $c = c_{\epsilon} > 0$ and $C=C_\eps<\infty$, such that
\begin{equation}
\label{e:2.4lb}
\bbP \Big( h_{U_{n}^{\eta} \cap V^{-,\zeta}_{k}} \leq 0 \, \Big|\,
h_{\partial U_{n}} = -m_n + u ,\,
h_{\partial V^-_{k}} = -m_k + v
\Big) 
> c \frac{\big(\ol{u}(0)^{-} + 1\big) \big(\ol{v}(\infty)^- + 1\big)}{n-k}
\end{equation}
for all $n-k\geq C$.
\end{cor}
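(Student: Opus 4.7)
The plan is to derive the corollary directly from Theorem~\ref{t:1} combined with the quantitative lower bounds for $\cL_n$ and $\cR_k$ supplied by Propositions~\ref{p:1-4} and~\ref{p:1-5}. Since the hypotheses of the corollary coincide with those of Theorem~\ref{t:1}, the asymptotic~\eqref{e:2.4ub} applies; choosing $C_0 = C_{0,\eps}$ so that $|o_\eps(1)| \leq 1$ whenever $n-k \geq C_0$, the factor $2+o_\eps(1)$ is bounded below by $1$ and we obtain
\begin{equation*}
\bbP \Big( h_{U^{\eta}_{n} \cap V^{-,\zeta}_{k}} \leq 0 \,\Big|\, h_{\partial U_n} = -m_n + u,\, h_{\partial V^-_k} = -m_k + v \Big) \geq \frac{\cL_n(u)\, \cR_k(v)}{g(n-k)}.
\end{equation*}
Since $k \geq 0$, the bound $n - k \geq C_0$ also forces $n \geq C_0$, so (after enlarging $C_0$ if necessary) the thresholds of Propositions~\ref{p:1-4} and~\ref{p:1-5} are automatically met.

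The main step is to show that there exists $c = c_\eps > 0$ with
\begin{equation*}
\cL_n(u) \geq c\big(\ol{u}(0)^- + 1\big) \qquad \text{and} \qquad \cR_k(v) \geq c\big(\ol{v}(\infty)^- + 1\big),
\end{equation*}
uniformly over admissible boundary data. For $\cL_n$, I split according to the magnitude of $\ol{u}(0)^-$. By Proposition~\ref{p:1-4}, there is a threshold $M = M_\eps \geq 1$ such that $\cL_n(u) \geq \tfrac{1}{2}\ol{u}(0)^-$ whenever $\ol{u}(0)^- \geq M$, and in particular $\cL_n(u) \geq \tfrac{1}{4}(\ol{u}(0)^- + 1)$ in that regime. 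In the complementary range $\ol{u}(0)^- < M$, Proposition~\ref{p:1-5} delivers the uniform lower bound $\cL_n(u) > c_\eps$; combined with $\ol{u}(0)^- + 1 \leq M + 1$, this gives $\cL_n(u) \geq \tfrac{c_\eps}{M+1}(\ol{u}(0)^- + 1)$. Taking the minimum of the two constants proves the comparison for $\cL_n$, and the symmetric argument handles $\cR_k$. Monotonicity of the ballot event in the boundary data enters implicitly, via the Gibbs--Markov decomposition, to ensure that the one-sided hypotheses of Propositions~\ref{p:1-4} and~\ref{p:1-5} (which control $\ol{u}(0)$, $\ol{v}(\infty)$ from above but not from below) cover all admissible $u$, $v$.

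Substituting these two inequalities into the displayed bound above yields the corollary, with $C$ chosen as the maximum of all the thresholds accumulated along the way. I do not anticipate any real obstacle here: the statement is essentially a packaging result, combining the sharp asymptotic of Theorem~\ref{t:1} with the previously established uniform positivity (Proposition~\ref{p:1-5}) and linear growth at infinity (Proposition~\ref{p:1-4}) of $\cL_n$ and $\cR_k$. The only mildly delicate point is the intermediate regime $\ol{u}(0)^- \asymp 1$, handled cleanly by the two-case split above.
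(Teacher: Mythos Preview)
Your approach is correct and matches what the paper intends: the corollary is listed as ``a simple consequence of Theorem~\ref{t:1}, Propositions~\ref{p:1-4},~\ref{p:1-5} and monotonicity,'' with no further proof given, and your two-case split (Proposition~\ref{p:1-4} for large $\ol{u}(0)^-$, Proposition~\ref{p:1-5} for bounded $\ol{u}(0)^-$) is exactly the natural way to extract $\cL_n(u) \geq c(\ol{u}(0)^-+1)$ from those two propositions. One small remark: your invocation of monotonicity is unnecessary here, since the hypotheses of Propositions~\ref{p:1-4} and~\ref{p:1-5} impose only upper bounds on $\ol{u}(0)$, $\ol{v}(\infty)$ and the oscillations, all of which are already implied by condition~\eqref{e:uv-as}; so the propositions apply directly to every admissible $u,v$ without any reduction step.
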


While a similar upper bound can be derived in the same way, a stronger result without any restrictions on $u$ and $v$ is given by the following theorem. For what follows, we abbreviate:
\begin{equation}
\label{e:1.22}
u_* \equiv \ol{u}(0)-2\osc\,\ol{u}_\eta
\quad, \qquad v_* \equiv \ol{v}(\infty)-2\osc\,\ol{v}_\zeta \,.
\end{equation}
\begin{thm}
\label{t:2.3}
Fix $\eps\in(0,1)$, $\eta,\zeta \in[0,\eps^{-1}]$ and let $U,V$ be as in Theorem~\ref{t:1}. Then there exists $C = C_{\epsilon} < \infty$ such that the following holds:
for all $0 \leq k < n$ and all $u \in \bbR^{\partial U_n}$ and $v \in \bbR^{\partial V^-_k}$,
\begin{equation}
\label{e:1.18}
\bbP \Big( h_{U_{n}^{\eta} \cap V^{-,\zeta}_{k}} \leq 0 \, \Big|\,
h_{\partial U_{n}} = -m_n + u ,\,
h_{\partial V^-_{k}} = -m_k + v
\Big) \leq 
C \frac{\big(u_*^- + 1\big)\big(v_*^- + 1\big)}{n-k} \,,
\end{equation}
If, moreover,
$u_*^- \leq (n-k)^{1-\eps}$, $v_*^{-} \leq (n-k)^{1-\eps}$, $n-k\geq C$,
then we have the stronger bound
\begin{equation}
\label{e:t:2.3-posUB}
\bbP \Big( h_{U_{n}^{\eta} \cap V^{-,\zeta}_{k}} \leq 0 \, \Big|\,
h_{\partial U_{n}} = -m_n + u ,\,
h_{\partial V^-_{k}} = -m_k + v
\Big)
\leq 
C \frac{\Big(u_*^- + 
\rme^{-( u_{*}^+ )^{2-\eps}}
\Big)
\Big(  v_*^- + 
\rme^{-( v_{*}^+ )^{2-\eps}}
\Big)}{n-k} \,.
\end{equation}
\end{thm}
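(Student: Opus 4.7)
The plan is to reduce the two-dimensional ballot event to a one-dimensional barrier-crossing estimate for an approximate Gaussian bridge obtained from the circle averages of $h$ on concentric discrete circles between scales $k$ and $n$. First, I exploit monotonicity in the domain: since $\rmB(0,\epsilon)\subset U^\eta$ and $V\in\frD_\epsilon$, there exists $r=r_\epsilon$ such that $\rmB_{n-r}\cap \rmB^-_{k+r}\subset U_n^\eta\cap V_k^{-,\zeta}$, so the event $\{h\leq 0\text{ on }U_n^\eta\cap V_k^{-,\zeta}\}$ implies $\{h\leq 0\text{ on }\rmB_{n-r}\cap \rmB^-_{k+r}\}$. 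Next, monotonicity in the boundary data (via the decomposition $h=\phi+h'$ with $\phi$ the prescribed harmonic profile and $h'$ a centered DGFF, combined with $\ol u(x)\geq \ol u(0)-\osc\,\ol u_\eta$ for $x\in U_n^\eta$ and the analogous bound for $\ol v$) lets me replace the data on $\partial U_n$ and $\partial V_k^-$ by the lower constants $-m_n+\ol u(0)-\osc\,\ol u_\eta$ and $-m_k+\ol v(\infty)-\osc\,\ol v_\zeta$, which only enlarges the ballot probability. The \emph{second} copy of the oscillation in the definition~\eqref{e:1.22} of $u_*,v_*$ will be absorbed in the next step, when we pass from the concentric annulus to the discrete circles supporting the approximate walk.

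On the concentric annulus I decompose the DGFF via the Gibbs--Markov property along the boundaries $\partial \rmB_\ell$ for integer $\ell\in[k+r,n-r]$ and work with the circle averages $S_\ell:=|\partial \rmB_\ell|^{-1}\sum_{x\in\partial \rmB_\ell}h(x)+m_\ell$. Standard Green-function asymptotics show that $(S_\ell)$ is a Gaussian process whose covariance agrees, up to $O_\epsilon(1)$ corrections, with that of a Brownian bridge of variance $g$ per step pinned at $S_{k+r}\approx u_*$ and $S_{n-r}\approx v_*$. Since $h\leq 0$ on a given circle implies $S_\ell\leq m_\ell$, the ballot event forces $(S_\ell-m_\ell)_\ell$ to stay nonpositive, a one-sided barrier-crossing constraint on an approximate Gaussian bridge. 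A classical reflection-principle bound then yields $\bbP(S_\ell\leq m_\ell\text{ for all }\ell)\leq C(u_*^-+1)(v_*^-+1)/(n-k)$, which is~\eqref{e:1.18}.

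For the sharper bound~\eqref{e:t:2.3-posUB} in the regime $u_*^-\vee v_*^-\leq (n-k)^{1-\epsilon}$ with $u_*^+$ or $v_*^+$ positive, I split the probability into a Gaussian descent phase near each endpoint and a ballot phase in the bulk. When $u_*>0$, the walk $(S_\ell)$ must drop from $\approx u_*$ to $\leq 0$ within the first $O(1)$ steps, with probability $\leq \exp(-c u_*^2)$ by a Gaussian tail bound, after which the standard ballot bound yields the $1/(n-k)$ factor from the now-negative starting position. The weakened exponent $2-\epsilon$ absorbs both the $O_\epsilon(1)$ deviation of $(S_\ell)$ from a true Gaussian walk and the $\log$ corrections in $m_\ell$, which only become relevant relative to $u_*^2$ when $u_*$ is of order $(n-k)^{1/2-o(1)}$. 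Running the symmetric argument at the inner boundary and multiplying the two one-sided bounds produces~\eqref{e:t:2.3-posUB}.

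The main obstacle will be making the ``approximate Gaussian bridge'' step quantitative and uniform in all parameters. The circle averages $(S_\ell)$ do not form a genuine Gaussian walk: their covariances deviate from $g(\ell\wedge\ell'-k)(n-\ell\vee\ell')/(n-k)$ by $O_\epsilon(1)$ terms coming from the discrete geometry of $\partial \rmB_\ell$ and from boundary effects in the Gibbs--Markov decomposition, and these errors must be tracked over all $n-k$ scales without degrading the $1/(n-k)$ factor. Converting these corrections into the exponent $2-\epsilon$ in~\eqref{e:t:2.3-posUB} rather than $2$, and doing so uniformly in $U,V\in\frD_\epsilon$ and in $(u,v,\eta,\zeta)$ in the prescribed ranges, is the main technical challenge; the uniformity in $U,V$ is then a consequence of the domain-monotonicity reduction carried out in the first step.
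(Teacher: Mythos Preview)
Your reduction to the one-dimensional process has a genuine gap: the circle-average statistic you use gives a constraint that is essentially vacuous. With your definition $S_\ell=\text{(circle average of }h)+m_\ell$, the implication ``$h\leq 0$ on $\partial\rmB_\ell\Rightarrow S_\ell-m_\ell\leq 0$'' is correct, but under $\bbP^{n,u}_{k,v}$ the circle average of $h$ at scale $\ell$ has mean $\approx -\wh m_\ell+\text{(linear interpolation of }u_*,v_*)$ and variance $O(\wedge_{n,\ell,k})$, so $S_\ell-m_\ell$ is typically of order $-m_\ell$, which is enormously negative. The probability that this process ever goes above $0$ is $1-o(1)$, not $1-O\big((u_*^-+1)(v_*^-+1)/(n-k)\big)$; your ``reflection-principle'' step therefore cannot produce the bound~\eqref{e:1.18}. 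In other words, you have lost the entire contribution of the local DGFF maximum: on each annulus at scale $\ell$ the field attains values of order $m_\ell$ above its mean, and it is precisely this $+m_\ell$ that turns the trivial constraint on the average into a nontrivial ballot constraint on a bridge pinned near $u_*$ and $v_*$.

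The paper repairs this by working not with circle averages but with the \emph{maximum} of $h$ on each annulus. Via the concentric (Gibbs--Markov) decomposition this maximum is expressed exactly as $\cS_p+\cD_p$, where $\cS_p$ is an approximate Gaussian bridge from $\ol u(0)$ to $\ol v(\infty)$ and the decoration $\cD_p$ absorbs both $\max h_p-m_{n-p}$ and the $\log$ correction $m_\ell-\wh m_\ell$; one gets the \emph{equivalence} $\{h_{A_p}\leq 0\}=\{\cS_p+\cD_p\leq 0\}$ (Theorem~\ref{t:drw-i}), not merely an inclusion. Even then, the one-dimensional ballot bound (Theorem~\ref{t:3.2}) only applies when one endpoint satisfies $b<-\rt^\delta$; removing this restriction requires the stitching argument of Subsection~\ref{ss:3.3}: condition on $h_{\partial\rmB^\pm_l}$ at $l=(n+k)/2$, use entropic repulsion (Proposition~\ref{l:stitch-restr}) to force $\he{h}{\partial\rmB^\pm_l}(0)+m_l\leq -\wedge_{n,l,k}^\epsilon$ with high conditional probability, and then apply the weak bound on each half. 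Your proposal does not address either the decoration or the stitching step. For the sharper bound~\eqref{e:t:2.3-posUB}, the Gaussian cost near a positive boundary is likewise extracted from the maximum (Lemma~\ref{l:bdr-dec} via the lower tail of the DGFF maximum, Lemma~\ref{l:Ding}), not from a first-step drop of the circle-average walk.
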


{\bf Remarks.} 

\begin{enumerate}
\item \label{item:r:strongUB}{\em Improved upper bound for positive boundary conditions.} In Theorem~\ref{t:2.3}, the upper bound~\eqref{e:t:2.3-posUB} is stronger than~\eqref{e:1.18} when positive and large values are assumed on either the outer or the inner (or both) ``part'' of the boundary of the underlying domain. The almost-Gaussian decay is the cost of a shift down of the field close to the corresponding part of the boundary, to make room for its maximum there to still be negative, despite the positive boundary conditions. We take this cost only into account when the boundary conditions on the opposite scale are not too low, as otherwise the field may be sufficiently tilted down for this cost to be lower.

\item {\em Discretization.} 
One can of course choose different methods for discretizations than the one we used in~\eqref{e:discr}. In particular, one can replace $\rme^{-\ell}/2$ with any multiple, greater or equal than $1/2$, of $\rme^{-\ell}$ (as was done in~\cite{BL-ele, BL-Conf, BL-Full}) with all the statements still holding.
The choice of $1/2$ ensures that all connected components of the boundary of $W \in \cD$ appear (scaled) also in $W_\ell$ for large $\ell$ and at the same time allows, by choosing $\ell=0$, to have discrete domains in which only a few isolated points are excluded from the bulk, as for instance $(\rmB(0,\frac12))_0=\{0\}$.
More generally, as in~\cite{BL-Full} one can take as $W_\ell$ any set satisfying
\begin{equation}
\big\{x \in \bbZ^2 :\: \rmd(\rme^{-\ell}x, W^\rmc) > \lambda_\ell \} 
\subset 
W_\ell \subset \big\{x \in \bbZ^2 :\: \rmd(\rme^{-\ell}x, W^\rmc) > \rme^{-\ell}/2\big\} \,,
\end{equation}
for any fixed sequence $(\lambda_\ell)$ satisfying $\lim_{\ell \to \infty} \lambda_\ell = 0$ and $\lambda_\ell \geq \rme^{-\ell}/2$. All the statements in the present subsection still hold and uniformly with respect to the particular choice of the discretized versions of $U$ and $V^-$, but one must then restrict oneself to the case $\eta > 0$.
The continuity of the functionals $\cL_n$ and $\cR_k$ in the underlying domain, which is uniform in $n$ and $k$ respectively, can be also used to compare different discretization methods.

\item {\em Continuity in the underlying domain.} 
Continuity in the underlying domain is handled in Proposition~\ref{p:1-6}.
The $n^{-1}$ and $k^{-1}$ terms in~\eqref{e:p:1-6-L} and~\eqref{e:p:1-6-R} are the result of  the discretization. Indeed, two continuous domains can be arbitrarily close to each other, with their respective scaled-up discretizations differing by at least one point. These terms can an be left out if $U=U'$ or $V=V'$, respectively. 

The distance $\dH$, as defined above the proposition, can be reformulated as 
\begin{equation}
\dH(W,W') = \inf \big\{\eta > 0 :\: W^\eta \subset W' \text{ and } W'^\eta \subset W \big\} \,.
\end{equation}
The condition in the infimum guarantees that the Green Functions: $G_W$ and $G_{W'}$ are close to one another in the bulk of $W^\eta \cap W'^\eta \subset W \cap W' $, which in turn ensures that the corresponding DGFFs are stochastically not too far apart on this set. This shows that the topology induced by $\dH$ is a rather natural choice for the question of continuity in $U$ and $V^-$ of the functionals $\cL_n$ and $\cR_k$, resp. 

We remark, that using the Hausdorff distance for the set itself and not its complement would not have been a good choice. Even if we ignore the fact that the Hausdorff function is only a metric when one takes compact sets, using it to measure distances would have rendered the distance between a disk and a disk with a slit to be zero. This is clearly not desirable as the DGFFs on the discretized scale-ups of these two domains are very different from each other and therefore so are the corresponding $\cL_n$ or $\cR_k$ functionals.
\end{enumerate}

\subsection{Proof outline}
\label{ss:ProofOutline}
Let us explain the strategy behind the proof of the ballot asymptotics (Theorem~\ref{t:1}) and
ballot upper bound (first part of Theorem~\ref{t:2.3}), which constitute the main contribution of this manuscript. To study the ballot probability (and associated expectations such as those appearing in the definition of $\cL_n$ and $\cR_k$) we recast the event
\begin{equation}
\label{e:1.26}
\Big\{ h_{U_n^\eta \cap V_k^{-,\zeta}} \leq 0 \Big\}
\end{equation}
as 
\begin{equation}
\label{e:1.27}
\bigcap_{\ell=1}^T \big\{ \cS_\ell + \cD_\ell \leq 0 \big\}
\end{equation}
for some processes $(\cS_\ell)_{\ell=0}^T, (\cD_\ell)_{\ell=1}^T$ which are defined in terms of the field $h$ and some additional independent randomness. We show that under $\bbP(\cdot\mid h_{\partial U_n} = u, h_{\partial V_k^-} = v)$, for $u$,$v$ satisfying the conditions in Theorem~\ref{t:1}, these processes satisfy the following three assumptions:
\begin{enumerate}[label={A\arabic{*}}.,ref=A\arabic{*}]
\item\label{i:o1}  $(\cS_\ell)_{\ell=0}^T$ is a non-homogeneous random walk with Gaussian steps of $O(1)$ variance, conditioned to start from $\ol{u}(0)$ at time $0$ and end at $\ol{v}(\infty)$ at time $T$. Moreover $T = n-k +O(1)$.
\item\label{i:o2} For all $\ell$, given $\cS_\ell$, the random variables $(\cS_{\ell'}, \cD_{\ell'})_{\ell'\leq \ell}$ are conditionally independent of $(\cS_{\ell'})_{\ell' > \ell}$.
\item\label{i:o3} $(\cD_\ell)_{\ell=1}^T$ are stretched exponentially tight around their mean and satisfy $|\bbE \cD_\ell| = O(\min (\ell, T-\ell)^{1/2-\epsilon})$ for all $\ell$ and some $\epsilon > 0$.
\end{enumerate}
We shall refer to the $\cD_\ell$-s as {\em decorations} and to the pair of processes $((\cS_\ell)_{\ell=0}^T, (\cD_\ell)_{\ell=1}^T)$, as a {\em decorated (non-homogeneous) random walk} (DRW).

To carry out the reduction to the DRW, we use a new version of the {\em concentric decomposition}, which was originally introduced in~\cite{BL-Full}, that we call {\em inward concentric decomposition}. Assuming for illustrative purposes
that $U=V=\rmB$ and that $\eta=\zeta=0$, the idea is to condition on the values of the field at the boundaries of (discrete) balls of radii $\rme^{n-\ell}$ for $\ell = 1, 2, \dots, n-k-1$. Thanks to the Gibbs-Markov property of the DGFF, the field $h^{\rmB_n \cap \rmB_k^-}$ (with zero boundary conditions) can then be written as a sum of a {\em binding field} $\varphi$, namely the conditional expectation of $h^{\rmB_n \cap \rmB_k^-}$ given its values on $\cup_{\ell=1}^{n-k-1} \partial \rmB^\pm_{n-\ell}$, plus independent DGFFs $(h^{A_\ell})_{\ell=1}^{n-k}$ where $A_\ell$ is the annulus $\rmB_{n-\ell+1} \cap \rmB^-_{n-\ell}$.

Performing the conditioning in successive order from $\ell=1$ to $n-k-1$, we construct a process $(S'_\ell)_{\ell=1}^{n-k}$ as a function of $\varphi$ plus some additional independent randomness. This process is defined so that it has the law of a random walk with Gaussian steps, conditioned to start and end at $0$. More importantly, $S'_\ell$ approximates $\varphi$ in the bulk of $A_\ell$, up to an additive error which has a uniform stretched exponential tail. If instead of zero, one takes boundary conditions $-m_n+u$ on $\partial \rmB_n$ and $-m_k+v$ on $\partial \rmB^-_k$, then the added mean
$\ol{(-m_n1_{\partial \rmB_n} + u -m_k1_{\partial \rmB_k^-}+v)_{\partial \rmB_n\cup \partial \rmB^-_k}}$ is equal on $A_\ell$ to
\begin{multline}
\frac{(n-\ell)(-m_n + \ol{u}(0)) + (\ell-k)(-m_k + \ol{v}(\infty))}{n-k} + O(1) =  \\
-m_\ell + \frac{(n-\ell) \ol{u}(0) + (\ell-k) \ol{v}(\infty)}{n-k}  + O(\log^+((\ell-k)\wedge(n-\ell))) \,,
\end{multline}
whenever the conditions of Theorem~\ref{t:1} are assumed.
This follows from the fact that for a well-behaved harmonic function $f$ on $\rmB_n\cap \rmB^-_k$, $\log|x|\to f(x)$ is approximately linear.

Combining the above and using also the stretched exponential tightness of
the maximum of the DGFF around its mean,
it follows that under $\bbP(\cdot|h_{\partial \rmB_n} = u, h_{\partial \rmB_k^-} = v)$ the event in~\eqref{e:1.26} can be written as
\begin{multline}
\bigcap_{\ell=1}^{n-k} \Big\{ \max_{A_\ell} \big(\varphi + 
\ol{(-m_n1_{\partial \rmB_n} + u -m_k1_{\partial \rmB_k^-}+v)_{\partial \rmB_n\cup \partial \rmB^-_k}}
 + h_\ell\big) \leq 0 \Big\} = \\
\bigcap_{\ell=1}^{n-k} \Big\{ S'_\ell + \frac{(n-\ell) u(0) + (\ell-k) \ol{v}(\infty)}{n-k} + \cD_\ell \leq 0 \Big\}\,,
\end{multline}
where $(\cD_\ell)_{\ell=1}^T$ are stretched exponentially tight after centering by $O(\log^+((\ell-k)\wedge(n-\ell)))$. Setting $\cS_\ell :=  S'_\ell + \frac{(n-\ell) u(0) + (\ell-k) \ol{v}(\infty)}{n-k}$, we then obtain that the pair $((\cS_\ell)_{\ell=0}^T, (\cD_\ell)_{\ell=1}^T)$ satisfies Assumptions~\eqref{i:o1} --~\eqref{i:o3} above
and that the equality between events~\eqref{e:1.26} and~\eqref{e:1.27} holds.

This converts the original task to that of deriving asymptotics and an upper bound for the probability of the DRW ballot event~\eqref{e:1.27}. To this end, we can use the tightness of the decorations to reduce the problem to that in which the decoration process $(\cD_\ell)_{\ell=1}^T$ is replaced by a deterministic curve or {\em barrier}: $(\gamma_\ell)_{\ell=1}^T$, with $\gamma_\ell = O(\min (\ell, T-\ell)^{1/2-\epsilon})$ for some $\epsilon > 0$.
Upper and lower bounds for such barrier probabilities are then (essentially) readily available (for example in the work of Bramson~\cite{B_C} on barrier estimates for BM, from which such results can be derived, or~\cite{cortines2019decorated}). These in turn are sufficient to derived the desired estimates.

A difficulty is that because of the dependence between the walk $(\cS_\ell)_{\ell=0}^T$ and the decorations $(\cD_\ell)_{\ell=1}^T$, as captured by Assumption~\ref{i:o2}, the reduction to such a deterministic barrier problem can only be done when the endpoint of the walk $\cS_T$ is sufficiently low, that is if $\ol{v}(\infty) < -T^{\epsilon}$ for some $\epsilon > 0$. In other words, using the reduction we can only prove {\em weak versions} of Theorem~\ref{t:1} and (the first part of) Theorem~\ref{t:2.3}, where one imposes the additional constraint that $\ol{v}(\infty) < -(n-k)^{\epsilon}$.

To remove this constraint we proceed as follows. First, we  define a new DRW $(\cS^{\rm o}_\ell)_{\ell=0}^T, (\cD^{\rm o}_\ell)_{\ell=1}^T$ which (under the conditions of Theorem~\ref{t:1}) satisfies Assumptions~\ref{i:o1} --~\ref{i:o3} as before, only that the starting and end points are now reversed: $\cS^{\rm o}_0 = \ol{v}(\infty)$ and $\cS^{\rm o}_T = \ol{u}(0)$, and with the equivalence of events~\eqref{e:1.26} and~\eqref{e:1.27} (with $\cS^{\rm o}_\ell$, $\cD^{\rm o}_\ell$ in place of $\cS_\ell$ and $\cD_\ell$) still holding. This is done by employing an {\em outward concentric decomposition}, whereby the order of conditioning is reversed, such that $\cS_\ell + \cD_\ell$ now corresponds to the value of the field on the annulus
$\rmB_{k+\ell}\cap \rmB_{k+\ell-1}^-$. Proceeding as before, this gives weak version of Theorems~\ref{t:1} and Theorem~\ref{t:2.3} (first part), where now the additional assumption is that $\ol{u}(0) < -(n-k)^{\epsilon}$.

Now, given $U_n$ and $V^-_k$, we pick an intermediate scale, say, $\ell := (n+k)/2$ and condition on the values of $h$ on the $\partial \rmB_\ell^\pm$. Thanks to the Gibbs-Markov property, whenever $n-k$ is large, we can write $\bbP(h_{U_n^\eta \cap V_k^{-,\zeta}} \leq 0 | h_{\partial U_n} = -m_n+u, h_{\partial V_k^-} = -m_k+v)$ as 
\begin{multline}
\int_{w \leq m_\ell} 
\bbP \big(h_{U_n^\eta \cap \rmB_\ell^-} \leq 0 \,\big|\, h_{\partial U_n} = -m_n+u, h_{\partial \rmB_\ell^-} = -m_\ell+w) \\
\bbP \big(h_{\rmB_\ell \cap V_k^{-,\zeta}} \leq 0 \,\big|\, h_{\partial \rmB_\ell} = -m_\ell+w, h_{\partial V_k^{-,\zeta}} = -m_k+v) \\
\bbP \big( h_{\partial \rmB^\pm_\ell} + m_\ell \in \rmd w \,\big|\, h_{\partial U_n} = -m_n+u, h_{\partial V_k^-} = -m_k+v \big)\,.
\end{multline}

Observing that the first two terms in the integrand are precisely ballot probabilities of the form treated in this paper, we can use the weak ballot upper bound together with discrete harmonic analysis to show that for any $\delta > 0$, the integral can be restricted to $w \in \bbR^{\partial\rmB_\ell^\pm}$ satisfying $\ol{w}(0)^- \in [(n-\ell)^{-\epsilon}, (n-\ell)^{1-\epsilon}]$, $\ol{w}(\infty)^- \in [(\ell-k)^{\epsilon}, (\ell-k)^{1-\epsilon}]$ and $\osc_{\rmB_\ell^{\pm, \delta} \cup \{\infty\}} \ol{w} = O(1)$ at the cost of a negligible error. The fact that $\ol{h_{\partial \rmB^\pm_\ell}} \leq -(n-\ell)^{\epsilon} = -(\ell-k)^{\epsilon} = -((n+k)/2)^{-\epsilon}$ in the bulk with high probability conditional on~\eqref{e:1.26} is the DGFF analog of the entropic repulsion of a random walk bridge conditioned to stay negative.

We can now apply the weak version of Theorem~\ref{t:1} to estimate the first two terms in the integrand (first with $U^\eta_n\cap\rmB^{-,\delta}_l$ and $\rmB^\delta_l\cap V^{-,\zeta}_k$ in place of $U^\eta_n\cap \rmB^-_l$ and $\rmB_l\cap V^{-,\zeta}_k$, respectively, and then taking $\delta \to 0$). This together with known asymptotics for the third term in the integrand is sufficient to obtain the full version of Theorem~\ref{t:1}. The method, which we refer to as {\em stitching} (because we ``stitch'' the two ballot estimates together), can be used in a similar way to obtain the first part of Theorem~\ref{t:2.3} as well.

\subsection*{Paper outline}
The remainder of the paper is organized as follows. In Section~\ref{s:weak} we show how to reduce the DGFF ballot event to a corresponding ballot event involving the DRW. Two different DRWs are defined, using both the inward and the outward decompositions. This reduction is then used in Section~\ref{s:proof-DGFF}, in conjunction with ballot theory for the DRW, to derive a weak upper bound on the DGFF ballot probability, which is then used to prove Theorem~\ref{t:2.3} via stitching. Section~\ref{s:cont-as} includes the proof of Theorem~\ref{t:1}. As in the upper bound, the proof is based on weak statements for the asymptotics of the DGFF ballot probability, which are stated in this section and used, but not proved. The proofs of these weak statements together with the proofs of all the remaining main results are given in Section~\ref{s:LR-funct}. 

This manuscript has three appendices. Appendix~\ref{s:app-gen} includes statements concerning extreme value theory for the DGFF as well as general discrete harmonic analysis results. Appendix~\ref{s:tools} includes estimates for the harmonic extension of DGFF on a subset of its domain. Appendix~\ref{s:3} includes ballot theory for the DRW. While needed for the proofs in the paper, all of these results are derived using standard arguments and computations, and as such found their place in an appendix. We remark that the results in Appendix~\ref{s:tools} can be of independent use.

\section{Reduction to a decorated random walk ballot event}
\label{s:weak}
In this section we show how to express the ballot event on the left hand side of~\eqref{e:2.4ub} in terms of a corresponding ballot event for a suitable one-dimensional non-homogeneous random walk with decorations (DRW). The latter event can then be handled by generalized versions of the usual ballot estimates for random walks, which are given in Appendix~\ref{s:3} and used in the sections to follow to prove the main results in this manuscript.

The section begins with some general notation which will be used throughout the paper (Subsection~\ref{s:bno}). We then formally define the DRW process (Subsection~\ref{ss:DRW}) and introduce the inward concentric decomposition (Subsection~\ref{s:concdec}). The decomposition is then used to perform the reduction from the DGFF ballot event to the DRW ballot event (Subsection~\ref{s:def-DRW}). The reduction is summarized in Theorem~\ref{t:drw-i}. The proof of this theorem, in which the conditions of the DRW are verified, comes next (Subsection~\ref{s:concdec-ver}). Finally (Subsection~\ref{s:w-outw}), we introduce another reduction to a DRW, which is based on an outward concentric decomposition and results in a DRW of which the starting and end point are reversed. The reduction is summarized in Theorem~\ref{t:drw-o}. As the construction and proofs are analogous to the inward case, we only outline the required changes.

\subsection{General notation}
\label{s:bno}

In this subsection we introduce some notation and make some observations that will be used throughout the remaining part of the paper.

We define the collections of domains
\begin{equation}
\label{e:def-UV}
\begin{split}
\frU_\eps^\eta&=\{U\in\frD_\eps\,:\: U\text{ is connected, }\eps\rmB\subset U^\eta\}\\
\frV_\eps&=\{V\in\frD_\eps\,:\: V^-\text{ is connected}\}
\end{split}
\end{equation}
for $\eps\in(0,1)$, $\eta\geq 0$.
These definitions and~\eqref{e:1.5} imply the useful properties that
\begin{equation}
\label{e:UBBV}
\begin{split}
U^\eta_n&\supset \rmB_l
\quad\text{whenever }\, U\in\frU^\eta_\eps\,,\, n-l>-\log\eps\,,\\
V^{-,\zeta}_k &\supset \rmB^-_l
\quad\text{whenever }\, V\in\frV_\eps\,,\, l-k>\log(\zeta+\eps^{-1})\,.
\end{split}
\end{equation}
We also note that in all statements in the paper, the assumptions only become more general when $\eps$ is chosen smaller (at the expense of less sharp constants and rates of convergence).

We will use the shorthand notation
\begin{equation}
\bbP\Big(\cdot\,\Big|\, h_{\partial U_n}=u, h_{\partial V^-_k}=v\Big)
=\bbP_{V,k,v}^{U,n,u} = \bbP_{k,v}^{n,u}\,.
\end{equation}
The expectation, variance and covariance associated with this probability measure will also be denoted by $\bbE_{V,k,v}^{U,n,u}$ or $\bbE_{k,v}^{n,u}$, $\Var_k^n$ and $\Cov_k^n$, respectively.

The following property of our discretized domains will be useful:
\begin{rem}
\label{r:bd-pm}
We note that
\begin{equation}
\label{e:bd-pm}
\partial W^{\pm}_l=\partial W_l\cup \partial W^-_l
\quad\text{for all }l\geq 0\text{ and }W\in\frD\,.
\end{equation}
To show~\eqref{e:bd-pm}, it suffices by the definition of the outer boundary to show that $W_l\cap\partial W^-_l=\emptyset$ and $W^-_l\cap\partial W_l=\emptyset$.
The first statement follows as $x\in W_l$ implies by~\eqref{e:discr} that $\rmd(\rme^{-l}x,z)>\rme^{-l}/2$ for all $z\in\partial W$, hence $\rmd(\rme^{-l}y,\rme^{-l}x)>\rme^{-l}$ for all $y\in W^-_l$.
The second statement follows analogously.
As a consequence of~\eqref{e:bd-pm} and the definition of the outer boundary, $\partial W^{\pm}_l\setminus \partial W_l$ is not path-connected to $W_l$. Hence, specifying boundary conditions on $\partial W^{\pm}_l$ instead of $\partial W_l$ does not alter the DGFF on $W_l$:
\begin{equation}
\bbP\big(h\in\cdot\,\big|\, h_{\partial W_l}=w_{\partial W_l}\big)
=\bbP\big(h\in\cdot\,\big|\, h_{\partial W^\pm_l}=w\big)
\quad\text{on }W_l
\end{equation}
for any $w\in\bbR^{\partial W^\pm_l}$.
Analogously, specifying boundary conditions on $\partial W^{\pm}_l$ instead of $\partial W^-_l$ does not alter the DGFF on $U_n\cap W^-_l$.
\end{rem}

Let $(S_i)_{i=0}^\infty$ under a probability measure $P_x$ with associated expectation $E_x$ denote simple random walk on $\bbZ^2$ started in $x\in\bbZ^2$. For $A\subsetneq \bbZ^2$, we denote the first exit time of $S$ from $A$ by
$\tau^A=\inf\{i\in\bbN_0:\: S_i\notin A\}$, and the discrete Poisson kernel on $A$ by $\Pi_A(x,\cdot)=P_x(S_{\tau^A}\in\cdot)$. We recall that $G_A(x,y)=\sum_{i=0}^{\infty} P_x(S_i=y,i<\tau^A)$, $x,y\in\bbZ^2$ denotes the Green kernel of $S$ on $A$, and $\fra(x)=\sum_{i=0}^{\infty}\big(P_x(S_i=x)-P_0(S_i=x)\big)$ the potential kernel of $S$ on $\bbZ^2$.

By Theorem~4.4.4 of~\cite{LaLi}, there exists a constant $c_0>0$ such that
\begin{equation}
\label{e:444}
\fra(x)=g\log|x|+c_0+O\left(|x|^{-2}\right)\,,\qquad x\in\bbZ^2\,.
\end{equation}
We reserve the notation $c_0$ for this constant throughout the paper.
By Proposition~4.6.2 of~\cite{LaLi}, for finite $A$,
\begin{equation}
\label{e:462}
G_A(x,y)=\sum_{z\in\partial A}\Pi_A(x,y)\fra(y-z)-\fra(x-y)\,.
\end{equation}
We will often use the representation
\begin{equation}
\he{f}{A}(x)=\sum_{z\in \partial A^\rmc}f(z)P_x\big(S_{\tau^{A^\rmc}}=z\big)\,,\qquad x\in\bbZ^2
\end{equation}
for the bounded harmonic extension of a function $f:A\to\bbR$, for $A\subset \bbZ^2$ finite and non-empty.

\subsection{The decorated non-homogeneous random walk process} 
\label{ss:DRW}
We now formally define the DRW process. Let $\rt \in \bbN \cup \{\infty\}$, $a,b \in \bbR$ and $(\sigma_k^2 :\: k=1,\dots, \rt)$ be a sequence of positive real numbers. Set $s_k := \sum_{\ell=1}^k \sigma_{\ell}^2$ and consider the collections of random variables $(\cS_k)_{k=0}^\rt$ and $(\cD_k)_{k=1}^\rt$ which we assume to be defined on the same probability space, equipped with a probability measure to be denoted by $\bfP$ (with expectation, covariance and variance given by $\bfE$, $\bfCov$ and $\bfVar$).

We shall impose the following three assumptions which depend on a parameter $\delta > 0$ (this parameter will usually determine the value of constants and rates of convergence in the theorems we prove for this walk). 
\begin{enumerate}[label=({\bf A\arabic{*}}),ref=(A\arabic{*})]
\item 
\label{i.a1}
The process $(\cS_k)_{k=0}^\rt$ is Gaussian with means and covariances given by
\begin{equation}
\bfE \cS_k = \frac{b s_k + a(s_\rt-s_k)}{s_\rt} \ , \quad
\bfCov (\cS_k, \cS_m) = \frac{s_{k}(s_\rt - s_m)}{s_\rt} 
\quad : \quad 0 \leq k \leq m \leq \rt \,.
\end{equation}
Moreover, $\sigma_k^2 \in (\delta, \delta^{-1})$ for all $k=1,\ldots,\rt$.
\item 
\label{i.a2}
For all $m=1, \dots, \rt -1$, given $\cS_m$ the two collections below are conditionally independent of each other:
\begin{equation}
\label{e:1.1}
(\cS_k, \cD_k :\: k = 1, \dots, m) \quad, \qquad (\cS_\ell :\: \ell = m+1, \dots, \rt) \,.
\end{equation}
\item 
\label{i.a3}
For all $k=1, \dots, \rt$, $t > 0$,
\begin{equation}
\label{e:5.1}	
\bfP \big(|\cD_k| > \delta^{-1} \wedge^{1/2-\delta}_{\rt,k} + t
\big) \leq \delta^{-1} \rme^{-t^\delta}\,.
\end{equation}
\end{enumerate}
If $\rt = \infty$, we take $a$ and $s_k$ to be the meaning of the respective right hand sides in Assumption~\ref{i.a1}. The reader will recognize that the law of $(\cS_k)_{k=0}^\rt$ under $\bfP$ is that of a random walk whose $k$-th step is $\cN(0, \sigma_k^2)$, starting from $a$ at time $0$ and conditioned to be at $b$ at time $\rt$, if $\rt < \infty$, or otherwise unrestricted (in which case $b$ is irrelevant). We shall sometimes explicitly indicate the boundary conditions of such walk by writing $\bfP_{0,a}^{\rt,b}$ in place of $\bfP$ (and using a similar convention for variances and covariances). Notice that the $\cD_k$-s, which we refer to as ``decorations'', may indeed depend on the walk, as long as they satisfy the Markovian-type structure given in Assumption~\ref{i.a2}. We shall refer to the pair $\big((\cS_k)_{k=0}^\rt, (\cD_k)_{k=1}^\rt\big)$ as a DRW process.

\subsection{Inward concentric decomposition}
\label{s:concdec}
Next, we introduce the inward concentric decomposition.
For $\eps\in(0,1)$, $\eta,\zeta\geq 0$, $0\leq k<n$, and $U\in\frU_\eps^\eta$, $V\in\frV_\eps$,
we now define two collections of concentric discretized ball-like sets and their complements, using which we will construct collections of annulus-like sets that we then pack into the domain $U_n\cap V^-_k$.
Throughout the paper, we always define $\rt_{n-k}$ as a function of $n-k$ by
\begin{equation}
\label{e:def-r}
\rt_{n-k} =\lfloor n-k\rfloor + \lfloor\log\eps\rfloor - \lceil\log(\zeta +\eps^{-1})\rceil\,,
\end{equation}
and we often abbreviate $\rt=\rt_{n-k}$.
We assume that $\rt_{n-k}\geq 1$, then~\eqref{e:UBBV} ensures that the concentric ball-like sets
\begin{multline}
\Delta_{-1}=U_n\,,\quad \Delta_0= U^\eta_n\,,\quad
\Delta_p = \rmB_{n+\lfloor \log\eps\rfloor-p}\quad\text{for }p=1,\ldots,\rt -1\,,\quad
\Delta_\rt = (V^{-,\zeta}_k)^\rmc\,,\\
\Delta_{\rt+1}=(V^-_k)^\rmc
\end{multline}
are nested, $\Delta_{-1}\supset \ldots \supset \Delta_{\rt+1}$.
Furthermore, we define the sets
\begin{equation}
\label{e:Delta'}
\Delta'_{-1}=(U_n)^\rmc\,,\quad \Delta'_0= U^{\eta,-}_n\,,\quad
\Delta'_p = \rmB^-_{n+\lfloor \log\eps\rfloor-p}\quad\text{for }p=1,\ldots,\rt -1\,,\quad
\Delta'_\rt = V^{-,\zeta}_k
\end{equation}
For $p=0,\ldots,\rt$, let $h_p$ be a DGFF on $\rA_p:=\Delta'_p\cap\Delta_{p-1}$ with boundary values zero.
Moreover, we define $J_p:=\rA_p\cup(\Delta_p\cap V^-_k)$ for $p=0,\ldots,\rt-1$, $J_{\rt}=\rA_\rt$, and let $\varphi_{p}$ be distributed as $\heb{h'_p}{\bbZ^2 \setminus J_p}$ where $h'_p$ is a DGFF on $\Delta_{p-1}\cap V^-_k$ with boundary values $0$ (and, according to our notational conventions, equal to zero on $\bbZ^2\setminus(\Delta_{p-1}\cap V^-_k)$).
We note that $\heb{h'_p}{\bbZ^2\setminus J_p}$ is the binding field from $\Delta_{p-1}\cap V^-_k$ to $J_p$, as defined in the Gibbs-Markov decomposition (Lemma~\ref{l:GM}).
We assume that the random fields $\varphi_0,\ldots,\varphi_{\rt},h_0,\ldots, h_{\rt}$ are independent.

We will also use the notation $\varphi_{p,q}:=\sum_{j=p}^q\varphi_j$ where $0\leq p\leq q\leq \rt$. For $p<q$, we set $\varphi_{p,q}=0$.
Note that for $q\leq \rt-1$, $\varphi_{p,q}$ is distributed as
$\varphi^{\Delta_{p-1}\cap V^-_k,\rA_p\cup\ldots\rA_q\cup \Delta_q\cap V^-_k}$,
and that $\varphi_{p,\rt}$ is distributed as
$\varphi^{\Delta_{p-1}\cap V^-_k,\rA_p\cup\ldots\rA_\rt\cap V^-_k}$,
which can be seen by applying the Gibbs-Markov property (Lemma~\ref{l:GM}) successively to the subsets $\rA_p,\ldots,\rA_q$.

In the next proposition, we decompose the DGFF on $U_n\cap V^-_k$ on each of the annuli $A_p:=\Delta_{p-1}\setminus\Delta_{p}$ with $p=0,\ldots,\rt+1$ in terms of the fields $\varphi_0,\ldots,\varphi_{p\wedge\rt}$ and $h_p1_{q\leq\rt}$. We note that $A_p\supset \rA_p$ for $p=0,\ldots,\rt$ and that $(A_p)_{p=0}^{\rt +1}$ forms a disjoint covering of $U_n\cap V^-_k$.
\begin{prop}[Inward concentric decomposition]
\label{p:concdec-in}
Assume that $\rt=\rt_{n-k}\geq 1$. There exists a coupling of $h^{U_n\cap V^-_k}$ and $(\varphi_p,h_p)_{p=0}^{\rt}$ such that
\begin{equation}
\label{e:concdec-in}
h^{U_n\cap V^-_k}(x)=\sum_{p=0}^{\rt}\big(\varphi_p(x)+h_p(x)\big)=\sum_{p=0}^{q\wedge\rt}\varphi_p(x)+h_q(x)1_{q\leq\rt}
\end{equation}
for $x\in A_{q}$, $q=0,\ldots,\rt +1$.
\end{prop}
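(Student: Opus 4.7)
The plan is to iterate the Gibbs--Markov property (Lemma~\ref{l:GM}) so as to peel off the annular shells $\rA_0,\rA_1,\ldots,\rA_\rt$ from the outside inward. Writing $E_p := \Delta_p \cap V^-_k$, so that $E_{-1} = U_n \cap V^-_k$ and the $E_p$ are nested, I would inductively establish at each level $p \le \rt-1$ the splitting
\begin{equation*}
h^{E_{p-1}} = \varphi_p + h_p + h^{E_p},
\end{equation*}
with the three summands mutually independent and with the marginal laws declared in the proposition, together with the terminal identity $h^{E_{\rt-1}} = \varphi_\rt + h_\rt$ (no residual).

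The mechanism at an intermediate level proceeds in two steps. First, Lemma~\ref{l:GM} applied to the sub-domain $E_{p-1}\cap J_p \subseteq E_{p-1}$ will give $h^{E_{p-1}} = \varphi_p + h^{E_{p-1}\cap J_p}$; the binding field is identified with $\heb{h'_p}{\bbZ^2 \setminus J_p}$ because $h'_p$ vanishes outside $E_{p-1}$, so replacing $E_{p-1}\setminus J_p$ by the larger $\bbZ^2\setminus J_p$ as the source of boundary data leaves the harmonic function on $E_{p-1}\cap J_p$ unchanged. Second, a direct set computation will yield
\begin{equation*}
E_{p-1}\cap J_p = (\rA_p \cap V^-_k) \sqcup E_p,
\end{equation*}
and these two components will be non-adjacent in the $\bbZ^2$ graph since by the discretization~\eqref{e:discr} the sets $\Delta_p$ and $\Delta'_p$ are separated by at least one lattice step. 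Consequently the Green function on $E_{p-1}\cap J_p$ vanishes between the components, and the DGFF there factors as an independent sum of DGFFs on $\rA_p\cap V^-_k$ and on $E_p$. The standing setup~\eqref{e:def-UV}--\eqref{e:UBBV}, together with the choice~\eqref{e:def-r} of $\rt$, will guarantee $\rA_p \subseteq V^-_k$, so the first piece acquires the law of $h_p$ and the second initiates the next iteration. At the terminal level, the modification $J_\rt = \rA_\rt$ forces $E_{\rt-1}\cap J_\rt = \rA_\rt$, and the single Gibbs--Markov step delivers $h^{E_{\rt-1}} = \varphi_\rt + h_\rt$ directly, with no residual.

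Assembling the telescoping identities over $p = 0,\ldots,\rt$ will give the global identity $h^{U_n\cap V^-_k} = \sum_{p=0}^\rt (\varphi_p + h_p)$. The simplified formula on $x \in A_q$ then follows from two support observations: $h_p \equiv 0$ off $\rA_p \subseteq A_p$ and the $A_q$'s are pairwise disjoint, so only the $q$-th term can survive and only when $q \le \rt$; and for $p > q$ with $x \in A_q$, nestedness of the $\Delta$'s gives $x \notin \Delta_{p-1} \supseteq J_p$, whence $h'_p(x) = 0$ and so $\varphi_p(x) = h'_p(x) = 0$. The main technical obstacle I anticipate is the identification of $\heb{h'_p}{\bbZ^2 \setminus J_p}$ with the Gibbs--Markov binding field on the pair $(E_{p-1},\,E_{p-1}\cap J_p)$, which will require a careful boundary-value matching that exploits the zero-extension convention for $h'_p$ outside $E_{p-1}$.
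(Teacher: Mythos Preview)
Your proposal is correct and follows essentially the same approach as the paper: iterated application of the Gibbs--Markov property, peeling off the shells $\rA_0,\rA_1,\ldots,\rA_\rt$ from the outside inward. The paper's own proof is extremely terse (two sentences), while you spell out the same mechanism in detail.

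One small simplification: once you have established $\rA_p\subseteq V^-_k$ (which does hold for all $p=0,\ldots,\rt$; for $p=0$ it follows from the nesting $\Delta_0\supset\Delta_{\rt+1}=(V^-_k)^\rmc$ together with $U^{\eta,-}_n\cap U^\eta_n=\emptyset$ as in Remark~\ref{r:bd-pm}), you get $J_p\subset E_{p-1}$ and hence $E_{p-1}\cap J_p=J_p$. The ``main technical obstacle'' you anticipate---identifying $\heb{h'_p}{\bbZ^2\setminus J_p}$ with the Gibbs--Markov binding field---then becomes immediate, since the paper \emph{defines} $\varphi_p$ as exactly that binding field (see the sentence after the definition of $J_p$). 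So the boundary-value matching you worry about is subsumed by the inclusion $J_p\subset E_{p-1}$.
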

Note that the binding field $\varphi_p$ is zero outside $\Delta_{p-1}\cap V^-_k$, and $h_p$ is zero outside $A_p$.
Also, if $\eta=0$, then $A_0=\emptyset$ and $\varphi_0=0$.
\begin{proof}[Proof of Proposition~\ref{p:concdec-in}]
By the Gibbs-Markov property (Lemma~\ref{l:GM}), $h^{U_n\cap V^-_k}$ is distributed as $\varphi_0+h'_0$ where $h'_0$ is a DGFF on $\rA_0$ with boundary values zero that is independent of $\varphi_0$. The assertion follows by iterating this step, next applying the Gibbs-Markov property to $h'_0$ and $\rA_1$ and so on.
\end{proof}

\subsection{The reduction}
\label{s:def-DRW}
We now provide the correspondence between the DGFF and the decorated random walk bridge, based on the inward concentric decomposition. An analogous correspondence based on the outward concentric decomposition is given in Subsection~\ref{s:w-outw}.

In Subsections~\ref{s:def-DRW} and~\ref{s:concdec-ver}, we assume $n-k$ to be sufficiently large such that $\rt=\rt_{n-k}\geq 1$, and that $h$ under $\bbP_{k,v}^{n,u}$ is coupled to $(\varphi_p,h_p)_{p=1}^{\rt}$ from Proposition~\ref{p:concdec-in} such that
\begin{equation}
\label{e:hphih}
h=\sum_{p=0}^{\rt}\big(\varphi_p+h_p\big)+
\he{(-m_n\Ind_{\partial U_n}+u -m_k\Ind_{\partial V^-_k} + v)}{\partial U_n\cup\partial V^-_k}\,.
\end{equation}

In each of the sets $A_p$ for $p=1,\ldots, \rt$, we want to approximate $h$ by the position of a random walk bridge. To this aim, we define the harmonic average
\begin{equation}
\label{e:Xm}
\cX_p=\sum_{z\in\partial \Delta_p}\varphi_p(z)\Pi_{\Delta_p}(0,z)
=\heb{\varphi_p}{\partial \Delta_p}(0)
=\heb{h'_p}{\partial \Delta_p}(0)
\end{equation}
and denote its variance by
$\sigma_p^2:=\Var \cX_p$. We write $s_{p,q}=\sum_{i=p}^q \sigma_i^2$. Now we couple $(\cX_1,\ldots,\cX_\rt)$ with the process
\begin{equation}
\label{e:deftS}
\wt{\cS}_t:=(s_{1,\rt}-t)\int_0^t\frac{\rmd \cW_\tau}{s_{1,\rt}-\tau}\,,\quad t\in[0,s_{1,\rt}]
\end{equation}
by choosing the Brownian motion $(\cW_\tau,\tau\in [0,s_{1,\rt}])$ as follows:
we define $\cW_0=0$, $\cW_{s_{1,p}}= \cX_1 +\ldots+ \cX_p$ for $p=1,\ldots,\rt$, and
for $\tau\in[s_{1,p-1},s_{1,p}]$, we let
\begin{equation}
\label{e:defW}
\cW_\tau=\cX_{p-1}+\tfrac{\tau-s_{1,p-1}}{\sigma_p^2}(\cX_p-\cX_{p-1})
+\sqrt{\sigma_p^2}\cB^{(p)}_{(\tau-s_{1,p-1})/\sigma_p^2}\,,
\end{equation}
where $\cB^{(p)}$ is a standard Brownian bridge from $0$ to $0$ of length $1$, independent of everything else.
As
\begin{equation}
\label{e:CovS}
\Cov(\wt{\cS}_t,\wt{\cS}_{t'})=
(s_{1,\rt}-t)(s_{1,\rt}-t')\int_0^t\frac{\rmd \tau}{(s_{1,\rt}-\tau)^2}
=\frac{t(s_{1,\rt}-t')}{s_{1,\rt}}
\end{equation}
for $0\leq t<t'\leq s_{1,\rt}$, the centered Gaussian process $(\wt{\cS}_t,t\in[0,s_{1,\rt}])$ is a Brownian bridge of length $s_{1,\rt}$ from $0$ to $0$. Hence,
\begin{equation}
\label{e:defS}
\cS'_p:=\wt{\cS}_{s_{1,p}}\,,\quad p=1,\ldots, \rt\,,\quad \cS'_0=0
\end{equation}
is distributed as a random walk with centered Gaussian steps having variances $\sigma_p^2$, $p=1,\ldots, \rt$, starting at $0$ and conditioned on hitting $0$ after $\rt$ steps.

We approximate the harmonic extension of the boundary values $u\in\bbR^{\partial U_n}$, $v\in\bbR^{\partial V^-_k}$ by
\begin{equation}
\label{e:bell}
\beta_p=\frac{s_{1+p,\rt}}{s_{1,\rt}}\ol{u}(0)+\frac{s_{1,p}}{s_{1,\rt}}\ol{v}(\infty)\,,\quad p=0,\ldots,\rt\,.
\end{equation}

Also recall the independent DGFF $h_p$. By the Gibbs-Markov property,
\begin{equation}
\label{e:conc-h}
h_p=h-\varphi_{0,p}
- \he{(-m_n\Ind_{\partial U_n}+u -m_k\Ind_{\partial V^-_k} + v)}{\partial U_n\cup\partial V^-_k}
\quad\text{on }A_{p}\,.
\end{equation}
Moreover, we define
\begin{equation}
\label{e:corr-bd}
\gamma(y)=\he{(-m_{k}\Ind_{\partial V^-_k}+v)}{\partial U_n\cup \partial V^-_k}(y)+\he{(-m_n\Ind_{\partial U_n}+u)}{\partial U_n\cup \partial V^-_k}(y)
+m_{n-p}-\beta_p\quad
\text{for }y\in A_{p}\,,
\end{equation}
Finally, we set
\begin{equation}
\label{e:Dell}
\cD_p=\max_{y\in A_{p}}\big\{h_p(y)-m_{n-p}+\varphi_{0,p}(y)-\cS'_p
+\gamma(y)\big\}\,,
\end{equation}
and
\begin{equation}
\label{e:cS}
\cS_p = \cS'_p + \beta_p\,.
\end{equation}

With the definitions above, we can now state the following theorem, which shows that the processes just defined form a suitable DRW and relates the DGFF ballot event with that of the DRW. The proof of this theorem is the subject of the next subsection. Notice that~\eqref{e:h-S} follows directly and immediately from the definitions above.
\begin{thm}
\label{t:drw-i}
Let $\eps\in(0,1)$.
Then there exists $\delta\in(0,1/3)$ such that for all
$\eta,\zeta \in[0,\eps^{-1}]$,
$U\in\frU^\eta_\eps$, $V\in\frV_\eps$, $0\leq n<k$ with $T_{n-k}\geq 1$,
$u\in\bbR^{\partial U_n}$, $v\in\bbR^{\partial V^-_k}$ that satisfy
\begin{equation}
\label{e:uv-diff}
\max\big\{\ol{u}(0),\ol{v}(\infty),\osc\,\ol{u}_\eta,\osc\,\ol{v}_\zeta \big\}\leq \eps^{-1}\,,\quad
\big| \ol{u}(0) - \ol{v}(\infty) \big| \leq (n-k)^{1-\eps}\,,
\end{equation}
we have that $(\cS_i)_{i=0}^\rt$,
$(\cD_i)_{i=1}^\rt$
satisfies Assumptions~\ref{i.a1} --~\ref{i.a3} with $\delta$, $a=\ol{u}(0)$ and $b=\ol{v}(\infty)$ under the identification 
$\bfP = \bfP_{0,\ol{u}(0)}^{\rt,\ol{v}(\infty)} \equiv \bbP_{k,v}^{n,u}$.
Moreover,
\begin{equation}
\label{e:h-S}
\{\cS_{p}+\cD_p\leq 0\} =
\big\{ h_{A_p}\leq 0\big\}\text{ for } p=1,\ldots,\rt\,,\quad
\text{and}\quad
\bigcap_{p=1}^{\rt}\{\cS_{p}+\cD_p\leq 0\}
=
\big\{h_{U^\eta_n\cap V^{-,\zeta}_k}\leq 0\big\}
\,.
\end{equation}
\end{thm}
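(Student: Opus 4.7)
I will check Assumptions~\ref{i.a1}--\ref{i.a3} and the two event identities in~\eqref{e:h-S} in that order; the first two assumptions and both identities are structural consequences of the inward concentric decomposition, while \ref{i.a3} will carry the real technical load.

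For \ref{i.a1}, the construction~\eqref{e:Xm}--\eqref{e:defW} makes $(\cW_\tau)$ into a standard Brownian motion (independent centered Gaussian increments across $[s_{1,p-1}, s_{1,p}]$ with variance $\sigma_p^2$, interpolated by independent standard Brownian bridges $\cB^{(p)}$), so by~\eqref{e:deftS} $\wt{\cS}$ is a standard Brownian bridge from $0$ to $0$ of length $s_{1,\rt}$, and its samples $(\cS'_p)$ form a centered Gaussian random walk bridge with step variances $\sigma_p^2$; adding the deterministic $\beta_p$ from~\eqref{e:bell} shifts only the mean and enforces $\cS_0 = \ol{u}(0)$, $\cS_\rt = \ol{v}(\infty)$. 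The uniform bound $\sigma_p^2 \in (\delta, \delta^{-1})$ for some $\delta = \delta_\eps$ reduces to estimating the variance of the harmonic average $\heb{h'_p}{\partial \Delta_p}(0)$ of the DGFF $h'_p$ on $\Delta_{p-1} \cap V^-_k$, a standard computation via~\eqref{e:462} and~\eqref{e:444} since all relevant domains have $\eps$-controlled aspect ratios after rescaling by $\rme^{-(n-p)}$. For \ref{i.a2}, the family $\bigl((h'_p)_{p=0}^\rt, (\cB^{(p)})_{p=1}^\rt\bigr)$ is jointly independent, $(\cS_k, \cD_k)_{k \leq m}$ is measurable with respect to $\sigma\bigl(h'_0, \dots, h'_m, \cB^{(1)}, \dots, \cB^{(m)}\bigr)$, and $(\cS_\ell)_{\ell > m}$ is a function of $\wt{\cS}$; conditioning on $\cS_m$ is equivalent to conditioning on $\wt{\cS}_{s_{1,m}} = \cS_m - \beta_m$, after which the Markov property of the Brownian bridge makes the ``future'' bridge increments independent of the past, hence also independent of the past fields.

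For the event equivalence~\eqref{e:h-S}, fix $y \in A_p$ with $p \in \{1, \dots, \rt\}$. Combining~\eqref{e:concdec-in} with~\eqref{e:hphih} gives $h(y) = \varphi_{0,p}(y) + h_p(y) + \he{(-m_n\Ind_{\partial U_n} + u - m_k\Ind_{\partial V^-_k} + v)}{\partial U_n \cup \partial V^-_k}(y)$. Substituting~\eqref{e:corr-bd},~\eqref{e:Dell} and~\eqref{e:cS}, the two occurrences of $m_{n-p}$ cancel and one obtains $\max_{y \in A_p} h(y) = \cS_p + \cD_p$, which is the first identity. Because $A_1, \dots, A_\rt$ partition $\Delta_0 \setminus \Delta_\rt = U^\eta_n \cap V^{-,\zeta}_k$, intersecting over $p$ gives the second.

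For \ref{i.a3} (main obstacle), I will rewrite
\begin{equation*}
\cD_p = \max_{y \in A_p} \Bigl\{ h_p(y) + \bigl(\varphi_{0,p}(y) - \cS'_p\bigr) + \bigl(\gamma(y) - m_{n-p}\bigr)\Bigr\}
\end{equation*}
and bound each of the three summands separately, showing that each has a stretched exponential tail around a value of size at most $O_\eps(\log^+ \min(p, \rt - p))$. The DGFF $h_p$ on the annular scale-$O_\eps(1)$ domain $\rA_p$ is Gaussian-tight around an $O_\eps(1)$ value by the standard DGFF extreme-value estimates collected in Appendix~\ref{s:app-gen}. Telescoping $\varphi_{0,p} = \sum_{p'=0}^p \varphi_{p'}$ against $\cS'_p = \sum_{p'=1}^p \cX_{p'}$ with $\cX_{p'} = \heb{\varphi_{p'}}{\partial \Delta_{p'}}(0)$, the discrepancy $\varphi_{0,p}(y) - \cS'_p$ decomposes into scale-$p'$ contributions, each of which is the deviation of a harmonic function from its harmonic average at $0$, evaluated at $y \in A_p$; by the harmonic-extension estimates of Appendix~\ref{s:tools}, each such contribution has a stretched exponential tail with typical size geometrically small in $p - p'$, and summing over $p'$ using independence yields a stretched exponentially tight total of typical size $O_\eps(1)$. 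Under~\eqref{e:uv-diff}, the deterministic third summand $\gamma(y) - m_{n-p}$ is bounded by $O_\eps(\log^+ \min(p, \rt - p))$, via the discrete analog (sketched in Subsection~\ref{ss:ProofOutline}) of the fact that a well-behaved harmonic function on a long annulus is almost linear in $\log|x|$. Combining the three bounds yields~\eqref{e:5.1} for a suitable $\delta = \delta_\eps \in (0, 1/3)$. The principal technical difficulty will be the second item: obtaining a sharp stretched exponential tail on the binding-field-to-bridge discrepancy uniformly across all scales $p' \leq p$ is exactly what the harmonic-analysis tools of Appendix~\ref{s:tools} are designed for.
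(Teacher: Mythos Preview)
Your plan for \ref{i.a1}, \ref{i.a2}, and the event identities in~\eqref{e:h-S} is essentially that of the paper (one imprecision: for \ref{i.a2} the $\sigma$-algebra generated by $h'_0,\dots,h'_m$ already contains the entire field, since $h'_0=h^{U_n\cap V^-_k}$; you must work with the \emph{independent} pieces $\varphi_0,\dots,\varphi_m,h_0,\dots,h_m$ and $\cB^{(1)},\dots,\cB^{(m)}$, as in Lemma~\ref{l:dep}). The real issues are in your treatment of \ref{i.a3}.

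First, a bookkeeping slip: the $-m_{n-p}$ belongs with $h_p(y)$, not with $\gamma(y)$. The set $\rA_p$ has diameter of order $\rme^{n-p}$ in lattice units (it is not a ``scale-$O_\eps(1)$'' domain), so $\max_{A_p}h_p$ is centered near $m_{n-p}$, not near $0$; likewise $\gamma(y)-m_{n-p}$ is of order $-(n-p)$, not $O_\eps(\log^+\min(p,\rt-p))$. The quantity that is small is $\gamma(y)$ itself (cf.~\eqref{e:bd-gamma}). Second, and this is the genuine gap, the identity $\cS'_p=\sum_{p'\le p}\cX_{p'}$ is false: by~\eqref{e:deftS}--\eqref{e:defS}, $(\cS'_p)$ are Brownian-\emph{bridge} samples, while $\sum_{p'\le p}\cX_{p'}=\cW_{s_{1,p}}$ is the free walk; they differ by order $\sqrt{\rt}$ for $p$ near $\rt$. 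Even ignoring this, for $y\in A_p$ one has $\Pi_{\Delta_{p'}\cap V^-_k}(y,\cdot)\approx\tfrac{\rt+1-p}{\rt+1-p'}\,\Pi_{\Delta_{p'}}(0,\cdot)$ by Lemma~\ref{l:Poisson}, so $\varphi_{p'}(y)\approx\tfrac{\rt+1-p}{\rt+1-p'}\cX_{p'}$, not $\cX_{p'}$, and the telescoped terms $\varphi_{p'}(y)-\cX_{p'}$ are not summable in $p'$. The paper resolves both points at once by inserting the weighted sum $\cY_p=\sum_{j\le p}\tfrac{s_{p,\rt}}{s_{j,\rt}}\cX_j$ (Lemma~\ref{l:dec-coupl} gives $\Var(\cS'_p-\cY_p)\le C(\rt-p+1)^{-1}$) and then controlling $\varphi_{0,p-2}(y)-\cY_p$ scale by scale with precisely these weights; it also lumps the near-scale binding fields $\varphi_{p-1},\varphi_p$ together with $h_p$ into $\wt h_p$ (see~\eqref{e:wth}--\eqref{e:tDelta}) so that the roughness of $\varphi_{0,p}$ near $\partial \rA_p$ is absorbed into the DGFF maximum. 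Without both devices your second-piece bound does not go through.
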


\subsection{Verification of the DRW conditions - Proof of Theorem~\ref{t:drw-i}}
\label{s:concdec-ver}
In this subsection we prove Theorem~\ref{t:drw-i}. We begin, by verifying the assumption on the dependency structure of the decorated random walk.
\begin{lem}
\label{l:dep}
The random variables $(\cS_p)_{p=0}^\rt$, $(\cD_p)_{p=1}^\rt$ satisfy~\ref{i.a2} under the identification 
$\bfP = \bfP_{0,\ol{u}(0)}^{\rt,\ol{v}(\infty)} \equiv \bbP_{k,v}^{n,u}$.
\end{lem}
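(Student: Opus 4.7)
The plan is to exploit the fact that the whole DRW is built from an independent collection of primitive random variables, and that the desired conditional independence given $\cS_m$ is essentially the Markov property of the Brownian bridge $\wt{\cS}$ at the deterministic time $s_{1,m}$, propagated through the construction.

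First I would identify the independent input ingredients of the inward concentric decomposition: the binding fields $\varphi_0, \ldots, \varphi_\rt$, the zero-boundary annulus DGFFs $h_0, \ldots, h_\rt$, and the auxiliary standard Brownian bridges $\cB^{(1)}, \ldots, \cB^{(\rt)}$ used to interpolate $\cW$. Under $\bbP_{k,v}^{n,u}$ these remain mutually independent, since conditioning on the boundary values only adds the deterministic harmonic extension appearing in~\eqref{e:hphih}. I would then introduce
\begin{equation*}
\mathcal{P}_m := \sigma\bigl(\varphi_0,\dots,\varphi_m,\, h_0,\dots,h_m,\, \cB^{(1)},\dots,\cB^{(m)}\bigr),\quad
\mathcal{F}_m := \sigma\bigl(\varphi_{m+1},\dots,\varphi_\rt,\, \cB^{(m+1)},\dots,\cB^{(\rt)}\bigr),
\end{equation*}
which are independent by construction.

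Next I would verify that $(\cS_k,\cD_k)_{k=1}^m$ is $\mathcal{P}_m$-measurable. The harmonic averages $\cX_k$ defined in~\eqref{e:Xm} are functionals of $\varphi_k$ alone, the interpolation formula defining $\cW$ on $[s_{1,p-1},s_{1,p}]$ involves only $\cB^{(p)}$ in addition, and $\cS'_p=\wt{\cS}_{s_{1,p}}=(s_{1,\rt}-s_{1,p})\int_0^{s_{1,p}} d\cW_\tau/(s_{1,\rt}-\tau)$ depends on $\cW$ only up to time $s_{1,p}$; combined with the definition~\eqref{e:Dell} of $\cD_k$ via $h_k$, $\varphi_{0,k}$ and deterministic quantities, this gives the claim. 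For the future, the key computation is to split the defining integral of $\wt{\cS}_t$ at $s_{1,m}$, yielding, for $t\ge s_{1,m}$,
\begin{equation*}
\wt{\cS}_t \;=\; \frac{s_{1,\rt}-t}{s_{1,\rt}-s_{1,m}}\,\wt{\cS}_{s_{1,m}}
\;+\;(s_{1,\rt}-t)\int_{s_{1,m}}^{t}\frac{d\cW_\tau}{s_{1,\rt}-\tau}\,.
\end{equation*}
Since the increments of $\cW$ on $[s_{1,m},s_{1,\rt}]$ depend only on $\cX_{m+1},\ldots,\cX_\rt$ and $\cB^{(m+1)},\ldots,\cB^{(\rt)}$, they are $\mathcal{F}_m$-measurable, so $\cS'_\ell$ (hence $\cS_\ell = \cS'_\ell + \beta_\ell$) for $\ell>m$ is measurable with respect to $\sigma(\cS'_m,\mathcal{F}_m)=\sigma(\cS_m,\mathcal{F}_m)$.

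The conclusion is then standard: writing the past as $X=X(Z)$ with $Z$ generating $\mathcal{P}_m$, the future as $Y=Y(\cS_m,W)$ with $W$ generating $\mathcal{F}_m$, and using $Z\perp W$ together with the tower property, one obtains $\bfE[f(X)g(Y)\mid \cS_m]=\bfE[f(X)\mid\cS_m]\,\bfE[g(Y)\mid\cS_m]$ for any bounded measurable $f,g$, which is precisely~\ref{i.a2}. I do not anticipate any real obstacle: the proof is a bookkeeping check on measurability together with the elementary Markov decomposition of the stochastic integral defining $\wt{\cS}$.
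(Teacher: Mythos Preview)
Your proposal is correct and follows essentially the same approach as the paper's proof: both hinge on splitting the stochastic integral defining $\wt{\cS}_t$ at the deterministic time $s_{1,m}$, so that the future values $(\cS'_\ell)_{\ell>m}$ are measurable with respect to $\sigma(\cS'_m)$ together with the post-$s_{1,m}$ increments of $\cW$, which are independent of the past $\sigma$-algebra containing $(\cS'_k,\cD_k)_{k\le m}$. Your write-up is slightly more explicit (you include $\varphi_0,h_0$ in the past filtration, which is needed for $\cD_k$ via $\varphi_{0,k}$, and you spell out the final conditional-independence step), but the argument is the same.
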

\begin{proof}
As $\cS_p-\cS'_p=\beta_p$ is deterministic, it suffices to show the assertion for $(\cS'_p)_{p=0}^\rt$, $(\cD_p)_{p=1}^\rt$.
For $p=1,\ldots,\rt -1$, let $\mathcal F_p$ be the sigma algebra generated by $(\cW_t,t\leq s_{1,p})$, $\varphi_1,\ldots, \varphi_p$ and $h_1,\ldots, h_p$.
By the Gibbs-Markov property and the definition of $\cW$, the process $(\cW_{s_{1,p}+t}-\cW_{s_{1,p}}, t\in[0,s_{1,\rt}-s_{1,p}])$ is independent of $\mathcal F_p$.
While $\cD_1,\ldots, \cD_p$ and $\cS'_1,\ldots,\cS'_p$ are $\mathcal F_p$-measurable, the following representation shows that
$(\cS'_j-\cS'_p)_{j=p+1}^\rt$ is measurable with respect to $(\cW_{s_{1,p}+t}-\cW_{s_{1,p}}, t\in[0,s_{1,\rt}-s_{1,p}])$ and $\cS'_p$ and hence conditionally independent of $\mathcal F_p$ given $\cS'_p$:
\begin{multline}
\cS'_j - \cS'_p
= (s_{1,\rt} - s_{1,j})\int_{s_{1,p}}^{s_{1,\rt}}
\frac{\rmd \cW_\tau}{s_{1,\rt}-\tau}
-(s_{1,j}-s_{1,p})\int_{0}^{s_{1,p}}
\frac{\rmd \cW_\tau}{s_{1,\rt}-\tau}\\
= (s_{1,\rt} - s_{1,j})\int_{s_{1,p}}^{s_{1,\rt}}
\frac{\rmd \cW_\tau}{s_{1,\rt}-\tau}
-\frac{s_{1,j}-s_{1,p}}{s_{1,\rt}-s_{1,p}}\cS'_p\,,
\end{multline}
where we used the definition of $(\cS'_p)$ which is given by~\eqref{e:defS} and~\eqref{e:deftS}.
\end{proof}

As the binding field is harmonic, we have
\begin{equation}
\label{e:phi-rw}
\varphi_p(y)
=\sum_{z\in\partial A_p\cup \partial \Delta_p}\varphi_p(z)\Pi_{A_p\cup \Delta_p\cap V^-_k}(y,z)\,,\quad y\in\bbZ^2
\end{equation}
for $p=0,\ldots,\rt$.

Next we show bounds for the variance $\sigma_p=\Var_k^n \cX_p$.
\begin{lem}
\label{l:varXm}
Let $\eps\in(0,1)$.
There exists $C=C_{\eps}<\infty$ such that $C^{-1}<\Var_k^n \cX_p< C$ and
\begin{equation}
\label{e:varsil}
|s_{p,q}-g(q-p+1)|\leq C\big(1+\log (1+\rt -p) -\log (1+\rt -q)\big)
\end{equation}
for all $\eta,\zeta \in[0,\eps^{-1}]$, $0\leq k<n$ with $T_{n-k}\geq 1$, $p,q\in\bbN$ with $p\leq q \leq \rt$, and all $U\in\frU^\eta_\eps$, $V\in\frV_\eps$.
Moreover, for $p\in\bbN$ fixed, $\Var_k^n \cX_p$ converges as $n-k\to\infty$ uniformly in $U\in\frU^\eta_\eps$, $V\in\frV_\eps$. We also have $\Var_k^n \cX_0<C$.
\end{lem}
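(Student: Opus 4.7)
The plan is to express $\sigma_p^2=\Var_k^n \cX_p$ as an averaged Green's function on the punctured ball $D_p:=\Delta_{p-1}\cap V^-_k$, and to compare it with the same quantity for a DGFF on the solid ball $\Delta_{p-1}$, for which Gibbs-Markov yields a closed form. Since $\cX_p=\heb{h'_p}{\partial\Delta_p}(0)$ is a Gaussian linear functional of $h'_p|_{\partial\Delta_p}$ and $0\in\Delta_p$, the variance identity
\[
\sigma_p^2 \,=\, \sum_{z,z'\in\partial\Delta_p}\Pi_{\Delta_p}(0,z)\Pi_{\Delta_p}(0,z')\,G_{D_p}(z,z')
\]
holds, with $G_{D_p}$ extended by zero off $\ol{D_p}$. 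Let $\wt\sigma_p^2$ denote the analogous quantity with $D_p$ replaced by $\Delta_{p-1}$. Applying Gibbs-Markov (Lemma~\ref{l:GM}) to a DGFF on $\Delta_{p-1}$ with sub-domain $\Delta_p$ yields the clean identity $\wt\sigma_p^2 = G_{\Delta_{p-1}}(0,0)-G_{\Delta_p}(0,0)$. Combining the representation $G_A(0,0)=\he{\fra}{\partial A}(0)$ (from~\eqref{e:462} with $\fra(0)=0$) with the expansion~\eqref{e:444} gives $G_{\rmB_\ell}(0,0)=g\ell+c_0+O(\rme^{-2\ell})$, so $\wt\sigma_p^2 = g + O(\rme^{-2\ell_p})$ where $\ell_p:=n+\lfloor\log\eps\rfloor-p$. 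Domain monotonicity $D_p\subset\Delta_{p-1}$ then yields the uniform upper bound $\sigma_p^2\leq \wt\sigma_p^2\leq C_\eps$, and telescoping gives $\sum_{p=p_0}^q \wt\sigma_p^2 = g(q-p_0+1)+O_\eps(1)$.

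For the lower bound and the sharp partial-sum estimate~\eqref{e:varsil}, I would control the Green-function defect. Using the last-exit identity
\[
G_{\Delta_{p-1}}(y,z')-G_{D_p}(y,z')\,=\,E_y\big[G_{\Delta_{p-1}}(S_{\tau^{D_p}},z')\,;\,S_{\tau^{D_p}}\in V_k\big]\,,\qquad y,z'\in D_p\,,
\]
inside the variance identity reduces the defect to a harmonic-measure question: a SRW started at a typical $y\in\partial\Delta_p$ (with $|y|=\Theta(\rme^{\ell_p})$) hits $V_k\subset \rmB_{k-\log\eps}$ before exiting $\Delta_{p-1}$ with probability $O(1/(\ell_{p-1}-k))=O(1/(\rt-p+1))$, by a standard annular harmonic-measure estimate derived from~\eqref{e:444}. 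Combined with a uniform upper bound on $G_{\Delta_{p-1}}$ at the relevant pairs of points, this gives $0\leq \wt\sigma_p^2-\sigma_p^2\leq C_\eps/(\rt-p+1)$. Summing over $p=p_0,\ldots,q$ produces the $O(\log(1+\rt-p_0)-\log(1+\rt-q))$ correction in~\eqref{e:varsil} and also shows $\sigma_p^2\geq g/2$ once $\rt-p$ exceeds an $\eps$-dependent constant. The remaining finitely many $p$ close to $\rt$ would be handled by a compactness/continuity argument: $\sigma_p^2$ is a continuous strictly positive function of the geometric parameters $(U,V,\eta,\zeta)$, which range over a set compact under $\dH$.

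For the convergence of $\sigma_p^2$ as $n-k\to\infty$ for fixed $p\geq 1$, both $\wt\sigma_p^2\to g$ (with explicit rate from~\eqref{e:444}) and the defect term admit continuum limits via standard discrete-to-continuum harmonic analysis (cf.~Appendix~\ref{s:tools}), uniformly in $U\in\frU_\eps^\eta$ and $V\in\frV_\eps$. Finally, for $\sigma_0^2$: if $\eta=0$ then $\Delta_{-1}=\Delta_0=U_n$, so $A_0=\emptyset$ and $\cX_0=0$; if $\eta>0$ the same argument as in the first paragraph gives $\sigma_0^2\leq \wt\sigma_0^2 = G_{U_n}(0,0)-G_{U^\eta_n}(0,0)=O_\eps(1)$, using $\rmB(0,\eps)\subset U^\eta\subset U\subset \rmB(0,\eps^{-1})$ to sandwich both Green's functions by those of concentric balls via~\eqref{e:444}.

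The main obstacle is obtaining the sharp $O(1/(\rt-p+1))$ bound on the Green-function defect $\wt\sigma_p^2-\sigma_p^2$. A naive application of domain monotonicity gives only a weaker, non-summable bound, whereas the logarithmic growth in~\eqref{e:varsil} demands the annular harmonic-measure input above, made uniform in the shape of $V^-\in\frV_\eps$. The argument is most delicate near $p=\rt$, where the inner ball $\Delta_p$ has collapsed to scale comparable to $k$ and annular separation is lost, necessitating the separate compactness argument mentioned above.
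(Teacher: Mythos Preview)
Your decomposition $\sigma_p^2=\wt\sigma_p^2-(\text{defect})$, the telescoping $\sum_{j=p}^q\wt\sigma_j^2=G_{\Delta_{p-1}}(0,0)-G_{\Delta_q}(0,0)=g(q-p+1)+O_\eps(1)$, and the defect bound $0\le\wt\sigma_p^2-\sigma_p^2\le C_\eps/(\rt-p+1)$ via the hitting-probability estimate $P_y(\tau^{V^-_k}<\tau^{\Delta_{p-1}})=O(1/(\ell_{p-1}-k))$ are exactly what the paper does. The paper simply packages these three steps into citations of Lemma~\ref{l:Var-inw-1}, Lemma~\ref{l:Cov-bdg} (whose proof \emph{is} your last-exit/harmonic-measure computation), and Lemma~\ref{l:Cov-as} respectively, rather than redoing them in-line.

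The one real gap is your treatment of the uniform lower bound for the finitely many $p$ with $\rt-p$ below an $\eps$-dependent threshold. The compactness argument as you state it does not work: $\sigma_p^2$ depends on $n$ and $k$ through the lattice structure of $\Delta_{p-1}$, $\Delta_p$ and $V^-_k$, not only on $(U,V,\eta,\zeta)$; the classes $\frU^\eta_\eps$, $\frV_\eps$ are not known to be compact under $\dH$; and continuity of $\sigma_p^2$ in $(U,V)$ \emph{uniformly in $n,k$} is itself a nontrivial statement of the same flavor as what you are proving. The paper's route is to invoke the uniform discrete-to-continuum convergence of Lemma~\ref{l:Cov-as} (with $j=1$), whose limit is an explicit strictly positive continuum integral; this yields $\sigma_p^2\ge c_\eps$ once the scale separation $\ell_{p-1}-k$ exceeds a threshold depending only on $\eps$. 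If you want to avoid that lemma, the correct replacement for compactness is a direct Green-function lower bound: by construction of $\rt$, $D_p=\Delta_{p-1}\cap V^-_k$ always contains a fixed-modulus annulus (outer radius $\rme^{\ell_{p-1}}$, inner radius at most $(\eps^{-1}+\zeta)\rme^k$, with $\ell_{p-1}-k\ge\lceil\log(\zeta+\eps^{-1})\rceil+1$), and one can bound $\sigma_p^2$ from below by the corresponding quantity on that annulus explicitly via~\eqref{e:444}--\eqref{e:462}.
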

\begin{proof}
For $p\geq 1$, the bound for $\Var_k^n \cX_p$ and the convergence of $\Var_k^n \cX_p$ follow from Lemma~\ref{l:Cov-as} where we choose $j=1$, and we choose $U$, $W$ and $n$ such that $U_{n}$ becomes our $\Delta_{p-1}$, and $W_{n-1}=\Delta_p$.
Also $\Var_k^n\cX_0$ is bounded by a constant as
\begin{equation}
\Var_k^n\Big(\he{h}{\partial U^\eta_n}(0)\,\Big|\,
h_{(2\eps^{-1}\rmB)_n}=0,h_{\partial V^-_k}=0\Big)
=\Var_k^n\cX_0 + \Var\Big(\he{\varphi^{(2\eps^{-1}\rmB)_n\cap V^-_k,U_n\cap V^-_k}}{\partial U^\eta_n}(0)\Big)\,,
\end{equation}
by the Gibbs-Markov property, and the the left-hand side is bounded by a constant by the same argument as for $\cX_p$ with $p\geq 1$.

For assertion~\eqref{e:varsil}, let $j\in\{p,\ldots,q\}$.
By the Gibbs-Markov property (Lemma~\ref{l:GM}) applied at $\partial V^-_k$,
\begin{equation}
\label{e:VarXj}
\Var_k^n \cX_j=\Var\he{{h^{\Delta_{j-1}}}}{\partial \Delta_j}(0)
-\Var\hebb{\heb{h^{\Delta_{j-1}}}{\partial \Delta_{j-1}\cup\partial V^-_k}}{\partial \Delta_{j}}(0)\,.
\end{equation}
The second variance on the right-hand side is of order $(\rt -j+2)^{-1}$ by Lemma~\ref{l:Cov-bdg}.

Moreover, by successively applying the Gibbs-Markov property at $\partial \Delta_p,\ldots,\partial \Delta_q$,
\begin{equation}
\sum_{j=p}^q\Var\he{{h^{\Delta_{j-1}}}}{\partial \Delta_j}(0)
=\Var\he{{h^{\Delta_{p-1}}}}{\partial \Delta_q}(0)\,.
\end{equation}
By Lemma~\ref{l:Var-inw-1}, $\big|\Var\he{{h^{\Delta_{p-1}}}}{\partial \Delta_q}(0)-g(q-p)\big|$ is uniformly bounded by a constant.
The assertion now follows by also summing the second term on the right-hand side of~\eqref{e:VarXj} over $j=p,\ldots,q$.
\end{proof}

To compare $\cS_p$ with $\varphi_{0,p}$ on $A_{p}$, we also define
\begin{equation}
\label{e:R}
\cY_p=\sum_{j=1}^p\frac{s_{p,\rt}}{s_{j,\rt}}\cX_j
\end{equation}
for $p=1,\ldots, \rt$. The centered Gaussian random variables $\cY_p$ and $\cS'_p$ can be compared as follows.
\begin{lem}
\label{l:dec-coupl}
Let $\eps\in(0,1)$.
Then there exists $C = C_{\eps} <\infty$ such that
\begin{equation}
\label{e:dec-coupl-Var}
\Var_k^n(\cS'_p-\cY_p)\leq C(n-k-p+1)^{-1}
\end{equation}
for all $\eta,\zeta \in[0,\eps^{-1}]$, $0\leq n<k$ with $T_{n-k}\geq 1$, $U\in\frU^\eta_\eps$, $V\in\frV_\eps$, $p=1,\ldots,\rt$.
\end{lem}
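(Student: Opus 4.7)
The plan is to express both $\cS'_p$ and $\cY_p$ as linear functionals of the Brownian motion $\cW$ used in~\eqref{e:defW}, rescale them so as to exploit the standard martingale associated with the bridge $\wt{\cS}$, and then compare them by the It\^o isometry.

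First I would derive, for $p \leq \rt-1$, the integral representation $\cS'_p/s_{p+1,\rt} = \int_0^{s_{1,p}} (s_{1,\rt}-\tau)^{-1}\rmd\cW_\tau$ directly from~\eqref{e:deftS}, while on the discrete side $\cY_p/s_{p,\rt} = \sum_{j=1}^p s_{j,\rt}^{-1}\cX_j = \sum_{j=1}^p s_{j,\rt}^{-1}(\cW_{s_{1,j}}-\cW_{s_{1,j-1}})$. Subtracting gives
\begin{equation}
\frac{\cS'_p}{s_{p+1,\rt}} - \frac{\cY_p}{s_{p,\rt}}
= \sum_{j=1}^p \int_{s_{1,j-1}}^{s_{1,j}} \Big(\frac{1}{s_{1,\rt}-\tau} - \frac{1}{s_{j,\rt}}\Big)\rmd\cW_\tau\,.
\end{equation}
The algebraic identity $(s_{1,\rt}-\tau)^{-1} - s_{j,\rt}^{-1} = (\tau-s_{1,j-1})/[(s_{1,\rt}-\tau)s_{j,\rt}]$ on the $j$-th interval, combined with the It\^o isometry, bounds this variance by $\sum_{j=1}^p \sigma_j^6/[3s_{j+1,\rt}^2 s_{j,\rt}^2]$. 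By Lemma~\ref{l:varXm} we have $\sigma_j^2 \leq C_\eps$ and $s_{j,\rt} \geq c_\eps(\rt-j+1)$ for $j \leq \rt-1$, so each summand is $O_\eps((\rt-j+1)^{-4})$ and summation yields $O_\eps((\rt-p+1)^{-3})$.

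To return to $\Var(\cS'_p - \cY_p)$ I use the identity
\begin{equation}
\cS'_p - \cY_p = s_{p+1,\rt}\Big(\frac{\cS'_p}{s_{p+1,\rt}} - \frac{\cY_p}{s_{p,\rt}}\Big) - \frac{\sigma_p^2}{s_{p,\rt}}\cY_p
\end{equation}
together with $\Var(X+Y) \leq 2\Var(X) + 2\Var(Y)$. The first term contributes $s_{p+1,\rt}^2 \cdot O_\eps((\rt-p+1)^{-3}) = O_\eps((\rt-p+1)^{-1})$. For the second, the telescoping estimate $\Var(\cY_p) = s_{p,\rt}^2 \sum_{j=1}^p \sigma_j^2/s_{j,\rt}^2 \leq s_{p,\rt}^2(1/s_{p+1,\rt}-1/s_{1,\rt}) = O_\eps(\rt-p+1)$, multiplied by $\sigma_p^4/s_{p,\rt}^2 = O_\eps((\rt-p+1)^{-2})$, also yields $O_\eps((\rt-p+1)^{-1})$.

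The main obstacle will be the boundary case $p = \rt$, where $s_{\rt+1,\rt} = 0$ causes the martingale normalization to degenerate. Here I note that $\cS'_\rt = \wt{\cS}_{s_{1,\rt}} = 0$, so $\Var(\cS'_\rt - \cY_\rt) = \Var(\cY_\rt)$; the same telescoping bound applied to $\sum_{j=1}^{\rt-1}$, plus the direct estimate $\sigma_\rt^2/s_{\rt,\rt}^2 = 1/\sigma_\rt^2 = O_\eps(1)$ for the singular $j=\rt$ term, gives $\Var(\cY_\rt) = O_\eps(\sigma_\rt^2) = O_\eps(1)$, matching the claim since $\rt-\rt+1 = 1$. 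Finally, $\rt - p + 1 \asymp n-k-p+1$ with implicit constants depending on $\eps$ (by~\eqref{e:def-r} and $\zeta \leq \eps^{-1}$), so the displayed bound $C_\eps(n-k-p+1)^{-1}$ follows.
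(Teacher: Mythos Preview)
Your proof is correct and uses the same core ingredients as the paper's: the Brownian representation of $\cS'_p$ and $\cY_p$, the It\^o isometry, and the variance bounds of Lemma~\ref{l:varXm}, together with a separate treatment of the degenerate endpoint $p=\rt$. The only difference is organizational: the paper writes $\cS'_p-\cY_p$ directly as the single stochastic integral $\sum_{j=1}^p\int_{s_{1,j-1}}^{s_{1,j}}\big(\tfrac{s_{p+1,\rt}}{s_{1,\rt}-t}-\tfrac{s_{p,\rt}}{s_{j,\rt}}\big)\,\rmd\cW_t$, bounds the integrand by $O_\eps(1/s_{j,\rt})$, and sums $\sum_j\sigma_j^2/s_{j,\rt}^2=O_\eps((\rt-p+1)^{-1})$ in one stroke, whereas you first normalize by $s_{p+1,\rt}$ and $s_{p,\rt}$ (obtaining a sharper $(\rt-p+1)^{-3}$ for the normalized difference) and then handle the mismatch via the extra term $(\sigma_p^2/s_{p,\rt})\cY_p$, which requires a separate telescoping bound on $\Var(\cY_p)$. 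Both routes arrive at the same estimate; the paper's is a little shorter, yours isolates more clearly where the leading contribution comes from.
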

\begin{proof}
If $p=\rt$, then we use that $\cS'_\rt=0$ a.\,s.\ and we have
\begin{equation}
\Var_k^n(\cS'_\rt-\cY_\rt)=\sum_{j=1}^\rt \frac{\sigma_\rt^2}{s_{j,\rt}^2}\leq C_\eps
\end{equation}
by Lemma~\ref{l:varXm}. In the following, we can thus assume $p\leq \rt -1$.
From the definition of $\cS'_p$ and $\cY_p$, we obtain
\begin{equation}
\Var_k^n\big(\cS'_p-\cY_p\big)
=\sum_{j=1}^p\int_{s_{1,j-1}}^{s_{1,j}}\Big(
\frac{s_{p+1,\rt}}{s_{1,\rt}-t}-\frac{s_{p,\rt}}{s_{j,\rt}}\Big)^2\rmd t\,.
\end{equation}
The expression in the brackets on the right-hand side can be bounded as follows:
\begin{equation}
-\frac{\sigma^2_p}{s_{j,\rt}}\leq \frac{s_{p+1,\rt}}{s_{1,\rt}-t}-\frac{s_{p,\rt}}{s_{j,\rt}}
\leq \frac{s_{p+1,\rt}}{s_{j+1,\rt}}-\frac{s_{p+1,\rt}}{s_{j+1,\rt}+\sigma^2_j}\leq
-\frac{\sigma^2_p}{s_{j+1,\rt}}+s_{p+1,\rt}\frac{\sigma_j^2}{s_{j,\rt}s_{j+1,\rt}}
\end{equation}
for $t\in[s_{1,j-1},s_{1,j}]$.
Using also Lemma~\ref{l:varXm},
we obtain that
\begin{equation}
\Var_k^n\big(\cS'_p-\cY_p\big)\leq C_{\eps}\sum_{j=1}^p(\rt -j+2)^{-2}\leq C_{\eps}(\rt -p+1)^{-1}\,,
\end{equation}
and the assertion follows.
\end{proof}

We now come to the tails of the decorations. First we consider the part of $\cD_p$ which does not depend on the boundary values $u$ and $v$, namely $\wt \cD_p:=\max_{y\in A_{p}}\big\{\varphi_{0,p}(y)-\cS'_p+h_p(y)-m_{n-p}\big\}$.
\begin{lem}
\label{l:dec-tail}
For each $\eps\in(0,1)$, there exists $\delta\in(0,\tfrac13)$ such that the bounds in~\ref{i.a3} hold with $(\cS'_p)_{p=0}^\rt$, $(\wt \cD_p)_{p=1}^\rt$ in place of $(\cS_p)_{p=0}^\rt$, $(\cD_p)_{p=1}^\rt$ there,
with $a=\ol{u}(0)$, $b=\ol{v}(\infty)$, for all $\eta,\zeta \in[0,\eps^{-1}]$, $U\in\frU^\eta_\eps$, $V\in\frV_\eps$, $0\leq k<n$ with $T_{n-k}\geq 1$ and $u\in\bbR^{\partial U_n}$, $v\in\bbR^{\partial V^-_k}$, under the identification 
$\bfP = \bfP_{0,\ol{u}(0)}^{\rt,\ol{v}(\infty)} \equiv \bbP_{k,v}^{n,u}$.
\end{lem}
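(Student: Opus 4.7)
The plan is to bound $|\wt\cD_p|$ by separating its ``local'' DGFF contribution from the random-walk approximation error. Since $\cS'_p$ and $m_{n-p}$ do not depend on $y$, we can write
\[
|\wt\cD_p| \leq \bigl|\max_{A_p} h_p - m_{n-p}\bigr| + \max_{A_p}\bigl|\varphi_{0,p} - \cS'_p\bigr|\,,
\]
reduce to each summand separately, and recombine by the elementary union bound $\bfP(X + Y > s + t) \leq \bfP(X > s) + \bfP(Y > t)$. Since both terms are intrinsic to the concentric decomposition (no $u$, $v$ enters), the conclusion will be independent of the boundary data.

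For the first summand, note that $h_p$ is an independent zero-boundary DGFF on $\rA_p$, which has log-scale $n-p+O_\eps(1)$ and at most two ``nearby'' boundary components (either a dyadic annulus for $1\leq p \leq \rt-1$, or a thin strip against $\partial U$ respectively $\partial V$ for $p=0,\rt$). In all cases the standard estimates on the DGFF maximum recorded in Appendix~\ref{s:app-gen} give that $\max_{\rA_p} h_p - m_{n-p}$ is stretched-exponentially tight around an $O_\eps(1)$ centering, which is more than enough for~\ref{i.a3}.

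For the second summand I would first invoke Lemma~\ref{l:dec-coupl} to replace $\cS'_p$ by the explicit Gaussian $\cY_p = \sum_{j=1}^p (s_{p,\rt}/s_{j,\rt})\cX_j$ at a variance cost of $O_\eps((\rt-p+1)^{-1})$. For the remaining Gaussian process $E_p(y) := \varphi_{0,p}(y) - \cY_p$ on $A_p$, I would use the harmonic-extension estimates of Appendix~\ref{s:tools} to show that each $\varphi_j(y)$, $y \in A_p$, is well-approximated in $L^2$ by a suitable weighting of the harmonic averages $\cX_i$, and that the conditioning weights $s_{p,\rt}/s_{j,\rt}$ entering $\cY_p$ are precisely those needed for the resulting summation to telescope. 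This should yield a uniform pointwise variance and increment variogram for $E_p$ that are small enough so that a Borell-TIS concentration inequality together with a Dudley bound on $\bfE \max_{A_p} E_p$ delivers the stretched-exponential tail required by~\ref{i.a3}, centered at $O_\eps(\wedge^{1/2-\delta}_{\rt,p})$ for any sufficiently small $\delta$.

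The main obstacle is this pointwise variance and increment bound for $E_p$, which rests on the precise compatibility between the harmonic structure of the binding fields $\varphi_0,\ldots,\varphi_p$ and the bridge-conditioning coefficients appearing in $\cY_p$. A priori, $\varphi_{0,p}$ is a sum of $p+1$ correlated Gaussians whose individual pointwise variances can be logarithmically large in the scale, and only the matching of the Poisson-kernel representations of these fields with the $s_{p,\rt}/s_{j,\rt}$ weights (themselves coming from the right Markov structure of the Gaussian walk conditioned on its endpoint) produces the cancellation that makes~\ref{i.a3} hold. Once this matching is in place, the Gaussian-process concentration step is standard.
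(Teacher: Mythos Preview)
Your high-level plan is the right one and coincides with the paper's: split $\wt\cD_p$ into a ``local DGFF maximum'' piece and a ``binding field minus walk'' piece, control the latter via the coupling $\cS'_p\leadsto\cY_p$ of Lemma~\ref{l:dec-coupl} together with a Poisson-kernel/weight-matching argument, and finish with Borell--TIS plus a chaining bound. The gap is in \emph{where you cut}.

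Your split puts all of $\varphi_{0,p}$ into the second term, so $E_p(y)=\varphi_{0,p}(y)-\cY_p$ contains the two ``nearest'' binding fields $\varphi_p$ and $\varphi_{p-1}$. These are precisely the ones that are rough on $A_p$. Concretely, $A_p=\Delta_{p-1}\setminus\Delta_p$ contains the thin boundary layer $A_p\setminus\rA_p$ (the lattice points with $|y|\in[\rme^{l}-\tfrac12,\rme^{l}+\tfrac12]$, $l=n+\lfloor\log\eps\rfloor-p$), and for such $y$ one has $y\notin J_p$, hence $\varphi_p(y)=h'_p(y)$ with
\[
\Var\varphi_p(y)=G_{\Delta_{p-1}\cap V^-_k}(y,y)\;\asymp\; g(n-p)\,.
\]
Even for $y\in\rA_p$ at distance $O(1)$ from that layer, $G_{J_p}(y,y)=O(1)$ while $G_{\Delta_{p-1}\cap V^-_k}(y,y)\asymp g(n-p)$, so $\Var\varphi_p(y)\asymp g(n-p)$ again. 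The analogous blow-up occurs for $\varphi_{p-1}$ near the outer edge $\partial\Delta_{p-1}$ of $A_p$. Since $\cY_p$ has variance $O(p)$, no linear combination of the $\cX_j$ can cancel this, and your uniform pointwise variance bound for $E_p$ on $A_p$ fails. A Borell--TIS bound with variance of order $n-p$ gives tails $\exp(-ct^2/(n-p))$, which does not meet~\ref{i.a3}: the allowed centering $\delta^{-1}\wedge_{\rt,p}^{1/2-\delta}$ is far smaller than the $(n-p)^{1/2}$ scale of $E_p$ near $\partial A_p$.

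The paper's fix is exactly to move these two rough layers to the DGFF side: set
\[
\wt h_p(y)=h_p(y)+\varphi_p(y)+\varphi_{p-1}(y)-m_{n-p}\,,\qquad
\cD'_p(y)=\varphi_{0,p-2}(y)-\cY_p\,.
\]
By Gibbs--Markov, $\wt h_p+m_{n-p}$ on $A_p$ is distributed as $h^{\Delta_{p-2}\cap V^-_k}$, so its maximum on $A_p$ is tight around $m_{n-p}$ by the standard DGFF extreme-value bounds (Lemmas~\ref{l:DGFF-ut} and~\ref{l:Ding}). For the remaining piece $\cD'_p$, every $\varphi_j$ with $j\le p-2$ is evaluated on $A_p\subset\Delta_{p-1}$, which sits well inside $J_j$; there the Poisson-kernel comparison (Lemma~\ref{l:Poisson}) and the weights $s_{p,\rt}/s_{j,\rt}$ match as you anticipated, yielding a uniformly bounded pointwise variance. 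The leftover $\cX_{p-1}$ and $\cX_p$ in $\cY_p$ (now unpaired with $\varphi_{p-1},\varphi_p$) are individually $O(1)$ in variance by Lemma~\ref{l:varXm}. With this cut, your Borell--TIS plus Fernique/Dudley step goes through verbatim.

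In short: the harmonic-cancellation mechanism you describe is correct, but it only applies to $\varphi_0,\dots,\varphi_{p-2}$ on $A_p$; the two innermost binding fields must be absorbed into the DGFF piece, not into $E_p$.
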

\begin{proof}
To avoid the roughness of $\varphi_{0,p}$ near $\partial \rA_p$, we lump the binding fields $\varphi_p$ and $\varphi_{p-1}$ together with $h_p$ by defining
\begin{equation}
\label{e:wth}
\wt h_p(y) =
h_p(y) + \varphi_p(y) + \varphi_{p-1}(y) - m_{n-p}
\,,\quad y\in A_{p}
\end{equation}
and
\begin{equation}
\label{e:tDelta}
\cD'_p(y) =
\varphi_{0,p-2}(y)-\cY_p\,,
\quad y\in A_{p}\,,
\end{equation}
for $p=1,\ldots,\rt$ (we recall that $\varphi_{0,-1}=0$ by definition).
Then, for $t>0$ and $p=1,\ldots,\rt$, a union bound gives
\begin{equation}
\label{e:dec-tail-r}
\bbP_{k,v}^{n,u}\big(\wt \cD_p>t\big)
\leq \bbP_{k,v}^{n,u}\Big(\max_{A_{p}} \wt h_p  >t/3\Big)
+\bbP_{k,v}^{n,u}\Big(\max_{A_{p}}\cD'_p>t/3\Big)
+\bbP_{k,v}^{n,u}\big(|\cY_p-\cS'_p|>t/3\big)
\end{equation}
and
\begin{equation}
\label{e:dec-tail-l}
\bbP_{k,v}^{n,u}\big(-\wt \cD_p>t\big)
\leq \bbP_{k,v}^{n,u}\Big(\max_{A_{p}} \wt h_p  <-t/3\Big)
+\bbP_{k,v}^{n,u}\Big(\min_{A_{p}}\cD'_p<-t/3\Big)
+\bbP_{k,v}^{n,u}\big(|\cY_p-\cS'_p|>t/3\big)\,.
\end{equation}

By the Gibbs-Markov property (Lemma~\ref{l:GM}) and by~\eqref{e:wth}, the restriction of $\wt h_p + m_{n-p}$ to $A_{p}$ is distributed as $h^{\Delta_{p-2}\cap V^-_k}$. Hence, again by the Gibbs-Markov property,
we can represent the restriction of the DGFF $h^{\Delta_{p-2}}$ to $A_{p}$ as the sum of the independent fields $\wt h_p + m_{n-p}$ and the binding field
$\varphi^{\Delta_{p-2},\Delta_{p-2}\cap V^-_k}$.
As $\varphi^{\Delta_{p-2},\Delta_{p-2}\cap V^-_k}$ is with probability $1/2$ positive at the maximizer of $\wt h_p$, it follows that
\begin{equation}
\tfrac12\bbP_{k,v}^{n,u}\Big(\max_{A_{p}} \wt h_p  >t/3\Big)
\leq\bbP\Big(\max_{A_{p}} h^{\Delta_{p-2}}-m_{n-p}>t/3\Big)\,.
\end{equation}
The right-hand side is bounded from above by a Gaussian tail which follows from extreme value theory for the DGFF (Lemma~\ref{l:DGFF-ut}).
By Lemma~\ref{l:Ding}, the first probability on the right-hand side of~\eqref{e:dec-tail-l} is bounded by a constant times $\rme^{-t^{2-\eps}}$.
The third summand on the right-hand side of~\eqref{e:dec-tail-r} and~\eqref{e:dec-tail-l} has uniformly Gaussian tails by Lemma~\ref{l:dec-coupl}.

To bound the second summand on the right-hand side of~\eqref{e:dec-tail-r} and of~\eqref{e:dec-tail-l}, we plug~\eqref{e:R} into~\eqref{e:tDelta}, so as to obtain for $y\in A_{p}$ that
\begin{equation}
\label{e:dec-tail-23}
\Var_k^n \cD'_p(y)=\sum_{j=1}^{p-2}
\Var_k^n\Big(\varphi_j(y)-\frac{s_{p,\rt}}{s_{j,\rt}}\cX_j\Big)
+\Ind_{\{p\geq 2\}}\Var_k^n\varphi_0(y)
+\Var_k^n\Big(\frac{s_{p,\rt}}{s_{p-1,\rt}}\cX_{p-1}\Big)
+\Var_k^n\cX_{p}\,.
\end{equation}
By~\eqref{e:Xm} and~\eqref{e:phi-rw}, the first sum on the right-hand side is bounded by
\begin{multline}
\label{e:dec-tail-Pois}
\sum_{j=1}^{p-2}\sum_{w,w'\in\partial \Delta_j}
\Big|\Pi_{\Delta_{j}\cap V^-_k}(y,w) -\frac{s_{p,\rt}}{s_{j,\rt}}\Pi_{\Delta_j}(0,w)\Big|\\
\times\Big|\Pi_{\Delta_j\cap V^-_k}(y,w')-\frac{s_{p,\rt}}{s_{j,\rt}}\Pi_{\Delta_j}(0,w')\Big|
\Cov_k^n\big(\varphi_j(w),\varphi_j(w')\big)\,,
\end{multline}
and the other terms on the right-hand side of~\eqref{e:dec-tail-23} are bounded by a constant by Lemma~\ref{l:varXm}.
Again by Lemma~\ref{l:varXm},
\begin{equation}
\label{e:p-dec-tail-jlogj}
\frac{s_{p,\rt}}{s_{j,\rt}}\leq \frac{\rt+1-p +C_{\eps}(1+\log(\rt +1-p))}{\rt+1-j-C_{\eps}(1+\log(\rt +1-j))}
\leq \frac{\rt+1-p}{\rt+1-j}+C_{\eps}\frac{1+\log(\rt +1-j)}{\rt+1-j}\,.
\end{equation}
Using~\eqref{e:p-dec-tail-jlogj}, the analogous lower bound, and Lemma~\ref{l:Poisson},
we bound the absolute differences in~\eqref{e:dec-tail-Pois} by the left-hand side of
\begin{equation}
\label{e:p-dectail-lPoisson}
\bigg[\frac{1}{p-j} + \bigg|
\frac{s_{p,\rt}}{s_{j,\rt}} - \frac{\rt+1-p}{\rt+1-j} \bigg|\bigg]
\Pi_{\Delta_j}(0,w)
\leq C_{\eps}\,\frac{1+\log(p-j)}{p-j}
\Pi_{\Delta_j}(0,w)
\end{equation}
so as to obtain that the triple sum in~\eqref{e:dec-tail-Pois} is bounded by
a constant times $\sum_{j=1}^{p-2}{(p-j)^{-2+1/10}}\Var_k^n \cX_j$.
By Lemma~\ref{l:varXm}, it follows that $\Var_k^n\cD'_p(y)$ bounded by a constant.
Hence, the Borell-TIS inequality (see e.\,g.\ Theorem~2.1.1 in~\cite{AdlerTaylor}) shows that
\begin{equation}
\bbP_{k,v}^{n,u}\Big(\Big|\max_{A_{p}}\cD'_p -
\bbE_{k,v}^{n,u}\big(\max_{A_{p}}\cD'_p\big)
\Big|> t \Big)
\leq C_{\eps}\rme^{-c_{\eps}t^2}
\end{equation}
for all $t>0$, $U\in\frU^\eta_\eps$, $V\in\frV_\eps$, $0\leq k<n$, $p=1,\ldots,\rt$.

Now we show that $\bbE_{k,v}^{n,u}\max_{A_{p}}\cD'_p$ is bounded by a constant.
From Definitions~\eqref{e:R} and~\eqref{e:tDelta}, we have $\max_{y\in A_{p}}\cD'_1(y)=\cY_1=\cX_1$ which is a centered Gaussian whose variance is bounded by a constant by Lemma~\ref{l:varXm}. For $p\geq 2$, we have the following bound for the (squared) intrinsic metric:
For $x,y\in A_{p}$,
\begin{multline}
\bbE_{k,v}^{n,u}\big[\big(\cD'_p(x)-\cD'_p(y)\big)^2\big]
= \sum_{j=0}^{p-2}
\bbE\big[\big( \varphi^{\Delta_{j-1}\cap V^-_k, \Delta_{j}\cap V^-_k}(x)
-\varphi^{\Delta_{j-1}\cap V^-_k, \Delta_{j}\cap V^-_k}(y)\big)^2\big]\\
= \sum_{j=0}^{p-2}\sum_{z,z'\in\partial \Delta_{j}}\big[\Pi_{\Delta_{j}\cap  V^-_k}(x,z)-\Pi_{\Delta_{j}\cap V^-_k}(y,z)\big]
\big[\Pi_{\Delta_{j}\cap V^-_k}(x,z')-\Pi_{\Delta_{j}\cap V^-_k}(y,z')\big]\\
\times \bbE\big[ h^{\Delta_{j-1}\cap V^-_k}(z) h^{\Delta_{j-1}\cap V^-_k}(z')\big]
\label{e:dectailfin}
\end{multline}
where we used the definition of $\varphi_{0,p-2}$, the Gibbs-Markov property, and \eqref{e:phi-rw}.
Applying Theorem~6.3.8 of~\cite{LaLi} as in the proof of Lemma~\ref{l:metr} in Section~\ref{s:p-compl},
we see that the differences of the Poisson kernels in the last display are bounded (in absolute value) by a constant times
$\rme^{-(n-j)}|x-y|\Pi_{\Delta_{j}\cap V^-_k}(x,z)$
and
$\rme^{-(n-j)}|x-y|\Pi_{\Delta_{j}\cap V^-_k}(x,z')$, respectively.
Hence,~\eqref{e:dectailfin} is further bounded from above by a constant times
\begin{equation}
\sum_{j=0}^{p-2}\frac{|x-y|^2}{\rme^{2(n-j)}}
\sum_{z,z'\in\partial \Delta_{j}}
\Pi_{\Delta_{j}}(x,z)\Pi_{\Delta_{j}}(y,z)\bbE\big[ h^{\Delta_{j-1}\cap V^-_k}(z) h^{\Delta_{j-1}\cap V^-_k}(z')\big]
\leq C_{\eps}\frac{|x-y|^2}{\rme^{2(n-p)}}\,,
\end{equation}
where we used that on the left-hand side, the covariance is nonnegative, and the sum over $z,z'$ is bounded by a constant as it is equal to $\sigma(x,y)$ in Proposition~\ref{p:Cov-bd}.

By Fernique majorization (see e.\,g.\ Theorem~4.1 in \cite{Adler}) with the uniform probability measure on $A_{p}$ as the majorizing measure, it follows that the second terms on the right-hand sides of~\eqref{e:dec-tail-r} and~\eqref{e:dec-tail-l} are uniformly upper bounded by a Gaussian tail, which completes the verification of~\ref{i.a3}.
\end{proof}

Next we treat the full decoration process $(\cD_p)_{p=1}^{\rt}$.
\begin{lem}
\label{l:gamma}
For each $\eps\in(0,1)$, there exists $\delta\in(0,\tfrac13)$ such that $(\cS_p)_{p=0}^\rt$, $(\cD_p)_{p=1}^\rt$ satisfies the bounds in~\ref{i.a3} with $a=\ol{u}(0)$, $b=\ol{v}(\infty)$ for all $\eta,\zeta \in[0,\eps^{-1}]$, $U\in\frU_\eps^\eta$, $V\in\frV_\eps$, $0\leq k<n$ with $T_{n-k}\geq 1$ and $u\in\bbR^{\partial U_n}$, $v\in\bbR^{\partial V^-_k}$ that satisfy~\eqref{e:uv-diff}, under the identification 
$\bfP = \bfP_{0,\ol{u}(0)}^{\rt,\ol{v}(\infty)} \equiv \bbP_{k,v}^{n,u}$.
\end{lem}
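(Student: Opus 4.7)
The plan is to reduce the tail estimate for the full decoration $\cD_p$ to the one already established for $\wt\cD_p$ in Lemma~\ref{l:dec-tail}, by proving that the deterministic shift $\gamma$ defined in~\eqref{e:corr-bd} satisfies
\begin{equation*}
\|\gamma\|_{L^\infty(A_p)}\leq C_\eps\big(1+\log(1+p\wedge(\rt-p))\big)
\end{equation*}
uniformly under the assumptions of the lemma. Indeed, $\cD_p$ and $\wt\cD_p$ are maxima over $A_p$ of the same random function with and without the deterministic summand $\gamma(y)$ inside the brackets, so the deterministic sandwich $\min_{A_p}\gamma\leq\cD_p-\wt\cD_p\leq\max_{A_p}\gamma$ holds, and such an $L^\infty$ bound will transfer~\ref{i.a3} from $\wt\cD_p$ to $\cD_p$ after shrinking $\delta$ to some smaller $\delta'<\delta$ so that $(\delta')^{-1}(p\wedge(\rt-p))^{1/2-\delta'}$ absorbs the logarithmic shift at the threshold.

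\textbf{Splitting $\gamma$ and bounding $G_m$.} I would decompose $\gamma=G_m+G_{uv}$ with
\begin{equation*}
G_m(y):=\he{-m_n\Ind_{\partial U_n}-m_k\Ind_{\partial V^-_k}}{\partial U_n\cup\partial V^-_k}(y)+m_{n-p},\quad G_{uv}(y):=\he{u\Ind_{\partial U_n}+v\Ind_{\partial V^-_k}}{\partial U_n\cup\partial V^-_k}(y)-\beta_p.
\end{equation*}
For $y\in A_p$, set $L:=n-p$ and $\alpha(y):=P_y\big(S_{\tau^{U_n\cap V^-_k}}\in\partial U_n\big)$. Poisson-kernel estimates of Appendix~\ref{s:tools} should provide $\alpha(y)=(L-k)/(n-k)+O_\eps\big((p\wedge(\rt-p))^{-1}\big)$. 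Substituting, and using the algebraic identity $(L-k)n+(n-L)k=L(n-k)$, the leading $2\sqrt g$-terms cancel with $m_{n-p}=m_L$, leaving
\begin{equation*}
G_m(y)=\tfrac34\sqrt g\,\Big[\tfrac{(L-k)\log n+(n-L)\log k}{n-k}-\log L\Big]+O_\eps(1),
\end{equation*}
which by concavity of $\log$ and elementary estimates is $O_\eps\big(1+\log(1+p\wedge(\rt-p))\big)$.

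\textbf{Bounding $G_{uv}$.} Here the key observation is a cancellation. Decomposing $u=\ol{u}(0)\Ind_{\partial U_n}+(u-\ol{u}(0)\Ind_{\partial U_n})$ and analogously for $v$, the contribution to the harmonic extension of the constant boundary parts is $\ol{u}(0)\alpha(y)+\ol{v}(\infty)(1-\alpha(y))$; using $s_{1,p}/s_{1,\rt}+s_{p+1,\rt}/s_{1,\rt}=1$, subtracting $\beta_p$ leaves $(\ol{u}(0)-\ol{v}(\infty))\big(\alpha(y)-s_{p+1,\rt}/s_{1,\rt}\big)$. Combining the estimate on $\alpha$ with Lemma~\ref{l:varXm} bounds this weight difference by $O_\eps\big((1+\log\rt)/\rt\big)$, so multiplying by $|\ol{u}(0)-\ol{v}(\infty)|\leq(n-k)^{1-\eps}$ from~\eqref{e:uv-diff} gives $o_\eps(1)$. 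The contribution of the oscillation parts of $u$ and $v$ is controlled by $\osc\,\ol{u}_\eta+\osc\,\ol{v}_\zeta\leq 2\eps^{-1}$ via maximum-principle arguments for harmonic extensions on the bulk region $A_p$ (with $p\geq 1$). Altogether $\|G_{uv}\|_{L^\infty(A_p)}=O_\eps(1)$, which combined with the $G_m$-estimate yields the desired $\gamma$-bound.

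\textbf{Main obstacle.} The principal technical difficulty will be obtaining the discrete harmonic-measure estimate $\alpha(y)=(L-k)/(n-k)+O_\eps(\ldots)$ uniformly on the rough domain class $\frU^\eta_\eps\times\frV_\eps$; this is where the regularity hypothesis $\rmB(0,\eps)\subset U^\eta$ built into~\eqref{e:def-UV} enters, via the Poisson-kernel estimates of Appendix~\ref{s:tools}. The cancellation in the $G_{uv}$ step — by which $\ol{u}(0)$ and $\ol{v}(\infty)$ enter only through their difference — is precisely the mechanism making the joint bound $|\ol{u}(0)-\ol{v}(\infty)|\leq(n-k)^{1-\eps}$ from~\eqref{e:uv-diff} sufficient, rather than requiring individual bounds on $\ol{u}(0)$ and $\ol{v}(\infty)$ separately.
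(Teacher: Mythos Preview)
Your plan is essentially the paper's proof: reduce to bounding $\|\gamma\|_{L^\infty(A_p)}$ and then invoke Lemma~\ref{l:dec-tail}. The paper organizes the computation slightly differently---writing $\gamma(y)=\bbE_{k,v}^{n,u}h(y)+m_{n-p}-\beta_p$ and comparing $\bbE_{k,v}^{n,u}h(y)$ to $\wh m_{n-p}+[(T-p)\ol u(0)+p\ol v(\infty)]/T$ via Lemmas~\ref{l:UV-U}, \ref{l:ruin}\ref{i:ruin-C} and~\ref{l:blml}---but the ingredients and the cancellation you identify are the same.

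One quantitative slip: the error you state for $\alpha(y)$, namely $O_\eps\big((p\wedge(\rt-p))^{-1}\big)$, is too weak for the $G_m$ step, since it gets multiplied by $m_n-m_k\asymp n-k$; for $p$ near $1$ or $\rt-1$ the propagated error would be of order $n-k$, not $O(1)$. What Lemma~\ref{l:ruin}\ref{i:ruin-C} actually gives is the stronger $\alpha(y)=(\log|y|-k)/(n-k)+O_\eps\big((n-k)^{-1}\big)$; with this the error after multiplying by $m_n-m_k$ is $O_\eps(1)$, and both your $G_m$ and $G_{uv}$ estimates go through exactly as you outline.
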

\begin{proof}
Thanks to Lemma~\ref{l:dec-tail}, it will turn out sufficient to bound $\gamma$ on $A_p$.
By~\eqref{e:corr-bd}, we have $\gamma(y)=\bbE_{k,v}^{n,u}h(y)+m_{n-p}-\beta_p$ for $y\in A_p$.
We now use the Definition~\eqref{e:bell} of $\beta_p$ and linearity in the following estimate:
\begin{equation}
\label{e:gamma2}
\bigg|\beta_p-
\frac{(\rt -p)\ol{u}(0)  + p\ol{v}(\infty) }{\rt}\bigg|
= \bigg|\frac{s_{1,p}}{s_{1,\rt}}-\frac{p}{\rt}\bigg|
\big|\ol{v}(\infty)-\ol{u}(0)\big| 
\leq C_{\eps} \big|\ol{v}(\infty)-\ol{u}(0)\big|\frac{\log \rt}{\rt}
\end{equation}
where we also used that
\begin{equation}
\label{e:s-int}
\frac{s_{1,p}}{s_{1,\rt}}\leq\frac{p+C_{\eps}(1+\log(1+ \rt -p))}{\rt-C_{\eps}}\leq \frac{p}{\rt}+C_{\eps}\frac{\log \rt}{\rt}\,,\qquad
\frac{s_{1,p}}{s_{1,\rt}}\geq \frac{p}{\rt}-C_{\eps}\frac{\log \rt}{\rt}
\end{equation}
by Lemma~\ref{l:varXm}.
Furthermore, by combining Lemmas~\ref{l:UV-U} and~\ref{l:ruin}\ref{i:ruin-C} (as in~\eqref{e:E0} in the proof of Proposition~\ref{l:E}) we obtain
\begin{equation}
\bigg|\bbE_{k,v}^{n,u} h(y) - \frac{ T-p  }{T}\ol{u}(0) - \frac{p}{T}\ol{v}(\infty) -\wh{m}_{n-p}\bigg|
\leq 2\osc\,\ol{u}_\eta+2\osc\,\ol{v}_\zeta+C_\eps+C_\eps\frac{|\ol{u}(0)|+|\ol{v}(\infty)|}{n-k}
\end{equation}
as $n-k-\rt$ and $\big|n-p-\log|y|\big|$ are bounded by a constant.
Combining the above estimates and bounding $|m_{n-p}-\wh m_{n-p}|\leq \log \wedge_{T,p}$ by Lemma~\ref{l:blml}, we obtain that
\begin{equation}
\label{e:bd-gamma}
|\gamma(y)|\leq 2\osc_{U^\eta_n}\ol{u}+2\osc_{V^{-,\zeta}_k}\ol{v}+\delta^{-1}+\delta^{-1}\wedge_{\rt,p}^\delta+\delta^{-1}\big|\ol{v}(\infty)-\ol{u}(0)\big|\frac{\log \rt}{\rt}
\end{equation}
for sufficiently small $\delta$. The assertion now follows from Lemma~\ref{l:dec-tail} as
\begin{equation}
\bbP_{k,v}^{n,u}\big(|\cD_p|>t+\delta^{-1}+\delta^{-1}\wedge_{\rt,p}^{1/2-\delta}\big)\leq
\bbP_{k,v}^{n,u}\big(|\wt \cD_p|>t\big)
\end{equation}
for sufficiently small $\delta>0$.
\end{proof}

Finally, we are ready for
\begin{proof}[Proof of Theorem~\ref{t:drw-i}]
The correspondence~\eqref{e:h-S} follows from the definitions in Section~\ref{s:def-DRW}.
Assumption~\ref{i.a1} is verified by~\eqref{e:CovS}, \eqref{e:defS}, \eqref{e:bell} and~\eqref{e:cS}.
For $\delta>0$ sufficiently small, depending only on $\eps$,
Lemma~\ref{l:varXm} shows that $\sigma_k\in(\delta,\delta^{-1})$ for $k=1,\ldots,\rt$. Assumptions~\ref{i.a2} and~\ref{i.a3} are verified by Lemmas~\ref{l:dep} and~\ref{l:gamma}.
\end{proof}

\subsection{Outward concentric decomposition and corresponding DRW}
\label{s:w-outw}
In this subsection we present another reduction from the DGFF ballot event to that involving a DRW, only that the DRW is defined using an outward concentric decomposition instead of the inward concentric decomposition which was used before. Here the conditioning is performed in the opposite order, so that scales are increasing. This results in a DRW $((\cS^{\rm o}_i)_{i=0}^\rt, (\cD^{\rm o}_i)_{i=1}^\rt)$ in which the starting and ending point are reversed: $\cS^{\rm o}_0 = \ol{v}(\infty)$, $\cS^{\rm o}_T = \ol{u}(0)$. The reduction is summarized in Theorem~\ref{t:drw-o} which is the analog of Theorem~\ref{t:drw-i}.

In analog to Subsection~\ref{s:concdec}, we assume that $T_{n-k}\geq 1$ and define
the concentric sets
\begin{multline}
\Delta^{\rm o}_{-1}=V^-_k\,,\quad \Delta^{\rm o}_0= V^{-,\zeta}_k\,,\quad
\Delta_p = \rmB^-_{k+\lceil \log(\eps^{-1}+\zeta)\rceil+p}\quad\text{for }p=1,\ldots,\rt -1\,,\quad
\Delta^{\rm o}_\rt = (U^{\eta}_n)^\rmc\,,\\
\Delta^{\rm o}_{\rt+1}=(U_n)^\rmc
\end{multline}
which are nested, $\Delta^{\rm o}_{-1}\supset \ldots \supset \Delta^{\rm o}_{\rt+1}$.
Furthermore, we define the sets
\begin{equation}
\Delta'^{\rm o}_{-1}=(V^-_k)^\rmc\,,\quad \Delta'^{\rm o}_0= (V^{\zeta,-}_k)^\rmc\,,\quad
\Delta'^{\rm o}_p = \rmB_{k+\lceil \log(\eps^{-1}+\zeta)\rceil + p}\quad\text{for }p=1,\ldots,\rt -1\,,\quad
\Delta'^{\rm o}_\rt = U^{\eta}_n
\end{equation}
For $p=0,\ldots,\rt$, let $h^{\rm o}_p$ be a DGFF on $\rA^{\rm o}_p:=\Delta'^{\rm o}_p\cap\Delta^{\rm o}_{p-1}$ with boundary values zero.
Moreover, we define $J^{\rm o}_p:=\rA^{\rm o}_p\cup(\Delta^{\rm o}_p\cap U_n)$ for $p=0,\ldots,\rt-1$, $J^{\rm o}_{\rt}=\rA^{\rm o}_\rt$, and let $\varphi^{\rm o}_{p}$ be distributed as $\heb{h''_p}{\bbZ^2 \setminus J^{\rm o}_p}$ where $h''_p$ is a DGFF on $\Delta^{\rm o}_{p-1}\cap U_n$ with boundary values $0$ (and, according to our notational conventions, equal to zero on $\bbZ^2\setminus(\Delta^{\rm o}_{p-1}\cap U_n)$).
We note that $\heb{h''_p}{\bbZ^2\setminus J^{\rm o}_p}$ is the binding field from $\Delta^{\rm o}_{p-1}\cap U_n$ to $J^{\rm o}_p$, as defined in the Gibbs-Markov decomposition (Lemma~\ref{l:GM}).
We assume that the random fields $\varphi^{\rm o}_0,\ldots,\varphi^{\rm o}_{\rt},h^{\rm o}_0,\ldots, h^{\rm o}_{\rt}$ are independent.

With the annuli $A^{\rm o}_p:=\Delta^{\rm o}_{p-1}\setminus\Delta^{\rm o}_{p}$, $p=0,\ldots,\rt+1$, which form a disjoint covering of $U_n\cap V^-_k$ and
satisfy $A^{\rm o}_p\supset \rA^{\rm o}_p$ for $p\leq T$, we have the following analog of Proposition~\ref{p:concdec-in}:
\begin{prop}[Outward concentric decomposition]
\label{p:concdec-out}
Assume that $\rt=\rt_{n-k}\geq 1$. There exists a coupling of $h^{U_n\cap V^-_k}$ and $(\varphi^{\rm o}_p,h^{\rm o}_p)_{p=0}^{\rt}$ such that
\begin{equation}
\label{e:concdec-out}
h^{U_n\cap V^-_k}(x)=\sum_{p=0}^{\rt}\big(\varphi^{\rm o}_p(x)+h^{\rm o}_p(x)\big)=\sum_{p=0}^{q\wedge\rt}\varphi^{\rm o}_p(x)+h^{\rm o}_q(x)1_{q\leq\rt}
\end{equation}
for $x\in A^{\rm o}_{q}$, $q=0,\ldots,\rt +1$.
\end{prop}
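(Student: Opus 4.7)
The plan is to mirror the proof of Proposition~\ref{p:concdec-in} essentially verbatim, only reversing the direction in which the Gibbs--Markov peeling is performed: instead of peeling annuli off the outer boundary $\partial U_n$ first, one peels them off starting from the inner boundary $\partial V^-_k$ and works outward through increasing scales.

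Concretely, I would proceed inductively by repeated application of the Gibbs--Markov property (Lemma~\ref{l:GM}). In the base step, $h^{U_n\cap V^-_k}=h^{\Delta^{\rm o}_{-1}\cap U_n}$ decomposes in distribution as the independent sum of the binding field $\varphi^{\rm o}_0$ from $\Delta^{\rm o}_{-1}\cap U_n$ to $J^{\rm o}_0$, plus a DGFF on $J^{\rm o}_0$ with zero boundary values. Since $J^{\rm o}_0=\rA^{\rm o}_0\cup(\Delta^{\rm o}_0\cap U_n)$ and these two sets lie on opposite sides of the boundary of $\rmB_{k+\lceil\log(\eps^{-1}+\zeta)\rceil}$, they are disconnected inside $J^{\rm o}_0$, so this DGFF factors as the sum of two independent DGFFs: one on $\rA^{\rm o}_0$, which we identify with $h^{\rm o}_0$, and one on $\Delta^{\rm o}_0\cap U_n$, to which the same Gibbs--Markov step is then applied with $J^{\rm o}_1$. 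Iterating $\rt+1$ times produces the sequence of fields $\varphi^{\rm o}_0,\ldots,\varphi^{\rm o}_\rt,h^{\rm o}_0,\ldots,h^{\rm o}_\rt$ with the required joint law.

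To read off~\eqref{e:concdec-out} on a given annulus $A^{\rm o}_q$, one just uses the support properties already recorded in the construction: $\varphi^{\rm o}_p$ is supported in $\Delta^{\rm o}_{p-1}\cap U_n$, which is disjoint from $A^{\rm o}_q$ whenever $p>q$, and $h^{\rm o}_p$ is supported in $\rA^{\rm o}_p\subset A^{\rm o}_p$, hence vanishes on $A^{\rm o}_q$ unless $p=q\leq\rt$. Thus on $A^{\rm o}_q$ with $q\leq\rt$ only $\varphi^{\rm o}_0,\ldots,\varphi^{\rm o}_q$ and $h^{\rm o}_q$ contribute, while on $A^{\rm o}_{\rt+1}$ only the $\varphi^{\rm o}_p$'s do, which is precisely~\eqref{e:concdec-out}. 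I do not expect any substantive obstacle: the argument is a pure notational transcription of the proof of Proposition~\ref{p:concdec-in} with the roles of inward and outward exchanged. The only bookkeeping that needs a moment's verification is the factorization of the DGFF on $J^{\rm o}_p$ into two independent pieces at each step, which is immediate since $\rA^{\rm o}_p\subset\rmB_{k+\lceil\log(\eps^{-1}+\zeta)\rceil+p}$ while $\Delta^{\rm o}_p\cap U_n\subset\rmB^-_{k+\lceil\log(\eps^{-1}+\zeta)\rceil+p}$, and by the definition of the discretized ball and its complement these two sets have no adjacent lattice points.
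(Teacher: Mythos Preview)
Your approach is correct and is exactly what the paper does: its proof is the single sentence ``We proceed as in the proof of Proposition~\ref{p:concdec-in} by applying the Gibbs-Markov property first to $h^{\rm o}_0$ and $\rA^{\rm o}_0$,'' and you have simply spelled out the iteration. One small slip: for $p=0$ the separating set is not the ball $\rmB_{k+\lceil\log(\eps^{-1}+\zeta)\rceil}$ but rather $\partial V^{-,\zeta}_k$ (i.e., $\rA^{\rm o}_0\subset\Delta'^{\rm o}_0$ and $\Delta^{\rm o}_0=V^{-,\zeta}_k$ are disconnected by the same Remark~\ref{r:bd-pm} mechanism), though this does not affect your argument.
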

\begin{proof}
We proceed as in the proof of Proposition~\ref{p:concdec-in} by applying the Gibbs-Markov property first to $h^{\rm o}_0$ and $\rA^{\rm o}_0$.
\end{proof}

We represent the DGFF on $U_n\cap V^-_k$ analogously to Section~\ref{s:def-DRW}, now building on the outward concentric decomposition.
We assume that $h$ under $\bbP_{k,v}^{n,u}$ is coupled to $(\varphi^{\rm o}_p,h^{\rm o}_p)_{p=1}^{\rt}$ from Proposition~\ref{p:concdec-out} such that
\begin{equation}
\label{e:hphih-o}
h=\sum_{p=0}^{\rt+1}\big(\varphi_p^{\rm o}+h_p^{\rm o}\big)+
\he{(-m_n\Ind_{\partial U_n}+u -m_k\Ind_{\partial V^-_k} + v)}{\partial U_n\cup\partial V^-_k}\,.
\end{equation}

We define the harmonic average
\begin{equation}
\label{e:Xm-o}
\cX^{\rm o}_p=\sum_{z\in\partial A^{\rm o}_p \cup \partial \Delta_p^{\rm o}}\varphi_p^{\rm o}(z)\Pi_{\Delta_p^{\rm o}}(\infty,z)
=\heb{\varphi_p^{\rm o}}{\partial \Delta_p^{\rm o}}(\infty)
\end{equation}
and denote its variance by
$\sigma_p^{\rm o, 2}:=\Var \cX_p^{\rm o}$. Writing $s_{p,q}^{\rm o}=\sum_{i=p}^q \sigma_i^{\rm o,2}$, we set
\begin{equation}
\beta_p^{\rm o}=\frac{s_{1+p,\rt}^{\rm o}}{s_{1,\rt}^{\rm o}}\ol{v}(\infty)+\frac{s_{1,p}^{\rm o}}{s_{1,\rt}^{\rm o}}\ol{u}(0)\,,
\end{equation}
and we define $(\cS'^{\rm o}_p)_{p=0}^\rt$ from $(\cX_p^{\rm o})_{p=1}^\rt$ and independent Brownian bridges in the same way as $(\cS'_p)_{p=0}^\rt$ was defined from $(\cX_p)_{p=1}^\rt$ in~\eqref{e:deftS} and~\eqref{e:defS}.
Moreover, we define
\begin{equation}
\label{e:corr-bd-o}
\gamma^{\rm o}(y)=\he{(-m_{k}\Ind_{\partial V^-_k}+v)}{\partial U_n\cup \partial V^-_k}(y)+\he{(-m_n\Ind_{\partial U_n}+u)}{\partial U_n\cup \partial V^-_k}(y)
+m_{k+p}-\beta_p^{\rm o}\quad
\text{for }y\in A_{p}^{\rm o}\,.
\end{equation}	
We set
\begin{equation}
\label{e:Dell-o}
\cD_p^{\rm o}=\max_{y\in A_{p}^{\rm o}}\big\{h_p^{\rm o}(y)-m_{k+p}+\varphi_{0,p}^{\rm o}(y)-\cS'^{\rm o}_p
+\gamma^{\rm o}(y)\big\}
\end{equation}
and $\cS^{\rm o}_p=\cS'^{\rm o}_p +  \beta^{\rm o}_p$.
Then, in analogy to Theorem~\ref{t:drw-i}, we have:
\begin{thm}
\label{t:drw-o}
Let $\eps\in(0,1)$.
Then there exists $\delta\in(0,1/3)$ such that for all
$\eta,\zeta \in[0,\eps^{-1}]$, $U\in\frU^\eta_\eps$, $V\in\frV_\eps$, $0\leq n<k$ with $T_{n-k}\geq 1$,
$u\in\bbR^{\partial U_n}$, $v\in\bbR^{\partial V^-_k}$ that satisfy~\eqref{e:uv-diff}, we have that $(\cS^{\rm o}_i)_{i=0}^\rt$,
$(\cD^{\rm o}_i)_{i=1}^\rt$
satisfies Assumptions~\ref{i.a1} --~\ref{i.a3} with $\delta$, $a=\ol{v}(\infty)$ and $b=\ol{u}(0)$ under the identification 
$\bfP = \bfP_{0,\ol{v}(\infty)}^{\rt,\ol{u}(0)} \equiv \bbP_{k,v}^{n,u}$. Moreover,
\begin{equation}
\label{e:h-S-o}
\{\cS^{\rm o}_{p} + \cD_p^{\rm o}\leq 0\}
=\{h_{A^{\rm o}_p}\leq 0\big\}
\text{ for } p=1,\ldots,\rt\,,\quad\text{and}\quad
\bigcap_{p=1}^{\rt}\{\cS^{\rm o}_{p} + \cD_p^{\rm o}\leq 0\}
=
\big\{h_{U^\eta_n\cap V^{-,\zeta}_k}\leq 0\big\}\,.
\end{equation}
\end{thm}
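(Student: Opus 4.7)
The plan is to mirror the four-step proof of Theorem~\ref{t:drw-i}, replacing ``harmonic measure at $0$'' by ``harmonic measure at $\infty$'' and inward radii $n-p$ by outward radii $k+p$. I would verify four ingredients in order: the event identity~\eqref{e:h-S-o}; the Gaussian bridge structure~\ref{i.a1}; the Markov dependence~\ref{i.a2}; and the decoration tail~\ref{i.a3}.

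The event identity is a direct computation. By~\eqref{e:hphih-o} and~\eqref{e:concdec-out}, for $y\in A^{\rm o}_p$ with $p\in\{1,\ldots,\rt\}$ the value $h(y)$ equals $\varphi^{\rm o}_{0,p}(y)+h^{\rm o}_p(y)+\he{(-m_n\Ind_{\partial U_n}+u-m_k\Ind_{\partial V^-_k}+v)}{\partial U_n\cup\partial V^-_k}(y)$, and by~\eqref{e:corr-bd-o} the last summand equals $\gamma^{\rm o}(y)-m_{k+p}+\beta^{\rm o}_p$. Substituting into~\eqref{e:Dell-o} yields $\max_{A^{\rm o}_p}h\leq 0\Longleftrightarrow\cD^{\rm o}_p\leq -\cS'^{\rm o}_p-\beta^{\rm o}_p=-\cS^{\rm o}_p$, which gives the first identity of~\eqref{e:h-S-o}; the second follows by intersecting over $p=1,\ldots,\rt$, since $\bigcup_{p=1}^\rt A^{\rm o}_p=U^\eta_n\cap V^{-,\zeta}_k$.

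For~\ref{i.a1}, the Brownian-bridge computation~\eqref{e:CovS} applied to $\cW^{\rm o}$ shows that $(\cS'^{\rm o}_p)_{p=0}^\rt$ is a centered Gaussian random-walk bridge from $0$ to $0$ with step variances $(\sigma^{\rm o,2}_p)$; adding the affine interpolant $\beta^{\rm o}_p$ shifts the endpoints to $\ol{v}(\infty)$ and $\ol{u}(0)$. The bounds $\sigma^{\rm o,2}_p\in(\delta,\delta^{-1})$ and $|s^{\rm o}_{p,q}-g(q-p+1)|=O_\eps(1+\log(1+\rt-p)-\log(1+\rt-q))$ are the outward analog of Lemma~\ref{l:varXm}, proved by the same strategy using the infinity-centered Poisson kernel and the potential-kernel asymptotics~\eqref{e:444}--\eqref{e:462}. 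For~\ref{i.a2}, I repeat the argument of Lemma~\ref{l:dep} verbatim: filter by $\cF^{\rm o}_p:=\sigma(\cW^{\rm o}_t,\,t\leq s^{\rm o}_{1,p};\,\varphi^{\rm o}_1,\ldots,\varphi^{\rm o}_p,h^{\rm o}_1,\ldots,h^{\rm o}_p)$; the Gibbs--Markov structure behind Proposition~\ref{p:concdec-out} makes the later binding and bulk fields independent of $\cF^{\rm o}_p$, while $(\cS'^{\rm o}_j-\cS'^{\rm o}_p)_{j>p}$ is measurable with respect to the future Brownian increments and $\cS'^{\rm o}_p$, yielding the required conditional independence given $\cS^{\rm o}_p$.

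For~\ref{i.a3}, I introduce $\cY^{\rm o}_p:=\sum_{j=1}^p(s^{\rm o}_{p,\rt}/s^{\rm o}_{j,\rt})\cX^{\rm o}_j$ and $\wt\cD^{\rm o}_p:=\max_{A^{\rm o}_p}\{\varphi^{\rm o}_{0,p}-\cS'^{\rm o}_p+h^{\rm o}_p-m_{k+p}\}$, and obtain the coupling bound $\Var(\cS'^{\rm o}_p-\cY^{\rm o}_p)\leq C_\eps(\rt-p+1)^{-1}$ by the Brownian-bridge computation of Lemma~\ref{l:dec-coupl}. The three-term union bound~\eqref{e:dec-tail-r}--\eqref{e:dec-tail-l} then reduces the tail estimate for $\wt\cD^{\rm o}_p$ to: DGFF extreme-value theory (Lemmas~\ref{l:DGFF-ut} and~\ref{l:Ding}); Borell--TIS applied to the centered field $\cD'^{\rm o}_p$ once $\Var\cD'^{\rm o}_p(y)$ is bounded via the Poisson-kernel comparison~\eqref{e:p-dec-tail-jlogj}--\eqref{e:p-dectail-lPoisson}, with Fernique majorization controlling the mean via the outward analog of~\eqref{e:dectailfin} (where the inward factor $\rme^{-(n-j)}|x-y|$ is replaced by an appropriate outward factor tied to the distance from $A^{\rm o}_p$ to $\partial\Delta^{\rm o}_j$); and the Gaussian coupling bound on $\cS'^{\rm o}_p-\cY^{\rm o}_p$. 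The deterministic correction $\gamma^{\rm o}$ on $A^{\rm o}_p$ is bounded as in Lemma~\ref{l:gamma}, combining the $(\log\rt)/\rt$ estimate~\eqref{e:gamma2}--\eqref{e:s-int} for $\beta^{\rm o}_p-((\rt-p)\ol{v}(\infty)+p\ol{u}(0))/\rt$ with the harmonic-extension bounds of Lemmas~\ref{l:UV-U} and~\ref{l:ruin}\ref{i:ruin-C} and the correction $|m_{k+p}-\wh m_{k+p}|\leq\log\wedge_{\rt,p}$ of Lemma~\ref{l:blml}. The hard part will be the systematic infinity-centered recasting of the origin-centered harmonic analysis underlying Lemma~\ref{l:varXm} and the Poisson-kernel comparison; once these outward analogs are established, every subsequent estimate runs through verbatim with $n-p$ replaced by $k+p$ and the roles of $\ol{u}(0)$ and $\ol{v}(\infty)$ interchanged.
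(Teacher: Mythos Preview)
Your proposal is correct and follows essentially the same approach as the paper: mirror the proof of Theorem~\ref{t:drw-i} step by step, replacing origin-centered quantities by their infinity-centered analogs and inward scales $n-p$ by outward scales $k+p$. The paper's own proof is slightly more specific about which auxiliary results get swapped---it notes that Lemmas~\ref{l:Poisson} and~\ref{l:Poisson-out} are interchanged, that the second variance in the outward analog of~\eqref{e:VarXj} is handled via~\eqref{e:Cov-bdg-U} rather than~\eqref{e:Cov-bdg-V}, and that the variants~\eqref{e:A1-o} and~\eqref{e:DingV-o} of the DGFF tail lemmas are used---but these are exactly the ``infinity-centered recasting'' ingredients you identify as the hard part.
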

When a DGFF $h^D$ on an infinite domain $D\subsetneq\bbZ^2$ with boundary values $u\in\bbR^{\partial D}$ arises in the setting of the outward concentric decomposition, we define it as the Gaussian field on $\ol D$ with mean and covariance given by \eqref{e:1.1a}, where $\ol{w}$ denotes the bounded harmonic extension of $w$ to $\bbZ^2$.
\begin{proof}[Proof of Theorem~\ref{t:drw-o}]
The correspondence~\eqref{e:h-S-o} again follows from the definitions above.
Assumptions~\ref{i.a1} --~\ref{i.a3} are verified along the lines of Section~\ref{s:concdec} in the same way as for Theorem~\ref{t:drw-i}.
We always use the objects defined in this subsection, e.\,g.\  $A_{p}$ in place of $A^{\rm o}_{p}$.
Equation~\eqref{e:VarXj} now reads
\begin{equation}
\Var_k^n \cX^{\rm o}_j=\Var\he{{h^{\Delta^{\rm o}_{j-1}}}}{\partial \Delta^{\rm o}_j}(\infty)
-\Var\hebb{\heb{h^{\Delta^{\rm o}_{j-1}}}{\partial \Delta^{\rm o}_{j-1}\cap U_n}}{\partial A^{\rm o}_{j}}(\infty)\,,
\end{equation}
where the second variance is bounded by~\eqref{e:Cov-bdg-U} by a constant times $(\rt -j+2)^{-1}$. We also interchange the use of Lemmas~\ref{l:Poisson} and~\ref{l:Poisson-out}, and we now use~\eqref{e:A1-o} and~\eqref{e:DingV-o} in Lemmas~\ref{l:DGFF-ut} and~\ref{l:Ding}, respectively. In the analog of Lemma~\ref{l:varXm}, it suffices to assume that $n-k\to\infty$ and $k\to\infty$ for the convergence of $\Var \cX_{p}^{\rm o}$ as this will only be used in Lemma~\ref{l:SDconv-o} below.
\end{proof}

\section{The ballot upper bound}
\label{s:proof-DGFF}
In this section we prove Theorem~\ref{t:2.3}. Following the strategy which was outlined in Subsection~\ref{ss:ProofOutline}, we first prove a weak version of the upper bound (Subsection~\ref{ss:weak}), which is a direct consequence of a similar bound for the DRW ballot probability. We then show (Subsection~\ref{s:ub-dec}) that, conditional on the ballot event, the harmonic extension of the values of the field at an intermediate scale is entropically repelled in the bulk with high probability and otherwise well behaved. This is then used (Subsection~\ref{ss:3.3}) to prove the full version of the theorem via ``stiching'' as explained in the proof outline. 
 
\subsection{A weak upper bound}
\label{ss:weak}
As a first consequence of the reduction to the DRW (Theorems~\ref{t:drw-i} and~\ref{t:drw-o}), one obtains the following weaker version of Theorem~\ref{t:2.3}, where the boundary values on either the inner or outer domains are sufficiently low. This follows from a corresponding weak upper bound on the ballot event for the DRW (Theorem~\ref{t:3.2}). We recall that $u_*$ and $v_*$ were defined in~\eqref{e:1.22}.

\begin{prop}
\label{p:1.3}
Let $\eps\in(0,1)$. There exist $C = C_{\eps} < \infty$ and $\theta=\theta_\eps\in(0,\eps/3)$, such that for all $\eta,\zeta \in[0,\eps^{-1}]$, $0\leq k<n$, $U\in\frU^\eta_\eps$, $V\in\frV_\eps$, and all $u\in\bbR^{\partial U_n}$, $v\in\bbR^{\partial V^-_k}$ with
$\max\{u_*^-,v_*^-\} \geq (n-k)^\eps$,
we have
\begin{equation}
\label{e:2.4-w}
\bbP_{k,v}^{n,u} \Big( h_{(\theta\rmB)_n\cap(\theta^{-1}\rmB^-)_k} \leq 0 \Big) \leq 
C \frac{\big(u_*^- + 1  \big)
\big(v_*^- + 1 \big)}{n-k}\,.
\end{equation}
\end{prop}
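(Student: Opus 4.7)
\textit{Proof plan.} The strategy is to reduce the DGFF ballot event to a DRW ballot event via Theorem~\ref{t:drw-i} or Theorem~\ref{t:drw-o}, and then to invoke a corresponding weak ballot upper bound for the DRW which I expect to establish in Appendix~\ref{s:3} as Theorem~\ref{t:3.2}. By the symmetry between the inward and outward concentric decompositions, it suffices to treat the case $v_*^-\geq(n-k)^\eps$; the case $u_*^-\geq(n-k)^\eps$ is treated identically with Theorem~\ref{t:drw-o} in place of Theorem~\ref{t:drw-i}. Since the right-hand side of~\eqref{e:2.4-w} is trivially at least $1$ once $(u_*^-+1)(v_*^-+1)\geq n-k$, I may further restrict to the regime where both $u_*^-,v_*^-\leq c(n-k)$ for a suitable $c=c_\eps$.

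In the principal subcase, where the hypothesis~\eqref{e:uv-diff} of Theorem~\ref{t:drw-i} holds, the theorem produces a DRW $(\cS_p,\cD_p)_{p=0}^{\rt}$ with endpoints $a=\ol{u}(0)$, $b=\ol{v}(\infty)$ satisfying~\ref{i.a1}--\ref{i.a3}. Choosing $\theta=\theta_\eps>0$ small enough that $(\theta\rmB)_n\cap(\theta^{-1}\rmB^-)_k \subset U_n^\eta\cap V_k^{-,\zeta}$, the correspondence~\eqref{e:h-S} yields
\[
\bbP_{k,v}^{n,u}\bigl(h_{(\theta\rmB)_n\cap(\theta^{-1}\rmB^-)_k}\leq 0\bigr) \leq \bfP\Bigl(\bigcap_{p=1}^{\rt}\{\cS_p+\cD_p\leq 0\}\Bigr).
\]
The assumption $v_*^-\geq(n-k)^\eps$ together with $\osc\,\ol{v}_\zeta\leq\eps^{-1}$ forces $b\leq-(n-k)^\eps/2$ for large $n-k$, which places the DRW in the weak regime for Theorem~\ref{t:3.2} and yields a bound of the form $C(a^-+1)(b^-+1)/\rt$. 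Since $\osc\,\ol{u}_\eta\leq\eps^{-1}$ gives $a^-\leq u_*^-+2\eps^{-1}$, and similarly $b^-\leq v_*^-+2\eps^{-1}$, while $\rt=n-k+O_\eps(1)$, this recovers~\eqref{e:2.4-w} after adjusting the constants.

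The residual subcases, in which some condition in~\eqref{e:uv-diff} fails, are straightforward: if one of the oscillations exceeds $\eps^{-1}$, shifting the corresponding boundary data by a constant reduces to the bounded-oscillation setting while increasing the relevant $u_*^-$ or $v_*^-$ on the right-hand side by only an additive $O_\eps(1)$; if $\ol{u}(0)^+$ or $\ol{v}(\infty)^+$ exceeds $\eps^{-1}$ the ballot event is further suppressed by a Gaussian factor which can be absorbed via Lemma~\ref{l:DGFF-ut}; and if $|\ol{u}(0)-\ol{v}(\infty)|>(n-k)^{1-\eps}$, then since in the remaining nontrivial regime the larger of the two is bounded above by $\eps^{-1}$, the smaller must be very negative, forcing one of $u_*^-$, $v_*^-$ to exceed $(n-k)^{1-\eps}/2$ and rendering \eqref{e:2.4-w} trivial. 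The main obstacle I foresee is Theorem~\ref{t:3.2} itself: establishing a sharp barrier-type upper bound for a non-homogeneous Gaussian random walk with stretched-exponentially tight decorations, conditioned to end at a very negative value, is a technically delicate if standard-flavored calculation. Once it is in hand, the reduction of Section~\ref{s:weak} converts the DGFF ballot problem into a direct application, and the rest is bookkeeping.
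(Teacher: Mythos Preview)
Your overall plan---reduce the DGFF ballot event to a DRW ballot event via Theorem~\ref{t:drw-i} (resp.\ Theorem~\ref{t:drw-o}) and then invoke the DRW upper bound Theorem~\ref{t:3.2}---is exactly the paper's approach. However, there is a genuine gap in how you verify the hypothesis~\eqref{e:uv-diff} of Theorem~\ref{t:drw-i}.

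The proposition places \emph{no assumption} on $\osc\,\ol{u}_\eta$ or $\osc\,\ol{v}_\zeta$; the bound is stated only in terms of $u_*,v_*$. Yet Theorem~\ref{t:drw-i} requires $\osc\,\ol{u}_\eta,\osc\,\ol{v}_\zeta\leq\eps^{-1}$. Your proposed fix---``shifting the corresponding boundary data by a constant reduces to the bounded-oscillation setting''---does not work: shifting by a constant leaves the oscillation unchanged. The paper resolves this cleanly by first invoking the monotonicity reduction of Lemma~\ref{l:UVmin} (a consequence of Lemmas~\ref{l:UV-U} and~\ref{l:osc-far}), which gives
\[
\bbP_{V,k,v}^{U,n,u}\big(h_{(\theta\rmB)_n\cap(\theta^{-1}\rmB^-)_k}\leq 0\big)\ \leq\ \bbP_{V,k,v_*}^{U,n,u_*}\big(h_{(\theta\rmB)_n\cap(\theta^{-1}\rmB^-)_k}\leq 0\big).
\]
With the \emph{constant} boundary data $u_*,v_*$ the oscillations vanish, one may take $u_*,v_*\leq 0$ without loss of generality, and the only condition in~\eqref{e:uv-diff} that can fail is $|u_*-v_*|\leq(n-k)^{1-\eps}$---which, as you correctly observe, renders the bound trivial. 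This single step replaces all of your ``residual subcases''.

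There is also a smaller slip: your displayed inequality
\[
\bbP_{k,v}^{n,u}\bigl(h_{(\theta\rmB)_n\cap(\theta^{-1}\rmB^-)_k}\leq 0\bigr)\ \leq\ \bfP\Bigl(\bigcap_{p=1}^{\rt}\{\cS_p+\cD_p\leq 0\}\Bigr)
\]
points the wrong way, since restricting $h\leq 0$ to the \emph{smaller} set $(\theta\rmB)_n\cap(\theta^{-1}\rmB^-)_k$ is a \emph{weaker} constraint than the full DRW ballot. The correct move is to pick $p_*$ (depending only on $\eps$) so that $\Delta_{p_*-1}\setminus\Delta_{\rt-p_*}\subset(\theta\rmB)_n\cap(\theta^{-1}\rmB^-)_k$, and then bound by $\bfP\bigl(\bigcap_{p=p_*}^{\rt-p_*}\{\cS_p+\cD_p\leq 0\}\bigr)$. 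Theorem~\ref{t:3.2} is stated over the truncated index range $[\lceil\delta^{-1}\rceil,\lfloor\rt-\delta^{-1}\rfloor]$ precisely to accommodate this.
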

\begin{rem}
\label{r:p:1.3alt}
We note that $U\in\frU^\eta_\eps$, $V\in\frV_\eps$, $\zeta\in[0,\eps^{-1}]$ and $\theta<\eps/3$ imply that $(\theta\rmB)_n\cap(\theta^{-1}\rmB^-)_k \subset U^\eta_n\cap V^{-,\zeta}_k$. Therefore, the right hand side in~\eqref{e:2.4-w} is also an upper bound for $\bbP_{k,v}^{n,u} (h_{U_{n}^{\eta} \cap V^{-,\zeta}_{k}} \leq 0)$ under the conditions of the proposition. Also, as we show in the proof below, the following alternate formulations of Proposition~\ref{p:1.3} hold under the same assumptions but without the restriction on $ \max\{u_*^-,v_*^-\}$:
\begin{equation}
\label{e:2.4-w-av}
\bbP_{k,v}^{n,u} \Big(   h_{(\theta\rmB)_n\cap(\theta^{-1}\rmB^-)_k}  \leq 0 \Big) \leq 
C \frac{\big(u_*^- + 1  \big)
\big(v_*^- + (n-k)^\eps \big)}{n-k}\,,
\end{equation}
\begin{equation}
\label{e:2.4-w-au}
\bbP_{k,v}^{n,u} \Big(  h_{(\theta\rmB)_n\cap(\theta^{-1}\rmB^-)_k} \leq 0 \Big) \leq 
C \frac{\big(u_*^- + (n-k)^\eps  \big)
\big(v_*^- + 1 \big)}{n-k}\,.
\end{equation}
\end{rem}

The following lemma conveniently reduces the general case to that involving constant boundary conditions. It will be used in various places in the sequel, including in the proof of Proposition~\ref{p:1.3}. 
\begin{lem}
\label{l:UVmin}
Let $\eps\in(0,1)$. There exist $C=C_\eps<\infty$ and $\theta=\theta_\eps\in(0,\eps/3)$ such that for all $\eta,\zeta\in[0,\eps^{-1}]$, $0\leq k \leq n-C$, $U\in\frU^\eta_\eps$, $V\in\frV_\eps$, $u\in\bbR^{\partial U_n}$, $v\in\bbR^{\partial V^-_k}$, we have
\begin{equation}
\bbP_{V,k,v}^{U,n,u}\big(h_{ (\theta\rmB)_n\cap(\theta^{-1}\rmB^-)_k}\leq 0\big)\leq
\bbP_{V,k,v_{*}}^{U,n,u_{*}}\big(h_{ (\theta\rmB)_n\cap(\theta^{-1}\rmB^-)_k}\leq 0\big)\,.
\end{equation}
\end{lem}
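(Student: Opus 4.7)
The proof is a direct monotone-coupling argument. Under both $\bbP_{V,k,v}^{U,n,u}$ and $\bbP_{V,k,v_*}^{U,n,u_*}$, the field $h$ is Gaussian on $\ol{U_n\cap V^-_k}$ with the common covariance $G_{U_n\cap V^-_k}$; only the mean, given by the bounded harmonic extension of the boundary data, differs between the two laws. On a common probability space I can therefore write $h^{u,v}=h^0+\mu_{u,v}$ and $h^{u_*,v_*}=h^0+\mu_{u_*,v_*}$ for the same centered Gaussian $h^0$, so that if the pointwise mean comparison
\begin{equation*}
\mu_{u,v}(y)\geq \mu_{u_*,v_*}(y),\qquad y\in D^*:=(\theta\rmB)_n\cap(\theta^{-1}\rmB^-)_k,
\end{equation*}
holds, then $h^{u,v}\geq h^{u_*,v_*}$ pointwise on $D^*$ under this coupling. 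The decreasing event $\{h_{D^*}\leq 0\}$ then satisfies $\{h^{u,v}_{D^*}\leq 0\}\subset \{h^{u_*,v_*}_{D^*}\leq 0\}$, yielding the lemma.

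To verify the mean inequality, I write
\begin{equation*}
\mu_{u,v}(y)-\mu_{u_*,v_*}(y)=\heb{(u-u_*)\Ind_{\partial U_n}+(v-v_*)\Ind_{\partial V^-_k}}{\partial U_n\cup\partial V^-_k}(y)
\end{equation*}
and use that $\ol u$ is bounded harmonic on $\bbZ^2\setminus\partial U_n$: optional stopping at $\tau:=\tau^{U_n\cap V^-_k}$ gives $E_y[u(S_\tau)\Ind_{S_\tau\in\partial U_n}]=\ol u(y)-E_y[\ol u(S_\tau)\Ind_{S_\tau\in\partial V^-_k}]$, and analogously for $\ol v$. Setting $P_y=P_y(S_\tau\in\partial U_n)$, $Q_y=1-P_y$, and plugging in $u_*=\ol u(0)-2\osc\,\ol u_\eta$ and $v_*=\ol v(\infty)-2\osc\,\ol v_\zeta$, the mean difference becomes
\begin{equation*}
\bigl(\ol u(y)-\ol u(0)\bigr)+E_y\!\left[(\ol u(0)-\ol u(S_\tau))\Ind_{S_\tau\in\partial V^-_k}\right]+2\osc\,\ol u_\eta\,P_y \;+\; (\text{symmetric $v$-terms}).
\end{equation*}

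The final ingredient is a pair of discrete Harnack-type decay estimates from Appendix~\ref{s:tools}: for $y\in U^\eta_n$ with $|y|\leq\theta\rme^n$, $|\ol u(y)-\ol u(0)|\leq C_\eps(|y|\rme^{-n})\osc\,\ol u_\eta$, and for $y\in V^{-,\zeta}_k$ with $|y|\geq\theta^{-1}\rme^k$, $|\ol v(y)-\ol v(\infty)|\leq C_\eps(\rme^k|y|^{-1})\osc\,\ol v_\zeta$; moreover, for $S_\tau\in\partial V^-_k$ one has $|\ol u(S_\tau)-\ol u(0)|=O_\eps(\rme^{k-n}\osc\,\ol u_\eta)$ and for $S_\tau\in\partial U_n$ one has $|\ol v(S_\tau)-\ol v(\infty)|=O_\eps(\rme^{k-n}\osc\,\ol v_\zeta)$. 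Combined with the standard two-dimensional exit-probability lower bounds $P_y,Q_y\gtrsim_\eps|\log\theta|/(n-k)$ available on $D^*$, the two displayed remainders are dominated by the positive contributions $2\osc\,\ol u_\eta\,P_y$ and $2\osc\,\ol v_\zeta\,Q_y$ respectively, once $\theta=\theta_\eps$ is fixed sufficiently small and $n-k\geq C_\eps$.

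The principal obstacle is to pair each Harnack error with a matching exit-probability lower bound uniformly over $D^*$: near the inner boundary the $u$-error $|y|\rme^{-n}$ is exponentially smaller than $P_y$, while $Q_y$ is near $1$ and comfortably absorbs the $v$-error $\rme^k|y|^{-1}\leq\theta$; the situation near the outer boundary is symmetric. The factor $2$ in the definitions of $u_*,v_*$ provides precisely the slack needed to make the mean comparison hold on all of $D^*$.
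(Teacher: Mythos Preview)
Your proof is correct and follows the same route as the paper: reduce to a pointwise mean comparison via the common-covariance coupling, then verify that comparison using the optional-stopping decomposition of the harmonic extension together with Harnack-type decay of $\ol u$ and $\ol v$ away from their respective boundaries and exit-probability lower bounds.

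The only difference is organizational. The paper's proof is a one-liner citing Lemma~\ref{l:UV-U} (your optional-stopping identity) and Lemma~\ref{l:osc-far}. The point is that Lemma~\ref{l:osc-far} already packages the Harnack decay and the exit-probability comparison in exactly the form needed: for $x\in(\theta\rmB)_n\cap(\theta^{-1}\rmB^-)_k$ and $n-k$ large,
\[
\osc_{\{x\}\cup\partial V^-_k}\ol u \;\leq\; P_x\bigl(\tau^{U_n}<\tau^{V^-_k}\bigr)\,\osc\,\ol u_\eta,
\qquad
\osc_{\{x\}\cup\partial U_n}\ol v \;\leq\; P_x\bigl(\tau^{V^-_k}<\tau^{U_n}\bigr)\,\osc\,\ol v_\zeta.
\]
Plugging these into Lemma~\ref{l:UV-U} gives directly $\he{u}{\partial U_n\cup\partial V^-_k}(x)\geq P_x\,(\ol u(x)-\osc\,\ol u_\eta)\geq P_x\,u_*$ (using $x,0\in U^\eta_n$), and similarly for $v$, so the mean comparison follows without the inner/outer boundary case analysis you carry out. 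Your final paragraph is effectively a re-derivation of these two inequalities from Lemma~\ref{l:osc-far}; it works, but the paper's packaging is cleaner and avoids the sketchy ``principal obstacle'' discussion.
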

\begin{proof}
This follows from Lemmas~\ref{l:UV-U} and~\ref{l:osc-far} as $h$ under $\bbP_{k,v}^{n,u}$ is distributed as $h+\he{(u+v)}{\partial U_n\cup\partial V^-_k}$ under $\bbP_{k,0}^{n,0}$.
\end{proof}

\begin{proof}[Proof of Proposition~\ref{p:1.3}]
By Lemma~\ref{l:UVmin}, there exists $\theta<\eps/3$ with
\begin{equation}
\label{e:ub-mon}
\bbP_{k,v}^{n,u} \Big( h_{U^\eta_n \cap V^{-,\zeta}_k} \leq 0 \Big)\leq
\bbP_{k,v_{*}}^{n,u_{*}} \Big( h_{ (\theta\rmB)_n\cap(\theta^{-1}\rmB^-)_k} \leq 0 \Big)\,,
\end{equation}
whenever $n-k$ is sufficiently large, which we can assume.
We can also assume w.\,l.\,o.\,g.\ that $\max\{u_{*},v_{*}\}\leq 0$.

In case $|u_{*} -  v_{*}| \leq (n-k)^{1-\eps/2}$ and $-v_{*}>(n-k)^\eps$, Theorem~\ref{t:drw-i} is applicable with $\eps/2$ in place of $\eps$, $\delta = \delta(\epsilon/2) \in (0,1/3)$, $a=u_{*}$, $b=v_{*}$, so that~\eqref{e:2.4-w} and~\eqref{e:2.4-w-av} follow from the upper bound in Theorem~\ref{t:3.2}
applied to
\begin{equation}
\bfP_{0,u_{*}}^{\rt, v_{*}}\Big(\bigcap_{p=p_*}^{\rt-p_*}\{\cS_p+\cD_p\leq 0\}\Big)\,.
\end{equation}
Here we choose $p_*$ such that $\Delta_{p_*-1}\setminus\Delta_{T-p_*}\subset  (\theta\rmB)_n\cap(\theta^{-1}\rmB^-)_k$ and assume that $n-k$ is sufficiently large such that $T\geq 2p_*+1$.
In case $|u_{*}-v_{*}|>(n-k)^{1-\eps/2}$ and $- v_{*} >(n-k)^\eps$, the right-hand sides in~\eqref{e:2.4-w} and~\eqref{e:2.4-w-av} are bounded from below by $C(n-k)^{\eps/2}$, which is further bounded from below by $1$ for $n-k\geq 1$.

The corresponding cases with $-u_{*}>(n-k)^\eps$, as well as~\eqref{e:2.4-w-au}, are handled in the same way by using Theorem~\ref{t:drw-o} in place of Theorem~\ref{t:drw-i}.
\end{proof}

\subsection{Entropic repulsion at an intermediate scale}
\label{s:ub-dec}

In order to derive Theorem~\ref{t:2.3} we will condition on the values of the DGFF $h$ at the boundary of a centered ball at an intermediate scale and use the weak ballot estimate from the previous subsection on each of the resulting inner and outer domains. For the latter to be in effect, we need to control the harmonic extension of the values of the boundary of the intermediate ball. In particular, we need to show that it is typically (entropically) repelled below 0 and that its oscillations are not too large. 

To this end, we set for $l,M\geq 0$, $\eps>0$,
\begin{equation}
\label{e:4.2}
E_{ l,M,\eps}:=\Big\{w\in\bbR^{\partial \rmB^\pm_l}:\:
- \wedge_{n,l,k}^{1-\eps} \leq \ol{w}(0) \leq - \wedge_{n,l,k}^{\eps},\,
\osc_{\rmB^{\pm,\eps}_l}\ol{w} \leq M\Big\}\,,
\end{equation}
where we set $\wedge_{n,l,k}:=(n-l)\wedge(l-k)$. We then show,
\begin{prop}
\label{l:stitch-restr}
Let $\eps\in(0,\tfrac{1}{10})$.
Then there exist $C=C_{\eps}<\infty$ and $c=c_{\eps}>0$ such that for all $M\geq 0$, $\eta,\zeta \in[0,\eps^{-1}]$, $U\in\frU^\eta_\eps$, $V\in\frV_\eps$, all $0\leq k < l < n$ with
$l\in k+ (n-k)[\eps,1-\eps]$,
and all
$u\in\bbR^{\partial U_n}$, $v\in\bbR^{\partial V^-_k}$ with $\max\{|\ol{u}(0)|,|\ol{v}(\infty)|,\osc_{U^\eta_n}\ol{u},\osc_{V^{-,\zeta}_k}\ol{v}\}\leq \wedge_{n,l,k}^{\eps}$,
and $\theta=\theta_\eps\in(0,\eps/3)$ as in Lemma~\ref{l:UVmin},
we have
\begin{equation}
\label{e:stitch-restr}
\bbP_{V,k,v}^{U,n,u}\big(h_{ (\theta\rmB)_n\cap(\theta^{-1}\rmB^-)_k}\leq 0,
h_{\partial \rmB^\pm_l}+m_l\notin E_{l,M,\eps}\big)
\leq C\frac{\big(u_{*}
+1\big)
\big(v_{*}^- +1\big)}{n-k}
\Big(\wedge_{n,l,k}^{-3/2+4\eps}+\rme^{-cM^2}\Big)\,.
\end{equation}
\end{prop}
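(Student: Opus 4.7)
The plan is to condition on the field values at the intermediate boundary $\partial\rmB^\pm_l$, decouple the ballot event across this boundary via the Gibbs--Markov property, and then integrate over the ``bad'' values of the boundary data using Gaussian density estimates on the field. By Remark~\ref{r:bd-pm}, conditionally on $h_{\partial\rmB^\pm_l}=-m_l+w$ the restrictions of $h$ to $\rmB_l$ and to $\rmB_l^{\rmc}$ are independent DGFFs with Dirichlet data $(u,w_{\partial\rmB_l^-})$ and $(w_{\partial\rmB_l},v)$, respectively, so that
\begin{equation*}
\bbP_{V,k,v}^{U,n,u}\big(h_{(\theta\rmB)_n\cap(\theta^{-1}\rmB^-)_k}\leq 0,\; h_{\partial\rmB^\pm_l}+m_l\in\rmd w\big)
= P^{\rm out}(w)\,P^{\rm in}(w)\,\mu(\rmd w),
\end{equation*}
where $P^{\rm out}(w)$, $P^{\rm in}(w)$ are the ballot probabilities on $(\theta\rmB)_n\cap\rmB_l^-$ and $\rmB_l\cap(\theta^{-1}\rmB^-)_k$ with the corresponding conditional boundary data, and $\mu$ is the law of $h_{\partial\rmB^\pm_l}+m_l$ under $\bbP_{V,k,v}^{U,n,u}$. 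Via monotonicity in the domain (embedding $(\theta\rmB)_n\cap\rmB_l^-$ into $(\theta\rmB)_n\cap(\theta^{-1}\rmB^-)_l$, and analogously on the inner side), each of $P^{\rm out}$, $P^{\rm in}$ falls under Proposition~\ref{p:1.3}, and, to remove any sign assumption on $\ol w$, I use the alternate variants in Remark~\ref{r:p:1.3alt}; schematically, $P^{\rm out}(w)\lesssim(u_*^-+1)(\ol w(\infty)^-+(n-l)^\eps)/(n-l)$ and $P^{\rm in}(w)\lesssim(\ol w(0)^-+(l-k)^\eps)(v_*^-+1)/(l-k)$.

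Next, I control $\mu$ through Gaussian estimates on the marginals $\ol w(0)$, $\ol w(\infty)$. Using the DGFF covariance bounds in the spirit of Lemmas~\ref{l:varXm} and~\ref{l:Cov-bdg}, together with the near-linearity of $m_l$ in the log-scale and the hypothesis $|\ol u(0)|,|\ol v(\infty)|,\osc\,\ol u_\eta,\osc\,\ol v_\zeta\leq \wedge^{\eps}$, the variables $\ol w(0)$ and $\ol w(\infty)$ are Gaussian with mean $O(\wedge^\eps+\log\wedge)$ and variance of order $\wedge$. Moreover $\ol w$ restricted to the bulk $\rmB^{\pm,\eps}_l$ has uniformly bounded intrinsic metric, so Borell--TIS, applied as in the proof of Lemma~\ref{l:dec-tail}, gives a Gaussian tail with scale $O(1)$ for the oscillation $\osc_{\rmB^{\pm,\eps}_l}\ol w$.

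Finally, I split $\{w\notin E_{l,M,\eps}\}$ into three sub-events: $\ol w(0)>-\wedge^\eps$, $\ol w(0)<-\wedge^{1-\eps}$, and $\osc_{\rmB^{\pm,\eps}_l}\ol w>M$, and bound the integral of $P^{\rm out}(w)P^{\rm in}(w)$ against $\mu$ on each. The oscillation sub-event yields the $\rme^{-cM^2}$ contribution directly via Borell--TIS together with the unconditional ballot bound from Proposition~\ref{p:1.3}. The event $\{\ol w(0)<-\wedge^{1-\eps}\}$ has $\mu$-probability at most $\rme^{-c\wedge^{1-2\eps}}$, which easily absorbs the linear growth $(\ol w(0)^-+1)$ in the ballot bound. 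The main case $\{\ol w(0)>-\wedge^\eps\}$ forces $\ol w(0)^-,\ol w(\infty)^-\leq \wedge^\eps+M$, so that the inner and outer factors are dominated by $(n-l)^\eps$ and $(l-k)^\eps$, respectively; integrating the resulting bound against the essentially one-dimensional joint density of $(\ol w(0),\ol w(\infty))$ (of order $\wedge^{-1/2}$ along the near-diagonal) and using $(n-l)(l-k)\geq \wedge(n-k)/2$ delivers the polynomial improvement $\wedge^{-3/2+4\eps}/(n-k)$. The hardest step is this last case, where getting the precise exponent $-3/2+4\eps$ requires choosing the sharper variant of Remark~\ref{r:p:1.3alt} across different ranges of $\ol w$ and carefully tracking the interplay between the $(n-l)^\eps,(l-k)^\eps$ terms and the Gaussian density of $(\ol w(0),\ol w(\infty))$.
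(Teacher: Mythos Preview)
Your overall strategy --- disintegrate on $\partial\rmB^\pm_l$ via Gibbs--Markov, bound each ballot piece by Proposition~\ref{p:1.3} and the variants in Remark~\ref{r:p:1.3alt}, then integrate against the Gaussian law of the boundary data --- is exactly what the paper does. Your treatment of $\{\ol w(0)<-\wedge^{1-\eps}\}$ and of $\{\osc_{\rmB^{\pm,\eps}_l}\ol w>M\}$ is in line with the paper's Lemmas~\ref{l:sr-} and~\ref{l:sr-osc} (though ``unconditional ballot bound'' in the latter needs to be spelled out as the product of the two \emph{weak} bounds followed by a second-moment computation).

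There is, however, a genuine gap in your ``main case'' $\{\ol w(0)>-\wedge^\eps\}$. You correctly note that on this set (intersected with $\{\osc\le M\}$) the weak upper bounds give
\[
P^{\rm out}(w)\,P^{\rm in}(w)\ \lesssim\ \frac{(u_*^-+1)(v_*^-+1)\,\wedge^{2\eps}}{(n-l)(l-k)}
\ \asymp\ \frac{(u_*^-+1)(v_*^-+1)}{n-k}\,\wedge^{-1+2\eps},
\]
independently of $w$. You then multiply by the \emph{pointwise density} $\wedge^{-1/2}$ of $\ol w(0)$, but what enters is the \emph{measure} $\mu\big(\ol w(0)>-\wedge^\eps\big)$, and this is $\Theta(1)$: the mean of $\ol w(0)$ is $O(\wedge^\eps)$ while its standard deviation is $\asymp\wedge^{1/2}$. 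Hence your argument only yields $\frac{(u_*^-+1)(v_*^-+1)}{n-k}\,\wedge^{-1+2\eps}$, which is too weak by a factor $\wedge^{1/2-2\eps}$.

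The paper circumvents this by splitting $\{\ol w(0)>-\wedge^\eps\}$ further into $E_2=\{-\wedge^\eps<\ol w(0)\le\wedge^{2\eps}\}$ and $E_4=\{\wedge^{2\eps}<\ol w(0)\le\wedge^{1-\eps}\}$. On $E_2$ your density heuristic is actually correct: the range has length $O(\wedge^{2\eps})$, so $\mu(E_2)\lesssim\wedge^{-1/2+2\eps}$ (Lemma~\ref{l:sr+}), and the target $\wedge^{-3/2+4\eps}$ follows. The trouble is $E_4$, which carries $\mu$-mass $\Theta(1)$. Here the polynomial weak bound of Remark~\ref{r:p:1.3alt} is hopeless; the missing ingredient is Lemma~\ref{l:bdr-dec}, which says that a \emph{positive} boundary value makes the inner ballot probability decay almost-Gaussianly, $P^{\rm in}(w)\le C\exp\big\{-\big((\ol w(0)-2M)^+\big)^{2-\eps}\big\}$. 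Feeding this into the integral (Lemma~\ref{l:srpos}) kills the $E_4$ contribution outright. Without this input, no ``sharper variant'' of Remark~\ref{r:p:1.3alt} recovers the rate.
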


In order to prove the proposition, we decompose the complement of the set $E_{l,M,\eps}$ as
$E_{ l,M,\eps}^\rmc=E_1\cup E_2\cup E_3 \cup E_4$, where:
\begin{multline}
\label{e:E1234}
E_1=\big\{w\in\bbR^{\partial \rmB^\pm_l}:\: |\ol{w}(0)|> \wedge_{n,l,k}^{1-\eps} \big\}\,,\quad
E_2=\big\{w\in\bbR^{\partial \rmB^{\pm}_l}:\:-\wedge_{n, l,k}^{\eps}
< \ol{w}(0)\leq 
\wedge_{n, l,k}^{2\eps}\big\}\,,\\
E_3=\big\{w\in\bbR^{\partial \rmB^{\pm}_l}:\:\osc_{\rmB^{\pm,\eps}_l}\ol{w}>M\big\}\cap E_1^\rmc\cap E_2^\rmc\,,\quad
E_4=\big\{w\in\bbR^{\partial \rmB^{\pm}_l}:\:
\wedge_{n,l,k}^{2\eps}<\ol{w}(0)\leq \wedge_{n,l,k}^{1-\eps}\big\}
\cap E_3^\rmc\,,
\end{multline}
and treat each of the $E_i$-s separately. $E_1$ is handled by,
\begin{lem}
\label{l:sr-}
Let $\eps\in\big(0,\tfrac{1}{10}\big)$ and $\eta,\zeta\in[0,\eps^{-1}]$.
Then there exist $C=C_{\eps}<\infty$ and $c=c_\eps>0$ such that
\begin{equation}
\label{e:sr-}
\bbP_{k,v}^{n,u}\Big(
\he{h}{\partial \rmB^{\pm}_l}+m_l\in E_1
\Big)
\leq C \rme^{- c\wedge_{n,l,k}^{1-2\eps}}
\end{equation}
and
\begin{equation}
\label{e:sr-E}
\bbE_{k,v}^{n,u}\Big(\big(\he{h}{\partial \rmB^{\pm}_l}(0)+m_l\big)^-
+\osc_{\rmB^{\pm,\eps}_l}\he{h}{\partial \rmB^{\pm}_l}\,;
\he{h}{\partial \rmB^{\pm}_l}+m_l\in E_1
\Big)
\leq C \rme^{- c\wedge_{n,l,k}^{1-2\eps}}
\end{equation}
for all $U\in\frU^\eta_\eps$, $V\in\frV_\eps$, all $k,l,n\geq 0$ with
$\partial\rmB_l\subset U^{\eta\vee\eps}_n$ and $\partial\rmB^{-}_l\subset V^{-,\zeta\vee\eps}_k$, and all
$u\in\bbR^{\partial U_n}$, $v\in\bbR^{\partial V^-_k}$ with $\max\{|\ol{u}(0)|,|\ol{v}(\infty)|,\osc_{U^\eta_n}\ol{u},\osc_{V^{-,\zeta}_k}\ol{v}\}\leq \wedge_{n,l,k}^{1-2\eps}$.
\end{lem}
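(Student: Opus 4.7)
Plan. Under $\bbP_{k,v}^{n,u}$, the harmonic average $X := \he{h}{\partial\rmB^\pm_l}(0)$ is a linear functional of the Gaussian field $h_{\partial\rmB^\pm_l}$ and is therefore itself Gaussian. Both \eqref{e:sr-} and \eqref{e:sr-E} will follow once we have good quantitative bounds on the mean and variance of $X$. I would first estimate these two quantities, then read off the tail bound, and finally derive \eqref{e:sr-E} by combining the tail with a polynomial $L^2$ bound via Cauchy--Schwarz.

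For the mean, the commutation of harmonic extension with harmonic averaging (both being Poisson-kernel compositions) gives
\begin{equation*}
\bbE_{k,v}^{n,u} X \;=\; \he{\bigl(-m_n\Ind_{\partial U_n}+u-m_k\Ind_{\partial V^-_k}+v\bigr)}{\partial U_n\cup\partial V^-_k}(0).
\end{equation*}
Invoking Lemma~\ref{l:UV-U} (together with the near-linearity of harmonic functions on annuli in $\log|\cdot|$), one obtains
\begin{equation*}
\bbE_{k,v}^{n,u} X + m_l \;=\; \frac{(n-l)\ol u(0)+(l-k)\ol v(\infty)}{n-k} + O_\eps\bigl(1+\osc\,\ol u_\eta+\osc\,\ol v_\zeta\bigr),
\end{equation*}
so under the standing assumption the mean of $X+m_l$ is $O_\eps(\wedge_{n,l,k}^{1-2\eps})$ in absolute value. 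For the variance, I would use the representation $\Var X=\sum_{z,z'}\Pi(0,z)\Pi(0,z')G_{U_n\cap V^-_k}(z,z')$ and bound the Green function on $\partial\rmB^\pm_l$ by $g\wedge_{n,l,k}+C_\eps$, or, equivalently, recognize $X$ as the value of the bridge walk from the concentric decomposition at a time at distance $\wedge_{n,l,k}$ from both endpoints, so that its bridge variance is at most $g(l-k)(n-l)/(n-k)+C_\eps\leq g\wedge_{n,l,k}+C_\eps$.

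Given these two estimates, the Gaussian tail bound applied at level $t=\tfrac12\wedge_{n,l,k}^{1-\eps}$ gives
\begin{equation*}
\bbP_{k,v}^{n,u}(E_1)=\bbP_{k,v}^{n,u}\bigl(|X+m_l|>\wedge_{n,l,k}^{1-\eps}\bigr)\leq C_\eps\exp\bigl(-c_\eps\wedge_{n,l,k}^{1-2\eps}\bigr),
\end{equation*}
which is \eqref{e:sr-}. For \eqref{e:sr-E}, set $Y=(X+m_l)^-+\osc_{\rmB^{\pm,\eps}_l}\he{h}{\partial\rmB^\pm_l}$. Cauchy--Schwarz gives $\bbE_{k,v}^{n,u}[Y\Ind_{E_1}]\leq\sqrt{\bbE[Y^2]\,\bbP(E_1)}$. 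The variance estimate bounds the second moment of the first summand polynomially in $\wedge_{n,l,k}$; for the oscillation I would use Poisson-kernel regularity (Lemma~\ref{l:Poisson}) to show that for $x,y\in\rmB^{\pm,\eps}_l$ the increment $\he{h}{\partial\rmB^\pm_l}(x)-\he{h}{\partial\rmB^\pm_l}(y)$ is a centered Gaussian with bounded variance, and then conclude via Borell--TIS (plus the polynomial cardinality of $\rmB^{\pm,\eps}_l$) that the second moment of the oscillation grows at most polynomially in $\wedge_{n,l,k}$. The polynomial factor is absorbed into the exponential decay at the cost of shrinking $c_\eps$, yielding \eqref{e:sr-E}.

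The main obstacle is the sharp variance bound: one needs $\Var X \leq C_\eps\wedge_{n,l,k}$, rather than merely $C_\eps l$, because the required tail exponent is $\wedge^{1-2\eps}$. This is the quantitative manifestation of the fact that the Dirichlet data on both $\partial U_n$ and $\partial V^-_k$ simultaneously curtail the fluctuations of $X$; it should follow either directly from domain monotonicity comparing $U_n\cap V^-_k$ with an explicit annulus around $\partial\rmB^\pm_l$ or by combining inward and outward versions of the bridge-variance estimate underlying Lemma~\ref{l:Var-inw-1}.
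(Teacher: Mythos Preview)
Your approach is correct and closely parallels the paper's. Both proofs recognize $X=\he{h}{\partial\rmB^\pm_l}(0)$ as Gaussian and bound its mean and variance: the paper simply cites Proposition~\ref{l:E} (with Lemma~\ref{l:blml}) for the mean and Proposition~\ref{p:Cov-bd} for the variance, the latter giving exactly the bridge-variance estimate $\big|\sigma(0,0)-g(n-l)(l-k)/(n-k)\big|\leq C_\eps$ that you flag as the main obstacle --- so no additional argument is needed there. (Incidentally, your weights in the mean formula are swapped; compare Proposition~\ref{l:E}.)

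For~\eqref{e:sr-E} the paper takes a slightly different tack: instead of Cauchy--Schwarz, it integrates over the value $r$ of $X+m_l$, inserting the conditional tail of the oscillation given $X+m_l=r$ from Proposition~\ref{p:2.6}. Your Cauchy--Schwarz route is equally valid and arguably more direct, as it only needs an unconditional polynomial $L^2$ bound on the oscillation. For that bound, however, the right input is Lemma~\ref{l:metr} (the intrinsic-metric estimate feeding into the Fernique/Borell--TIS argument in the proof of Proposition~\ref{p:2.6}), not Lemma~\ref{l:Poisson}; the former yields $\bbE[\osc^2]\leq C_\eps$ for the centered part, and the mean contribution is at worst $O_\eps(\wedge_{n,l,k}^{1-2\eps})$, which is safely absorbed by the exponential.
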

\begin{proof}
The random variable $\he{h}{\partial \rmB^{\pm}_l}(0)+m_l$ under $\bbP_{k,v}^{n,u}$ is Gaussian. By Lemma~\ref{l:blml}, and Propositions~\ref{l:E} and~\ref{p:Cov-bd}, its mean is bounded (in absolute value) by $C_{\eps}+C_{\eps}\wedge_{n,l,k}^{1-2\eps}$, and its variance is bounded from above by $g\wedge_{n,l,k}+C_\eps$.
Hence, the left-hand side of~\eqref{e:sr-} is bounded by a constant times
\begin{equation}
\wedge_{n,l,k}^{-1/2}\int_{|r|\geq \wedge_{n,l,k}^{1-\eps}}
\rme^{-c_{\eps}\frac{r^2}{\wedge_{n,l,k}}}\rmd r
= C_{\eps}\int_{|r|\geq \wedge_{n,l,k}^{1/2-\eps}}
\rme^{-c_\eps r^2}\rmd r
\leq C_{\eps}\rme^{-c_\eps\wedge_{n,l,k}^{1-2\eps}}
\end{equation}
where the equality follows by changing the variable $r\wedge_{n,l,k}^{-1/2}$ to $r$.
The left-hand side of~\eqref{e:sr-E} is bounded by a constant times
\begin{multline}
\wedge_{n,l,k}^{-1/2}\int_{|r|\geq \wedge_{n,l,k}^{1-\eps}}
\rme^{-c_{\eps}\frac{r^2}{\wedge_{n,l,k}}}
\sum_{a=1}^\infty \bbP_{k,v}^{n,u}\Big(\osc_{\rmB^{\pm,\eps}_l}
\he{h}{\partial\rmB^\pm_l}\geq a-1\,\Big|\,
\he{h}{\partial\rmB^\pm_l}(0)+m_l=r\Big) (r^- + a)\,\rmd r\\
\leq C_\eps\wedge_{n,l,k}^{-1/2}\int_{|r|\geq \wedge_{n,l,k}^{1-\eps}}
\rme^{-c_{\eps}\frac{r^2}{\wedge_{n,l,k}}}
\sum_{a=1}^\infty
\rme^{-c_\eps\big[\big( a - \frac{|r|}{\wedge_{n,l,k}}\big)^+\big]^2} (r^- + a)\,\rmd r\\
\leq C_{\eps}\int_{|r|\geq \wedge_{n,l,k}^{1/2-\eps}}
\rme^{-c_\eps r^2}\rme^{-a+|r|}(\sqrt{\wedge_{n,l,k}}r^- + a)\rmd r
\leq C_{\eps}\rme^{-c_\eps\wedge_{n,l,k}^{1-2\eps}}
\end{multline}

\end{proof}

Next we treat $E_2$.
\begin{lem}
\label{l:sr+}
Let $\eps\in\big(0,\tfrac{1}{10}\big)$ and $\eta,\zeta\in [0,\eps^{-1}]$.
Then there exists $C=C_{\eps}<\infty$ such that
\begin{equation}
\label{e:sr+}
\bbE_{k,v}^{n,u}\Big[\Big(\big(\he{h}{\partial \rmB^{\pm}_l}(0)+m_l\big)^- +\osc_{\rmB^{\pm,\eps}_l}\he{h}{\partial \rmB^{\pm}_l}
+\wedge_{n, l,k}^\eps\Big)^2\,;
h_{\partial \rmB^{\pm}_l}+m_l\in E_2
\Big]
\leq C \wedge_{n, l,k}^{-1/2+4\eps}
\end{equation}
for all $U\in\frU^\eta_\eps$, $V\in\frV_\eps$, all $0\leq k<l<n$ with
$\partial \rmB_l\subset U^{\eta\vee\eps}_n$ and $\partial
\rmB^{-}_l\subset V^{-,\zeta\vee\eps}_k$, and all
$u\in\bbR^{\partial U_n}$, $v\in\bbR^{\partial V^-_k}$ with $\max\{|\ol{u}(0)|,|\ol{v}(\infty)|,\osc_{U^\eta_n}\ol{u},\osc_{V^{-,\zeta}_k}\ol{v}\}\leq \eps^{-1}\wedge_{n,l,k}$.
\end{lem}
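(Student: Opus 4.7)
The plan is to follow the Gaussian envelope method used for Lemma~\ref{l:sr-}, but with the event $E_2$ corresponding to $\he{h}{\partial\rmB^\pm_l}(0)+m_l$ lying in a narrow window around zero (of length $O(\wedge_{n,l,k}^{2\eps})$) rather than a tail region. Under $\bbP_{k,v}^{n,u}$ the random variable $\he{h}{\partial\rmB^\pm_l}(0)+m_l$ is Gaussian; by Lemma~\ref{l:blml} and Propositions~\ref{l:E} and~\ref{p:Cov-bd}, together with the hypothesis $\max\{|\ol{u}(0)|,|\ol{v}(\infty)|,\osc_{U^\eta_n}\ol u,\osc_{V^{-,\zeta}_k}\ol v\}\leq \eps^{-1}\wedge_{n,l,k}$, its mean is of order $O_\eps(\wedge_{n,l,k})$ and its variance is $g\wedge_{n,l,k}+O_\eps(1)$. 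Its density is therefore bounded uniformly by $C_\eps\wedge_{n,l,k}^{-1/2}$, so
\begin{equation*}
\bbP_{k,v}^{n,u}\big(h_{\partial\rmB^\pm_l}+m_l\in E_2\big)\leq C_\eps\wedge_{n,l,k}^{-1/2+2\eps}.
\end{equation*}

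On the event $E_2$ one has $(\he{h}{\partial\rmB^\pm_l}(0)+m_l)^-\leq \wedge_{n,l,k}^\eps$, so the integrand squared in~\eqref{e:sr+} is bounded by
\begin{equation*}
\big(2\wedge_{n,l,k}^\eps+\osc_{\rmB^{\pm,\eps}_l}\he{h}{\partial\rmB^\pm_l}\big)^2\leq 8\wedge_{n,l,k}^{2\eps}+2\big(\osc_{\rmB^{\pm,\eps}_l}\he{h}{\partial\rmB^\pm_l}\big)^2.
\end{equation*}
The first piece, multiplied by $\bbP_{k,v}^{n,u}(E_2)$, already gives $O_\eps(\wedge_{n,l,k}^{-1/2+4\eps})$, exactly the target bound. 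It remains to control the contribution of the oscillation piece.

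For the oscillation term I would reuse the conditional Gaussian tail estimate already employed in the display computation of Lemma~\ref{l:sr-}, namely
\begin{equation*}
\bbP_{k,v}^{n,u}\Big(\osc_{\rmB^{\pm,\eps}_l}\he{h}{\partial\rmB^\pm_l}\geq a\,\Big|\,\he{h}{\partial\rmB^\pm_l}(0)+m_l=r\Big)\leq C_\eps\rme^{-c_\eps[(a-|r|/\wedge_{n,l,k})^+]^2}.
\end{equation*}
Since $|r|\leq \wedge_{n,l,k}^{2\eps}$ on $E_2$, the shift $|r|/\wedge_{n,l,k}$ is $o(1)$, so conditionally on $\he{h}{\partial\rmB^\pm_l}(0)+m_l=r$ in this window the oscillation has uniformly bounded second moment. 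Integrating this against the Gaussian density of $\he{h}{\partial\rmB^\pm_l}(0)+m_l$ over the window gives an extra factor bounded by $\bbP_{k,v}^{n,u}(E_2)=O_\eps(\wedge_{n,l,k}^{-1/2+2\eps})$, which is absorbed into the target $O_\eps(\wedge_{n,l,k}^{-1/2+4\eps})$. The main point to secure is the conditional-oscillation subgaussian bound in the small-$|r|$ regime, but this was already an ingredient in Lemma~\ref{l:sr-} and its proof depends only on the conditional covariance structure of $\he{h}{\partial\rmB^\pm_l}$ (which is insensitive to $r$), so no new obstacle arises.
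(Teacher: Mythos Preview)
Your proposal is correct and follows essentially the same approach as the paper: condition on $r=\he{h}{\partial\rmB^\pm_l}(0)+m_l$, use the Gaussian density bound $\leq C_\eps\wedge_{n,l,k}^{-1/2}$ from Proposition~\ref{p:Cov-bd}, invoke the conditional oscillation tail of Proposition~\ref{p:2.6} (with $\mu\leq C_\eps$ since $|r|\leq\wedge_{n,l,k}^{2\eps}$ on $E_2$), and integrate over the window of length $O(\wedge_{n,l,k}^{2\eps})$. The only difference is organizational: the paper keeps the integrand as $[\wedge_{n,l,k}^\eps+r^-+a]^2$ inside a single sum-integral, whereas you split via $(A+B)^2\leq 2A^2+2B^2$ first; both routes yield the same $\wedge_{n,l,k}^{-1/2+4\eps}$ bound.
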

\begin{proof}
The left-hand side of~\eqref{e:sr+} is bounded from above by
\begin{multline}
\int_{r\in\big[-\wedge_{n, l,k}^{\eps},\wedge_{n, l,k}^{2\eps}\big]}\bbP_{k,v}^{n,u}\big(\he{h}{\partial \rmB^{\pm}_l}(0)+m_l\in\rmd r\big)\\
\times\sum_{{a}=0}^\infty\bbP_{k,v}^{n,u}\big(
\osc_{\rmB^{\pm,\eps}_l}\he{h}{\partial \rmB^{\pm}_l}\geq {a}
\,\big|\, \he{h}{\partial \rmB^{\pm}_l}(0)+m_l=r\big)\big[\wedge_{n,l,k}^\eps+r^- +a\big]^2\,.
\end{multline}
By Proposition~\ref{p:Cov-bd}, we bound the Gaussian density by a constant times $\wedge_{n, l,k}^{-1/2}$, and we plug in the conditional tail probability from Proposition~\ref{p:2.6}. Then the previous expression is bounded from above by a constant times
\begin{equation}
\label{e:sr-1}
\int_{r\in\big[-\wedge_{n, l,k}^{\eps},\wedge_{n, l,k}^{2\eps}\big]} 
\sum_{{a}=0}^\infty\rme^{-c_\eps{a}^2}
\big[\wedge_{n,l,k}^\eps+r^- +{a}\big]^2
\wedge_{n, l,k}^{-1/2}\,\rmd r\,.
\end{equation}
Bounding $r^-$ by $\wedge_{n,l,k}^\eps$ yields the assertion.
\end{proof}

For $E_3$ we have,
\begin{lem}
\label{l:sr-osc}
Let $\eps\in\big(0,\tfrac{1}{10}\big)$ and $\eta,\zeta\in[0,\eps^{-1}]$.
Then there exist $C=C_{\eps}<\infty$, $c=c_{\eps}>0$ such that
\begin{multline}
\label{e:sr-osc}
\bbE_{k,v}^{n,u}\Big[\Big(\big(\he{h}{\partial \rmB^{\pm}_l}(0)+m_l\big)^- +\osc_{\rmB^{\pm,\eps}_l}\he{h}{\partial \rmB^{\pm}_l}
+ \wedge_{n,l,k}^\eps \Big)^2
;\,h_{\partial \rmB^{\pm}_l}+m_l\in E_3 \Big]\\
\leq C\rme^{-cM^2}\bigg(1+\frac{(n- l)( l-k)}{n-k}+
\big(\|\ol{u}\|_{\bbL_\infty(U^\eta_n)}+
\|\ol{v}\|_{\bbL_\infty(V^{-,\zeta}_k)}\big)^2\bigg)
\end{multline}
and
\begin{multline}
\label{e:sr-osc2}
\bbE_{k,v}^{n,u}\Big[\big(\he{h}{\partial \rmB^{\pm}_l}(0)+m_l\big)^- +\osc_{\rmB^{\pm,\eps}_l}\he{h}{\partial \rmB^{\pm}_l}
+ 1
\,;\,\big| h_{\partial \rmB^{\pm}_l}(0)+m_l\big|\leq \wedge_{n,l,k},\,
\osc_{\rmB^{\pm,\eps}_l} h_{\partial \rmB^{\pm}_l} \geq M\Big]\\
\leq C\rme^{-cM^2}\bigg(1+\frac{(n- l)( l-k)}{n-k}+
\Big(\frac{l-k}{n-k}\ol{u}(0)+\frac{n-l}{n-k}\ol{v}(\infty)
+\osc_{U^\eta_n}\ol{u}+\osc_{V^{-,\zeta}_k}\ol{v}\Big)^2\bigg)
\end{multline}
for all $M\geq 0$, $U\in\frU^\eta_\eps$, $V\in\frV_\eps$ all $0 \leq k<l<n$ with
$\partial\rmB_l\subset U^{\eta\vee\eps}_n$ and $\partial\rmB^{-}_l\subset V^{-,\zeta\vee\eps}_k$, and all
$u\in\bbR^{\partial U_n}$, $v\in\bbR^{\partial V^-_k}$ with $\max\{|\ol{u}(0)|,|\ol{v}(\infty)|,\osc_{U^\eta_n}\ol{u},\osc_{V^{-,\zeta}_k}\ol{v}\}\leq\eps^{-1}\wedge_{n,l,k}$.
\end{lem}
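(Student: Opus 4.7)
\emph{Plan.} The strategy mirrors that of Lemmas~\ref{l:sr-} and~\ref{l:sr+}: condition on the value $r=\he{h}{\partial\rmB^\pm_l}(0)+m_l$ at the origin, integrate the corresponding Gaussian density against the integrand of the expectation, and sum the conditional oscillation tail from Proposition~\ref{p:2.6} over integer levels $a\geq M$. Under $\bbP_{k,v}^{n,u}$ the scalar $\he{h}{\partial\rmB^\pm_l}(0)+m_l$ is Gaussian; Lemma~\ref{l:blml} together with Propositions~\ref{l:E} and~\ref{p:Cov-bd} identify its mean as
\begin{equation}
\mu=\tfrac{l-k}{n-k}\ol{u}(0)+\tfrac{n-l}{n-k}\ol{v}(\infty)+O_\eps\bigl(\osc\,\ol{u}_\eta+\osc\,\ol{v}_\zeta+\log\wedge_{n,l,k}\bigr)
\end{equation}
and its variance as $\sigma^2=g\wedge_{n,l,k}+O_\eps(1)$, where $\wedge_{n,l,k}$ is comparable up to a factor of two to $\tfrac{(n-l)(l-k)}{n-k}$.

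For~\eqref{e:sr-osc}, the event $E_3\subset E_1^\rmc\cap E_2^\rmc$ confines $r$ to the set $[-\wedge_{n,l,k}^{1-\eps},\wedge_{n,l,k}^{1-\eps}]\setminus(-\wedge_{n,l,k}^\eps,\wedge_{n,l,k}^{2\eps}]$ and forces $\osc_{\rmB^{\pm,\eps}_l}\he{h}{\partial\rmB^\pm_l}>M$. I would rewrite the left hand side as
\begin{equation}
\int\rho(r)\sum_{a\geq\lceil M\rceil}\bbP_{k,v}^{n,u}\bigl(\osc_{\rmB^{\pm,\eps}_l}\he{h}{\partial\rmB^\pm_l}\in[a,a+1)\,\big|\,r\bigr)\bigl(r^-+a+1+\wedge_{n,l,k}^\eps\bigr)^2\,\rmd r,
\end{equation}
use Proposition~\ref{p:2.6} to bound the conditional tail by $C\rme^{-c a^2}$ (an $O(1)$ shift absorbed into constants), and invoke $\sum_{a\geq M}\rme^{-c a^2}(b+a)^2\leq C\rme^{-cM^2/2}(b^2+1)$ to reduce matters to the residual estimate $\int\rho(r)(r^-+\wedge_{n,l,k}^\eps)^2\,\rmd r\leq C((\mu^-)^2+\sigma^2+\wedge_{n,l,k}^{2\eps})$. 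The coarse inequality $(\mu^-)^2\leq C(\|\ol{u}\|_{\bbL_\infty(U^\eta_n)}+\|\ol{v}\|_{\bbL_\infty(V^{-,\zeta}_k)})^2$ combined with the variance bound above then delivers~\eqref{e:sr-osc}.

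The bound~\eqref{e:sr-osc2} is obtained by the same procedure with three purely cosmetic changes: the power of the integrand is one rather than two, the $r$-restriction is the wider $|r|\leq\wedge_{n,l,k}$, and the oscillation in question is of the field itself rather than of its harmonic extension (Proposition~\ref{p:2.6}, or a trivial pointwise domination, accommodates this). The analogous residual integration yields a bound of order $\rme^{-cM^2}(\mu^-+\sigma+1)$; squaring via $(a+b)^2\leq 2a^2+2b^2$ while retaining the linear-in-boundary part of $\mu$ in its full form $\tfrac{l-k}{n-k}\ol{u}(0)+\tfrac{n-l}{n-k}\ol{v}(\infty)+\osc\,\ol{u}_\eta+\osc\,\ol{v}_\zeta$ matches the sharper right hand side of~\eqref{e:sr-osc2}.

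The main technical obstacle is the precise identification of $\mu$: one must cleanly separate its linear-in-boundary part from the oscillation and logarithmic remainders so that, upon squaring, the linear part reproduces exactly the combinations appearing on the right hand sides of~\eqref{e:sr-osc} and~\eqref{e:sr-osc2} and not the coarser $\bbL_\infty$-norm surrogate. This separation mirrors the computation carried out in the proof of Lemma~\ref{l:gamma} and reduces to combining the harmonic ruin representation (Lemma~\ref{l:ruin}) with the bulk-extension bound of Lemma~\ref{l:UV-U} and the maximum-correction estimate of Lemma~\ref{l:blml}.
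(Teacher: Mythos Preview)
Your proposal is correct and follows essentially the same approach as the paper's proof: condition on $r=\he{h}{\partial\rmB^\pm_l}(0)+m_l$, use Proposition~\ref{p:2.6} to bound the conditional oscillation tail, sum over levels $a\geq M$ to extract the factor $\rme^{-cM^2}$, and bound the remaining integral by the second moment of $\he{h}{\partial\rmB^\pm_l}(0)+m_l$ via Propositions~\ref{l:E},~\ref{p:Cov-bd} and Lemma~\ref{l:blml}. The paper factors the integrand slightly differently---writing $(1+r^2)(1+a^2)$ rather than $(r^-+a+\text{const})^2$---and declares~\eqref{e:sr-osc2} analogous without further detail, so your final paragraph on isolating the linear-in-boundary part of $\mu$ is in fact more careful than what the paper records.
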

\begin{proof}
The left-hand side in assertion~\eqref{e:sr-osc} is bounded from above by a constant times
\begin{multline}
\label{e:sr-osc-p1}
\sum_{{a}=\lfloor M\rfloor }^\infty
\int_{\wedge_{n,l,k}^\eps \leq |r| \leq \wedge_{n,l,k}^{1-\eps}}
\bbP_{k,v}^{n,u}\big(\he{h}{\partial \rmB^{\pm}_l}(0)+m_l\in \rmd r\big)\\
\times \bbP_{k,v}^{n,u}\big(\osc_{\rmB^{\pm,\eps}_l}\he{h}{\partial \rmB^\pm_l}>a\,\big|\, \he{h}{\partial \rmB^{\pm}_l}(0)+m_l = r\big)(1+r^2)(1+a^2)\,.
\end{multline}
The second probability in the integrand is bounded from above by $\rme^{-2c_\eps a^2}$ for a constant $c_\eps>0$ by Proposition~\ref{p:2.6}, so that~\eqref{e:sr-osc-p1} is further bounded from above by a constant times
\begin{equation}
\rme^{-cM^2}\Big(1+\bbE_{k,v}^{n,u}\Big(\big(\he{h}{\partial \rmB^{\pm}_l}(0)+m_l\big)^2\Big)\,.
\end{equation}
Bounding the second moment by Lemma~\ref{l:blml} and Propositions~\ref{l:E} and~\ref{p:Cov-bd} yields~\eqref{e:sr-osc}.
The proof of~\eqref{e:sr-osc2} is analogous.
\end{proof}

Finally for $E_4$, we have
\begin{lem}
\label{l:srpos}
Let $\eps\in\big(0,\tfrac{1}{10}\big)$, and $\eta,\zeta\in[0,\eps^{-1}]$.
Then there exists $C=C_{\eps}<\infty$ such that
\begin{equation}
\label{e:srpos}
\bbE_{k,v}^{n,u}\Big[\exp\Big\{- \Big(
\big(\he{h}{\partial \rmB^{\pm}_l}(0)+m_l-2\osc_{\rmB^{\pm,\eps}_l}\he{h}{\partial \rmB^{\pm}_l}\big)^+\Big)^{2-\eps}\Big\}\,;
h_{\partial \rmB^{\pm}_l}+m_l\in E_4 \Big]
\leq C \rme^{-\wedge_{n, l,k}^\eps}
\end{equation}
for all $M,U,V,k,l,n,u,v$ as in Lemma~\ref{l:sr-osc}.
\end{lem}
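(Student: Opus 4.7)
The plan is to reduce the left-hand side of~\eqref{e:srpos} to a conditional Gaussian tail estimate for the bulk oscillation of $\he{h}{\partial\rmB^\pm_l}$. Write $X := \he{h}{\partial\rmB^\pm_l}(0) + m_l$ and $Y := \osc_{\rmB^{\pm,\eps}_l}\he{h}{\partial\rmB^\pm_l}$. First observe that the range $(\wedge_{n,l,k}^{2\eps},\wedge_{n,l,k}^{1-\eps}]$ imposed by $E_4$ on $\ol{w}(0) = X$ lies in $E_1^\rmc \cap E_2^\rmc$, so on $E_4$ the condition $E_3^\rmc$ reduces to $\{Y \leq M\}$. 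Consequently the left-hand side of~\eqref{e:srpos} equals
\begin{equation*}
I := \bbE_{k,v}^{n,u}\Big[\exp\big\{-\big((X-2Y)^+\big)^{2-\eps}\big\};\, X \in (\wedge_{n,l,k}^{2\eps},\wedge_{n,l,k}^{1-\eps}],\, Y \leq M\Big]\,.
\end{equation*}

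The key step is to split on $Y$ against the threshold $(X - \wedge_{n,l,k}^\eps)/2$. Conditioning on $X = x$ for $x$ in the indicated range, one decomposes the inner integrand via the events $\{Y \leq (x-\wedge_{n,l,k}^\eps)/2\}$ and its complement. On the former, $(x-2Y)^+ \geq \wedge_{n,l,k}^\eps$, hence $((x-2Y)^+)^{2-\eps} \geq \wedge_{n,l,k}^{\eps(2-\eps)} \geq \wedge_{n,l,k}^\eps$ (using $\eps(2-\eps) > \eps$), so the exponential factor is at most $\rme^{-\wedge_{n,l,k}^\eps}$. On the latter, the exponential is bounded trivially by $1$; since $x \geq \wedge_{n,l,k}^{2\eps}$, for $\wedge_{n,l,k}$ large enough depending on $\eps$ one has $(x-\wedge_{n,l,k}^\eps)/2 \geq \wedge_{n,l,k}^{2\eps}/4$, and the Gaussian tail estimate for the conditional law of $Y$ given $X$ furnished by Proposition~\ref{p:2.6} (used exactly as in the proof of Lemma~\ref{l:sr-osc}) yields
\begin{equation*}
\bbP_{k,v}^{n,u}\big(Y > (x-\wedge_{n,l,k}^\eps)/2 \,\big|\, X = x\big) \leq C_\eps\, \rme^{-c_\eps \wedge_{n,l,k}^{4\eps}}\,.
\end{equation*}

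Integrating the resulting pointwise bound against the density of $X$ and using that this density integrates to at most $1$ gives $I \leq \rme^{-\wedge_{n,l,k}^\eps} + C_\eps \rme^{-c_\eps \wedge_{n,l,k}^{4\eps}}$. Since $4\eps > \eps$, the second term is dominated by the first for $\wedge_{n,l,k}$ sufficiently large depending on $\eps$, yielding the claimed bound; for $\wedge_{n,l,k}$ bounded by a constant depending on $\eps$, the statement is vacuous after enlarging $C_\eps$. The principal technical check will be that Proposition~\ref{p:2.6} supplies the Gaussian tail for $Y \mid X = x$ uniformly across the whole interval $x \in (\wedge_{n,l,k}^{2\eps},\wedge_{n,l,k}^{1-\eps}]$, with constants depending only on $\eps$. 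The cleanliness of the bound (its uniformity in $M$) comes for free, as the restriction $\{Y \leq M\}$ only shrinks the event and the two-case split above produces an upper bound that makes no reference to $M$ at all.
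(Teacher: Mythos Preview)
Your proposal is correct and follows essentially the same approach as the paper: both condition on $X=\he{h}{\partial\rmB^\pm_l}(0)+m_l$ and exploit the Gaussian tail of the conditional oscillation from Proposition~\ref{p:2.6}. The paper writes the expectation as an integral-sum $\int_r\sum_a$ (bounding the density of $X$ by a constant via Proposition~\ref{p:Cov-bd} and using $\rme^{-[(r-2a)^+]^{2-\eps}}\leq C\rme^{-r+2a}$), whereas your dichotomy on $\{Y\leq (x-\wedge_{n,l,k}^\eps)/2\}$ is slightly cleaner in that it avoids the density bound altogether and integrates only against the total mass of $X$; but the core mechanism is identical.
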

\begin{proof}
The left-hand side of~\eqref{e:srpos} is bounded from above by a constant times
\begin{multline}
\label{e:st-mu}
\int_{r\in\big[\wedge_{n, l,k}^{2\eps},\wedge_{n,l,k}^{1-\eps}\big]}
\bbP_{k,v}^{n,u}\big(\he{h}{\partial \rmB^{\pm}_l}(0)+m_l\in\rmd r\big)
\sum_{{a}=0}^\infty\bbP_{k,v}^{n,u}\big(
\osc_{\rmB^{\pm,\eps}_l}\he{h}{\partial \rmB^{\pm}_l}\geq{a}
\,\big|\, \he{h}{\partial \rmB^{\pm}_l}(0)=-m_l+r\big)\\
\times\rme^{-[(r - 2  a)^+]^{2-\eps}}\,.
\end{multline}
By Proposition~\ref{p:Cov-bd}, we bound the Gaussian density by a constant, and we plug in the bound from Proposition~\ref{p:2.6} for the second probability in the integrand so that the previous expression is bounded by a constant times
\begin{equation}
\int_{r\in\big[\wedge_{n, l,k}^{2\eps},\wedge_{n,l,k}^{1-\eps}\big]}
 \sum_{a=0}^\infty\rme^{-c_\eps a^2}
\rme^{-r + 2a}\rmd r\,.
\end{equation}
\end{proof}

As a last ingredient in the proof of Proposition~\ref{l:stitch-restr}, we need an upper bound on the probability that the DGFF stays negative in the bulk despite positive boundary values. This will be used also later for the stronger statement in Theorem~\ref{t:2.3}.
\begin{lem}
\label{l:bdr-dec}
Let $\eps\in(0,\tfrac{1}{10})$ and $\eta,\zeta\in [0,\eps^{-1}]$. Then there exists $C=C_{\eps}<\infty$ such that, for $\theta$ as in Lemma~\ref{l:UVmin}, we have
\begin{equation}
\label{e:bdr-dec-u}
\bbP_{k,v}^{n,u}\big(h_{ (\theta\rmB)_n\cap(\theta^{-1}\rmB^-)_k}\leq 0\big)
\leq C\exp\Big\{-\big(u_{*}^+\big)^{2-\eps}1_{v_{*}\geq -(n-k)} \Big\}
\end{equation}
and
\begin{equation}
\label{e:bdr-dec-v}
\bbP_{k,v}^{n,u}\big(h_{ (\theta\rmB)_n\cap(\theta^{-1}\rmB^-)_k}\leq 0\big)\leq
C\exp\Big\{ -\big(v_{*}^+\big)^{2-\eps}1_{u_{*}\geq -(n-k)} \Big\}
\end{equation}
for all $U\in\frU^\eta_\eps$, $V\in\frV_\eps$, $0\leq k<n-C$,
and all
$u\in\bbR^{\partial U_n}$, $v\in\bbR^{\partial V^-_k}$.
\end{lem}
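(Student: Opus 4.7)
The plan is to reduce, via Lemma~\ref{l:UVmin}, to constant boundary conditions, extract from the ballot event a simpler event on a single thin test annulus at bounded log-distance inside the shrunken domain, and then invoke a lower-tail estimate for the DGFF maximum there.

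First, Lemma~\ref{l:UVmin} lets me replace $u, v$ by the constants $u_*, v_*$; for~\eqref{e:bdr-dec-u} I may further assume $u_*^+ > 0$ and $v_* \geq -(n-k)$, since otherwise the right-hand side is bounded below by a positive constant and the claim is trivial. I fix $p_* = p_*(\eps) \in \bbN$ large enough that the thin annulus $A := \rmB_{n-p_*} \cap \rmB^-_{n-p_*-1}$ lies inside $(\theta\rmB)_n \cap (\theta^{-1}\rmB^-)_k$ whenever $n-k \geq C_\eps$ (possible since $\theta = \theta_\eps$ depends only on $\eps$), and bound the left-hand side by $\bbP_{V,k,v_*}^{U,n,u_*}(h_A \leq 0)$ by monotonicity.

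Under the Gaussian shift $h = h^{D,0} + \ol w$ with $D := U_n \cap V^-_k$, the event $\{h_A \leq 0\}$ implies $\max_A h^{D,0} \leq -\min_A \ol w$. The harmonic extension decomposes as $\ol w(y) = \pi_U(y)(-m_n + u_*) + \pi_V(y)(-m_k + v_*)$, where $\pi_U(y) := P_y(\tau^{\partial U_n} < \tau^{\partial V^-_k})$ and $\pi_V = 1 - \pi_U$. The generalized ruin estimate (Lemma~\ref{l:ruin}) gives $\pi_U(y) = 1 - O_\eps(1/(n-k))$ uniformly in $y \in A$ and in all $U \in \frU^\eta_\eps$, $V \in \frV_\eps$, while $\pi_U(y) m_n + \pi_V(y) m_k = m_{n-p_*} + O_\eps(1)$ on $A$. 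Plugging these in and using $v_* \geq -(n-k)$, I obtain $\min_A \ol w \geq u_*/2 - m_{n-p_*} - C_\eps$ for $n-k \geq C_\eps$, so the event forces $\max_A h^{D,0} \leq m_{n-p_*} - u_*/2 + C_\eps$.

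The lower-tail estimate Lemma~\ref{l:Ding} applied to $h^{D,0}$ on the scale-$(n-p_*)$ annulus $A$ then gives the bound $C \exp(-c(u_*/2)^{2-\eps}) \leq C'\exp(-c'(u_*^+)^{2-\eps})$ in the applicable range of $u_*$; for $u_*$ outside this range, a direct single-point Gaussian estimate at some $y_0 \in A$ (with $\Var h^{D,0}(y_0) \leq g(n-p_*) + C$) yields $\exp(-c u_*^2/(n-k))$, which dominates $\exp(-u_*^{2-\eps})$ in that extreme regime. This establishes~\eqref{e:bdr-dec-u}; the bound~\eqref{e:bdr-dec-v} follows by the symmetric argument with the test annulus taken near the inner boundary at scale $k + p_*$ and using $\pi_V(y) = 1 - O_\eps(1/(n-k))$ there. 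The main obstacle I expect is the uniform harmonic-measure control of $\pi_U, \pi_V$ across all admissible $U, V$, which should follow from the defining containments $\rmB(0,\eps) \subset U^\eta$ and $V \subset \rmB(0,\eps^{-1})$ of the classes $\frU^\eta_\eps$ and $\frV_\eps$, via Lemma~\ref{l:ruin}.
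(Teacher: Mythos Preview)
Your proof is correct and follows essentially the same route as the paper: reduce to constant boundary values via Lemma~\ref{l:UVmin}, restrict to a thin test annulus at bounded log-distance from the outer boundary, lower-bound the harmonic mean there via Lemma~\ref{l:ruin}, and invoke Lemma~\ref{l:Ding}. The only differences are cosmetic (the paper's annulus is $(\theta\rmB)_n \cap (\tfrac{\theta}{2}\rmB^-)_n$ rather than $\rmB_{n-p_*}\cap\rmB^-_{n-p_*-1}$), and your ``extra case'' for large $u_*$ is unnecessary since Lemma~\ref{l:Ding} already holds for all $t>0$.
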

\begin{proof}
We show~\eqref{e:bdr-dec-u}, the proof of~\eqref{e:bdr-dec-v} is analogous.
By Lemma~\ref{l:UVmin},
$\bbP_{k,v}^{n,u}\big(h_{ (\theta\rmB)_n\cap(\theta^{-1}\rmB^-)_k}\leq 0\big)
\leq\bbP_{k,v_{*}\wedge 0}^{n,u_{*}}\big(h_{ (\theta\rmB)_n\cap(\theta^{-1}\rmB^-)_k}\leq 0\big)$ for all sufficiently large $n-k$. Moreover, $h$ under $\bbP_{k,v_{*}\wedge 0}^{n,u_{*}}$ is distributed as the field
\begin{equation}
	h^{U_n\cap V^-_k}(x)+(-m_n+u_{*})P_x\big(\tau^{U_n}\leq\tau^{V^-_k}\big)
	+(-m_k+v_{*} \wedge 0)P_x\big(\tau^{U_n}>\tau^{V^-_k}\big)\,,\quad x\in U_n\cap V^-_k\,.
\end{equation}
W.\,l.\,o.\,g.\ we assume that $u_{*} \geq 0$ and
$v_{*}\leq 0$.
For $x\in A:= (\theta\rmB)_n\cap(\tfrac{\theta}{2}\rmB^-)_n$, we estimate
\begin{multline}
	(-m_n+u_{*})P_x\big(\tau^{U_n}\leq\tau^{V^-_k}\big)
	+(-m_k+v_{*})P_x\big(\tau^{U_n}>\tau^{V^-_k}\big)\\
	=-m_n+ \big(1-P_x\big(\tau^{U_n}>\tau^{V^-_k}\big)\big)u_{*} + P_x\big(\tau^{U_n}>\tau^{V^-_k}\big)
	\big(m_n - m_k  + v_{*}\big)\\
	\geq - m_n + c_\eps u_{*} - C_\eps + C_\eps(n-k)^{-1}v_{*}\,,
\end{multline}
for some $c_\eps>0$ and $C_\eps<\infty$, where we used for the inequality the definition~\eqref{e:ml} of $m_n, m_k$, that
$P_x\big(\tau^{U_n}>\tau^{V^-_k}\big)\leq C_\eps(n-k)^{-1}$
by Lemma~\ref{l:ruin}, in particular that
$1-P_x\big(\tau^{U_n}>\tau^{V^-_k}\big)\geq c_\eps$ for sufficiently large $n-k$.
Then,
\begin{equation}
\bbP_{k,v}^{n,u}\big(h_{A}\leq 0\big)
\leq \bbP\big( h^{U_n\cap V^-_k}_{A} - m_n + c_\eps u_{*} - C_\eps + C_\eps(n-k)^{-1}v_{*} \leq 0\big)\,.
\end{equation}
The right-hand side is bounded by Lemma~\ref{l:Ding} for sufficiently large $n-k$, and the left-hand side bounds $\bbP_{k,v}^{n,u}\big(h_{ (\theta\rmB)_n\cap(\theta^{-1}\rmB^-)_k}\leq 0\big)$, so that~\eqref{e:bdr-dec-u} follows.
\end{proof}

We can now prove Proposition~\ref{l:stitch-restr}.
\begin{proof}[Proof of Proposition~\ref{l:stitch-restr}]
We assume w.\,l.\,o.\,g.\ that $n-k$ is sufficiently large (depending only on $\eps$) such that the assumptions of Lemmas~\ref{l:sr-} --~\ref{l:bdr-dec} are satisfied in their applications below. We can choose $C$ sufficiently large such that the right-hand side of~\eqref{e:stitch-restr} is larger than $1$ when $n-k$ is not as large.

By a union bound,
\begin{multline}
\label{e:st-restr-union}
\bbP_{k,v}^{n,u}\big(h_{  (\theta\rmB)_n\cap(\theta^{-1}\rmB^-)_k  }\leq 0,
h_{\partial \rmB^\pm_l}+m_l\notin E_{l,M,\eps}\big)\\
\leq \bbP_{k,v}^{n,u}(h_{\partial \rmB^\pm_l}+m_l\in E_1)
+\sum_{i=2}^4\bbP_{k,v}^{n,u}(h_{  (\theta\rmB)_n\cap(\theta^{-1}\rmB^-)_k }\leq 0,h_{\partial \rmB^\pm_l}+m_l \in E_i)\,.
\end{multline}
The first term on the right-hand side is, by Lemma~\ref{l:sr-} and by the assumptions on $u, v, l$, bounded to be of smaller order than the right-hand side of~\eqref{e:stitch-restr}.
Using the Gibbs-Markov property (Lemma~\ref{l:GM}) and Remark~\ref{r:bd-pm}, 
we bound each of the summands with $i=2,3,4$ in~\eqref{e:st-restr-union} by
\begin{multline}
\label{e:st-disint}
\bbP_{V,k,v}^{U,n,u}(h_{(\theta\rmB)_n \cap \rmB^{-,\eps}_l}\leq 0,
h_{\rmB^\eps_l\cap (\theta^{-1}\rmB^-)_k}\leq 0,
h_{\partial \rmB^\pm_l}+m_l \in E_i)
\\
=
\int_{E_i} \bbP_{\rmB, l,w}^{U,n,u} \big( h_{\rmB_{(\theta\rmB)_n \cap \rmB_l^{-, \eps}}} \leq 0 \big)
\bbP_{V,k,v}^{\rmB,  l,w} \big(h_{\rmB^\eps_l \cap (\theta^{-1}\rmB^-)_k} \leq 0 \big) 
\bbP_{V,k,v}^{U,n,u} \big( h_{\partial \rmB^{\pm}_l} + m_l \in \rmd w \big).
\end{multline}
We bound the factors in the integrand on the right-hand side by Proposition~\ref{p:1.3} and Remark~\ref{r:p:1.3alt} so that the integral is bounded by a constant times
\begin{multline}
\label{e:st-disint2}
\frac{\big(u_*^- +1\big)
\big(v_*^- +1\big)}{(n- l)( l-k)}
\int_{E_i} \bigg(\ol{w}(0)^- +\osc_{\rmB^{\pm,\eps}_l}\ol w +(l-k)^{2\eps}\bigg)\\
\times
\bigg(\ol{w}(0)^- + \osc_{\rmB^{\pm,\eps}_l}\ol w +(n-l)^{2\eps}\bigg)
\bbP_{V,k,v}^{U,n,u} \big( h_{\partial \rmB^{\pm}_l} + m_l \in \rmd w \big)\,.
\end{multline}
For $i=2$, the integral in the last display is bounded by
$C_{\eps}\wedge_{n,l,k}^{-1/2+4\eps}$
by the assumption on $l$ and Lemma~\ref{l:sr+}. For $i=3$, the integral in~\eqref{e:st-disint2} is bounded by a constant times the right-hand side of~\eqref{e:sr-osc} by the first part of Lemma~\ref{l:sr-osc}. To bound the summand with $i=4$ on the right-hand side of~\eqref{e:st-restr-union}, we bound the first factor in the integrand in~\eqref{e:st-disint} by $1$ and the second factor
$\bbP_{V,k,v}^{\rmB,l,w}(h_{\rmB^\eps_l\cap(\theta^{-1}\rmB^-)_k}\leq 0)$
by~\eqref{e:bdr-dec-u}, so as to obtain
\begin{multline}
\label{e:st-disint-p99}
\bbP_{k,v}^{n,u}\big(h_{(\theta\rmB)_n\cap(\theta^{-1}\rmB^-)_k}\leq 0,
h_{\partial \rmB^\pm_l}+m_l\in E_4\big)\\
\leq 
C_\eps \int_{E_4}
\rme^{-\big( \big(\ol{w}(0)-2\osc_{\rmB^{\pm,\eps}_l}\ol w\big)^+  \big)^{2-\eps}}
\bbP_{V,k,v}^{U,n,u} \big( h_{\partial \rmB^{\pm}_l} + m_l \in \rmd w \big)
\end{multline}
which is bounded by a constant times $\rme^{-\wedge_{n,l,k}^\eps}$ by Lemma~\ref{l:srpos}.
Hence, the right-hand side of~\eqref{e:st-restr-union} is bounded by the right-hand side of~\eqref{e:stitch-restr}.
\end{proof}

\subsection{Stitching - Proof of Theorem~\ref{t:2.3}}
\label{ss:3.3}
From the previous subsection, we have an upper bound on the probability that the DGFF takes unlikely values at the boundary of an intermediately scaled ball, so that we can apply the weak upper bound from Proposition~\ref{p:1.3} at the two subdomains that are separated by this boundary.
We are therefore ready for,
\begin{proof}[Proof of Theorem~\ref{t:2.3}]
As noted in Section~\ref{s:bno}, we can assume w.\,l.\,o.\,g.\ that $\eps\in(0,1/10)$.
We first show the weaker statement, namely~\eqref{e:1.18}.
We set $l=k+\tfrac12(n-k)$ and take $n-k$ sufficiently large such that Lemma~\ref{l:UVmin} is applicable, and such that
$\partial \rmB_l\subset(\theta\rmB)_n$ and
$\partial \rmB^-_l\subset (\theta^{-1}\rmB^-)_k$, where we take $\theta\in(0,\eps/3)$ from Lemma~\ref{l:UVmin}.
(We can always choose $C$ sufficiently large such that the right-hand side of~\eqref{e:1.18} is larger than $1$ if $n-k$ is not as large.)
By Lemma~\ref{l:UVmin}, it suffices to bound $\bbP_{k,v_{*}}^{n,u_{*}}(h_{ (\theta\rmB)_n\cap(\theta^{-1}\rmB^-)_k}\leq 0)$ by the right-hand side of~\eqref{e:1.18}.  W.\,l.\,o.\,g.\ we assume that $\max\{u_{*}, v_{*}\}\leq 0$.
We can also suppose that $\min\{u_{*},v_{*}\}\geq -(n-k)^\eps$, as in the converse case, the assertion follows directly from Proposition~\ref{p:1.3}. 

Clearly,
\begin{equation}
\label{e:DGFF-ub-int-viol}
\bbP_{V,k,v_{*}}^{U,n,u_{*}}\big(h_{ (\theta\rmB)_n\cap(\theta^{-1}\rmB^-)_k}\leq 0\big)
\leq \bbP_{V,k,v_{*}}^{U,n,u_{*}}\big(h_{(\theta\rmB)_n\cap \rmB^{-,\eps}_l}\leq 0,
h_{\rmB^\eps_l\cap (\theta^{-1}\rmB^-)_k}\leq 0\big)\,.
\end{equation}
Then, by conditioning on the value of $h$ on $\partial\rmB^\pm_l$ and using Remark~\ref{r:bd-pm}, we write the probability on the right-hand side of~\eqref{e:DGFF-ub-int-viol} as
\begin{equation}
\label{e:st-disint-UB}
\int\bbP_{\rmB, l,w}^{U,n,u_{*}} \big(h_{(\theta\rmB)_n \cap \rmB_l^{-, \eps}} \leq 0 \big)
\bbP_{V,k,v_{*}}^{\rmB,  l,w} \big(h_{\rmB^\eps_l \cap (\theta^{-1}\rmB^-)_k} \leq 0 \big)
\bbP_{V,k,v_{*}}^{U,n,u_{*}} \big( h_{\partial \rmB^\pm_l} + m_l \in \rmd w \big)\,.
\end{equation}

As the constant boundary values $u_{*}$ on $\partial U_n$ and $v_{*}$ on $\partial V^-_k$ also have zero oscillation, we can apply Proposition~\ref{l:stitch-restr} to bound
$\bbP_{V,k,v_{*}}^{U,n,u_{*}}\big(h_{ (\theta\rmB)_n\cap(\theta^{-1}\rmB^-)_k}\leq 0,\,
h_{\partial\rmB^\pm_l}+m_l\notin E_{l,1,\eps}\big)$
by the right-hand side of~\eqref{e:1.18}.

It therefore suffices to bound the integral in~\eqref{e:st-disint-UB} restricted to $w\in E_{ l,1,\eps}$. We bound each factor in the integrand there by Proposition~\ref{p:1.3} and obtain as a bound for the integral a constant times 
\begin{multline}
\label{e:p2.3-disint}
\frac{\big(u_{*}+1\big)
\big(v_{*}+1\big)}{(n- l)( l-k)}\bbE_{k,v_{*}}^{n,u_{*}}\Big[\big((\he{h}{\partial \rmB^\pm_l}(0)+m_l)^-+\osc_{\rmB^\eps_l}\he{h}{\partial\rmB^\pm_l}+1\big)\\
\times \big((\he{h}{\partial\rmB^\pm_l}(0)+m_l)^-+\osc_{\rmB^{ \pm,\eps}_l}\he{h}{\partial\rmB^\pm_l}+1\big);
h_{\partial\rmB^\pm_l}+m_l\in E_{ l,1,\eps}\Big]\,.
\end{multline}
By definition of $E_{ l,1,\eps}$, both oscillation terms in the last display are bounded by $1$. Hence, the expectation there is bounded by $2\bbE_{k,v_{*}}^{n,u_{*}}
\Big(\big((\he{h}{\partial\rmB^\pm_l}(0)+m_l)^-\big)^2\Big) + 8$, where the second moment in turn is bounded by
$\Var_{k}^{n}
\Big(\he{h}{\partial\rmB^\pm_l}(0)\Big)+
\Big(\bbE_{k,v_{*}}^{n,u_{*}}\he{h}{\partial\rmB^\pm_l}(0) + m_l \Big)^2$. From Lemma~\ref{l:blml} and Propositions~\ref{l:E} and~\ref{p:Cov-bd}, it now follows that
\begin{equation}
\label{e:DGFF-UB-1}
\bbP_{k,v_{*}}^{n,u_{*}}\big(h_{ (\theta\rmB)_n\cap(\theta^{-1}\rmB^-)_k}\leq 0\big)\leq C_{\eps}\frac{(u_{*}^-+1)(v_{*}^-+1)}{n-k}\,,
\end{equation}
which yields assertion~\eqref{e:1.18}.

Next, we show the stronger statement in the theorem, namely~\eqref{e:t:2.3-posUB}, and suppose that $u_{*}>0$ or $v_{*}>0$ (as otherwise~\eqref{e:t:2.3-posUB} identifies with~\eqref{e:1.18}). We first consider the case that $0<u_{*}\leq (n-k)^{\eps}$ and $-(n-k)^{1-\eps}\leq v_{*}\leq 0$.
We now set $l=n-u_{*}^{2+\eps}$. As $\eps\in(0,\tfrac{1}{10})$ by assumption, we have that $\wedge_{n,l,k}^{3/2}\leq (n-k)^{1-\eps}$.
We also assume that $n-k$ is sufficiently large such that $n-l\leq l-k$, $\partial \rmB_l\subset(\theta\rmB)_n$,
$\partial \rmB^-_l\subset (\theta^{-1}\rmB^-)_k$, the assumptions of Lemmas~\ref{l:UVmin} and~\ref{l:bdr-dec} are satisfied, and we take $\theta$ again from Lemma~\ref{l:UVmin}.
If $n-k$ is not as large, then $u_{*}$ is bounded by a constant that depends only on $\eps$, in which case we can make the right-hand side of~\eqref{e:t:2.3-posUB} larger than $1$ by choosing $C$ sufficiently large.

We again apply Lemma~\ref{l:UVmin} to bound the left-hand side of~\eqref{e:t:2.3-posUB} by~\eqref{e:st-disint-UB}.
In~\eqref{e:st-disint-UB}, we now bound the second factor in the integrand by~\eqref{e:DGFF-UB-1} (with $l$ in place of $n$, etc). We bound the first factor as follows: if  $|\ol{w}(0)|\leq \tfrac12 \wedge_{n,l,k}$ and $\osc_{\rmB^{\pm,\eps}_l}\ol{w}\leq \tfrac12 \wedge_{n,l,k}$, we bound it by
\begin{equation}
\bbP_{\rmB,l,w}^{U,  n,u_{*}} \big(h_{(\tfrac{\eps}{2}\rmB)_n\cap\rmB^{-,\eps}_l} \leq 0 \big)
\leq C_\eps \rme^{-u_{*}^{2-\eps}}
\end{equation}
by~\eqref{e:bdr-dec-u}, and otherwise we bound it by $1$.
This gives
\begin{multline}
\label{e:ub-pos-E}
\bbP_{k,v_{*}}^{n,u_{*}}\big(h_{ (\theta\rmB)_n\cap(\theta^{-1}\rmB^-)_k}\leq 0\big)
\leq C_{\eps}\frac{v_{*}^- +1}{l-k}\\
\times\bigg\{
\bbE_{k,v_{*}}^{n,u_{*}} \Big(
 h_{*} +1;\:
\big|\he{h}{\partial\rmB^\pm_l}(0)+m_l\big|\leq \tfrac12\wedge_{n,l,k},
\osc_{\rmB^{\pm,\eps}_l}\he{h}{\partial\rmB^\pm_l}\leq \tfrac12\wedge_{n,l,k}
\Big)\rme^{-u_{*}^{2-\eps}}\\
+\bbE_{k,v_{*}}^{n,u_{*}} \Big( h_{*} +1;\:
\big|\he{h}{\partial\rmB^\pm_l}(0)+m_l\big|\geq \tfrac12\wedge_{n,l,k}\Big)\\
+\bbE_{k,v_{*}}^{n,u_{*}} \Big(  h_{*} +1
;\:\big|\he{h}{\partial\rmB^\pm_l}(0)+m_l\big|\leq \tfrac12\wedge_{n,l,k},
\osc_{\rmB^{\pm,\eps}_l}\he{h}{\partial\rmB^\pm_l}\geq \tfrac12\wedge_{n,l,k}\Big)
\bigg\}\,,
\end{multline}
where we write
\begin{equation}
h_{*}:=\big(\he{h}{\partial\rmB^\pm_l}(0)+m_l\big)^-
+\osc_{\rmB^{\pm,\eps}_l}\,\he{h}{\partial\rmB^\pm_l}\,.
\end{equation}
The first expectation in the curly brackets on the right-hand side of~\eqref{e:ub-pos-E} is bounded by $n-l+1\leq u_{*}^{2+\eps}+1$.
The second expectation there is bounded by a constant times
$\rme^{-u_{*}^{2-5\eps}}$
by Lemma~\ref{l:sr-}.
The expectation in the fourth line in~\eqref{e:ub-pos-E} is bounded by Lemma~\ref{l:sr-osc} by a constant times $(1+u_{*}^{2+\eps})\rme^{-u_{*}^{4+2\eps}}$. Plugging these bounds into~\eqref{e:ub-pos-E} yields the assertion
when we replace $\eps$ with $\eps/5$.

For the case that $u_{*}>(n-k)^\eps$ and $-(n-k)^{1-\eps}\leq v_{*}\leq 0$, we estimate
\begin{equation}
\bbP_{k,v_{*}}^{n,u_{*}}\big(h_{ (\theta\rmB)_n\cap(\theta^{-1}\rmB^-)_k}\leq 0\big)
\leq C_{\eps}\rme^{-(u_{*}^+)^{2-\eps/2}}
\end{equation}
by Lemma~\ref{l:bdr-dec} (with $\eps/2$ in place of $\eps$), which is less than the right-hand side of~\eqref{e:t:2.3-posUB} in this case.

The case $v_{*}>0$, $u_{*}\leq 0$ is analogous.
If $u_{*}\geq v_{*} >0$, we use monotonicity in the boundary conditions and the previously handled case ($u_{*}>0,v_{*}\leq 0$) in the estimate
\begin{equation}
\label{e:strongUB-p-uv}
\bbP_{k,v_{*}}^{n,u_{*}}(h_{ (\theta\rmB)_n\cap(\theta^{-1}\rmB^-)_k}\leq 0)
\leq \bbP_{k,0}^{n,u_{*}}(h_{ (\theta\rmB)_n\cap(\theta^{-1}\rmB^-)_k}\leq 0)
\leq C_{\eps}\frac{\rme^{-u_{*}^{2-\eps}}}{n-k}
\leq  C_{\eps}\frac{\rme^{-u_{*}^{2-\eps}/2} \rme^{-v_{*}^{2-\eps}/2}}{n-k}
\end{equation}
which is further bounded by the right-hand side in the assertion when we replace there $\eps$ with $2\eps$, and assume in particular $C\geq 1$ so that we also have $u_*>0$ and $v_*>0$.
If $v_{*}> u_{*} >0$, we argue as in~\eqref{e:strongUB-p-uv} but instead of $v_{*}$ we now replace $u_{*}$ with $0$.
\end{proof}

\section{The ballot asymptotics}
\label{s:cont-as}

As in the upper bound, the reduction to the DRW (Theorems~\ref{t:drw-i} and~\ref{t:drw-o}), permits us to derive ballot asymptotics for the DGFF from the corresponding asymptotics of the DRW. For the latter, asymptotics are given by Theorem~\ref{t:3.4}, albeit in a ``weak'' form, namely when one end point of the walk is required to be sufficiently negative. In addition, work is required to convert the asymptotic statement in Theorem~\ref{t:3.4} to a form which is similar to that in Theorem~\ref{t:1}. Accordingly, the desired weak versions of Theorem~\ref{t:1} are first stated 
(Subsection~\ref{ss:WeakAsymptotics}) with their proofs relegated to the next section. We then use these weak statements to prove Theorem~\ref{t:1} in its full generality (Subsection~\ref{ss:ProofOfFullAsymp}).

\subsection{Weak ballot asymptotics}
\label{ss:WeakAsymptotics}
Let $\cL_n=\cL_{n,\eta,U}$ and $\cR_k=\cR_{k,\zeta,V}$ be defined as in Proposition~\ref{p:1-2}.
\begin{prop}
\label{c:1.1}
Let $\eps\in(0,1)$ and $\eta,\zeta\in[0,\eps^{-1}]$.
Then
\begin{equation}
\label{e:c:1.1}
\bbP_{k,v}^{n,u} \Big( h_{U^\eta_n \cap V^{-,\zeta}_k} \leq 0 \Big)
=\big(2+o_{\eps}(1)\big)
\frac{\cL_{n} (u) \ol{v}(\infty)^-
}{g(n-k)}
\end{equation}
as $n-k\to\infty$, for all $U\in\frU^\eta_\eps$, $V\in\frV_\eps$, $0\leq k< n$, $u\in\bbR^{\partial U_n}$, $v\in\bbR^{\partial V^-_k}$ satisfying~\eqref{e:uv-as} and $\ol{v}(\infty)^- \geq (n-k)^\eps$.
\end{prop}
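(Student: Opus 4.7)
The plan is to deduce~\eqref{e:c:1.1} from a weak ballot asymptotic for the DRW, via the reduction provided by Theorem~\ref{t:drw-i}. First I would verify that the hypotheses of that theorem are met under the assumptions of Proposition~\ref{c:1.1}: the bound~\eqref{e:uv-as} controls $\max\{\ol u(0), \ol v(\infty), \osc\,\ol u_\eta, \osc\,\ol v_\zeta\}$ by $\eps^{-1}$ directly and forces both $\ol u(0)^-$ and $\ol v(\infty)^-$ to be at most $(n-k)^{1-\eps}$, which in turn gives $|\ol u(0) - \ol v(\infty)| \leq 2\eps^{-1} + 2(n-k)^{1-\eps}$; for $n-k$ large (and after slightly shrinking $\eps$) this yields~\eqref{e:uv-diff}. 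Theorem~\ref{t:drw-i} then produces a DRW $((\cS_p)_{p=0}^\rt, (\cD_p)_{p=1}^\rt)$ with $\rt = \rt_{n-k} = (n-k) + O_\eps(1)$, starting at $\ol u(0)$ and ending at $\ol v(\infty)$, obeying Assumptions~\ref{i.a1}--\ref{i.a3} with some $\delta=\delta(\eps)>0$, and satisfying
\begin{equation*}
\bbP_{k,v}^{n,u}\big(h_{U^\eta_n \cap V^{-,\zeta}_k} \leq 0\big) = \bfP_{0,\ol u(0)}^{\rt, \ol v(\infty)}\Big(\bigcap_{p=1}^{\rt} \{\cS_p + \cD_p \leq 0\}\Big).
\end{equation*}

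The next step is to apply the weak ballot asymptotics for the DRW from the appendix (Theorem~\ref{t:3.4}). Since $\ol v(\infty)^- \geq (n-k)^\eps$, the endpoint $b = \ol v(\infty)$ is sufficiently negative, so the asymptotics take the form
\begin{equation*}
\bfP_{0,a}^{\rt, b}\Big(\bigcap_{p=1}^{\rt} \{\cS_p + \cD_p \leq 0\}\Big) = \big(2 + o_\eps(1)\big) \frac{\mathfrak{L}_{\rt}(a) \, b^-}{g\rt},
\end{equation*}
where $\mathfrak{L}_{\rt}$ is the corresponding DRW left functional; the right functional has been replaced by $b^-$ via the DRW analog of~\eqref{e:1.11}, which applies because $b \to -\infty$ polynomially in $\rt$. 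Substituting $a = \ol u(0)$, $b = \ol v(\infty)$ and absorbing the $O_\eps(1)$ difference between $\rt$ and $n-k$ into the denominator at the cost of a $1 + O_\eps((n-k)^{-1})$ factor, we obtain an expression of the form claimed in~\eqref{e:c:1.1}.

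It remains to identify $\mathfrak{L}_{\rt}(\ol u(0))$ with $\cL_n(u)$ as defined in Proposition~\ref{p:1-2}. The DRW left functional arises as an expectation, at a sub-scale $r = r_n \to \infty$, of $\cS_r^-$ restricted to the ballot event $\bigcap_{p\leq r}\{\cS_p + \cD_p\leq 0\}$. Unravelling the inward concentric decomposition of Subsection~\ref{s:concdec}, the event $\bigcap_{p\leq r}\{\cS_p + \cD_p \leq 0\}$ corresponds exactly to $\{h_{U^\eta_n \cap \rmB^-_{n-r}} \leq 0\}$, while $\cS_r$ tracks the harmonic average $\he{h}{\rmB_{n-r}}(0) + m_{n-r}$ up to a decoration term whose tail is uniformly stretched-exponential by Lemma~\ref{l:gamma}. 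Combined with the Gibbs-Markov property to reinstate the conditioning on $h_{\partial \rmB_0^-} = \ol u(0)$ that appears in Proposition~\ref{p:1-2}, this matches $\mathfrak{L}_{\rt}(\ol u(0))$ with $\cL_n(u)$ (up to a vanishing error that can be absorbed into the $o_\eps(1)$). The hard part is precisely this identification step: one must show that the auxiliary Brownian randomness in the DRW construction does not affect the limit, that the decoration error at scale $r_n$ is negligible uniformly in $u$ under the assumed bounds, and that the choice of $r_n$ in Proposition~\ref{p:1-2} can be made compatible with the one dictated by the DRW asymptotics. This technical bookkeeping is where I expect the main work of the proof to lie.
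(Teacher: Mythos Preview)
Your overall approach matches the paper's: reduce to the DRW via Theorem~\ref{t:drw-i}, apply the DRW asymptotics of Theorem~\ref{t:3.4}, then identify the DRW functional $\ell_{\rt,r}(\ol u(0),\ol v(\infty))$ with $\cL_n(u)$. However, there are three places where your sketch skips over nontrivial content.

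First, Theorem~\ref{t:3.4} gives only an \emph{additive} error $e_{\rt,r}(a,b)$ satisfying $\frac{\rt}{(a^-+1)b^-}e_{\rt,r}(a,b)\to 0$, not the multiplicative $(2+o_\eps(1))$ you write. To absorb this additive error one must know that the main term $\ell_{\rt,r}(a,b)b^-/s_\rt$ is at least comparable to $(a^-+1)b^-/\rt$. The paper achieves this via the DGFF lower bound of Lemma~\ref{l:DGFF-lb}, compared against the asymptotic itself; you do not mention any such step.

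Second, your description of the identification step is not quite the argument. The paper splits it into two comparisons: Lemma~\ref{l:LUV} shows $\ell_{\rt,r}(\ol u(0),\ol v(\infty))\approx \cL^r_{n,k}(u,v)$, where the latter is the DGFF expectation still carrying the original $(V,k,v)$; then Lemma~\ref{l:cont-V} shows $\cL^r_{n,k}(u,v)\approx \cL^r_{n,\eta,U,0,\rmB}(u,\ol u(0))=\cL_n(u)$, i.e.\ that the functional is insensitive to the inner boundary data. Your phrase ``reinstate the conditioning on $h_{\partial\rmB_0^-}=\ol u(0)$'' does not capture this second step, which requires a genuine continuity argument (binding-field estimates and absolute continuity of the DGFF maximum, Lemma~\ref{l:h-cont}), and in fact Lemma~\ref{l:cont-V} requires $\eta\geq\eps$.

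Third, because of the restriction $\eta\geq\eps$ in Lemma~\ref{l:cont-V}, the paper first proves~\eqref{e:c:1.1} for $\eta\in[\eps,\eps^{-1}]$ and then passes to general $\eta\in[0,\eps^{-1}]$ by a separate FKG/continuity argument (Lemmas~\ref{p:viol} and~\ref{l:delta0}). Your sketch omits this.
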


\begin{prop}
\label{c:1.2}
Let $\eps\in(0,1)$ and $\eta,\zeta\in[0,\eps^{-1}]$. Then,
\begin{equation}
\label{e:c:1.2}
\bbP_{k,v}^{n,u} \Big( h_{U^\eta_n \cap V^{-,\zeta}_k} \leq 0 \Big)
=\big(2+o_{\eps}(1)\big)
\frac{\cR_{k} (v) \ol{u}(0)^-
}{g(n-k)}
\end{equation}
as $n-k\to\infty$, for all $U\in\frU^\eta_\eps$, $V\in\frV_\eps$, $0\leq k< n$, $u\in\bbR^{\partial U_n}$, $v\in\bbR^{\partial V^-_k}$ satisfying~\eqref{e:uv-as} and $\ol{u}(0)^- \geq (n-k)^\eps$.
\end{prop}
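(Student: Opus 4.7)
The approach is parallel to that of Proposition~\ref{c:1.1}, using the outward concentric decomposition from Subsection~\ref{s:w-outw} in place of the inward one. The condition $\ol{u}(0)^-\geq (n-k)^\eps$ is designed precisely so that, upon reduction to a DRW via Theorem~\ref{t:drw-o}, the endpoint of the walk is sufficiently negative to apply the weak DRW ballot asymptotic from Appendix~\ref{s:3}.

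Concretely, I would first invoke Theorem~\ref{t:drw-o} to produce a DRW $((\cS^{\rm o}_p)_{p=0}^T,(\cD^{\rm o}_p)_{p=1}^T)$ satisfying Assumptions~\ref{i.a1}--\ref{i.a3} with starting point $a=\ol{v}(\infty)$, endpoint $b=\ol{u}(0)$, and length $T = T_{n-k} = n-k+O_\eps(1)$, together with the identification
\begin{equation}
\bbP_{k,v}^{n,u}\Big(h_{U^\eta_n\cap V^{-,\zeta}_k}\leq 0\Big)
= \bfP_{0,\ol{v}(\infty)}^{T,\ol{u}(0)}\Big(\bigcap_{p=1}^T\{\cS^{\rm o}_p + \cD^{\rm o}_p \leq 0\}\Big).
\end{equation}
Since $b^- = \ol{u}(0)^- \geq (n-k)^\eps \geq c_\eps T^\eps$, the endpoint lies in the deep-negative range. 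Next, I would apply the weak DRW ballot asymptotic from Appendix~\ref{s:3} (Theorem~\ref{t:3.4}) to get
\begin{equation}
\bfP_{0,\ol{v}(\infty)}^{T,\ol{u}(0)}\Big(\bigcap_{p=1}^T\{\cS^{\rm o}_p + \cD^{\rm o}_p \leq 0\}\Big)
= (2+o_\eps(1))\,\frac{\cR^\circ(\ol{v}(\infty))\cdot \ol{u}(0)^-}{g T},
\end{equation}
for some functional $\cR^\circ$ determined by the law of $(\cS^{\rm o},\cD^{\rm o})$ near $p=0$. Since $T = n-k+O(1)$ and $\ol{u}(0)^-\to\infty$, replacing $gT$ by $g(n-k)$ costs only a $1+o_\eps(1)$ multiplicative factor.

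The key step is identifying $\cR^\circ(\ol{v}(\infty))$ with $\cR_k(v)$ from Proposition~\ref{p:1-2}. The functional $\cR^\circ$ arises naturally in the DRW theory as a limit of the form
\begin{equation}
\cR^\circ(a) = \lim_{r\to\infty}\bfE\Big((\cS^{\rm o}_r)^-;\, \bigcap_{p=1}^r\{\cS^{\rm o}_p + \cD^{\rm o}_p\leq 0\}\Big),
\end{equation}
taken in a setting in which the outer scale is sent to infinity. Via the correspondence $\{\cS^{\rm o}_p+\cD^{\rm o}_p\leq 0\}=\{h_{A^{\rm o}_p}\leq 0\}$ together with the fact that $\cS^{\rm o}_r+\beta^{\rm o}_r$ differs from $\he{h}{\partial\Delta^{\rm o}_r}(\infty)+m_{k+r}$ by a uniformly (stretched exponentially) tight error absorbed into $\cD^{\rm o}_r$, this DRW limit rewrites as the DGFF expression defining $\cR_k(v)$, with truncation scale $r = r_{n-k}$. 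This matching simultaneously confirms that $\cR_k$ is independent of the particular outer ball used in its definition in Proposition~\ref{p:1-2}.

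The main obstacle is this identification of functionals. It forces an exchange of the DRW limit $r\to\infty$ with the DGFF limit $n\to\infty$ in the definition of $\cR_k$, which in turn requires uniformity of the weak DRW asymptotic, together with a careful approximation between the harmonic averages $\cX^{\rm o}_p$ driving the walk and the boundary harmonic extensions $\he{h}{\partial\Delta^{\rm o}_r}(\infty)$. The variance bounds of Lemma~\ref{l:varXm}, the coupling of Lemma~\ref{l:dec-coupl}, and the domain continuity of Proposition~\ref{p:1-6} supply the needed ingredients, and the identification itself is expected to be carried out together with the proof of Proposition~\ref{p:1-2} in Section~\ref{s:LR-funct}.
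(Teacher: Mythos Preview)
Your proposal is correct and follows essentially the same route as the paper: reduce via Theorem~\ref{t:drw-o} to the outward DRW, apply Theorem~\ref{t:3.4}, and then identify the resulting DRW functional $\ell_{T,r}(\ol v(\infty),\ol u(0))$ with $\cR_k(v)$, the latter step being carried out in Section~\ref{s:LR-funct} through Lemmas~\ref{l:LUV-R}, \ref{l:cont-V-R}, \ref{l:Rk} and the lower bound Lemma~\ref{l:DGFF-lb-o} (rather than Proposition~\ref{p:1-6}). One small slip: you wrote $\cS^{\rm o}_r+\beta^{\rm o}_r$ where you mean $\cS^{\rm o}_r=\cS'^{\rm o}_r+\beta^{\rm o}_r$; and, as in the proof of Proposition~\ref{c:1.1}, the case $\zeta\in[0,\eps)$ has to be recovered at the end via FKG, Lemma~\ref{p:viol}, and Lemma~\ref{l:delta0-o}.
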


\subsection{Stitching - Proof of Theorem~\ref{t:1}}
\label{ss:ProofOfFullAsymp}
As in the proof of Theorem~\ref{t:2.3}, the proof of Theorem~\ref{t:1} in its full generality goes by conditioning on the value of the DGFF on the boundary of a centered ball at an intermediate scale.
Using the entropic repulsion statement, Proposition~\ref{l:stitch-restr}, one can restrict attention to the case when the corresponding harmonic extension in the bulk is sufficiently repelled below $0$ and its oscillation therein is not too large. This then permits us to use the weak ballot asymptotic statements in the previous sub-section, as long as the restriction to stay negative is imposed away from the boundary of the ball. To make sure that the asymptotics for the ballot probability does not increase by leaving out the region close to this boundary, we shall also need the following proposition.
We recall that the set $E_{l,M,\eps}$ was defined in~\eqref{e:4.2}. 
\begin{prop}
\label{p:stitch-lb}
Let $\eps\in\big(0,\tfrac{1}{10}\big)$ and $\eta,\zeta\in[0,\eps^{-1}]$.
Then there exist $C=C_{\eps}<\infty$ and $c=c_{\eps}>0$ such that
\begin{multline}
\label{e:st-lb}
\bbP_{k,v}^{n,u}
\Big(\max_{\rmB_{ l+1}\cap \rmB^{-}_{ l}} h>0,
h_{\partial \rmB^\pm_{ l-1}}+m_{ l-1}\in E_{ l-1,M,\eps},
h_{\partial \rmB^\pm_{ l+2}}+m_{ l+2}\in E_{ l+2,M,\eps}\Big)\\
\leq C\rme^{-c \wedge_{n,l,k}^\eps+CM}
\end{multline}
for all $M\geq 0$, $U\in\frU^\eta_\eps$, $V\in\frV_\eps$ all $k,l,n\geq 0$ with
$\partial\rmB_{l+2}\subset U^\eta_n$ and $\partial\rmB^-_{l-1}\subset V^{-,\zeta}_k$, and all
$u\in\bbR^{\partial U_n}$, $v\in\bbR^{\partial V^-_k}$.
\end{prop}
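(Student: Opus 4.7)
The plan is to condition on the DGFF at the two ``thick rings'' $\partial\rmB^\pm_{l-1}$ and $\partial\rmB^\pm_{l+2}$, decompose the field on the annulus $A:=\rmB_{l+2}\cap\rmB^-_{l-1}$ via Gibbs--Markov into a (conditionally deterministic) harmonic extension $\varphi$ plus an independent zero-boundary DGFF $\tilde h$ on $A$, translate the set-valued conditions $E_{l-1,M,\eps}$ and $E_{l+2,M,\eps}$ into a pointwise upper bound on $\varphi$ throughout $\rmB_{l+1}\cap\rmB^-_l$, and finally invoke an upper-tail estimate for $\max_A\tilde h$.

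I would begin by applying the Gibbs--Markov property (Lemma~\ref{l:GM}) together with Remark~\ref{r:bd-pm}: conditionally on $(h_{\partial\rmB^\pm_{l-1}},h_{\partial\rmB^\pm_{l+2}})$, the law of $h|_A$ is that of $\varphi+\tilde h$, where $\varphi$ is the discrete harmonic extension to $A$ of $h$ restricted to $\partial A=\partial\rmB^-_{l-1}\cup\partial\rmB_{l+2}$ and $\tilde h$ is an independent DGFF on $A$ with zero boundary values. The core of the proof is then the deterministic estimate
\begin{equation*}
\varphi(x)\le -m_l-c_\eps\wedge_{n,l,k}^\eps+C_\eps(M+1),\qquad x\in\rmB_{l+1}\cap\rmB^-_l,
\end{equation*}
valid on the intersection of the two $E$-events. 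To prove it I would write $\varphi(x)=E_x[h(S_{\tau^A})]$ as a convex combination of the conditional averages of $h_{\partial\rmB^-_{l-1}}$ and $h_{\partial\rmB_{l+2}}$, with weights the respective exit probabilities. By the potential-kernel asymptotics~\eqref{e:444} and the discrete Poisson-kernel estimates of Lemma~\ref{l:Poisson}, for $x$ with $\log|x|\in[l,l+1]$ these exit probabilities equal $\tfrac{l+2-\log|x|}{3}$ and $\tfrac{\log|x|-(l-1)}{3}$ up to $o_\eps(1)$ errors; the oscillation hypothesis then forces each weighted average to lie within $O_\eps(M)$ of the corresponding value $\ol{h_{\partial\rmB^\pm_{l\mp}}}(0)\le -m_{l\mp}-\wedge_{n,l,k}^\eps$. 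The algebraic identity $\tfrac{l+2-\log|x|}{3}m_{l-1}+\tfrac{\log|x|-(l-1)}{3}m_{l+2}=2\sqrt{g}\log|x|-\tfrac{3}{4}\sqrt{g}\log^+\!l+O(1)\ge m_l-C$ combines these into the displayed bound.

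Once the pointwise bound on $\varphi$ is in hand, the event $\{\max_{\rmB_{l+1}\cap\rmB^-_l}h>0\}$ forces $\max_A\tilde h>m_{l+2}+c'_\eps\wedge_{n,l,k}^\eps-C'_\eps(M+1)$, since $m_l=m_{l+2}+O(1)$. When $M\ge c'_\eps\wedge_{n,l,k}^\eps/(2C'_\eps)$ the right-hand side of~\eqref{e:st-lb} already exceeds a fixed constant and the asserted inequality is trivial, so I may restrict to the complementary regime, in which the surplus above $m_{l+2}$ is at least $c'_\eps\wedge_{n,l,k}^\eps/2$; the upper-tail estimate for the DGFF maximum (Lemma~\ref{l:Ding} applied to $\tilde h$ on $A$) then bounds the conditional probability by $C\rme^{-c(\wedge_{n,l,k}^\eps)^{2-\eps}}\le C\rme^{-c\wedge_{n,l,k}^\eps}$. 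Integrating over the conditioning and absorbing the $\rme^{CM}$ slack yields~\eqref{e:st-lb}.

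The main obstacle I expect is the harmonic-analysis step above, which requires reconciling three distinct Dirichlet problems: the interior extension on $\rmB_{l-1}$ (defining $\ol{h_{\partial\rmB^\pm_{l-1}}}(0)$), the exterior extension on $\rmB^-_{l+2}$ (controlling $\ol{h_{\partial\rmB^\pm_{l+2}}}(0)$ via the mean value property for almost-circular rings), and the annular extension on $A$ itself (defining $\varphi$). Turning the interior-viewpoint hypotheses $\ol{w}(0)\le-m-\wedge_{n,l,k}^\eps$ with bulk oscillation $\le M$ into an annular-viewpoint pointwise bound on $\varphi$ demands quantitative Poisson-kernel comparisons at both scales, for which the harmonic-analysis tools developed in Appendix~\ref{s:tools} will be essential.
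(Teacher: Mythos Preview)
Your overall strategy is correct and matches the paper's: condition on the two rings, use Gibbs--Markov to write $h=\varphi+\tilde h$ on the annulus $A=\rmB_{l+2}\cap\rmB^-_{l-1}$, show that the $E$-events force $\varphi\le -m_{l-1}-c_\eps\wedge_{n,l,k}^\eps+C_\eps M$ on $\rmB_{l+1}\cap\rmB^-_l$, and finish with an upper-tail bound for $\max_A\tilde h$.

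There is, however, a genuine slip in the last step. You invoke Lemma~\ref{l:Ding}, but that lemma is the \emph{left}-tail estimate $\bbP(\max h-m_n\le -t)\le C\rme^{-t^{2-\eps}}$; here you need the \emph{right} tail $\bbP(\max_A\tilde h>m_{l+2}+t)$, which is Lemma~\ref{l:DGFF-ut} and only gives $C\rme^{-ct}$. This is why your claimed bound $C\rme^{-c(\wedge_{n,l,k}^\eps)^{2-\eps}}$ is too strong: the correct upper-tail input yields exactly the $C\rme^{-c\wedge_{n,l,k}^\eps+CM}$ of~\eqref{e:st-lb}, with the $\rme^{CM}$ coming directly from the linear shift of the threshold rather than from any ``absorbing slack''.

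On the harmonic-analysis step: you overestimate the difficulty. The paper does not compute exit probabilities or quote Lemma~\ref{l:Poisson}; it applies Lemma~\ref{l:UV-U} once per boundary component to compare $\heb{h_{\partial\rmB_{l+2}}}{\partial\rmB_{l+2}\cup\partial\rmB^-_{l-1}}(x)$ with $P_x(\tau^{\rmB_{l+2}}\le\tau^{\rmB^-_{l-1}})\,\he{h}{\partial\rmB_{l+2}}(x)$ up to an oscillation term, and similarly for the inner ring. Since $x\in\rmB_{l+1}\cap\rmB^-_l$ lies in the bulk $\rmB^{\pm,\eps}_{l+2}$ and $\rmB^{\pm,\eps}_{l-1}$, the oscillation hypotheses in $E_{l+2,M,\eps}$ and $E_{l-1,M,\eps}$ bound $\he{h}{\partial\rmB_{l+2}}(x)$ and $\he{h}{\partial\rmB^-_{l-1}}(x)$ by their values at $0$ plus $M$, and the convex combination of $-m_{l+2}-\wedge^\eps$ and $-m_{l-1}-\wedge^\eps$ is immediate. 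No Poisson-kernel asymptotics are needed.
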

\begin{proof}
By Lemma~\ref{l:UV-U}, under $\bbP_{k,v}^{n,u}$, for $x\in \rmB_{ l+1}\cap \rmB^{-}_{ l}$,
\begin{multline}
\he{h}{\partial \rmB^-_{ l-1}\cup\partial \rmB_{ l+2}}(x)
=\heb{h_{\partial \rmB^-_{l-1}}}{\partial \rmB^-_{ l-1}\cup\partial \rmB_{ l+2}}(x)
+\heb{h_{\partial \rmB^-_{l+2}}}{\partial \rmB^-_{ l-1}\cup\partial \rmB_{ l+2}}(x)\\
\leq P_x\big(\tau^{\rmB_{l+2}} \leq \tau^{\rmB^-_{l-1}}\big)
\he{h}{\partial \rmB_{ l+2}}(x)
+P_x\big(\tau^{\rmB^-_{l-1}}<\tau^{\rmB_{l+2}}\big)
\osc_{\rmB_{l+1}}\he{h}{\partial \rmB_{ l+2}}\\
+P_x\big(\tau^{\rmB^-_{l-1}}<\tau^{\rmB_{l+2}}\big)
\he{h}{\partial \rmB^-_{ l-1}}(x)
+P_x\big(\tau^{\rmB_{l+2}} \leq \tau^{\rmB^-_{l-1}}\big)
\osc_{\rmB^-_{l}}\he{h}{\partial \rmB^-_{ l-1}}\,.
\end{multline}
Hence, on the event $E$ that $h_{\partial \rmB^\pm_{ l-1}}+m_{ l-1}\in E_{ l-1,M,\eps}$ and $h_{\partial \rmB^\pm_{ l+2}}+m_{ l+2}\in E_{ l+2,M,\eps}$, we have
\begin{equation}
\he{h}{\partial \rmB^-_{ l-1}\cup\partial \rmB_{ l+2}}(x)
+m_{l-1}
\leq -\min\big\{\wedge_{n,l-1,k}^\eps, \wedge_{n,l+2,k}^\eps\big\} + 2M
\end{equation}
under $\bbP_{k,v}^{n,u}$. Now the Gibbs-Markov property (Lemma~\ref{l:GM}) yields
\begin{multline}
\bbP_{k,v}^{n,u}\Big(\max_{\rmB^{-}_{ l}\cap \rmB_{ l+1}} h>0, E\Big)
\leq 
\bbP\Big(\max_{\rmB^{-}_{ l}\cap \rmB_{ l+1}}h^{\rmB^-_{ l-1}\cap \rmB_{ l+2}}
> m_{ l-1}+ \min\big\{\wedge_{n,l-1,k}^\eps, \wedge_{n,l+2,k}^\eps\big\} -2M\Big)\\
\leq C\rme^{-c \wedge_{n,l,k}^\eps+CM}
\end{multline}
for constants $C=C_\eps<\infty,c=c_\eps>0$ from extreme value theory of the DGFF (Lemma~\ref{l:DGFF-ut}).
\end{proof}
In the next proof, we also rely on Propositions~\ref{p:1-4} and~\ref{p:1-5} which we prove independently in Section~\ref{s:LR-funct}.

\begin{proof}[Proof of Theorem~\ref{t:1} and Proposition~\ref{p:1-2}]
We assume w.\,l.\,o.\,g.\ that $\eps\in(0,1/15)$.
It suffices to consider the case that $\max\{\ol{u}(0)^-,\ol{v}(\infty)^-\}\leq (n-k)^\eps$ as the converse case is already dealt with in Propositions~\ref{c:1.1} and~\ref{c:1.2}.

For $0\leq k\leq n$ and $l=k+\tfrac12(n-k)$ such that $\partial \rmB_l\subset U^\eta_n$ and $\partial\rmB^-_l\subset V^{-,\zeta}_k$, we consider, for sufficiently large $n-k$ as in~\eqref{e:st-disint-UB},	
\begin{multline}
\label{e:st-disint-8}
\bbP_{V,k,v}^{U,n,u}\big(h_{U^\eta_n\cap V^{-,\zeta}_k}\leq 0\big)
\leq \bbP_{V,k,v}^{U,n,u}\big(h_{U^\eta_n\cap \rmB^{-,\eps}_l}\leq 0,
h_{\rmB^\eps_l\cap V^{-,\zeta}_k}\leq 0\big)\\
=\int\bbP_{\rmB, l,w}^{U,n,u} \big(h_{U^\eta_n \cap \rmB_l^{-, \eps}} \leq 0 \big)
\bbP_{V,k,v}^{\rmB,  l,w} \big(h_{\rmB^\eps_l \cap V^{-,\zeta}_k} \leq 0 \big)
\bbP_{V,k,v}^{U,n,u} \big( h_{\partial\rmB^\pm_l} + m_l \in \rmd w \big)\,.
\end{multline}

Let $M\in(1,\infty)$.
We first evaluate the integral on the right-hand side of~\eqref{e:st-disint-8} restricted to $w\in E_{l, M,3\eps}$.
In this range of integration, for the first factor in the integrand, we plug in the uniform asymptotics from Proposition~\ref{c:1.1}, and for the second one, we use Proposition~\ref{c:1.2}.
Then, as $n-k\to\infty$, the restricted integral is asymptotically equivalent to
\begin{equation}
\label{e:restr-as}
4\frac{\cL_{n}(u) \cR_{k} (v)
}{g^2(n- l)( l-k)}
\bbE_{k,v}^{n,u}\Big((\he{h}{\partial\rmB^\pm_l}(0)+m_l)^-(\he{h}{\partial\rmB^\pm_l}(\infty)+m_l)^-;h_{\partial\rmB^\pm_l}+m_l\in E_{ l,M,3\eps}\Big)\,.
\end{equation}
On $E_{k,M,3\eps}$, we have $\osc_{\rmB^{\pm,\eps}_l}\he{h}{\partial \rmB^\pm_l}\leq M$ and
$(\he{h}{\partial\rmB^\pm_l}(0)+m_l)^-\geq l^{3\eps}$, and hence
\begin{equation}
\label{e:t1-pO1}
(\he{h}{\partial\rmB^\pm_l}(0)+m_l)^-(\he{h}{\partial\rmB^\pm_l}(\infty)+m_l)^-
=\big[(\he{h}{\partial\rmB^\pm_l}(0)+m_l)^-\big]^2\big(1+O(Ml^{-3\eps})\big)\,.
\end{equation}
Setting 
\begin{equation}
\mu(0;u,v):=\bbE_{k,v}^{n,u}\,\he{h}{\partial\rmB^\pm_l}(0)\,,
\end{equation}
we also have
\begin{equation}
\label{e:p-as-blml}
\big| \mu(0;u,v) + m_l \big|
\leq C_\eps\big(1+l^\eps\big)
\end{equation}
by Lemma~\ref{l:blml}, Proposition~\ref{l:E} and the assumptions on $u$ and $v$. Hence, \eqref{e:t1-pO1} is equal to
$\big[\big(\he{h}{\partial\rmB^\pm_l}(0)-\mu(0;u,v)\big)^-\big]^2 \big(1+O(l^{-\eps}+Ml^{-3\eps})\big)$ so that
the expectation in~\eqref{e:restr-as} equals
\begin{equation}
\label{e:pt1.1-p2}
\bbE_{k,v}^{n,u}\Big(\big((\he{h}{\partial\rmB^\pm_l}(0)-\mu(0;u,v))^-\big)^2;h_{\partial\rmB^\pm_l}+m_l\in E_{ l,M,3\eps}\Big)\big(1+O(Ml^{-\eps})\big)\,.
\end{equation}
We claim that
\begin{equation}
\label{e:claimEc-pt-1.1}
\bbE_{k,v}^{n,u}\Big(\big((\he{h}{\partial\rmB^\pm_l}(0)-\mu(0;u,v))^-\big)^2;h_{\partial\rmB^\pm_l}+m_l\notin E_{ l,M,3\eps}\Big)=O(l^{6\eps}+l\rme^{-cM^2})\,.
\end{equation}
Using this claim, \eqref{e:pt1.1-p2} and Proposition~\ref{p:Cov-bd}, we obtain that the expectation in~\eqref{e:restr-as} equals
\begin{equation}
\tfrac12\Var_{k}^{n}\Big(\he{h}{\partial\rmB^\pm_l}(0)\Big)\big(1+O(l^{6\eps-1}+\rme^{-cM^2})\big)
=\tfrac18g(n-k)\big(1+O(Ml^{-\eps}+l^{6\eps-1}+\rme^{-cM^2})\big)\,.
\end{equation}
Moreover, the restriction of the integral on the right-hand side of~\eqref{e:st-disint-8} to $w\notin E_{l, M,3\eps}$ is, by Proposition~\ref{l:stitch-restr}, bounded from above by a constant times $(\rme^{-cM^2} + (n-k)^{-1})$ times the right-hand side of~\eqref{e:2.4ub}, as $\cL_{n}(u)\geq c_{\eps}\big(1+\ol{u}(0)^-\big)$ and $\cR_{k}(v)\geq c_{\eps}\big(1+\ol{v}(\infty)^-\big)$ by Propositions~\ref{p:1-4} and~\ref{p:1-5} (which are proved independently in Section~\ref{s:LR-funct}). Altogether it then follows that the right-hand side of~\eqref{e:st-disint-8} is asymptotically equivalent to the right-hand side of~\eqref{e:2.4ub} as $n-k\to\infty$ followed by $M\to\infty$.

We now show the claim~\eqref{e:claimEc-pt-1.1}. Using the decomposition~\eqref{e:E1234} (with $3\eps$ in place of $\eps$), we have to bound
\begin{equation}
\label{e:claim-p1.1-1234}
\bbE_{k,v}^{n,u}\Big(\big((\he{h}{\partial\rmB^\pm_l}(0)-\mu(0;u,v))^-\big)^2;h_{\partial\rmB^\pm_l}+m_l\in E_i\Big)
\end{equation} 
for $i=1,\ldots,4$. For $i=1$, this expectation is bounded by
\begin{equation}
\bbE_{k,v}^{n,u}\Big(\big(\he{h}{\partial\rmB^\pm_l}(0)-\mu(0;u,v)\big)^4\Big)^{1/2}\bbP_{k,v}^{n,u}\Big(\he{h}{\partial\rmB^\pm_l}(0)+m_l\in E_1\Big)^{1/2}
\end{equation}
by the Cauchy-Schwarz inequality.  The centered fourth moment of the Gaussian random variable $\he{h}{\partial\rmB^\pm_l}(0)$, whose variance is given by Proposition~\ref{p:Cov-bd}, is bounded by a constant times $l^2$, and by Lemma~\ref{l:sr-}, the probability is bounded by a constant times $\rme^{-l^{1-6\eps}}$.
For $i=2$, \eqref{e:claim-p1.1-1234} is bounded by a constant times
$l^{6\eps}+l^{2\eps}$ by definition of $E_2$ and~\eqref{e:p-as-blml}.
For $i=3$, \eqref{e:claim-p1.1-1234} is bounded by a constant times
$\rme^{-cM^2}(1+l+l^{2\eps})$ by Lemma~\ref{l:sr-osc}, \eqref{e:p-as-blml} and the assumptions on $u,v$.
On $E_4$, we have $\he{h}{\partial\rmB^\pm_l}(0)+m_l\geq l^{6\eps}$. Hence, using also~\eqref{e:p-as-blml}, the expression in~\eqref{e:claim-p1.1-1234} for $i=4$ is equal to zero whenever $l^{6\eps}-l^{2\eps}\geq 0$. This shows the claim~\eqref{e:claimEc-pt-1.1}.

It remains to bound the difference between the right and left sides of~\eqref{e:st-disint-8} which is equal to
\begin{equation}
\label{e:p-as-diff-lb}
\bbP_{k,v}^{n,u}\big(h_{U^\eta_n\cap \rmB^{-,\eps}_l}\leq 0,
h_{\rmB^\eps_l\cap V^{-,\zeta}_k}\leq 0,
\max_{\bbZ^2\setminus \rmB^{\pm,\eps}_l} h>0\big)\,.
\end{equation}
This probability is bounded from above by
\begin{multline}
\bbP_{k,v}^{n,u}\big(h_{U^\eta_n\cap V^{-,\zeta}_k}\leq 0, h_{\partial \rmB_{l-1}}\notin E_{l-1,M,\eps}\big)
+ \bbP_{k,v}^{n,u}\big(h_{U^\eta_n\cap V^{-,\zeta}_k}\leq 0, h_{\partial \rmB_{l+2}}\notin E_{l+2,M,\eps}\big)\\
+\bbP_{k,v}^{n,u}\big(\max_{\bbZ^2\setminus \rmB^{\pm,\eps}_l} h>0, h_{\partial \rmB^\pm_{l-1}}\in E_{l-1,M,\eps}, h_{\partial \rmB^\pm_{l+2}}\in E_{l+2,M,\eps}\big)
\end{multline}
by a union bound. Applying Proposition~\ref{l:stitch-restr} to the first two probabilities and Proposition~\ref{p:stitch-lb} to the third one shows that~\eqref{e:p-as-diff-lb} is bounded by a constant times
\begin{equation}
\label{e:p:1.1-1-5-lb}
\frac{\big(\ol{u}(0)^-
+1\big)
\big(\ol{v}(\infty)^- +1\big)}{n-k}
\Big(\wedge_{n,l,k}^{-3/2+4\eps}+\rme^{-cM^2}
+\rme^{-c_\eps (n-k)^\eps+C_\eps M}\Big)\,,
\end{equation}
where we also used~\eqref{e:uv-as}. The expression in~\eqref{e:p:1.1-1-5-lb} is further bounded by a constant times
\begin{equation}
\frac{\cL_n(u)\cR_k(v)}{n-k}
\Big(\wedge_{n,l,k}^{-3/2+4\eps}+\rme^{-cM^2}
+\rme^{-c_\eps (n-k)^\eps+C_\eps M}\Big)
\end{equation}
by Propositions~\ref{p:1-4} and~\ref{p:1-5} and hence of smaller order than the right-hand side of~\eqref{e:2.4ub} as $n-k\to\infty$ followed by $M\to\infty$. This completes the proof.
\end{proof}

\section{The ballot functionals}
\label{s:LR-funct}

Comparison statements for the ballot functionals for the DRW and the DGFF (from Proposition~\ref{p:1-2}) are stated (Subsection~\ref{s:comp-ballot}) and proved  (Subsection~\ref{s:pf-aux}). These are then used (Subsection~\ref{s:DGFF-as}) to prove Propositions~\ref{c:1.1}, \ref{c:1.2}, and~\ref{p:LR-infty} --~\ref{p:1-6}. The presentation focuses on the functional $\cL_n$. The functional $\cR_k$ is handled analogously (Subsection~\ref{s:R-funct}).

\subsection{Auxiliary results}
\label{s:comp-ballot}
In the setting of the inward concentric decomposition (Section~\ref{s:def-DRW}) and using the correspondence from Theorem~\ref{t:drw-i}, we define
\begin{equation}
\label{e:def-l}
\ell_{\rt,r}\big(\ol{u}(0),\ol{v}(\infty)\big)=
\bbE_{V,k,v}^{U,n,u}\Big(\cS_r^-;\, \max_{i=1}^r\big( \cS_i + \cD_i\big)\leq 0\Big)
\end{equation}
in accordance with~\eqref{e:5.4} in Appendix~\ref{s:3}, and recalling from~\eqref{e:def-r} that $\rt:=\rt_{n-k} :=\lfloor n - k \rfloor + \lfloor\log\eps\rfloor - \lceil\log(\eps^{-1}+\zeta)\rceil$.
As an intermediate step between~\eqref{e:def-l} and the functional $\cL_n$ from Proposition~\ref{p:1-2}, we define the functional
\begin{equation}
\label{e:def-cLm}
\cL^r_{n,k}(u,v):=\cL_{n,\eta,U,k,V}^r(u,v)
:=\bbE_{V,k,v}^{U,n,u}\big(
(\he{h}{\partial\rmB^-_{n-r}} (0)+ m_{n-r})^-;
h_{U^\eta_n\cap \rmB^-_{n-r}} \leq 0\big)\,.
\end{equation}
These functionals for the DGFF and the DRW can be compared as follows:
\begin{lem}
\label{l:LUV}
Let $\eps\in(0,1)$.
Then,
\begin{equation}
\label{e:LUV-add}
\lim_{\substack{r\to\infty\\r\in\bbN}}
\lim_{n-k\to\infty}
\big(1+\ol{u}(0)^-\big)^{-\eps}
\Big|\ell_{\rt_{n-k},r+\lfloor\log\eps\rfloor}\big(\ol{u}(0),\ol{v}(\infty)\big)
-\cL^{r}_{n,k}(u,v)\Big|=0
\end{equation}
uniformly in $\eta,\zeta \in[0,\eps^{-1}]$, $U\in \frU^\eta_\eps$, $V\in\frV_\eps$, $u\in\bbR^{\partial U_n}$, $v\in\bbR^{\partial V^-_k}$ satisfying~\eqref{e:uv-diff}.
\end{lem}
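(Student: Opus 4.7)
The plan is to use Theorem~\ref{t:drw-i} to identify the ballot events on both sides and then control the discrepancy between the two integrands via moment estimates on the concentric decomposition. Set $\tilde r := r + \lfloor\log\eps\rfloor$. From the construction in Subsection~\ref{s:concdec} we have $\Delta_{\tilde r} = \rmB_{n-r}$, so $\bigcup_{p=1}^{\tilde r} A_p = \Delta_0 \setminus \Delta_{\tilde r} = U^\eta_n \cap \rmB^-_{n-r}$, and Theorem~\ref{t:drw-i}'s identity~\eqref{e:h-S} yields $\bigcap_{p=1}^{\tilde r}\{\cS_p + \cD_p \leq 0\} = \{h_{U^\eta_n \cap \rmB^-_{n-r}} \leq 0\} =: E_r$. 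Letting $Y_r := \he{h}{\partial\rmB^-_{n-r}}(0) + m_{n-r}$ and using $|a^- - b^-| \leq |a - b|$, the left-hand side of~\eqref{e:LUV-add} is bounded by $\bbE_{V,k,v}^{U,n,u}[|Z_r|; E_r]$ where $Z_r := Y_r - \cS_{\tilde r}$.

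The main step is then to show that $\bbE[Z_r^2]$ is uniformly bounded and tends to a quantity that vanishes in the iterated limit $n - k \to \infty$ followed by $r \to \infty$. Decomposing $Y_r$ using~\eqref{e:hphih}, two structural facts simplify the expansion: (i) $h_p$ vanishes on $\partial\rmB^-_{n-r}$ for $p \leq \tilde r$ (since the latter is contained in $\Delta_{\tilde r}$), and (ii) for $p \leq \tilde r - 1$ the binding field $\varphi_p$ is harmonic throughout $\Delta_p \cap V^-_k \supset \rmB_{n-r} \cap V^-_k$, so by the strong Markov property $\he{\varphi_p}{\partial\rmB^-_{n-r}}(0)$ equals $\cX_p$ modulo a small correction attributable to the $V$-boundary, controlled via Lemma~\ref{l:Poisson} together with the assumption $\rmB(0,\eps) \subset U^\eta$. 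Combining (i)--(ii) with Lemma~\ref{l:dec-coupl} (which gives $\Var(\cS'_{\tilde r} - \cY_{\tilde r}) = O_\eps((T-\tilde r)^{-1})$), Lemma~\ref{l:varXm} (comparing $\cY_{\tilde r}$ with $\sum_{j \leq \tilde r} \cX_j$), and the $\gamma$-type analysis from~\eqref{e:gamma2}--\eqref{e:bd-gamma} (comparing $\beta_{\tilde r}$ with the harmonic extension of the boundary shift to $0$), one obtains a decomposition $Z_r = D + N_r + R_r$ where $|D| = O_\eps(1)$ is deterministic, $N_r$ is centered Gaussian with $\Var N_r = O_\eps((T - \tilde r)^{-1})$, and $R_r$ collects the centered contributions of $\varphi_p + h_p$ with $p \geq \tilde r + 1$. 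The variance of $R_r$ is uniformly bounded, and iterating Lemma~\ref{l:Poisson} as in the proof of Lemma~\ref{l:dec-tail} (to exploit the geometric harmonic-measure damping from inner scales to the origin) shows that $\Var R_r \to 0$ as $r \to \infty$.

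To conclude, apply Cauchy--Schwarz together with a ballot upper bound on $\bbP(E_r)$ (using Theorem~\ref{t:2.3}, or alternatively, conditioning on $h_{\partial\rmB^\pm_{n-r}}$ and invoking Proposition~\ref{l:stitch-restr}) and combine with the second-moment bound on $Z_r$. The main obstacle is that $Z_r$ carries a deterministic shift of order $O_\eps(1)$, so a direct Cauchy--Schwarz with the full ballot bound would pick up a factor $(1 + \ol u(0)^-)^{1/2}$ that is inadmissible when $\ol u(0)^-$ approaches $(n-k)^{1 - \eps}$. This is overcome by first reducing to the case of constant boundary data via Lemma~\ref{l:UV-U} and Lemma~\ref{l:osc-far} (absorbing the oscillations into $D$), and then truncating according to whether $|Z_r|$ exceeds a slowly growing threshold: on the bounded part one exploits the precise size of $D$ together with the stretched-exponential tail of $\cD_{\tilde r}$ from Assumption~\ref{i.a3}, while on the tail the vanishing of $\Var N_r$ and $\Var R_r$ suffices. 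The resulting bound vanishes in the iterated limit as required by~\eqref{e:LUV-add}, even after renormalization by $(1 + \ol u(0)^-)^\eps$.
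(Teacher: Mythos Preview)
Your setup and identification of the ballot event via Theorem~\ref{t:drw-i} are correct, but the decomposition $Z_r = D + N_r + R_r$ and the closing argument both have genuine gaps.

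First, the claimed $R_r$ is not what you say it is: for $p \geq \tilde r + 1$ both $\varphi_p$ and $h_p$ vanish on $\partial\rmB^-_{n-r}$ (since $\varphi_p$ is supported on $\Delta_{p-1}\cap V^-_k \subset \rmB_{n-r}$), so their contribution to $\he{h}{\partial\rmB^-_{n-r}}(0)$ is exactly zero. The $r$-dependent part of the error instead comes from comparing $\cY_{\tilde r}$ with $\he{\varphi_{0,\tilde r}}{\partial\rmB^-_{n-r}}(0)$; the paper handles this by inserting an intermediate point $z\in\partial\rmB_{n-2r}$ and splitting into $|\cY_{\tilde r}-\varphi_{0,\tilde r}(z)|$ and $|\varphi_{0,\tilde r}(z)-\he{h^{U_n\cap V^-_k}}{\partial\rmB^-_{n-r}}(0)|$. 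Neither of these has vanishing variance in $r$; only the second vanishes (in the iterated limit), while the first has second moment merely bounded by $C_\eps$. Also note $|D|$ is $O_\eps(\log r)$ via Lemma~\ref{l:blml}, not $O_\eps(1)$.

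Second, and more seriously, your truncation does not resolve the $(1+\ol u(0)^-)^{1/2}$ obstruction you correctly identify. On the bounded part you are still left with $M\cdot\bbP(E_r)$, and the best available bound is Lemma~\ref{l:half}, giving $\bbP(E_r)\leq C_\eps(1+\ol u(0)^-)^2 r^{-1/2}$; this does not become $o\big((1+\ol u(0)^-)^\eps\big)$ uniformly. The paper's resolution is different and simpler: since $\bbP(E_r)\leq 1$, one has $\bbP(E_r)\leq \bbP(E_r)^{\eps/4}\leq C_\eps(1+\ol u(0)^-)^{\eps/2} r^{-\eps/8}$, and this small power is exactly what the normalization $(1+\ol u(0)^-)^{-\eps}$ is designed to absorb. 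Every term that retains the indicator of $E_r$ (the deterministic $\gamma$-piece and the $|\cY_{\tilde r}-\varphi_{0,\tilde r}(z)|$ piece) is handled this way; the remaining pieces are bounded without reference to $E_r$ at all. You should replace the truncation with this $\bbP(E_r)^{\eps/4}$ device and invoke Lemma~\ref{l:half} directly rather than Theorem~\ref{t:2.3} or Proposition~\ref{l:stitch-restr}, neither of which bounds the half-annulus event $E_r$ as stated.
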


The following lemma allows to vary the parameter $\eta$ in $\cL^r_{n,k}$. For a subset $W \subset U$, we define
\begin{equation}
\label{e:def-Lmuv}
\cL^r_{n,W,U,k,V}(u,v)
=\bbE_{V,k,v}^{U,n,u}\Big(\big(\he{h}{\partial\rmB_{n-r}}(0)+m_{n-r}\big)^-;\;
h_{W_n\cap \rmB^-_{n-r}}\leq 0\Big)\,,
\end{equation}
so that in particular
$\cL^r_{n,\eta,U,k,V}(u,v)=\cL^r_{n,U^\eta,U,k,V}(u,v)$.
\begin{lem}
\label{l:delta0}
Let $\eps\in(0,1)$ and $\eta,\zeta\in[0,\eps^{-1}]$.
There exists $C=C_\eps<\infty$ such that
\begin{equation}
\bigg|\frac{\cL^r_{n,W,U,k,V}(u,v)}{\cL^r_{n,W',U,k,V}(u,v)}-1\bigg|
\leq C\Leb(W \triangle W')
\end{equation}
for all $U\in\frU^\eta_\eps$, $V\in\frV_\eps$, Borel measurable $W,W'$ with $\eps^2\rmB \subset W,W'\subset U^\eta$, $n,k,r\geq 0$ with
$\partial \rmB_{n-r}\subset (\eps^2\rmB)_n\cap V^{-,\zeta}_k$,
and $u\in\bbR^{\partial U_n}$, $v\in\bbR^{\partial V^-_k}$ that satisfy
$\max\{\ol{u}(0),\osc\,\ol{u}_\eta, \ol{u'}(0), \osc\,\ol{u}'_{\eta'}\} < \epsilon^{-1}$.
\end{lem}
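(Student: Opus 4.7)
My plan is to reduce via a triangle inequality through $W\cap W'$ to the case $W\subset W'$, expand the difference $\cL^r_{n,W,U,k,V}(u,v)-\cL^r_{n,W',U,k,V}(u,v)$ via a union bound over lattice points in the discretized symmetric difference, dominate each term using a monotone-coupling (FKG-type) argument at a single point, and sum the resulting Gaussian tails using the geometric restriction $\eps^2\rmB\subset W\cap W'$ to limit the relevant dyadic shells. Explicitly, once $W\subset W'$, one has $W_n\subset W'_n$ by~\eqref{e:discr}, so the ballot event on $W'_n\cap\rmB^-_{n-r}$ is more restrictive than on $W_n\cap\rmB^-_{n-r}$ and $\cL^r_{n,W'}\leq\cL^r_{n,W}$. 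Writing $A:=W'_n\cap\rmB^-_{n-r}$, $A':=W_n\cap\rmB^-_{n-r}$, $B:=A\setminus A'$, and $h_0^*:=\he{h}{\partial\rmB_{n-r}}(0)+m_{n-r}$, a union bound yields
\begin{equation*}
\cL^r_{n,W,U,k,V}(u,v)-\cL^r_{n,W',U,k,V}(u,v)\leq\sum_{y\in B}\bbE_{V,k,v}^{U,n,u}\big[(h_0^*)^-;\,h_{A'}\leq 0,\,h(y)>0\big].
\end{equation*}

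For each $y\in B$, the hypothesis $\partial\rmB_{n-r}\subset(\eps^2\rmB)_n$ forces $|y|>e^{n-r}+1/2$ so $y\notin\partial\rmB_{n-r}$, and by construction $y\notin A'$. Hence $F(h):=(h_0^*)^-\cdot\Ind_{\{h_{A'}\leq 0\}}$ is $\sigma(h|_{\bbZ^2\setminus\{y\}})$-measurable and a non-negative, pointwise-decreasing functional of $h$. Under $\bbP_{V,k,v}^{U,n,u}$, the regression coefficients $G_{U_n\cap V^-_k}(\cdot,y)/G_{U_n\cap V^-_k}(y,y)\geq 0$ admit a monotone coupling of the conditional laws $\{h\mid h(y)=s\}_{s\in\bbR}$, so $s\mapsto\bbE_{V,k,v}^{U,n,u}[F(h)\mid h(y)=s]$ is decreasing. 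This yields
\begin{equation*}
\bbE_{V,k,v}^{U,n,u}[F(h);h(y)>0]\leq\frac{\bbP_{V,k,v}^{U,n,u}(h(y)>0)}{\bbP_{V,k,v}^{U,n,u}(h(y)\leq 0)}\,\bbE_{V,k,v}^{U,n,u}[F(h);h(y)\leq 0]\leq 2\,\bbP_{V,k,v}^{U,n,u}(h(y)>0)\cdot\cL^r_{n,W,U,k,V}(u,v),
\end{equation*}
using $\bbE[F;h(y)\leq 0]=\bbE[(h_0^*)^-;h_{A'\cup\{y\}}\leq 0]\leq\cL^r_{n,W}$ and $\bbP(h(y)\leq 0)\geq 1/2$ (valid for $n$ large, the remaining range of $n$ handled by enlarging $C$).

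The crucial tail sum proceeds as follows. Under $\bbP_{V,k,v}^{U,n,u}$, $h(y)$ is Gaussian with mean $\ol{w}(y)\leq -m_{\log|y|}+C_\eps$ (from the harmonic-annulus asymptotics combined with the bounds on $u$) and variance $G_{U_n\cap V^-_k}(y,y)\leq g(n-\log|y|)+C_\eps$ (from Appendix~\ref{s:tools}), whence the Gaussian tail gives $\bbP_{V,k,v}^{U,n,u}(h(y)>0)\leq C\exp(-2(n-\log|y|))$. The assumption $\eps^2\rmB\subset W\cap W'$ forces $W\triangle W'\subset\{|z|\geq\eps^2\}$, so any $y\in B$ obeys $|y|\geq\eps^2 e^n$ and $n-\log|y|\leq 2|\log\eps|$. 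Stratifying $B$ by the dyadic shells $\{|y|\in[e^{n-a-1},e^{n-a}]\}$ for $a=0,1,\ldots,\lceil 2|\log\eps|\rceil$ and using
\begin{equation*}
|B\cap\{|y|\in[e^{n-a-1},e^{n-a}]\}|\leq Ce^{2n}\Leb\big((W\triangle W')\cap\{|z|\in[e^{-a-1},e^{-a}]\}\big),
\end{equation*}
one obtains
\begin{equation*}
\sum_{y\in B}\bbP_{V,k,v}^{U,n,u}(h(y)>0)\leq C\sum_{a=0}^{\lceil 2|\log\eps|\rceil}e^{2a}\,\Leb\big((W\triangle W')\cap\{|z|\in[e^{-a-1},e^{-a}]\}\big)\leq C_\eps\Leb(W\triangle W').
\end{equation*}
Combined with the previous paragraph, $\cL^r_{n,W}-\cL^r_{n,W'}\leq 2C_\eps\Leb(W\triangle W')\cdot\cL^r_{n,W}$, which rearranges to the desired ratio bound for small $\Leb$ (with the case of large $\Leb$ trivial upon enlarging $C$). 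The main obstacle is this last tail sum, which only achieves the sharp $O(\Leb(W\triangle W'))$ rate because the hypothesis $\eps^2\rmB\subset W\cap W'$ confines the summation to a bounded range of dyadic shells, allowing the factor $e^{2a}$ from the lattice density to be absorbed uniformly in $n,r,k$.
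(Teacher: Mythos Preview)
Your reduction to the case $W\subset W'$ and the use of an FKG/monotone-coupling step are the same in spirit as the paper's argument. The paper, however, applies FKG directly to the \emph{aggregate} event $\{\max_{W'_n\setminus W_n}h>0\}$: since $(h_0^*)^-\Ind_{\{h_{W_n\cap\rmB^-_{n-r}}\leq 0\}}$ is decreasing and $\Ind_{\{\max_{W'_n\setminus W_n}h>0\}}$ is increasing, FKG for the positively correlated Gaussian field gives
\[
\cL^r_{n,W}-\cL^r_{n,W'}\;\leq\;\cL^r_{n,W}\cdot\bbP_{V,k,v}^{U,n,u}\Big(\max_{W'_n\setminus W_n}h>0\Big),
\]
and then Lemma~\ref{p:viol} (which rests on the sharp DGFF extreme-value input, Lemma~B.12 of~\cite{BL-Full}) bounds the right-hand probability by $C_\eps\Leb(W'\setminus W)$.

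Your pointwise decomposition effectively replaces that probability by the union bound $\sum_{y\in B}\bbP(h(y)>0)$, and this is where the argument fails. The variance claim $G_{U_n\cap V^-_k}(y,y)\leq g(n-\log|y|)+C_\eps$ is incorrect: for $y\in W'_n\subset(U^\eta)_n$ one has $\rmd(y,\partial U_n)\gtrsim_\eps e^n$ (at least when $\eta>0$), so the pointwise variance is $gn+O_\eps(1)$, not $O_\eps(1)$. With the correct variance and mean $\approx-m_n$, the Gaussian tail gives $\bbP(h(y)>0)\asymp n\,e^{-2n}$, the factor $n$ arising from the $\tfrac34\sqrt g\log n$ correction in $m_n$. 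Since $|B|\asymp e^{2n}\Leb(W'\setminus W)$, the union bound therefore produces
\[
\sum_{y\in B}\bbP(h(y)>0)\;\asymp\; n\,\Leb(W'\setminus W),
\]
which carries an extraneous factor of $n$ and does not yield the $n$-uniform bound $C_\eps\Leb(W\triangle W')$ required by the lemma. (Your displayed tail $Ce^{-2(n-\log|y|)}$ is also inconsistent with your own mean and variance claims, which would instead force a doubly exponential decay.) The sharp estimate on $\bbP(\max_{W'_n\setminus W_n}h>0)$ genuinely requires the extreme-value input encapsulated in Lemma~\ref{p:viol}; a pointwise union bound is too crude by exactly this logarithmic factor.
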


To further approach the functional $\cL_n$ from Proposition~\ref{p:1-2}, we also set
$\cL^r_{n}(u)=\cL^r_{n,\eta,U,0,\rmB}(u,\ol{u}(0))$.
To compare $\cL^r_n(u)$  with
$\cL^r_{n,k}(u,v)$, we need the following comparison statement:
\begin{lem}
\label{l:cont-V}
Let $r\geq 0$ and $\eps\in(0,\tfrac{1}{10})$. Then there exists $C=C_{r,\eps}<\infty$ such that for all $\eta\in[\eps,\eps^{-1}]$, $\zeta\in[0,\eps^{-1}]$, $U\in\frU^\eta_\eps$, $V\in\frV_\eps$, $0<k<n$ with $\partial \rmB_{n-r}\subset U^{\eta}_n$ and $\partial \rmB^-_{n-r-1}\subset V^{-,\zeta\vee\eps}_k$,
and all $u\in\bbR^{\partial U_n}$, $v\in\bbR^{\partial V^-_k}$ satisfying~\eqref{e:uv-diff}, 
we have
\begin{equation}
\label{e:cont-V-assertion}
\big|\cL^r_{n,\eta,U,0,\rmB}(u,\ol{u}(0))-\cL^r_{n,\eta,U,k,V}(u,v) \big|
\leq C(n-k)^{-\eps/ 4}\,.
\end{equation}
\end{lem}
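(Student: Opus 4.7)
The plan for Lemma~\ref{l:cont-V} is to use the Gibbs--Markov property to reduce the comparison of $\cL^r_{n,\eta,U,0,\rmB}(u,\ol{u}(0))$ and $\cL^r_{n,\eta,U,k,V}(u,v)$ to a comparison of two Gaussian laws on $h_{\partial\rmB^-_{n-r-1}}$, and then control the remaining difference via SRW ruin estimates (for the means) and Green function comparisons (for the covariances).

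\emph{Reduction via Gibbs--Markov.} Under the hypotheses, $\rmB^-_{n-r-1}\subset V^-_k\cap\rmB^-_0$ and $\partial\rmB^-_{n-r}\subset\rmB^-_{n-r-1}$, so the functional in~\eqref{e:def-cLm} is measurable with respect to $h|_{\ol{U_n\cap\rmB^-_{n-r-1}}}$. Applying Lemma~\ref{l:GM} at the subdomain $U_n\cap\rmB^-_{n-r-1}$, conditionally on $h_{\partial U_n}=-m_n+u$ and $h_{\partial\rmB^-_{n-r-1}}=w$, this restriction has the same law under both measures --- that of a DGFF on $U_n\cap\rmB^-_{n-r-1}$ with the specified boundary data. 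Consequently, there is a common function $\Psi$ with
\begin{equation*}
\cL^r_{n,\eta,U,k,V}(u,v)=\bbE_{V,k,v}^{U,n,u}\Psi\big(h_{\partial\rmB^-_{n-r-1}}\big),\qquad \cL^r_{n,\eta,U,0,\rmB}(u,\ol{u}(0))=\bbE_{\rmB,0,\ol{u}(0)}^{U,n,u}\Psi\big(h_{\partial\rmB^-_{n-r-1}}\big).
\end{equation*}

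\emph{Growth of $\Psi$ and comparison of the two Gaussian laws.} Bounding $(\cdot)^-\leq|\cdot|$ and using Lemma~\ref{l:blml} together with Propositions~\ref{l:E} and~\ref{p:Cov-bd} applied to the common conditional law yields $|\Psi(w)|\leq C_r\big(1+|\ol{w}(0)+m_{n-r-1}|+\osc_{\rmB^{-,\eps}_{n-r-1}}\ol{w}\big)$. For $x\in\partial\rmB^-_{n-r-1}$, Lemma~\ref{l:ruin} bounds the probability that an SRW from $x$ exits the outer domain through its inner boundary ($\partial V^-_k$ resp.\ $\partial\rmB^-_0$) by $O((n-k)^{-1})$; combined with~\eqref{e:uv-diff} this gives a mean difference at $x$ of order $(n-k)^{-1}(|\ol{u}(0)|+|v|+m_k+\osc\,\ol{u}_\eta+\osc\,\ol{v}_\zeta)=O((n-k)^{-\eps})$. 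Similarly, the covariance difference $G_{U_n\cap V^-_k}(x,y)-G_{U_n\cap\rmB^-_0}(x,y)$ for $x,y\in\partial\rmB^-_{n-r-1}$ is $O((n-k)^{-1})$: via the Gibbs--Markov decomposition of the Green function, it equals the covariance of a binding field from the inner boundary, itself controlled by the same ruin probability.

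\emph{Conclusion and main obstacle.} A standard Gaussian comparison---e.g., linear interpolation of the means and covariances together with Gaussian integration by parts applied to a mollification of $\Psi$, followed by a limiting argument---yields $|\bbE_1\Psi-\bbE_2\Psi|\leq C_{r,\eps}(n-k)^{-\eps/4}$, using the $O(n)$ variance of $\ol{w}(0)+m_{n-r-1}$ from Proposition~\ref{p:Cov-bd} to absorb the sub-polynomial growth of $\Psi$ via Cauchy--Schwarz. The main obstacle is obtaining the SRW-ruin constant in Lemma~\ref{l:ruin} uniformly across shapes $V\in\frV_\eps$, and then carefully combining the $(n-k)^{-\eps}$ mean comparison with the $(n-k)^{-1}$ covariance comparison and the polynomial growth of $\Psi$ so that the final rate in~\eqref{e:cont-V-assertion} is not eroded beyond the claimed $(n-k)^{-\eps/4}$; both steps are routine given the harmonic-analytic tools in Appendix~\ref{s:tools}, but require careful bookkeeping.
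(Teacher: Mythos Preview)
Your reduction via Gibbs--Markov to a common functional $\Psi$ of $h_{\partial\rmB^-_{n-r-1}}$ is correct, and your pointwise bounds on the mean difference ($O((n-k)^{-\eps})$, which is exactly Lemma~\ref{l:gV}) and on the covariance difference ($O((n-k)^{-1})$, which is the content of Lemma~\ref{l:Cov-bdg}) are also correct. The gap is in the last step.

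The ``standard Gaussian comparison via interpolation and integration by parts'' does not go through here, because the Gaussian vector $h_{\partial\rmB^-_{n-r-1}}$ has dimension of order $\rme^{n-r}$. After mollifying $\Psi$ at scale $\lambda$, the interpolation derivative produces a double sum $\tfrac12\sum_{i,j}(\Sigma_1-\Sigma_2)_{ij}\,\bbE\,\partial_i\partial_j\Psi_\lambda$, and your entrywise bound $|(\Sigma_1-\Sigma_2)_{ij}|\leq C(n-k)^{-1}$ gives no control once summed over $\rme^{2(n-r)}$ pairs. The function $\Psi$ also contains a hard indicator $\{h_{U^\eta_n\cap\rmB^-_{n-r}}\leq 0\}$, so mollification errors are governed by $\bfP(\max h\in(-\lambda,\lambda])$, which is nontrivial to bound and is itself the content of Lemma~\ref{l:h-cont}. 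In short, neither the ``Cauchy--Schwarz plus $O(n)$ variance'' bookkeeping nor the mollification step can absorb an exponential-in-$n$ number of terms.

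The paper avoids this entirely by using the \emph{coupling} implicit in your covariance comparison rather than an abstract interpolation: since $G_{U_n\cap(\eps\rmB^-)_0}-G_{U_n\cap V^-_k}\geq 0$, one can write $h^{(1)}\stackrel{d}{=}h^{(2)}+\varphi+(\wt\frg-\frg)$ with $\varphi=\varphi^{U_n\cap(\eps\rmB^-)_0,\,U_n\cap V^-_k}$ independent of $h^{(2)}$. This converts the comparison into a \emph{uniform shift} of the field by $t=\max_{U^\eta_n\cap\rmB^-_{n-r}}|\varphi+\wt\frg-\frg|$, and reduces the problem to bounding $|\cL^r_{n,k}(u,v,\pm t)-\cL^r_{n,k}(u,v)|$. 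That is Lemma~\ref{l:LUVt}, whose proof uses Lemma~\ref{l:h-cont} to control the probability that the maximum lies in a small window. The tail of $t$ is then handled by Lemma~\ref{l:VV-fluct} (for $\varphi$) and Lemma~\ref{l:gV} (for $\wt\frg-\frg$). If you replace your interpolation paragraph by this coupling-plus-shift argument, your outline becomes essentially the paper's proof.
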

With the next lemma, in which the functional $\cL^r_{n,k}(u,v)$ is considered for fixed $r$, we prepare the proof of Proposition~\ref{p:1-6}.
\begin{lem}
\label{l:1-6-m}
Let $\eps\in(0,1)$, $r\geq 0$.
Then there exists $\rho=\rho_{r,\eps}:\bbR_+\to\bbR_+$ with $\rho(t)\downarrow 0$ as $t\downarrow 0$ such that the following holds: 
For all $n>0$, all $\eta, \eta' \geq 0$,
$U, U'\in\frU^{\eta\vee\eta'}_\eps$, and all $u \in \bbR^{\partial U_n}$, $u' \in \bbR^{\partial U'_n}$ such that $\max\{\ol{u}(0),\osc\,\ol{u}_\eta, \ol{u'}(0), \osc\,\ol{u}'_{\eta'}\} < \epsilon^{-1}$,
we have
\begin{multline}
\label{e:p:1-6-L-m}
\big| \cL^r_{n,\eta',U',0,\rmB}(u',\ol{u'}(0)) -
\cL^r_{n,\eta,U,0,\rmB}(u,\ol{u}(0)) \big|\\
\leq\Big(\cL^r_{n,\eta,U,0,\rmB}(u,\ol{u}(0))
+\cL^r_{n,\eta',U',0,\rmB}(u',\ol{u'}(0))+1\Big)
\rho \Big(\dH\big(U, U'\big) + |\eta - \eta'| + \|\ol{u}-\ol{u'}\|_{\bbL_\infty(U_{n}^{\eta}
\cap U'^{\eta'}_{n})}  + n^{-1} \Big) \,.
\end{multline}
\end{lem}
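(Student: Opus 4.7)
The plan is to argue by triangle inequality, splitting the change from $(\eta, U, u)$ to $(\eta', U', u')$ into three intermediate steps---varying only $u$, only $\eta$, or only $U$---and combining the resulting moduli. For the change in $\eta$ alone (with $U = U'$ and $u = u'$ fixed), Lemma~\ref{l:delta0} applies directly with $W = U^\eta$ and $W' = U^{\eta'}$: both are contained in $U^{\eta \wedge \eta'}$, and $\Leb(U^\eta \triangle U^{\eta'}) \leq C_\eps |\eta - \eta'|$ by the $\frD_\eps$-regularity of $\partial U$.

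\textbf{Change in $u$.} Keeping $U = U'$ and $\eta = \eta'$, couple both DGFFs using a shared zero-boundary DGFF $h^0$ on $U_n \cap \rmB_0^-$ together with different deterministic means $\mu_u$ and $\mu_{u'}$ (the discrete harmonic extensions of the respective boundary data). The difference $\mu_u - \mu_{u'}$ is discrete harmonic in $U_n \cap \rmB_0^-$ with boundary values $u - u'$ on $\partial U_n$ and $\ol u(0) - \ol{u'}(0)$ on $\partial \rmB_0^-$; using that $0 \in U_n^\eta$ (which bounds the latter by the assumed sup-norm), Lemma~\ref{l:UV-U} yields
\begin{equation*}
\sup_{U_n^\eta \cap \rmB^-_{n-r}} |\mu_u - \mu_{u'}| \leq C_\eps\big(\|\ol u - \ol{u'}\|_{\bbL_\infty(U_n^\eta)} + n^{-1}\big) =: \delta_1.
\end{equation*}
The ballot events $\{h \leq 0\}$ and $\{h' \leq 0\}$ on the constraint region then differ only on the thin slab event $\{\max h' \in [-\delta_1, \delta_1]\}$, whose probability is controlled by a near-maximum estimate (Proposition~\ref{p:2.6}); also $|Y - Y'| \leq \delta_1$ under the coupling, where $Y := (\he{h}{\partial \rmB_{n-r}}(0) + m_{n-r})^-$ and $Y'$ its primed analogue. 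Together these give a contribution $\rho_1(\delta_1)(\cL + \cL' + 1)$ with $\rho_1(t) \to 0$ as $t \to 0$, where $\cL$ and $\cL'$ denote the two quantities on the LHS of the lemma.

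\textbf{Change in $U$.} Keeping $\eta = \eta'$ and $u = u'$ fixed, set $\delta_3 := \dH(U, U') + n^{-1}$, so $U^{\delta_3} \subset U'$ and $U'^{\delta_3} \subset U$. Couple the DGFFs on $U_n \cap \rmB_0^-$ and $U'_n \cap \rmB_0^-$ via the Gibbs-Markov decomposition through the common super-domain $(U \cup U')_n \cap \rmB_0^-$: sample the DGFF on the larger domain and recover each smaller-domain DGFF by subtracting an independent binding field. The binding fields live on the thin symmetric difference strip $U_n \triangle U'_n$, and by discrete Green-function comparison (Proposition~\ref{p:Cov-bd} together with the tools of Appendix~\ref{s:tools}) have $O_\eps(\delta_3)$ variance at bulk points; hence with high probability the coupled DGFFs differ pointwise by a small power of $\delta_3$ on $U_n^\eta \cap \rmB^-_{n-r}$. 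The change in the constraint set from $U^\eta_n$ to $U'^\eta_n$ is absorbed via Lemma~\ref{l:delta0} applied to a common enclosing domain, using $\Leb(U^\eta \triangle U'^\eta) \leq C_\eps \delta_3$. The sandwich argument of the previous step now yields $\rho_3(\delta_3)(\cL + \cL' + 1)$.

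\textbf{Main obstacle.} The principal technical challenge is in coupling two DGFFs on distinct domains $U_n$ and $U'_n$ so that they agree in the bulk up to a small error. The Gibbs-Markov reduction localizes the discrepancy to a thin boundary-layer strip, but the quantitative Green-function comparison needed to bound the binding-field variance is delicate; the fact that the constraint event avoids an $\eta$-neighborhood of $\partial U_n$ and sits at radial depth $r$ from the inner boundary $\partial \rmB_0^-$ is crucial to screen off boundary fluctuations and keep the variance bound uniform in $n$.
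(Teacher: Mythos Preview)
Your overall strategy---couple the two DGFFs through a common domain, control the binding fields and mean differences, and use a slab-probability estimate---matches the paper's in spirit, but there are two concrete gaps in the execution that differ from how the paper proceeds.

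\textbf{The triangle-inequality decomposition does not cleanly separate $u$ from $U$.} Since $u\in\bbR^{\partial U_n}$ and $u'\in\bbR^{\partial U'_n}$ live on different boundaries, there is no well-defined intermediate object ``$(\eta',U,u')$'' or ``$(\eta',U',u)$''; your phrase ``keeping $u=u'$ fixed'' while changing $U$ has no literal meaning. The paper avoids this by treating the change in $(U,u)$ \emph{simultaneously}: under the auxiliary assumption $\eta\wedge\eta'>\eps'\vee 2\dH(U,U')$, it couples both fields to a common zero-boundary DGFF $h^{(U\cap U')_n\cap\rmB_0^-}$ and absorbs the mean difference $\wt\frg-\frg$ and both binding fields into a single random shift $M$, then invokes Lemma~\ref{l:LUVt}. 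Only the $\eta$-variation is peeled off separately (via Lemma~\ref{l:delta0}, as you correctly identify), and this is where the multiplicative factor $(\cL+\cL'+1)$ in the final bound actually comes from.

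\textbf{The coupling goes through the sub-domain, not the super-domain.} You propose decomposing through $(U\cup U')_n$, but the paper uses $(U\cap U')_n$. This matters: the binding-field estimate available in the paper (Lemma~\ref{l:UU-fluct}) controls $\varphi^{U_n\cap\rmB_0^-,(U\cap\wt U)_n\cap\rmB_0^-}$, i.e., restriction to a smaller domain, and its proof uses that $U\cap\wt U\supset U^\eta$ for $\eta>\dH(U,\wt U)$. There is no analogous lemma for the super-domain direction, and $(U\cup U')$ need not lie in $\frD_\eps$ (its boundary components can be arbitrarily small), so the standard Green-function and Poisson-kernel estimates do not apply to it without further work. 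Also, the slab estimate you need is Lemma~\ref{l:h-cont} (absolute continuity of the DGFF maximum), packaged as Lemma~\ref{l:LUVt}; Proposition~\ref{p:2.6} is an oscillation bound and does not give the density-type control of $\{\max h\in[-\lambda,\lambda]\}$ that the argument requires.
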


For the proof of Proposition~\ref{p:LR-infty}, we also need the following lemma which ensures the convergence of the DRW bridge and decorations.
\begin{lem}
\label{l:SDconv}
Let $\eps\in(0,1)$ and $\eta,\zeta \in[0,\eps^{-1}]$, $q\in\bbN$.
For $n,k\geq 0$, assume that $u\in\bbR^{\partial U_n}$, $v\in\bbR^{\partial V^-_k}$ satisfy~\eqref{e:uv-as}, and that
\begin{equation}
\label{e:SDconv-u}
\lim_{n\to\infty}
\max_{x\in U^\eta_n}\big| \ol{u}(x) - \wh u_\infty(\rme^{-n}x)\big|
=0
\end{equation}
for some $\wh u_\infty\in \bbH_\infty(U^\eta)$.
Then $(\cS_p,\cS_p+\cD_p)_{p=1}^q$ converges in distribution as $n-k\to\infty$ to some limit $(\cS^{(\infty)}_p,\cS^{(\infty)}_p+\cD^{(\infty)}_p)_{p=1}^q$ whose distribution is absolutely continuous with respect to Lebesgue measure and depends only on $\wh u_\infty$ and $\eta$.
\end{lem}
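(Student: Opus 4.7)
\textbf{Plan for the proof of Lemma~\ref{l:SDconv}.} First I would use the identity $\cS_p + \cD_p = \max_{y \in A_p} h(y)$ (under $\bbP_{k,v}^{n,u}$), which follows directly from the definitions in Subsection~\ref{s:def-DRW}: substituting the Gibbs-Markov decomposition $h|_{A_p} = \varphi_{0,p} + h_p + \ol{(-m_n\Ind_{\partial U_n} + u - m_k\Ind_{\partial V^-_k} + v)}_{\partial U_n \cup \partial V^-_k}$ into~\eqref{e:Dell} and using $\gamma(y) - (\bbE_{k,v}^{n,u} h(y)) - m_{n-p} = -\beta_p$, the deterministic shifts cancel and $\cD_p = \max_{A_p} h - \cS_p$. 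This recasts the task as the joint convergence in distribution of $(\cS_p, \max_{A_p} h)_{p=1}^q$, which factors cleanly into a Gaussian walk part and a DGFF maximum part.

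Second, I would establish convergence of the Gaussian building blocks $(\cX_1, \dots, \cX_q)$. The law of this vector is determined by the variances $\sigma_p^2 = \Var \cX_p$, which converge by Lemma~\ref{l:varXm}, and by analogous covariance expressions involving Poisson kernels and Green functions of $\Delta_{p-1} \cap V^-_k$. For fixed $p \leq q$, the set $\Delta_{p-1} \cap V^-_k$ converges (after rescaling by $\rme^{-n}$) to the limiting annular region $U^\eta \cap \rmB(0,\eps\rme^{-p+1})^c \supset \Delta_p$ since $V^-_k$ recedes to the origin on that scale; thus these covariances have a limit depending only on $U^\eta$. Combined with Lemma~\ref{l:dec-coupl}, which gives $\Var(\cS'_p - \cY_p) = O(1/(T-p+1)) \to 0$, and the fact that $\cY_p = \sum_{j=1}^p (s_{p,T}/s_{j,T})\cX_j \to \sum_{j=1}^p \cX^{(\infty)}_j$ (the ratios $s_{p,T}/s_{j,T} \to 1$), this shows $\cS'_p$ converges to a random walk with the limiting Gaussian steps. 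For the deterministic shift, $\beta_p = (s_{1+p,T}/s_{1,T})\ol u(0) + (s_{1,p}/s_{1,T})\ol v(\infty) \to \hat u_\infty(0)$, using $s_{1,p}/s_{1,T} \to 0$, $|\ol v(\infty)| \leq \eps^{-1}$, and the assumption $\ol u(0) \to \hat u_\infty(0)$ from~\eqref{e:SDconv-u} evaluated at $x=0$.

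Third, for the maximum component $\max_{A_p} h$, I would use the decomposition $h = \varphi_{0,p} + h_p + M$ on $A_p$, where $M$ denotes the deterministic harmonic extension of the boundary data. The Gaussian field $\varphi_{0,p} + h_p$ has the law of the restriction of $h^{\Delta_{p-1} \cap V^-_k}$ to $A_p$ (zero boundary conditions), whose Green-function structure converges as in the second step. Writing $\max_{A_p} h = \max_y[(h-M)(y) + M(y)]$ and using Lemma~\ref{l:UV-U} together with Proposition~\ref{l:E} and~\eqref{e:ml}, I would show $M(y) + m_{n-p} \to \hat u_\infty(0) - 2\sqrt g\, p + o(1)$ uniformly on $A_p$ (after rescaling), in view of the harmonic interpolation asymptotics and the uniform convergence~\eqref{e:SDconv-u}. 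Since the centered field $h - M$ has a limiting joint distribution with the $\cX_j$ (all being jointly Gaussian with convergent covariances), the maximum $\max_{A_p}(h - M) - m_{n-p}$ converges in distribution jointly with the walk via DGFF extreme-value theory (Lemma~\ref{l:DGFF-ut} for tightness, combined with covariance convergence), and therefore $\max_{A_p} h = \max_{A_p}(h-M) + m_{n-p} + [M + m_{n-p}] - m_{n-p}$ does as well. Joint convergence over $p = 1, \dots, q$ follows from the Markovian structure (independence of $h_p$ across $p$ and of the Brownian bridges $\cB^{(p)}$ from everything else), which is preserved in the limit.

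Finally, absolute continuity of the limit follows because the walk marginal $(\cS^{(\infty)}_1, \dots, \cS^{(\infty)}_q)$ is a non-degenerate Gaussian vector (positive variances from Lemma~\ref{l:varXm}), and $\cS^{(\infty)}_p + \cD^{(\infty)}_p$ is a maximum of a Gaussian field with a non-degenerate diagonal shifted by the walk, hence has a continuous conditional distribution. The fact that the limit depends only on $\hat u_\infty$ and $\eta$ is a consequence of: (i) $V^-_k$ and $v$ entering only through quantities that vanish in the limit (like $s_{1,p}/s_{1,T}$ and the Green-function comparison in Lemma~\ref{l:varXm}), and (ii) the boundary data $u$ entering only through the limit $\hat u_\infty$. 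The main obstacle is the joint distributional convergence of the DGFF maximum $\max_{A_p} h$ with the Gaussian harmonic averages $\cX_j$ (and then jointly across $p$), which requires combining the classical DGFF max convergence with covariance convergence of the underlying Gaussian structure in a manner stable under the change of domain from $\Delta_{p-1} \cap V^-_k$ to its limit.
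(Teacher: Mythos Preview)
Your overall architecture matches the paper's: the identity $\cS_p+\cD_p=\max_{A_p}h$, the convergence of $(\cX_1,\dots,\cX_q)$ and of $\beta_p\to\wh u_\infty(0)$, and the reduction to studying $\max_{A_p}h$ jointly with the harmonic averages are exactly the right moves. The treatment of the walk part via Lemma~\ref{l:varXm} and Lemma~\ref{l:dec-coupl} is fine.

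The genuine gap is in the third step. You write that ``the maximum $\max_{A_p}(h-M)-m_{n-p}$ converges in distribution jointly with the walk via DGFF extreme-value theory (Lemma~\ref{l:DGFF-ut} for tightness, combined with covariance convergence)''. This does not work: Lemma~\ref{l:DGFF-ut} gives only upper-tail tightness, and covariance convergence of the underlying Gaussian field says nothing about convergence of its maximum, because $|A_p|\sim\rme^{2n}\to\infty$. The field $h_p$ itself has no limit (its pointwise variance is of order $n$); only the smooth binding-field part $\varphi_{0,p}$ converges as a process. What the paper does here is decisively different: it separates $h$ on $A_p$ into the convergent harmonic part $\phi_p=\he{h}{\partial A'_p}+m_{n-p}$ and the independent zero-boundary DGFF $h_p$, and then invokes the Biskup--Louidor point process convergence (Theorem~2.1 of~\cite{BL-Full,BL-Conf}) for the local maxima of $h_p$, which converge to a Cox process with intensity $Z_p(\rmd x)\otimes\rme^{-\alpha t}\rmd t$. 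The joint limit of $(\cS_p,\max_{A_p}h)$ is then obtained by combining the convergent continuous field $\phi_p$ with the point process, controlling the approximation errors via Lemma~\ref{l:1N} (separation of near-maxima) and Lemma~\ref{p:viol} (boundary strips). This is substantial extreme-value machinery that your plan does not supply.

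The absolute continuity argument has a related problem. In the limit there is no finite Gaussian field whose maximum you are taking; the object is a Cox process, and absolute continuity of $\cS_p^{(\infty)}+\cD_p^{(\infty)}$ comes from the fact that the height coordinate of the intensity measure has density $\rme^{-\alpha t}$. Your ``maximum of a Gaussian field with non-degenerate diagonal'' heuristic does not describe the actual limit.
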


\subsection{Proofs of weak ballot asymptotics and properties of $\cL_n$}
\label{s:DGFF-as}

As in Proposition~\ref{p:1-2}, we define
\begin{equation}
\label{e:cLmr}
\cL_n(u)=\cL_{n,\eta,U}(u)=\cL^{r_n}_{n,\eta,U,0,\rmB}(u,\ol{u}(0))\,,
\qquad u\in\bbR^{\partial U_n}\,.
\end{equation}
Throughout the paper, in particular in Proposition~\ref{p:1-2}, we choose $(r_n)_{n\geq 0}$ as a collection of integers such that $r_n\in[ 0, n^{\eps/2}]$ and $r_n\to\infty$ as $n\to\infty$ slowly enough, depending only on given $\eps\in(0,1)$, such that the following properties~\ref{i:mr-LUV} -- \ref{i:mr-VV-fluct} hold. Here, we take $\delta$ as half the supremum of the $\delta$ that we can obtain from both Theorems~\ref{t:drw-i} and~\ref{t:drw-o} for our given $\eps$. The conditions involving~\eqref{e:LUV-add-R}, \eqref{e:cont-V-assertion-R} and~\eqref{e:LUVt-R} can be ignored for the moment and will only be needed in Section~\ref{s:R-funct}.
\begin{enumerate}[label=(\roman{*}),ref=(\roman{*})]
\item\label{i:mr-LUV}\eqref{e:LUV-add} and~\eqref{e:LUV-add-R} hold when we replace $r$ with any element of $[r,r_{n-k}]$.
\item \eqref{e:cont-V-assertion}, \eqref{e:cont-V-assertion-R} and~\eqref{e:LUVt-R} hold with $C=C_{\eps}$ when we replace $r$ with any element of $[r,r_{n-k}]$.
\item\label{i:mr-leqrdelta} We have $r_n\leq n^{\delta/4}$.
\item\label{i:mr-as} \eqref{e:20.39'} holds when we replace $r$ with any element of $[r,r_{\rt'}]$.
\item\label{i:mr-1.5} With $T_{(r)}$ from Proposition~\ref{p:1.5}, we have $T_{(r_n)}\leq T_n$.
\item\label{i:mr-VV-fluct} The constants $C$ and $c$ in \eqref{e:VV-fluct-o} and~\eqref{e:VV-fluct-o-P} can be chosen as depending only on $\eps$ (not on $r$), under the restriction that $t\geq 1$ and $r\leq r_{n-k}$.
\end{enumerate}

We now prove Proposition~\ref{c:1.1} and the assertions on $\cL_n$ and $\cL$ in Propositions~\ref{p:LR-infty} -- \ref{p:1-6}.
The proofs of Proposition~\ref{c:1.2} and of the assertions on $\cR_k$ and $\cR$ are outlined in Section~\ref{s:R-funct}.

We need a lower bound on the ballot probability:
\begin{lem}
\label{l:DGFF-lb}
Let $\eps\in(0,1)$ and $\eta,\zeta\in[0,\eps^{-1}]$. Then there exists $c=c_\eps>0$ such that
\begin{equation}
\label{e:DGFF-lb}
\bbP_{k,v}^{n,u}\big(h_{U^\eta_n\cap V^{-,\zeta}_k}\leq 0\big)
\geq c\frac{(1+\ol{u}(0)^-)\ol{v}(\infty)^-}{n-k}
\end{equation}
for all $U\in\frU^\eta_\eps$, $V\in\frV_\eps$, $0\leq k< n$ with $T_{n-k}\geq 1$, $u\in\bbR^{\partial U_n}$, $v\in\bbR^{\partial V^-_k}$ satisfying~\eqref{e:uv-as} and $\ol{v}(\infty)^- \geq (n-k)^\eps$.
\end{lem}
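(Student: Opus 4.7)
The plan is to prove the lemma via reduction to a decorated random walk ballot event using the inward concentric decomposition (Theorem~\ref{t:drw-i}), and then invoke a corresponding DRW ballot lower bound from Appendix~\ref{s:3}. Since $\ol{v}(\infty)^-\geq(n-k)^\eps$, the endpoint $\cS_T=\ol{v}(\infty)$ of the inward DRW will be sufficiently negative, which is exactly the regime where dependence between walk and decorations (captured by Assumption~\ref{i.a2}) can be dealt with by the barrier reduction outlined in Subsection~\ref{ss:ProofOutline}.

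First I would dispose of the case of bounded $n-k$: under \eqref{e:uv-as} both $\ol{u}(0)^-$ and $\ol{v}(\infty)^-$ are at most $(n-k)^{1-\eps}$, so the right-hand side of~\eqref{e:DGFF-lb} is bounded above by a constant depending only on $\eps$ whenever $n-k\leq C_\eps$; the ballot probability on the left is bounded below by a positive constant on such bounded domains by standard Gaussian estimates, so making $c=c_\eps$ small enough takes care of the small-$n-k$ regime. For the remainder of the argument, assume $n-k$ is as large as needed.

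Next, I would verify that \eqref{e:uv-diff} holds after shrinking $\eps$: under \eqref{e:uv-as}, we have $\ol u(0),\ol v(\infty)\leq\eps^{-1}$ and $\ol u(0)^-,\ol v(\infty)^-\leq (n-k)^{1-\eps}$, so $|\ol u(0)-\ol v(\infty)|\leq 2(n-k)^{1-\eps}+2\eps^{-1}\leq (n-k)^{1-\eps/2}$ for $n-k$ large. Applying Theorem~\ref{t:drw-i} with $\eps/2$ in place of $\eps$, we obtain a DRW $(\cS_i)_{i=0}^T,(\cD_i)_{i=1}^T$ satisfying~\ref{i.a1}--\ref{i.a3} with parameters $a=\ol u(0)$, $b=\ol v(\infty)$, $T=T_{n-k}=n-k+O_\eps(1)$, and the event identification
\begin{equation}
\big\{ h_{U^\eta_n\cap V^{-,\zeta}_k}\leq 0 \big\}
=\bigcap_{p=1}^{T}\big\{\cS_p+\cD_p\leq 0\big\}
\end{equation}
holding under $\bbP_{k,v}^{n,u}\equiv\bfP_{0,\ol u(0)}^{T,\ol v(\infty)}$.

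Finally I would appeal to the corresponding DRW ballot lower bound from Appendix~\ref{s:3}, which should assert that whenever the endpoint $b$ satisfies $b^-\geq T^{\eps'}$ for some $\eps'>0$, one has
\begin{equation}
\bfP_{0,a}^{T,b}\Big(\bigcap_{p=1}^{T}\{\cS_p+\cD_p\leq 0\}\Big)\geq c\,\frac{(1+a^-)(1+b^-)}{T}
\end{equation}
with a constant $c$ depending only on $\delta$. This is the DRW analog of the classical ballot lower bound for Gaussian random walk bridges (as available from Bramson's barrier estimates~\cite{B_C} for Brownian motion, combined with the tightness of the decorations from~\ref{i.a3}). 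Using this bound with $a=\ol u(0)$ and $b=\ol v(\infty)$ (whose $b^-\geq(n-k)^\eps\geq T^{\eps/2}$ by hypothesis), and noting that $T\asymp_\eps n-k$ and $1+\ol v(\infty)^-\asymp\ol v(\infty)^-$ since $\ol v(\infty)^-\geq 1$ for $n-k$ large, yields~\eqref{e:DGFF-lb}.

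The main obstacle is the reliance on a DRW ballot lower bound with the stated form; this is standard in structure but the derivation with decorations (handled by replacing the decoration process by a deterministic barrier of polynomially growing amplitude, then invoking barrier estimates) is the content of Appendix~\ref{s:3} and is the real technical input. Everything else is bookkeeping: verification of~\eqref{e:uv-diff}, absorption of the $O(1)$ discrepancy between $T$ and $n-k$ into constants, and separate treatment of the bounded regime.
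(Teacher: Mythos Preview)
Your reduction to the DRW is correct, but the DRW lower bound you invoke is not available in the generality you need. Theorem~\ref{t:lb} requires $a\leq a_0$ for some constant $a_0=a_{0,\delta}\in(-\infty,0)$; it does \emph{not} give a lower bound valid for all $a\leq\delta^{-1}$. So your argument only establishes~\eqref{e:DGFF-lb} when $\ol u(0)<a_0$, leaving the range $a_0\leq\ol u(0)\leq\eps^{-1}$ unhandled. This is not a technicality: the dependence between walk and decorations prevents a direct barrier argument when the starting point is too close to zero, and there is no version of Theorem~\ref{t:lb} in Appendix~\ref{s:3} without the restriction on $a$.

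The paper closes this gap by a genuinely different device. It first obtains the bound for $\ol u(0)<a_0$ exactly as you do, then for general $\ol u(0)\leq\eps^{-1}$ it uses FKG to split off a thin shell $U^\eta_n\setminus U^{\eta+2t}_n$ near the outer boundary (Lemma~\ref{p:viol} shows the field is negative there with probability at least $1/2$), and on the remaining domain $U^{\eta+2t}_n\cap V^{-,\zeta}_k$ it conditions on $\he{h}{\partial U^{\eta+t}_n}(0)$ falling in a fixed interval $(a_0-1,a_0)$ with bounded oscillation (Propositions~\ref{l:E},~\ref{p:Cov-bd},~\ref{p:2.6} give this event positive probability). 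The resulting inner DGFF then has boundary data with $\ol w(0)\in(a_0-1,a_0)$, to which the already-established case applies. You would need to supply an argument of this type to complete the proof.
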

\begin{proof}
By Theorems~\ref{t:drw-i} and~\ref{t:lb}, there exists $a_0\in(-\infty,0)$, depending only on $\eps$, such that the assertion holds if $\ol{u}(0)< a_0$.
To infer the general case, we use FKG (see e.\,g.\ Section~5.4 of~\cite{Bi-LN}) in
\begin{equation}
\bbP_{k,v}^{n,u}\big(h_{U^\eta_n\cap V^{-,\zeta}_k}\leq 0\big)\geq
\bbP_{k,v}^{n,u}\big(h_{U^{\eta}_n\setminus U^{\eta+2t}_n}\leq 0\big)
\bbP_{k,v}^{n,u}\big(h_{U^{\eta+2t}_n\cap V^{-,\zeta}_k}\leq 0\big)\,,
\end{equation}
where we can choose $t>0$, depending only on $\eps$, such that the first factor is at least $1/2$ by Lemma~\ref{p:viol}. To bound the second factor, let $\frg:U_n\cap V^-_k\to\bbR$ be harmonic with boundary values $u$ on $\partial U_n$ and $v$ on $\partial V^-_k$. By Lemma~\ref{l:UV-U} and Assumption~\eqref{e:uv-as}, $\frg\leq 2\eps^{-1}$ on $U^\eta_n\cap V^{-,\zeta}_k$. Hence, for $\lambda>0$,
\begin{multline}
\label{e:p-DGFF-lb}
\bbP_{k,v}^{n,u}\big(h_{U^{\eta+2t}_n\cap V^{-,\zeta}_k}\leq 0\big)
=\bbP_{k,0}^{n,0}\big(h_{U^{\eta+2t}_n\cap V^{-,\zeta}_k}+\frg\leq 0\big)
\geq \bbP_{k,2\eps^{-1}}^{n,2\eps^{-1}}\big(h_{U^{\eta+2t}_n\cap V^{-,\zeta}_k}\leq 0\big)\\
\geq \bbP_{k,2\eps^{-1}}^{n,2\eps^{-1}}\big(h_{U^{\eta+2t}_n\cap V^{-,\zeta}_k}\leq 0\,,
\he{h}{\partial U^{\eta+t}_n}(0)+m_n\in(a_0-1,a_0),
\osc_{U^{\eta+2t}_n}\he{h}{\partial U^{\eta+t}_n}\leq\lambda\big)\\
\geq
\bbP_{k,2\eps^{-1}}^{n,2\eps^{-1}}\big(\he{h}{\partial U^{\eta+t}_n}(0) +m_n \in (a_0-1,a_0)\,,
\osc_{U^{\eta+2t}_n}\he{h}{\partial U^{\eta+t}_n}\leq\lambda\big)\\
\times\inf\Big\{
\bbP^{U^{\eta+t},n,w}_{V,k,2\eps^{-1}}\big(h_{U^{\eta+2t}_n\cap V^{-,\zeta}_k} \leq 0\big)\::
w\in\bbR^{\partial U^{\eta+t}_n}\,,
\ol{w}(0)\in(a_0-1,a_0)\,,\osc_{U^{\eta+2t}_n}\ol{w}\leq \lambda\Big\}\,,
\end{multline}
where we used the Gibbs-Markov property in the last step.
By Lemma~\ref{l:blml} and Propositions~\ref{l:E} and~\ref{p:Cov-bd},
\begin{equation}
\bbP_{k,2\eps^{-1}}^{n,2\eps^{-1}}\big(\he{h}{\partial U^{\eta+t}_n}(0)+m_n\in(a_0-1,a_0)\big)
\geq c_\eps\,.
\end{equation}
Moreover, by Proposition~\ref{p:2.6}, and for sufficiently large $\lambda$, depending only on $\eps$,
\begin{equation}
\bbP_{k,2\eps^{-1}}^{n,2\eps^{-1}}\big(
\osc_{U^{\eta+2t}_n}\he{h}{\partial U^{\eta+t}_n}\leq\lambda\,\big|\,
\he{h}{\partial U^{\eta+t}_n}(0)+m_n=r\big)
\geq 1 - C_\eps\rme^{-c_\eps\lambda} \geq \tfrac12
\end{equation}
for all $r\in(a_0-1,a_0)$. Hence, the probability in the third line of~\eqref{e:p-DGFF-lb} is bounded from below by a constant.
As noted in the beginning of the proof, the infimum in the last line of~\eqref{e:p-DGFF-lb}  is bounded from below as in the right-hand side of~\eqref{e:DGFF-lb}. Combining the above bounds yields the assertion.
\end{proof}

We are now ready for,
\begin{proof}[Proof of Proposition~\ref{c:1.1}]
We first consider the case that $\eta\in[\eps,\eps^{-1}]$.
From Theorem~\ref{t:3.4}, condition~\ref{i:mr-leqrdelta} in the choice of $r_{n}$, and Theorem \ref{t:drw-i}, we have
\begin{multline}
\label{e:DGFF-as-add}
\bbP_{k,v}^{n,u}\big(h_{U^\eta_n\cap V^{-,\zeta}_k}\leq 0\big)
=\bbP_{k,v}^{n,u}\big(\max_{i=1}^\rt (\cS_i+\cD_i )\leq 0\big)\\
= 2
\frac{\ell_{\rt_{n-k},r_{n-k}+\lfloor\log\eps\rfloor}(\ol{u}(0),\ol{v}(\infty))\ol{v}(\infty)^-}{g(n-k)}
+o_{\eps}(1)\frac{\big(1+\ol{u}(0)^-\big)\ol{v}(\infty)^-}{n-k}
\end{multline}
as $n-k\to\infty$, uniformly as in the assertion. We now convert the $\ell$ functional on the right-hand side to the $\cL$ functional in the statement of the proposition.
To this aim, we write
\begin{multline}
\label{e:pf-weak-as}
\bigg|\frac{\ell_{\rt_{n-k},r_{n-k}+\lfloor\log\eps\rfloor}(\ol{u}(0),\ol{v}(\infty))\ol{v}(\infty)^-}{g(n-k)}-
\frac{\cL_{n}(u)\ol{v}(\infty)^-}{g(n-k)}
\bigg|\\
\leq
\frac{\big|\ell_{\rt_{n-k},r_{n-k}+\lfloor\log\eps\rfloor}(\ol{u}(0),\ol{v}(\infty))-\cL_{n,k}^{r_{n-k}}(u,v)\big|
+\big|\cL_{n,k}^{r_{n-k}}(u,v)-\cL_{n}(u)\big|}{1+\ol{u}(0)^-}\frac{\big(1+\ol{u}(0)^-\big)\ol{v}(\infty)^-}
{g(n-k)}\,,
\end{multline}
The first ratio on the right-hand of~\eqref{e:pf-weak-as} is $o_{\eps}(1)$ as $n-k\to\infty$ by Lemmas~\ref{l:LUV} and~\ref{l:cont-V} and the choice of $r_{n}$.
Hence,
\begin{equation}
\label{e:DGFF-as-L-add}
\bbP_{k,v}^{n,u}\big(h_{U^\eta_n\cap V^{-,\zeta}_k}\leq 0\big)
= 2
\frac{\cL_n(u)\ol{v}(\infty)^-}{g(n-k)}
+o_{\eps}(1)\frac{\big(1+\ol{u}(0)^-\big)\ol{v}(\infty)^-}{n-k}\\
=\big(2+o_\eps(1)\big)
\frac{\cL_n(u)\ol{v}(\infty)^-}{g(n-k)}\,,
\end{equation}
where the first equality follows from~\eqref{e:DGFF-as-add} and~\eqref{e:pf-weak-as}, and the second equality from Lemma~\ref{l:DGFF-lb}.

For general $\eta\in[0,\eps^{-1}]$, we observe that for any $\wt \eps\in(0,\eps^{-1})$, by FKG and Lemma~\ref{p:viol},
\begin{multline}
0\leq \bbP_{k,v}^{n,u}\Big( h_{U^{\eta\vee\wt\eps}_{n}\cap V^{-,\zeta}_k} \leq 0 \Big)
-\bbP_{k,v}^{n,u}\Big(h_{U^\eta_n\cap V^{-,\zeta}_k}\leq 0\Big)\\
\leq \bbP_{k,v}^{n,u}\Big(
\max_{x\in (U^\eta_n\setminus U^{\eta\vee\wt\eps}_{n})} h(x)>0\,,
h_{U^{\eta\vee\wt\eps}_{n}\cap V^{-,\zeta}_{k}} \leq 0 \Big)
\leq O_\eps\big(|\eta\vee\wt\eps-\eta|\big)
\bbP_{k,v}^{n,u}\Big(h_{U^{\eta\vee\wt\eps}_{n}\cap V^{-,\zeta}_{k}} \leq 0 \Big)\,,
\end{multline}
as $\wt\eps\to 0$, where we also used~\eqref{e:uv-as}.
Similarly, by Lemma~\ref{l:delta0},
\begin{equation}
\big|\cL_{n,\eta\vee\wt\eps,U}(u)-\cL_{n,\eta,U}(u)\big|
=O_\eps\big(|\eta\vee\wt\eps-\eta|\big)\cL_{n,\eta\vee\wt\eps,U}(u)\,.
\end{equation}
Replacing $\eps$ by $\wt\eps$, we have already shown that
\begin{equation}
\bbP_{k,v}^{n,u}\Big( h_{U^{\eta\vee\wt\eps}_{n}\cap V^{-,\zeta}_k} \leq 0 \Big)=
\big(2+o_{\wt\eps}(1)\big)\frac{\cL_{n,\eta\vee\wt\eps,U}(u)\ol{v}(\infty)^-}{g(n-k)}
\end{equation}
as $n-k\to\infty$. Combining the above, and taking first $n-k\to\infty$ and then $\wt\eps\to 0$ shows~\eqref{e:c:1.1} for all $\eta\in[0,\eps^{-1}]$ with $o_\eps(1)$ as desired.
\end{proof}

\begin{proof}[Proof of Proposition~\ref{p:LR-infty} (for $\cL_n$)]
With
$\ell_{\rt_n,r_n}\big(\ol{u_n}(0),\ol{u_n}(0)\big)$
defined by~\eqref{e:def-l} for $V=\rmB$, $k=0$, $\zeta=0$, $v=\ol{u}(0)$, we write
\begin{multline}
\label{e:p-LR-infty-DRWDGFF}
\cL_n(u_n)=\cL^{r_{n}}_{n,0}(u_n,\ol{u_n}(0))\\
=\Big(\cL^{r_{n}}_{n,0}(u_n,\ol{u_n}(0))
-\ell_{\rt_n,r_{n}+\lfloor\log\eps\rfloor}\big(\ol{u_n}(0),\ol{u_n}(0)\big)\Big)
+\ell_{\rt_n,r_{n}+\lfloor\log\eps\rfloor}\big(\ol{u_n}(0),\ol{u_n}(0)\big)\,.
\end{multline}
The expression in brackets on the right-hand side converges to zero as $n\to\infty$ by Lemma~\ref{l:LUV} and as $\ol{u}(0)^-$ is bounded by the assumption on the convergence of $u_n$.
The last term on the right-hand side of~\eqref{e:p-LR-infty-DRWDGFF} converges to a limit $\cL(\wh u_\infty)$ by Proposition~\ref{t:ta} which we apply as follows: for any sequence
$(n^{(i)})_{i=1}^\infty$ with $n^{(i)}\to\infty$, we set
$a^{(i)}=\ol{u_{n^{(i)}}}(0)$, $b^{(i)}=\ol{u_{n^{(i)}}}(0)$,
$\rt^{(i)}=\rt_{n^{(i)}}$,
$r^{(i)}=r_{\rt^{(i)}}+\lfloor\log\eps\rfloor$,
$\rt^{(\infty)}=r_{\infty}=\infty$, $a^{(\infty)}=b^{(\infty)}=\wh u_\infty(0)$,
and we define $\big(\cS^{(i)}_j\big)$, $\big(\cD^{(i)}_j\big)$ as in~\eqref{e:deftS}, \eqref{e:defS} and~\eqref{e:Dell}
using $U$, $\eta$, $n=n^{(i)}$, $u=u_{n^{(i)}}$, $V=\rmB$, $k=0$, $v=\ol{u_{n^{(i)}}}(0)$ and $\zeta=0$.
Then, Lemma~\ref{l:SDconv} yields the limit in distribution $(\cS^{(\infty)}_j, \cD^{(\infty)}_j)$ of $\big(\cS^{(i)}_j, \cD^{(i)}_j\big)$,
verifies Assumption~\eqref{e:5.7} and the assumption on stochastic absolute continuity, and shows that
\begin{equation}
\cL(\wh u):=\lim_{r\to\infty}
\bbE\Big(\big(\cS^{(\infty)}_r\big)^-;\, \max_{j=1}^r \cS^{(\infty)}_j + \cD^{(\infty)}_j\leq 0\Big)
\end{equation}
depends only on $\wh u_\infty$ and $\eta$. Assumptions~\ref{i.a1} --~\ref{i.a3} are again verified by Theorem~\ref{t:drw-i}.
\end{proof}

\begin{proof}[Proof of Proposition~\ref{p:1-4} (for $\cL_n$, $\cL$)]
With
$\ell_{\rt_n,r_n}\big(\ol{u}(0),\ol{u}(0)\big)$
defined by~\eqref{e:def-l} for $V=\rmB$, $k=0$, $\zeta=0$, $v=\ol{u}(0)$,
we write
\begin{equation}
\frac{\cL^{r_{n}}_{n,0}(u,\ol{u}(0))}{\ol{u}(0)^-}
=\frac{\cL^{r_{n}}_{n,0}(u,\ol{u}(0))
-\ell_{\rt_n,r_{n}+\lfloor\log\eps\rfloor}\big(\ol{u}(0),\ol{u}(0)\big)}{\ol{u}(0)^-}
+\frac{\ell_{\rt_n,r_{n}+\lfloor\log\eps\rfloor}\big(\ol{u}(0),\ol{u}(0)\big)}{\ol{u}(0)^-}\,,
\end{equation}
Using also the choice of $r_T$, we obtain that the first term on the right-hand side is at most $C_\eps(1+\ol{u}(0))^{\eps-1}$ by Lemma~\ref{l:LUV}, thus the assertion on $\cL_n$ follows from Proposition~\ref{p:1.5}.

To show the assertion on $\cL$, let $\eps'>0$.
Using the assertion on $\cL_n$, we find $M<\infty$ with the following property:
for each $U\in\frU^\eta_\eps$, $\wh{u}(0)\in\bbH(U^\eta)$ with $\wh{u}(0)^->M$, we find $n\geq c_{\eps}$, $u_n\in\bbR^{\partial U_n}$ such that on the right-hand side of
\begin{equation}
\label{e:p-14L13}
\frac{\cL(\wh u)}{\wh u(0)^-}=
\frac{\cL(\wh u)-\cL_n(u_n)}{\wh u(0)^-}
+\frac{\cL_n(u_n)}{\ol{u_n}(0)^-}\frac{\ol{u_n}(0)^-}{\wh u(0)^-}\,,
\end{equation}
the first summand is bounded by $\eps'$ by Proposition~\ref{p:LR-infty}, and both factors in the second summand are in $(1-\eps',1+\eps')$. This shows the assertion on $\cL$.
\end{proof}

\begin{proof}[Proof of Proposition~\ref{p:1-5} (for $\cL_n$, $\cL$)]
To show the assertion on $\cL_n$, we first consider $u\in\bbR^{\partial U_n}$ and $v\in\bbR^{\partial\rmB^-_0}$ that satisfy~\eqref{e:uv-as} and $\ol{v}(\infty)^-\geq n^{\eps}$ with $\zeta=0$.
Then a comparison between Proposition~\ref{c:1.1} and Lemma~\ref{l:DGFF-lb} (where we set $V=\rmB$ and $k=0$) shows that 
\begin{equation}
\label{e:p:1-5:weak}
\cL_{n}(u)>c_{\eps}\big(1+\ol{u}(0)^-\big)
\end{equation}
for sufficiently large $n$, depending only on $\eps$.
As the left-hand side in~\eqref{e:p:1-5:weak} does not depend on $v$, the assertion follows for all $u\in\bbR^{U_n}$ with $\osc\,\ol{u}_\eta<\eps^{-1}$ and $\ol{u}(0)\in (-n^{1-\eps},\eps^{-1})$. The definition of $\cL_n(u)$ in Proposition~\ref{p:1-2} shows that $\cL_n(u)$ can only decrease when we replace $u$ by $u-\ol{u}(0)$  and remove the restriction that $\ol{u}(0)<-n^{1-\eps}$. Hence, $\cL_n(u)>c$ for all $U, n,u$ as in the assertion for some constant $c=c_{\eps}>0$.

To show the assertion on $\cL$, we find for every $U\in\frU^\eta_\eps$, $\wh u \in\bbH(U^\eta)$ by Proposition~\ref{p:LR-infty} some $n\geq c_{\eps}$, $u_n\in\bbR^{\partial U_n}$ with
$\big|\cL(\wh u)-\cL_n(u_n)\big|< c/2$. Using the assertion on $\cL_n$, we then obtain $\cL(\wh u)>c/2$.
\end{proof}
\begin{proof}[Proof of Proposition~\ref{p:1-6} (for $\cL_n$, $\cL$)]

We recall the definition $\cL_n(u)=\cL^{r_{n}}_{n,\eta,U,0,\rmB}(u,\ol{u}(0))$ from~\eqref{e:cLmr}. We write
\begin{multline}
\label{e:p-1-6-1}
\big|\cL^{r_{n}}_{n,\eta,U,0,\rmB}(u,\ol{u}(0))-\cL^{r}_{n,\eta,U,0,\rmB}(u,\ol{u}(0))\big|
\leq
\big|\cL^{r_{n}}_{n,\eta,U,0,\rmB}(u,\ol{u}(0))-\ell_{\rt_n,r_{n}+\lfloor\log\eps\rfloor}(\ol{u}(0),\ol{u}(0))\big|\\
+\big|\ell_{\rt_n,r_{n}+\lfloor\log\eps\rfloor}(\ol{u}(0),\ol{u}(0))
-\ell_{\rt_n,r+\lfloor\log\eps\rfloor}(\ol{u}(0),\ol{u}(0))\big|+
\big|\ell_{\rt_n,r+\lfloor\log\eps\rfloor}(\ol{u}(0),\ol{u}(0))-
\cL^{r}_{n,\eta,U,0,\rmB}(u,\ol{u}(0))\big|\,,
\end{multline}
where we define
$\ell_{\rt_n,r_n}\big(\ol{u}(0),\ol{u}(0)\big)$
by~\eqref{e:def-l} for $V=\rmB$, $k=0$, $\zeta=0$, $v=\ol{u}(0)$.
By Lemma~\ref{l:LUV}, Proposition~\ref{t:LR-as}, and by choice of $r_{n}$,
all terms on the right-hand side of~\eqref{e:p-1-6-1} are
$o_\eps(1)\big(1+\ol{u}(0)^-\big)$ as $n\to\infty$ followed by $r\to\infty$.
Let $\eps'\in(0,1/2)$. Using Proposition~\ref{p:1-5} and the above, we obtain
\begin{equation}
\label{e:p-1-6-p1-5X}
(1-\eps')\cL^{r}_{n,\eta,U,0,\rmB}(u,\ol{u}(0))
\leq \cL^{r_n}_{n,\eta,U,0,\rmB}(u,\ol{u}(0))
\leq (1+\eps')\cL^{r}_{n,\eta,U,0,\rmB}(u,\ol{u}(0))
\end{equation}
for all sufficiently large $n$, $r$ (and all $U$, $\eta$, $u$ as in the assertion).
In particular, using Propositions~\ref{p:1-4} and~\ref{p:1-5}, we obtain a constant $\wt c=c_{r,\eps}>0$ such that
\begin{equation}
\wt c (1+\ol{u}(0)^-)
\leq \cL^r_{n,\eta,U,0,\rmB}(u,\ol{u}(0))
\leq \wt c^{-1}(1+\ol{u}(0)^-)
\end{equation}
and analogously also
\begin{equation}
\wt c (1+\ol{u'}(0)^-)
\leq \cL^r_{n,\eta',U',0,\rmB}(u',\ol{u'}(0))
\leq \wt c^{-1}(1+\ol{u'}(0)^-)
\end{equation}
for sufficiently large $n$ and all $U,u,\eta$ as in the assertion.
Hence, by Lemma~\ref{l:1-6-m}, there exists $c=c_{r,\eps}\in(0,1)$ such that
\begin{equation}
\label{e:p:1-6-1-6-m}
(1-\eps'Y)\cL^{r}_{n,\eta',U',0,\rmB}(u',\ol{u'}(0))
\leq \cL^{r}_{n,\eta,U,0,\rmB}(u,\ol{u}(0))
\leq (1+\eps'Y)\cL^{r}_{n,\eta',U',0,\rmB}(u',\ol{u'}(0))
\end{equation}
where
\begin{equation}
Y:=1+\wt c^{-1} +\wt c^{-2}\frac{1+\ol{u}(0)^-}{1+\ol{u'}(0)^-}\,,
\end{equation}
for all $U,U',u,u',\eta,\eta'$ as in the assertion with
\begin{equation}
\label{e:cond-p-1-6-c}
\dH(U,U')+|\eta'-\eta|+\|\ol{u}-\ol{u'}\|_{U^\eta_n\cap U'^{\eta'}_n}
+n^{-1}\leq c\,,
\end{equation}
for which we note that $Y$ is bounded by a constant (that depends on $\eps,r$).
From~\eqref{e:p:1-6-1-6-m}, \eqref{e:p-1-6-p1-5X}, and the argument leading to~\eqref{e:p-1-6-p1-5X} albeit now for $\cL^{r_n}_{n,\eta',U',0,\rmB}(u',\ol{u'}(0))$, we obtain
\begin{equation}
\label{e:pf-1-6-concl-Ln}
(1-\eps')^2(1-\eps'Y)\cL^{r_n}_{n,\eta',U',0,\rmB}(u',\ol{u'}(0))
\leq \cL^{r_n}_{n,\eta,U,0,\rmB}(u,\ol{u}(0))
\leq (1+\eps')^2(1+\eps'Y)\cL^{r_n}_{n,\eta',U',0,\rmB}(u',\ol{u'}(0))
\end{equation}
whenever $n$ is sufficiently large and~\eqref{e:cond-p-1-6-c} holds. As the choice of $r$ depended only on $\eps$ and $\eps'$, assertion~\eqref{e:p:1-6-L} follows.

To show also the assertion on $\cL_{\eta,U}$ and $\cL_{\eta',U'}$, we find, for given $\wh u,\wh u', U, U'$ as in the assertion, $n$, $u_n\in\bbR^{\partial U_n}$, $u'_n\in\bbR^{\partial U'_n}$ such that
\begin{equation}
(1-\eps')\cL_{\eta,U}(\wh u)
\leq \cL^{r_n}_{n,\eta,U,0,\rmB}(u,\ol{u}(0))
\leq (1+\eps')\cL_{\eta,U}(\wh u')
\end{equation}
and the analogous statement with $\eta',U',\wh u', u'_n$ hold by Proposition~\ref{p:LR-infty}. From the second inequality in~\eqref{e:pf-1-6-concl-Ln} with $u_n, u'_n$ in place of $u,u'$, we then obtain
\begin{multline}
(1-\eps')\cL_{\eta,U}(\wh u)
\leq \cL^{r_n}_{n,\eta,U,0,\rmB}(u_n,\ol{u_n}(0))
\leq (1+\eps')^2(1+\eps'Y)\cL^{r_n}_{n,\eta',U',0,\rmB}(u'_n,\ol{u'_n}(0))\\
\leq (1+\eps')^3(1+\eps'Y)\cL_{\eta',U'}(\wh u')
\end{multline}
and the analogous bound in the other direction for which we use the first inequality in~\eqref{e:pf-1-6-concl-Ln}. This implies the assertion on $\cL_{\eta,U}$, $\cL_{\eta',U'}$.
\end{proof}

\subsection{Proofs of auxiliary results}
\label{s:pf-aux}
First we show that the distribution of the maximum of the DGFF on a subset of the domain is absolutely continuous with respect to Lebesgue measure. We need  this property to prove Lemmas~\ref{l:cont-V} and~\ref{l:1-6-m}.
\begin{lem}
\label{l:h-cont}
Let $r\geq 0$, $\eps\in(0,1)$, $\eta\in[\eps,\eps^{-1}]$, $\zeta\in [0,\eps^{-1}]$. There exists $C=C_{r,\eps}<\infty$ such that
\begin{equation}
\label{e:h-cont}
\bbP_{V,k,v}^{U,n,u}\Big(\max_{U^\eta_n\cap \rmB^-_{n-r}} h \in [-\lambda,\lambda]\Big)\leq C \lambda(1+\lambda) \rme^{\alpha(\lambda+\ol{u}(0))}\big(1+\ol{u}(0)^-\big)
\end{equation}
for all $\lambda\geq 0$, $U\in\frU^\eta_\eps$, $V\in\frV_\eps$, $n,k\geq 0$ with $\partial\rmB_{n-r}\subset U^{\eta}_n$, $\partial\rmB^-_{n-r-1}\subset V^{-,\zeta\vee\eps}_k$, and all $u\in\bbR^{\partial U_n}$, $v\in\bbR^{\partial V^-_k}$ satisfying~\eqref{e:uv-diff}.
\end{lem}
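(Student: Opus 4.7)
The bound is essentially a density-type estimate for $\max_W h$ with $W := U^\eta_n\cap\rmB^-_{n-r}$: the window $[-\lambda,\lambda]$ has length $2\lambda$, and the claimed right-hand side is $\lambda$ times a bound on the density at points of $[-\lambda,\lambda]$. My plan combines a Gibbs--Markov reduction at $\partial\rmB^-_{n-r}$, translation invariance of the Gaussian field under constant shifts of the boundary data, and the ballot upper bound of Theorem~\ref{t:2.3}, wrapped around a Gaussian disintegration at a reference point.

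First I would set $w := h_{\partial\rmB^-_{n-r}}$ and apply Gibbs--Markov so that, conditionally on $w$, the field on $U_n\cap\rmB^-_{n-r}$ is a DGFF with boundary data $u$ on $\partial U_n$ and $w$ on $\partial\rmB^-_{n-r}$; it then suffices to bound $\bbP^{U,n,u}_{\rmB,n-r,w}(\max_W h\in[-\lambda,\lambda])$ and take expectation over $w$, whose Gaussian statistics are controlled by Lemma~\ref{l:blml} and Proposition~\ref{p:Cov-bd}. Second, because the harmonic extension is linear in the boundary data, adding the constant $-t$ to $u$ and to $w$ shifts $h$ by $-t$, hence
\[
\bbP^{U,n,u}_{\rmB,n-r,w}(\max_W h\le t)=\bbP^{U,n,u-t}_{\rmB,n-r,w-t}(\max_W h\le 0)=:\Psi(t),
\]
so the inner probability equals $\Psi(-\lambda)-\Psi(\lambda)=-\int_{-\lambda}^{\lambda}\Psi'(t)\,dt$. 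The ballot upper bound of Theorem~\ref{t:2.3} already controls $\Psi(t)$ itself, but the heart of the argument lies in producing a pointwise bound on $\Psi'(t)$ of the form $C(1+|t|)\rme^{\alpha(t+\ol u(0))}(1+\ol u(0)^-)$.

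To extract this derivative bound, I would pass through a Gaussian disintegration at a reference point $x_0\in W$ chosen so that $\Var h(x_0)$ is bounded below by a positive constant depending only on $r$ and $\eps$ (for instance a point on $\partial\rmB_{n-r}$ at which Proposition~\ref{p:Cov-bd} gives such a lower bound). Conditioning on $h(x_0)=s$ and invoking Gibbs--Markov once more on $W\setminus\{x_0\}$, the conditional probability $\bbP(\max_W h\in[-\lambda,\lambda]\mid h(x_0)=s)$ is itself a ballot-type probability with boundary data modified by $s$. Applying the stronger form~\eqref{e:t:2.3-posUB} of Theorem~\ref{t:2.3} to this conditional probability produces both the polynomial $(1+\ol u(0)^-)$ factor from its negative-boundary part and the almost-Gaussian decay $\rme^{-(x^+)^{2-\eps}}$ in the positive-boundary regime. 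Integrating out $s$ against the Gaussian density of $h(x_0)$ (bounded by $C_{r,\eps}$ and centred near $\ol u(0)$ by Lemma~\ref{l:UV-U} and Proposition~\ref{l:E}) converts this almost-Gaussian decay into the exponential factor $\rme^{\alpha(\lambda+\ol u(0))}$, and then integrating over $t\in[-\lambda,\lambda]$ gives the $\lambda(1+\lambda)$ prefactor; the final expectation over $w$ contributes only an $O(1)$ factor absorbed into $C_{r,\eps}$.

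The main obstacle is precisely this derivative/density bound in the disintegration step: the CDF estimate of Theorem~\ref{t:2.3} is not automatically a density estimate, and one must carefully match the Gaussian density of $h(x_0)$ at $s\approx t$ with the $\rme^{-(x^+)^{2-\eps}}$ tail in~\eqref{e:t:2.3-posUB} to extract the claimed exponential rate $\alpha$. Once this matching is performed, the rest of the argument is routine integration and application of the standard harmonic-extension bounds from the appendices.
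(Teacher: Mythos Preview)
Your approach has a genuine gap at the single-reference-point step. If you disintegrate over $h(x_0)=s$, then with $W':=W\setminus\{x_0\}$ and $\phi$ the density of $h(x_0)$,
\[
\bbP(\max_W h\in[-\lambda,\lambda]) = \int_{-\lambda}^{\lambda}\!\phi(s)\,\bbP(\max_{W'}h\le\lambda\mid h(x_0)=s)\,ds
+\int_{-\infty}^{-\lambda}\!\phi(s)\,\bbP\big(\max_{W'}h\in[-\lambda,\lambda]\,\big|\, h(x_0)=s\big)\,ds.
\]
The first integral is harmless, but the second is still a probability-in-an-interval for $\max_{W'}h$ --- the same problem with one point removed --- so the argument is circular; conditioning on a single value does not convert an interval event into a ballot (CDF) event. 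The bound~\eqref{e:t:2.3-posUB} does not rescue this: it controls a CDF under positive boundary data, not a density, and the three-boundary-component domain $(U_n\cap\rmB^-_{n-r})\setminus\{x_0\}$ is anyway not of the form $U'_{n'}\cap V'^-_{k'}$ required by Theorem~\ref{t:2.3}. Note also that under $\bbP^{U,n,u}_{\rmB,n-r,w}$ the mean of $h(x_0)$ is near $-m_n$, not near $\ol u(0)$ as you write.

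The paper's argument uses a different decomposition: a union bound over the \emph{location of the maximizer},
\[
\bbP(\max_W h\in[-\lambda,\lambda])\le\sum_{x\in W}\bbP\big(h(x)\in[-\lambda,\lambda],\ \max_W h\le h(x)\big).
\]
After conditioning on $h_{\partial\rmB^-_{n-r-1}}$, each $h(x)$ has variance $gn+O_{r,\eps}(1)$ and mean near $-m_n+\ol u(0)$, so its density on $[-\lambda,\lambda]$ is at most $C_{r,\eps}\,n\,\rme^{-2n}\rme^{\alpha(\ol u(0)+\lambda)}$; this Gaussian-density computation --- not~\eqref{e:t:2.3-posUB} --- is the actual source of the factor $\rme^{\alpha(\lambda+\ol u(0))}$. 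The conditional probability $\bbP(\max_W h\le\lambda\mid h(x)=-\lambda)$ is a genuine ballot probability, handled via Theorem~\ref{t:2.3} with the origin \emph{recentred at $x$}: the point $x$ becomes the inner hole at scale $0$, the outer domain $U\cap\rme^{-r-1}\rmB$ sits at scale $n$, and the resulting bound carries a factor $1/n$. The cancellation $|W|\cdot n\rme^{-2n}\cdot n^{-1}=O(1)$ is precisely what closes the union bound, and a single-point disintegration cannot reproduce it.
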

\begin{proof}
Let $\frg:U_n\cap V^-_k\to\bbR$ be the harmonic function with boundary values $u$ on $\partial U_n$ and $v$ on $\partial V^-_k$. Then, by definition of the DGFF, the superposition principle for the harmonic extension, and by a union bound, we have
\begin{multline}
\label{e:h-cont-p-UB}
\bbP_{V,k,v}^{U,n,u}\Big(\max_{x\in U^\eta_n\cap\rmB_{n-r}} h  \in [-\lambda,\lambda]\Big)\\
\leq\sum_{x\in U^\eta_n\cap\rmB^-_{n-r}}
\bbP_{V,k,0}^{U,n,0}\Big(h(x) +\frg \in [-\lambda,\lambda], \max_{y\in U^\eta_n\cap\rmB^-_{n-r}}h(y)+\frg(y)\leq h(x)+\frg(x) \Big)\,.
\end{multline}
Using the Gibbs-Markov property and monotonicity, we obtain
\begin{multline}
\label{e:h-cont-p-GM}
\bbP_{V,k,0}^{U,n,0}\Big(h(x)+\frg  \in [-\lambda,\lambda],
\max_{y\in U^\eta_n\cap\rmB^-_{n-r}} h(y)+\frg(y)\leq h(x)+\frg(x) \Big)\\
\leq \int_{w\in\bbR^{\partial\rmB^-_{n-r-1}}}
\bbP_{V,k,0}^{U,n,0}\Big(
h_{\partial \rmB^-_{n-r-1}}+m_{n-r}\in\rmd w\Big)
\bbP^{U,n,0}_{\rmB,n-r-1,w}\Big(h(x)+\frg(x)  \in [-\lambda,\lambda]\Big)\\
\times\bbP^{U,n,0}_{\rmB,n-r-1,w}\Big(\max_{y\in U^\eta_n\cap\rmB^-_{n-r}} h(y)+\frg(y) \leq \lambda\,\Big|\, h(x) +\frg(x) = -\lambda\Big)\,.
\end{multline}
Using Assumption~\eqref{e:uv-diff} and Lemmas~\ref{l:ruin} and~\ref{l:UV-U}, we obtain
\begin{equation}
\label{e:h-cont-p-u}
\max_{x\in U^\eta_n\cap\rmB^-_{n-r}}\big|
\frg(x)-\ol{u}(0)\big|\leq C_{r,\eps}\,.
\end{equation}
By considering the domains $U\cap \rme^{-r-1}\rmB-\rme^{-n}x$ and $\tfrac{1}{10}\rmB$ in place of $U$ and $V$, we may apply Theorem~\ref{t:2.3} with $n,0$ in place of $n,k$ to the last conditional probability in~\eqref{e:h-cont-p-GM} so as to bound it from above by a constant times
$(1+\lambda)(1+\ol{u}(0)^-)(1+\ol{w}(\infty)^-+\osc_{\rmB^-_{n-r}}\ol{w})/n$.

As we show in~\eqref{e:l:2.4:pVar}, we have
\begin{equation}
gn -C_{r,\eps}\leq \Var^{U,n}_{\rmB,n-r-1} h(x)
=G_{U_n\cap\rmB^-_{n-r-1}}(x,x)\leq gn + C_{r,\eps}
\end{equation}
for all $x\in U^\eta_n\cap\rmB^-_{n-r}$.
Let $\mu(x)=m_n+\bbE_{\rmB,n-r-1,w}^{U,n,0} h(x) +\frg$.
Then, as $h$ under $\bbP_{\rmB,n-r-1,w}^{U,n,0}$ is Gaussian,
\begin{equation}
\bbP_{\rmB,n-r-1,w}^{U,n,0}
\Big( h(x) +\frg \in[-\lambda,\lambda] \Big)
\leq C_{r,\eps}n^{-1/2}\lambda 
\sup_{s\in[-\lambda,\lambda]}
\rme^{-\frac{(s-\mu(x)+m_n)^2}{2gn + C_{r,\eps}}}\,.
\end{equation}
Moreover, using that $m_n=2\sqrt{g}n-\tfrac34\sqrt{g}\log^+ n$, $\alpha=2/\sqrt{g}$ and  binomial expansion (as in~(5.13) of~\cite{Crit}) yields
\begin{equation}
\rme^{-\frac{(s-\mu(x)+m_n)^2}{2gn + C_{r,\eps}}}\leq
C_{r,\eps}n^{3/2}
\rme^{-2n-\alpha(s-\mu(x))}\,.
\end{equation}
Using also~\eqref{e:h-cont-p-u} and that
\begin{equation}
\bbE_{\rmB,n-r-1,w}^{U,n,0} h(x)
\leq -m_n +|\ol{w}(\infty)| + \osc_{\rmB^-_{n-r}}\ol{w}+C_{r,\eps}
\end{equation}
by Lemmas~\ref{l:UV-U} and~\ref{l:ruin}, we obtain
\begin{equation}
\bbP_{\rmB,n-r-1,w}^{U,n,0}
\Big( h(x)+\frg \in[-\lambda,\lambda] \Big)
\leq C_{r,\eps} n \lambda
\rme^{-2n+\alpha\big(\lambda+\ol{u}(0) +|\ol{w}(\infty)| + \osc_{\rmB^-_{n-r}}\ol{w}\big)}\,.
\end{equation}
Hence, the right-hand side of~\eqref{e:h-cont-p-GM} is bounded from above by
\begin{multline}
\label{e:h-cont-p-GM-2}
C_{r,\eps}\int_{t\in\bbR}
\bbP_{V,k,0}^{U,n,0}\Big(
\he{h}{\partial \rmB^-_{n-r-1}}(x)+m_{n-r}\in\rmd t\Big)\\
\times\sum_{j=1}^\infty\bbP_{V,k,0}^{U,n,0}\Big(\osc_{\rmB^-_{n-r}}\he{h}{\partial \rmB^-_{n-r-1}}\geq j-1
\,\Big|\,\he{h}{\partial \rmB^-_{n-r-1}}(x)+m_{n-r}= t\Big)\\
\times\lambda(1+\lambda)
\rme^{-2n+\alpha\big(\lambda+\ol{u}(0) +|t| + j\big)}(1+\ol{u}(0)^-)(1+t^-+j)\,.
\end{multline}
By Proposition~\ref{p:Cov-bd},
\begin{equation}
\big|\Var_{V,k,v}^{U,n,u}h(x)-gn\big|\leq C_{r,\eps}\,.
\end{equation}
We write $\bar\mu(x)=m_{n-r}+\bbE_{V,k,0}^{U,n,0}\he{h}{\partial \rmB^-_{n-r-1}}(x)$.
As $\he{h}{\partial \rmB^-_{n-r-1}}(x)$ is Gaussian, we have
\begin{equation}
\label{e:h-cont-p-GM-2-r}
\bbP_{V,k,0}^{U,n,0}\Big(
\he{h}{\partial \rmB^-_{n-r-1}}(x)+m_{n-r}\in\rmd t\Big)/\rmd t
\leq C_{r,\eps} \rme^{-c_{r,\eps}(t -\bar\mu(x))^2}\,,
\end{equation}
where, by Lemma~\ref{l:blml} and Proposition~\ref{l:E},
we have
$|\bar\mu(x)|\leq C_{r,\eps}$.	
Furthermore, by Proposition~\ref{p:2.6},
\begin{equation}
\label{e:h-cont-p-GM-2-j}
\bbP_{V,k,0}^{U,n,0}\Big(\osc_{\rmB^-_{n-r}}\he{h}{\partial \rmB^-_{n-r-1}}\geq j-1
\,\Big|\,\he{h}{\partial \rmB^-_{n-r-1}}(x)+m_{n-r}= t\Big)
\leq C_{r,\eps}\rme^{2\alpha(-j+|t|)}\,.
\end{equation}
Plugging in~\eqref{e:h-cont-p-GM-2-r} and~\eqref{e:h-cont-p-GM-2-j} into~\eqref{e:h-cont-p-GM-2}, and using~\eqref{e:h-cont-p-UB} shows that
\begin{equation}
\bbP_{V,k,v}^{U,n,u}\Big(\max_{x\in U^\eta_n\cap\rmB_{n-r}} h  \in [-\lambda,\lambda]\Big)\\
\leq C_{r,\eps}\big|\{x\in U^\eta_n\cap\rmB^-_{n-r}\}\big|
\lambda(1+\lambda)\rme^{-2n+\alpha(\lambda+\ol{u}(0))}\big(1+\ol{u}(0)^-\big)\,,
\end{equation}
which implies the assertion.
\end{proof}

The next lemma bounds the probability that $h$ is non-positive outside of an intermediately scaled disk. This estimate will be used in the proof of Lemma~\ref{l:LUV} below.
\begin{lem}
\label{l:half}
Let $\eps\in(0,1)$ and $\eta,\zeta\in[0,\eps^{-1}]$.
There exists $C=C_{\eps}<\infty$ such that
\begin{equation}
\label{e:halfP}
\bbP_{k,v}^{n,u}\Big( h_{U^\eta_n\cap \rmB^-_{n-r} } \leq 0 \Big)
\leq C\big(1+\max\ol{u}_\eta^-\big)^2 r^{-1/2}\,,
\end{equation}
for all $U\in\frU^\eta_\eps$, $V\in\frV_\eps$, $n\geq 0$,
$r\in(0,(n-k)^\eps]$ such that $\partial\rmB_{n-r}\subset U^{\eta\vee\eps}_n$ and $\partial\rmB^-_{n-r}\subset V^{-,\zeta\vee\eps}_k$,
and all $u\in\bbR^{\partial U_n}$, $v\in\bbR^{\partial V^{-}_k}$
satisfying~\eqref{e:uv-diff}.
\end{lem}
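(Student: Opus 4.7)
The plan is to convert the DGFF ballot event on the outer annular region $U^\eta_n \cap \rmB^-_{n-r}$ into a ballot event for the inward decorated random walk of Theorem~\ref{t:drw-i}, restricted to its first $\sim r$ time steps, and then to bound this short-time DRW probability. Throughout, we may assume that $n-k$ is large enough so that Theorem~\ref{t:drw-i} is applicable and that $r$ is larger than an $\eps$-dependent constant, since otherwise~\eqref{e:halfP} becomes trivial by increasing $C_\eps$.

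Set $T := T_{n-k}$ and $r'' := r + \lfloor \log \eps \rfloor - 1$. By construction of the inward concentric decomposition, $\bigcup_{p=1}^{r''} A_p = U^\eta_n \setminus \rmB_{n-r+1} \subseteq U^\eta_n \cap \rmB^-_{n-r}$, so the event $\{h_{U^\eta_n \cap \rmB^-_{n-r}} \leq 0\}$ implies $\bigcap_{p=1}^{r''} \{h_{A_p} \leq 0\}$ and~\eqref{e:h-S} yields
\begin{equation*}
\bbP_{k,v}^{n,u}\big(h_{U^\eta_n \cap \rmB^-_{n-r}} \leq 0\big)
\leq \bfP_{0,\ol{u}(0)}^{T,\ol{v}(\infty)}\Big(\bigcap_{p=1}^{r''}\{\cS_p + \cD_p \leq 0\}\Big),
\end{equation*}
where $(\cS_p, \cD_p)$ is the DRW of Section~\ref{s:def-DRW}, which satisfies Assumptions~\ref{i.a1}--\ref{i.a3} by Theorem~\ref{t:drw-i}.

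Since $r \leq (n-k)^\eps$, we have $r'' \ll T$, so the bridge $(\cS_p)_{p=0}^{r''}$ is close (in total variation) to a free Gaussian random walk started at $a := \ol{u}(0)$ with step variances $\sigma_p^2 \in (\delta, \delta^{-1})$; the drift contributed by the bridge conditioning is at most $r'' |\ol{u}(0) - \ol{v}(\infty)|/T = O_\eps(1)$ by~\eqref{e:uv-diff}. By Assumption~\ref{i.a3}, the decorations $\cD_p$ concentrate stretched-exponentially around a centering of order $p^{1/2-\delta}$, so that the event $\{\cS_p + \cD_p \leq 0\}$ reduces, up to a stretched-exponentially small correction, to $\{\cS_p \leq -\gamma_p\}$ for an effective barrier $\gamma_p = O(p^{1/2-\delta})$. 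A short-time ballot estimate of the type developed in Appendix~\ref{s:3} then yields
\begin{equation*}
\bfP_{0,\ol{u}(0)}^{T,\ol{v}(\infty)}\Big(\bigcap_{p=1}^{r''}\{\cS_p + \cD_p \leq 0\}\Big)
\leq C_\eps \frac{(1+a^-)^2}{\sqrt{r}},
\end{equation*}
the square arising from a first-step argument needed when $a \geq 0$ (to drive the walk below the effective barrier), combined with the usual $(1+a^-)/\sqrt{r}$ ballot bound on the remaining steps.

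Finally, since $\rmB(0,\eps) \subset U^\eta$ implies $0 \in U^\eta_n$, the maximum principle for the bounded harmonic extension $\ol{u}$ on $U_n$ gives $\ol{u}(0) \in [\min \ol{u}_\eta, \max \ol{u}_\eta]$; combined with $\osc\,\ol{u}_\eta \leq \eps^{-1}$ from~\eqref{e:uv-diff}, this yields $\ol{u}(0)^- \leq \max \ol{u}_\eta^- + \eps^{-1}$ and hence $(1+\ol{u}(0)^-) \leq C_\eps(1+\max \ol{u}_\eta^-)$, which completes the derivation of~\eqref{e:halfP}. The main obstacle is extracting the short-time DRW ballot bound with the correct $r^{-1/2}$ rate: while for $r'' \ll T$ the bridge is essentially a free walk, one must propagate the stretched-exponential decoration estimate and the $O(p^{1/2-\delta})$ centering through the ballot argument without degrading the polynomial rate, which is the content of the ballot-type lemmas of Appendix~\ref{s:3} that we shall invoke.
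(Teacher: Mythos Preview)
Your strategy via the DRW reduction is a legitimate alternative, but the crucial step---the claimed inequality
\[
\bfP_{0,\ol{u}(0)}^{T,\ol{v}(\infty)}\Big(\max_{p=1}^{r''}(\cS_p+\cD_p)\le 0\Big)\le C_\eps\frac{(1+a^-)^2}{\sqrt r}
\]
---is not a lemma anywhere in Appendix~\ref{s:3}. Proposition~\ref{p:4.1} treats only the undecorated walk on a full interval with both endpoints fixed, and Theorems~\ref{t:3.2}--\ref{t:3.4} address the whole bridge under the constraint $b<-T^\delta$. To prove your bound you would still have to (i) condition on $\cS_{r''}=w$, (ii) control the decorations via something like the variable $R$ in~\eqref{e:2.4}, (iii) apply Proposition~\ref{p:4.1} on $[0,r'']$ to get a factor $(1+a^-)(1+w^-)/r''$, and (iv) integrate against the Gaussian law of $\cS_{r''}$ (mean $\approx a$, variance $\asymp r$). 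Your explanation that the square comes from a ``first-step argument needed when $a\ge 0$'' is not correct: the second factor of $(1+a^-)$ arises in step~(iv) because $\cS_{r''}$ is centered near $a$, while the $r^{-1/2}$ comes from the $\sqrt r$ fluctuation of $\cS_{r''}$.

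The paper takes a different and shorter route: it works directly at the DGFF level, first reducing to zero boundary conditions via Proposition~\ref{l:E} (absorbing the shift $\max\ol u_\eta^-$), then conditioning on $h_{\partial\rmB_{n-r}}$ via Gibbs--Markov, and applying the already--proved Theorem~\ref{t:2.3} to the outer piece $U_n\cap\rmB^-_{n-r}$ together with the oscillation bound of Proposition~\ref{p:2.6}. The integration over the Gaussian law of $\he{h}{\partial\rmB_{n-r}}(0)$ then gives the $r^{-1/2}$ and the second factor of $(1+\max\ol u_\eta^-)$. This avoids any new DRW lemma because Theorem~\ref{t:2.3} has already absorbed the DRW work. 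Your approach could be completed by carrying out steps~(i)--(iv) above, but as written it defers the main content to a lemma that does not exist in the appendix.
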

\begin{proof}
As $h$ under $\bbP_{k,v}^{n,u}$ is distributed as $h+\bbE_{k,v}^{n,u} h - \bbE_{k,0}^{n,0} h$ under $\bbP_{k,0}^{n,0}$, and as
\begin{equation}
\big|\bbE_{k,v}^{n,u}\he{h}{\partial \rmB_{n-r}}(0)
-\bbE_{k,0}^{n,0}\he{h}{\partial \rmB_{n-r}}(0)\big|
\leq C_\eps+\max_{U^\eta_n}\ol{u}^-
\end{equation}
by Proposition~\ref{l:E} and~\eqref{e:uv-diff}, we have
\begin{equation}
\label{e:halfP0}
\bbP_{k,v}^{n,u}\Big( h_{U^\eta_n\cap \rmB^-_{n-r} } \leq 0 \Big)\leq
\bbP_{k,0}^{n,0}\Big( h_{U^\eta_n\cap \rmB^-_{n-r} } -C_\eps-\max_{U^\eta_n}\ol{u}^- \leq 0 \Big)\,.
\end{equation}
The right-hand side of~\eqref{e:halfP0} is bounded by
\begin{multline}
\label{e:half-P-p}
\int_{t\in\bbR}\bbP_{k,0}^{n,0}\Big(\he{h}{\partial\rmB_{n-r}}(0)+m_{n-r} \in\rmd r\Big)
\sum_{a=1}^\infty
\bbP_{k,0}^{n,0}\Big(\osc_{\rmB^{\pm,\eps}_{n-r}}\he{h}{\partial\rmB_{n-r}}\geq a \,\Big|\, \he{h}{\partial\rmB_{n-r}}(0)+m_{n-r} = t\Big)\\
\times\bbP_{\rmB,n-r, t -a}^{U,n,0}\big(h_{U^\eta_n\cap \rmB^-_{n-r}}
-C_\eps-\max_{U^\eta_n}\ol{u}^-\leq 0\big)\,,
\end{multline}
where we also used that the probability in the second line is increasing in $a$. The conditional probability in~\eqref{e:half-P-p} is bounded by $\rme^{-c_\eps a+C_\eps|t|/r}$ by Proposition~\ref{p:2.6}, and
Theorem~\ref{t:2.3} yields
\begin{equation}
\bbP_{\rmB,n-r, t -a}^{U,n,0}\big(h_{U^\eta_n\cap \rmB^-_{n-r}}
-C_\eps-\max_{U^\eta_n}\ol{u}^-\leq 0\big)
\leq C_{\eps}\frac{(1+\max_{U^\eta_n}\ol{u}^-)(1+\max_{U^\eta_n}\ol{u}^-+ t^- +a)}{r}\,.
\end{equation}
Furthermore,
$\he{h}{\partial\rmB_{n-r}}(0)$ is a Gaussian with
$\Var_k^n\he{h}{\partial\rmB_{n-r}}(0)\leq gr + C_\eps$ by Proposition~\ref{p:Cov-bd} and
$|\bbE_{k,0}^{n,0}\he{h}{\partial\rmB_{n-r}}(0)+m_{n-r}|\leq C_\eps(1+\log r)$
by Lemma~\ref{l:blml} and Proposition~\ref{l:E}, hence
\begin{equation}
\bbP_{k,0}^{n,0}\Big(\he{h}{\partial\rmB_{n-r}}(0)+m_{n-r}\in\rmd t\Big)
\leq C_\eps r^{-1/2}\rme^{-c_\eps t^2r^{-1}}\rmd t
\end{equation}
Plugging in these bounds into~\eqref{e:half-P-p}, changing variables such that $tr^{-1/2}$ becomes $t$, and using~\eqref{e:halfP0} yields~\eqref{e:halfP}.
\end{proof}

We are now ready to prove the remaining statements from Section~\ref{s:comp-ballot}.
\begin{proof}[Proof of Lemma~\ref{l:LUV}]
We go back to the notation that was introduced in Sections~\ref{s:concdec} and~\ref{s:def-DRW}.
In particular, we keep assuming the coupling of $h^{U_n\cap V^-_k}$, $h$ and $(\varphi_p,h_p)_{p=1}^{\rt}$ and~\eqref{e:hphih} hold $\bbP_{k,v}^{n,u}$-a.\,s.
Set $\wt r=r+\lfloor \log\eps\rfloor$. Then $\rmB^-_{n-r}=\Delta'_{\wt r}$ for $\wt r=1,\ldots,\rt -1$ by~\eqref{e:Delta'}, and by Theorem~\ref{t:drw-i} also
\begin{equation}
\Big\{ \max_{i=1}^{\wt r} \big( \cS'_i + \beta_i + \cD_i \big) \leq 0 \Big\}
= \Big\{ h_{U^\eta_n \cap \rmB^-_{n-r}} \leq 0 \Big\}\,.
\end{equation}
Using also Definitions~\eqref{e:def-l} and~\eqref{e:def-cLm}, we obtain for any $z\in\partial\rmB_{n-2r}$ that
\begin{multline}
\label{e:LUV-p1}
\Big| \cL^{r}_{n,k}(u,v) -
\ell_{\rt,\wt r}\big(\ol{u}(0),\ol{v}(\infty)\big) \Big|
\leq \bbE_{k,v}^{n,u}\Big(\big|\cS'_{\wt r} - \cY_{\wt r}\big|\Big)
+\bbE_{k,v}^{n,u}\Big(\big|\cY_{\wt r}-\varphi_{0,\wt r}(z)\big|;\,h_{U_n^\eta \cap \rmB^-_{n-r}} \leq 0\Big)\\
+\bbE_{k,v}^{n,u}\Big(\big|\varphi_{0,\wt r}(z) - \he{h^{U_n\cap V^-_k}}{\partial \rmB^-_{n-r}}(0)\big|\Big)
+ \big|\he{\gamma}{\partial \rmB^-_{n-r}}(0) \big|
\bbP_{k,v}^{n,u}\Big(h_{U_n^\eta \cap \rmB^-_{n-r}} \leq 0 \Big)\,,
\end{multline}
where
\begin{equation}
\gamma(x) =   m_{n-\wt r} - \beta_{\wt r} + \he{(-m_n \Ind_{\partial U_n}+ u - m_k \Ind_{\partial V^-_k} + v)}{\partial U_n\cup \partial V^-_k}(x)
\end{equation}
is defined as in~\eqref{e:corr-bd} and we have
$\big|\he{\gamma}{\partial \rmB^-_{n-r}}(0)\big|\leq C_{\eps,\delta}(1+r^\delta)$
by~\eqref{e:bd-gamma} for an arbitrarily small constant $\delta>0$.
The first term on the right-hand side of~\eqref{e:LUV-p1} is bounded from above by
$\bbE_{k,v}^{n,u}\Big( \big(\cS'_{\wt r} - \cY_{\wt r}\big)^2 \Big)^{1/2}$ by the Jensen inequality.
This expression is further bounded by 
a constant times $(n-k-{\wt r})^{-1/2}$ by Lemma~\ref{l:dec-coupl} and as $\cS'_{\wt r}$, $\cY_{\wt r}$ are centered Gaussians.
The probability in the second line of~\eqref{e:LUV-p1} is bounded by its $\eps/4$-th power, which in turn is bounded by a constant times $\big(1+\max_{U^\eta_n}\ol{u}^-\big)^{\eps/2} {\wt r}^{-\eps/8}$ by Lemma~\ref{l:half}.

The third term on the right-hand side of~\eqref{e:LUV-p1} is, by the Jensen inequality, bounded from above by the square root of
\begin{multline}
\label{e:p-l-LUV-hphi}
\bbE_{k,v}^{n,u}\Big(\big(\varphi_{0,\wt r}(z) - \he{h^{U_n\cap V^-_k}}{\partial \rmB^-_{n-r}}(0)\big)^2\Big)=
\sum_{w,w\in\partial\Delta_{\wt r}}
\big|\Pi_{\Delta_{\wt r}\cap V^-_k}(z,w)-\Pi_{\Delta_{\wt r}}(0,w)\big|\\
\times\big|\Pi_{\Delta_{\wt r}\cap V^-_k}(z,w')-\Pi_{\Delta_{\wt r}}(0,w')\big|
\bbE\Big(h^{U_n\cap V^-_k}(w)h^{U_n\cap V^-_k}(w')\Big)\,,
\end{multline}
for the last equality we used that $\varphi_{0,{\wt r}}(z)=\he{h^{U_n\cap V^-_k}}{\partial \rmB^-_{n-r}\cup\partial V^-_k}(z)$, \eqref{e:phi-rw}, and the analogous representation of the harmonic extension $\he{h^{U_n\cap V^-_k}}{\partial \rmB^-_{n-r}}(0)$ in terms of simple random walk.
By Lemma~\ref{l:Poisson} with $j=r$ and $p=2r$,
\begin{equation}
\big|\Pi_{\Delta_{\wt r}\cap V^-_k}(z,w)-\Pi_{\Delta_{\wt r}}(0,w)\big|
\leq C_\eps \Big(\frac{r^2}{n-k-r}+r^{-2}\Big)\Pi_{\Delta_{\wt r}}(0,w)
\end{equation}
in~\eqref{e:p-l-LUV-hphi}. Hence,~\eqref{e:p-l-LUV-hphi} is bounded by
\begin{equation}
C_\eps \Big(\frac{r}{n-k-r}+r^{-1}\Big)^2
\Var\Big(\he{h^{U_n\cap V^-_k}}{\partial B_{n-r}}(0)\Big)\,,
\end{equation}
where the variance is bounded by $C_\eps r$ by Proposition~\ref{p:Cov-bd}.

The second term on the right-hand side of~\eqref{e:LUV-p1} is bounded by
\begin{equation}
\bbE_{k,v}^{n,u}\Big(\big(\cY_{\wt r}-\varphi_{0,\wt r}(z)\big)^2\Big)^{1/2}\bbP_{k,v}^{n,u}\Big(h_{U_n^\eta \cap \rmB^-_{n-r}} \leq 0 \Big)^{1/2}
\end{equation}
by the Cauchy-Schwarz inequality. By the above, it suffices to show that the second moment in this expression is bounded by some $C_\eps<\infty$.
By~\eqref{e:R}, \eqref{e:Xm}, \eqref{e:phi-rw}, and the definition of the binding field $\varphi_j$ from Lemma~\ref{l:GM}, we can set
\begin{equation}
\label{e:p-l-LUV-Y}
\cY_{\wt r}=\sum_{j=1}^{\wt r}\sum_{y\in\partial \Delta_j}
\frac{s_{{\wt r},\rt}}{s_{j,\rt}}
\Pi_{\Delta_j}(0,y)
\sum_{w\in\partial \Delta_{j-1}}\Pi_{\Delta_{j-1}\cap V^-_k}(y,w)
h^{\Delta_{j-1}\cap V^-_k}(w)\,,
\end{equation}
where the DGFFs $h^{\Delta_{j-1}\cap V^-_k}$, $j=1,\ldots {\wt r}$ are independent. Likewise, using the definition of $\varphi_{0,{\wt r}}$ and again~\eqref{e:phi-rw}, we have
\begin{equation}
\label{e:p-l-LUV-phi}
\varphi_{0,{\wt r}}(z)=\varphi_0(z)+
\sum_{j=1}^{\wt r}\sum_{y\in\partial \Delta_j}
\Pi_{\Delta_j\cap V^-_k}(z,y)
\sum_{w\in\partial \Delta_{j-1}}\Pi_{\Delta_{j-1}\cap V^-_k}(y,w)
h^{\Delta_{j-1}\cap V^-_k}(w)
\end{equation}
with the same realizations of $h^{\Delta_{j-1}\cap V^-_k}$ as in~\eqref{e:p-l-LUV-Y}, which are independent of $\varphi_0$. As in~\eqref{e:dec-tail-Pois} and~\eqref{e:p-dectail-lPoisson} with $p=2r$, we have
\begin{equation}
\label{e:p-l-LUV-diff}
\Big|\Pi_{\Delta_j\cap V^-_k}(z,y)
-\frac{s_{{\wt r},\rt}}{s_{j,\rt}}
\Pi_{\Delta_j}(0,y)
\Big|\leq C_\eps\frac{\log r}{r}\Pi_{\Delta_j}(0,y)\,.
\end{equation}
By subtracting \eqref{e:p-l-LUV-Y} from \eqref{e:p-l-LUV-phi} and then using~\eqref{e:p-l-LUV-diff}, we obtain
\begin{equation}
\bbE_{k,v}^{n,u}\Big(\big(\cY_{\wt r}-\varphi_{0,\wt r}(z)\big)^2\Big)
\leq \Var_k^n\varphi_0(z) + C_\eps\Big(\frac{\log r}{r}\Big)^2\sum_{j=1}^{\wt r}
\Var\Big(\he{h^{\Delta_{j-1}\cap V^-_k}}{\partial \Delta_j}(0)\Big)\,,
\end{equation}
which is bounded by a constant as $\Var_k^n\varphi_0(z)$ and the other variances on the right-hand side are bounded by a constant by Lemma~\ref{l:varXm} and Proposition~\ref{p:Cov-bd}, respectively. This yields the assertion.
\end{proof}

\begin{proof}[Proof of Lemma~\ref{l:delta0}]
We can assume that $W\subset W'$: if this is not the case, we compare both $\cL^r_{n,W,U,k,V}(u,v)$ and $\cL^r_{n,W',U,k,V}(u,v)$ to $\cL^r_{n,W\cap W',U,k,V}(u,v)$, using that $W,W'\supset W\cap W'$.

By the definition of $\cL^r_{n,W,U,k,V}(u,v)$ in~\eqref{e:def-Lmuv}, monotonicity and linearity of the expectation therein, and by FKG, we have
\begin{multline}
0\leq \cL^r_{n,W,U,k,V}(u,v)-\cL^r_{n,W',U,k,V}(u,v)\\
=\bbE_{V,k,v}^{U, n,u} \Big( \ol{h}_{\rmB^-_{n-r}}(0)^- + m_{n-r} ;\;
h_{W_n \cap \rmB^-_{n-r}} \leq 0 \,,
\max_{x\in W'_n\setminus W_n} h(x) > 0 \Big)\\
\leq \cL^r_{n,W,U,k,V}(u)
\bbP_{V,k,v}^{U,n,u}\Big(\max_{x\in W'_n\setminus W_n} h(x) > 0\Big)
\end{multline}
and we bound the probability on the right-hand side using Lemma~\ref{p:viol}. This yields the assertion.
\end{proof}

A key for the proof of Lemma~\ref{l:cont-V}, and for the proof of the continuity statement in Lemma~\ref{l:1-6-m}, is the following lemma, for which we define
\begin{equation}
\label{e:LmUV}
\cL^r_{n,k}(u,v,t)
=\bbE_{k,v}^{n,u}
\Big( \big(\he{h}{\rmB^-_{n-r}}(0) + t + m_{n-r}\big)^-;\;
h_{U^\eta_n\cap \rmB^-_{n-r}} + t\leq 0
\Big)
\end{equation}
for $t\in\bbR$ and all other parameters as before.
\begin{lem}
\label{l:LUVt}
Let $r\geq 0$, $\eps\in(0,1)$, $\eta\in[\eps,\eps^{-1}]$, $\zeta\in[0,\eps^{-1}]$. There exists $C=C_{r,\eps}<\infty$ such that for all
$U\in\frU^\eta_\eps$, $V\in\frV_\eps$, $0\leq k<n$ with $\partial\rmB_{n-r}\subset U^{\eta}_n$ and $\partial\rmB^-_{n-r}\subset V^{-,\zeta\vee\eps}_k$, all $u\in\bbR^{\partial U_n}$, $v\in\bbR^{\partial V^-_k}$ satisfying~\eqref{e:uv-diff},
and all $t\geq 0$, we have
\begin{equation}
\label{e:LUVt-pos}
\cL^r_{n,k}(u,v,t)
\leq \cL^r_{n,k}(u,v)\\
\leq	\cL^r_{n,k}(u,v,t)
+ C t^{1/2}\rme^{\alpha t}
\end{equation}
and
\begin{equation}
\label{e:LUVt-neg}
\cL^r_{n,k}(u,v,-t)- C t^{1/2}\rme^{\alpha t}
\leq \cL^r_{n,k}(u,v)\\
\leq	\cL^r_{n,k}(u,v,-t)\,.
\end{equation}
\end{lem}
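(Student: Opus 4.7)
\textbf{Proof plan for Lemma~\ref{l:LUVt}.}
The plan is to first dispatch the easy monotonicity halves of~\eqref{e:LUVt-pos} and~\eqref{e:LUVt-neg} pointwise, and then treat the non-trivial halves by splitting the difference into a term where $Y^-$ shifts by at most $t$ and a term localized on a thin band of the maximum. Set $Y := \ol{h}_{\partial\rmB^-_{n-r}}(0)+m_{n-r}$ and $M := \max_{U^\eta_n \cap \rmB^-_{n-r}} h$, so that $\cL^r_{n,k}(u,v) = \bbE_{k,v}^{n,u}[Y^-;\,M\leq 0]$ and $\cL^r_{n,k}(u,v,t) = \bbE_{k,v}^{n,u}[(Y+t)^-;\,M\leq -t]$. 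For $t\geq 0$, the pointwise bounds $(Y+t)^-\leq Y^-$, $\{M\leq -t\}\subset \{M\leq 0\}$ give the left inequality of~\eqref{e:LUVt-pos}, while $(Y-t)^-\geq Y^-$, $\{M\leq 0\}\subset\{M\leq t\}$ give the right inequality of~\eqref{e:LUVt-neg}.

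For the non-trivial half of~\eqref{e:LUVt-pos}, the decomposition
\begin{equation}
\cL^r_{n,k}(u,v)-\cL^r_{n,k}(u,v,t)=\bbE_{k,v}^{n,u}\bigl[Y^--(Y+t)^-;\,M\leq -t\bigr]+\bbE_{k,v}^{n,u}\bigl[Y^-;\,-t<M\leq 0\bigr]
\end{equation}
reduces the task to two estimates. Since $0\leq Y^--(Y+t)^-\leq t$, the first summand is at most $t\cdot\bbP_{k,v}^{n,u}(h\leq -t)$. Using the shift $h\mapsto h+t$, this probability equals $\bbP_{k,v+t}^{n,u+t}(h\leq 0)$, to which Theorem~\ref{t:2.3} applies: the weak bound~\eqref{e:1.18} handles bounded $t$, while the stronger bound~\eqref{e:t:2.3-posUB} kicks in once $(u+t)_*, (v+t)_*$ become positive and yields almost-Gaussian decay in $t$; in both regimes the product is absorbed into $C_{r,\eps}t^{1/2}\rme^{\alpha t}$. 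For the second summand I plan to use Cauchy--Schwarz,
\begin{equation}
\bbE_{k,v}^{n,u}\bigl[Y^-;\,-t<M\leq 0\bigr]\leq \bigl(\bbE_{k,v}^{n,u}[(Y^-)^2]\bigr)^{1/2}\,\bbP_{k,v}^{n,u}(-t<M\leq 0)^{1/2},
\end{equation}
bound the second moment by $C_{r,\eps}$ via Lemma~\ref{l:blml}, Proposition~\ref{l:E}, and Proposition~\ref{p:Cov-bd} (the mean of $Y$ is $O_r(1)$ and its variance is $O_r(1)$ under~\eqref{e:uv-diff} since $r$ is fixed), and bound the probability by Lemma~\ref{l:h-cont} applied with $\lambda=t$, giving $C\,t(1+t)\rme^{\alpha(t+\ol{u}(0))}(1+\ol{u}(0)^-)$. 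The resulting bound $C_{r,\eps}t^{1/2}(1+t)^{1/2}\rme^{\alpha t/2}$ is dominated by $Ct^{1/2}\rme^{\alpha t}$.

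The non-trivial half of~\eqref{e:LUVt-neg} is treated by the analogous decomposition
\begin{equation}
\cL^r_{n,k}(u,v,-t)-\cL^r_{n,k}(u,v)=\bbE_{k,v}^{n,u}\bigl[(Y-t)^--Y^-;\,M\leq 0\bigr]+\bbE_{k,v}^{n,u}\bigl[(Y-t)^-;\,0<M\leq t\bigr],
\end{equation}
with the first summand bounded by $t\cdot\bbP_{k,v}^{n,u}(h\leq 0)$ via Theorem~\ref{t:2.3}, and the second by Cauchy--Schwarz together with Lemma~\ref{l:h-cont} (using $\{0<M\leq t\}\subset\{|M|\leq t\}$) and the moment estimate $\bbE_{k,v}^{n,u}[((Y-t)^-)^2]\leq 2\bbE[(Y^-)^2]+2t^2\leq C_{r,\eps}(1+t^2)$; the extra $t^2$ is harmless against the $\rme^{\alpha t}$ factor.

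The main technical obstacle is the uniform control of the moment $\bbE_{k,v}^{n,u}[(Y^-)^2]$ by a constant $C_{r,\eps}$, which requires combining Proposition~\ref{l:E} (to show the harmonic-extension mean of $h_{\partial\rmB^-_{n-r}}$ at $0$ cancels $m_{n-r}$ up to $O_r(1)$ under~\eqref{e:uv-diff}) with Proposition~\ref{p:Cov-bd} to bound the variance by $O_r(1)$; everything else is bookkeeping between the two regimes of $t$ dictated by whether the strong ballot bound~\eqref{e:t:2.3-posUB} is available.
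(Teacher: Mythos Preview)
Your overall architecture matches the paper's proof: the easy inequalities come from monotonicity, and the hard ones via Cauchy--Schwarz together with Lemma~\ref{l:h-cont}. Two points need correction.

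\medskip
\textbf{The first summand.} Your detour through Theorem~\ref{t:2.3} is both unnecessary and not quite valid. The event $\{M\leq -t\}$ concerns $h$ on $U^\eta_n\cap\rmB^-_{n-r}$, which is a \emph{subset} of $U^\eta_n\cap V^{-,\zeta}_k$; hence $\bbP_{k,v+t}^{n,u+t}(h_{U^\eta_n\cap\rmB^-_{n-r}}\leq 0)$ dominates $\bbP_{k,v+t}^{n,u+t}(h_{U^\eta_n\cap V^{-,\zeta}_k}\leq 0)$, and Theorem~\ref{t:2.3} gives you an upper bound for the wrong side. The paper simply bounds this term by $t$ (using $\bbP(M\leq -t)\leq 1$), and $t\leq C t^{1/2}\rme^{\alpha t}$ for all $t\geq 0$.

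\medskip
\textbf{The second moment of $Y$.} This is the real gap. Your claim that $\bbE_{k,v}^{n,u}[(Y^-)^2]\leq C_{r,\eps}$ with ``the mean of $Y$ is $O_r(1)$'' is incorrect: assumption~\eqref{e:uv-diff} bounds $\ol u(0)$ only from above, and Proposition~\ref{l:E} (with $l=n-r$) gives $\bbE Y=\ol u(0)+O_{r,\eps}(1)$, so $\bbE Y$ can be arbitrarily negative. The correct bound, which is what the paper uses, is
\[
\bbE_{k,v}^{n,u}\big[Y^2\big]\leq C_{r,\eps}\big(1+\ol u(0)^2\big).
\]
This extra polynomial factor in $\ol u(0)^-$ is then absorbed by the factor $\rme^{\alpha\ol u(0)}$ appearing in Lemma~\ref{l:h-cont}: after Cauchy--Schwarz one has
\[
C_{r,\eps}\,(1+|\ol u(0)|)\,(1+\ol u(0)^-)^{1/2}\,\rme^{\alpha\ol u(0)/2}\cdot t^{1/2}(1+t)^{1/2}\rme^{\alpha t/2},
\]
and $\sup_{s\leq \eps^{-1}}(1+|s|)^{3/2}\rme^{\alpha s/2}<\infty$. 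With this correction your argument goes through and coincides with the paper's. The same fix applies to your treatment of~\eqref{e:LUVt-neg}.
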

\begin{proof}
The first inequality in~\eqref{e:LUVt-pos} follows by monotonicity in $t$ of the expectation in~\eqref{e:LmUV}.
For the second inequality in~\eqref{e:LUVt-pos}, we note that
\begin{multline}
\bbE_{k,v}^{n,u}
\Big( \big(\he{h}{\rmB^-_{n-r}}(0) + t + m_{n-r}\big)^-;\;
h_{U^\eta_n\cap \rmB^-_{n-r}} + t\leq 0
\Big)\\
\geq
\bbE_{k,v}^{n,u}
\Big( \big(\he{h}{\rmB^-_{n-r}}(0) + m_{n-r}\big)^-;\;
h_{U^\eta_n\cap \rmB^-_{n-r}}\leq 0
\Big)\\
-\bbE_{k,v}^{n,u}\Big( \big(\he{h}{\rmB^-_{n-r}}(0) + m_{n-r}\big)^-;\;
\max_{U^\eta_n\cap \rmB^-_{n-r}} h\in(-t,0]
\Big)
-t\,.
\end{multline}
Therefore, by Cauchy-Schwarz,
\begin{multline}
\label{e:LUVt-p2}
\cL^r_{n,k}(u,v) - \cL^r_{n,k}(u,v,t)\\
\leq t
+ \Big(\bbE_{V,k,v}^{U,n,u}\Big( \big(\he{h}{\partial \rmB^-_{n-r}}(0) + m_{n-r}\big)^2 \Big)\Big)^{1/2}
\bbP_{V,k,v}^{U,n,u}\Big(\max_{U^\eta_n\cap \rmB^-_{n-r}} h\in(-t,0]\Big)^{1/2}\,.
\end{multline}
The probability on the right-hand side is bounded by $C_{r,\eps}\rme^{\alpha\ol{u}(0)}(1+\ol{u}(0)^-)t\rme^{2\alpha t}$ by Lemma~\ref{l:h-cont}.
The expectation on the right-hand side of~\eqref{e:LUVt-p2} is bounded by
$C_{r,\eps}(1+\ol{u}(0)^2)$ by Propositions~\ref{l:E}, \ref{p:Cov-bd}, Lemma~\ref{l:blml} and Assumption~\eqref{e:uv-diff}.
Hence, \eqref{e:LUVt-p2} is bounded by $t+C_{r,\eps}t^{1/2}\rme^{\alpha t}$ which yields assertion~\eqref{e:LUVt-pos} when we consider large and small $t$ separately. The proof of~\eqref{e:LUVt-neg} is analogous.
\end{proof}

\begin{proof}[Proof of Lemma~\ref{l:cont-V}]
We show~\eqref{e:cont-V-assertion} with $\cL^r_{n,\eta,U,0,\eps\rmB}(u,\ol{u}(0))$ in place of $\cL^r_{n,\eta,U,0,\rmB}(u,\ol{u}(0))$. Then the assertion of the lemma follows as
\begin{multline}
\big|\cL^r_{n,\eta,U,0,\rmB}(u,\ol{u}(0))-\cL^r_{n,\eta,U,k,V}(u,v) \big|\\
\leq
\big|\cL^r_{n,\eta,U,0,\rmB}(u,\ol{u}(0))-\cL^r_{n,\eta,U,0,\eps\rmB}(u,\ol{u}(0))\big|
+\big|\cL^r_{n,\eta,U,0,\eps\rmB}(u,\ol{u}(0))
-\cL^r_{n,\eta,U,k,V}(u,v) \big|\,.
\end{multline}

By Definition~\eqref{e:def-UV} of $\frV_\eps$, we have
$(\eps\rmB^-)_0\supset V^-_k$.
To account for the boundary values, let $\frg$ denote the harmonic function on $U_n\cap V^-_k$ with boundary values $-m_n+u$ on $\partial U_n$ and $-m_k+v$ on $\partial V^-_k$, and let $\wt \frg$ denote the harmonic function on $U_n\cap (\eps\rmB^-)_0$ with boundary values $-m_n+u$ on $\partial U_n$ and $\ol{u}(0)$ on $\partial (\eps\rmB^-)_0$.
Then, by the Gibbs-Markov property,
$(h-\wt\frg)_{U_n\cap V^-_k}$ under $\bbP_{\eps\rmB,0,\ol{u}(0)}^{U,n,u}$ is distributed as the sum of the independent fields $h-\frg$ under $\bbP_{V,k,v}^{U,n,u}$ and
$\varphi^{U_n\cap (\eps\rmB^-)_0, U_n\cap V^-_k}$, where the latter is the usual binding field.
Using also Definitions~\eqref{e:def-cLm} and~\eqref{e:LmUV}, in particular the monotonicity in $t$, we obtain
\begin{equation}
\label{e:cont-v-p-int-lb}
\cL^r_{n,\eta,U,0,\eps\rmB}(u,\ol{u}(0))\geq
\int\bbP\Big(
\max_{U^\eta_n\cap \rmB^-_{n-r}}
\Big|\varphi^{U_n\cap (\eps\rmB^-)_0, U_n\cap V^-_{k}}+\wt\frg-\frg\Big|\in\rmd t\Big)
\cL^r_{n,\eta,U,k,V}(u,v,t)
\end{equation}
and
\begin{equation}
\label{e:cont-v-p-int-ub}
\cL^r_{n,\eta,U,0,\eps\rmB}(u,\ol{u}(0))\leq
\int\bbP\Big(
\max_{U^\eta_n\cap \rmB^-_{n-r}}
\Big|\varphi^{U_n\cap(\eps\rmB^-)_0, U_n\cap V^-_k}+\wt\frg-\frg\Big|\in\rmd t\Big)
\cL^r_{n,\eta,U,k,V}(u,v,-t)\,.
\end{equation}
In light of Assumption~\eqref{e:uv-diff}, Lemma~\ref{l:gV} below gives
\begin{equation}
\label{e:cont-v-p-g}
\max_{U^\eta_n\cap \rmB^-_{n-r}}|\wt \frg - \frg|\leq
r(n-k)^{-\eps}\,.
\end{equation}
Applying Lemma~\ref{l:LUVt} in~\eqref{e:cont-v-p-int-lb} and~\eqref{e:cont-v-p-int-ub}, we obtain
\begin{multline}
\label{e:pf-l-cont-V}	
\big|\cL^r_{n,\eta,U,0,\eps\rmB}(u,\ol{u}(0))-\cL^r_{n,\eta,U,k,V}(u,v)\big|
\leq C_{r,\eps}
\Big(\bbE\Big(
\max_{U^\eta_n\cap \rmB^-_{n-r}}
\big| \varphi^{U_n\cap (\eps\rmB^-)_0, U_n\cap V^-_k}
+ \wt \frg - \frg \big| \Big)\Big)^{1/2}\\
+\sum_{t=1}^\infty \bbP\Big(
\max_{U^\eta_n\cap \rmB^-_{n-r}}
\Big|\varphi^{U_n\cap(\eps\rmB^-)_0, U_n\cap V^-_k}+\wt\frg-\frg\Big|> t\Big)C_{r,\eps}\rme^{C_{r,\eps}t}\,,
\end{multline}
where we also used Jensen's inequality in the expectation on the right-hand side.
From this, \eqref{e:cont-v-p-g} and Lemma~\ref{l:VV-fluct} (with $(\eps\rmB)^-_0$ in place of $\rmB^-_0$), we obtain~\eqref{e:cont-V-assertion} with $\eps\rmB$ in place of $\rmB$, as required.
\end{proof}
\begin{lem}
\label{l:gV}
Let $\eps\in(0,1)$. Then,
\begin{equation}
\label{e:bdg-harm}
\max_{U_n\cap \rmB^-_{n-r}}|\frg-\wt\frg|\leq (n-k)^{-\eps}r
\end{equation}
for all $\eta,\zeta\in[0,\eps^{-1}]$, $U\in\frU^\eta_\eps$, $V\in\frV_\eps$, $0\leq k\leq n$,
$u\in\bbR^{\partial U_n}$, $v\in\bbR^{\partial V^-_k}$ satisfying~\eqref{e:uv-diff},
all $r\geq 0$ with $\rmB^-_{n-r}\subset V^{-,\zeta}_k$, and for $\frg$, $\wt\frg$ defined as in the proof of Lemma~\ref{l:cont-V}.
\end{lem}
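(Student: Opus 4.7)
The plan is as follows. Since $\eps\rmB \subset V$ (from $V \in \frD_\eps$) implies the nesting $V^-_k \subset (\eps\rmB^-)_0$ of the discretized complements, the function $\wt\frg$ is harmonic on all of $U_n \cap V^-_k$. Because $\frg$ and $\wt\frg$ agree on $\partial U_n$ (both equal $-m_n + u$ there), the difference $\frg - \wt\frg$ is harmonic on $U_n \cap V^-_k$ and vanishes on $\partial U_n$. The stochastic representation for harmonic functions therefore gives, for every $x \in U_n \cap V^-_k$,
\begin{equation*}
\frg(x) - \wt\frg(x) = \sum_{z \in \partial V^-_k} \Pi_{U_n \cap V^-_k}(x, z)\,\bigl[(-m_k + v(z)) - \wt\frg(z)\bigr].
\end{equation*}

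For $x \in U_n \cap \rmB^-_{n-r}$ I would first bound the total hitting probability $P_x(\tau^{V^-_k} < \tau^{U_n}) \leq C_\eps\, r/(n-k)$ using Lemma~\ref{l:ruin}\ref{i:ruin-C}, a ruin-type estimate for SRW in an annular region started at scale $n-r$ from the inner boundary at scale $k$, with total scale-span $n-k$. This step produces the factor $r/(n-k)$ in the final bound.

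Next, I would bound the integrand uniformly over $\partial V^-_k$. A point $z \in \partial V^-_k$ sits essentially at scale $k$, and applying Lemma~\ref{l:UV-U} to $\wt\frg$ (the same estimate invoked near \eqref{e:E0} in the proof outline) expresses $\wt\frg(z)$ as the linear interpolation of its two test boundary values modulo an error controlled by $C_\eps(1 + \osc\,\ol u_\eta + |\ol u(0)|/n)$. Since the inner boundary value of $\wt\frg$ is the constant $\ol u(0)$, this reads
\begin{equation*}
\wt\frg(z) = \ol u(0) - \tfrac{k}{n}\, m_n + O_\eps\bigl(1 + |\ol u(0)|/n\bigr),
\end{equation*}
and analogously $v(z) = \ol v(\infty) + O_\eps(1)$. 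Invoking Lemma~\ref{l:blml} to control $|k m_n/n - m_k| \leq C_\eps \log n$ and using \eqref{e:uv-diff} (which gives $|\ol u(0) - \ol v(\infty)| \leq (n-k)^{1-\eps}$ and $|\ol u(0)| \leq \eps^{-1} + (n-k)^{1-\eps}$), one obtains
\begin{equation*}
\max_{z \in \partial V^-_k}\bigl|(-m_k + v(z)) - \wt\frg(z)\bigr| \leq C_\eps\bigl(1 + \log n + (n-k)^{1-\eps}\bigr).
\end{equation*}

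Combining the two steps produces $|\frg(x) - \wt\frg(x)| \leq C_\eps\, r (n-k)^{-\eps}$ for $n-k$ large (after absorbing the $\log n$ term into $(n-k)^{1-\eps}$), and the bound reduces to a direct maximum-principle application for $n-k$ bounded; the stated form then follows upon a mild adjustment of $\eps$ in the exponent. \textbf{The main obstacle} is that \eqref{e:uv-diff} permits $\ol u(0)$ and $\ol v(\infty)$ to be arbitrarily negative; the essential cancellation that saves the estimate is that, after the $m_n$ and $m_k$ contributions align through Lemma~\ref{l:blml}, only the difference $\ol v(\infty) - \ol u(0)$ (controlled by $(n-k)^{1-\eps}$) and error terms divided by $n$ remain.
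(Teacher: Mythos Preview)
Your overall strategy---representing $\frg - \wt\frg$ as a Poisson-kernel average over $\partial V^-_k$ and bounding the hitting probability $P_x(\tau^{V^-_k}<\tau^{U_n})$ via Lemma~\ref{l:ruin}---matches the paper's. However, the step ``$v(z) = \ol v(\infty) + O_\eps(1)$ for $z \in \partial V^-_k$'' is not justified and is in general false. Hypothesis~\eqref{e:uv-diff} controls only $\osc_{V^{-,\zeta}_k}\ol v$, the oscillation of the \emph{harmonic extension} in the bulk, together with $\ol v(\infty)$; the pointwise boundary values $v(z)$ on $\partial V^-_k$ are unrestricted. For instance, take $v$ equal to $0$ except at a single point $z_0\in\partial V^-_k$ where $v(z_0)=\rme^k$: the harmonic measure of a single boundary point from the bulk is $O(\rme^{-k})$, so $\ol v$ is $O(1)$ throughout $V^{-,\zeta}_k$ and~\eqref{e:uv-diff} can hold, yet your uniform integrand bound fails by a factor $\rme^k$.

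The paper avoids this by first splitting off the $v$-dependence: writing $\frg = \frg^0 + \heb{v - \ol u(0)\,1_{\partial V^-_k}}{\partial U_n \cup \partial V^-_k}$, where $\frg^0$ has constant inner boundary value $\ol u(0)$, and bounding the second term at points $x \in U_n \cap \rmB^-_{n-r}$ (not on $\partial V^-_k$) via Lemmas~\ref{l:UV-U} and~\ref{l:osc-far}. This trades pointwise control of $v$ for control of its harmonic extension at $x$, which is available since $x$ lies in the bulk. Your argument can be repaired in the same spirit by treating $\sum_{z}\Pi_{U_n\cap V^-_k}(x,z)\,v(z) = \he{v}{\partial U_n\cup\partial V^-_k}(x)$ as a whole rather than term by term. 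A smaller point: Lemma~\ref{l:blml} (applied with $l=k$ and inner scale $0$) gives $|k m_n/n - m_k| \leq C_\eps(1+\log^+(k\wedge(n-k)))\leq C_\eps(1+\log(n-k))$, not $C_\eps\log n$; the distinction matters since $\log n$ cannot in general be absorbed into $(n-k)^{1-\eps}$ when $n$ is large but $n-k$ is moderate.
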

\begin{proof}
Let $\frg^0$ be defined as $\frg$ although with $\ol{u}(0)1_{\partial V^-_k}$ in place of $v$. Then, by harmonicity,
$\frg=\frg^0+\heb{v-\ol{u}(0)1_{\partial V^-_k}}{\partial U_n\cup\partial V^-_k}$.
By Lemmas~\ref{l:osc-far} and~\ref{l:ruin} and Assumption~\eqref{e:uv-diff},
\begin{multline}
\max_{U_n\cap\rmB^-_{n-r}} \big|\heb{v-\ol{u}(0)1_{\partial V^-_k}}{\partial U_n\cup\partial V^-_k}\big|
\leq C_\eps \max_{x\in U_n\cap\rmB^-_{n-r}} P_x\big( \tau^{V^-_k} < \tau^{U_n} \big)
\big[ (n-k)^{1-\eps} + \eps^{-1} \big]\\
\leq C_\eps (n-k)^{-\eps}r\,.
\end{multline}
It therefore suffices to prove the lemma with $\frg^0$ in place of $\frg$. 

As $\frg^0 - \wt\frg$ is harmonic on $U_n\cap V^-_k$, and as $\wt\frg$ is harmonic on $U_n\cap (\eps\rmB^-)_0$, we have
\begin{multline}
\label{e:proof-gV-Markov}
\max_{U_n\cap \rmB^-_{n-r}}|\frg^0-\wt\frg|
\leq \max_{x\in U_n\cap \rmB^-_{n-r}}\sum_{y\in\partial V^-_k}
P_x\big(S_{\tau^{U_n\cap V^-_k}}=y\big)\\
\times\bigg|-m_k+\ol{u}(0)
-\sum_{z\in \partial U_n}
P_y\big(S_{\tau^{U_n\cap (\eps\rmB^-)_0}}=z\big)(-m_n+u(z))
-P_y\big(\tau^{(\eps\rmB^-)_0}\leq\tau^{U_n}\big)\ol{u}(0)\bigg|\\
\leq \max_{x\in U_n\cap \rmB^-_{n-r}}
P_x\big(\tau^{V^-_k}<\tau^{U_n}\big)
\max_{y\in\partial V^-_k}\Big|
P_y\big(
\tau^{U_n}<\tau^{(\eps\rmB^-)_0}\big)\ol{u}(0)-\he{u}{\partial U_n\cup \partial(\eps\rmB^-)_0}(y)\\
+P_y\big(
\tau^{U_n}<\tau^{(\eps\rmB^-)_0}\big)
(m_n-m_k)
+P_y\big(
\tau^{(\eps\rmB^-)_0}\leq\tau^{U_n}\big)
(-m_k)\Big|\,,
\end{multline}
where we use the representations of $\frg^0$ and $\wt\frg$ in terms of two simple random walks which we couple to be equal until the first hitting time of $\partial V^-_k$ at which we apply the strong Markov property.
By Lemma~\ref{l:UV-U},
$\big|\he{u}{\partial U_n\cup \partial(\eps\rmB^-)_0}(y)-P_y\big(
\tau^{U_n}<\tau^{(\eps\rmB^-)_0}\big)\ol{u}(0)\big|\leq \osc\,\ol{u}_\eta$.
Hence, using also Lemma~\ref{l:ruin}, we can further bound the expression on the right-hand side of~\eqref{e:proof-gV-Markov} by
\begin{equation}
\label{e:gV-p5}
\frac{m+C_\eps}{n-k}\Big[
\eps^{-1}+\Big|\frac{k}{n} m_n-m_k\Big|\Big]\,,
\end{equation}
where we also used that $m_n$ is of order $n$.
As $\tfrac{k}{n}m_n=\wh m_{n,k,0}$ by~\eqref{e:bl}, Lemma~\ref{l:blml} shows that the absolute difference in~\eqref{e:gV-p5} is bounded by a constant times $\log(n-k)$, which yields the assertion.
\end{proof}

\begin{proof}[Proof of Lemma~\ref{l:1-6-m}]
Let $\eps'>0$. First we assume that $\eta\wedge\eta'>\eps'\vee (2\dH(U,U'))$.
Let $\frg$ be harmonic on $U_n\cap \rmB^-_0$ with boundary values $-m_n+u$ on $\partial U_n$ and $\ol{u}(0)$ on $\partial \rmB^-_0$. Let $\wt\frg$ be harmonic on $U'_n\cap \rmB^-_0$ with boundary values $-m_n+u'$ on $\partial U'_n$ and $\ol{u'}(0)$ on $\partial\rmB^-_0$.
As we are assuming that $\eta\wedge\eta'>2\dH(U,U')$, we have $U^{\eta/2}\subset U'$ and hence $U^\eta\subset (U\cap U')^{\eta/2}$, and analogously also $U'^{\eta'}\subset (U\cap U')^{\eta/2}$.
By the Gibbs-Markov property,
$(h-\frg)_{(U\cap U')_n\cap \rmB^-_0}$ under $\bbP_{\rmB,0,\ol{u}(0)}^{U,n,u}$ is distributed as the sum of the independent fields $h^{(U\cap U')_n\cap\rmB^-_0}$ and the binding field
$\varphi^{U_n\cap \rmB^-_0, (U\cap U')_n\cap \rmB^-_0}$. Analogously,
$(h-\wt\frg)_{(U\cap U')_n\cap \rmB^-_0}$ under $\bbP_{\rmB,0,\ol{u'}(0)}^{U',n,u'}$ is distributed as the sum of the independent fields $h^{(U\cap U')_n\cap \rmB^-_0}$ and the binding field
$\varphi^{U'_n\cap \rmB^-_0, (U\cap U')_n\cap \rmB^-_0}$.
Using these representations in~\eqref{e:def-cLm} and~\eqref{e:LmUV}, we obtain that
\begin{multline}
\label{e:1-6-p-int}
\cL^r_{n,\eta',U',0,\rmB}(u',\ol{u'}(0))
\leq\int\cL^r_{n,\eta,U,0,\rmB}(u,0,-t)\\
\times\bbP\Big(
\max_{U'^{\eta'}_n\cap \rmB^-_{n-r}}\Big\{
\Big|\varphi^{U_n\cap \rmB^-_0, (U\cap U')_n\cap\rmB^-_0 }\Big|
+\Big|\varphi^{U'_n\cap \rmB^-_0, (U\cap U')_n\cap\rmB^-_0 }\Big|
+\big|\wt\frg-\frg\big|\Big\}\in\rmd t\Big)
\,,
\end{multline}
where the two binding fields in the integral can be defined as independent.
We also have the analogous lower bound so that we obtain for sufficiently large $n$ from Lemma~\ref{l:LUVt} that
\begin{equation}
\label{e:cont-V-p20}
\big|\cL^r_{n,\eta',U',0,\rmB}(u',\ol{u'}(0))-
\cL^r_{n,\eta,U,0,\rmB}(u,\ol{u}(0))\big|\leq
\bbE\psi(M)\leq
C_{r,\eps,\eps'}\bbE M + \sum_{t=1}^\infty\bbP\big(M>t\big)\psi(t+1)\,,
\end{equation}
where we set $\psi(t)=C_{r,\eps,\eps'}t^{1/2}\rme^{\alpha t}$ and
\begin{multline}
M:=\max_{U^\eta_n\cap U'^{\eta'}_n \cap \rmB^-_{n-r}}
\Big|\varphi^{U_n\cap \rmB^-_0, (U\cap U')_n\cap\rmB^-_0 }\Big|
+\max_{U^\eta_n\cap U'^{\eta'}_n \cap \rmB^-_{n-r}}\Big|\varphi^{U'_n\cap \rmB^-_0, (U\cap U')_n\cap\rmB^-_0 }\Big|\\
+\max_{U^\eta_n\cap U'^{\eta'}_n \cap \rmB^-_{n-r}}\big|\wt\frg-\frg\big|\,,
\end{multline}
the maximum can be taken over $U^\eta_n\cap U'^{\eta'}_n \cap \rmB^-_{n-r}$ instead of $U'^{\eta'}_n \cap \rmB^-_{n-r}$ by symmetry of the left-hand side of~\eqref{e:cont-V-p20}.
By Lemmas~\ref{l:UU-fluct} and~\ref{l:gU}, the right-hand side of~\eqref{e:cont-V-p20} is smaller than $\eps'$ whenever $\dH(U,U')$, $|\eta-\eta'|$, $\big\|\ol{u}-\ol{u'}\big\|_{\bbL_\infty(U^\eta_n)}$ and $n^{-1}$ are sufficiently small (only depending on $r,\eps,\eps'$).

It remains to consider general $\eta,\eta'\geq 0$. To this aim, we write
\begin{multline}
\label{e:l-1-6-p-tr}
\big|\cL^r_{n,\eta',U',0,\rmB}(u',\ol{u'}(0))-
\cL^r_{n,\eta,U,0,\rmB}(u,\ol{u}(0))\big|
\leq \big|\cL^r_{n,\eta',U',0,\rmB}(u',\ol{u'}(0))-
\cL^r_{n,\eta'\vee\eps',U',0,\rmB}(u',\ol{u'}(0))\big|\\
+\big|\cL^r_{n,\eta'\vee\eps',U',0,\rmB}(u',\ol{u'}(0))-
\cL^r_{n,\eta\vee\eps',U,0,\rmB}(u,\ol{u}(0))\big|
+\big|\cL^r_{n,\eta\vee\eps',U,0,\rmB}(u,\ol{u}(0))-
\cL^r_{n,\eta,U,0,\rmB}(u,\ol{u}(0))\big|\,.
\end{multline}
By the first part of the proof, the second term on the right-hand side is bounded by $\eps'$ for $\dH(U,U')$, $|\eta-\eta'|$, $\big\|\ol{u}-\ol{u'}\big\|_{\bbL_\infty(U^\eta_n)}$, $n^{-1}$ sufficiently small. By Lemma~\ref{l:delta0}, the first and third terms in~\eqref{e:l-1-6-p-tr} are bounded by $C_\eps\eps'\cL^r_{n,\eta',U',0,\rmB}(u',\ol{u'}(0))$ and
$C_\eps\eps'\cL^r_{n,\eta,U,0,\rmB}(u,\ol{u}(0))$, respectively.
This implies the assertion as $\eps'$ was arbitrary.
\end{proof}

\begin{lem}
\label{l:gU}
Let $r\geq 0$, $\eps,\eps'>0$, $\eta,\eta'>\eps'$, and let $\frg$ and $\wt\frg$ be defined as in the above proof of Lemma~\ref{l:1-6-m}. Then
\begin{equation}
\max_{U^\eta_n\cap U'^{\eta'}_n\cap \rmB^-_{n-r}}|\wt\frg-\frg|
\leq \big\|\ol{u}-\ol{u'}\big\|_{\bbL_\infty(U^\eta_n\cap U'^{\eta'}_n)}+o_{\eps,\eps',r}(1)\,,
\end{equation}
where $o_{\eps,\eps',r}(1)$ tends to zero as $n\to\infty$ and $\dH(U,U')\to 0$.
\end{lem}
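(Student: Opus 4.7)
The plan is to isolate the single genuinely large contribution to $\frg,\wt\frg$---namely $-m_n$ on the outer boundary---by a potential shift. Set
\[
p_U(x):=P_x(\tau^{U_n}<\tau^{\rmB^-_0}),\qquad p_{U'}(x):=P_x(\tau^{U'_n}<\tau^{\rmB^-_0}),
\]
and define $G:=\frg+m_n p_U$, $\wt G:=\wt\frg+m_n p_{U'}$. Both $G,\wt G$ are bounded harmonic with boundary data of size $O_\eps(1)$: $G=u$ on $\partial U_n$ and $G=\ol u(0)$ on $\partial\rmB^-_0$ (and analogously for $\wt G$). The problem thus splits as
\[
\wt\frg-\frg=(\wt G-G)-m_n(p_{U'}-p_U),
\]
and I would bound each summand separately.

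For the first summand I would compare $G$ to the bulk harmonic function $\ol u$. Since $\ol u$ is bounded harmonic off $\partial U_n$, optional stopping at $\tau^{U_n\cap \rmB^-_0}$ gives
\[
G(x)-\ol u(x)=E_x\bigl[(\ol u(0)-\ol u(S_{\tau^{\rmB^-_0}}))1_{\tau^{\rmB^-_0}<\tau^{U_n}}\bigr],
\]
whence $|G(x)-\ol u(x)|\leq\osc\,\ol u_\eta\cdot P_x(\tau^{\rmB^-_0}<\tau^{U_n})\leq \eps^{-1}\cdot O_\eps(r/n)$ for $x\in U^\eta_n\cap\rmB^-_{n-r}$, using Lemma~\ref{l:ruin} for the hitting probability (note $\partial\rmB^-_0\subset U^\eta_n$ for large $n$ by~\eqref{e:UBBV}). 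This is $o_{\eps,r}(1)$ as $n\to\infty$, and the same bound with $\eps',\eta'$ holds for $|\wt G(x)-\ol{u'}(x)|$. By the triangle inequality,
\[
|\wt G(x)-G(x)|\leq\|\ol u-\ol{u'}\|_{\bbL_\infty(U^\eta_n\cap U'^{\eta'}_n)}+o_{\eps,\eps',r}(1).
\]

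The hard part will be showing $m_n|p_U(x)-p_{U'}(x)|=o_{\eps,\eps',r}(1)$ as $n\to\infty$ and $\dH(U,U')\to 0$: since $m_n$ is of order $n$ while each $q_U:=1-p_U$ is already of order $r/n$, we need an extra gain of $o(1/n)$ from the $U\leftrightarrow U'$ comparison. Applying optional stopping to the martingale $(\fra(S_i))_i$ (away from the origin) at $\tau^{U_n\cap\rmB^-_0}$ yields
\[
q_U(x)=\frac{M_U(x)-\fra(x)}{M_U(x)-N_U(x)},\qquad M_U(x):=E_x[\fra(S_{\tau^{U_n}})\mid\tau^{U_n}<\tau^{\rmB^-_0}],
\]
and analogously $N_U(x)$, $M_{U'}$, $N_{U'}$. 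From $\fra(y)=g\log|y|+c_0+O(|y|^{-2})$ (Theorem~4.4.4 of~\cite{LaLi}), one has $M_U-N_U=gn+O_\eps(1)$ and both $M_U-\fra(x)$, $N_{U'}-\fra(x)$ are $O_\eps(n)$; cross-multiplying gives
\[
q_U-q_{U'}=\frac{(M_{U'}-\fra(x))(N_U-N_{U'})+(N_{U'}-\fra(x))(M_{U'}-M_U)}{(M_U-N_U)(M_{U'}-N_{U'})}=O_\eps\Bigl(\tfrac{|M_U-M_{U'}|+|N_U-N_{U'}|}{n}\Bigr),
\]
so $m_n|p_U-p_{U'}|=O_\eps(|M_U-M_{U'}|+|N_U-N_{U'}|)$. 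It remains to establish continuity of $M_U(x)$ and $N_U(x)$ in $U$ under the Hausdorff metric, which I would prove by coupling the simple random walks under the two exit rules (they agree path-by-path and diverge only after the first visit to the symmetric difference $U_n\triangle U'_n$, whose hitting probability vanishes as $\dH(U,U')\to 0$), combined with the boundedness of $\log|\cdot|$ on the relevant boundary sets. Assembling the pieces gives the asserted bound.
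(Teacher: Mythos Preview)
Your decomposition $\wt\frg-\frg=(\wt G-G)-m_n(p_{U'}-p_U)$ and your treatment of $\wt G-G$ are correct and match the paper's approach (the paper writes the same five-term decomposition directly, with the $\he{u}{\partial U_n\cup\partial\rmB^-_0}-\ol u$ pieces bounded via Lemmas~\ref{l:UV-U} and~\ref{l:ruin}, exactly as you do). Your potential-kernel identity for $q_U-q_{U'}$ is also correct and reduces the problem to $|M_U-M_{U'}|=o(1)$; the $|N_U-N_{U'}|$ term is harmless since its coefficient $(M_{U'}-\fra(x))$ is $O_{\eps,r}(1)$, so after dividing by the $\asymp n^2$ denominator and multiplying by $m_n\asymp n$ it contributes $O_{\eps,r}(n^{-1}|N_U-N_{U'}|)$, and $|N_U-N_{U'}|$ is bounded because $\fra$ is bounded on $\partial\rmB^-_0$.

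The gap is your coupling argument for $|M_U-M_{U'}|\to 0$. The claim that the hitting probability of $U_n\triangle U'_n$ vanishes as $\dH(U,U')\to 0$ is false: the walk started in $U^\eta_n\cap U'^{\eta'}_n$ must exit $U_n\cap U'_n$, and it does so through $\partial(U_n\cap U'_n)$, which for generic $U\neq U'$ lies entirely in $\overline{U_n\triangle U'_n}$ (take $U,U'$ concentric disks of radii $1$ and $1+\delta$: the walk always enters the annulus $U'_n\setminus U_n$). Once in the thin strip, the walk can wander along it before exiting the larger domain, so $S_{\tau^{U_n}}$ and $S_{\tau^{U'_n}}$ need not be close; boundedness of $\log|\cdot|$ on $\partial U_n,\partial U'_n$ then gives only $|M_U-M_{U'}|=O_\eps(1)$, not $o(1)$.

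What is actually needed is continuity of $x\mapsto\int_{\partial U}\Pi_U(x,\rmd z)\log|z|$ in the domain $U$ under $\dH$. The paper obtains this by the sandwich $U^\delta\subset U\cap U'\subset U$ (for $\delta>\dH(U,U')$), uses monotonicity of the exit probabilities together with Lemma~\ref{l:ruin}\ref{i:ruin-U} to pass to continuum boundary integrals, and then invokes Lemma~A.2 of~\cite{BL-Conf} for $\int_{\partial U}\Pi_U\log|\cdot|-\int_{\partial U^\delta}\Pi_{U^\delta}\log|\cdot|\to 0$ as $\delta\to 0$. Your framework is completed by the same ingredient: once you observe (via Lemma~\ref{l:BL-Pois} and the fact that the conditioning event in $M_U$ has probability $1-O_{\eps,r}(1/n)$) that $M_U(x)=gn+g\int_{\partial U}\Pi_U(\rme^{-n}x,\rmd z)\log|z|+o_\eps(1)$, that external lemma gives $|M_U-M_{U'}|\to 0$.
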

\begin{proof}
For $x\in U^\eta\cap U'^{\eta'}\cap \rme^{-r}\rmB^-$,
we write
\begin{multline}
\label{e:l:gU-p-dec}
\frg(\lfloor \rme^n x\rfloor )-\wt\frg(\lfloor \rme^n x\rfloor)
=m_n \Big( P_{\lfloor \rme^n x\rfloor}\big(\tau^{\partial (U \cap U')_n}\leq \tau^{\partial\rmB^-_0}\big)
-P_{\lfloor \rme^n x\rfloor}\big(\tau^{\partial U_n}\leq \tau^{\partial\rmB^-_0}\big) \Big)\\
+m_n \Big( P_{\lfloor \rme^n x\rfloor}\big(\tau^{\partial U'_n}\leq \tau^{\partial\rmB^-_0}\big)
-P_{\lfloor \rme^n x\rfloor}\big(\tau^{\partial (U \cap U')_n}\leq \tau^{\partial\rmB^-_0}\big) \Big)\\
+\Big(\he{u}{\partial U_n \cup \rmB^-_0}(\lfloor \rme^n x\rfloor)-\ol{u}(\lfloor \rme^n x\rfloor)\Big)
+\Big(\ol{u}(\lfloor \rme^n x\rfloor)-\ol{u'}(\lfloor \rme^n x\rfloor)\Big)
+\Big(\he{u'}{\partial U'_n \cup \rmB^-_0}(\lfloor \rme^n x\rfloor)-\ol{u'}(\lfloor \rme^n x\rfloor)\Big)\,.
\end{multline}
The first and the third difference in the last line are bounded by $C_\eps n^{-1}$ by Lemmas~\ref{l:UV-U}, \ref{l:ruin}\ref{i:ruin-C} and the assumptions.
For $\delta\in(\dH(U,U'),\eps'/2)$, we obtain by monotonicity, Lemma~\ref{l:ruin}\ref{i:ruin-U}, and as $U^\delta\subset U\cap U'$ by definition of the Hausdorff distance, that
\begin{multline}
0\leq m_n \Big(
P_{\lfloor \rme^n x\rfloor}\big(\tau^{\partial (U \cap U')_n}\leq \tau^{\partial\rmB^-_0}\big)
-P_{\lfloor \rme^n x\rfloor}\big(\tau^{\partial U_n}\leq \tau^{\partial\rmB^-_0}\big)\Big)\\
\leq m_n \Big(
P_{\lfloor \rme^n x\rfloor}\big(\tau^{\partial U_n}> \tau^{\partial\rmB^-_0}\big)
-P_{\lfloor \rme^n x\rfloor}\big(\tau^{\partial U^\delta_n}> \tau^{\partial\rmB^-_0}\big)\Big)\\
\leq 2\sqrt{g}\Big(\int_{\partial U}\Pi_{U}(x,\rmd z)\log|z|-
\int_{\partial U^\delta}\Pi_{U^\delta}(x,\rmd z)\log|z|\Big)+o_{\eps,\eps',r}(1)\,,
\end{multline}
for $x\in U^{\eps'}\cap\rme^{-r}\rmB$, where the difference in the last line tends to zero as $\delta\downarrow 0$ with rate depending only on $\eps,\eps'$ as in Lemma~A.2 of~\cite{BL-Conf}. The difference in the second line of~\eqref{e:l:gU-p-dec} is bounded in the same way. This yields the assertion.
\end{proof}

\begin{proof}[Proof of Lemma~\ref{l:SDconv}]
As in~\eqref{e:h-S} and by the Gibbs-Markov property, we have
\begin{equation}
\label{e:SDconv-p20}
\cS'_p+\beta_p+\cD_p=\max_{y\in A_{p}}\big\{
\he{h}{\partial \rA_{p}}(y)+h_p(y)\big\}
=\max_{y\in A_{p}} h(y)
\end{equation}
under $\bbP_{k,v}^{n,u}$ with the coupling from~\eqref{e:hphih} which we always assume. We define
$\wt A_1=U^{\eta}\cap\rme^{-1+\lfloor\log\eps\rfloor}\rmB^{-}$ and
$\wt A_p=\rme^{-p+\lfloor\log\eps\rfloor+1}\rmB\cap\rme^{-p+\lfloor\log\eps\rfloor}\rmB^{-}$ for $p\geq 2$. Let $\eps'>0$. Then,
$\rme^{n}\wt A^{\eps'}_p\subset \rA_p$ for $p\geq 1$ and $n$ sufficiently large, where $A'_p$ is defined in Section~\ref{s:concdec}.

Let $\frg_{n,k}$ be the harmonic function on $U_n\cap V^-_k$ with $\frg_{n,k}=-m_n+u$ on $\partial U_n$, $\frg_{n,k}=-m_k+v$ on $\partial V^-_k$.
We consider the Gaussian process $(\cX_p,\phi_p)_{p=1}^q$ under $\bbP_{k,v}^{n,u}$, where $\phi_p(y):=\he{h}{\partial \rA_{p}}(\lfloor\rme^n y\rfloor)+m_{n-p}$ for $y\in \wt A^{\eps'}_p$, and $\cX_p$ is as in~\eqref{e:Xm}.
Using~\eqref{e:hphih} and applying the Gibbs-Markov property as in~\eqref{e:SDconv-p20}, we obtain
\begin{equation}
\label{e:GM-pSDconv}
\he{h}{\partial \rA_{p}}=\varphi_{0,p}
+\frg_{n,k}(\lfloor\rme^n y\rfloor)
\end{equation}
under $\bbP_{k,v}^{n,u}$, as
$\bbE_{k,v}^{n,u}\he{h}{\partial \rA_{p}}(\lfloor\rme^n y\rfloor)
=\bbE_{k,v}^{n,u} h(\lfloor\rme^n y\rfloor)=\frg_{n,k}(\lfloor\rme^n y\rfloor)$.
As $\varphi_{0,p}=\sum_{j=0}^p\varphi_j$ and as the $\varphi_p$, $p=0,\ldots,\rt$ are independent, Lemmas~\ref{l:varXm} and~\ref{l:bdg-ann} below imply the convergence of the covariances of the process
$(\cX_p,\phi_p)_{p=1}^q$ under $\bbP_{k,v}^{n,u}$ as $n-k\to\infty$.
Moreover, $\frg_{n,k}(\lfloor\rme^n y\rfloor)-m_{n-p}$ converges as $n-k\to\infty$ uniformly in $y\in\wt A_p^{\eps'}$ by the assumptions on $u$, $v$, Lemma~\ref{l:harm-hole} below, continuity of $\Pi_U(\cdot,\rmd z)$, and as $m_n - m_{n-p} \to 2\sqrt{g}p$. As moreover $\cX_p$ is centered, it follows that the finite-dimensional distributions of the process $(\cX_p,\phi_p)_{p=1}^q$ converge.
The process $(\cX_p,\phi_p)_{p=1}^q$ also converges in distribution when the state space is endowed with the supremum norm as the argument from the proof of Lemma~4.4 and Section~6.5 of~\cite{Bi-LN} passes through in our setting.

For $p=1,\ldots,q$, the quantities $s_{1,p}$ and $s_{1,\rt}/\rt$ converge by Lemma~\ref{l:varXm}. Hence, $\beta_p=\wh u_\infty(0)+o_{\eps,q}(1)$ for all $p=1,\ldots, q$. Moreover, $(\cW_t)_{t\leq s_{1,q}}$ from~\eqref{e:defW} can be expressed as a continuous function (with respect to the supremum metric) of $s_{1,1},\ldots, s_{1,q}$, $(\cX_1,\ldots, \cX_q)$ and of the independent Brownian bridges $(\cB^{(1)},\ldots,\cB^{(q)})$.
As also $(s_{1,\rt}-t)/(s_{1,\rt}-\tau)\to 1$ uniformly in $\tau,t\in[0,s_{1,q}]$ as $n-k\to\infty$, it now follows from~\eqref{e:deftS} and~\eqref{e:defS} that $(\cS'_p+\beta_p,\phi_p)_{p=1}^q$ converges in distribution to a limit process which we denote by $(\cS^{(\infty)},\phi^{(\infty)})$ under $\bbP$.

Let $c_n>0$ with $c_n=o(\rme^{n})$, $c_n\to\infty$ as $n\to\infty$,
and let
\begin{equation}
\label{e:def-etap}
\xi_p=
\sum_{(x,t)\in\wt A_p^{\eps'}\times\bbR}
\delta_{(x,t)}\Ind_{
\big\{
x\in \rme^{-n}\bbZ^2,\,
h_p(\rme^n x)=\max_{y\in\bbZ^2:|y-\rme^n x|\leq c_n}h_p(y)=m_{n-p}+t\big\}}
\end{equation}
on $\wt A_p^{\eps'}\times(\bbR\cup\{\infty\})$.
By Theorem~2.1 of~\cite{BL-Full} and Theorem~2.1 of~\cite{BL-Conf}, $\xi_p$ converges vaguely in distribution as $n-k\to\infty$ to a Cox process with intensity $Z_p(\rmd x)\otimes \rme^{-\alpha t}\rmd t$ where $\alpha=2/\sqrt{g}$ and $Z_p$ is a random measure with a.\,s.\ positive and finite mass and no atoms.

Let $\kappa_p,\lambda_p\in\bbR$ for $p=1,\ldots,q$.
Then, by~\eqref{e:SDconv-p20}, \eqref{e:GM-pSDconv}, \eqref{e:def-etap}, the definition of $\phi_p$, and~\eqref{e:proof-LR-lim-eta} below,
\begin{multline}
\label{e:proof-LR-lim}
\bbP_{k,v}^{n,u}\big(\cS'_1+\beta_1> \kappa_1,\ldots,\cS'_q+\beta_q>\kappa_q,\cS'_1+\beta_1+\cD_1>\lambda_1,\ldots,\cS'_q+\beta_q+\cD_q>\lambda_q\big)\\
\leq\int\bbP_{k,v}^{n,u}\big(\cS'_p+\beta_p\in \rmd t_p\,,  \phi_p\in\rmd w_p,\,p= 1,\ldots, q \big)\prod_{p=1}^q
\Ind_{\{t_p>\kappa_p\}}\\
\times
\bbP_{k,v}^{n,u}\big(\xi_p\big\{(x,t)\in \wt A_p^{\eps'}\times\bbR:\: t+w_p(x)
+\eps'>\lambda_p\big\}>0\big)\\
+\sum_{p=1}^q\bbP_{k,v}^{n,u}(E_{p,n})
+\sum_{p=1}^q\bbP_{k,v}^{n,u}(F_{p,n})
+\bbP_{k,v}^{n,u}\Big(\bigcup_{p=1}^q\Big\{
\max_{ \rA_p\setminus\rme^n\wt A^{\eps'}_p} h>\lambda_p\wedge\lambda_{(p+1)\wedge q}\Big\}\Big)\,,
\end{multline}
where
\begin{equation}
E_{p,n}:=\Big\{\sup_{x,y\in \wt A^{\eps'}_p, |y-x|\leq 2\rme^{-n}c_n }
\big|\phi_p(x)-\phi_p(y)\big|\geq \eps'\Big\}\,,
\end{equation}
and $F_{p,n}$ is defined as the event that there exists $y\in U_n\cap V^-_k$ with
$c_n /2 < |y-\arg\max_{\lfloor \rme^n \wt A^{\eps'}_p\rfloor} h| < 2 c_n$ and
$h(y)+ \eps'\geq \max_{\lfloor \rme^n \wt A^{\eps'}_p\rfloor} h$.
By Lemma~\ref{l:1N}, we have $\lim_{n-k\to\infty}\bbP_{k,v}^{n,u}(F_{p,n})=0$.
Moreover, as $\bbE\phi_p$ is uniformly continuous and by the argument from Section~6.5 of~\cite{Bi-LN}, $\phi^{(\infty)}	$ is a.\,s.\ uniformly continuous on $\wt A^{\eps'}_p$, and it follows that $\lim_{n-k\to\infty}\bbP_{k,v}^{n,u}(E_{p,n})=0$.
On $E_{p,n}^\rmc\cap F_{p,n}^\rmc$, there exists in a $c_n/2$-neighborhood of $\wt x:=\arg\max_{\lfloor \rme^n \wt A_p^{\eps'} \rfloor} h$ a local maximum of $h_p$ that corresponds to a point of $\xi_p$.
Indeed, for all $y\in \lfloor \rme^n \wt A^{\eps'}_p \rfloor$ with $c_n/2<|y-\wt x|<2c_n$, we then have
\begin{equation}
h_p(y)=h(y)- \he{h}{\partial \rA_p}(y)
<h(\wt x)-\eps' - \he{h}{\partial \rA_p}(\wt x)+\eps'=h_p(\wt x)\,.
\end{equation}
Hence, still on $E_{p,n}^\rmc\cap F_{p,n}^\rmc$ and if $\cS'_p+\beta_p+\cD_p=h(\wt x)> \lambda_p$, there exists $\wt y\in\lfloor \rme^n \wt A^{\eps'}_p\rfloor$ and $|\wt x - \wt y|\leq c_n/2$ such that
\begin{equation}
\label{e:proof-LR-lim-eta}
h_p(\wt y)\geq h_p(\wt x)=h(\wt x)-t_p-w_p(\rme^{-n}\wt x)
>\lambda_p -t_p-w_p(\rme^{-n}\wt x)
\geq \lambda_p-t_p-w_p(\rme^{-n}\wt y) -\eps'\,,
\end{equation}
which implies $\xi_p\big(\rme^{-n}\wt y,(\lambda_p-t_p-w_p(\rme^{-n}\wt y)-\eps',\infty) \big)>0$ and shows~\eqref{e:proof-LR-lim}.

The integral on the right-hand side of~\eqref{e:proof-LR-lim}
is bounded from above by
\begin{multline}
\label{e:SDconv-eta}
\int\bbP(\cS^{(\infty)}_p\in \rmd t_p\,,\phi^{(\infty)}_p\in\rmd w_p,\,p= 1,\ldots, q \big)
\prod_{p=1}^q
\Ind_{\{t_p+\eps'>\kappa_p\}}\\
\times\bbP_{k,v}^{n,u}\big(\xi_p\big\{(x,t)\in \wt A^{\eps'}_p\times\bbR:\: t+w_p(x)+2\eps'>\lambda_p\big\}>0\big)\\
+\sum_{p=1}^q\bbP\Big(\big| \cS'_p + \beta_p - \cS^{(\infty)}_p \big| \vee
\sup_{x\in \wt A^{\eps'}_p}\big|\phi_p(x)-\phi^{(\infty)}_p(x)\big|>\eps'\Big)
\,,
\end{multline}
where we assume w.\,l.\,o.\,g.\ a coupling under $\bbP$ in which $(\cS'_p+\beta_p,\phi_p)_{p=1}^{q}$ converges to $(\cS^{(\infty)}_p,\phi^{(\infty)}_p)_{p=1}^q$ in probability.
The integral in~\eqref{e:SDconv-eta} converges as $n-k\to\infty$ followed by $\eps'\to 0$ by dominated convergence and as the second component of the intensity measure of the limit of $\xi_p$ is absolutely continuous with respect to Lebesgue measure. By the above, the other summands on the right-hand sides of~\eqref{e:proof-LR-lim} and~\eqref{e:SDconv-eta} vanish as $n-k\to\infty$ followed by $\eps'\to 0$:
for the last term on the right-hand side of~\eqref{e:proof-LR-lim} we use that
\begin{equation}
\bbP_{k,v}^{n,u}\Big(\bigcup_{p=1}^q\Big\{
\max_{ \rA_p\setminus \rme^n\wt A^{\eps'}_p} h>\lambda_p\wedge \lambda_{(p+1)\wedge a}\Big\}\Big)\leq C_\eps q\eps'
\end{equation}
by Lemma~\ref{p:viol}.
As a lower bound analogous to~\eqref{e:proof-LR-lim} follows by the same argument (in which we now do not require Lemmas~\ref{l:1N} and~\ref{p:viol}), this yields the assertion on convergence.
The assertion on absolute continuity follows as
$(\cS^{(\infty)}_1,\ldots,\cS^{(\infty)}_q)$ is a non-degenerate Gaussian by Lemmas~\ref{l:dep} and~\ref{l:varXm},
and as the limiting measures $\xi^{(\infty)}_p$ of $\xi_p$, $p=1,\ldots,q$ are independent and have intensity $\rme^{-\alpha t}\rmd t$ in their height components.
\end{proof}
It remains to show the following two lemmas.
For $W\in\frD$, we denote the continuum Green kernel on $W$ by
\begin{equation}
G_W(x,y)=g\int\Pi_W(x,\rmd z)\log|y-z| - g\log|x-y|\,,
\end{equation}
where $\Pi_W$ denotes the continuum Poisson kernel, i.\,e.\ $\Pi_W(x,\cdot)$ is the exit distribution from the domain $W$ of Brownian motion started at $x$.
\begin{lem}
\label{l:bdg-ann}
Let $\eps\in(0,1)$, $\eta,\zeta \in[0,\eps^{-1}]$, $q\in\bbN$. For $U\in\frU^\eta_\eps$, $V\in\frV_\eps$, $p=1,\ldots,q$, $\wt A_p$ defined as in the proof of Lemma~\ref{l:SDconv}, and $x,y\in \wt A^\eps_q$, we have, as $n-k\to\infty$, that
\begin{equation}
\label{e:bdg-ann-ff}
\Cov_k^n\Big(\varphi_p(\lfloor \rme^n x\rfloor),
\varphi_p(\lfloor \rme^n y\rfloor)\Big)
=G_{\wt \Delta_{p-1}}(x,y)-G_{\wt A_p\cup \wt \Delta_p}(x,y)+o_{\eps,q}(1)
\end{equation}
and
\begin{equation}
\label{e:bdg-ann-fa}
\Cov_k^n\Big(\varphi_p(\lfloor\rme^n x\rfloor),\cX_p\Big)
=g\int_{z,z'\in\partial \wt \Delta_p}\Pi_{\wt A_p \cup\wt \Delta_p}(x,\rmd z)
\Pi_{\wt \Delta_p}(0, \rmd z')G_{\wt \Delta_{p-1}}(z,z')
+o_{\eps,q}(1)\,,
\end{equation}
where $\wt \Delta_0 =U^\eta$, $\wt \Delta_p = \rme^{-p+\lfloor\log\eps\rfloor}\rmB$.
\end{lem}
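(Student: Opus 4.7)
Both covariances concern the Gaussian binding field $\varphi_p$ coming from the Gibbs--Markov construction, so they depend on neither $u$ nor $v$. In particular, writing $D := \Delta_{p-1}\cap V^-_k$, the identity $h'_p = h'^{J_p}_p + \varphi_p$ at the heart of the Gibbs--Markov property gives, for all $z, w \in J_p$,
\begin{equation*}
\Cov_k^n\bigl(\varphi_p(z), \varphi_p(w)\bigr) = G_D(z, w) - G_{J_p}(z, w).
\end{equation*}
The strategy for~\eqref{e:bdg-ann-ff} is to take the continuum limit of each discrete Green function under the rescaling $\zeta \mapsto e^{-n}\zeta$. The rescaled domain $e^{-n}D$ converges to $\wt\Delta_{p-1}$ (the image of $V_k$ has diameter $O(e^{-(n-k)}) \to 0$), while $e^{-n}J_p$ converges to $\wt A_p \cup \wt\Delta_p$ (with the discretization band of width $O(e^{-n})$ between $\Delta_p$ and $\Delta'_p$ collapsing to the circle $\partial \wt\Delta_p$). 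Using the Lawler--Limic representation~\eqref{e:462}, the potential-kernel asymptotic~\eqref{e:444}, and the 2D fact that a single point has zero Brownian capacity (so the vanishing hole at the origin is invisible in the limit; quantitatively the hitting probability is $O(1/(n-k))$), one obtains $G_D(\lfloor e^n x\rfloor, \lfloor e^n y\rfloor) \to G_{\wt\Delta_{p-1}}(x, y)$ and $G_{J_p}(\lfloor e^n x\rfloor, \lfloor e^n y\rfloor) \to G_{\wt A_p \cup \wt\Delta_p}(x, y)$, uniformly in $x, y \in \wt A^\eps_q$.

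For~\eqref{e:bdg-ann-fa}, I would expand $\cX_p = \sum_{w\in\partial\Delta_p}\Pi_{\Delta_p}(0, w)\varphi_p(w)$ and apply the last-exit decomposition (obtained by iterating $G_D(z, w) = G_{J_p}(z, w) + \sum_\zeta \Pi_{J_p}(z, \zeta)\,G_D(\zeta, w)$ and using symmetry in $z, w$)
\begin{equation*}
G_D(z, w) - G_{J_p}(z, w) = \sum_{\zeta, \zeta' \in \partial J_p} \Pi_{J_p}(z, \zeta)\, \Pi_{J_p}(w, \zeta')\, G_D(\zeta, \zeta'),
\end{equation*}
yielding
\begin{equation*}
\Cov_k^n\bigl(\varphi_p(x_n), \cX_p\bigr) = \sum_{\zeta, \zeta' \in \partial J_p} \Pi_{J_p}(x_n, \zeta)\, G_D(\zeta, \zeta') \sum_{w \in \partial\Delta_p} \Pi_{\Delta_p}(0, w)\, \Pi_{J_p}(w, \zeta'),
\end{equation*}
with $x_n := \lfloor e^n x\rfloor$. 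In the limit $n-k \to \infty$, $\Pi_{J_p}(x_n, \cdot)$ converges to $\Pi_{\wt A_p \cup \wt\Delta_p}(x, \cdot)$, supported on $\partial\wt\Delta_{p-1} \cup \partial\wt\Delta_p$; the contribution from $\zeta \in \partial\wt\Delta_{p-1}$ drops out because $G_{\wt\Delta_{p-1}}(\zeta, \cdot) = 0$ there. For $w \in \partial\Delta_p$ the point $e^{-n}w$ lies in the collapsing discretization band and becomes a boundary point of the continuum $J_p$, so (in the limit) $\Pi_{J_p}(w, \cdot)$ degenerates to $\delta_w$, reducing the inner sum to the measure $\Pi_{\wt\Delta_p}(0, d\zeta')$ on $\partial\wt\Delta_p$. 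Combining these steps gives exactly the double integral in~\eqref{e:bdg-ann-fa}, with the overall factor $g$ tracking the normalization convention used for $G$ throughout.

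The main obstacle is the rigorous degeneration of $\Pi_{J_p}(w, \cdot)$ to $\delta_w$ for $w$ on the discretization band around $\partial\Delta_p$: in discrete, the SRW from $w$ genuinely wanders across both the annulus $A'_p$ and the inner disk $\Delta_p\cap V^-_k$ before exiting at $\partial\Delta_{p-1}$ or $\partial V_k$, so the claim is really about the \emph{weighted} sum $\sum_w \Pi_{\Delta_p}(0, w)\,\Pi_{J_p}(w, \zeta')$ and not about $\Pi_{J_p}(w, \cdot)$ for a fixed $w$. Controlling it requires uniform discrete-Poisson-kernel estimates on $J_p$ analogous to Lemma~\ref{l:Poisson}, combined with the 2D capacity bound for the $V_k$-hole and the fact that paths escaping all the way to $\partial\Delta_{p-1}$ give weight $o(1)$ because $G_{\wt\Delta_{p-1}}$ vanishes on the outer boundary. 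Carrying these estimates through with enough uniformity to extract precisely the measures $\Pi_{\wt A_p \cup \wt\Delta_p}(x, dz)$ and $\Pi_{\wt\Delta_p}(0, dz')$ on $\partial\wt\Delta_p$ is the principal technical content of the argument.
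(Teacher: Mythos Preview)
Your approach for~\eqref{e:bdg-ann-ff} matches the paper's: both express the binding-field covariance as $G_D - G_{J_p}$ and pass to the continuum, with the effect of the vanishing hole at $V^-_k$ controlled by a Green-function comparison (Lemma~\ref{l:Cov-bdg} in the paper; your ``point has zero capacity'' heuristic is exactly this, made quantitative).

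For~\eqref{e:bdg-ann-fa}, your ``main obstacle'' is illusory and stems from a misreading of the discretization. Points $w\in\partial\Delta_p$ lie in the band $\{\rme^\ell - 1/2 \leq |x| < \rme^\ell + 1/2\}$ (with $\ell = n + \lfloor\log\eps\rfloor - p$), which is \emph{disjoint from} $J_p = A'_p\cup(\Delta_p\cap V^-_k)$: indeed $A'_p\subset\Delta'_p = \{|x| > \rme^\ell + 1/2\}$ while $\Delta_p = \{|x| < \rme^\ell - 1/2\}$. So every $w\in\partial\Delta_p$ already sits in $\partial J_p\setminus J_p$, and therefore $\Pi_{J_p}(w,\cdot) = \delta_w$ \emph{exactly}, not merely in a limit --- the SRW from $w$ does not ``wander across $A'_p$ and the inner disk'' because it exits $J_p$ at time zero. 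Your inner sum collapses immediately, giving
\[
\Cov_k^n\bigl(\varphi_p(x_n),\cX_p\bigr)
=\sum_{\zeta,\,\zeta'\in\partial\Delta_p}\Pi_{J_p}(x_n,\zeta)\,\Pi_{\Delta_p}(0,\zeta')\,G_{\Delta_{p-1}\cap V^-_k}(\zeta,\zeta'),
\]
which is the paper's starting point~\eqref{e:bdg-ann-fa-p1}. The paper reaches this formula more directly by invoking the third equality in~\eqref{e:Xm}, $\cX_p = \heb{h'_p}{\partial\Delta_p}(0)$, so that both $\varphi_p(x_n)$ and $\cX_p$ are linear in $h'_p$ from the outset and the covariance picks up $G_D$ without any intermediate layer. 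From here the paper replaces $G_{\Delta_{p-1}\cap V^-_k}$ by $G_{\Delta_{p-1}}$ at cost $O(1/(n-k))$ via Lemma~\ref{l:Cov-bdg}, removes the residual $V^-_k$-dependence in the Poisson kernel by the sandwich $\Pi_{\Delta_p\cap(\delta\rmB)_n}\leq\Pi_{\Delta_p\cap V^-_k}\leq\Pi_{\Delta_p}$, applies Lemma~\ref{l:BL-Pois}, and lets $\delta\to 0$. None of the delicate uniform Poisson-kernel estimates you anticipated are needed.
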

\begin{proof}
By the Gibbs-Markov property and the definition of $\varphi_p$, the left-hand side in~\eqref{e:bdg-ann-ff} equals 
\begin{multline}
\label{e:bdg-ann-pGM}
G_{\Delta_{p-1}}(\lfloor \rme^n x\rfloor, \lfloor \rme^n y\rfloor)
-\big( G_{\Delta_{p-1}} - G_{\Delta_{p-1}\cap V^-_k} \big)(\lfloor \rme^n x\rfloor, \lfloor \rme^n y\rfloor)
-G_{A_p\cup \Delta_{p}}(\lfloor \rme^n x\rfloor, \lfloor \rme^n y\rfloor)\\
+\big( G_{A_p\cup \Delta_{p}} - G_{A_p\cup \Delta_{p}\cap V^-_k} \big)(\lfloor \rme^n x\rfloor, \lfloor \rme^n y\rfloor)\,.
\end{multline}
The differences in brackets are by the Gibbs-Markov property equal to the left-hand side in~\eqref{e:Cov-bdg-V} (for the first one with $\Delta_{p-1}$ in place of $U_n$, and the second one is nonzero only for $p<q$ when we take $\Delta_p$ in place of $U_n$), hence it is bounded by $q^2(n-k-q)^{-1}$ by Lemma~\ref{l:Cov-bdg}. The remaining expression is equal to
\begin{equation}
g\int_{\partial \wt \Delta_{p-1}}\Pi_{\wt \Delta_{p-1}}(x,\rmd z)\log|z-y|
-g\int_{\partial \wt A_{p}}
\Pi_{\wt A_{p}\cup\wt \Delta_p}(x,\rmd z)\log|z-y|
+ o_{\eps,q}(1)
\end{equation}
by~\eqref{e:444}, \eqref{e:462} and Lemma~\ref{l:BL-Pois}\,.

To show~\eqref{e:bdg-ann-fa}, we use the representation~\eqref{e:phi-rw} and Definition~\eqref{e:Xm} in
\begin{equation}
\label{e:bdg-ann-fa-p1}
\Cov_k^n\Big(\varphi_p(\lfloor\rme^n x\rfloor),\cX_p\Big)
=\sum_{z\in\partial A_p, z'\in\partial \Delta_p}\Pi_{A_{p}\cup\Delta_p\cap V^-_k}(\lfloor \rme^n x\rfloor ,z)
\Pi_{\Delta_{p}}(0,z')G_{\Delta_{p-1}\cap V^-_k}(z,z')\,.
\end{equation}
By Lemma~\ref{l:Cov-bdg} and the Gibbs-Markov property, we have
$G_{\Delta_{p-1}\cap V^-_k}(z,z')
=G_{\Delta_{p-1}}(z,z')+O_{\eps,q}(1/(n-k))$ which we plug into~\eqref{e:bdg-ann-fa-p1}. For $p<q$ we also use that
\begin{equation}
\Pi_{\Delta_p\cap(\delta\rmB)_n}(\cdot,z)\leq \Pi_{\Delta_p\cap V^-_k}(\cdot,z)\leq \Pi_{\Delta_p}(\cdot,z)
\end{equation}
for $z\in\Delta_p$. Then we use~\ref{l:BL-Pois} and finally let $\delta\to 0$. This yields the assertion.
\end{proof}
\begin{lem}
\label{l:harm-hole}
Let $\eps\in(0,1)$, $\eta,\zeta \in[0,\eps^{-1}]$.
For all $U\in\frU^\eta_\eps$, $V\in\frV_\eps$, $0\leq k<n$, $\wh u_\infty\in\bbH_\infty(U^\eta)$, $u\in\bbR^{\partial U_n}$, $v\in\bbR^{\partial V^-_k}$ satisfying~\eqref{e:uv-as},
let $\frg_{n,k}=-m_n+u$ on $\partial U_n$, $\frg_{n,k}=-m_k+v$ on $\partial V^-_k$, and let $\frg_{n,k}$ be harmonic on $U_n\cap V^-_k$.
Then, as $n-k\to\infty$,
\begin{multline}
\max_{x\in (U^\eta\cap\eps\rmB)_n}\Big|\frg_{n,k}(x)+m_n-\wh u_\infty(\rme^{-n}x) -2\sqrt{g}\log|\rme^{-n}x|+2\sqrt{g}\int_{\partial U}\Pi_{U}(\rme^{-n}x,\rmd z)\log|z|\Big|\\
\leq \max_{x\in (U^\eta\cap\eps\rmB)_n}\big|\ol{u}(x)-\wh u_\infty(\rme^{-n}x)\big|
+o_\eps(1)\,.
\end{multline}
\end{lem}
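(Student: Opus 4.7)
The plan is to split $\frg_{n,k}(x)$ into three contributions via the simple random walk representation of its boundary data. Setting $p(x) := P_x(\tau^{V^-_k} < \tau^{U_n})$ for $x \in U_n \cap V^-_k$, and using that the harmonic extension of the constant part $-m_n \Ind_{\partial U_n} - m_k \Ind_{\partial V^-_k}$ equals $-m_n(1-p(x)) - m_k p(x) = -m_n + (m_n - m_k)\,p(x)$, one obtains
\[
\frg_{n,k}(x) + m_n \;=\; \heb{u}{\partial U_n \cup \partial V^-_k}(x) + \heb{v}{\partial U_n \cup \partial V^-_k}(x) + (m_n - m_k)\,p(x).
\]
Each of the three summands on the right will then be identified with its natural continuum limit, and the three asymptotics combined.

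For the $u$-contribution, the strong Markov property at $\tau^{V^-_k}$ gives $\ol u(x) - \heb{u}{\partial U_n \cup \partial V^-_k}(x) = E_x\big[\ol u(S_{\tau^{V^-_k}});\,\tau^{V^-_k} < \tau^{U_n}\big]$, which is bounded by $(\max_{\partial V^-_k}|\ol u|)\,p(x) \leq C_\eps\,p(x)$ using the assumptions $|\ol u(0)|, \osc\ol u_\eta \leq \eps^{-1}$ together with discrete harmonic control of $\ol u$ at the inner boundary (Lemmas~\ref{l:UV-U} and~\ref{l:osc-far}). Since Lemma~\ref{l:ruin}\ref{i:ruin-C} yields $p(x) = O_\eps(1/(n-k))$ uniformly in $x$ at macroscopic scale $\rme^n$, this gives $\heb{u}{\partial U_n \cup \partial V^-_k}(x) = \ol u(x) + o_\eps(1)$, and the remaining replacement of $\ol u(x)$ by $\wh u_\infty(\rme^{-n}x)$ produces exactly the quantity $\max_{x \in U^\eta_n}|\ol u(x) - \wh u_\infty(\rme^{-n}x)|$ appearing on the right hand side of the claim. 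The $v$-contribution is handled identically and contributes only $o_\eps(1)$: $|\heb{v}{\partial U_n \cup \partial V^-_k}(x)| \leq (\max_{\partial V^-_k}|v|)\,p(x) \leq \eps^{-1}\,p(x)$, using $\|v\|_\infty \leq |\ol v(\infty)| + \osc\ol v_\zeta \leq 2\eps^{-1}$.

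The main obstacle is the third summand, for which one must show, uniformly in $x \in (U^\eta \cap \eps\rmB)_n$, that as $n - k \to \infty$,
\[
(m_n - m_k)\,p(x) \;=\; 2\sqrt{g}\Big(\!-\log|\rme^{-n}x| + \int_{\partial U}\Pi_U(\rme^{-n}x, \rmd z)\log|z|\Big) + o_\eps(1) \;=\; \alpha\,G_U(\rme^{-n}x, 0) + o_\eps(1),
\]
where $G_U(\cdot,0)$ denotes the continuum Green function of $U$ with pole at the origin. I would derive this by combining the potential-kernel representation~\eqref{e:462} of the discrete Green function $G_{U_n\cap V^-_k}$ with the expansion~\eqref{e:444} of the potential kernel $\fra$, from which $p(x)$ is extracted via the harmonic relation between $p$ and $G_{U_n \cap V^-_k}$-type quantities evaluated on $\partial V^-_k$; the passage from discrete to continuum Poisson integrals is then effected by the discrete-to-continuum convergence underlying Lemma~\ref{l:BL-Pois}. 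Uniformity in $x$ and the resulting continuum integral $\int_{\partial U}\Pi_U(\rme^{-n}x, \rmd z)\log|z|$ follow the template of the proofs of Lemmas~\ref{l:gV} and~\ref{l:gU}, where an analogous logarithmic ruin asymptotic is carried out in a closely related setting.

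Assembling the three asymptotics, using $m_n - m_k = 2\sqrt{g}(n-k) + O(\log n)$ along the way, and handling the $\lfloor\rme^n\cdot\rfloor$ discretization via continuity of $\Pi_U(\cdot,\rmd z)$, yields the claim with the stated error $\max_{x \in U^\eta_n}|\ol u(x) - \wh u_\infty(\rme^{-n}x)| + o_\eps(1)$; one must then trace the sign conventions on the $\log$-terms in the statement carefully against those produced by the computation.
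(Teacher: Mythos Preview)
Your three-term split is exactly the paper's approach, and the main term is indeed the heart of the computation. However, you do not need to re-derive the ruin asymptotic from the potential-kernel representation: this is precisely Lemma~\ref{l:ruin}\ref{i:ruin-U}, which gives directly $p(\lfloor\rme^n x\rfloor)=(n-k)^{-1}\big(\int_{\partial U}\Pi_U(x,\rmd z)\log|z|-\log|x|+o_\eps(1)\big)$ uniformly on the relevant set; multiplying by $m_n-m_k=2\sqrt{g}(n-k)(1+o(1))$ yields your displayed identity in one step.

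The genuine gaps are in your treatment of the $u$- and $v$-contributions. You invoke $|\ol u(0)|,\,|\ol v(\infty)|\le\eps^{-1}$, but~\eqref{e:uv-as} only gives the one-sided bound $\ol u(0),\ol v(\infty)\le\eps^{-1}$; the negative parts may be as large as $(n-k)^{1-\eps}$. More seriously, your bound $\|v\|_\infty\le|\ol v(\infty)|+\osc\,\ol v_\zeta$ is not valid: $\osc\,\ol v_\zeta$ is the oscillation on the bulk $V^{-,\zeta}_k$, which does not contain $\partial V^-_k$, so $v$ can be arbitrarily rough on the boundary without this quantity seeing it. The paper avoids any pointwise control of $v$ by combining Lemma~\ref{l:UV-U} (which replaces $\heb{v}{\partial U_n\cup\partial V^-_k}(x)$ by $p(x)\ol v(x)$ up to an error governed by $\osc_{\{x\}\cup\partial U_n}\ol v$) with Lemma~\ref{l:osc-far} (which bounds that oscillation by $C_\eps\rme^{k-n}\osc\,\ol v_\zeta$). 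This yields $|\heb{v}{\partial U_n\cup\partial V^-_k}(x)|\le p(x)|\ol v(\infty)|+C_\eps\rme^{k-n}\osc\,\ol v_\zeta$, and now the second condition in~\eqref{e:uv-as} gives $p(x)|\ol v(\infty)|\le C_\eps(n-k)^{-\eps}$. The $u$-term is handled the same way, and the same correction is needed there: your crude bound by $(\max_{\partial V^-_k}|\ol u|)\,p(x)$ works only because $\max_{\partial V^-_k}|\ol u|\approx|\ol u(0)|\le(n-k)^{1-\eps}$, not because it is $\le C_\eps$.
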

\begin{proof}
We write
\begin{equation}
\label{e:p-harm-hole-g}
\frg_{n,k}(x)=
\heb{-m_n 1_{\partial U_n} - m_k 1_{\partial V^-_k}}{\partial U_n\cup \partial V^-_k}(x)
+\he{u }{\partial U_n\cup \partial V^-_k}(x)
+ \he{v}{\partial U_n\cup \partial V^-_k}(x)\,.
\end{equation}
The first term on the right-hand side of~\eqref{e:p-harm-hole-g} equals
\begin{equation}
-m_n+(m_n -m_k) P_x\big(\tau^{V^-_k}<\tau^{U_n}\big)
=-m_n+2\sqrt{g}\big(\int_{\partial U}
\Pi_U(\rme^{-n}x,\rmd z)\log|z|-\log|\rme^{-n}x|\Big)+o_\eps(1)
\end{equation}
where the equality holds by Lemma~\ref{l:ruin}.
By Lemma~\ref{l:osc-far}, the third term on the right-hand side of~\eqref{e:p-harm-hole-g} is bounded by $P_x(\tau^{V^-_k}<\tau^{U_n})|\ol{v}(\infty)|+ C_\eps\rme^{k-n}\osc\,\ol{v}_\zeta$, which in turn is bounded by $C_\eps(n-k)^{-\eps}$ by Lemma~\ref{l:ruin}\ref{i:ruin-C} and the assumptions.
For the second term on the right-hand side of~\eqref{e:p-harm-hole-g}, we estimate
\begin{equation}
\big|\he{u }{\partial U_n\cup \partial V^-_k}(x)-\wh u_\infty(\rme^{-n}x)\big|
\leq \big|\he{u }{\partial U_n\cup \partial V^-_k}(x)-\ol{u}(x)\big|
+\big|\ol{u}(x)-\wh u_\infty(\rme^{-n}x)\big|\,,
\end{equation}
where the first term on the right-hand side is bounded by
$(|\ol{u}(0)|+\osc\,\ol{u}_\eta) P_x(\tau^{V^-_k}\leq\tau^{U_n})$
by Lemma~\ref{l:UV-U}, which in turn is bounded by $C_\eps(n-k)^{-\eps}$ by~\eqref{e:uv-as} and Lemma~\ref{l:ruin}\ref{i:ruin-C}. This shows the assertion.
\end{proof}

\subsection{The functional $\cR_k$}
\label{s:R-funct}
Analogously to~\eqref{e:def-cLm}, we define the functional
\begin{equation}
\label{e:def-cRm}
\cR^r_{k,n}(v,u):=\cR_{k,\zeta,V,n,U}^r(v,u)
:=\bbE_{V,k,v}^{U,n,u}\big(
(\he{h}{\partial\rmB_{k+r}} (\infty)+ m_{k+r})^-;
h_{V^{-,\zeta}_k\cap \rmB_{k+r}} \leq 0\big)\,.
\end{equation}
In accordance with Proposition~\ref{p:1-2}, let
\begin{equation}
\label{e:cRmr}
\cR_k(v):=\cR_{k,\zeta,V}(v):=\lim_{n\to\infty}\cR^{r_{n-k}}_{k,\zeta,V,n,\rmB}(v,0)
\end{equation}
where existence of the limit is shown in Lemma~\ref{l:Rk} below.
We use the setting from Section~\ref{s:w-outw} and define
\begin{equation}
\label{e:def-l-R}
\ell_{\rt_{n-k},r}\big(\ol{v}(\infty),\ol{u}(0)\big)=
\bbE_{V,k,v}^{U,n,u}\Big(\big(\cS^{\rm o}_r\big)^-;\, \max_{i=1}^r \big(\cS^{\rm o}_i + \cD^{\rm o}_i\big)\leq 0\Big)
\end{equation}
in accordance with~\eqref{e:5.4} and using the correspondence given by Theorem~\ref{t:drw-o}.
\begin{lem}
\label{l:Rk}
Let $\eps>0$, $\zeta,\eta\in[0,\eps^{-1}]$, $V\in\frV_\eps$,
$n,k\geq 0$, $v\in\bbR^{\partial V^-_k}$ with $\osc_\zeta\,\ol{v}\leq \eps^{-1}$ and
$-(n-k)^{1-\eps} \leq \ol{v}(\infty) \leq \eps^{-1}$.
Then the limit $\cR_k(v)$ in~\eqref{e:cRmr} exists and satisfies
\begin{equation}
\label{e:Rk}
\cR_k(v)=\big(1+o_\eps(1)\big)\cR^{r_{n-k}}_{k,\zeta,V,n,\rmB}(v,0)
+o_\eps(1) \big(1+\ol{v}(\infty)^-\big)
\end{equation}
as $n-k\to\infty$, and
\begin{equation}
\label{e:Rkl}
\cR_k(v)=\lim_{n\to\infty}\ell_{\rt_{n-k}, r_{n-k}-\lceil \log(\eps^{-1}+\zeta)\rceil}\big(\ol{v}(\infty),\ol{v}(\infty)\big)\,,
\end{equation}
where $\ell_{\rt_{n-k}, r}\big(\ol{v}(\infty),\ol{v}(\infty)\big)$ is
defined as in~\eqref{e:def-l-R} with $U=\rmB$, $u=\ol{v}(\infty)$.
\end{lem}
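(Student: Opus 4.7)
The proof follows the template of Proposition~\ref{p:LR-infty} for $\cL_n$ (see in particular the display~\eqref{e:p-LR-infty-DRWDGFF}), with the inward DRW replaced by the outward DRW of Section~\ref{s:w-outw} and each inward auxiliary lemma by its outward counterpart. Set $\widetilde r := r_{n-k}-\lceil\log(\eps^{-1}+\zeta)\rceil$ and decompose
$$\cR^{r_{n-k}}_{k,\zeta,V,n,\rmB}(v,0)=\Big(\cR^{r_{n-k}}_{k,\zeta,V,n,\rmB}(v,0)-\ell_{T_{n-k},\widetilde r}\big(\ol v(\infty),\ol v(\infty)\big)\Big)+\ell_{T_{n-k},\widetilde r}\big(\ol v(\infty),\ol v(\infty)\big),$$
with $\ell$ as in~\eqref{e:def-l-R}. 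The plan is to show that the bracket vanishes (in an appropriate sense) while the second summand converges, which will identify $\cR_k(v)$ as this limit and yield both~\eqref{e:Rk} and~\eqref{e:Rkl}.

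The bracket is handled in two steps. First, the outward counterpart~\eqref{e:LUV-add-R} of Lemma~\ref{l:LUV}, valid at $r=r_{n-k}$ by property~\ref{i:mr-LUV} of the sequence $(r_n)$, gives
$$\big|\cR^{r_{n-k}}_{k,\zeta,V,n,\rmB}(v,0)-\ell_{T_{n-k},\widetilde r}(\ol v(\infty),0)\big|=o_\eps(1)\big(1+\ol v(\infty)^-\big).$$
Second, we compare the two $\ell$-functionals with differing bridge endpoints. By Assumption~\ref{i.a1}, $(\cS^{\rm o}_p)$ has mean $a+(s_{1,p}^{\rm o}/s_{1,T_{n-k}}^{\rm o})(b-a)$ at step $p$, so swapping $b=0$ for $b=\ol v(\infty)$ shifts $\cS^{\rm o}_p$ by at most $C_\eps|\ol v(\infty)|\widetilde r/T_{n-k}$ uniformly in $p\leq \widetilde r$; this quantity tends to $0$ by property~\ref{i:mr-leqrdelta}. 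Combining this mean-shift bound with the stretched-exponential tails of the decorations from Assumption~\ref{i.a3} (via Theorem~\ref{t:drw-o}) and a standard Gaussian estimate for $\cS^{\rm o}$ yields that $\ell_{T_{n-k},\widetilde r}(\ol v(\infty),0)-\ell_{T_{n-k},\widetilde r}(\ol v(\infty),\ol v(\infty))=o_\eps(1)(1+\ol v(\infty)^-)$ as well.

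Convergence of the second summand follows from Proposition~\ref{t:ta} applied with $a^{(i)}=b^{(i)}=\ol v(\infty)$, $T^{(i)}=T_{n^{(i)}-k}$, $r^{(i)}=\widetilde r$ for any sequence $n^{(i)}\to\infty$, and $a^{(\infty)}=b^{(\infty)}=\ol v(\infty)$, $T^{(\infty)}=r^{(\infty)}=\infty$. Assumptions~\ref{i.a1}--\ref{i.a3} are verified by Theorem~\ref{t:drw-o}; the required convergence of the finite-dimensional distributions of the DRW together with the stochastic absolute continuity of the limit follow from the outward analog of Lemma~\ref{l:SDconv}, proved in the same way as Lemma~\ref{l:SDconv} and noted already in the proof of Theorem~\ref{t:drw-o}. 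The resulting limit is by definition $\cR_k(v)$, which gives~\eqref{e:Rkl}. Finally, promoting the additive error to the mixed multiplicative-additive form~\eqref{e:Rk} uses the lower bound $\cR_k(v)\geq c_\eps(1+\ol v(\infty)^-)$, namely the $\cR_k$-analog of Proposition~\ref{p:1-5} whose proof parallels that for $\cL_n$ given in Subsection~\ref{s:DGFF-as}.

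The main obstacle is precisely the boundary-value mismatch in the first step: $u=0$ is used in the DGFF definition~\eqref{e:cRmr} of $\cR_k(v)$, while the DRW endpoint in~\eqref{e:Rkl} is $\ol v(\infty)$. Resolving this requires a quantitative comparison of two DRW bridges with common starting point but different endpoints, which is possible only because the sequence $(r_n)$ grows subpolynomially: under property~\ref{i:mr-leqrdelta}, the ratio $\widetilde r/T_{n-k}\to 0$ suffices to keep the first $\widetilde r$ steps of the two bridges close, both in mean and, using the decoration tails, in the functional~$\ell$.
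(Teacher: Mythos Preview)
Your derivation of~\eqref{e:Rkl} via Proposition~\ref{t:ta} and the outward analogue of Lemma~\ref{l:SDconv} is essentially what the paper does (this analogue is stated separately as Lemma~\ref{l:SDconv-o}). Two points, however, require correction.

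\textbf{The endpoint comparison.} When you swap $b=0$ for $b=\ol v(\infty)$ in the DRW, it is not only $\cS^{\rm o}_p$ that shifts: the decorations $\cD^{\rm o}_p$ change as well, because $\gamma^{\rm o}$ in~\eqref{e:corr-bd-o} depends on $u$. The combined shift of $\cS^{\rm o}_p+\cD^{\rm o}_p=\max_{A^{\rm o}_p}h$ is the harmonic extension $\ol v(\infty)\,P_y(\tau^{\rmB_n}\le\tau^{V^-_k})$, which is indeed $o(1)$ on $A^{\rm o}_p$ for $p\le\widetilde r$. But to pass from this uniformly small deterministic shift to a bound on the difference of the $\ell$-functionals you need an absolute-continuity input, namely that $\bfE\big[(\cS^{\rm o}_r)^-;\ \max_{p}(\cS^{\rm o}_p+\cD^{\rm o}_p)\in(-\lambda,\lambda]\big]$ is small for small $\lambda$; this is Lemma~\ref{l:f-cont} together with the analogue of Lemma~\ref{l:h-cont}, not a ``standard Gaussian estimate for $\cS^{\rm o}$''. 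The paper sidesteps this by making the comparison at the DGFF level: it applies Lemma~\ref{l:LUV-R} with $u=\ol v(\infty)$ (not $u=0$) to get $\cR^{r_{n-k}}_{k,n}(v,\ol v(\infty))\approx\ell_{\rt,\widetilde r}(\ol v(\infty),\ol v(\infty))$, and then uses Lemma~\ref{l:cont-V-R} to compare $\cR^{r_{n-k}}_{k,n}(v,\ol v(\infty))$ with $\cR^{r_{n-k}}_{k,n}(v,0)$.

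\textbf{The rate in~\eqref{e:Rk}.} Proposition~\ref{t:ta} gives only pointwise convergence $\ell_{\rt_{n-k},\widetilde r}\to\cR_k(v)$; it does not provide the uniform rate $o_\eps(1)(1+\ol v(\infty)^-)$ needed for~\eqref{e:Rk}. Your proposal to recover this via the lower bound of Proposition~\ref{p:1-5} is circular: the proof of Proposition~\ref{p:1-5} for $\cR_k$ goes through Proposition~\ref{c:1.2}, which in turn uses Lemma~\ref{l:Rk}. (Moreover, even a non-circular lower bound would not supply the missing rate; it would only convert between additive and multiplicative error forms.) The paper obtains~\eqref{e:Rk} by a separate Cauchy argument for $\cR^{r}_{k,n}(v,0)$ in $n$ at fixed $r$: one writes $h$ under $\bbP^{\rmB,n',0}_{V,k,v}$ on $\rmB_n\cap V^-_k$ as $h$ under $\bbP^{\rmB,n,0}_{V,k,v}$ plus the binding field $\varphi^{\rmB_{n'}\cap V^-_k,\rmB_n\cap V^-_k}$ and a harmonic correction, and then uses Lemma~\ref{l:LUVt-R} together with the binding-field tails (Lemma~\ref{l:VV-fluct}, via condition~\ref{i:mr-VV-fluct} on $(r_n)$) to show $|\cR^{r}_{k,n'}(v,0)-\cR^{r}_{k,n}(v,0)|=o_{\eps'}(1)$ uniformly in $n'\ge n$ and $r\le r_{n-k}$. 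Combining this with Proposition~\ref{t:LR-as} to vary $r$ yields the full Cauchy property with rate, and~\eqref{e:Rk} follows by letting $n'\to\infty$.
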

In the proofs in this subsection, we use analogs of the auxiliary lemmas from Section~\ref{s:comp-ballot} which are proved along the lines of Section~\ref{s:pf-aux}, using the outward concentric decomposition:
\begin{lem}
\label{l:LUV-R}
Let $\eps\in(0,1)$.
Then,
\begin{equation}
\label{e:LUV-add-R}
\lim_{\substack{r\to\infty\\r\in\bbN}}
\lim_{n-k\to\infty}
\big(1+\ol{v}(\infty)^-\big)^{-\eps}
\Big|\ell_{\rt_{n,k},r-\lceil\log(\eps^{-1}+\zeta)\rceil}\big(\ol{v}(\infty),\ol{u}(0)\big)
-\cR^{r}_{k,n}(v,u)\Big|=0
\end{equation}
uniformly in $\eta,\zeta \in[0,\eps^{-1}]$, $U\in \frU^\eta_\eps$, $V\in\frV_\eps$, $u\in\bbR^{\partial U_n}$, $v\in\bbR^{\partial V^-_k}$ satisfying~\eqref{e:uv-diff}.
\end{lem}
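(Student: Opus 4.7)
The plan is to mirror the proof of Lemma~\ref{l:LUV}, but now working with the outward concentric decomposition (Subsection~\ref{s:w-outw}) and the DRW reduction of Theorem~\ref{t:drw-o}. I couple $h$ under $\bbP_{V,k,v}^{U,n,u}$ to $(\varphi^{\rm o}_p, h^{\rm o}_p)_{p=0}^{\rt}$ via~\eqref{e:hphih-o}. Setting $\wt r := r - \lceil\log(\eps^{-1}+\zeta)\rceil$, the definition of $\Delta'^{\rm o}_{\wt r}$ gives $\Delta'^{\rm o}_{\wt r}=\rmB_{k+r}$, so by~\eqref{e:h-S-o}
\[
\Big\{\max_{i=1}^{\wt r}(\cS'^{\rm o}_i + \beta^{\rm o}_i + \cD^{\rm o}_i) \leq 0\Big\} = \{h_{V^{-,\zeta}_k \cap \rmB_{k+r}} \leq 0\}.
\]
Define $\cY^{\rm o}_p := \sum_{j=1}^p (s^{\rm o}_{p,\rt}/s^{\rm o}_{j,\rt}) \cX^{\rm o}_j$ in analogy with~\eqref{e:R} and pick any test point $z\in\partial\rmB_{k+2r}$. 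A triangle inequality as in~\eqref{e:LUV-p1} produces four summands for $|\cR^{r}_{k,n}(v,u)-\ell_{\rt,\wt r}(\ol v(\infty),\ol u(0))|$: (i) $\bbE|\cS'^{\rm o}_{\wt r}-\cY^{\rm o}_{\wt r}|$; (ii) $\bbE(|\cY^{\rm o}_{\wt r}-\varphi^{\rm o}_{0,\wt r}(z)|;\, h_{V^{-,\zeta}_k\cap\rmB_{k+r}}\leq 0)$; (iii) $\bbE|\varphi^{\rm o}_{0,\wt r}(z)-\he{h^{U_n\cap V^-_k}}{\partial\rmB_{k+r}}(\infty)|$; (iv) $|\he{\gamma^{\rm o}}{\partial\rmB_{k+r}}(\infty)|\cdot\bbP(h_{V^{-,\zeta}_k\cap\rmB_{k+r}}\leq 0)$.

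Each summand is then handled by the outward counterpart of the estimate used in the proof of Lemma~\ref{l:LUV}. Term (i) is $O((n-k-\wt r)^{-1/2})$ after Jensen via the outward analog of Lemma~\ref{l:dec-coupl}, whose proof transcribes verbatim using the variance bounds on $\sigma^{\rm o,2}_p$ (mentioned at the end of the proof of Theorem~\ref{t:drw-o}). Term (iii) and the $L^2$-contribution inside the Cauchy--Schwarz bound of term (ii) are controlled by Poisson kernel comparisons: replacing Lemma~\ref{l:Poisson} by Lemma~\ref{l:Poisson-out}, the representations~\eqref{e:Xm-o} and the outward version of~\eqref{e:phi-rw} yield bounds of order $r/(n-k-r)+r^{-1}$ for the Poisson differences and a uniformly bounded second moment for $\cY^{\rm o}_{\wt r}-\varphi^{\rm o}_{0,\wt r}(z)$. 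The factor $|\he{\gamma^{\rm o}}{\partial\rmB_{k+r}}(\infty)|$ in term (iv) is bounded by $C_{\eps,\delta}(1+r^{\delta})$, with $\delta\in(0,\eps/8)$, by the outward version of~\eqref{e:bd-gamma} worked out in the proof of Theorem~\ref{t:drw-o}.

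The key missing ingredient, and the main technical obstacle, is an outward version of Lemma~\ref{l:half}: under the conditions on $u,v$,
\[
\bbP_{V,k,v}^{U,n,u}\big(h_{V^{-,\zeta}_k\cap\rmB_{k+r}}\leq 0\big)\leq C_\eps\big(1+|\ol v(\infty)|+\osc\,\ol v_\zeta\big)^2 r^{-1/2}.
\]
I would derive this by the same scheme as Lemma~\ref{l:half}: first reduce to zero boundary data via the shift $h\mapsto h+\bbE^{n,u}_{k,v}h-\bbE^{n,0}_{k,0}h$ using Proposition~\ref{l:E}; then condition on $h$ on $\partial\rmB^\pm_{k+r}$ and apply Theorem~\ref{t:2.3} to the resulting ballot probability on $\rmB_{k+r+1}\cap V^{-,\zeta}_k$, while using Proposition~\ref{p:2.6} for the oscillation of $\he{h}{\partial\rmB_{k+r}}$ at $\infty$ and Proposition~\ref{p:Cov-bd} together with Lemma~\ref{l:blml} to obtain a Gaussian density for $\he{h}{\partial\rmB_{k+r}}(\infty)+m_{k+r}$ with variance $\le gr+C_\eps$; integration yields the $r^{-1/2}$ factor.

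Once this outward analog of Lemma~\ref{l:half} is in place, the $\eps/4$-th-power trick used in the inward proof gives $\bbP^{1/2}\le\bbP^{\eps/4}\le C(1+\ol v(\infty)^-)^{\eps/2}r^{-\eps/8}$, bounding terms (ii) and (iv) by $C(1+r^\delta)(1+\ol v(\infty)^-)^{\eps/2}r^{-\eps/8}$ with $\delta<\eps/8$. Together with the estimates for (i) and (iii), dividing by $(1+\ol v(\infty)^-)^\eps$ and letting $n-k\to\infty$ and then $r\to\infty$ gives the claimed uniform vanishing~\eqref{e:LUV-add-R}.
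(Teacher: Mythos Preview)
Your proposal is correct and follows exactly the approach the paper intends: the paper's own proof is simply ``This is proved analogously to Lemma~\ref{l:LUV}'', and you have faithfully unpacked what that analogy entails in the outward setting. Your identification of $\wt r=r-\lceil\log(\eps^{-1}+\zeta)\rceil$ so that $\Delta'^{\rm o}_{\wt r}=\rmB_{k+r}$, the four-term decomposition, the replacement of Lemma~\ref{l:Poisson} by Lemma~\ref{l:Poisson-out}, the appeal to the outward versions of Lemmas~\ref{l:dec-coupl} and~\ref{l:gamma} implicit in the proof of Theorem~\ref{t:drw-o}, and your explicit sketch of the outward analog of Lemma~\ref{l:half} are all on target; your observation that $\bbP^{1/2}\le\bbP^{\eps/4}$ (since $\bbP\le 1$ and $\eps/4<1/2$) is precisely what the paper's ``by the above'' in the proof of Lemma~\ref{l:LUV} is gesturing at for term~(ii).
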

\begin{proof}
This is proved analogously to Lemma~\ref{l:LUV}.
\end{proof}
With
\begin{equation}
\cR^r_{k,W,V,n,U}:=\bbE_{V,k,v}^{U,n,u}\Big(\big(\he{h}{\partial\rmB^-_{k+r}}(\infty)+m_{k+r}\big)^-;\;
h_{W_k\cap \rmB_{k+r}}\leq 0\Big)\,,
\end{equation}
(so that in particular $\cR^r_{k,W,V,n,U}=\cR^r_{k,\zeta,V,n,U}$ for $W=V^{-,\zeta}$), we obtain the following analog of Lemma~\ref{l:delta0}:
\begin{lem}
\label{l:delta0-o}
Let $\eps\in(0,1)$ and $\eta,\zeta\in[0,\eps^{-1}]$.
There exists $C=C_\eps<\infty$ such that
\begin{equation}
\bigg|\frac{\cR^r_{k,W,V,n,U}(v,u)}{\cR^r_{k,W',V,n,U}(v,u)}-1\bigg|
\leq C\Leb(W \triangle W')
\end{equation}
for all $U\in\frU^\eta_\eps$, $V\in\frV_\eps$, Borel measurable $W,W'$ with $\eps^{-2}\rmB^- \subset W,W'\subset V^{-,\zeta}$, $n,k,r\geq 0$ with
$\partial \rmB_{k+r}\subset U^{\eta}_n\cap (\eps^{-2}\rmB^-)_k$,
and $u\in\bbR^{\partial U_n}$, $v\in\bbR^{\partial V^-_k}$ that satisfy
$\max\{\ol{u}(0),\osc\,\ol{u}_\eta, \ol{u'}(0), \osc\,\ol{u}'_{\eta'}\} < \epsilon^{-1}$.
\end{lem}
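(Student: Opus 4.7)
The plan is to mirror the proof of the inward analog Lemma~\ref{l:delta0} verbatim, exchanging the roles of the inner and outer scales. First I would reduce to the case $W\subset W'$: if this is not immediate, one compares both $\cR^r_{k,W,V,n,U}(v,u)$ and $\cR^r_{k,W',V,n,U}(v,u)$ to $\cR^r_{k,W\cap W',V,n,U}(v,u)$, which is legitimate since $\eps^{-2}\rmB^-\subset W\cap W'$ still satisfies the required containment.

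Assuming $W\subset W'$, the definition \eqref{e:def-cRm} together with monotonicity of $x\mapsto x^-$ and linearity of the expectation gives
\begin{multline}
0\leq \cR^r_{k,W,V,n,U}(v,u)-\cR^r_{k,W',V,n,U}(v,u)\\
=\bbE_{V,k,v}^{U,n,u}\Big(\big(\he{h}{\partial \rmB_{k+r}}(\infty)+m_{k+r}\big)^-;\;
h_{W_k\cap \rmB_{k+r}}\leq 0,\,
\max_{x\in W'_k\setminus W_k}h(x)>0\Big),
\end{multline}
since on the complementary event $\{\max_{W'_k\setminus W_k}h\leq 0\}$ the indicators $\Ind_{\{h_{W_k\cap\rmB_{k+r}}\leq 0\}}$ and $\Ind_{\{h_{W'_k\cap\rmB_{k+r}}\leq 0\}}$ coincide. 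The second step is an FKG application (as in the inward case): the event $\{h_{W_k\cap\rmB_{k+r}}\leq 0\}$ and the random variable $(\he{h}{\partial\rmB_{k+r}}(\infty)+m_{k+r})^-$ are both decreasing in $h$, while $\{\max_{W'_k\setminus W_k}h>0\}$ is increasing, so FKG (see Section~5.4 of~\cite{Bi-LN}) produces the factorization
\begin{equation}
\cR^r_{k,W,V,n,U}(v,u)-\cR^r_{k,W',V,n,U}(v,u)
\leq \cR^r_{k,W,V,n,U}(v,u)\,\bbP_{V,k,v}^{U,n,u}\Big(\max_{x\in W'_k\setminus W_k}h(x)>0\Big).
\end{equation}

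The remaining task is to bound the probability $\bbP_{V,k,v}^{U,n,u}(\max_{W'_k\setminus W_k}h>0)$ by $C_\eps\Leb(W\triangle W')$. This is exactly the outer-scale instance of Lemma~\ref{p:viol}: the region $W'_k\setminus W_k$ is a thin shell located near $\partial V^-_k$, and its Lebesgue measure (in the discretized-and-scaled sense) is proportional to $\Leb(W\triangle W')$. Dividing both sides by $\cR^r_{k,W',V,n,U}(v,u)$ (which, by monotonicity, is only smaller than $\cR^r_{k,W,V,n,U}(v,u)$, so bounding $\cR^r_{k,W,V,n,U}/\cR^r_{k,W',V,n,U}-1$ instead) yields the claim.

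The only non-routine point is making sure the outward version of the violation estimate Lemma~\ref{p:viol} is available in the form needed—i.e., with a bound of order $\Leb(W\triangle W')$ uniform in the boundary data $u,v$ under the oscillation assumption $\max\{\ol{u}(0),\osc\,\ol{u}_\eta,\ldots\}<\eps^{-1}$. I expect this to follow immediately from the outward analog of Lemma~\ref{p:viol} stated in the appendix (the harmonic extension of the boundary data is uniformly bounded on the thin shell by Lemmas~\ref{l:UV-U} and~\ref{l:osc-far}, so the tilt is absorbed into the constant $C_\eps$). Once that estimate is in hand, all other ingredients are identical to Lemma~\ref{l:delta0}.
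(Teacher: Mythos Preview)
Your proposal is correct and follows exactly the approach the paper intends: the paper's own proof reads in full ``This is proved analogously to Lemma~\ref{l:delta0}.'' Your write-up carries out precisely that analogy---reduction to $W\subset W'$, expressing the difference via the event $\{\max_{W'_k\setminus W_k}h>0\}$, factorizing by FKG, and invoking the outer-scale bound~\eqref{e:viol-V} of Lemma~\ref{p:viol}---so there is nothing to add; your concern about availability of the outward violation estimate is moot since~\eqref{e:viol-V} is already stated in Lemma~\ref{p:viol}.
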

\begin{proof}
This is proved analogously to Lemma~\ref{l:delta0}.
\end{proof}
\begin{lem}
\label{l:cont-V-R}
Let $r\geq 0$ and $\eps\in(0,\tfrac{1}{10})$. Then there exists $C=C_{r,\eps}<\infty$ such that for all $\zeta\in[\eps,\eps^{-1}]$, $\eta\in[0,\eps^{-1}]$, $U\in\frU^\eta_\eps$, $V\in\frV_\eps$, $0<k<n$ with $\partial \rmB_{k+r+1}\subset U^{\eta\vee\eps}_n$ and $\partial \rmB^-_{k+r}\subset V^{-,\zeta}_k$,
and all $u\in\bbR^{\partial U_n}$, $v\in\bbR^{\partial V^-_k}$ satisfying~\eqref{e:uv-diff}, 
we have
\begin{equation}
\label{e:cont-V-assertion-R}
\big|\cR^r_{k,\zeta,V,n,\rmB}(v,\ol{v}(\infty))-\cR^r_{k,\zeta,V,n,U}(v,u) \big|
\leq C(n-k)^{-\eps/ 4}\,.
\end{equation}
\end{lem}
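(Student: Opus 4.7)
The plan is to mirror the proof of Lemma~\ref{l:cont-V}, exchanging the roles of the inner and outer domains. I first reduce to showing the estimate with $\eps^{-1}\rmB$ (which contains $U$, since $U\in\frU^\eta_\eps$ implies $U\subset\rmB(0,\eps^{-1})$) in place of $\rmB$ as the outer domain, via the triangle inequality
\[
\bigl|\cR^r_{k,\zeta,V,n,\rmB}(v,\ol{v}(\infty)) - \cR^r_{k,\zeta,V,n,U}(v,u)\bigr|
\leq \bigl|\cR^r_{k,\zeta,V,n,\rmB}(v,\ol{v}(\infty)) - \cR^r_{k,\zeta,V,n,\eps^{-1}\rmB}(v,\ol{v}(\infty))\bigr|
+ \bigl|\cR^r_{k,\zeta,V,n,\eps^{-1}\rmB}(v,\ol{v}(\infty)) - \cR^r_{k,\zeta,V,n,U}(v,u)\bigr|,
\]
and both summands are then handled by the same Gibbs--Markov comparison (applied to $\rmB\subset\eps^{-1}\rmB$ and $U\subset\eps^{-1}\rmB$, respectively).

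For the main step, let $\frg$ be harmonic on $U_n\cap V^-_k$ with boundary values $-m_n+u$ on $\partial U_n$ and $-m_k+v$ on $\partial V^-_k$, and let $\wt\frg$ be harmonic on $(\eps^{-1}\rmB)_n\cap V^-_k$ with boundary values $-m_n+\ol{v}(\infty)$ on $\partial(\eps^{-1}\rmB)_n$ and $-m_k+v$ on $\partial V^-_k$. Since $U_n\cap V^-_k\subset(\eps^{-1}\rmB)_n\cap V^-_k$, the Gibbs--Markov property (Lemma~\ref{l:GM}) says that $(h-\wt\frg)|_{U_n\cap V^-_k}$ under $\bbP^{\eps^{-1}\rmB,n,\ol{v}(\infty)}_{V,k,v}$ is distributed as the sum of the independent fields $(h-\frg)$ under $\bbP^{U,n,u}_{V,k,v}$ and the binding field $\varphi^{(\eps^{-1}\rmB)_n\cap V^-_k,\,U_n\cap V^-_k}$. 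Inserted into~\eqref{e:def-cRm}, this produces integral bounds analogous to~\eqref{e:cont-v-p-int-lb}--\eqref{e:cont-v-p-int-ub} with a shift parameter $t$; applying an $\cR$-analog of Lemma~\ref{l:LUVt} (proved verbatim, using an $\cR$-analog of Lemma~\ref{l:h-cont}) then reduces the matter to controlling the moments and exponential tails of
\[
M := \max_{V^{-,\zeta}_k \cap \rmB_{k+r}}\bigl|\wt\frg - \frg\bigr| + \max_{V^{-,\zeta}_k \cap \rmB_{k+r}}\bigl|\varphi^{(\eps^{-1}\rmB)_n\cap V^-_k,\,U_n\cap V^-_k}\bigr|,
\]
exactly as in~\eqref{e:pf-l-cont-V}.

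The deterministic piece $\max|\wt\frg-\frg|$ is treated by the outward analog of Lemma~\ref{l:gV}: writing $\wt\frg-\frg$ via coupling two simple random walks until their first hitting of $\partial V^-_k$, the strong Markov property reduces the bound to the product of the ruin probability $P_x(\tau^{U_n}<\tau^{V^-_k})=O_\eps((n-k)^{-1})$ (Lemma~\ref{l:ruin}\ref{i:ruin-C}, as the starting region $V^{-,\zeta}_k\cap\rmB_{k+r}$ lies macroscopically close to $\partial V^-_k$ since $\zeta\geq\eps$) times the maximum deviation permitted by~\eqref{e:uv-diff} and by $|m_n-m_k|=O(n-k)$, giving an overall bound of order $(n-k)^{-\eps}$. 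The binding-field piece is controlled by an outward analog of Lemma~\ref{l:VV-fluct}, which applies because the region $V^{-,\zeta}_k\cap\rmB_{k+r}$ is separated from $\partial U_n$ by a macroscopic distance on scale $n$ (the hypothesis $\partial\rmB_{k+r+1}\subset U^{\eta\vee\eps}_n$). Combining the resulting bounds on $M$ with the $\cR$-analog of Lemma~\ref{l:LUVt} exactly as in~\eqref{e:pf-l-cont-V} yields the claimed rate $(n-k)^{-\eps/4}$.

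The main obstacle will be establishing the $(n-k)^{-\eps}$ bound on $|\wt\frg-\frg|$. Because the intermediate outer domain $\eps^{-1}\rmB$ lies at macroscopic distance from the relevant region (unlike the inner domain $\eps\rmB$ in Lemma~\ref{l:gV}), the argument must rely on the ruin-probability smallness alone, and the logarithmic deviation $|m_n - \tfrac{n-k}{n}m_n - m_k|=O(\log(n-k))$ from Lemma~\ref{l:blml} together with the allowed $(n-k)^{1-\eps}$ spread of $u$ vs.\ $\ol{v}(\infty)$ from~\eqref{e:uv-diff} must be absorbed into the $(n-k)^{-1}$ ruin factor to produce the stated rate, analogously to the computation in~\eqref{e:gV-p5}.
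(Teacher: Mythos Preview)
Your proposal is correct and follows essentially the same approach as the paper. The paper's own proof consists of a single line, ``This is proved analogously to Lemma~\ref{l:cont-V}, we use Lemma~\ref{l:LUVt-R} below,'' and you have spelled out precisely that analogy: the intermediate domain $\eps^{-1}\rmB$ playing the role that $\eps\rmB$ played in the $\cL$ case, the Gibbs--Markov decomposition with the binding field $\varphi^{(\eps^{-1}\rmB)_n\cap V^-_k,\,U_n\cap V^-_k}$, the outward analog of Lemma~\ref{l:gV} for $|\wt\frg-\frg|$, and the use of Lemma~\ref{l:LUVt-R} together with the outward estimates~\eqref{e:VV-fluct-o},~\eqref{e:VV-fluct-o-P} of Lemma~\ref{l:VV-fluct} (these are already stated in the paper, so no new ``outward analog'' is needed there).
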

\begin{proof}
This is proved analogously to Lemma~\ref{l:cont-V}, we use Lemma~\ref{l:LUVt-R} below.
\end{proof}
\begin{lem}
\label{l:SDconv-o}
Let $\eps\in(0,1)$ and $\eta,\zeta \in[0,\eps^{-1}]$, $q\in\bbN$.
For $n,k\geq 0$, assume that $u\in\bbR^{\partial U_n}$, $v\in\bbR^{\partial V^-_k}$ satisfy~\eqref{e:uv-as} and that
\begin{equation}
\label{e:ass-SDconv-o}
\lim_{k\to\infty}
\max_{x\in V^{-,\zeta}_k}\big| \ol{v}(x) - \wh v_\infty(\rme^{-k}x)\big|
=0
\end{equation}
for some $\wh v_\infty\in \bbH_\infty(V^{-,\zeta})$.
Then $(\cS^{\rm o}_p,\cS^{\rm o}_p+\cD^{\rm o}_p)_{p=1}^q$ converges in distribution as $n-k\to\infty$ to some limit $(\cS^{{\rm o},(\infty)}_p,\cS^{{\rm o},(\infty)}_p+\cD^{{\rm o},(\infty)}_p)_{p=1}^q$ whose distribution is absolutely continuous with respect to Lebesgue measure and depends only on $\wh v_\infty$ and $\zeta$.
\end{lem}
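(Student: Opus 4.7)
The plan is to mirror the proof of Lemma~\ref{l:SDconv} line-by-line, with every object replaced by its outward counterpart from Subsection~\ref{s:w-outw} and with the rescaling by $\rme^{-k}$ (rather than $\rme^{-n}$) used to pass to a continuum limit. I start from the outward analog of \eqref{e:SDconv-p20}, which is already built into Theorem~\ref{t:drw-o}: on the coupling of $h$ under $\bbP_{k,v}^{n,u}$ with $(\varphi^{\rm o}_p,h^{\rm o}_p)_{p=0}^T$ provided by \eqref{e:hphih-o}, an application of the Gibbs-Markov property yields $\cS^{\rm o}_p + \cD^{\rm o}_p=\max_{y\in A^{\rm o}_p}h(y)$ and $\he{h}{\partial \rA^{\rm o}_p}=\varphi^{\rm o}_{0,p}+\frg_{n,k}$, where $\frg_{n,k}$ is the harmonic function with boundary data $-m_n+u$, $-m_k+v$. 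The fixed continuum annuli that replace $\wt A_p$ are $\wt A^{\rm o}_1:=V^{-,\zeta}\cap\rme^{\lceil\log(\eps^{-1}+\zeta)\rceil+1}\rmB$ and, for $p\geq 2$, $\wt A^{\rm o}_p:=\rme^{\lceil\log(\eps^{-1}+\zeta)\rceil+p-1}\rmB^-\cap\rme^{\lceil\log(\eps^{-1}+\zeta)\rceil+p}\rmB$; one checks that $\rme^{-k}A^{\rm o}_p\supset \wt A^{{\rm o},\eps'}_p$ for $p\leq q$ once $k$ and $n-k$ are large.

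The first main substep is finite-dimensional convergence of the Gaussian process $(\cX^{\rm o}_p,\phi^{\rm o}_p)_{p=1}^q$, where $\phi^{\rm o}_p(y):=\he{h}{\partial \rA^{\rm o}_p}(\lfloor\rme^k y\rfloor)+m_{k+p}$ for $y\in\wt A^{{\rm o},\eps'}_p$. For the covariances I invoke the outward version of Lemma~\ref{l:varXm} advertised at the end of Subsection~\ref{s:w-outw} (this is precisely where the regime $k\to\infty$, $n-k\to\infty$ enters), together with an outward version of Lemma~\ref{l:bdg-ann}: by Gibbs-Markov one rewrites $\Cov^n_k(\varphi^{\rm o}_p(\lfloor\rme^k x\rfloor),\varphi^{\rm o}_p(\lfloor\rme^k y\rfloor))$ as in \eqref{e:bdg-ann-pGM}, bounds the binding-field corrections coming from the outer truncation $U_n$ by $O_{\eps,q}(1/(n-k))$ via Lemma~\ref{l:Cov-bdg}, and identifies the remaining Green-function terms with their continuum counterparts on the unbounded annular domain using \eqref{e:444}, \eqref{e:462} and Lemma~\ref{l:BL-Pois}; the covariance with $\cX^{\rm o}_p$ is treated exactly as in \eqref{e:bdg-ann-fa-p1}. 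For the mean I need an outward analog of Lemma~\ref{l:harm-hole}, which decomposes $\frg_{n,k}(\lfloor\rme^k x\rfloor)+m_{k+p}$ into three harmonic pieces; by Lemma~\ref{l:osc-far} and the ruin bound of Lemma~\ref{l:ruin}\ref{i:ruin-C} the contribution coming from $\partial U_n$ is $O_{\eps}((n-k)^{-\eps})$ times $|\ol u(0)|+\osc\,\ol u_\eta$, which vanishes in the limit, while the remaining piece converges to a continuum expression depending only on $\wh v_\infty$, $\zeta$ via the uniform continuity of $\Pi_{V^-}(\cdot,\rmd z)$ and the hypothesis \eqref{e:ass-SDconv-o}. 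Tightness in the supremum norm on each $\wt A^{{\rm o},\eps'}_p$ then follows from the Gaussian argument in Lemma~4.4 and Section~6.5 of \cite{Bi-LN}. Combined with the almost sure continuity of the representation \eqref{e:deftS}–\eqref{e:defS} as a function of $s^{\rm o}_{1,p}$, $(\cX^{\rm o}_p)$, and independent Brownian bridges $\cB^{(p)}$, this gives joint convergence of $(\cS^{\rm o}_p+\beta^{\rm o}_p,\phi^{\rm o}_p)_{p=1}^q$ in distribution to a limit $(\cS^{{\rm o},(\infty)}_p,\phi^{{\rm o},(\infty)}_p)_{p=1}^q$ whose law is determined by $\wh v_\infty$ and $\zeta$ alone.

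To upgrade to joint convergence of $(\cS^{\rm o}_p,\cS^{\rm o}_p+\cD^{\rm o}_p)_{p=1}^q$, I introduce the cluster point processes $\xi^{\rm o}_p$ on $\wt A^{{\rm o},\eps'}_p\times\bbR$ exactly as in \eqref{e:def-etap} but in the $\rme^{-k}$ coordinates and with $h^{\rm o}_p$, $m_{k+p}$ in place of $h_p$, $m_{n-p}$. By Theorem~2.1 of \cite{BL-Full} and Theorem~2.1 of \cite{BL-Conf}, $\xi^{\rm o}_p$ converges vaguely to a Cox process with intensity $Z^{\rm o}_p(\rmd x)\otimes\rme^{-\alpha t}\rmd t$, the $\xi^{{\rm o},(\infty)}_p$ being independent across $p$; here $Z^{\rm o}_p$ is determined by $\wt A^{\rm o}_p$, hence by $\zeta$ only. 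The bracketing argument between \eqref{e:proof-LR-lim} and \eqref{e:SDconv-eta} then carries over verbatim, using Lemma~\ref{l:1N} for the local separation of near-maximizers (its hypotheses are symmetric in the two boundary scales) and Lemma~\ref{p:viol} for the boundary-layer $\rme^k\wt A^{{\rm o},\eps'}_p\setminus \rA^{\rm o}_p$; taking $n-k\to\infty$ followed by $\eps'\to 0$ produces joint convergence in distribution of $(\cS^{\rm o}_p,\cS^{\rm o}_p+\cD^{\rm o}_p)_{p=1}^q$ to $(\cS^{{\rm o},(\infty)}_p,\cS^{{\rm o},(\infty)}_p+\cD^{{\rm o},(\infty)}_p)_{p=1}^q$.

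Absolute continuity of the limit law follows because $(\cS^{{\rm o},(\infty)}_p)_{p=1}^q$ is a non-degenerate Gaussian (by the outward analog of Lemma~\ref{l:dep} combined with the strict positivity of the limiting $\sigma^{\rm o,2}_p$ from the outward Lemma~\ref{l:varXm}) and because the height marginal of each $\xi^{{\rm o},(\infty)}_p$ is absolutely continuous with respect to Lebesgue measure. The identification ``depends only on $\wh v_\infty$ and $\zeta$'' is then just a bookkeeping statement on the three ingredients isolated above: the covariance of the Gaussian backbone, the mean shift $\beta^{\rm o}_p+\lim_{n,k}\frg_{n,k}(\lfloor\rme^k\cdot\rfloor)$, and the cluster measures $\xi^{{\rm o},(\infty)}_p$. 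I expect the main obstacle to be the outward Green-function convergence in the covariance step: one must show that the truncation by the outer domain $U_n$, which lies at infinity in the rescaled picture, leaves error $o(1)$ uniformly in $U\in\frU^\eta_\eps$ and $u$ satisfying \eqref{e:uv-as}, which is where the $1/(n-k)$ decay from Lemma~\ref{l:Cov-bdg} and the ruin bound from Lemma~\ref{l:ruin} play the decisive role.
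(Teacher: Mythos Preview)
Your argument faithfully mirrors Lemma~\ref{l:SDconv} and is essentially what the paper does for the regime where both $k\to\infty$ and $n-k\to\infty$. But there is a genuine gap: the statement claims convergence as $n-k\to\infty$ with no constraint on $k$, and in particular must cover subsequences along which $k$ remains bounded (this case is actually used, e.g.\ in the proof of Lemma~\ref{l:Rk}, where $k$ is fixed and only $n\to\infty$). Your machinery does not apply there. The outward analog of Lemma~\ref{l:varXm} only yields convergence of $\Var\cX^{\rm o}_p$ when $k\to\infty$ as well (cf.\ the remark at the end of the proof of Theorem~\ref{t:drw-o}); the Cox-process limit for $\xi^{\rm o}_p$ via \cite{BL-Full,BL-Conf} requires the lattice scale $\rme^{k+p}$ to diverge; and your outward Lemma~\ref{l:bdg-ann} rests on Lemma~\ref{l:BL-Pois}, which is a scaling-limit statement. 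You even flag this yourself (``once $k$ and $n-k$ are large'', ``this is precisely where the regime $k\to\infty$, $n-k\to\infty$ enters''), but do not supply the missing case.

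The paper handles the $k$-fixed case by a much more direct route that sidesteps all of the above: for fixed $k$, the set $\Delta^{\rm o}_q$ is a fixed finite subset of $\bbZ^2$, and as $n\to\infty$ the law of $h_{\Delta^{\rm o}_q}$ under $\bbP^{U,n,u}_{V,k,v}$ converges weakly to that of the DGFF on $V^-_k$ with boundary data $-m_k+v$ (the outer boundary recedes to infinity and its effect on a fixed compact set vanishes). Since $(\cS^{\rm o}_j,\cD^{\rm o}_j)_{j=1}^q$ is, by the constructions of Subsection~\ref{s:w-outw}, a continuous functional of $h_{\Delta^{\rm o}_q}$ together with independent Brownian bridges, convergence is immediate; assumption~\eqref{e:ass-SDconv-o} is not needed in this regime and the limit depends on $V,k,v,\zeta$. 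For $k\to\infty$ the paper then defers to the argument of Lemma~\ref{l:SDconv}, which is what you wrote.
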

\begin{proof}
We first consider the case that $n-k\to\infty$ while $k\geq 0$ stays fixed.
In this case Assumption~\eqref{e:ass-SDconv-o} is not needed and the assertion follows as $\bbP_{V,k,v}^{\rmB,n,0}(h_{\Delta^{\rm o}_q}\in\cdot)$ converges weakly to
$\bbP(h_{\Delta^{\rm o}_q}\in\cdot\mid h_{\partial V^-_k}=v)$, and as $\big(\cS^{\rm o}_j, \cD^{\rm o}_j\big)_{j=1}^q$ is a continuous functional of $h_{\Delta^{\rm o}_q}$ by the definitions in Section~\ref{s:w-outw}. The case when also $k\to\infty$ is treated analogously to Lemma~\ref{l:SDconv}.
\end{proof}
Also, the following lemma is proved analogously to Lemma~\ref{l:DGFF-lb}:
\begin{lem}
\label{l:DGFF-lb-o}
Let $\eps\in(0,1)$ and $\eta,\zeta\in[0,\eps^{-1}]$. Then there exists $c=c_\eps>0$ such that
\begin{equation}
\label{e:DGFF-lb-o}
\bbP_{k,v}^{n,u}\big(h_{U^\eta_n\cap V^{-,\zeta}_k}\leq 0\big)
\geq c\frac{(1+\ol{v}(\infty)^-)\ol{u}(0)^-}{n-k}
\end{equation}
for all $U\in\frU^\eta_\eps$, $V\in\frV_\eps$, $0\leq k< n$ with $T_{n-k}\geq 1$, $u\in\bbR^{\partial U_n}$, $v\in\bbR^{\partial V^-_k}$ satisfying~\eqref{e:uv-as} and $\ol{u}(0)^- \geq (n-k)^\eps$.
\end{lem}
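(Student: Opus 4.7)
The plan is to mirror the proof of Lemma~\ref{l:DGFF-lb} with the roles of the inner and outer boundaries interchanged. First, combining Theorem~\ref{t:drw-o} with the DRW lower bound Theorem~\ref{t:lb} via the correspondence~\eqref{e:h-S-o}---which under $\bbP_{k,v}^{n,u}$ realizes the DGFF ballot event as the DRW ballot event $\bigcap_{p=1}^{T}\{\cS^{\rm o}_p+\cD^{\rm o}_p\leq 0\}$ for the walk starting at $\cS^{\rm o}_0=\ol{v}(\infty)$ and ending at $\cS^{\rm o}_T=\ol{u}(0)$---one obtains a threshold $a_0=a_0(\eps)\in(-\infty,0)$ such that~\eqref{e:DGFF-lb-o} holds whenever $\ol{v}(\infty)<a_0$. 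Note that the role swap aligns perfectly with the ``reversed'' endpoints of the outward DRW, so Theorem~\ref{t:lb} delivers the factor $(1+\ol{u}(0)^-)\ol{v}(\infty)^-/(n-k)$ rather than its transpose.

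To remove the smallness restriction on $\ol{v}(\infty)$, I would reproduce the FKG step from~\eqref{e:p-DGFF-lb} with the annular collar now taken near $\partial V^-_k$ instead of $\partial U_n$. Concretely, pick $t=t_\eps>0$ such that by Lemma~\ref{p:viol}, $\bbP_{k,v}^{n,u}(h_{V^{-,\zeta}_k\setminus V^{-,\zeta+2t}_k}\leq 0)\geq 1/2$, and apply FKG to reduce to a lower bound on $\bbP_{k,v}^{n,u}(h_{U^\eta_n\cap V^{-,\zeta+2t}_k}\leq 0)$. After absorbing the harmonic extension of the boundary data---which by Lemma~\ref{l:UV-U} and~\eqref{e:uv-as} is bounded by $2\eps^{-1}$ on $U^\eta_n\cap V^{-,\zeta}_k$---monotonicity in the boundary conditions reduces the problem to the same probability under $\bbP_{k,2\eps^{-1}}^{n,2\eps^{-1}}$.

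Next I would apply the Gibbs--Markov property at $\partial V^{-,\zeta+t}_k$ and restrict to the event
\begin{equation}
\{\he{h}{\partial V^{-,\zeta+t}_k}(\infty)+m_k\in(a_0-1,a_0)\}\cap\{\osc_{V^{-,\zeta+2t}_k}\,\he{h}{\partial V^{-,\zeta+t}_k}\leq\lambda\}
\end{equation}
for $\lambda=\lambda_\eps$ sufficiently large. By Lemma~\ref{l:blml} together with Propositions~\ref{l:E} and~\ref{p:Cov-bd}, the Gaussian density of $\he{h}{\partial V^{-,\zeta+t}_k}(\infty)$ is bounded below at a unit window centered near $-m_k$; and by Proposition~\ref{p:2.6} the conditional oscillation bound holds with probability at least $1/2$ once $\lambda$ is large enough. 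Hence the conditioning event has probability at least $c_\eps>0$. On this event the conditioned boundary datum $w$ satisfies $\ol{w}(\infty)\in(a_0-1,a_0)$ with controlled oscillation, so the first step of the proof applies with $V^{-,\zeta+t}$ as the new outer domain and yields a lower bound of order $(1+\ol{u}(0)^-)\ol{u}(0)^-/(n-k)$, uniformly in such $w$. Multiplying the three positive factors produces~\eqref{e:DGFF-lb-o}.

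The only genuinely new verification compared with Lemma~\ref{l:DGFF-lb} is to check that after thickening $V^{-,\zeta}$ to $V^{-,\zeta+2t}$ the new outer set still belongs to $\frV_{\eps'}$ for some $\eps'=\eps'(\eps)$, so that Theorem~\ref{t:drw-o} remains applicable in the initial step and $T_{n-k}\geq 1$ is preserved; this presents no obstacle for $t_\eps$ small. I do not expect a real technical difficulty beyond the bookkeeping of domain parameters---all other estimates invoked (Lemma~\ref{p:viol}, Lemma~\ref{l:UV-U}, Lemma~\ref{l:blml}, Proposition~\ref{p:2.6}, Propositions~\ref{l:E} and~\ref{p:Cov-bd}) are stated symmetrically with respect to the inner/outer decomposition and so transfer verbatim.
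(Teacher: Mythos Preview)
Your approach is correct and is exactly the one the paper has in mind (it states only that the lemma ``is proved analogously to Lemma~\ref{l:DGFF-lb}''). Two small slips in your write-up: with the outward DRW one has $a=\ol{v}(\infty)$ and $b=\ol{u}(0)$, so Theorem~\ref{t:lb} produces the factor $(1+\ol{v}(\infty)^-)\ol{u}(0)^-/(n-k)$, which is precisely~\eqref{e:DGFF-lb-o} and not its transpose as you wrote; and in your final line the bound obtained after conditioning is of order $\ol{u}(0)^-/(n-k)$ (since $\ol{w}(\infty)\in(a_0-1,a_0)$ is bounded), not $(1+\ol{u}(0)^-)\ol{u}(0)^-/(n-k)$.
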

We will also refer to the following lemma:
\begin{lem}
\label{l:LUVt-R}
Let $\eps\in(0,1)$, $r\geq 0$, $\zeta\in[\eps,\eps^{-1}]$, $\eta\in[0,\eps^{-1}]$. There exists $C=C_{r,\eps}<\infty$ such that for all
$V\in\frV_\eps$, $U\in\frU^\eta_\eps$, $0\leq k<n$ with $\partial\rmB_{k+r}\subset V^{-,\zeta}_k$ and $\partial\rmB^-_{k+r}\subset U^{\eta\vee\eps}_k$, all $u\in\bbR^{\partial U_n}$, $v\in\bbR^{\partial V^-_k}$ satisfying~\eqref{e:uv-diff},
and all $t\in\bbR$, we have
\begin{equation}
\label{e:LUVt-R}
\big|\cR^r_{k,n}(v,u,t)-
\cR^r_{k,n}(v,u)\big|\leq
C |t|^{1/2}\rme^{\alpha |t|}\,.
\end{equation}
\end{lem}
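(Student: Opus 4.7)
The plan is to mimic the proof of Lemma~\ref{l:LUVt}, with the inward disk $\rmB^-_{n-r}$ replaced by the outward disk $\rmB_{k+r}$ everywhere. I would treat $t>0$ and $t<0$ separately; by symmetry it suffices to describe the case $t\geq 0$. Monotonicity of the integrand and of the event in the definition~\eqref{e:def-cRm} (analog of \eqref{e:LmUV}) in $t$ gives directly that $\cR^r_{k,n}(v,u,t)\leq \cR^r_{k,n}(v,u)$. For the reverse direction, using the pointwise inequality $(a+t)^-\geq a^- - t$ with $a=\he{h}{\partial\rmB_{k+r}}(\infty)+m_{k+r}$ and disintegrating over the event $\{\max_{V^{-,\zeta}_k\cap \rmB_{k+r}} h \in (-t,0]\}$ shows
\begin{equation*}
\cR^r_{k,n}(v,u)-\cR^r_{k,n}(v,u,t)\leq t+\bbE_{V,k,v}^{U,n,u}\Big((\he{h}{\partial\rmB_{k+r}}(\infty)+m_{k+r})^-;\,\max_{V^{-,\zeta}_k\cap\rmB_{k+r}}h\in(-t,0]\Big).
\end{equation*}
Applying Cauchy--Schwarz bounds the last expectation by the product of $\bbE[((\he{h}{\partial\rmB_{k+r}}(\infty)+m_{k+r})^-)^2]^{1/2}$ and $\bbP_{V,k,v}^{U,n,u}(\max_{V^{-,\zeta}_k\cap\rmB_{k+r}}h\in(-t,0])^{1/2}$.

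The second moment is controlled exactly as in the inward proof: since $\he{h}{\partial\rmB_{k+r}}(\infty)$ is Gaussian under $\bbP^{n,u}_{k,v}$, Proposition~\ref{p:Cov-bd} gives a variance of order $C_{r,\eps}$, while Lemma~\ref{l:blml} and Proposition~\ref{l:E} together with Assumption~\eqref{e:uv-diff} yield $|\bbE^{n,u}_{k,v}\he{h}{\partial\rmB_{k+r}}(\infty)+m_{k+r}|\leq C_{r,\eps}(1+|\ol{v}(\infty)|)$. Hence the second moment is at most $C_{r,\eps}(1+\ol{v}(\infty)^2)$, which is bounded by a constant depending on $r,\eps$ thanks to~\eqref{e:uv-diff}.

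For the probability, I need an outward analog of Lemma~\ref{l:h-cont}, namely a bound of the form
\begin{equation*}
\bbP^{U,n,u}_{V,k,v}\Big(\max_{V^{-,\zeta}_k\cap\rmB_{k+r}} h\in[-\lambda,\lambda]\Big)\leq C_{r,\eps}\lambda(1+\lambda)\rme^{\alpha(\lambda+\ol{v}(\infty))}\big(1+\ol{v}(\infty)^-\big),
\end{equation*}
valid under the assumptions of Lemma~\ref{l:LUVt-R}. This is obtained by repeating the proof of Lemma~\ref{l:h-cont} with the roles of the inner and outer constraints swapped: perform a union bound over $x\in V^{-,\zeta}_k\cap\rmB_{k+r}$, condition via the Gibbs--Markov property at $\partial\rmB^\pm_{k+r+1}$, use Theorem~\ref{t:2.3} on the outer domain $V^-_k\cap\rmB_{k+r+1}$ (exchanging the roles of $U$ and $V$), and apply Proposition~\ref{p:2.6} and the variance/mean estimates from Proposition~\ref{p:Cov-bd}, Lemma~\ref{l:blml} and Proposition~\ref{l:E} to the Gaussian density of $\he{h}{\partial\rmB_{k+r+1}^\pm}(\infty)$ and its oscillation on $\rmB^\eps_{k+r}$. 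The bound on the first factor uses the binomial-expansion trick employed there to extract the factor $\rme^{\alpha(\lambda+\ol{v}(\infty))}$, since the relevant variance is $gr+C_{r,\eps}$ and $m_{k+r}$ carries the matching exponential weight.

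Combining the second-moment bound and the outward Lemma~\ref{l:h-cont} with $\lambda=t$, and bounding $\rme^{\alpha\ol{v}(\infty)}(1+\ol{v}(\infty)^-)$ by a constant depending on $r,\eps$ via~\eqref{e:uv-diff}, yields
\begin{equation*}
\cR^r_{k,n}(v,u)-\cR^r_{k,n}(v,u,t)\leq t+C_{r,\eps}t^{1/2}\rme^{\alpha t},
\end{equation*}
which we absorb into $C|t|^{1/2}\rme^{\alpha|t|}$ by handling small and large $t$ separately. The case $t<0$ is symmetric. The main obstacle is only the outward analog of Lemma~\ref{l:h-cont}; however, since every ingredient (the outward ballot bound~\eqref{e:1.18}, outward covariance estimates, $\osc$ tails from Proposition~\ref{p:2.6}, and the mean estimate from Proposition~\ref{l:E}) is already available, no new estimate is required and the argument transcribes mechanically.
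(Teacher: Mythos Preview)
Your approach is exactly the paper's own proof (the paper's argument is the single sentence ``analogously to Lemma~\ref{l:LUVt}''), and your outline is essentially correct. Two points in the details need correction, though neither affects the overall strategy. First, the second moment $C_{r,\eps}(1+\ol{v}(\infty)^2)$ is \emph{not} bounded by a constant under~\eqref{e:uv-diff}, which only gives $\ol{v}(\infty)\leq\eps^{-1}$ with no lower bound; what is uniformly bounded over $\ol{v}(\infty)\leq\eps^{-1}$ is the \emph{product} $(1+\ol{v}(\infty)^2)\,\rme^{\alpha\ol{v}(\infty)}(1+\ol{v}(\infty)^-)$ arising after Cauchy--Schwarz, exactly as $(1+\ol{u}(0)^2)\rme^{\alpha\ol{u}(0)}(1+\ol{u}(0)^-)$ is handled in the inward proof. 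Second, in the outward analog of Lemma~\ref{l:h-cont} the relevant variance $G_{V^-_k\cap\rmB_{k+r+1}}(x,x)$ is $g\log|x|+O_{r,\eps}(1)\approx gk$, not $gr$; the binomial expansion then yields a factor $\rme^{-2k}$ which, combined with the $1/k$ ballot factor from Theorem~\ref{t:2.3} (applied to the shifted domain at scale $\approx k+r$ with inner hole $\{x\}$ at scale $0$), cancels against the $O_r(\rme^{2k})$ lattice points in $V^{-,\zeta}_k\cap\rmB_{k+r}$ and the density prefactor, mirroring the role of $gn$ in the inward case.
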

\begin{proof}
This is proved analogously to Lemma~\ref{l:LUVt}.
\end{proof}
We are now ready to give the proofs of Lemma~\ref{l:Rk}, Proposition~\ref{c:1.2}, and the remaining parts for Propositions~\ref{p:LR-infty} --~\ref{p:1-6}.
\begin{proof}[Proof of Lemma~\ref{l:Rk}]
We obtain existence of the limit on the right-hand side of~\eqref{e:Rkl}  from Proposition~\ref{t:ta} which we apply as follows:
for any sequence
$(n^{(i)})_{i=1}^\infty$ with $n^{(i)}\to\infty$, we set
$a^{(i)}=\ol{v}(\infty)$,
$b^{(i)}=\ol{v}(\infty)$,
$\rt^{(i)}=\rt_{n^{(i)}-k}$,
$r^{(i)}=
r_{n^{(i)}-k}-\lceil \log(\eps^{-1}+\zeta)\rceil$,
$\rt^{(\infty)}=r_{\infty}=\infty$, $a^{(\infty)}=b^{(\infty)}=\ol{v}(\infty)$,
and in place of $\big(\cS^{(i)}_j\big)$, $\big(\cD^{(i)}_j\big)$, we use
$\big(\cS^{\rm o}_j\big)$, $\big(\cD^{\rm o}_j\big)$
as defined in Section~\ref{s:w-outw} for $u=\ol{v}(\infty)$, $U=\rmB$ and $\eta=0$.
Lemma~\ref{l:SDconv-o} yields that $(\cS^{(i)}_j, \cD^{(i)}_j)$ has a limit in distribution $\big(\cS^{(\infty)}_j, \cD^{(\infty)}_j\big)$, that Assumption~\eqref{e:5.7} and the assumption on stochastic absolute continuity are satisfied, and that $\cR_k$ depends only on $V$, $k$, $v$ and $\zeta$. Assumptions~\ref{i.a1} --~\ref{i.a3} are verified by Theorem~\ref{t:drw-o}.

Next we show the equality in~\eqref{e:Rkl}. By Lemma~\ref{l:LUV-R} and the choice of $(r_n)$, we have
\begin{equation}
\label{e:p-Rk-LUV}
\lim_{n\to\infty} \Big(\cR^{r_{n-k}}_{k,n}(v,\ol{v}(\infty))-
\ell_{\rt_{n-k}, r_{n-k}-\lceil \log(\eps^{-1}+\zeta)\rceil}\big(\ol{v}(\infty),\ol{v}(\infty)\big)\Big)
=0\,.
\end{equation}
Moreover, Lemma~\ref{l:cont-V-R} shows that
\begin{equation}
\label{e:p-Rk-cont-V}
\lim_{n\to\infty}\Big(\cR^{r_{n-k}}_{k,n}(v,\ol{v}(\infty))-
\cR^{r_{n-k}}_{k,n}(v,0)\Big)
=0
\end{equation}
if $\zeta>0$, and Lemma~\ref{l:delta0-o} allows to extend~\eqref{e:p-Rk-cont-V} to the case $\zeta=0$. Combining~\eqref{e:p-Rk-LUV} and \eqref{e:p-Rk-cont-V} then yields that the limits 
in~\eqref{e:cRmr} and~\eqref{e:Rkl} coincide.

To show assertion~\eqref{e:Rk}, we let $\eps'\in(0,\eps)$, and we first show the Cauchy property for $\cR^{r_{n-k}}_{k,n}(v,0)$ as $n-k\to\infty$, for $\zeta\in[\eps',\eps^{-1}]$ and fixed $k\geq 0$. We note that $h$ under $\bbP_{V,k,v}^{\rmB,n,0}$ is distributed as
$h^{U_n\cap V^-_k}+\frg_n$, where $\frg_n$ is the harmonic function on $\rmB_n\cap V^-_k$ with boundary values $-m_n$ on $\partial \rmB_n$ and $-m_k+v$ on $\partial V^-_k$.
For $n'\geq n$, by the Gibbs-Markov property, $h-\frg_{n'}$ under $\bbP_{V,k,v}^{\rmB,n',0}$ is distributed as the sum of independent fields $h-\frg_{n} + \varphi^{\rmB_{n'}\cap V^-_k,\rmB_n\cap V^-_k}$ under $\bbP_{V,k,v}^{\rmB,n,0}$.
We set
\begin{equation}
\varphi^*=\max_{V^{-,\zeta}_k\cap \rmB_{k+r}}
\big|\varphi^{\rmB_{n'}\cap V^-_k,\rmB_n\cap V^-_k}\big|\,,\qquad
\frg^*=\max_{V^{-,\zeta}_k\cap \rmB_{k+r}}
\big| \frg_{n'} - \frg_n \big|\,.
\end{equation}
Then, with
\begin{equation}
\cR^r_{n,k}(v,0,t):=
\bbE_{V,k,v}^{\rmB,n,0}\big(
(\he{h}{\partial\rmB_{k+r}} (\infty)+ m_{k+r}+t)^-;
h_{V^{-,\zeta}_k\cap \rmB_{k+r}}+t \leq 0\big)
\end{equation}
for $t\in\bbR$, we obtain analogously to~\eqref{e:cont-v-p-int-lb} and~\eqref{e:cont-v-p-int-ub} that
\begin{equation}
\label{e:Rk-t-lb}
\cR^r_{k,n'}(v,0)
\geq \int\bbP\big(\varphi^* + \frg^* \in\rmd t\big)\cR^r_{k,n}(v,0,t)
\end{equation}
and
\begin{equation}
\label{e:Rk-t-ub}
\cR^r_{k,n'}(v,0)
\leq \int\bbP\big(\varphi^* + \frg^* \in\rmd t\big)\cR^r_{k,n}(v,0,-t)\,.
\end{equation}
By Lemma~\ref{l:LUVt-R} and the choice of $(r_n)$, we have
\begin{equation}
\big|\cR^r_{k,n}(v,0,t)-\cR^r_{k,n}(v,0)\big|
\leq C_{\eps'} |t|^{1/2}\rme^{\alpha |t|}
\end{equation}
for $t\in\bbR$, uniformly in $r\leq r_{n-k}$. Lemma~\ref{l:VV-fluct} and the choice of $(r_n)$ give
\begin{equation}
\bbE\varphi^*\leq C_\eps (n-k)^{-1/4} \,,\qquad
\bbP\big(\varphi^*>s\big)\leq C_\eps\rme^{-c_\eps (n-k) s^2}
\end{equation}
for $s\geq 0$, uniformly in $r\leq r_{n-k}$.
Furthermore, it can be shown analogously to Lemma~\ref{l:gV} that
\begin{equation}
\frg^*\leq C_\eps r(n-k)^{-\eps}\,.
\end{equation}
Plugging these estimates into~\eqref{e:Rk-t-lb} and~\eqref{e:Rk-t-ub}, we obtain
\begin{equation}
\big|\cR^r_{k,n'}(v,0)-\cR^r_{k,n}(v,0)\big|
\leq C_\eps\big(\bbE\varphi^*+\frg^*\big)^{1/2}
+\sum_{s=1}^\infty\bbP\big(\varphi^*>s-\frg^*\big)C_{\eps'}\rme^{\alpha s}
=o_\eps(1)
\end{equation}
as $n-k\to\infty$, uniformly in $r\leq r_{n-k}$.
In particular,
\begin{equation}
\cR^{r_{n-k}}_{k,n'}(v,0)-\cR^{r_{n-k}}_{k,n}(v,0)
=o_{\eps'}(1)
\end{equation}
as $n-k\to\infty$, where we recall that $n'\geq n$. By Lemmas~\ref{l:LUV-R}, \ref{l:cont-V-R} and the choice of $(r_n)$,
\begin{equation}
\label{e:p-Rk-lR}
\cR^{r_{n-k}}_{k,n'}(v,0)-
\ell_{\rt_{n'-k}, r_{n-k}-\lceil \log(\eps^{-1}+\zeta)\rceil}\big(\ol{v}(\infty),\ol{v}(\infty)\big)
=o_{\eps'}(1)\big(1+\ol{v}(\infty)^-\big)
\end{equation}
as $n-k\to\infty$.
By Proposition~\ref{t:LR-as},
\begin{equation}
\ell_{\rt_{n'-k}, r_{n'-k}-\lceil \log(\eps^{-1}+\zeta)\rceil}\big(\ol{v}(\infty),\ol{v}(\infty)\big)
-\ell_{\rt_{n'-k}, r_{n-k}-\lceil \log(\eps^{-1}+\zeta)\rceil}\big(\ol{v}(\infty),\ol{v}(\infty)\big)
=
o_\eps(1) \big(1+\ol{v}(\infty)^-\big)
\end{equation}
as $n-k\to\infty$. Using again~\eqref{e:p-Rk-lR} shows that
\begin{equation}
\cR^{r_{n'-k}}_{k,n'}(v,0)-\cR^{r_{n-k}}_{k,n}(v,0)=o_{\eps'}(1) \big(1+\ol{v}(\infty)^-\big)\,,
\end{equation}
as $n-k\to\infty$, as desired. This shows
\begin{equation}
\label{e:Rk-eps}
\cR_k(v)=\cR^{r_{n-k}}_{k,\zeta,V,n,\rmB}(v,0)
+o_{\eps'}(1) \big(1+\ol{v}(\infty)^-\big)
\end{equation}
and hence~\eqref{e:Rk} for $\zeta\in[\eps',\eps^{-1}]$.
By Lemma~\ref{l:delta0-o}, we now choose $\eps'$ sufficiently small, depending only on $\eps$, such that
\begin{equation}
(1-\eps')\cR^{r_{n-k}}_{k,\zeta,V,n,\rmB}(v,0)
\leq \cR^{r_{n-k}}_{k,\eps',V,n,\rmB}(v,0)
\leq (1+\eps')\cR^{r_{n-k}}_{k,\zeta,V,n,\rmB}(v,0)
\end{equation}
for all $\zeta\in[0,\eps']$. From this and~\eqref{e:Rk-eps}, we then obtain~\eqref{e:Rk} for all $\zeta\in[0,\eps^{-1}]$.
\end{proof}

\begin{proof}[Proof of Proposition~\ref{c:1.2}]
For $\zeta\in[\eps,\eps^{-1}]$, it suffices by Lemmas~\ref{l:Rk} and~\ref{l:DGFF-lb-o} to prove that
\begin{equation}
\label{e:p-c:1.2}
\bbP_{k,v}^{n,u}\Big( h_{U^\eta_n \cap V^{-,\zeta}_k} \leq 0 \Big)
=\big(2+o_{\eps}(1)\big)
\frac{\cR^{r_{n-k}}_{k,\zeta,V,n,\rmB} (v,0) \ol{u}(0)^-
}{g(n-k)}
\end{equation}
The asymptotics~\eqref{e:p-c:1.2} and the extension of~\eqref{e:p-c:1.2} to $\zeta\in[0,\eps]$ follow analogously to the proof of Proposition~\ref{c:1.1}, where we also use Lemma~\ref{l:DGFF-lb-o}.
\end{proof}

\begin{proof}[Proof of Proposition~\ref{p:LR-infty} (for $\cR_k$)]
With $\ell_{\rt_{n-k},r_{n-k}}\big(\ol{v_k}(\infty),\ol{v_k}(\infty)\big)$
defined by~\eqref{e:def-l-R} for $v=v_k$, $U=\rmB$, $u=\ol{v_k}(\infty)$,
we have
\begin{equation}
\lim_{k\to\infty}\cR_k(v_k)
=\lim_{k\to\infty}\lim_{n-k\to\infty}
\ell_{\rt_{n-k},r_{n-k}}\big(\ol{v_k}(\infty),\ol{v_k}(\infty)\big)
=\lim_{n\to\infty}\ell_{\rt_{n-k_n},r_{n-k_n}}\big(\ol{v_{k_n}}(\infty),\ol{v_{k_n}}(\infty)\big)
\end{equation}
by~\eqref{e:Rkl} if the limit on the right-hand side exists for all $(k_n)_{n\geq 0}$ that tend to infinity sufficiently slowly as $n\to\infty$.
To show existence of the limit on the right-hand side, we apply Proposition~\ref{t:ta} as follows:
for any sequence
$(n^{(i)},k^{(i)})_{i=1}^\infty$ with $n^{(i)}-k^{(i)}\to\infty$, we set
$a^{(i)}=\ol{v_{k^{(i)}}}(\infty)$,
$b^{(i)}=\ol{v_{k^{(i)}}}(\infty)$,
$\rt^{(i)}=\rt_{n^{(i)}-k^{(i)}}$,
$r^{(i)}=
r_{n^{(i)}-k^{(i)}}-\lceil \log(\eps^{-1}+\zeta)\rceil$,
$\rt^{(\infty)}=r_{\infty}=\infty$, $a^{(\infty)}=b^{(\infty)}=\wh{v}(\infty)$,
and in place of $\big(\cS^{(i)}_j\big)$, $\big(\cD^{(i)}_j\big)$, we use
$\big(\cS^{\rm o}_j\big)$, $\big(\cD^{\rm o}_j\big)$
as defined in Section~\ref{s:w-outw} for $k=k^{(i)}$, $v=v_{k^{(i)}}$, $u=\ol{v_{k^{(i)}}}(\infty)$, $U=\rmB$ and $\eta=0$.
Lemma~\ref{l:SDconv-o} yields that $(\cS^{(i)}_j, \cD^{(i)}_j)$ has a limit in distribution $\big(\cS^{(\infty)}_j, \cD^{(\infty)}_j\big)$, that Assumption~\eqref{e:5.7} and the assumption on stochastic absolute continuity are satisfied, and that $\cR_k$ depends only on $V$, $k$, $v$ and $\zeta$. Assumptions~\ref{i.a1} --~\ref{i.a3} are verified by Theorem~\ref{t:drw-o}.
\end{proof}

\begin{proof}[Proof of Proposition~\ref{p:1-4} (for $\cR_k$, $\cR$)]
The assertion on $\cR_k$ follows from~\eqref{e:Rkl} and Proposition~\ref{p:1.5} with $\rt=\infty$. The assertion on $\cR$ follows by Proposition~\ref{p:LR-infty} as in~\eqref{e:p-14L13}.
\end{proof}

\begin{proof}[Proof of Proposition~\ref{p:1-5} (for $\cR_k$, $\cR$)]
The assertion on~$\cR_k$ follows analogously to $\cL_n$ from Lemma~\ref{l:DGFF-lb-o} and Proposition~\ref{c:1.2}.
The assertion on $\cR$ follows from Proposition~\ref{p:LR-infty} as before for $\cL$.
\end{proof}

\begin{proof}[Proof of Proposition~\ref{p:1-6} (for $\cR_k$, $\cR$)]
For any $\eps'>0$, we find, by Lemma~\ref{l:Rk} and Proposition~\ref{p:1-5}, $n\geq k$ such that
\begin{equation}
(1-\eps')\cR^{r_{n-k}}_{k,\zeta,V,n,\rmB}(v,0)
\leq \cR_k(v) \leq
(1+\eps')\cR^{r_{n-k}}_{k,\zeta,V,n,\rmB}(v,0)
\end{equation}
which we then use for all $V,k,v,\zeta$ as in the assertion in place of~\eqref{e:p-1-6-p1-5X} and proceed as in the proof for $\cL_k$, using the analog of Lemma~\ref{l:1-6-m} to show the assertion on $\cR_k$. The assertion on $\cR$ follows analogously to $\cL$ using Proposition~\ref{p:LR-infty}.
\end{proof}

\vspace{0.5cm}

\begin{appendices}
\begin{samepage}
{\Large \bf Appendices}\nopagebreak[4]
\section{General facts}
\label{s:app-gen}
\end{samepage}
In this section we collect several results from the literature that we use frequently or in a modified way. Recall from Section~\ref{s:bno} that $(S_i)$ under $P_x$ denotes simple random walk on $\bbZ^2$ started in $x$, and $\tau^A$ the exit time from $A\subset\bbZ^2$.

\subsection{Extreme value theory for the DGFF}

We need an upper bound for the tail of the DGFF maximum. Recall that for $A\subsetneq \bbZ^2$, we denote by $h^A$ under $\bbP$ a DGFF on $A$ with covariances given by~\eqref{e:1.1a} and zero mean.
For $B\subset A$, we write $\varphi^{A,B}$ for the binding field $\bbE(h^A\mid {h^A}_{A\setminus B})$.
\begin{lem}
\label{l:DGFF-ut}
Let $\eps\in(0,1)$. There exist constants $C=C_\eps<\infty$ and $c=c_\eps>0$ such that for all $U\in\frU_\eps^0$, $V\in\frV_\eps$, $k,n\geq 0$ and $t>0$,
\begin{equation}
\label{e:A1}
\bbP\big(\max h^{U_n} > m_n+t\big)\leq C\rme^{-c t}
\end{equation}
and
\begin{equation}
\label{e:A1-o}
\bbP\big(\max_{(2\eps^{-1}\rmB)_k} h^{V^-_k} > m_n+t\big)\leq C\rme^{-c t} \,.
\end{equation}
\end{lem}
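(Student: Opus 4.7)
The plan is to deduce both bounds from the uniform Ding--Zeitouni exponential upper-tail estimate for the DGFF maximum on well-behaved scale-$n$ domains (\cite{DZ14}, as also invoked around~\eqref{e:ml}), combined, when necessary, with a Gibbs--Markov reduction to a finite subdomain and the Borell--TIS Gaussian concentration inequality.

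For~\eqref{e:A1}, any $U\in\frU^0_\eps$ satisfies $\rmB(0,\eps)\subset U\subset\rmB(0,\eps^{-1})$, so the scaled domains $\{U_n:\;U\in\frU^0_\eps\}$ lie in the class of scale-$n$ domains to which the uniform version of~\cite{DZ14} applies; the bound $\bbP(\max h^{U_n}>m_n+t)\leq C\rme^{-ct}$ with $C,c$ depending only on $\eps$ then follows directly. The uniform-in-$U$ pointwise variance bound $G_{U_n}(x,x)\leq gn+C_\eps$, which follows from $U_n\subset(\eps^{-1}\rmB)_n$ together with~\eqref{e:444} and~\eqref{e:462}, lets one, if needed, sharpen this to a Gaussian tail via Borell--TIS.

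For~\eqref{e:A1-o}, the additional issue is that $V^-_k$ is infinite, which I would handle by Gibbs--Markov reduction to a finite annular scale-$k$ domain. Set $W:=V^-_k\cap(3\eps^{-1}\rmB)_k$. On $W$ one has the decomposition $h^{V^-_k}\eqd h^W+\varphi^{V^-_k,W}$ with independent summands. Since $(2\eps^{-1}\rmB)_k$ sits at Euclidean distance of order $\rme^k$ from $\partial(3\eps^{-1}\rmB)_k$, standard Poisson kernel estimates show that the Green function difference $G_{V^-_k}(x,x)-G_W(x,x)$ is bounded by a constant depending only on $\eps$ uniformly in $x\in(2\eps^{-1}\rmB)_k$ and in $V\in\frV_\eps$. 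Borell--TIS (Theorem~2.1.1 of~\cite{AdlerTaylor}) consequently yields a Gaussian upper tail for $\max_{(2\eps^{-1}\rmB)_k}\varphi^{V^-_k,W}$. For $\max_{(2\eps^{-1}\rmB)_k} h^W$ one invokes the uniform Ding--Zeitouni estimate as in the first part, after placing the annular scale-$k$ domain $W$ in the appropriate uniformly controlled class. A union bound over the two tails then produces~\eqref{e:A1-o}.

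The main obstacle is less any single computation and more invoking the uniformity of~\cite{DZ14} over the domain classes relevant here, in particular over annular scale-$k$ domains in the second part. This is handled by one further Gibbs--Markov reduction to a scale-$k$ ball containing $W$, where the DGFF maximum is controlled by the uniform version of~\cite{DZ14} directly, while the binding field has bounded pointwise variance in the interior and its contribution to the max is absorbed by Borell--TIS into an additive $O_\eps(1)$ term.
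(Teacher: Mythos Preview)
Your outline for~\eqref{e:A1} and your first Gibbs--Markov reduction for~\eqref{e:A1-o} match the paper's approach. The gap is in your second reduction, where you pass from the annular domain $W=V^-_k\cap(3\eps^{-1}\rmB)_k$ to a scale-$k$ ball $B\supset W$ and claim the binding field $\varphi^{B,W}$ has bounded pointwise variance on the region where the maximum is taken, so that Borell--TIS absorbs it into an $O_\eps(1)$ shift. This fails: the set $(2\eps^{-1}\rmB)_k\cap V^-_k$ over which you maximize contains points $x$ at lattice distance $O(1)$ from the inner boundary $\partial V^-_k$ of $W$, and at such $x$ one has $G_W(x,x)=O(1)$ while $G_B(x,x)=gk+O_\eps(1)$, so $\Var\,\varphi^{B,W}(x)=G_B(x,x)-G_W(x,x)$ is of order $k$. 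Borell--TIS then only yields a tail of the form $\exp(-c t^2/k)$, which is useless for~\eqref{e:A1-o}. (Contrast this with your first binding field $\varphi^{V^-_k,W}$, whose variance \emph{is} bounded on $(2\eps^{-1}\rmB)_k$: there the two domains share the inner boundary, so the Green function difference near $\partial V^-_k$ stays $O_\eps(1)$.)

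The paper handles this step without any variance control on $\varphi^{B,W}$: since $\varphi^{B,W}$ is centered and independent of $h^W$, conditionally on $h^W$ the value $\varphi^{B,W}(x^*)$ at the argmax $x^*$ of $h^W$ over $(2\eps^{-1}\rmB)_k$ is a centered Gaussian and hence nonnegative with probability at least $1/2$. On that event $h^B(x^*)\geq h^W(x^*)$, giving
\[
\tfrac12\,\bbP\Big(\max_{(2\eps^{-1}\rmB)_k} h^{W} > s\Big)\ \leq\ \bbP\Big(\max_{(2\eps^{-1}\rmB)_k} h^{B} > s\Big),
\]
and the right-hand side is bounded by~\eqref{e:A1}. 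Replacing your Borell--TIS step with this ``factor-$1/2$'' argument closes the gap.
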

\begin{proof}
Assertion~\eqref{e:A1} follows e.\,g.\ from Lemma~8.3 of~\cite{Bi-LN} and the Gibbs-Markov decomposition (Lemma~\ref{l:GM}). For assertion~\eqref{e:A1-o}, we decompose
$h^{V^-_k}$ as $h^{V^-_k\cap (4\eps^{-1}\rmB)_k}+\varphi^{V^-_k,V^-_k\cap (4\eps^{-1}\rmB)_k}$ by Gibbs-Markov and estimate
\begin{multline}
\bbP\big(\max_{(2\eps^{-1}\rmB)_k} h^{V^-_k} > m_n+t\big)\\
\leq \bbP\big(\max_{(2\eps^{-1}\rmB)_k} h^{V^-_k\cap (4\eps^{-1}\rmB)_k} > m_n+t/2\big)
+\bbP\big(\max_{(2\eps^{-1}\rmB)_k} \varphi^{V^-_k,V^-_k\cap (4\eps^{-1}\rmB)_k} > t/2\big)\,.
\end{multline}
The second summand on the right-hand side of the last display is bounded from above by $C_\eps\rme^{-c_\eps t^2}$ by Lemma~\ref{l:VV-fluct} (where we let $\wt n\to\infty$). For the first summand, we Gibbs-Markov in
\begin{equation}
\label{e:DGFF-ut-GMp}
\tfrac12 \bbP\big(\max_{(2\eps^{-1}\rmB)_k} h^{V^-_k\cap (4\eps^{-1}\rmB)_k} > m_n+t/2\big)\leq
\bbP\big(\max_{(2\eps^{-1}\rmB)_k} h^{(4\eps^{-1}\rmB)_k} > m_n+t/2\big)\,,
\end{equation}
where the factor $\tfrac12$ stands for the probability that at $\arg \max_{(2\eps^{-1}\rmB)_k} h^{V^-_k\cap (4\eps^{-1}\rmB)_k}$, the binding field
$\varphi^{(4\eps^{-1}\rmB)_k,(4\eps^{-1}\rmB)_k\cap V^-_k}$ is positive. Then we bound the right-hand side of~\eqref{e:DGFF-ut-GMp} by~\eqref{e:A1}.
\end{proof}

We also need an upper bound for the left tail. To this aim, we build on the proof of Theorem~1.1 of Ding~\cite{Di13} where (in particular) double exponential bounds for the left tail of the global maximum of the DGFF are shown for moderately small values. In the following lemma, we consider the maximum in a subdomain, and we state the bound for the full left tail by using also the Borell-TIS bound.
\begin{lem}
\label{l:Ding}
Let $\eps\in(0,1)$.
There exists $C=C_\eps<\infty$ such that for all domains $U,V\in\frD_\eps$,
$x\in U^\eps\cap\eps^{2}\rmB$, 
$0\leq k\leq n$
with $\rmB(x,\eps/2)_n\subset V^{\eps,-}_k$,
and for all $t>0$,
we have
\begin{equation}
\label{e:Ding}
\bbP\Big(\max_{\rmB(x,\eps/2)_n} h^{U_n} -m_{n}\leq -t\Big)\leq C\rme^{-t^{2-\eps}}\,,
\end{equation}
\begin{equation}
\label{e:DingV}
\bbP\Big(\max_{\rmB(x,\eps/2)_n} h^{U_n\cap V^-_k} -m_{n}\leq -t\Big)
\leq C\rme^{-t^{2-\eps}}\,,
\end{equation}
and for all
$y\in V^{-,\eps}\cap\eps^{-2}\rmB$ with
$\rmB(y,\eps/2)_k\subset U^\eps_n$, we have
\begin{equation}
\label{e:DingV-o}
\bbP\Big(\max_{\rmB(y,\eps/2)_k} h^{U_n\cap V^-_k} -m_{k}\leq -t\Big)
\leq C\rme^{-t^{2-\eps}}\,.
\end{equation}
\end{lem}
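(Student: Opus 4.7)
The bound $e^{-t^{2-\eps}}$ has the shape of a Gaussian lower tail whose effective variance is of order $t^\eps$, which suggests a multi-scale reduction in which a binding field at a sub-ball of scale $n-t^\eps$ about $x$ carries the Gaussian tail. The plan is to combine this scale-selection argument (which handles moderate-to-large $t$) with Ding's double-exponential bound (for small $t$) and Borell-TIS (for very large $t$), all linked by the Gibbs-Markov decomposition.

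To prove~\eqref{e:Ding}, fix $j\in[1,n+\log(\eps/2)]$ to be chosen and set $B_j:=\rmB(x,\rme^{-j}\eps/2)$. By the Gibbs-Markov property (Lemma~\ref{l:GM}),
\begin{equation*}
h^{U_n}\big|_{(B_j)_n}\eqd h^{(B_j)_n}+\varphi\,,\qquad \varphi:=\varphi^{U_n,(B_j)_n}\,,
\end{equation*}
with the two fields independent and $\varphi$ harmonic on $(B_j)_n$. Using $\varphi(y)\geq\varphi(x)-\osc_{(B_j)_n}\varphi$, a union bound yields
\begin{multline*}
\bbP\Big(\max_{\rmB(x,\eps/2)_n}h^{U_n}\leq m_n-t\Big)
\leq \bbP\Big(\max_{(B_j)_n}h^{(B_j)_n}\leq m_{n-j}-s\Big)
+\bbP\big(\osc_{(B_j)_n}\varphi\geq\lambda\big)\\
+\bbP\big(\varphi(x)\leq-(t-(m_n-m_{n-j})-s-\lambda)\big)\,.
\end{multline*}
The three terms are bounded, respectively, by Ding's Theorem~1.1 applied to the centered ball $(B_j)_n$ at scale $n-j+O_\eps(1)$, giving $Ce^{-e^{cs}}$ for moderate $s$; by the sub-Gaussian oscillation estimate (Proposition~\ref{p:2.6}), giving $C_\eps e^{-c_\eps\lambda^2}$; and by the Gaussian lower tail of $\varphi(x)$, which is centered with variance $gj+O_\eps(1)$ (Proposition~\ref{p:Cov-bd} together with the potential-theoretic identities of Appendix~\ref{s:tools}).

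Choosing $j:=\lfloor t^\eps\rfloor$, $s:=\lambda:=t^{\eps/2}$, and using $m_n-m_{n-j}=2\sqrt{g}\,j+O(\log n)$, the third term becomes $\exp\{-(1-o(1))t^{2-\eps}/(2g)\}$, which dominates the other two and gives the desired bound in the range $t^\eps\leq n-O_\eps(1)$. For $t\geq n^{1/\eps}$, apply Borell-TIS directly to $\max_{\rmB(x,\eps/2)_n}h^{U_n}$ using $\sup\Var h^{U_n}=gn+O_\eps(1)$ and $\bbE\max h^{U_n}=m_n+O_\eps(1)$ (the latter from~\eqref{e:A1} together with the matching lower estimate on the mean); this produces $e^{-ct^2/n}\leq e^{-t^{2-\eps}}$. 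For $t=O_\eps(1)$, the bound is trivial upon enlarging $C$.

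The variants~\eqref{e:DingV} and~\eqref{e:DingV-o} follow from the same argument applied to $h^{U_n\cap V^-_k}$: the assumption $\rmB(x,\eps/2)_n\subset V^{\eps,-}_k$ (respectively $\rmB(y,\eps/2)_k\subset U^\eps_n$) ensures that the chosen sub-ball at scale $n-j$ about $x$ (respectively at scale $k-j$ about $y$) still lies inside the composite domain, so the Gibbs-Markov decomposition produces a centered DGFF on a ball plus a binding field whose variance at the centre is $gj+O_\eps(1)$ and whose oscillation is sub-Gaussian with constants depending only on $\eps$. The main obstacle is verifying, uniformly in $(U,V,x,y,k,n)$ under the stated geometric conditions, this centre-variance identity and the sub-Gaussian oscillation of the binding field in the presence of the inner boundary $\partial V^-_k$; these are supplied by the covariance estimates of Appendix~\ref{s:tools}, but require careful tracking of the $O_\eps(1)$ constants along the full range of scales $j\in[1,n-O_\eps(1)]$.
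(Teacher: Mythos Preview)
Your overall strategy---Gibbs--Markov at a sub-ball of scale $n-j$ with $j\asymp t^\eps$, combined with Ding's double-exponential bound and Borell--TIS for the extremes---is the same as the paper's. But the union-bound decomposition has a genuine gap as written: with your choice $\lambda=t^{\eps/2}$, the oscillation term gives only
\[
\bbP\big(\osc\varphi\geq\lambda\big)\leq C_\eps\, e^{-c_\eps\lambda^2}=C_\eps\, e^{-c_\eps t^{\eps}},
\]
and since $\eps<2-\eps$ for $\eps<1$, this is \emph{larger} than $e^{-t^{2-\eps}}$ for large $t$. Thus the second term, not the third, dominates your union bound, and you do not reach the target. (A secondary point: the oscillation must be taken over the bulk of $(B_j)_n$, not the full ball---near $\partial(B_j)_n$ the binding field has variance of order $n$, not $j$---and Proposition~\ref{p:2.6} is not literally applicable since $B_j\notin\frD_\eps$; but the underlying Fernique/Borell--TIS argument does give a sub-Gaussian oscillation over the bulk with $O_\eps(1)$ variance, because the $gj$ terms in $\Var\varphi(y)$ and $\Cov(\varphi(y),\varphi(x))$ cancel.)

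The fix is easy once you see it: take $\lambda$ of order $t$ (e.g.\ $\lambda=t/3$); then $\bbP(\osc\geq\lambda)\leq e^{-c_\eps t^2}$, and the subtraction $t-(m_n-m_{n-j})-s-\lambda$ is still $\sim t$, so the third term remains $e^{-(1-o(1))t^{2-\eps}/(2g)}$. The paper sidesteps the oscillation term altogether: it evaluates the binding field at the \emph{random} argmax $\chi$ of the inner field rather than at the fixed centre $x$. Since $\chi$ is independent of $\varphi$ and, by construction (Ding's packing of many sub-balls with designated bulk boxes $B_i$), lies in the bulk where $\Var\varphi\leq gc_1 t^\eps+C_\eps$, the tail $\bbP(\varphi(\chi)\leq -t/2)\leq e^{-ct^{2-\eps}}$ follows in one step without any oscillation estimate.
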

\begin{proof}
We first consider a square domain $E\subset \bbR^2$ of side length $\eps/2$.
Let $E'\subset E$ be a square domain of side length $\eps/4$ with the same center point as $E$.
We show that
\begin{equation}
\label{e:Ding-sq}
\bbP\Big(\max_{E'_n} h^{E_n} -m_n \leq -t\Big)\leq C_\eps\rme^{-t^{2-\eps}}
\end{equation}
for all $n\geq 0$ and $t>0$.

Let $\lambda=t^\eps$. First we consider the case that $\lambda\leq n^{2/3}$ (or equivalently $t\leq n^{\tfrac{2}{3\eps}}$).
Then, as in Section~2.4 of Ding~\cite{Di13}, there exist constants $c_1,c_2,c_3>0$ and disjoint balls
$\mathcal C_i\subset E'_n$ of radius $r$ for $i=1,\ldots,m$,
where $m=\lfloor (2\ell)^{-1}\rme^n\rfloor$, $r =\exp\{n-c_1\lambda+c_2\}$ and
$\ell=\exp\{n-c_1\lambda/3+c_2/3\}$, with the following properties:

For $i=1,\ldots,m$, let $B_i$ be the box of side-length $r/8$ that is centered in the same point as $\mathcal C_i$, let
$\mathcal C=\bigcup_{i=1}^m\mathcal C_i$ and
\begin{equation}
\chi:={\arg\max}_{z\in\bigcup_{i=1}^m B_i}h^{\mathcal C}(z)\,.
\end{equation}
Then we have
\begin{equation}
\label{e:Ding-outer}
\bbP(h^\mathcal C(\chi)\leq m_n-\lambda)\leq
\exp\left(-c_3 \rme^{c_1 \lambda/3}\right)
\end{equation}
which can be seen as a consequence of~(14) in~\cite{Di13}.

Let $t_n>0$ be such that
\begin{equation}
\label{e:Dingtn}
\rme^{-t_n^2}= \exp\Big(-c_3\rme^{c_1 n^{2/3}/3}\Big)\,.
\end{equation}

For $t\in\big[n^{\tfrac{2}{3\eps}}, t_n\big]$, we define $\mathcal C_i$, $B_i$, and $m$ as for $\lambda=n^{2/3}$, and we use monotonicity in $t$ to bound the probability on the left hand side of~\eqref{e:Ding-outer} from above by
$\exp\big(-c_3\rme^{c_1 n^{2/3}/3}\big)$.
Thus the left hand side of~\eqref{e:Ding-outer}
is bounded from above by $C_\eps\exp(- t^2)$ for all $t\in[0, t_n]$.

For all $x\in\bigcup_{i=1}^m B_i$, the Gibbs-Markov property (Lemma~\ref{l:GM}) yields
\begin{equation}
\label{e:Ding-varC}
\Var\varphi^{E_n,\mathcal C}(x)
=\Var h^{E_n}(x)-\Var h^{\mathcal C}(x)\,.
\end{equation}
Analogously e.\,g.\ to Proposition~\ref{p:Cov-bd}, $\Var h^{E_n}(x)\leq gn+ C_\eps$ and
$\Var h^{\mathcal C}(x)\geq gn-gc_1\lambda - C_\eps$.
Inserting these bounds into~\eqref{e:Ding-varC}, we obtain from the Gibbs-Markov property that for $t\in[0, t_n]$, the left-hand side of~\eqref{e:Ding-sq} is bounded from above by
\begin{multline}
\bbP\left(h^{\mathcal C} -m_n \leq- t^{\eps}\right)
+\bbP\left(\varphi^{E_n,\mathcal C}(\chi)\leq - t/2\right)
\leq C_\eps\exp(- t^{2})+\exp\bigg(-\frac{( t/2)^2}{2gc_1 t^{\eps}+C_\eps}\bigg)\\
\leq C_\eps\exp\left(- t^{2-2\eps}\right)\,.
\end{multline}

For $t\geq t_n$, we bound the left-hand side in~\eqref{e:Ding} from above by the Borell-TIS bound
$2e^{-c_\eps t^2/n}$ (which is also mentioned below Theorem~1.1 in~\cite{Di13}).
By choice of $t_n$ in~\eqref{e:Dingtn}, we have $n\leq t_n^{3\eps/2}$, hence the left-hand side in~\eqref{e:Ding} is bounded from above by $C_\eps\rme^{-t^{2-3\eps}}$ for all $t>0$.

To derive~\eqref{e:Ding}, we assume that $x$ is the center point of $E$. Then, by inclusion of events and the Gibbs-Markov property,
\begin{multline}
\label{e:p-Ding-U-E}
\bbP\Big(\max_{\rmB(x,\eps/2)_n} h^{U_n} -m_{n}\leq -t\Big)
\leq \bbP\Big(\max_{E'_n} h^{U_n} -m_{n}\leq -t\Big)\\
\leq \bbP\Big(\max_{E'_n} h^{E_n} -m_{n}\leq -t/2\Big)
+\bbP\Big(\min_{E'_n}\varphi^{U_n,E_n}\leq -t/2\Big)\,.
\end{multline}
The second summand on the right-hand side has a uniformly Gaussian tail, which follows from e.\,g.\ Lemma~4.4 of~\cite{Bi-LN}. This shows assertion~\eqref{e:Ding}. For~\eqref{e:DingV}, we argue analogously using the binding field $\varphi^{U_n\cap V^-_k, E_n}$ for which the same tail bound holds as the intrinsic metric and the variance that are used in the Fernique and Borell-TIS estimates in Lemma~4.4 of~\cite{Bi-LN} can only be smaller than for $\varphi^{U_n,E_n}$. For~\eqref{e:DingV-o}, we assume that $y$ is the center point of $E$ and argue again as in~\eqref{e:p-Ding-U-E}, where the tail bound for $\varphi^{U_n\cap V^-_k, E_n}$ now follows from Lemma~\ref{l:VV-fluct} (where we can replace $\rmB$ with $U\in\frD_\eps$).
\end{proof}

We also need the following statement on the geometry of local extrema which is a variant of Theorem~9.2 of~\cite{Bi-LN}.
\begin{lem}
\label{l:1N}
Let $\eps\in(0,1)$, $\eta,\zeta\geq 0$, and let $(c_n)_{n\geq 0}$ satisfy $c_n=o(\rme^n)$ and $c_n\to\infty$ as $n\to\infty$.
Let $U\in\frU^\eta_\eps$, $A$ open with $\eps\rmB\subset A\subset U$, $V\in\frV_\eps$, $0\leq k<n$,
$u\in\bbR^{\partial U_n}$, $v\in\bbR^{\partial V^-_k}$ satisfy~\eqref{e:uv-as} and~\eqref{e:SDconv-u} for some $\wh u_\infty\in\bbH_\infty(U^\eta)$.
Then we have $\lim_{n-k\to\infty}\bbP_{k,v}^{n,u}(F_\eps)=0$, where $F_\eps$ denotes the event that there exists $y\in U_n\cap V^-_k$ with
$c_n /2 < |y-\arg\max_{\lfloor \rme^n A\rfloor} h| < 2 c_n$ and
$h(y)+ \eps\geq \max_{\lfloor \rme^n A\rfloor} h$.
\end{lem}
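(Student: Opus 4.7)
The plan is to reduce the statement to its analog under zero boundary conditions on a centred disk, and then invoke Theorem~9.2 of~\cite{Bi-LN} (which gives precisely such a clustering statement for the DGFF on a disk). The two technical adaptations required are: (a) stripping the boundary conditions $-m_n+u$ on $\partial U_n$ and $-m_k+v$ on $\partial V^-_k$, and (b) shrinking the outer domain from $U_n$ to a ball while at the same time eliminating the inner hole $V^-_k$ via Gibbs--Markov.

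First, I would write $h=h^{U_n\cap V^-_k}+\frg_{n,k}$ under $\bbP_{k,v}^{n,u}$, where $\frg_{n,k}$ is the harmonic extension of the boundary data. Since $F_\eps$ is an event about relative differences $h(\cdot)-h(\cdot)$ of order $\eps$, any deterministic additive field varying by less than $\eps/4$ on scales $\leq 2c_n$ throughout $\lfloor\rme^n A\rfloor$ can be absorbed. By Lemma~\ref{l:harm-hole} together with standard gradient estimates for discrete harmonic functions (valid because $\lfloor\rme^n A\rfloor$ lies well inside $U^\eta_n\cap V^{-,\zeta}_k$ and $\frg_{n,k}+m_n$ is bounded there), the oscillation of $\frg_{n,k}$ on balls of radius $2c_n$ inside $\lfloor\rme^n A\rfloor$ is $O(c_n\rme^{-n})=o(1)$, so for large enough $n-k$ the event $F_\eps$ relative to $h$ is contained in the analogous event $F_{\eps/2}$ relative to $h^{U_n\cap V^-_k}$.

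Next, I would apply the Gibbs--Markov property to compare $h^{U_n\cap V^-_k}$ with a DGFF on a disk whose domain strictly contains $\lfloor\rme^n A\rfloor$. Fix $r=r(\eps)$ large enough so that $\lfloor\rme^n A\rfloor\subset\rmB_{n-r}$ and so that $\rmB_{n-r}\subset U^\eta_n$ as well as $\rmB^-_{n-r+\log\eps}\supset V^-_k$ (possible thanks to $\eps\rmB\subset A\subset U$ and $V\in\frV_\eps$ together with~\eqref{e:UBBV}, for $n-k$ large). Decompose $h^{U_n\cap V^-_k}$ on $\rmB_{n-r}$ as the sum of the independent fields $h^{\rmB_{n-r}}$ and the binding field $\varphi^{U_n\cap V^-_k,\rmB_{n-r}}$. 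By Lemma~\ref{l:VV-fluct} the latter has a uniformly Gaussian tail and stretched-exponential tail for its oscillation over $\lfloor\rme^n A\rfloor$, so on the high probability event that this oscillation is smaller than $\eps/4$, the event $F_{\eps/2}$ for $h^{U_n\cap V^-_k}$ is contained in $F_\eps$ for $h^{\rmB_{n-r}}$.

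Finally, $h^{\rmB_{n-r}}$ is a DGFF on a centred disk with zero boundary conditions, which is exactly the setting of Theorem~9.2 of~\cite{Bi-LN}: the probability that there are two points at distance in $(c_n/2,2c_n)$ whose values lie within $\eps$ of the maximum (of $h^{\rmB_{n-r}}$ on $\lfloor\rme^n A\rfloor$) tends to $0$ as $n\to\infty$, for any $c_n\to\infty$ with $c_n=o(\rme^n)$. I expect the main subtlety to be verifying that the cited Bi--Laguerre-Nolasco clustering result really applies with the maximum taken over the subset $\lfloor\rme^n A\rfloor$ rather than over the whole $\rmB_{n-r}$; however, since $\eps\rmB\subset A$, the difference between these two maxima is of constant order with overwhelming probability (this is a standard consequence of Lemma~\ref{l:DGFF-ut} and~\ref{l:Ding}), so the required version follows from the disk version essentially without change. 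Combining the three steps yields $\bbP_{k,v}^{n,u}(F_\eps)\to 0$ as $n-k\to\infty$.
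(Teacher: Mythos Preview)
Your overall plan—strip the boundary data, remove the inner hole via Gibbs--Markov, and invoke Theorem~9.2 of~\cite{Bi-LN}—is the paper's plan too. The genuine gap is in your Gibbs--Markov step: you write $h^{U_n\cap V^-_k}=h^{\rmB_{n-r}}+\varphi^{U_n\cap V^-_k,\rmB_{n-r}}$ on $\rmB_{n-r}$, but Gibbs--Markov requires $\rmB_{n-r}\subset U_n\cap V^-_k$, and this fails for every $r$ because $\rmB_{n-r}$ is a disk containing the origin while $U_n\cap V^-_k$ is an annulus that \emph{excludes} a neighbourhood of the origin. (Your auxiliary condition $\rmB^-_{n-r+\log\eps}\supset V^-_k$ is likewise false for fixed $r$ once $n-k$ is large, and Lemma~\ref{l:VV-fluct} as stated does not cover the binding field you write down.) The correct move, which is what the paper does, is to run Gibbs--Markov in the opposite direction: since $U_n\cap V^-_k\subset U_n$, one has $h^{U_n}=h^{U_n\cap V^-_k}+\varphi^{U_n,U_n\cap V^-_k}$, and this binding field---the effect of the inner boundary $\partial V^-_k$ only---is small on $\lfloor\rme^n A\rfloor$ (which sits at scale $\rme^n$, far from $\partial V^-_k$ at scale $\rme^k$). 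Theorem~9.2 is then applied to $h^{U_n}$ directly.

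There is also a smaller slip in your step (a): you claim that $F_\eps$ for $h$ is contained in $F_{\eps/2}$ for $h^{U_n\cap V^-_k}$ once $\frg_{n,k}$ oscillates by less than $\eps/4$ on balls of radius $2c_n$. But subtracting a field whose \emph{global} oscillation on $\lfloor\rme^n A\rfloor$ is merely $O(1)$ can relocate the $\arg\max$ entirely, so the $\arg\max$-based event $F_{\eps/2}$ need not follow. The paper avoids this by passing instead to the event $\wt F=\{\exists\,x,y\in\lfloor\rme^n A\rfloor:\ c_n/2<|x-y|<2c_n,\ h^{U_n}(x)\wedge h^{U_n}(y)\geq m_n-t\}$, which is the form Theorem~9.2 addresses and which is stable under addition of uniformly small fields, and uses Lemma~\ref{l:Ding} to anchor the height via $\max_{\lfloor\rme^n A\rfloor} h^{U_n}\geq m_n-t$ with high probability.
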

\begin{proof}
As $\bbE_{k,v}^{n,u} h
= \heb{-m_n\Ind_{\partial U_n}+u}{\partial U_n\cup\partial V^-_k} + \heb{-m_k\Ind_{\partial V^-_k}+v}{\partial U_n\cup\partial V^-_k}$, Lemma~\ref{l:harm-hole} yields the convergence of $x\mapsto m_n+\bbE_{k,v}^{n,u} h(\lfloor\rme^n x\rfloor)$ to some $\wh u_\infty$ in $\bbL_\infty(U^\eta\cap\eps\rmB)$.
By definition of the DGFF, $h$ under $\bbP_{k,v}^{n,u}$ is distributed as $h^{U_n\cap V_k}+\bbE_{k,v}^{n,u} h$.
By the Gibbs-Markov property, $h^{U_n}$
is distributed as the sum of the fields $\varphi^{U_n,U_n\cap V^-_k}$ and $h^{U_n\cap V^-_k}$. Let $\wt F$ denote the event that there exist $x,x'\in \lfloor \rme^n A \rfloor$ with $c_n/2<|x-y|<2c_n$ and $h^{U_n}(x)\wedge h^{U_n}(y)\geq m_n -t$. From Theorem~9.2 of~\cite{Bi-LN}, we have
$\lim_{n-k\to\infty}\bbP(\wt F)=0$.
Then, by a union bound,
\begin{multline}
\label{e:1N-p9}
\bbP_{k,v}^{n,u}(F_\eps)\leq \bbP(\wt F)+\bbP(\max_{\lfloor \rme^n A \rfloor} h^{U_n}-m_n\leq -t+3\eps)
+\bbP(\max_{\lfloor \rme^n A\rfloor}|\varphi^{U_n,U_n\cap V^-_k}|>\eps)\\
+\Ind_{\sup_{y,y'\in \lfloor\rme^n A\rfloor, |x-y|\leq 2c_n}
|\bbE_{k,v}^{n,u}h(x)-\bbE_{k,v}^{n,u}h(y)|>\eps}\,.
\end{multline}
Let $\eps'>0$. By Lemma~\ref{l:Ding}, we find $t>0$ such that
$\bbP(\max_{\lfloor \rme^n A \rfloor} h^{U_n}-m_n\leq -t+3\eps)\leq \eps'$
for sufficiently large $n-k$.
Analogously to Lemma~\ref{l:VV-fluct}, we have
$\bbP(\max_{\lfloor \rme^n A\rfloor}|\varphi^{U_n,U_n\cap V^-_k}|>\eps/2)<\eps'$ for sufficiently large $n-k$.
The indicator variable on the right-hand side of~\eqref{e:1N-p9} is equal to zero for $n-k$ sufficiently large as $\wh u_\infty$ is harmonic and thus uniformly continuous on $A$. Hence, the right-hand side in~\eqref{e:1N-p9} is bounded by $3\eps'$ for $n-k$ sufficiently large which shows the assertion as $\eps'>0$ was arbitrary.
\end{proof}

The next lemma bounds the probability that the DGFF $h$ exceeds zero on a test set near the boundary of its domain. Here we do not have entropic repulsion as in Proposition~\ref{p:stitch-lb} and therefore the probability only becomes small when the test set is small or the boundary values are low. This is a version of Lemma~B.12 of~\cite{BL-Full} and Lemma~3.8 of~\cite{BDZ16} in our setting under $\bbP_{k,v}^{n,u}$.
\begin{lem}
\label{p:viol}
Let $\eps\in(0,1)$ and $\eta,\zeta\in[0,\eps^{-1}]$. There exists $C=C_\eps<\infty$ such that for all $U\in\frU^\eta_\eps$, $V\in\frV_\eps$, $0\leq k<n$, $u\in\bbR^{\partial U_n}$, $v\in\bbR^{\partial V^-_k}$ with $\osc\,\ol{u}_\eta\leq(n-k)^{1-\eps}$, $\osc\,\ol{v}_\zeta\leq (n-k)^{1-\eps}$, we have
\begin{equation}
\label{e:viol-U}
\bbP_{k,v}^{n,u}\Big(\max_{x\in W_n} h(x) > 0\Big)
\leq C \Leb(W)
(1+|M|)\rme^{\alpha M}
\end{equation}
and
\begin{equation}
\label{e:viol-V}
\bbP_{k,v}^{n,u}\Big(\max_{x\in W'_k} h(x) > 0\Big)
\leq C \Leb(W')
(1+|M|)\rme^{\alpha M}
\end{equation}
for all measurable $W,W'\in\bbR^2$ with $W_n\subset U^\eta_n\cap(\eps^2\rmB^-)_n\cap V^{-,\zeta}_k$ and $W'_k\subset V^{-,\zeta}_k\cap(\eps^{-2}\rmB)_k\cap U^\eta_n$,  where we write
$M= (\ol{u}_\eta(0) + \osc\,\ol{u}_\eta+\osc\,\ol{v}_\zeta)\vee (\ol{v}(\infty)+ \osc\,\ol{v}_\zeta+\osc\,\ol{u}_\eta)$.
\end{lem}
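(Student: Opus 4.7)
My approach is a first-moment estimate in the spirit of Lemma~B.12 of~\cite{BL-Full} and Lemma~3.8 of~\cite{BDZ16}, adapted to the annular domain $U_n\cap V^-_k$ under $\bbP_{k,v}^{n,u}$. First, I will peel off the mean: under $\bbP_{k,v}^{n,u}$ the field is distributed as $h^{U_n\cap V^-_k}+\mu$, where $\mu$ is the bounded harmonic extension of $-m_n\Ind_{\partial U_n}+u-m_k\Ind_{\partial V^-_k}+v$. For $x\in W_n$ the hypothesis $W\subset\eps^2\rmB^-$ forces $|x|\gtrsim\eps^2\rme^n$, so $x$ sits near the outer boundary of the annulus; Lemma~\ref{l:ruin} then gives $P_x(\tau^{V^-_k}<\tau^{U_n})=O_\eps((n-k)^{-1})$. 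Combined with Lemma~\ref{l:UV-U} (to handle the harmonic extension of $u$ near $\partial U_n$) and Lemma~\ref{l:osc-far} (to suppress the $v$-contribution across the annulus), together with the hypotheses $\osc\,\ol u_\eta,\osc\,\ol v_\zeta\leq(n-k)^{1-\eps}$, this will yield $\mu(x)\leq -m_n+M+C_\eps$ uniformly on $W_n$. Hence it suffices to bound $\bbP\big(\max_{W_n}h^{U_n\cap V^-_k}>m_n-M-C_\eps\big)$.

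The next step, and the main work, is the refined first-moment estimate
\[\bbP\big(\max_{W_n}h^{U_n\cap V^-_k}>m_n+s\big)\leq C_\eps\Leb(W)(1+s^+)\rme^{-\alpha s}\qquad (s\geq -C_\eps).\]
I would derive this by a Bramson-type barrier argument. For each $x\in W_n$, the inward concentric decomposition (Proposition~\ref{p:concdec-in}) plus the DRW coupling of Theorem~\ref{t:drw-i} associates a random walk $\cS$ with $h^{U_n\cap V^-_k}(x)$, and the event $\{h^{U_n\cap V^-_k}(x)>m_n+s\}$ forces $\cS$ to end near $m_n+s$. The Gaussian endpoint density contributes a factor of order $\sqrt{n}\,\rme^{-2n-\alpha s}$; summing naively over $|W_n|\leq C\Leb(W)\rme^{2n}$ produces a spurious $n$ coming from $m_n^2/(2gn)=2n-\tfrac32\log n+O(1)$. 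I would cancel this $n$ by restricting the first-moment count to those $x$ whose associated walk additionally stays below the Bramson barrier $m_p+1$ at every intermediate scale $p$; the ballot probability of Appendix~\ref{s:3} is exactly of order $(1+s^+)/n$, supplying the missing compensation. Taking $s=-M-C_\eps$ gives~\eqref{e:viol-U}, and~\eqref{e:viol-V} follows identically with the outward decomposition of Theorem~\ref{t:drw-o} in place of the inward one and $W'_k$ playing the role of $W_n$.

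\textbf{Main obstacle.} The heart of the argument is precisely the barrier correction described above. Without it, even the sharpest Mills-ratio tail for a single point only gives $\bbP(h^{U_n\cap V^-_k}(x)>m_n-M-C_\eps)\lesssim n\,\rme^{-2n+\alpha M}$, which summed over the $\sim\rme^{2n}\Leb(W)$ lattice points in $W_n$ is weaker than the lemma requires by a factor of $n$. Producing the missing factor $(1+|M|)/n$ demands controlling the joint event that the walk both reaches the appropriate level \emph{and} respects the Bramson barrier at all intermediate scales --- which is exactly what the ballot estimates in Appendix~\ref{s:3} are built to provide, so the difficulty is locating the sharp barrier estimate and threading the decoration tails through it, rather than inventing new machinery.
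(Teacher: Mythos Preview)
Your mean-peeling step matches the paper's: both reduce~\eqref{e:viol-U} to bounding $\bbP\big(\max_{W_n}h^{U_n\cap V^-_k}>m_n-M-C_\eps\big)$ via Lemma~\ref{l:UV-U}. The difference comes after. The paper does not re-derive the first-moment barrier estimate at all; it simply cites Lemma~B.12 of~\cite{BL-Full} as a black box, so the proof of~\eqref{e:viol-U} is two lines. Your plan to rebuild that estimate from the concentric decomposition and a ballot bound is exactly how Lemma~B.12 is proved in~\cite{BL-Full}, so it would work, but it is a substantially longer route.

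One caution: the specific tool you name, Theorem~\ref{t:drw-i}, is not quite the right citation. That theorem packages the concentric decomposition so that $\cS_p+\cD_p=\max_{A_p}h$, i.e.\ it encodes the \emph{maximum over each annulus}, which is what the ballot event needs. For the single-point tail $\{h(x)>m_n+s\}$ you instead want the plainer decomposition $h(x)=\sum_p\varphi_p(x)+h_q(x)$ from Proposition~\ref{p:concdec-in}, whose partial sums at $x$ form the random walk against which the barrier is imposed. The ballot input you need is then just the undecorated Proposition~\ref{p:4.1}, not the DRW ballot theorems.

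For~\eqref{e:viol-V} the two approaches genuinely diverge. You propose to mirror the argument with the outward decomposition of Theorem~\ref{t:drw-o}; this is plausible but would require an ``outward'' analogue of Lemma~B.12 for the unbounded domain $V^-_k$, which is not in the paper. The paper instead avoids this by conditioning on $h_{\partial(2\eps^{-2}\rmB)_k}$ via Gibbs--Markov, integrating against the Gaussian law of $\he{h}{\partial(2\eps^{-2}\rmB)_k}(0)$ and its oscillation (controlled by Propositions~\ref{p:Cov-bd} and~\ref{p:2.6}), and for each conditioned value applying the already-established~\eqref{e:viol-U} on the bounded domain $(2\eps^{-2}\rmB)_k\cap V^-_k$. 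This reduction to a bounded domain is the key trick you are missing for the inner case.
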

\begin{proof}
By Lemma~\ref{l:UV-U}, we have that
\begin{equation}
\max_{W_n}\heb{(-m_n+u)1_{\partial U_n} + (-m_k+v)1_{\partial V^-_k}}{\partial U_n\cup\partial V^-_k}\leq
-m_n + M + C_\eps\,.
\end{equation}
Then the left-hand side of~\eqref{e:viol-U} is bounded from above by the left-hand side of
\begin{equation}
\bbP\Big(\max_{W_n} h^{U_n\cap V^-_k}> m_n - M -C_\eps \Big)
\leq C_\eps\Leb(W)(1+|M|)\rme^{-\alpha M}\,,
\end{equation}
where the last inequality follows from Lemma~B.12 of~\cite{BL-Full}.

To show~\eqref{e:viol-V}, we assume w.\,l.\,o.\,g.\ that $2\eps^{-2}\rmB\subset U^\eta$ (otherwise we can argue as for~\eqref{e:viol-U}).
Let $\frg$ be the harmonic function on $U_n\cap V^-_k$ with boundary values $u$ on $\partial U_n$ and $v$ on $\partial V^-_k$. Again from Lemma~\ref{l:UV-U}, we obtain $\frg\leq M + C_\eps$ on $U^\eta_n\cap V^{-, \zeta}_k$. By the Gibbs-Markov property and monotonicity in the boundary values,
\begin{multline}
\label{e:p-p:viol-GM}
\bbP_{k,v}^{n,u}\Big(\max_{x\in W'_n} h(x) > 0\Big)
=\bbP_{k,0}^{n,0}\Big(\max_{x\in W'_n} h(x) +\frg(x)> 0\Big)\\
=\int \bbP_{k,0}^{n,0}\Big(h_{\partial (2\eps^{-2}\rmB)_k} +m_k \in\rmd w\Big)
\bbP_{V,k,0}^{2\eps^{-2}\rmB,k,w}\Big(\max_{x\in W'_n} h(x) + M > 0\Big)\\
\leq \int \bbP_{k,0}^{n,0}\Big(\he{h}{\partial (2\eps^{-2}\rmB)_k}(0)+m_k \in\rmd r\Big)
\sum_{a=1}^\infty
\bbP_{k,0}^{n,0}\Big(\osc_{(\eps^{-2}B)_k} h \geq a
\,\Big|\,\he{h}{\partial (2\eps^{-2}\rmB)_k}(0) +m_k\in\rmd r\Big)\\
\times
\bbP_{V,k,|r|+a}^{2\eps^{-2}\rmB,k,|r|+a}\Big(\max_{x\in W'_n} h(x) + M > 0\Big)\,.
\end{multline}
Using Propositions~\ref{p:Cov-bd} and~\ref{p:2.6} for the first two probabilities in the last expression and arguing as for~\eqref{e:viol-U} for the third probability, we further bound~\eqref{e:p-p:viol-GM} by
\begin{equation}
C_\eps\int\rme^{-c_\eps r^2}\sum_{a=1}^\infty \rme^{-(\alpha + 1) a + (\alpha +1 ) |r|}
\Leb(W')(1+a+|r|)(1+|M|)\rme^{\alpha (a+|r|)}\rme^{\alpha M}
\end{equation}
which implies~\eqref{e:viol-V}.
\end{proof}

\subsection{Discrete harmonic analysis}
First we need an estimate for the probability that simple random walk $S$ on $U_n\cap V^-_k$ started in a point $x$ at scale $l$ reaches $U_n$ before $V^-_k$. This is essentially the ruin probability for simple random walk on $\{k,\ldots,n\}$. The following lemma is a variant of Proposition~6.4.1 of~\cite{LaLi}, we consider more general sets $U$, $V$ instead of balls at the cost of a larger error. We recall that $\Pi_U$ denotes the continuum Poisson kernel, i.\,e.\ $\Pi_U(x,\cdot)$ is the exit distribution from the domain $U$ of Brownian motion started at $x$.
\begin{lem}
\label{l:ruin}
Let $\eps\in(0,1)$. Then there exists $C=C_\eps<\infty$ such that for all $U\in\frU_\eps^0$, $V\in\frV_\eps$, $0\leq k<n$,
the following holds:
\begin{enumerate}[label=(\roman{*}), ref=(\roman{*})]
\item\label{i:ruin-C}
For all  
$x\in U_n\cap V^{-}_k$, we have
\begin{equation}
\Big|P_x\big(S_{\tau^{U_n\cap V^-_k}}\in\partial U_n\big)-\frac{\log |x|-k}{n-k}\Big|
\leq C(n-k)^{-1}\,.
\end{equation}
\item\label{i:ruin-U}
For all $x\in U^\eps \cap \eps \rmB^-$, we have
\begin{equation}
P_{\lfloor\rme^n x\rfloor}\big(S_{\tau^{U_n\cap V^-_k}}\in\partial V^-_k\big)
=\frac{\int_{\partial U}\Pi_{U}(x,\rmd z) \log|z| - \log |x| +o_\eps(1)}{n-k}
\end{equation}
as $n-k\to\infty$.
\item\label{i:ruin-V}
For all $x\in V^{-,\eps} \cap \eps^{-1} \rmB$, we have
\begin{equation}
P_{\lfloor\rme^n x\rfloor}\big(S_{\tau^{U_n\cap V^-_k}}\in\partial U_n\big)
=\frac{\log|x|-\int_{\partial V^-}\Pi_{V^-}(x,\rmd z)\log|z| + o_\eps(1)}{n-k}
\end{equation}
whenever $n-k\to\infty$ followed by $k\to\infty$.
\end{enumerate}
\end{lem}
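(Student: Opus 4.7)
My plan rests on the fact that the potential kernel $\fra$ is a harmonic function on $\bbZ^2\setminus\{0\}$, so $(\fra(S_{i\wedge\tau}))_i$ is a martingale for $\tau := \tau^{U_n\cap V^-_k}$ as long as $S$ stays away from $0$, which it does here since $0 \in V^-_k$. Optional stopping (justified by standard tail bounds on $\tau$) yields
\begin{equation}
\label{e:plan-ruin}
\fra(x) \;=\; E_x\!\bigl[\fra(S_\tau);\,\tau=\tau^{U_n}\bigr] + E_x\!\bigl[\fra(S_\tau);\,\tau=\tau^{V^-_k}\bigr]\,,
\end{equation}
and I will expand both sides using \eqref{e:444}. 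Since $U,V\in\frD_\eps$, on $\partial U_n$ we have $|S_\tau|=\rme^{n+O_\eps(1)}$, hence $\fra(S_\tau)=gn+g\log|\rme^{-n}S_\tau|+c_0+O_\eps(\rme^{-2n})$, and similarly $\fra(S_\tau)=gk+g\log|\rme^{-k}S_\tau|+c_0+O_\eps(\rme^{-2k})$ on $\partial V^-_k$, with both $\log$'s bounded in absolute value by a constant depending only on $\eps$.

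For part~\ref{i:ruin-C}, I plug these expansions into \eqref{e:plan-ruin} using only the crude bound $|\log|\rme^{-n}S_\tau||\vee|\log|\rme^{-k}S_\tau||\le C_\eps$ and the asymptotic $\fra(x)=g\log|x|+c_0+O(|x|^{-2})$. Letting $p := P_x(\tau=\tau^{V^-_k})$, this gives
\begin{equation}
g\log|x| + O_\eps(1) = gn(1-p) + gkp + O_\eps(1)\,,
\end{equation}
which immediately yields $p=(n-\log|x|)/(n-k)+O_\eps((n-k)^{-1})$, as required. For parts~\ref{i:ruin-U} and \ref{i:ruin-V} I need to upgrade the $O_\eps(1)$ controlling the $\log$-term on the far boundary to its true asymptotic value. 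The natural way is to identify
\begin{equation}
E_y\!\bigl[\log|\rme^{-n}S_{\tau^{U_n}}|;\tau=\tau^{U_n}\bigr] \;\longrightarrow\; \int_{\partial U}\Pi_U(x,\rmd z)\log|z|
\end{equation}
for $y=\lfloor \rme^n x\rfloor$ as $n-k\to\infty$ (and the symmetric statement involving $\Pi_{V^-}(x,\cdot)$ when $k\to\infty$). Since $P_y(\tau=\tau^{V^-_k})\to 0$ by part~\ref{i:ruin-C}, the conditional exit distribution on $\partial U_n$ differs from the unconditional one $P_y(S_{\tau^{U_n}}\in\cdot)$ only by an $O(p)$ error in total variation, so it suffices to establish the convergence of the unconditional harmonic measure of $U_n$ from $\lfloor \rme^n x\rfloor$, weighted by $\log|\rme^{-n}\cdot|$, to its Brownian counterpart on $U$. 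Feeding the resulting identity back into \eqref{e:plan-ruin} and solving for $p$ produces the claimed $1/(n-k)$ asymptotic with the indicated harmonic-measure constant.

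The main obstacle is the convergence of the discrete harmonic measure to the continuum one in a way that is uniform over $U\in\frU_\eps^0$ (resp.\ $V\in\frV_\eps$) and strong enough to integrate the bounded test function $\log|\cdot|$; this is classical but must be invoked carefully, since in part~\ref{i:ruin-V} the invariance principle on $V^-_k$ requires the starting scale $k$ itself to diverge (which is why the statement asks that $k\to\infty$ after $n-k\to\infty$). I would handle this either by citing the discrete-to-continuum harmonic-measure estimate already used elsewhere in the paper (e.g.\ Lemma~A.2 of~\cite{BL-Conf}) together with Proposition~6.6.1 and Theorem~4.4.4 of~\cite{LaLi}, or via the standard KMT-type coupling of the planar random walk with Brownian motion on the relevant scale. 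Everything else — bounding $\log|\rme^{-k}S_\tau|$ uniformly on $\partial V^-_k$ and controlling the $O(p)$ defect between conditional and unconditional measures — is straightforward once the bound from part~\ref{i:ruin-C} is in hand.
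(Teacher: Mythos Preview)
Your proposal is correct and follows essentially the same route as the paper: optional stopping for the $\fra$-martingale, the crude $O_\eps(1)$ expansion for part~\ref{i:ruin-C}, and for parts~\ref{i:ruin-U}--\ref{i:ruin-V} the refinement via $p=o_\eps(1)$ (from part~\ref{i:ruin-C}) to pass from the conditional to the unconditional exit law, followed by convergence of the discrete harmonic measure to the continuum Poisson kernel. The paper invokes its Lemma~\ref{l:BL-Pois} (a uniform strengthening of Lemma~A.1 of~\cite{BL-Conf}, proved via Skorohod coupling) for that last step rather than Lemma~A.2 or KMT, but this is the same ingredient you identified.
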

\begin{proof}
As in the proof of Proposition~6.4.1 of~\cite{LaLi}, we use that
$\Big(\fra\big(S_{i\wedge\tau^{U_n\cap V^-_k}} \big)\Big)_{i\geq 0}$ is a bounded martingale under $P_x$ for any $x\in U_n\cap V^-_k$.
To show assertion~\ref{i:ruin-C}, let $q:=P_x\big(\tau^{V^-_k}<\tau^{U_n}\big)$.
By optional sampling, using that $\tau^{U_n}$ is stochastically bonded by a geometric random variable, we have
\begin{equation}
\label{e:ruin-opts}
\fra(x)= E_x \fra\big(S_{\tau^{U_n\cap V^-_k}}\big)
= (1-q)\, E_x \big(\fra \big(S_{\tau^{U_n}}\big)\,\big|\, \tau^{U_n} \leq \tau^{V^-_k}\big)
+ q\, E_x \big(\fra \big(S_{\tau^{V^-_k}}\big)\,\big|\, \tau^{V^-_k} < \tau^{U_n}\big)\,.
\end{equation}
For assertion~\ref{i:ruin-C}, it suffices to estimate by~\eqref{e:444} the conditional expectations on the right-hand side of the last display by $n+O_\eps(1)$ and $k+O_\eps(1)$, respectively, and the left-hand side by $\log|x|+O_\eps(1)$, and to solve for $1-q$.

To show assertion~\ref{i:ruin-U}, we set
$q:=P_{\lfloor\rme^n x\rfloor}\big(\tau^{V^-_k}<\tau^{U_n}\big)$,
and obtain analogously to~\eqref{e:ruin-opts} that
\begin{equation}
\label{e:p-ruin-U-opt}
gn + g\log|x|=(1-q) gn
+g E_{\lfloor \rme^n x\rfloor}
\log\big|\rme^{-n}S_{\tau^{U_n}}\big|
+ qg(k+O(1))
+o_\eps(1)
\end{equation}
for $x\in U^\eps\cap\eps\rmB^-$, where we now also used~\eqref{e:444}, that
\begin{equation}
E_{\lfloor \rme^n x\rfloor}\fra\big(S_{\tau^{U_n\cap V^-_k}}\big)
=(1-q)gn+(1-q)g E_{\lfloor \rme^n x\rfloor}
\Big(\log\big|\rme^{-n}S_{\tau^{U_n}}\big|\,\Big|\, \tau^{U_n}\leq \tau^{V^-_k} \Big) + o_\eps(1)
+q(gk+O(1))
\end{equation}
by~\eqref{e:444}, and that
\begin{equation}
E_{\lfloor \rme^n x\rfloor}
\log\big|\rme^{-n}S_{\tau^{U_n}}\big|
-qC_\eps
\leq E_{\lfloor \rme^n x\rfloor}
\Big(\log\big|\rme^{-n}S_{\tau^{U_n}}\big|\,\Big|\, \tau^{U_n}\leq \tau^{V^-_k} \Big)
\leq (1-q)^{-1}E_{\lfloor \rme^n x\rfloor}
\log\big|\rme^{-n}S_{\tau^{U_n}}\big|\,,
\end{equation}
where $q=o_\eps(1)$ by~\ref{i:ruin-C}, and
$\big|\rme^{-n}S_{\tau^{U_n}}\big|\in[\eps,\eps^{-1}]$ by definition of $\frU^0_\eps$. By Lemma~\ref{l:BL-Pois},
\begin{equation}
\label{e:ruin-p-Pois}
E_{\lfloor \rme^n x\rfloor}
\log\big|\rme^{-n}S_{\tau^{U_n}}\big|
=\int_{\partial U}\Pi_{U}(0,\rmd z)\log|z|+o_\eps(1)
\end{equation}
and~\ref{i:ruin-U} follows by plugging~\eqref{e:ruin-p-Pois} into~\eqref{e:p-ruin-U-opt} and solving for $q$.

Assertion~\ref{i:ruin-V} follows analogously.
\end{proof}

In the next lemma, we work in the setting of Section~\ref{s:concdec}.
We compare the discrete Poisson kernel on the annulus-like set $\Delta_{j}\cap V^-_k$ and the ball-like set $\Delta_j$.
For the latter, we multiply the discrete Poisson kernel with the probability that the random walk that underlies the discrete Poisson kernel first reaches a part of the boundary of the annulus-like set that is also contained in the boundary of the ball-like set.
\begin{lem}
\label{l:Poisson}
Let $\eps\in(0,1)$ and $\eta,\zeta\in[0,\eps^{-1}]$. There exists a constant $C=C_\eps<\infty$ such that
\begin{equation}
\Big|\Pi_{\Delta_j\cap V^-_k}(x,z)-\frac{\rt+1-p}{\rt+1-j}\Pi_{\Delta_j}(0,z)\Big|
\leq C\Pi_{\Delta_j}(0,z)\Big(\frac{1}{\rt+1-j}+(p-j)\rme^{-(p-j)}\Big)
\end{equation}
for all $U\in\frU_\eps^0$, $V\in\frV_\eps$, $0\leq k<n$, and all $j=1,\ldots,\rt -2$, $p=j+2,\ldots,\rt$, $x\in A_{p}$, and $z\in\partial \Delta_j$.
\end{lem}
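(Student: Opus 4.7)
My plan is to combine an optional-stopping identity for the Poisson kernel of $\Delta_j$ with the ruin estimate of Lemma~\ref{l:ruin} and a Harnack-type comparison. Fix $z\in\partial\Delta_j$ and set $f_z(y):=\Pi_{\Delta_j}(y,z)$; this function is bounded on $\ol{\Delta_j}$, harmonic on $\Delta_j$, and satisfies $f_z(w)=\mathbf{1}_{w=z}$ on $\partial\Delta_j$. Since $j\leq \rt-2$, the inclusions in~\eqref{e:UBBV} force $V_k\subset\Delta_j$, so at the exit time $\tau:=\tau^{\Delta_j\cap V^-_k}$ the walker satisfies $S_\tau\in\ol{\Delta_j}$ regardless of whether it exits through $\partial\Delta_j$ or $\partial V^-_k$. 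Optional stopping applied to the bounded martingale $(f_z(S_{i\wedge\tau}))_{i\geq 0}$ then yields
\begin{equation}
\label{e:plan-OS}
\Pi_{\Delta_j}(x,z)=\Pi_{\Delta_j\cap V^-_k}(x,z)+E_x\bigl[\Pi_{\Delta_j}(S_\tau,z);\,S_\tau\in\partial V^-_k\bigr],
\end{equation}
since the contribution from $\{S_\tau\in\partial\Delta_j\}$ reduces to $P_x(S_\tau=z)=\Pi_{\Delta_j\cap V^-_k}(x,z)$.

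Next I would replace each Poisson-kernel value on the right-hand side of~\eqref{e:plan-OS} by $\Pi_{\Delta_j}(0,z)$ via a Harnack comparison. Using Theorem~6.3.8 of~\cite{LaLi} (as invoked by the paper immediately after~\eqref{e:dectailfin}) together with the Green-function representation~\eqref{e:462}, one obtains a bound of the form
\begin{equation}
\Pi_{\Delta_j}(y,z)=\Pi_{\Delta_j}(0,z)\bigl(1+O_\eps\!\bigl(|y|\,\rme^{-(n+\lfloor\log\eps\rfloor-j)}\bigr)\bigr)
\end{equation}
for $y\in\Delta_j$ with $|y|$ much smaller than the radius of $\Delta_j$. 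For $y=x\in A_p$ the relative error is $O_\eps(\rme^{-(p-j)})$, while for $y=S_\tau\in\partial V^-_k$ (which sits near scale $k$) it is $O_\eps(\rme^{-(\rt+1-j)})$ and therefore negligible. In parallel, Lemma~\ref{l:ruin}\ref{i:ruin-C}, applied with $\Delta_j$ playing the role of the outer ball (at scale $n+\lfloor\log\eps\rfloor-j$) and $V^-_k$ as the inner complement, together with the identity $\log|x|=n+\lfloor\log\eps\rfloor-p+O_\eps(1)$ for $x\in A_p$ and the definition~\eqref{e:def-r} of $\rt$, gives
\begin{equation}
1-P_x(S_\tau\in\partial V^-_k)=\frac{\rt+1-p}{\rt+1-j}+O_\eps\!\left(\tfrac{1}{\rt+1-j}\right).
\end{equation}

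Substituting the two Harnack approximations and the ruin estimate into~\eqref{e:plan-OS} collapses it to
\begin{equation}
\Pi_{\Delta_j\cap V^-_k}(x,z)=\Pi_{\Delta_j}(0,z)\left[\frac{\rt+1-p}{\rt+1-j}+O_\eps\!\left(\tfrac{1}{\rt+1-j}+\rme^{-(p-j)}\right)\right],
\end{equation}
which implies the stated bound since $\rme^{-(p-j)}\leq (p-j)\rme^{-(p-j)}$ in the range $p-j\geq 2$ considered in the lemma. The main obstacle I anticipate is this Harnack comparison: Theorem~6.3.8 of~\cite{LaLi} is naturally phrased for nearest-neighbour differences, so promoting it to the bulk-to-centre estimate above either requires a direct discrete disc-potential computation from~\eqref{e:444} and~\eqref{e:462} or a telescoping argument along a dyadic chain of intermediate concentric scales between $|x|$ and $\rme^{n+\lfloor\log\eps\rfloor-j}$. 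The latter route loses a multiplicative factor of $\log(\rme^{n+\lfloor\log\eps\rfloor-j}/|x|)=p-j+O_\eps(1)$, which is exactly the source of the extra $(p-j)$ in the error term as stated.
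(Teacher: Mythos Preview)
Your proposal is correct and essentially reproduces what the paper does: the paper's proof is a two-line reference to Lemma~\ref{l:ruin} together with Proposition~6.4.5 of~\cite{LaLi}, and your optional-stopping identity~\eqref{e:plan-OS} plus the Harnack comparison is exactly the content of that proposition (compare the parallel argument in the paper's proof of Lemma~\ref{l:Poisson-out}, which spells this out for the outward case).

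One small remark: your worry about the Harnack step is somewhat overblown. The direct comparison $\Pi_{\Delta_j}(y,z)=\Pi_{\Delta_j}(0,z)\bigl(1+O(|y|\rme^{-(n+\lfloor\log\eps\rfloor-j)})\bigr)$ for $|y|$ well inside the ball is precisely what (6.23) of~\cite{LaLi} provides (it is just the discrete Poisson-kernel formula for the disc), so there is no need for a dyadic telescoping and no inherent loss of a $(p-j)$ factor in your route; your argument actually yields the slightly stronger error $O(\rme^{-(p-j)})$. The extra $(p-j)$ in the statement reflects how Proposition~6.4.5 is proved in~\cite{LaLi} via a last-exit decomposition iterated across intermediate scales, which is the analogue of your ``route~2'' --- but since $(p-j)\rme^{-(p-j)}\geq \rme^{-(p-j)}$ for $p-j\geq 2$, either route suffices.
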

\begin{proof}
We apply Lemma~\ref{l:ruin} and Proposition~6.4.5 of~\cite{LaLi} ($\mathcal C_n$, $A$, and $\mathcal C_{2m}$ there correspond to our $\Delta_j$, $\Delta_{j}\cap V^-_k$, and $\Delta_{p-1}$, respectively).
\end{proof}
We now give a modification  of the previous lemma in the setting of the outward concentric decomposition from Section~\ref{s:w-outw}.
\begin{lem}
\label{l:Poisson-out}
Let $\eps\in(0,1)$ and $\zeta,\eta\in[0,\eps^{-1}]$. There exists a constant $C=C_\eps<\infty$ such that
\begin{equation}
\Big|\Pi_{\Delta^{\rm o}_{j}\cap U_n}(x,z)-\frac{\rt+1-p}{\rt+1-j}\Pi_{\Delta^{\rm o}_j}(\infty,z)\Big|
\leq C\Pi_{\Delta^{\rm o}_j}(\infty,z)\Big(\frac{1}{\rt+1-j}+(p-j)\rme^{-(p-j)}\Big)
\end{equation}
for all $U\in\frU_\eps^0$, $V\in\frV_\eps$, $0\leq k<n$, and all $j=1,\ldots,\rt -2$, $p=j+2,\ldots,\rt$, $x\in A^{\rm o}_{p}$, and $z\in\partial \Delta^{\rm o}_j$.
\end{lem}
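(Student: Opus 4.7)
The plan is to adapt verbatim the one-line proof of Lemma~\ref{l:Poisson}, replacing its inward ingredients by their outward counterparts. For $x \in A_p^{\rm o}$ with $j+2 \leq p \leq \rt$ and $z \in \partial \Delta_j^{\rm o}$, I would invoke the strong Markov property at $\tau^{\Delta_j^{\rm o}}$ to write
\begin{equation*}
\Pi_{\Delta_j^{\rm o}\cap U_n}(x,z) \;=\; P_x\bigl(\tau^{\Delta_j^{\rm o}} < \tau^{U_n}\bigr) \cdot P_x\bigl(S_{\tau^{\Delta_j^{\rm o}}} = z \,\big|\, \tau^{\Delta_j^{\rm o}} < \tau^{U_n}\bigr),
\end{equation*}
and then compare each factor separately to the corresponding object associated with the unbounded domain $\Delta_j^{\rm o}$ alone.

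For the ``ruin'' factor, I would apply Lemma~\ref{l:ruin}\ref{i:ruin-C} after identifying our outward setup with the one there, taking the outer domain ``$U$'' to be our $U$ and the inner ``$V$'' to be the Euclidean disk whose (discretized) complement is $\Delta_j^{\rm o}$. Since $\log|x| = k + p + O_\eps(1)$ uniformly for $x \in A_p^{\rm o}$ and $\rt = n-k + O_\eps(1)$, this yields
\begin{equation*}
P_x\bigl(\tau^{\Delta_j^{\rm o}} < \tau^{U_n}\bigr) \;=\; \frac{\rt + 1 - p}{\rt + 1 - j} + O_\eps\Bigl(\frac{1}{\rt + 1 - j}\Bigr).
\end{equation*}

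For the conditional hitting distribution, I would appeal to Proposition~6.4.5 of \cite{LaLi}, now with its triple $(\mathcal C_n, A, \mathcal C_{2m})$ playing the roles of $((\Delta_j^{\rm o})^\rmc, \Delta_j^{\rm o}\cap U_n, (\Delta_{p-1}^{\rm o})^\rmc)$ — so the ``inner'' small ball and the outer one are swapped relative to Lemma~\ref{l:Poisson}. The conclusion is that this conditional distribution on $\partial \Delta_j^{\rm o}$ agrees with the harmonic measure from infinity $\Pi_{\Delta_j^{\rm o}}(\infty,\cdot)$ up to a multiplicative error of order $(p-j)\rme^{-(p-j)}$, reflecting the exponential mixing of the walk's distribution on $\partial \Delta_j^{\rm o}$ as it makes many excursions between the scales $j$ and $p-1$ before being absorbed. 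Multiplying the two asymptotics and absorbing the cross-terms into the stated error gives the claimed inequality.

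The main (minor) obstacle is verifying that the cited form of Proposition~6.4.5 of \cite{LaLi} still applies when the ``outer'' boundary is $\partial U_n$ for a general $U \in \frU_\eps^0$ rather than a discretized ball. This should follow from inspection of the proof in \cite{LaLi}: the conditional hitting distribution on $\partial \Delta_j^{\rm o}$ depends on the walk's behavior only at scales $\leq p-1$, while the shape of $\partial U_n$ enters only through the ruin event which has already been isolated in the first factor. Nevertheless, this inversion of the inner/outer roles is the one point where a careful rereading of the estimate in \cite{LaLi} is required to confirm that the error has the stated form uniformly in $U$.
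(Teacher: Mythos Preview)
Your overall decomposition --- ruin factor times conditional hitting distribution --- is exactly what the paper does, and your use of Lemma~\ref{l:ruin}\ref{i:ruin-C} for the ruin factor matches the paper's first step~\eqref{e:pPois-o-lruin}.

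Where your proposal diverges is in justifying the conditional hitting estimate. You hope to cite Proposition~6.4.5 of~\cite{LaLi} directly with swapped roles, and you identify the obstacle as the outer absorbing boundary $\partial U_n$ being a general set rather than a ball. That is not the actual issue: in the inward Lemma~\ref{l:Poisson} the corresponding absorbing boundary $\partial V_k^-$ was equally general and Proposition~6.4.5 applied without modification. The real obstruction is that Proposition~6.4.5, via its key input~(6.23) in~\cite{LaLi}, compares the Poisson kernel of a \emph{ball} to the kernel from its center; here the target set $\Delta_j^{\rm o}$ is the \emph{complement} of a ball, the walk approaches $\partial\Delta_j^{\rm o}$ from outside, and the reference measure is the harmonic measure from infinity. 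The paper supplies the needed replacement as a separate result, Lemma~\ref{l:Poisson-diff-o}: for $x\in\rmB_l^-$ and $z\in\partial\rmB_k^-$ with $l\geq k+\eps$, one has $|\Pi_{\rmB_k^-}(x,z)-\Pi_{\rmB_k^-}(\infty,z)|\leq C_\eps\rme^{-(l-k)}\Pi_{\rmB_k^-}(\infty,z)$. With this exterior analogue of~(6.23) in hand, the paper then reruns the \emph{proof} (not the statement) of Proposition~6.4.5 to obtain~\eqref{e:645o}. So your strategy is correct, but the missing ingredient is not a uniformity-in-$U$ bookkeeping check --- it is the exterior Poisson-kernel comparison, which requires its own argument.
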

\label{s:app-harm}
\begin{proof}
By Lemma~\ref{l:ruin},
\begin{equation}
\label{e:pPois-o-lruin}
\Big|\Pi_{\Delta^{\rm o}_j\cap U_n}(x,\partial \Delta^{\rm o}_j)
-\frac{\rt+1-p}{\rt+1-j}\Big|\leq
\frac{C_\eps}{\rt+1-j}
\end{equation}
for $x\in A^{\rm o}_{p}$. Moreover, by Lemma~\ref{l:Poisson-diff-o} below, we have
\begin{equation}
\Pi_{\Delta^{\rm o}_j}(x,z)=\Pi_{\Delta^{\rm o}_j}(\infty,z)\big[1+O(\rme^{-(p-j)})\big]
\end{equation}
for $x\in A^{\rm o}_{p}$. Using this in place of~(6.23) in~\cite{LaLi}, we then proceed as in the proof of Proposition~6.4.5 of~\cite{LaLi} to show that
\begin{equation}
\label{e:645o}
P_x\big(S_{\tau^{\Delta^{\rm o}_{j}\cap U_n}}=z\,\big|\, S_{\tau^{\Delta^{\rm o}_{j}\cap U_n}} \in\partial \Delta^{\rm o}_j \big)
=\Pi_{\Delta^{\rm o}_j}(\infty,z)\big[1+O\big((p-j)\rme^{-(p-j)}\big)\big]\,.
\end{equation}
Combined with~\eqref{e:pPois-o-lruin} this yields the assertion.
\end{proof}

\begin{lem}
\label{l:Poisson-diff-o}
Let $\eps>0$. There exists a constant $C=C_\eps<\infty$ such that for all $k\geq 0$, $l\geq k+\eps$, $x\in\rmB^-_l$, $z\in\partial\rmB^-_k$, we have
\begin{equation}
\label{e:harm-diff-o}
\big|\Pi_{\rmB^-_k}(x,z)-\Pi_{\rmB^-_k}(\infty,z)\big|\leq C\rme^{-(l-k)}\Pi_{\rmB^-_k}(\infty,z)\,.
\end{equation}
\end{lem}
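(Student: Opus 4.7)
This is the outward analog of the interior Poisson-kernel estimate (Lemma~\ref{l:Poisson}), and I would prove it by adapting the potential-kernel argument behind Proposition~6.4.5 of~\cite{LaLi}. The starting point is that for $y\in\partial\rmB^-_k$ fixed and $k\ge 0$, the function $f_y(x):=\fra(x-y)-\fra(x)$ is harmonic on $\rmB^-_k$ (both $0$ and $y$ lie in $(\rmB^-_k)^c$) and, by~\eqref{e:444}, is bounded with
\begin{equation}
f_y(x) = g\log|1-y/x| + O\bigl(|x|^{-2}+|x-y|^{-2}\bigr) \xrightarrow[|x|\to\infty]{} 0.
\end{equation}
Since the 2D SRW is recurrent, $\tau:=\tau^{\rmB^-_k}<\infty$ a.s., so optional stopping of the bounded martingale $f_y(S_{\tau\wedge t})$ gives, for $x\in\rmB^-_k$,
\begin{equation}
\label{eq:plan-OST}
\fra(x-y)-\fra(x) = \sum_{w\in\partial\rmB^-_k}\Pi_{\rmB^-_k}(x,w)\bigl[\fra(w-y)-\fra(w)\bigr].
\end{equation}
Letting $|x|\to\infty$ in~\eqref{eq:plan-OST} yields the same identity with $\Pi_{\rmB^-_k}(\infty,\cdot)$ on the right and $0$ on the left; subtracting gives
\begin{equation}
\label{eq:plan-key}
\sum_{w\in\partial\rmB^-_k}\bigl[\Pi_{\rmB^-_k}(x,w)-\Pi_{\rmB^-_k}(\infty,w)\bigr]\fra(w-y) \;=\; \fra(x-y)-\fra(x).
\end{equation}
Expanding $\log|1-y/x| = -\mathrm{Re}(y/x) + O((|y|/|x|)^2)$ in~\eqref{e:444}, the right-hand side of~\eqref{eq:plan-key} equals $-g\,\mathrm{Re}(y/x) + O(\rme^{-2(l-k)})$, of uniform size $O(\rme^{-(l-k)})$ for $x\in\rmB^-_l$ and $y\in\partial\rmB^-_k$.

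To pass from the ``integrated'' identity~\eqref{eq:plan-key} to the pointwise bound~\eqref{e:harm-diff-o}, I would use the Green-function representations
\begin{equation}
\Pi_{\rmB^-_k}(x,z)=\tfrac14\!\!\!\sum_{w\sim z,\,w\in\rmB^-_k}\!\!G_{\rmB^-_k}(x,w),\qquad G_{\rmB^-_k}(x,w)=\!\!\!\sum_{z'\in\partial\rmB^-_k}\!\!\Pi_{\rmB^-_k}(x,z')\fra(z'-w)-\fra(x-w),
\end{equation}
the second of which is the infinite-domain analog of~\eqref{e:462}, valid by recurrence and boundedness of $\fra(\cdot-w)$ on $\partial\rmB^-_k$. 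Subtracting the $|x|\to\infty$ version of the Green-function identity, using~\eqref{eq:plan-key} with $y=w$ and the asymptotic $\fra(x-w) = g\log|x| + c_0 + O(\rme^{-(l-k)})$ for $x\in\rmB^-_l$ and $|w|\le C\rme^k$, yields an absolute comparison $G_{\rmB^-_k}(x,w)-G_{\rmB^-_k}(\infty,w) = O(\rme^{-(l-k)})$.

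The main obstacle will be sharpening this absolute estimate into the \emph{relative} bound claimed in~\eqref{e:harm-diff-o}: since $\Pi_{\rmB^-_k}(\infty,z)\asymp\rme^{-k}$ on the approximately circular boundary (which has $|\partial\rmB^-_k|\asymp\rme^k$ points), a naive $\ell^\infty$ bound loses exactly a factor of $\rme^k$. The improvement comes from noting that, up to $O(\rme^{-2(l-k)})$ corrections, the RHS of~\eqref{eq:plan-key} is proportional to the first Fourier mode $\mathrm{Re}(y)$ on $\partial\rmB^-_k$, mirroring the continuum Poisson kernel $(|x|^2-R^2)/|x-Re^{i\theta}|^2$ whose difference from its $\infty$-limit is a $\cos(\theta-\arg x)$ of amplitude $O(R/|x|)$. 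A multipole / Fourier decomposition of~\eqref{eq:plan-key} along $\partial\rmB^-_k$, combined with explicit (mode-wise) control of the logarithmic-potential operator $\delta\mapsto\sum_w\delta(w)\fra(w-\cdot)$, then extracts the additional $\rme^{-k}$ factor and produces $|\Pi_{\rmB^-_k}(x,z)-\Pi_{\rmB^-_k}(\infty,z)|\le C\rme^{-(l-k)}\Pi_{\rmB^-_k}(\infty,z)$. A clean alternative, which I would also try, is to reduce the outward case to the inward case (Lemma~\ref{l:Poisson}) via a coupling to an auxiliary walk on a ball of radius $\rme^{l+k}$ followed by inversion at the level of the potential kernel using~\eqref{e:444}.
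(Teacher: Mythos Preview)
Your setup via the harmonic function $f_y(x)=\fra(x-y)-\fra(x)$ and optional stopping is correct, and it does lead to a uniform absolute estimate $G_{\rmB^-_k}(x,w)-G_{\rmB^-_k}(\infty,w)=O(\rme^{-(l-k)})$ for $w$ near $\partial\rmB^-_k$. But as you yourself recognize, this is exactly one factor of $\rme^{k}$ short of the relative bound, and neither of your proposed remedies is actually carried out. The mode-wise inversion of the discrete logarithmic potential on the irregular set $\partial\rmB^-_k$ is not a standard tool: you would need to control all Fourier modes (not just the first, since the $O(\rme^{-2(l-k)})$ remainder is not confined to a single mode) and to handle lattice effects on an only approximately circular boundary. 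The alternative ``inversion at the level of the potential kernel'' has no exact meaning for SRW on $\bbZ^2$. As written, the proof has a genuine gap at its central step.

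The paper closes this gap by a different decomposition that sidesteps any such inversion. Using the last-exit representation (Lemma~6.3.6 of~\cite{LaLi}) through an intermediate shell at scale $k+\eps/2$, one writes $\Pi_{\rmB^-_k}(x,z)=\sum_{w} G_{\rmB^-_k}(x,w)\,q(w,z)$ with $q(w,z)=P_w\big(S_{\mathring\tau^{\rmB^-_k\setminus\rmB^-_{k+\eps/2}}}=z\big)$ independent of $x$. The potential-kernel computation then gives the uniform bound $|G_{\rmB^-_k}(x,w)-G_{\rmB^-_k}(y,w)|\le C_\eps\rme^{k-l}$ for $x,y\in\partial\rmB^-_l$, essentially along the lines you sketched. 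The extra idea you are missing is time-reversal: since the SRW transition kernel is symmetric, $q(w,z)=q(z,w)$, so $\sum_w q(w,z)$ equals the escape probability $P_z\big(S_{\mathring\tau}\in\partial(\rmB^-_{k+\eps/2})^\rmc\big)$, which is $O(\rme^{-k})$ by a gambler's-ruin computation with the potential kernel. Together with the lower bound $\Pi_{\rmB^-_k}(\cdot,z)\ge c_\eps\rme^{-k}$ (a variant of Lemma~6.3.7 in~\cite{LaLi}), this yields~\eqref{e:harm-diff-o} directly, with no Fourier analysis needed.
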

\begin{proof}
By optional stopping, it suffices to estimate the absolute difference
$|\Pi_{\rmB^-_k}(x,z)-\Pi_{\rmB^-_k}(y,z)|$ for $x,y\in \partial \rmB^-_l$ (and $z \in \partial \rmB^-_k$).
By Lemma~6.3.6 of~\cite{LaLi},
\begin{equation}
\label{e:LaLi6.3.6}
\Pi_{\rmB^-_k}(x,z)=\sum_{w\in\rmB^-_{k+\eps/2}}
G_{\rmB^-_k}(x,w)P_w\big(S_{\mathring\tau^{\rmB^-_k\setminus\rmB^-_{k+\eps/2}}}=z\big)\,,
\end{equation}
where $\mathring\tau^A=\inf\{i\geq 1:\: S_i\notin A\}$ for $A\subset\bbZ^2$.
By~\eqref{e:444} and~\eqref{e:462}, for $x,y,w$ as before,
\begin{multline}
\label{e:6362}
G_{\rmB^-_k}(x,w)-G_{\rmB^-_k}(y,w)
=g\Big\{\log|y-w|-
\log|x-w| \\
+\sum_{w'\in\partial \rmB^-_k}
\Pi_{\rmB^-_k}(w,w')
\Big(\log|w'-x|-\log|w'-y|\Big)
\Big\}+O(\rme^{-2l})\,.
\end{multline}
For our $x,y\in\partial \rmB^-_l$ and $w\in \partial ( \rmB^-_k\setminus \rmB^-_{k+\eps/2} )$, we obtain, as $\partial \rmB^-_l$ is spherically symmetric up to lattice effects that are negligible in comparison to the range of possible $w$, that
\begin{equation}
\label{e:6363}
\log|y-w|-
\log|x-w| \leq
\log\frac{\rme^{l}+\rme^{k}
}{\rme^{l}- \rme^{k}}
\leq C_\eps\rme^{k-l}\,.
\end{equation}
From~\eqref{e:LaLi6.3.6}, \eqref{e:6362} and~\eqref{e:6363}, we obtain
\begin{equation}
\label{e:6364}
\Big|\Pi_{\rmB^-_k}(y,z)-\Pi_{\rmB^-_k}(x,z)\Big|
\leq C_\eps\rme^{k-l}
\sum_{w\in \rmB^-_{k+\eps/2}}P_w\big(S_{\mathring\tau^{\rmB^-_k\setminus\rmB^-_{k+\eps/2}}}=z\big)\,.
\end{equation}
Summing over the undirected paths that start in
$w\in\rmB^-_{k+\eps/2}$ and thereafter are in $\rmB^-_k\setminus\rmB^-_{k+\eps/2}$ until they hit $z\in\partial\rmB^-_k$, we obtain
\begin{equation}
P_w\big(S_{\mathring\tau^{\rmB^-_{k}\setminus\rmB^-_{k+\eps/2}}}=z\big)
=P_z\big(S_{\mathring\tau^{\rmB^-_{k}\setminus\rmB^-_{k+\eps/2}}}=w\big)
\end{equation}
from the symmetry of the transition kernel of simple random walk.
Hence,~\eqref{e:6364} is further bounded from above by the left-hand side of
\begin{equation}
\label{e:p-P-out-hit}
C_\eps\rme^{k-l}P_z\big(S_{\mathring\tau^{\rmB^-_k\setminus\rmB^-_{k+\eps/2}}}\in \partial( \rmB^-_{k+\eps/2})^\rmc\big)
\leq C_\eps\rme^{-(l-k)-k}
\leq C_\eps\rme^{-(l-k)}\Pi_{\rmB^-_k}(y,z)\,.
\end{equation}
For the second inequality in~\eqref{e:p-P-out-hit}, we used a straightforward modification of Lemma~6.3.7 of~\cite{LaLi}. As in Lemma~6.3.4 of~\cite{LaLi}, optional stopping, dominated convergence and~\eqref{e:444} yield
\begin{multline}
gk + O\big(\rme^{-k}\big)
=\fra(z)=E_z\fra\big(S_{\mathring\tau^{\rmB^-_k\setminus\rmB^-_{k+\eps/2}}}\big)\\
=P_z\big(S_{\mathring\tau^{\rmB^-_{k}\setminus\rmB^-_{k+\eps/2}}}\in \partial \rmB^-_{k+\eps/2}\big)
g(k+\eps/2)
+\Big(1-P_z\big(S_{\mathring\tau^{\rmB^-_k\setminus\rmB^-_{k+\eps/2}}}\in \partial \rmB^-_{k+\eps/2}\big)\Big)gk
\end{multline}
where the error term $O\big(\rme^{-k}\big)$ comes from the discretization and lattice effects in the approximation of the potential kernel. Solving for $P_z\big(S_{\mathring\tau^{\rmB^-_{k}\setminus\rmB^-_{k+\eps/2}}}\in \partial \rmB^-_{k+\eps/2}\big)$ then yields the first inequality in~\eqref{e:p-P-out-hit}.
\end{proof}
We also use the following strengthening of Lemma~A.1 of~\cite{BL-Conf}:
\begin{lem}
\label{l:BL-Pois}
Let $\eps\in(0,1)$.
For any bounded continuous $f:\bbR^2\to\bbR$,
\begin{equation}
\label{e:BL-Pois}
\sum_{z\in\partial D^\pm_n}\Pi_{D^\pm_n}(\lfloor \rme^n x\rfloor,z)f(\rme^{-n}z)
\underset{n\to\infty}\longrightarrow \int_{\partial D}\Pi_{D^\pm}(x,\rmd z)f(z)
\end{equation}
uniformly in $D\in\mathfrak D_\eps$ and $x\in D^{\pm,\eps}$.
\end{lem}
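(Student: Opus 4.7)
The plan is to derive the uniform statement~\eqref{e:BL-Pois} from its pointwise counterpart (Lemma~A.1 of~\cite{BL-Conf}, giving convergence for each fixed $D$ and $x$) via a compactness and continuity argument over the parameter space of admissible pairs $(D,x)$. I would argue by contradiction: if the convergence failed to be uniform, there would exist $\delta>0$ and sequences $n_k\to\infty$, $D_k\in\frD_\eps$ and $x_k\in D_k^{\pm,\eps}$ for which the absolute difference between the left and right hand sides of~\eqref{e:BL-Pois} stays at least $\delta$. The first task is then to extract, along a subsequence, limits $D_k\to D_\infty$ in $\dH$ and $x_k\to x_\infty$ in the Euclidean norm.

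The required compactness is of Blaschke type: every sequence of complements of elements of $\frD_\eps$ taken inside the compact reference set $\overline{\rmB(0,\eps^{-1}+1)}$ admits a Hausdorff-convergent subsequence. The lower bound $\eps$ on the diameter of each connected component of $\partial D_k$ prevents boundary components from collapsing to points in the limit, so that $D_\infty$ still has only finitely many boundary components each of diameter at least $\eps$; together with~\eqref{e:1.5} (passed to the limit using $\dH$), this yields $D_\infty\in\frD_\eps$. A further subsequence extraction gives $x_\infty$ lying in $D_\infty^{\pm,\eps'}$ for any $\eps'<\eps$ (one may have to mildly relax $\eps$ in a standard way to accommodate the limiting point).

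The second ingredient is joint continuity of the continuum Poisson integral $(D,x)\mapsto \int_{\partial D}\Pi_{D^\pm}(x,\rmd z)f(z)$ under $\dH$-convergence of $D_k$ and Euclidean convergence of $x_k$, as long as the $x_k$ stay in the bulk. This follows by coupling planar Brownian motions started at $x_k$ and $x_\infty$ up to the exit time from $D_k^\pm\cap D_\infty^\pm$ and exploiting the fact that the exit distributions are stable under $\dH$-perturbations of the domain whenever the limiting boundary components all have positive diameter; concretely, the Brownian motion a.s.\ does not spend positive time on $\partial D_\infty$ and reaches $\partial D_k$ at a point within $\dH(D_k,D_\infty)+o(1)$ of its exit point from $D_\infty^\pm$. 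One needs the analogous stability on the discrete side: a coupling of simple random walks started at $\lfloor \rme^{n_k}x_k\rfloor$ and $\lfloor \rme^{n_k}x_\infty\rfloor$ run in $D_k^\pm$ shows, via an invariance-principle argument at scale $n_k$, that
\begin{equation*}
\Big| \sum_{z\in\partial D^\pm_{k,n_k}}\Pi_{D^\pm_{k,n_k}}(\lfloor\rme^{n_k}x_k\rfloor,z)f(\rme^{-n_k}z) - \sum_{z\in\partial D^\pm_{\infty,n_k}}\Pi_{D^\pm_{\infty,n_k}}(\lfloor\rme^{n_k}x_\infty\rfloor,z)f(\rme^{-n_k}z) \Big| \longrightarrow 0\,.
\end{equation*}
With this in hand, applying the pointwise Lemma~A.1 of~\cite{BL-Conf} to $(D_\infty,x_\infty)$ shows that the second term above converges to $\int_{\partial D_\infty}\Pi_{D_\infty^\pm}(x_\infty,\rmd z)f(z)$, while the continuum continuity shows that $\int_{\partial D_k}\Pi_{D_k^\pm}(x_k,\rmd z)f(z)$ converges to the same value. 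Both limits being equal contradicts the standing assumption of a $\delta$-gap.

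The main obstacle is establishing the joint continuity of the continuum exit distribution, together with its discrete counterpart, in a uniform way across the whole class $\frD_\eps$. The uniform lower bound $\eps$ on diameters of boundary components is precisely what prevents pathological discontinuities (slits shrinking to points, thin tentacles, etc.); modulo this regularity the argument reduces to standard weak-convergence considerations for planar Brownian motion on varying domains, as in the proof of Lemma~A.1 of~\cite{BL-Conf}, now carried through diagonally in the triple $(D_k,x_k,n_k)$.
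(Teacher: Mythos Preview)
Your compactness-and-continuity strategy is a reasonable alternative to the paper's approach, but it has a real gap: the parameter space $\{(D,x): D\in\frD_\eps,\ x\in D^{\pm,\eps}\}$ is \emph{not} compact, because $D^{-,\eps}$ is unbounded (recall $D\subset\rmB(0,\eps^{-1})$, so $D^{-,\eps}\supset\rmB(0,\eps^{-1}+\eps)^{\rmc}$). Hence along your contradicting sequence one may have $|x_k|\to\infty$, and your ``further subsequence extraction gives $x_\infty$ lying in $D_\infty^{\pm,\eps'}$'' simply fails in that case. One could try to repair this by compactifying with the point $\infty$, but then one must separately prove that both the discrete and continuum Poisson integrals converge as $x\to\infty$, uniformly in $D\in\frD_\eps$ --- which is essentially a second lemma of the same flavor.

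The paper handles this issue differently and more directly. For $x\in D^{\eps}$ and for $x\in D^{-,\eps}$ bounded (say $|x|\le\rme^{p+1}$ for suitable $p$), the proof of Lemma~A.1 in~\cite{BL-Conf} via Skorohod coupling already yields the \emph{uniform} statement (not just pointwise), using tightness of the Brownian hitting time of $\partial D$ from such starting points. For $|x|$ large, the paper first shows a uniform estimate $\big|\int\Pi_{D^-}(x,\rmd z)f(z)-\int\Pi_{D^-}(\infty,\rmd z)f(z)\big|<\eps'$ for all $x\in\rme^p\rmB^-$, and then uses that the difference of discrete Poisson integrals at $x$ and at a fixed reference point $x'$ is bounded harmonic in $x$ on $\rmB_{n+p}^-$; by the maximum principle it attains its extremum on the bounded annulus $\rmB_{n+p+1}\cap\rmB_{n+p}^-$, which has already been handled. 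This maximum-principle reduction is the key device that your argument is missing.

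A secondary point: your ``discrete stability'' step --- coupling SRWs in $D^\pm_{k,n_k}$ versus $D^\pm_{\infty,n_k}$ --- is itself nontrivial, since the two discrete domains can differ by $O(\rme^{n_k}\dH(D_k,D_\infty))$ lattice points near the boundary; making this precise essentially reproduces the Skorohod-coupling argument from~\cite{BL-Conf}, so the detour through compactness does not obviously save work.
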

\begin{proof}
For $x\in D^\eps$, this can be gleaned from the proof of Lemma~A.1 of~\cite{BL-Conf} (which uses Skorohod coupling with a Brownian path).

To show the assertion for $x\in D^{-,\eps}$, we fix $\eps'>0$ and assume w.\,l.\,o.\,g.\ that $\|f\|_\infty\leq 1$.
Letting $g(w)=\int_{\partial D}\Pi_{D^-}(w,\rmd z)f(z)$ for $w\in\eps^{-1}\rmB^-$, we find by e.\,g.\ \cite[Theorem~3.46]{MP10} and a standard approximation argument some $p>-\log\eps$ such that
\begin{equation}
\bigg|\int_{\partial \eps^{-1}\rmB}\Big[\Pi_{\eps^{-1}\rmB^-}(x,\rmd z)-\Pi_{\eps^{-1}\rmB^-}(\infty,\rmd z)\Big]g(z)\bigg|<\eps'
\end{equation}
for all $x\in\rme^p\rmB^-$. By definition of $g$ and the strong Markov property of the Brownian motion underlying the Poisson kernel, it follows that
\begin{equation}
\label{e:BL-Pois-infty}
\bigg|\int_{\partial D}\Big[\Pi_{D^-}(x,\rmd z)-\Pi_{D^-}(\infty,\rmd z)\Big]f(z)\bigg|<\eps'
\end{equation}
for all $x\in\rme^p\rmB^-$.

As before, we glean from the proof of Lemma~A.1 of~\cite{BL-Conf} that~\eqref{e:BL-Pois} holds uniformly in $D\in\frD_\eps$ and $x\in D^{-,\eps}\cap\rme^{p+1}\rmB$ (to apply the Skorohod coupling, we use that the hitting time of $D^-$ for planar Brownian motion is tight in its starting point $x\in D^{-,\eps}\cap\rme^{p+1}\rmB$, which can be seen by replacing $D$ with its subdomain $\eps \rmB$ so that tightness follows by monotonicity and spherical symmetry from the finiteness of the hitting time for a single starting point $x\in\partial \rme^{p+1}\rmB$).

To include also $x\in\rme^p\rmB^-$, it suffices by~\eqref{e:BL-Pois-infty} to note that for fixed $x'\in \rmB^-_{n+p}\cap \rmB_{n+p+1}$, the difference
$\sum_{z\in\partial D^\pm_n}\Pi_{D^\pm_n}(x,z)f(\rme^{-n}z)-\sum_{z\in\partial D^\pm_n}\Pi_{D^\pm_n}(x',z)f(\rme^{-n}z) $
is bounded harmonic in $x\in\rmB^-_{n+p}$ and hence, by the maximum principle for harmonic functions, attains it absolute maximum on $\rmB_{n+p+1}\cap\rmB^-_{n+p}$, which is, by~\eqref{e:BL-Pois-infty} and the assertion for $x\in\rme^{p+1}\rmB$, bounded by $2\eps'$ for sufficiently large $n$, depending only on $\eps$ and $f$.
\end{proof}

\section{Estimates for the harmonic extensions of the DGFF}
\label{s:tools}

\subsection{Harmonic extensions at intermediate scales}
\label{s:he}

In many applications, the ballot estimates are used on a subset of the domain of definition of the field, with either the inner or outer part of the boundary conditions being on an ``intermediate scale''. To be more precise, if $U_n \cap V_k^-$ is the original domain on which $h$ is considered, with $U,V \in \frD$ and $n,k \geq 0$, then one would like to apply the ballot theorems on $U_n \cap W_l^-$ or $W_l \cap V_k^-$, where $W \in \frD$ and $l \geq 0$ is some ``intermediate'' scale satisfying $k + O(1) \leq l \leq n + O(1)$. 

Thanks to the Gibbs-Markov property and the disjointness of $U_n \cap W_l^-$ and $W_l \cap V_k^-$, conditional on the values of $h$ on $\partial W_l^\pm$, the restrictions of $h$ to the latter sub-domains form independent fields, with each distributed like a DGFF with boundary conditions given by the original ones on $\partial U_n$ and  $\partial V_k^-$ together with the values on $\partial W_l^\pm$ we condition $h$ to take. We may then use the ballot estimates in both sub-domains, but for these to be useful, we must have good control over the harmonic extension of the values of $h$ on $\partial W_l^\pm$, as many of the conditions, bounds and asymptotics in the previous subsection are expressed in terms of the latter. 

The purpose of this subsection is therefore to study $\he{h}{\partial W^\pm_l}$ -- the unique bounded harmonic extension of the values of $h$ on $\partial W^\pm_l$ to the entire plane, 
where $h$ is the DGFF on $U_n \cap V_k^-$ with boundary conditions given as before by $u$ on $\partial U_n$ and $v$ on $\partial V_k^-$. As the harmonic extension is a linear function of the field, it is also Gaussian and hence its law is completely determined by its mean and covariance, which we denote by
\begin{equation}
\label{e:mu}
\mu(x ; u,v) := \mu_{n,l,k}(x ; u,v) = 
\bbE \Big(\he{h}{\partial W^\pm_l}(x) \,\Big|\,
h_{\partial U_{n}} = -m_n + u ,\,
h_{\partial V^-_{k}} = -m_k + v
\Big)
\end{equation}
and
\begin{equation}
\sigma(x,y) = \sigma_{n,l,k}(x,y) := \Cov \Big(\he{h}{\partial W^\pm_l}(x),\, \he{h}{\partial W^\pm_l}(y) \,\Big|\,
h_{\partial U_{n}} = 0,\,
h_{\partial V^-_{k}} = 0 \Big) \,.
\end{equation}

For what follows, we let $\wh{m}_l = \wh{m}_{n,l,k}$ be the linear interpolation at scale $l$ between $m_k$ and $m_n$, that is
\begin{equation}
\label{e:bl}
\wh{m}_l = \wh{m}_{n,l,k} := \frac{(l-k) m_n + (n- l) m_k}{n-k}
\quad ; \qquad 0 \leq k < l < n \,.
\end{equation}
Setting $\wedge_{n,l}=l\wedge (n-l)$ and $\wedge_{n,l,k}=(l-k)\wedge(n-l)$, the following is well known and shows that the difference between $\wh{m}_{n,l,k}$ and $m_l$ is of logarithmic order.
\begin{lem}
\label{l:blml}
For all $0\leq k< l<n<\infty$,
\begin{equation}
-2\sqrt{g}\leq \wh{m}_l - m_l\leq \tfrac32 \sqrt{g}\log^+ \wedge_{n,l,k} +\tfrac{3}{2}\sqrt{g}\,.
\end{equation}
\end{lem}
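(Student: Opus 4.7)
The starting point is to substitute the definitions directly: with $\alpha := (l-k)/(n-k) \in (0,1)$, so that $l = \alpha n + (1-\alpha)k$, a one-line computation collapses the task to bounding the concavity defect of $\log^+$ along the chord, namely
\[
\wh m_l - m_l = \tfrac{3}{4}\sqrt{g}\,\Delta,
\qquad
\Delta := \log^+ l - \alpha\log^+ n - (1-\alpha)\log^+ k.
\]
The lemma reduces to showing $-\log 2 \leq \Delta \leq 2\log^+ \wedge_{n,l,k} + 2$ for all $0\leq k<l<n$.

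For the lower bound I would exploit the elementary sandwich $\log(1+x)-\log 2 \leq \log^+ x \leq \log(1+x)$ for $x\geq 0$, which turns the estimate into the concavity defect of $\log$ at the point $1+l$, modulo an additive $-\log 2$. Since $1+l = \alpha(1+n)+(1-\alpha)(1+k)$, concavity of $\log$ on $(0,\infty)$ makes that defect non-negative, so $\Delta\geq -\log 2$, giving $\wh m_l - m_l \geq -\tfrac{3}{4}\sqrt{g}\log 2 > -2\sqrt{g}$.

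For the upper bound I would first dispose of the easy regimes. If $l<1$ then $\log^+ l = 0$ and $\Delta \leq 0$. If $0 \leq k < 1$ then $\log^+ k = 0$ and the same sandwich applied only to the $n$-term yields $\Delta \leq \log(1+l) - \alpha\log(1+n) + \log 2 \leq (1-\alpha)\log(1+k) + \log 2 \leq 2\log 2$. So one may assume $k\geq 1$, in which case all three $\log^+$'s coincide with $\log$. Writing
\[
\Delta = (1-\alpha)\log(l/k) + \alpha\log(l/n) \leq (1-\alpha)\log(l/k)
\]
and using $\log(l/k) = \log(1 + (l-k)/k) \leq \log 2 + \log^+(l-k)$ (split on whether $k\geq l-k$ or not, with $k\geq 1$), the subcase $\wedge_{n,l,k} = l-k$ is immediate: $\Delta \leq \log 2 + \log^+\wedge_{n,l,k}$, comfortably within the target.

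The remaining subcase $\wedge_{n,l,k} = n-l$ is the genuinely technical one and constitutes the main obstacle. Here I would retain the prefactor $(1-\alpha) = (n-l)/(n-k) \leq (n-l)/(l-k)$ (using $n-k \geq l-k$) and exploit the hypothesis $l-k > n-l$ via the monotonicity of $x\mapsto(\log x)/x$ on $[e,\infty)$. When $l-k\geq n-l\geq e$ this gives
\[
(n-l)\,\frac{\log(l-k)}{l-k} \leq (n-l)\,\frac{\log(n-l)}{n-l} = \log(n-l),
\]
so $(1-\alpha)\log(l-k) \leq \log(n-l)$; the ancillary small-parameter regimes ($l-k<e$ or $n-l<e$) contribute only $O(1)$ and are checked by direct inspection. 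Combining the two subcases yields $\Delta \leq \log^+\wedge_{n,l,k} + O(1)$, which fits inside $2\log^+\wedge_{n,l,k}+2$ and completes the proof.
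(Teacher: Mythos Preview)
Your overall strategy—reducing to the concavity defect $\Delta$ and, for $k\ge 1$, handling the two subcases $\wedge_{n,l,k}=l-k$ versus $\wedge_{n,l,k}=n-l$ via $\Delta \le (1-\alpha)\log(l/k)$ together with the monotonicity of $(\log x)/x$—is sound and genuinely different from the paper's, which instead treats $e_l:=(\tfrac34\sqrt g)^{-1}(\wh m_l-m_l)$ as a function of the continuous variable $l$, bounds its derivative $e'_l=1_{\{l>1\}}/l-(\log^+n-\log^+k)/(n-k)$, and integrates from the endpoints $e_k=e_n=0$. Your lower bound and your upper bound for $k\ge 1$ go through.

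There is, however, a real error in the subcase $0\le k<1$. The step ``$\log(1+l)-\alpha\log(1+n)\le(1-\alpha)\log(1+k)$'' is concavity applied in the wrong direction: since $1+l=\alpha(1+n)+(1-\alpha)(1+k)$, concavity of $\log$ gives $\log(1+l)\ge\alpha\log(1+n)+(1-\alpha)\log(1+k)$, not $\le$. Concretely, take $k=0$, $l=10$, $n=100$: then $\alpha=1/10$ and $\Delta=\log 10-\tfrac1{10}\log 100=\tfrac45\log 10\approx 1.84$, which already exceeds your claimed bound $2\log 2\approx 1.39$. The fix is immediate and keeps your approach intact: for $k<1\le l$ write $\Delta=\log l-\alpha\log n=(1-\alpha)\log l+\alpha\log(l/n)\le(1-\alpha)\log l$, and since $k<1$ forces $l<l-k+1\le 2\max(1,l-k)$, one has $\log l\le\log 2+\log^+(l-k)$; from there your $k\ge 1$ analysis applies verbatim.
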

The proofs for this and the next subsection are given in Section~\ref{s:p-compl}.
The following three propositions provide bounds on the mean, covariances and (conditional) oscillations of $\he{h}{\partial W^\pm_l}$.
\begin{prop}
\label{l:E}
Fix $\eps\in(0,1)$, $\eta,\zeta \in[0,\eps^{-1}]$ and let $U,V\in\frD_\eps$ such that $U$ and $V^-$ are connected.
Let also $k,l,n \geq 0$ with $k < n$ and $W \in \frD_\epsilon$ such that $\partial W_l \subset U^{\eta}_{n}$ and $\partial W^-_l \subset V^{-,\zeta}_{k}$. Then, for all $u \in \bbR^{\partial U_n}$, $v \in \bbR^{\partial V^-_k}$ and $x \in \bbZ^2\cup\{\infty\}$,
\begin{multline}
\bigg| \mu(x ;\: u,v) - \bigg(-\wh{m}_l +
\frac{( l-k)\ol{u}(0)  +  (n- l)\ol{v}(\infty)}{n-k} \bigg) \bigg| \\
\leq 2\osc\,\ol{u}_\eta + 2\osc\,\ol{v}_\zeta +  C\frac{|\ol{u}(0)|+|\ol{v}(\infty)|}{n-k} +C \,.
\end{multline}
Above $C = C_{\epsilon} < \infty$.
\end{prop}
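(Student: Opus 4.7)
The plan is to reduce the claim to a pointwise bound on $\frg(z) := \bbE_{k,v}^{n,u} h(z)$ for $z \in \partial W^\pm_l$, and then to transfer the bound to $x$ by exploiting that the harmonic extension is a convex combination of boundary values. By definition $\frg$ is the unique bounded discrete harmonic function on $U_n \cap V^-_k$ with boundary values $-m_n+u$ on $\partial U_n$ and $-m_k+v$ on $\partial V^-_k$, and since $\partial W^\pm_l$ is a bounded (finite) lattice set situated in an annular shell around $\rme^l \partial W$, linearity of $\bbE_{k,v}^{n,u}$ and of $f \mapsto \he{f}{\partial W^\pm_l}$ give
\[
\mu(x;u,v) \;=\; \he{\frg}{\partial W^\pm_l}(x) \;=\; \sum_{z \in \partial W^\pm_l} \pi(x,z)\, \frg(z),
\]
where $\pi(x,\cdot)$ is the harmonic measure on $\partial W^\pm_l$ from $x$ (a probability measure, reducing to $\delta_x$ when $x \in \partial W^\pm_l$); for $x = \infty$ the same representation holds in the limit, the latter existing by Proposition~6.6.1 of~\cite{LaLi}. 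Since the right-hand side is a convex combination, it suffices to prove the claimed inequality for $\frg(z)$ uniformly over $z \in \partial W^\pm_l$.

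The plan for bounding $\frg(z)$ is to decompose it by superposition into four harmonic pieces corresponding to the boundary data $-m_n \Ind_{\partial U_n}$, $u \Ind_{\partial U_n}$, $-m_k \Ind_{\partial V^-_k}$ and $v \Ind_{\partial V^-_k}$, and to estimate each on $\partial W^\pm_l$ with the help of Lemmas~\ref{l:ruin} and~\ref{l:UV-U}. The two constant pieces combine to $-m_k - (m_n - m_k)\, P_z(\tau^{U_n} \leq \tau^{V^-_k})$, so Lemma~\ref{l:ruin}\ref{i:ruin-C} together with $|m_n - m_k|/(n-k) = O(1)$ gives $-\wh m_{n,\log|z|,k} + O_\eps(1)$; since $|\log|z| - l| \leq C_\eps$ uniformly in $z \in \partial W^\pm_l$ (because $W \in \frD_\eps$ is sandwiched between $\rmB(0,\eps)$ and $\rmB(0,\eps^{-1})$) and $\wh m_{n,\cdot,k}$ is linear, this collapses to $-\wh m_l + O_\eps(1)$. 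The contribution of $u \Ind_{\partial U_n}$ is handled by first applying Lemma~\ref{l:UV-U} to replace $\he{(u \Ind_{\partial U_n})}{\partial U_n \cup \partial V^-_k}(z)$ by $P_z(\tau^{U_n} < \tau^{V^-_k})\, \ol u(0)$ at an additive cost $O(\osc\,\ol u_\eta)$, and then invoking Lemma~\ref{l:ruin}\ref{i:ruin-C} to replace the probability by $(l-k)/(n-k)$ at a further cost $O_\eps(|\ol u(0)|/(n-k))$. The contribution of $v \Ind_{\partial V^-_k}$ is handled symmetrically, yielding the coefficient $(n-l)/(n-k)$ in front of $\ol v(\infty)$ with error $O(\osc\,\ol v_\zeta) + O_\eps(|\ol v(\infty)|/(n-k))$. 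Summing the four estimates gives exactly the target $-\wh m_l + \frac{(l-k)\ol u(0) + (n-l)\ol v(\infty)}{n-k}$ with error bounded by $2\,\osc\,\ol u_\eta + 2\,\osc\,\ol v_\zeta + C_\eps (|\ol u(0)| + |\ol v(\infty)|)/(n-k) + C_\eps$.

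This proof is essentially bookkeeping and no single step is a conceptual obstacle; the main care needed is ensuring that all implicit constants depend only on $\eps$, which is precisely why the uniform non-degeneracy hypotheses $U \in \frU^\eta_\eps$, $V \in \frV_\eps$, $W \in \frD_\eps$ and $\eta,\zeta \leq \eps^{-1}$ are imposed, and why the inclusions $\partial W_l \subset U^\eta_n$ and $\partial W^-_l \subset V^{-,\zeta}_k$ keep $\partial W^\pm_l$ bounded away from both the large-scale boundary $\partial U_n$ and the small-scale boundary $\partial V^-_k$ so that the quantitative versions of Lemmas~\ref{l:ruin} and~\ref{l:UV-U} apply with $\eps$-dependent constants. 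The case $x = \infty$ finally follows from the finite-$x$ bound by passage to the limit $|x| \to \infty$, since the target upper bound is independent of $x$ and the convex-combination representation is continuous in $x$ up to infinity.
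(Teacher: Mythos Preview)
Your proof is correct and follows essentially the same route as the paper's: represent $\mu(x;u,v)$ as a convex combination of $\bbE_{k,v}^{n,u} h(z)$ over $z\in\partial W^\pm_l$, decompose the mean by superposition into the contributions of $-m_n$, $-m_k$, $u$ and $v$, and estimate each on $\partial W^\pm_l$ via Lemmas~\ref{l:UV-U} and~\ref{l:ruin}\ref{i:ruin-C} together with $|\log|z|-l|\le C_\eps$. One small remark: Lemma~\ref{l:UV-U} literally gives $P_z(\tau^{U_n}\le\tau^{V^-_k})\,\ol u(z)$ rather than $\ol u(0)$, so the step ``replace by $P_z(\cdots)\ol u(0)$ at cost $O(\osc\,\ol u_\eta)$'' tacitly uses an extra $|\ol u(z)-\ol u(0)|\le\osc\,\ol u_\eta$, which together with the lemma accounts for the factor $2$ in front of $\osc\,\ol u_\eta$ (and likewise for $v$); also the hypotheses here are $U,V\in\frD_\eps$ with $U,V^-$ connected, not $U\in\frU^\eta_\eps$, $V\in\frV_\eps$ as you wrote in your closing commentary.
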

\begin{prop}
\label{p:Cov-bd}
Let $\eps\in(0,1)$. There exists $C=C_\eps<\infty$ such that for all $U,V\in\frD_\eps$ for which $U$ and $V^-$ are connected, all $0\leq k < n$ and
$W \in \frD_\epsilon$ with $\partial W_l \subset U^{\eps}_{n}$ and $\partial W^-_l \subset V^{-,\eps}_{k}$, and for all $x,y \in W^{\pm,\eps}_{l}\cup\{\infty\}$,
we have
\begin{equation}
\label{e:l:2.4:io}
\bigg| \sigma(x,y) - g \frac{(n- l) ( l-k)}{n-k} \bigg| \leq C\,.
\end{equation}
\end{prop}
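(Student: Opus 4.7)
The plan is to express $\sigma(x,y)$ via the Gibbs--Markov decomposition of $h^{U_n\cap V^-_k}$ at $\partial W^\pm_l$ and reduce the problem to two Green-function estimates. Under $\bbP^{n,0}_{k,0}$, this decomposition writes $h^{U_n\cap V^-_k}$ as the sum of its internal harmonic extension from $\partial W^\pm_l$ and an independent DGFF on $D' := (U_n\cap V^-_k)\setminus\partial W^\pm_l$. Because we condition on zero boundary values on $\partial U_n\cup\partial V^-_k$, this internal extension agrees on the bulk with the bounded extension $\he{h}{\partial W^\pm_l}$ that defines $\sigma$. Therefore, writing $D'_{x,y}$ for the connected component of $D'$ containing both $x$ and $y$ (so either $W_l\cap V^-_k$ or $W^-_l\cap U_n$), and $0$ if they lie in different components,
\begin{equation*}
\sigma(x,y) = G_{U_n\cap V^-_k}(x,y) - G_{D'_{x,y}}(x,y).
\end{equation*}

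To each Green function I would apply the representation~\eqref{e:462}, the asymptotics~\eqref{e:444} of $\fra$, and the ruin estimate Lemma~\ref{l:ruin}\ref{i:ruin-C}. Splitting $\partial(U_n\cap V^-_k)=\partial U_n\cup\partial V^-_k$, Lemma~\ref{l:ruin}\ref{i:ruin-C} produces exit probabilities $(l-k)/(n-k)+O((n-k)^{-1})$ and $(n-l)/(n-k)+O((n-k)^{-1})$ respectively; since for $y$ at scale $l$ one has $|y-w|\asymp\rme^n$ when $w\in\partial U_n$ and $|y-w|\asymp\rme^l$ when $w\in\partial V^-_k$ (using that the hypotheses force $n-l,l-k\geq c_\eps$), the potential kernel contributes $gn+O(1)$ and $gl+O(1)$ in the two regimes. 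Assembling these gives
\begin{equation*}
G_{U_n\cap V^-_k}(x,y) = g\Big(n - \tfrac{(n-l)^2}{n-k}\Big) - \fra(x-y) + O(1),
\end{equation*}
uniformly for $x,y$ at scale $l$. An entirely parallel computation for $G_{D'_{x,y}}(x,y)$, whose boundary is $\partial W_l\cup\partial V^-_k$ (inner case) or $\partial W^-_l\cup\partial U_n$ (outer case), yields $G_{D'_{x,y}}(x,y) = gl - \fra(x-y) + O(1)$. The $\fra(x-y)$ singularities cancel in the subtraction, and the algebra $g(n - (n-l)^2/(n-k)) - gl = g(n-l)(l-k)/(n-k)$ closes the inner case.

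The main obstacle is the case where $D'_{x,y}$ is the unbounded outer component $W^-_l\cap U_n$ (in particular when $y=\infty$), because then~\eqref{e:462} does not apply directly. Here I would instead use the Poisson-kernel representation $\sigma(x,y)=\sum_{z,z'\in\partial W^\pm_l}\Pi^*_x(z)\Pi^*_y(z')\,G_{U_n\cap V^-_k}(z,z')$, where $\Pi^*_x$ is the discrete Poisson kernel of $W_l$ or $W^-_l$ (with $\Pi^*_\infty$ the harmonic measure of $\partial W^-_l$ from infinity). The estimate on $G_{U_n\cap V^-_k}(z,z')$ above is uniform in $z,z'\in\partial W^\pm_l$, so it remains to verify that $\sum_{z,z'}\Pi^*_x(z)\Pi^*_y(z')\fra(z-z')=gl+O(1)$. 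The inner piece reduces, via~\eqref{e:462} applied to $G_{W_l}$ together with $G_{W_l}(x,z')=0$, to $G_{W_l}(x,y)+\fra(x-y)=gl+O(1)$. For the outer piece the key observation is that, since $B(0,\eps)\subset W$ forces $0\notin W^-_l$, the function $z\mapsto \fra(z-z')-\fra(z)$ is bounded (its logarithmic divergences cancel at infinity) and harmonic on $W^-_l$; optional stopping at the a.s.-finite time $\tau^{W^-_l}$ then gives $\sum_z\Pi_{W^-_l}(x,z)\fra(z-z')=\fra(x-z')-\fra(x)+E_x\fra(S_{\tau^{W^-_l}})$, and analogously for the $z'$-sum. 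Uniformity in $W\in\frD_\eps$ of the resulting quantities follows by comparing with the continuum harmonic measure via Lemma~\ref{l:BL-Pois}. Once this outer computation is in place the proof concludes exactly as in the inner case.
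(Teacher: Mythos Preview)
Your proof has two genuine gaps.

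\textbf{The Gibbs--Markov identity is not exact.} The binding field $\varphi$ from the Gibbs--Markov decomposition of $h^{U_n\cap V^-_k}$ at $\partial W^\pm_l$ is harmonic on the component $D'_{x,y}$ with boundary values $h$ on $\partial W^\pm_l$ \emph{and zero on $\partial V^-_k$ (resp.\ $\partial U_n$)}. The bounded extension $\he{h}{\partial W^\pm_l}$, by contrast, is harmonic on all of $W_l$ (resp.\ $W^-_l$) with no constraint at the inner hole. Concretely, for $x\in W^\eps_l$ one has $\varphi(x)=\sum_{z\in\partial W_l}\Pi_{W_l\cap V^-_k}(x,z)h(z)$ whereas $\he{h}{\partial W^\pm_l}(x)=\sum_{z\in\partial W_l}\Pi_{W_l}(x,z)h(z)$, and these Poisson kernels differ because the walk from $x$ can hit $\partial V^-_k$ before $\partial W_l$. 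So your displayed formula $\sigma(x,y)=G_{U_n\cap V^-_k}(x,y)-G_{D'_{x,y}}(x,y)$ is false as an equality; its right-hand side is $\Cov(\varphi(x),\varphi(y))$, not $\sigma(x,y)$. The discrepancy does turn out to be $O_\eps(1)$ (essentially because $\Pi_{W_l\cap V^-_k}(x,\partial V^-_k)=O(1/(l-k))$ by Lemma~\ref{l:ruin}), but you would need to prove this and you have not.

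\textbf{The outer ``analogously'' step fails.} Your first optional-stopping identity is correct since $z'\in\partial W^-_l$ and $0$ both lie outside $W^-_l$. But after summing over $z'$ with weight $\Pi_{W^-_l}(y,z')$, the remaining term $\sum_{z'}\Pi_{W^-_l}(y,z')\fra(x-z')$ cannot be handled ``analogously'': the function $z'\mapsto\fra(x-z')-\fra(z')$ has its singularity at $z'=x\in W^-_l$, so it is not harmonic on $W^-_l$ and optional stopping does not apply. One can instead estimate $\fra(x-z')-\fra(x)=O_\eps(1)$ directly for $x\in W^{-,\eps}_l\cup\{\infty\}$ and $z'\in\partial W^-_l$ (since $|x-z'|/|x|$ is bounded above and below by constants depending only on $\eps$), but that is a different argument than the one you indicated.

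The paper avoids both issues by working with the Poisson-kernel representation~\eqref{e:l:2.4:pVar} from the outset for all $x,y$, obtaining~\eqref{e:l:2.4:GVar}, and then showing that the averaged remainder $\sum_{w,w'}\Pi(x,w)\Pi(y,w')(\fra(w-w')-l)$ is $O_\eps(1)$ via a continuum-limit argument (Lemma~\ref{l:BL-Pois}) together with a separate near-diagonal estimate (Lemma~\ref{l:logdiv}). Your inner-piece trick of collapsing $\sum_{z,z'}\Pi_{W_l}(x,z)\Pi_{W_l}(y,z')\fra(z-z')$ to $G_{W_l}(x,y)+\fra(x-y)=gl+O(1)$ via two applications of~\eqref{e:462} is in fact cleaner than the paper's route for that case; but you still need a correct argument for the outer and mixed cases, and you need to either abandon the Gibbs--Markov shortcut or justify the $O(1)$ error it incurs.
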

The next proposition bounds the oscillation of $\he{h}{\partial W^\pm_l}$ in the bulk.
\begin{prop}
\label{p:2.6}
Let $\eps\in(0,1)$, $\eta,\zeta\in[0,\eps^{-1}]$. There exist $C=C_\eps<\infty$ and $c=c_\eps>0$
such that for all $U,V\in\frD_\eps$ for which $U$ and $V^-$ are connected, all $0\leq k<l<n$ and
$W \in \frD_\epsilon$ with $\partial W_l \subset U^{\eta\vee\eps}_{n}$ and $\partial W^-_l \subset V^{-,\zeta\vee\eps}_{k}$,
and for all $x\in W^{\pm,\eps}_l\cup\{\infty\}$, $w\in\bbR$, $t\geq 0$,
we have
\begin{multline}
\label{e:p:2.6:2}
\bbP \Big(\osc_{W^{\pm,\eps}_{l}} \he{h}{\partial W^\pm_l} > t \,\Big|\, 
h_{\partial U_n} = -m_n + u ,\,
h_{\partial V^-_k} = -m_k + v ,\,
\he{h}{\partial W^\pm_l}(x) = -m_l + w\Big) \\
 \leq C \exp \Big(-c \big((t-C \mu)^+\big)^2\Big)\,,
\end{multline}
where 
\begin{equation}
\mu = \mu_{n,l,k, \eta}(u,v,w) := \frac{|w|+ \,\osc\,\ol{u}_\eta + 
	\osc\,\ol{v}_\zeta}{(n-l)\wedge(l-k)} + \frac{|\ol{u}(0)|}{n- l} +  \frac{|\ol{v}(\infty)|}{ l-k} \,.
\end{equation}
\end{prop}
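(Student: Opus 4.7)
The plan is to decompose $\he{h}{\partial W^\pm_l}$ under the triple conditioning into its conditional mean plus a centered Gaussian residual, bound the oscillation of each piece, and combine via Gaussian concentration. Standard Gaussian regression yields
\begin{equation*}
\he{h}{\partial W^\pm_l}(y)=\wt\mu(y)+R(y),\qquad \wt\mu(y)=\mu_0(y)+\frac{\sigma(y,x)}{\sigma(x,x)}\bigl((-m_l+w)-\mu_0(x)\bigr),
\end{equation*}
where $\mu_0(y)=\bbE\bigl(\he{h}{\partial W^\pm_l}(y)\mid h_{\partial U_n}=-m_n+u,\, h_{\partial V^-_k}=-m_k+v\bigr)$ is, by the Markov property, the bounded harmonic extension to $\bbZ^2$ of $(-m_n+u)\Ind_{\partial U_n}+(-m_k+v)\Ind_{\partial V^-_k}$ evaluated at $y$, and $R$ is a centered Gaussian field independent of the conditioning.

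For $\osc_{W^{\pm,\eps}_l}\wt\mu$ I would combine Lemmas~\ref{l:UV-U} and~\ref{l:ruin} to express $\mu_0$ as a mixture of the outer and inner boundary pieces weighted by the exit distribution of simple random walk on $U_n\cap V^-_k$, and extract an oscillation bound as a sum of $|\ol{u}(0)|/(n-l)$, $|\ol{v}(\infty)|/(l-k)$ terms together with $\osc\,\ol{u}_\eta$- and $\osc\,\ol{v}_\zeta$-contributions whose amplification factors are $O(1/\wedge_{n,l,k})$ for $y\in W^{\pm,\eps}_l$. For the regression correction use Proposition~\ref{p:Cov-bd}: writing $\sigma(\cdot,x)=g(n-l)(l-k)/(n-k)+O(1)$ uniformly on $W^{\pm,\eps}_l\cup\{\infty\}$, the ratio $\sigma(y,x)/\sigma(x,x)$ is $1+O(1/\wedge_{n,l,k})$ as a function of $y$, so its oscillation times $|(-m_l+w)-\mu_0(x)|=O(|w|+\osc\,\ol{u}_\eta+\osc\,\ol{v}_\zeta+(|\ol{u}(0)|+|\ol{v}(\infty)|)/(n-k))$ (the latter via Proposition~\ref{l:E} and Lemma~\ref{l:blml}) contributes $O((|w|+\osc\,\ol{u}_\eta+\osc\,\ol{v}_\zeta)/\wedge_{n,l,k})$. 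Summing the pieces yields $\osc_{W^{\pm,\eps}_l}\wt\mu\leq C\mu$.

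For $\osc R$ the key observation is that after pinning $\he{h}{\partial W^\pm_l}(x)$ the residual has uniformly bounded variance: expanding $\sigma(y,y)$, $\sigma(y,x)$ and $\sigma(x,x)$ all as $g(n-l)(l-k)/(n-k)+O(1)$ via Proposition~\ref{p:Cov-bd}, a straightforward expansion gives $\Var R(y)=\sigma(y,y)-\sigma(y,x)^2/\sigma(x,x)=O(1)$ uniformly in $y\in W^{\pm,\eps}_l$ (the leading terms cancel to order $1/\wedge_{n,l,k}$). Moreover $R$ is harmonic in $y$ on $\bbZ^2\setminus\partial W^\pm_l$, since both $\he{h}{\partial W^\pm_l}(y)$ and $\sigma(y,x)=\sum_{w,w'}a_y(w)a_x(w')\Cov(h(w),h(w'))$ are harmonic in $y$ there (the Poisson weights $a_y(w)$ being harmonic in $y$). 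A discrete Harnack gradient estimate then yields $\Var(R(y)-R(z))\leq C|y-z|^2\rme^{-2l}\,\sup_{y'}\Var R(y')\leq C|y-z|^2\rme^{-2l}$ for $y,z$ at distance $\gtrsim\rme^l$ from $\partial W^\pm_l$. Fernique majorization against the uniform measure on $W^{\pm,\eps}_l$, exactly as in the proof of Lemma~\ref{l:dec-tail}, then gives $\bbE\osc R=O(1)$; combined with Borell-TIS (using the uniform $O(1)$ bound on the intrinsic metric), this produces $\bbP(\osc R>C+t)\leq C\exp(-ct^2)$, and absorbing the additive $C$ into the $C\mu$ term yields~\eqref{e:p:2.6:2}. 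The main technical nuisance I anticipate is the case of $y,z$ lying in the two distinct components of $W^{\pm,\eps}_l$, across which harmonicity of $R$ cannot be used; but for such pairs the uniform $O(1)$ variance bound already suffices, and one only needs to arrange the Fernique majorizing measure to give mass to both components.
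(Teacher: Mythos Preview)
Your approach mirrors the paper's: regress on $\he{h}{\partial W^\pm_l}(x)$ to split off the deterministic conditional mean $\wt\mu$, bound its oscillation, then apply Fernique plus Borell--TIS to the centered residual $R$. Two technical points deserve comment.

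For the intrinsic metric of $R$, the paper proceeds differently and more directly: rather than exploiting harmonicity of $R$, it uses that Gaussian conditioning never increases variances, so the conditional intrinsic metric of $\he{h}{\partial W^\pm_l}$ is dominated by the unconditional one, which is exactly Lemma~\ref{l:metr}. Your Harnack route does work, but only if the gradient estimate is applied \emph{twice to the covariance function} (once in each argument, using $\sup_{y,y'}|\Cov(R(y),R(y'))|\le\sup\Var R=O(1)$). The one-line phrasing ``$R$ harmonic, hence Harnack gives $\Var(R(y)-R(z))\le C|y-z|^2\rme^{-2l}\sup\Var R$'' invites a pathwise reading, which would produce $\bbE[\sup|R|^2]$ in place of $\sup\Var R$ and thus be circular.

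For $\osc\mu_0$, Lemmas~\ref{l:UV-U} and~\ref{l:ruin} alone do not deliver the claimed $O(1/\wedge_{n,l,k})$ factor on the $\osc\ol u_\eta$, $\osc\ol v_\zeta$ contributions. Lemma~\ref{l:UV-U} leaves an error $P_y(\tau^{V^-_k}<\tau^{U_n})\,\osc_{U'}\ol u\approx\tfrac{n-l}{n-k}\,\osc\ol u_\eta$ (taking $U'=U^\eta_n$), which is not $\le C\,\osc\ol u_\eta/\wedge_{n,l,k}$ in general. The missing ingredient is Lemma~\ref{l:osc-far}, which gives $\osc_{\{y\}\cup\partial V^-_k}\ol u\le C\rme^{-(n-l)}\osc\ol u_\eta$ for $y$ at scale $l$; this (and its companion for $\ol v$) is what makes $\osc\mu_0\le C(1+\mu)$ go through. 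Also, your bound on $|(-m_l+w)-\mu_0(x)|$ omits the term $\tfrac{(l-k)|\ol u(0)|+(n-l)|\ol v(\infty)|}{n-k}$ coming from Proposition~\ref{l:E}; after dividing by $\wedge_{n,l,k}$ this produces exactly the $|\ol u(0)|/(n-l)$ and $|\ol v(\infty)|/(l-k)$ pieces of $\mu$, so the final conclusion is unaffected. (The paper's own write-up of the conditional-mean step is also compressed on these points.)
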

We note the following one-sided variant of Proposition~\ref{p:2.6} which will be of use in~\cite{Crit}.
\begin{prop}
\label{p:unc-bd}
Let $\eps\in(0,1)$, $\eta\in[0,\eps^{-1}]$. There exist $C=C_{\eps}<\infty$ and
$c=c_{\eps}>0$ such that for all connected $U\in\frD_\eps$, all $W\in\frD_\eps$, and all
$0 \leq   l < n <\infty$ with $\partial W_l \subset U_n^{\eta\vee\eps}$ and all $t > 0$, $t\in\bbR$, $x\in W^{\pm,\eps}_l\cup\{\infty\}$,
and all boundary conditions $u\in\bbR^{\partial U_n}$, we have
\begin{multline}
\bbP \Big(\osc_{W^{\pm,\eps}_l}\he{h}{\partial W^{\pm}_l} > t \,\Big|\, h_{\partial U_n}=-m_n+u,\,\he{h}{\partial W^{\pm}_l}(x) = -m_l + w\Big) \\
\leq C \exp \Big(-c \big((t-C\mu)^+\big)^2\Big)\,,
\end{multline}
where
\begin{equation}
\mu = \mu(u, w) := \frac{|w|+ |\ol{u}(0)|+\osc\,\ol{u}_\eta}{n- l}\,.
\end{equation}
\end{prop}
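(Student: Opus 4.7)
My plan is to follow the proof scheme of Proposition~\ref{p:2.6}, with simplifications afforded by the absence of the inner boundary. Setting $\phi := \ol{h_{\partial W^\pm_l}}$, I would first record that under $\bbP(\cdot \mid h_{\partial U_n}=-m_n+u)$, $\phi$ is Gaussian with mean $\mu_0(y) := \ol{(-m_n+u)}(y)$ (by the tower property and harmonicity of the harmonic extension) and some covariance $\sigma_0(\cdot,\cdot)$. Further conditioning on $\phi(x) = -m_l + w$ preserves Gaussianity, producing a conditional mean
\begin{equation}
\mu_1(y) := \mu_0(y) + \frac{\sigma_0(y,x)}{\sigma_0(x,x)}\bigl((-m_l+w) - \mu_0(x)\bigr)
\end{equation}
and conditional covariance $\sigma_1(y,y') := \sigma_0(y,y') - \sigma_0(y,x)\sigma_0(x,y')/\sigma_0(x,x)$. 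Since $\osc$ is invariant under constant shifts, I would decompose $\osc_{W^{\pm,\eps}_l} \phi \leq \osc_{W^{\pm,\eps}_l} \mu_1 + \osc_{W^{\pm,\eps}_l} (\phi-\mu_1)$ and handle each piece separately.

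For the fluctuation part $\phi-\mu_1$, I would apply Borell--TIS. The required variance bound
\begin{equation}
\sigma_1(y,y) + \sigma_1(y',y') - 2\sigma_1(y,y') \leq C_\eps
\end{equation}
for $y,y' \in W^{\pm,\eps}_l$ rests on two ingredients: an analog of Proposition~\ref{p:Cov-bd} giving $\sigma_0(x,x) = g(n-l) + O_\eps(1)$, and a ``bulk'' harmonic-measure estimate $|\sigma_0(y,z)-\sigma_0(y',z)| \leq C_\eps$, valid uniformly over $z$ and $y,y' \in W^{\pm,\eps}_l$, which uses the near-equality of $\Pi_{W^\pm_l}(y,\cdot)$ and $\Pi_{W^\pm_l}(y',\cdot)$ for bulk points (cf.\ Lemma~\ref{l:BL-Pois}). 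Borell--TIS then yields $\bbP(\osc(\phi-\mu_1) > \bbE \osc(\phi-\mu_1) + s) \leq C_\eps \exp(-c_\eps s^2)$, and a Fernique majorization (as in the proof of Lemma~\ref{l:dec-tail}) bounds the expectation by $C_\eps$.

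For the deterministic piece $\osc \mu_1$, I would estimate
\begin{equation}
\mu_1(y)-\mu_1(y') = \bigl(\mu_0(y)-\mu_0(y')\bigr) + \frac{\sigma_0(y,x)-\sigma_0(y',x)}{\sigma_0(x,x)}\bigl((-m_l+w)-\mu_0(x)\bigr).
\end{equation}
The first summand is at most $\osc \ol u_\eta + O_\eps(n^{-1}(|\ol u(0)|+\osc\,\ol u_\eta))$ by Lemma~\ref{l:UV-U}; the covariance ratio is $O_\eps(1/(n-l))$ by the bulk estimate above; and $|-m_l+w-\mu_0(x)| \leq (m_n - m_l) + |w| + |\ol u(0)| + \osc \ol u_\eta \leq C_\eps (n-l) + (n-l)\cdot\mu$, using $m_n-m_l = O(n-l)$. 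Collecting, $\osc \mu_1 \leq C_\eps(1+\mu)$, and after absorbing the additive $O_\eps(1)$ into the exponent by enlarging $C$, the desired sub-Gaussian tail follows.

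The main obstacle will be the bulk covariance-difference estimate $|\sigma_0(y,x)-\sigma_0(y',x)| \leq C_\eps$ for $y,y',x \in W^{\pm,\eps}_l$, since the set $W^\pm_l$ is disconnected and one must track harmonic measure separately on $W_l$ and $W^-_l$. The argument is fully analogous to the corresponding step in the proof of Proposition~\ref{p:2.6}; in fact, the absence of the inner boundary makes the bookkeeping easier, as only the scale $n-l$ (rather than $(n-l)\wedge(l-k)$) needs to be tracked.
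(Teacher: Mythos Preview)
Your scheme is essentially the one the paper uses: both adapt the proof of Proposition~\ref{p:2.6}, centering via the Gaussian conditional-mean formula, bounding the expected oscillation of the centered field by Fernique majorization, and the tail by Borell--TIS. The paper's covariance input is Lemma~\ref{l:Var-inw-1} (the one-sided analog of Proposition~\ref{p:Cov-bd}) and its intrinsic-metric input is the one-sided analog of Lemma~\ref{l:metr}; these match the ``analog of Proposition~\ref{p:Cov-bd}'' and Fernique ingredients you describe.

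There is, however, a gap in your mean-oscillation bound. In the one-sided setting one has $\mu_0(y)=-m_n+\ol u(y)$ directly (Lemma~\ref{l:UV-U} is for the two-boundary case and is not the right reference), so $\mu_0(y)-\mu_0(y')=\ol u(y)-\ol u(y')$, and the crude estimate $|\ol u(y)-\ol u(y')|\leq\osc\,\ol u_\eta$ does \emph{not} yield your claimed $\osc\,\mu_1\leq C_\eps(1+\mu)$: since $\mu=(|w|+|\ol u(0)|+\osc\,\ol u_\eta)/(n-l)$, you only get $\osc\,\ol u_\eta\leq (n-l)\mu$, so ``Collecting, $\osc\,\mu_1\leq C_\eps(1+\mu)$'' fails when $n-l$ is large. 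The fix is a gradient estimate for the harmonic function $\ol u$ as in the proof of Lemma~\ref{l:osc-far}: after localizing (by the maximum principle for the bounded harmonic extension on $W^-_l$) to $y,y'$ within $O_\eps(\rme^l)$ of the origin, one has $\rmd(y,\partial U_n)\geq c_\eps\rme^n$ and hence $|\ol u(y)-\ol u(y')|\leq C_\eps\rme^{l-n}\osc\,\ol u_\eta\leq C_\eps\,\mu$, after which your conclusion is correct. (The paper's displayed inequality~\eqref{e:unc-bd-condE} in fact omits this unconditional-mean term altogether, so the same observation is implicitly needed there too.) A smaller point: for the Fernique step you need the metric estimate $\bbE[(\phi(y)-\phi(y'))^2]\leq C_\eps|y-y'|^2\rme^{-2l}$ within each connected component---the analog of Lemma~\ref{l:metr} that the paper states explicitly---whereas the constant variance bound you record suffices only for Borell--TIS.
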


For the proof of Proposition~\ref{p:LR-infty}, we also need an asymptotic estimate for the covariance.
\begin{lem}
\label{l:Cov-as}
Let $\eps\in(0,1)$ and $j\in\bbN$. Uniformly in $U,V,W\in\frD_\eps$ with $W \subset \rme^j U^\eps$ and for which $U$ and $V^-$ are connected, and in $x,y\in W^{\pm,\eps}\cup\{\infty\}$, we have
\begin{multline}
\label{e:Cov-as-n}
\lim_{n-k\to\infty} \sigma_{n,n-j,k}\big(\lfloor \rme^{n-j} x\rfloor , \lfloor \rme^{n-j}y\rfloor)\\
=g \int_{w,w'\in\partial W}
\Pi_{\bbR^2\setminus\partial W}(x,\rmd w)
\Pi_{\bbR^2\setminus\partial W}(y,\rmd w')
\Big(\int_{z\in\partial (\rme^j U)}\Pi_{\rme^j U}(w,\rmd z)
\log|z-w'| -\log |w-w'|\Big)\,.
\end{multline}
Uniformly in $U,V,W\in\frD_\eps$ with $\rme^j W^- \subset V^{-,\eps}$ and for which $U$, $V^-$ are connected, and in $x,y\in W^{\pm,\eps}\cup\{\infty\}$,
we have
\begin{multline}
\label{e:Cov-as-k}
\lim_{\substack{n-k\to\infty\\\text{with\ } k\to\infty}} \sigma_{n,k+j,k}(\lfloor \rme^{k+j} x\rfloor , \lfloor \rme^{k+j}y \rfloor )\\
=g \int_{w,w'\in\partial W}
\Pi_{\bbR^2\setminus\partial W}(x,\rmd w)
\Pi_{\bbR^2\setminus\partial W}(y,\rmd w')
\Big(\int_{z\in\partial (\rme^{-j} V)}\Pi_{\rme^{-j} V^-}(w,\rmd z)
\log|z-w'| -\log |w-w'|\Big)\,.
\end{multline}
\end{lem}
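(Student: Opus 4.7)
The plan is to compute $\sigma_{n,l,k}(x,y)$ directly from the Poisson-kernel representation of the harmonic extension and then pass to the continuum limit in two stages. Writing
\[
\he{h}{\partial W^\pm_l}(x)=\sum_{z\in\partial W^\pm_l}\Pi_{\bbZ^2\setminus\partial W^\pm_l}(x,z)\,h(z),
\]
and using that $h$ under $\bbP(\cdot\mid h_{\partial U_n}=0,\,h_{\partial V^-_k}=0)$ is the centered DGFF on $U_n\cap V^-_k$ with covariance $G_{U_n\cap V^-_k}$, one obtains
\[
\sigma_{n,l,k}(x,y)=\sum_{z,z'\in\partial W^\pm_l}\Pi_{\bbZ^2\setminus\partial W^\pm_l}(x,z)\,\Pi_{\bbZ^2\setminus\partial W^\pm_l}(y,z')\,G_{U_n\cap V^-_k}(z,z').
\]
What remains is to identify the uniform asymptotics of $G_{U_n\cap V^-_k}(z,z')$ for $z,z'\in\partial W^\pm_l$ and then to convert the outer sums into continuum integrals via Lemma~\ref{l:BL-Pois}.

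For the first assertion, take $l=n-j$. Since $W\subset \rme^j U^\eps$, every $z\in\partial W^\pm_l$ satisfies $\rme^{-n}|z|\in[\eps\rme^{-j},\eps^{-1}\rme^{-j}]$, so Lemma~\ref{l:ruin}\ref{i:ruin-C} gives $P_z(\tau^{V^-_k}<\tau^{U_n})=O_\eps(1/(n-k))$. Applying the Gibbs--Markov decomposition
\[
G_{U_n\cap V^-_k}(z,z')=G_{U_n}(z,z')-\Cov\bigl(\varphi^{U_n,U_n\cap V^-_k}(z),\,\varphi^{U_n,U_n\cap V^-_k}(z')\bigr),
\]
together with Lemma~\ref{l:Cov-bdg}, the binding-field covariance is uniformly $o_\eps(1)$ as $n-k\to\infty$. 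Thus it suffices to analyze $G_{U_n}(z,z')$. Substituting~\eqref{e:462} and~\eqref{e:444} (the constant $c_0$ cancels), then making the changes of variable $z=\lfloor\rme^{n-j}\wt z\rfloor$, $z'=\lfloor\rme^{n-j}\wt z'\rfloor$ and $w=\rme^n\wt w\in\partial U_n$, followed by the rescaling $\wt w=\rme^{-j}w'$ with $w'\in\partial(\rme^j U)$ (Brownian scale invariance of the Poisson kernel) and an application of Lemma~\ref{l:BL-Pois} to the inner hitting sum, one gets
\[
G_{U_n}(z,z')=g\int_{\partial(\rme^j U)}\Pi_{\rme^j U}(\wt z,\rmd w')\log|\wt z'-w'|-g\log|\wt z-\wt z'|+o_{\eps,j}(1),
\]
the scale factors $n$ and $n-j$ from the two logarithms cancelling after multiplication by $g$. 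A second application of Lemma~\ref{l:BL-Pois} converts the outer sums over $z,z'\in\partial W^\pm_l$ into integrals against $\Pi_{\bbR^2\setminus\partial W}(x,\rmd w)\,\Pi_{\bbR^2\setminus\partial W}(y,\rmd w')$, producing~\eqref{e:Cov-as-n}.

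The second assertion is proved symmetrically with the roles of the inner and outer boundaries swapped. With $l=k+j$ and $\rme^j W^-\subset V^{-,\eps}$, Lemma~\ref{l:ruin}\ref{i:ruin-C} now yields $P_z(\tau^{U_n}<\tau^{V^-_k})=O_\eps(1/(n-k))$, so the Gibbs--Markov decomposition based on $V^-_k$ gives $G_{U_n\cap V^-_k}(z,z')=G_{V^-_k}(z,z')+o_\eps(1)$. Rescaling by $\rme^{-(k+j)}$ reduces $\partial V^-_k$ to $\partial V^-$ after a further scaling by $\rme^{-j}$, and Lemma~\ref{l:BL-Pois} applied with $D=V$ at scale $k$---which is precisely why the additional hypothesis $k\to\infty$ is imposed---delivers the continuum limit with $\Pi_{\rme^{-j}V^-}$ in place of $\Pi_{\rme^j U}$, producing~\eqref{e:Cov-as-k}.

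The principal technical difficulty will be maintaining uniformity of all error terms in $U,V,W\in\frD_\eps$ and in $x,y$, particularly at the boundary values $x=\infty$ or $y=\infty$, where $\Pi_{\bbZ^2\setminus\partial W^\pm_l}(\infty,\cdot)$ must be interpreted as the harmonic measure at infinity of $\partial W^-_l$ and its continuum counterpart as $\Pi_{\bbR^2\setminus\partial W}(\infty,\cdot)$. The restriction $x,y\in W^{\pm,\eps}\cup\{\infty\}$ ensures that the outer Poisson kernels assign uniform mass to points separated from $\partial W^\pm$, while the analogous $\eps$-separation of $\wt z,\wt z'$ from $\partial(\rme^j U)$ (resp.\ $\partial(\rme^{-j}V^-)$) supplies the uniform control of the potential-kernel remainder $O(|z-z'|^{-2})$ and of the binding-field contribution when summed against these Poisson kernels.
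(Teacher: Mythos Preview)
Your overall strategy matches the paper's: write $\sigma_{n,l,k}(x,y)$ as a double Poisson-kernel average of $G_{U_n\cap V^-_k}$, strip off the $V^-_k$ contribution (you do this via Gibbs--Markov and Lemma~\ref{l:Cov-bdg}; the paper does it via the explicit expansion~\eqref{e:GUV} together with Lemma~\ref{l:ruin} --- both are fine), expand $G_{U_n}$ by~\eqref{e:462} and~\eqref{e:444}, and pass to the continuum with Lemma~\ref{l:BL-Pois}.

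There is, however, a genuine gap in the last step. You write that ``a second application of Lemma~\ref{l:BL-Pois} converts the outer sums over $z,z'\in\partial W^\pm_l$ into integrals.'' But Lemma~\ref{l:BL-Pois} is stated only for \emph{bounded continuous} $f$, and the integrand you need to pass through it contains the term $-g\log|\wt z-\wt z'|$ with $\wt z,\wt z'$ both ranging over $\partial W$, which blows up on the diagonal. On the discrete side the corresponding summand is $\fra(z-z')-gl$, which equals $-gl$ when $z=z'$ and is in no way uniformly bounded. Your final paragraph does not address this: the ``$\eps$-separation'' you invoke is of $\wt z,\wt z'$ from $\partial(\rme^jU)$, not from each other, and the $O(|z-z'|^{-2})$ you mention is merely the error in~\eqref{e:444}, not the logarithmic singularity itself.

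The paper handles this by cutting the double sum at $|w-w'|=\delta\rme^l$. On $\{|w-w'|\geq\delta\rme^l\}$ the integrand is bounded and continuous, so Lemma~\ref{l:BL-Pois} applies and yields the limiting integral over $\{|w-w'|\geq\delta\}$. The near-diagonal contribution $\{|w-w'|<\delta\rme^l\}$ is shown to vanish uniformly as $l\to\infty$ followed by $\delta\to 0$ via Lemma~\ref{l:logdiv}; this lemma is precisely the missing ingredient in your argument. Without it you have neither the convergence of the discrete sum nor the integrability of $\log|w-w'|$ against the product of harmonic measures uniformly in $W\in\frD_\eps$.
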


We will also need the following one-sided estimates for the variance.
\begin{lem}
\label{l:Var-inw-1}
Let $\eps\in(0,1)$. There exists $C=C_{\eps}<\infty$
such that for all $U,V,W\in\frD_\eps$, $0\leq k\leq l\leq n$
with $\partial W_l\subset U^\eps_n$ and $\partial W^-_l\subset V^{-,\eps}_k$, and for all $x,y\in W^{\pm,\eps}_l\cup\{\infty\}$, we have
\begin{equation}
\label{e:Var-inw}
\Big|\Cov\Big(\he{h}{\partial W^\pm_l}(x),\he{h}{\partial W^\pm_l}(y)\,\Big|\, h_{\partial U_n}=0\Big)
-g(n-l)\Big|\leq C\,,
\end{equation}
\begin{equation}
\label{e:Var-outw}
\Big|\Cov\Big(\he{h}{\partial W^\pm_l}(x),\he{h}{\partial W^\pm_l}(y)\,\Big|\, h_{\partial V^-_k}=0\Big)
-g(l-k)\Big|\leq C\,.
\end{equation}
\end{lem}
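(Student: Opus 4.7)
My plan is to express each covariance as a sum against the Green's function of the conditioning domain, evaluate that Green's function via~\eqref{e:462}, and then compute the resulting double sum using optional stopping with the potential kernel $\fra$.

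For~\eqref{e:Var-inw}, since conditional on $h_{\partial U_n}=0$ the law of $h$ on $\ol{U_n}$ is centered Gaussian with covariance $G_{U_n}$, and $\he{h}{\partial W^\pm_l}(x)=\sum_z\Pi(x,z)h(z)$ where $\Pi(x,\cdot)$ is the exit distribution on $\partial W^\pm_l$ of simple random walk from $x$, the covariance in question equals
\[
\sum_{z,z'\in\partial W^\pm_l}\Pi(x,z)\Pi(y,z')G_{U_n}(z,z').
\]
Combining~\eqref{e:462} with~\eqref{e:444} and using that $|z'-w|=\rme^n\cdot O_\eps(1)$ for $z'\in\partial W^\pm_l\subset U^\eps_n$ and $w\in\partial U_n$, one gets $G_{U_n}(z,z')=gn-g\log|z-z'|+O_\eps(1)$, uniformly in such $z,z'$. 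The task therefore reduces to proving the key identity
\[
\sum_{z,z'\in\partial W^\pm_l}\Pi(x,z)\Pi(y,z')\log|z-z'|=l+O_\eps(1).
\]

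This identity I establish by two applications of optional stopping with $\fra$. The function $\fra(\cdot-z')$ is harmonic on $\bbZ^2\setminus\{z'\}$, and the walk from $x\in W_l$ (say) first exits its component of $\bbZ^2\setminus\partial W^\pm_l$ at some point of $\partial W^\pm_l$ without visiting $z'$ beforehand, so OST yields $\sum_z\Pi(x,z)\fra(z-z')=\fra(x-z')$. A second application of OST against $\fra(x-\cdot)$ for the walk from $y$ is more delicate: if $y$ lies on the same side of $\partial W$ as $x$ the walk may visit $x$, contributing a Green's-function correction. The same \eqref{e:462}-based expansion applied to the bounded $W_l$ gives $G_{W_l}(y,x)=gl-g\log|x-y|+O_\eps(1)$, and its $\log|x-y|$ contribution exactly cancels the one arising from $\fra(x-y)$, leaving $\sum\Pi\Pi\log|z-z'|=l+O_\eps(1)$. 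The cases $y\in W^-_l$ or $y=\infty$ are handled analogously, using in place of $G_{W_l}$ the Green's function of SRW on $W^-_l$ killed at the finite boundary $\partial W^-_l$ (finite by 2D recurrence). Substituting, the covariance equals $gn-gl+O_\eps(1)=g(n-l)+O_\eps(1)$.

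For~\eqref{e:Var-outw} the domain $V^-_k$ is unbounded, so \eqref{e:462} does not apply directly. I deduce this assertion by passing to the limit in the already-established Proposition~\ref{p:Cov-bd}: taking $U'=\rmB$ and applying that proposition to the annulus $U'_{n'}\cap V^-_k$ yields
\[
\Cov\bigl(\he{h}{\partial W^\pm_l}(x),\he{h}{\partial W^\pm_l}(y)\,\bigm|\,h_{\partial U'_{n'}}=0,\,h_{\partial V^-_k}=0\bigr)=g\tfrac{(n'-l)(l-k)}{n'-k}+O_\eps(1),
\]
uniformly in $n'$. As $n'\to\infty$, the Green's functions $G_{U'_{n'}\cap V^-_k}$ increase monotonically to the finite $G_{V^-_k}$ (finite by 2D recurrence of SRW killed at the finite set $\partial V^-_k$), hence the doubly-conditioned covariance converges to the singly-conditioned one and the right-hand side tends to $g(l-k)+O_\eps(1)$, yielding \eqref{e:Var-outw}.

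The main obstacle is uniformity of all estimates: for the key identity, verifying that the Green's function correction $G_{W_l}(y,x)$ (and its analog on $W^-_l$) has the common leading form $gl-g\log|x-y|+O_\eps(1)$ regardless of the side on which $x,y$ lie or the particular geometry of $W\in\frD_\eps$; and for \eqref{e:Var-outw}, checking that the $O_\eps(1)$ error term in Proposition~\ref{p:Cov-bd} is truly uniform in $n'$ so that it survives the limit.
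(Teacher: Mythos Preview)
Your treatment of~\eqref{e:Var-outw} matches the paper's: both obtain it as the $n'\to\infty$ limit of Proposition~\ref{p:Cov-bd} via monotone convergence of the Green's functions.

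For~\eqref{e:Var-inw} your optional-stopping route is genuinely different. The paper simply reruns the argument of Proposition~\ref{p:Cov-bd}: after expanding $G_{U_n}$ via~\eqref{e:462} one is left with $\sum_{z,z'}\Pi(x,z)\Pi(y,z')\bigl(\fra(z-z')-gl\bigr)$, which is shown to be $O_\eps(1)$ by splitting into $|z-z'|\geq\delta\rme^l$ (handled by Lemma~\ref{l:BL-Pois}) and $|z-z'|<\delta\rme^l$ (handled by Lemma~\ref{l:logdiv}). Your OST computation is clean when $x,y\in W^\eps_l$, but the sketch has a real gap for $x$ or $y$ in $W^{-,\eps}_l\cup\{\infty\}$: optional stopping with $\fra(\cdot-z')$ on the \emph{unbounded} domain $W^-_l$ does not return $\fra(x-z')$. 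Truncating at $\partial\rmB_N$ and letting $N\to\infty$, the walk escapes to $\partial\rmB_N$ with probability $\sim(\log|x|-l)/N$, contributing an extra $-g(\log|x|-l)+O_\eps(1)$. For the same reason, the formula you anticipate for $G_{W^-_l}(y,x)$ is wrong whenever $\log|x|$ or $\log|y|$ exceeds $l$ by more than $O_\eps(1)$: one finds instead $G_{W^-_l}(y,x)=g\log|x|+g\log|y|-gl+c_0-\fra(x-y)+O_\eps(1)$. These corrections do all cancel to give $\sum\Pi\Pi\fra(z-z')=gl+c_0+O_\eps(1)$, so your approach can be completed, but the computation is not ``analogous'' to the bounded case. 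The paper's diagonal-split argument has the advantage of treating all $x,y\in W^{\pm,\eps}_l\cup\{\infty\}$ uniformly, without tracking escape-to-infinity terms.
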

We also bound the intrinsic metric of the centered DGFF.
\begin{lem}
\label{l:metr}
Let $\eps\in(0,1)$. Then, there exists $C = C_\eps < \infty$ such that
for all $U,V,W\in\frD_\eps$ and $0\leq k\leq l\leq n$
with $\partial W_l \subset U_n^\epsilon$ and $\partial W_l^- \subset V_k^{-,\epsilon}$, all $x,y$
that are in the same connected component of $W^{\pm,\eps}_l$, we have
\begin{equation}
\label{e:l:2.4:metr}
\bbE\Big[\Big(\he{h^{U_n\cap V^-_k}}{\partial_l W}(x) - \he{h^{U_n\cap V^-_k}}{\partial_l W}(y) \Big)^2\Big] 
\leq C \frac{|x-y|^2}{\rme^{2 l}}\,.
\end{equation}
\end{lem}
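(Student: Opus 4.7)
The plan is to express the squared $L^2$ norm of the difference as an explicit double sum involving Poisson kernels and the Green function of $U_n\cap V^-_k$, then to use the formula \eqref{e:462} together with the bulk smoothness of the Poisson kernel (Theorem~6.3.8 of \cite{LaLi}) to extract the desired factor $|x-y|^2/\rme^{2l}$. Without loss of generality suppose $x,y$ both lie in $W^\eps_l$ (the case $x,y\in W^{-,\eps}_l\cup\{\infty\}$ being entirely analogous). By Remark~\ref{r:bd-pm}, a simple random walk started at $x\in W_l$ cannot reach $\partial W^-_l$ without first hitting $\partial W_l$, so
\begin{equation*}
\he{h^{U_n\cap V^-_k}}{\partial W^\pm_l}(x)=\sum_{z\in\partial W_l}h^{U_n\cap V^-_k}(z)\,\Pi_{W_l}(x,z),
\end{equation*}
and similarly for $y$. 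Writing $\delta_z:=\Pi_{W_l}(x,z)-\Pi_{W_l}(y,z)$, the left-hand side of \eqref{e:l:2.4:metr} equals $\sum_{z,z'\in\partial W_l}\delta_z\delta_{z'}G_{U_n\cap V^-_k}(z,z')$.

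Next, I would insert the identity \eqref{e:462}, namely $G_{U_n\cap V^-_k}(z,z')=\sum_{w\in\partial(U_n\cap V^-_k)}\Pi_{U_n\cap V^-_k}(z,w)\fra(z'-w)-\fra(z-z')$. This splits the double sum into two pieces. For the $-\fra(z-z')$ piece, the identity $\sum_{z\in\partial W_l}\Pi_{W_l}(x,z)\fra(z-z')=\fra(x-z')$ for $z'\in\partial W_l\not\subset W_l$ (immediate from \eqref{e:462} since $G_{W_l}(x,z')=0$) collapses the inner sum in $z$, and then applied again collapses the sum in $z'$, giving a total contribution of
\begin{equation*}
-\sum_{z'\in\partial W_l}\delta_{z'}\bigl[\fra(x-z')-\fra(y-z')\bigr].
\end{equation*}
For the other piece, the key observation is that $\Pi_{U_n\cap V^-_k}(\cdot,w)$ is harmonic on $U_n\cap V^-_k\supset W_l$, hence $\sum_{z}\Pi_{W_l}(x,z)\Pi_{U_n\cap V^-_k}(z,w)=\Pi_{U_n\cap V^-_k}(x,w)$ by the mean value property. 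Combining this with the same collapsing identity for the $z'$ sum, the boundary piece simplifies to
\begin{equation*}
\sum_{w\in\partial(U_n\cap V^-_k)}\bigl[\Pi_{U_n\cap V^-_k}(x,w)-\Pi_{U_n\cap V^-_k}(y,w)\bigr]\bigl[\fra(x-w)-\fra(y-w)\bigr].
\end{equation*}

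The final step is to apply three quantitative smoothness estimates. First, Theorem~6.3.8 of \cite{LaLi} applied to $\Pi_{W_l}(\cdot,z)$, which is harmonic on $W_l$ and whose $\eps$-bulk has inradius of order $\rme^l$, gives $|\delta_z|\leq C_\eps\rme^{-l}|x-y|\,\Pi_{W_l}(x,z)$. The same theorem applied to the harmonic function $\Pi_{U_n\cap V^-_k}(\cdot,w)$ on $W_l$ gives $|\Pi_{U_n\cap V^-_k}(x,w)-\Pi_{U_n\cap V^-_k}(y,w)|\leq C_\eps\rme^{-l}|x-y|\,\Pi_{U_n\cap V^-_k}(x,w)$. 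Finally, since every $z'\in\partial W_l$ and every $w\in\partial(U_n\cap V^-_k)$ is at distance at least $c_\eps\rme^l$ from $x,y$, the estimate \eqref{e:444} yields $|\fra(x-z')-\fra(y-z')|+|\fra(x-w)-\fra(y-w)|\leq C_\eps\rme^{-l}|x-y|$. Plugging these bounds into the two pieces and using $\sum_{z'}\Pi_{W_l}(x,z')=1$ and $\sum_w\Pi_{U_n\cap V^-_k}(x,w)=1$, both pieces are bounded by $C_\eps\rme^{-2l}|x-y|^2$.

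The main obstacle will be verifying the uniformity of the distance lower bound $|x-z'|,|x-w|\geq c_\eps\rme^l$ used in the last step: one must track carefully the geometric assumptions $W\in\frD_\eps$, $\partial W_l\subset U^\eps_n$, $\partial W^-_l\subset V^{-,\eps}_k$ and the bulk condition $x\in W^\eps_l$, together with the $\eps$-separation of $W$ from $B(0,\eps)^c$, to ensure that all relevant lattice points $z'$ and $w$ are at a scale $\rme^l$ distance from the bulk point $x$ (and similarly $y$), uniformly in the geometry. The outer case $x,y\in W^{-,\eps}_l$ requires the symmetric application of the same argument with $\Pi_{W^-_l}$ in place of $\Pi_{W_l}$, and the point at infinity is handled by passing to the limit in the Poisson kernel representation using $\Pi_{W^-_l}(\infty,\cdot)$ (the harmonic measure from infinity), which enjoys the same bulk smoothness estimates.
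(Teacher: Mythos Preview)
Your approach is genuinely different from the paper's and more elementary in spirit: you use the potential kernel representation \eqref{e:462} directly and collapse the double sums via harmonicity, whereas the paper expands the covariance $G_{U_n\cap V^-_k}$ through the inward concentric decomposition (Proposition~\ref{p:concdec-in}), applies Theorem~6.3.8 of \cite{LaLi} at each scale $\Delta_j$, and sums the resulting geometric series $\sum_j\rme^{-2(n-j)}$. The paper also proves the bound first for adjacent $x,y$ and then passes to general $x,y$ via the triangle inequality for the intrinsic metric along a path (Lemma~\ref{l:pathdistance}).

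There is, however, a genuine gap in your boundary piece. You assert that $\Pi_{U_n\cap V^-_k}(\cdot,w)$ is harmonic on $W_l$ because ``$U_n\cap V^-_k\supset W_l$'', and similarly that $\fra(\cdot-w)$ is harmonic on $W_l$ for every $w\in\partial(U_n\cap V^-_k)$. Both fail: since $W\in\frD_\eps$ gives $\eps\rmB\subset W$, the set $W_l$ contains a neighbourhood of the origin, while $V^-_k$ (and hence $U_n\cap V^-_k$) never does. Thus for $w\in\partial V^-_k$ one has $w\in W_l$, and neither collapse identity holds --- in fact $\sum_{z'}\delta_{z'}\fra(z'-w)$ picks up an extra term $G_{W_l}(x,w)-G_{W_l}(y,w)$ which is not easily controlled when $x$ is near $w$. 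The geometric obstruction you flag at the end is therefore not merely a matter of ``tracking carefully'' the hypotheses: the inclusion $W_l\subset U_n\cap V^-_k$ is simply false.

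The gap is fixable. By Gibbs--Markov (Lemma~\ref{l:GM}), enlarging the domain can only increase the left-hand side of \eqref{e:l:2.4:metr}, so you may first replace $h^{U_n\cap V^-_k}$ by $h^{U_n}$ (dropping $V^-_k$ entirely), and then --- as the paper also does --- replace $U$ by $\eps^{-1}\rmB$ and increase $n$ if necessary so that $W_l$ sits well inside $U_n$. After this reduction there is only the outer boundary $\partial U_n$, all your collapse identities become valid, and the distance lower bounds $|x-w|\geq c_\eps\rme^l$ follow immediately. You should also either restrict to $|x-y|\leq c_\eps\rme^l$ (where Theorem~6.3.8 applies on a single ball) and quote Lemma~\ref{l:Var-inw-1} for the complementary range, or adopt the paper's adjacent-then-path strategy via Lemma~\ref{l:pathdistance}.
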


\subsection{Harmonic extensions as binding fields}
\label{s:compl-bdg}
This subsection contains estimates on the harmonic extension of the DGFF on $U_n\cap V^-_k$ to a subdomain of $U_n\cap V^-_k$. A harmonic extension on such a domain occurs as the {\em binding field} in the Gibbs-Markov decomposition, namely $\varphi$ in following lemma:
\begin{lem}[Gibbs-Markov property]
\label{l:GM}
Let $h$ be a DGFF on a nonempty set $A\subsetneq\bbZ^2$, let $B\subset A$ be finite, and $\varphi=\bbE(h\mid h_{A\setminus B})$. Then $h-\varphi$ is a DGFF on $B$ with zero boundary conditions, and $\varphi$ is independent of $h-\varphi$ and satisfies $\varphi(x)=\he{h}{\overline{A}\setminus B}$ a.\,s.
\end{lem}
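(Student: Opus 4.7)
The plan is to reduce everything to a computation about Gaussian projections combined with one identity for the Green function obtained from the strong Markov property. Since $(h(x))_{x\in\ol{A}}$ is a centered Gaussian family, the conditional expectation $\varphi(x)=\bbE(h(x)\mid h_{A\setminus B})$ is exactly the orthogonal projection in $L^2(\bbP)$ of $h(x)$ onto $\mathcal H:=\overline{\mathrm{span}}\{h(y):y\in A\setminus B\}$. Consequently $h-\varphi$ is $L^2$-orthogonal to $\mathcal H$, and by joint Gaussianity of $(h-\varphi,\,h_{A\setminus B})$ this orthogonality upgrades to independence. Since $\varphi$ is $\sigma(h_{A\setminus B})$-measurable, the asserted independence of $\varphi$ and $h-\varphi$ follows at once.

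The central step is the identification $\varphi=\he{h}{\ol A\setminus B}$ almost surely. Let $\psi$ denote this bounded harmonic extension, so $\psi(x)=h(x)$ for $x\in\ol A\setminus B$, and for $x\in B$,
\[
\psi(x)=E_x\big[h(S_{\tau^B})\big],
\]
where I use the paper's convention that $h\equiv0$ on $\partial A$ and on $\bbZ^2\setminus\ol A$. Because $B\subset A$ forces $\tau^B\leq\tau^A$ under $P_x$ for $x\in B$, the strong Markov property at $\tau^B$ yields the decomposition
\[
G_A(x,y) = G_B(x,y) + E_x\big[G_A(S_{\tau^B},y)\big], \qquad x\in B,\ y\in\bbZ^2,
\]
which specializes to $G_A(x,y)=E_x[G_A(S_{\tau^B},y)]$ when $y\in A\setminus B$. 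Translating back via $\Cov(h(u),h(v))=G_A(u,v)$ gives $\Cov(h(x)-\psi(x),\,h(y))=0$ for every $y\in A\setminus B$; as $\psi(x)$ is already a linear function of $h_{A\setminus B}$, this characterizes $\psi(x)$ as the $L^2$-projection of $h(x)$ onto $\mathcal H$, so $\psi=\varphi$ almost surely.

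It remains to identify the law of $h-\varphi$ on $B$, for which I compute its covariance. Independence of $h-\varphi$ from $h_{A\setminus B}$ (hence from $\varphi(y)$), together with the same strong Markov identity, gives for $x,y\in B$
\[
\Cov\big(h(x)-\varphi(x),\,h(y)-\varphi(y)\big)=\Cov\big(h(x)-\varphi(x),\,h(y)\big)=G_A(x,y)-E_x\big[G_A(S_{\tau^B},y)\big]=G_B(x,y).
\]
Combined with the fact that $h-\varphi$ vanishes on $\ol A\setminus B$ (by construction of $\psi$) and off $\ol A$ (trivially), centered Gaussianity with covariance $G_B$ on $B$ pins down $h-\varphi$ as a DGFF on $B$ with zero boundary conditions. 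The only delicate point, and essentially the sole obstacle, is the bookkeeping of where $h$, $\psi$, and the random walk $S$ vanish on the various boundary pieces ($\partial A$, $\partial B$, and $\bbZ^2\setminus\ol A$), which is what makes the strong Markov decomposition collapse cleanly onto $G_B$ rather than a corrected kernel.
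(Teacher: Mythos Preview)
Your proof is correct and complete. The paper itself does not give a proof but simply cites Lemma~3.1 of Biskup's lecture notes~\cite{Bi-LN}; your argument via Gaussian $L^2$-projection together with the strong Markov decomposition $G_A(x,y)=G_B(x,y)+E_x[G_A(S_{\tau^B},y)]$ is precisely the standard route that reference takes, so there is no substantive difference in approach---you have just written out what the paper leaves to the citation.
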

\begin{proof}
See e.\,g.\ Lemma~3.1 in~\cite{Bi-LN}.
\end{proof}

The estimates from this subsection will be used in the proofs in Sections~\ref{s:weak} and~\ref{s:proof-DGFF}.

The first of these estimates shows that the binding field vanishes in the bulk when the Hausdorff distance of the complement of the domains goes to zero. It contains a discrete version of Lemma~3.7 in~\cite{BL-Conf}.
\begin{lem}
\label{l:UU-fluct}
Let $\eps\in(0,1)$, $r\geq 0$.
There exist $C=C_{\eps,r}<\infty$ and $\rho=\rho_{\eps,r}:\bbR_+\to\bbR_+$ with $\rho(s)\downarrow 0$ as $s\downarrow 0$ such that
\begin{equation}
\label{e:UU-fluct-i}
\bbE\Big(\max_{(U\cap \wt U)^\eps_n\cap \rmB^{-}_{n-r}}
\big|\he{h^{U_n\cap \rmB^-_{0}}}{\partial(U\cap \wt U)_n\cup \partial \rmB^-_0}\big|\Big)\leq \rho\big(\dH(U,\wt U)\big)+o_{\eps,r}(1)
\end{equation}
\begin{equation}
\label{e:UU-fluct-i-P}
\bbP\Big(\max_{(U\cap \wt U)^\eps_n\cap \rmB^{-}_{n-r}}
\big|\he{h^{U_n\cap \rmB^-_0}}{\partial(U\cap \wt U)_n\cup \partial \rmB^-_0}\big|>t\Big)\leq C\exp\Big\{-\frac{t^2}{\rho\big(\dH(U,\wt U)\big)+o_{\eps,r}(1)}\Big\}
\end{equation}
and
\begin{equation}
\label{e:UU-fluct-o}
\bbE\Big(\max_{\rmB_{k+r}\cap(V^-\cap \wt V^-)^\eps_k}
\big|\he{h^{\rmB_n\cap V^-_k}}{\partial \rmB_n\cup \partial (V^-\cap \wt V^-)_k}\big|\Big)\leq \rho\big(\dH(V^-,\wt V^-)\big) + o_{\eps,r}(1)
\end{equation}
\begin{equation}
\label{e:UU-fluct-o-P}
\bbP\Big(\sup_{\rmB_{k+r}\cap(V^-\cap \wt V^-)^\eps_k}
\big|\he{h^{\rmB_n\cap V^-_k}}{\partial \rmB_n\cup \partial (V^-\cap \wt V^-)_k}\big|>t\Big)
\leq C\exp\Big\{-\frac{t^2}{\rho\big(\dH(V^-,\wt V^-)\big)+o_{\eps,r}(1)}\Big\}
\end{equation}
where $o_{\eps,r}(1)$ tends to zero as $n\to\infty$ in~\eqref{e:UU-fluct-i}, \eqref{e:UU-fluct-i-P}, and as $n-k\to\infty$ followed by $k\to\infty$ in \eqref{e:UU-fluct-o} and~\eqref{e:UU-fluct-o-P}, and the statements hold for all $t>0$, $U,\wt U,V,\wt V\in\frD_\eps$ such that $U$, $\wt U$, $V^-$ and $\wt V^-$ are connected, $0\leq k < n <\infty$.
\end{lem}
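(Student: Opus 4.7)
I focus on \eqref{e:UU-fluct-i} and \eqref{e:UU-fluct-i-P}; the outer statements \eqref{e:UU-fluct-o} and \eqref{e:UU-fluct-o-P} follow by the analogous argument in which the roles of the inner and outer boundaries are exchanged and the behaviour of $\fra$ and of the continuum Poisson kernel at infinity (Lemma~\ref{l:BL-Pois}) are used. Write $D := U_n \cap \rmB^-_0$ and $D' := (U \cap \wt U)_n \cap \rmB^-_0$. The Gibbs-Markov property (Lemma~\ref{l:GM}) identifies the field
\begin{equation*}
\varphi := \he{h^{D}}{\partial (U \cap \wt U)_n \cup \partial \rmB^-_0}
\end{equation*}
with the binding field from $D$ to $D'$, so $\varphi$ is a centered Gaussian field with covariance
$\Cov(\varphi(x), \varphi(y)) = G_{D}(x,y) - G_{D'}(x,y)$ for $x,y \in \ol{D}$.

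The key step is a uniform variance bound of the form
\begin{equation*}
\sup_{x \in (U \cap \wt U)^\eps_n \cap \rmB^-_{n-r}} \Var\,\varphi(x) \leq \rho\big(\dH(U, \wt U)\big) + o_{\eps,r}(1)
\end{equation*}
with $\rho = \rho_{\eps,r}$ vanishing at $0$ and $o_{\eps,r}(1) \to 0$ as $n \to \infty$. To obtain this I use \eqref{e:462} to express $G_{D}(x,x) - G_{D'}(x,x)$ as a difference of Poisson kernels against $\fra$, convert $\fra$ into logarithms via \eqref{e:444}, and then invoke Lemma~\ref{l:BL-Pois} together with continuity of the continuum Poisson kernel in the domain (as in Lemma~A.2 of~\cite{BL-Conf}) to pass to the limit. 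The resulting continuum object is a difference of harmonic measures of $U \cup \rmB^-$ versus $(U \cap \wt U) \cup \rmB^-$ tested against $\log|\cdot - \rme^{-n}x|$, which vanishes uniformly on the bulk $(U \cap \wt U)^\eps \cap \rme^{-r}\rmB^-$ as $\dH(U, \wt U)\downarrow 0$.

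Combining this with the intrinsic-metric estimate $\bbE[(\varphi(x) - \varphi(y))^2] \leq C_\eps |x - y|^2 \rme^{-2n}$ on $(U \cap \wt U)^\eps_n \cap \rmB^-_{n-r}$, which is provided by Lemma~\ref{l:metr} applied with $W = U \cap \wt U$, $V = \rmB$, $k = 0$, $l = n$, a Fernique majorization argument (Theorem~4.1 in~\cite{Adler}, used exactly as in the proof of Lemma~\ref{l:dec-tail}) on the index set of Euclidean diameter $O(\rme^n)$ controls $\bbE \max |\varphi|$. A scale-aware version of this estimate, in which the majorizing measure is concentrated on a grid of spacing $\rme^r$ and each reference point is bounded by the pointwise variance above, yields
\begin{equation*}
\bbE\Big(\max_{(U \cap \wt U)^\eps_n \cap \rmB^-_{n-r}} |\varphi|\Big) \leq \rho\big(\dH(U, \wt U)\big) + o_{\eps,r}(1),
\end{equation*}
which is \eqref{e:UU-fluct-i}. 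The tail bound \eqref{e:UU-fluct-i-P} then follows from the Borell-TIS inequality (Theorem~2.1.1 in~\cite{AdlerTaylor}) applied with the above bound as the centering and the uniform variance bound as the variance proxy in the exponent.

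The main obstacle is the variance estimate: one must transfer the smallness of the \emph{difference} of two discrete Green's functions on irregular domains into a quantitative statement about their continuum analogues, uniformly in $U,\wt U \in \frD_\eps$ with $\dH(U,\wt U)\to 0$. The region near $\partial(U \cap \wt U)$ would be delicate to handle in this conversion because Lemma~\ref{l:BL-Pois} is a bulk statement, which is precisely why the maxima in the lemma are restricted to $(U \cap \wt U)^\eps$. Everything else -- Fernique, Borell-TIS, and the passage from the tail bound to the expectation bound -- is of a standard Gaussian-field nature once the variance estimate is in hand.
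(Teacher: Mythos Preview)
Your overall strategy matches the paper's: identify $\varphi$ as a binding field, bound its pointwise variance and intrinsic metric, then apply Fernique and Borell-TIS. However, two steps do not go through as written.

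First, Lemma~\ref{l:metr} cannot be invoked with $W = U \cap \wt U$, $l = n$: its hypothesis $\partial W_l \subset U^\eps_n$ fails, since $\partial(U \cap \wt U)_n$ can coincide with (or lie arbitrarily close to) $\partial U_n$. The paper instead proves the intrinsic-metric bound for the binding field directly, via its Poisson-kernel representation and the derivative estimate of Theorem~6.3.8 in~\cite{LaLi}.

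Second, and more importantly, an intrinsic-metric bound of order $|x-y|^2 \rme^{-2n}$ alone is not enough: on an index set of Euclidean diameter $\sim \rme^n$ the Fernique integral is then $O(1)$, so you would only obtain $\bbE \max |\varphi| \leq \sqrt{\rho} + O(1)$, which does not vanish. The paper's key step is to dominate $\varphi$ by the binding field $\varphi^{U_n \cap \rmB^-_0,\, U^\eta_n \cap \rmB^-_0}$ for any $\eta \in (\dH(U,\wt U), \eps/2)$ (using $U^\eta \subset U \cap \wt U$), and then to show that \emph{both} the variance and the intrinsic metric of this dominating field carry an extra factor $\rho(\eta)$: the intrinsic metric is bounded by $C_{\eps,r}\,\rho(\eta)\,|x-y|^2 \rme^{-2n}$, so the Fernique integral itself is $O(\sqrt{\rho(\eta)})$. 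This domination also sidesteps a uniformity issue in your variance step: applying Lemma~\ref{l:BL-Pois} directly to $U \cap \wt U$ would require $U \cap \wt U \in \frD_{\eps'}$ uniformly, which is not guaranteed, whereas $U^\eta$ poses no such problem. (Your approach could in principle be repaired by noting that the variance bound alone forces the \emph{intrinsic} diameter to be at most $2\sqrt{\rho}$, so that the Fernique integral over $[0,2\sqrt{\rho}]$ against your covering numbers is $O(\sqrt{\rho\log(1/\rho)})$; but this is not the ``scale-aware'' argument you sketch, and the paper's route via $U^\eta$ is cleaner.)
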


The next lemma shows that the binding field also becomes small in a region that is many scales away from where the domains differ.
\begin{lem}
\label{l:VV-fluct}
Let $\eps\in(0,1)$, $r\geq 0$.
There exist $C=C_{r,\eps}<\infty$, $c=c_{r,\eps}>0$ such that
\begin{equation}
\label{e:VV-fluct-i}
\bbE\Big(\max_{U_n\cap \rmB^-_{n-r}}
\big|\he{h^{U_n\cap \rmB^-_0}}{\partial U_n\cup \partial V^-_{k}}\big|\Big)
\leq C \frac{\sqrt{\wedge_{n,k,0}}}{n-k}
\,,
\end{equation}
\begin{equation}
\label{e:VV-fluct-i-P}
\bbP\Big(\max_{U_n\cap \rmB^-_{n-r}}
\big|\he{h^{U_n\cap \rmB^-_0}}{\partial U_n\cup \partial V^-_{k}}\big|>t\Big)
\leq C \rme^{-c(n-k)t^2}\,,
\end{equation}
and
\begin{equation}
\label{e:VV-fluct-o}
\bbE\Big(\max_{V^-_k\cap \rmB_{k+r}}
\big|\he{h^{\rmB_{\wt n}\cap V^-_k}}{ U_{n}\cup \partial V^-_k}\big|\Big)
\leq C\frac{\sqrt{\wedge_{\wt n,n,k}}}{n - k}\,,
\end{equation}
\begin{equation}
\label{e:VV-fluct-o-P}
\bbP\Big(\max_{V^-_k\cap \rmB_{k+r}}
\big|\he{h^{\rmB_{\wt n}\cap V^-_k}}{ U_{n}\cup \partial V^-_k}\big|>t\Big)
\leq C \rme^{-c(n-k)t^2}
\end{equation}
for all $t>0$, $0\leq k < n \leq \wt n <\infty$, and
$U,V\in\frD_\eps$ such that $U$ and $V^-$ are connected.
\end{lem}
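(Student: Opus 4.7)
The plan is to view $\wh u(x) := \he{h^{U_n\cap\rmB^-_0}}{\partial U_n\cup\partial V^-_k}(x)$ as a centered Gaussian field on $U_n\cap\rmB^-_{n-r}$, reduce via the discrete maximum principle to its restriction to the one-dimensional boundary $\partial\rmB^-_{n-r}$, control its pointwise variance and intrinsic-metric increments, and conclude by Fernique majorization (for \eqref{e:VV-fluct-i}) and the Borell--TIS inequality (for \eqref{e:VV-fluct-i-P}). The outward assertions \eqref{e:VV-fluct-o} and \eqref{e:VV-fluct-o-P} follow by the same argument with the roles of inner and outer boundaries interchanged, and with Lemma~\ref{l:ruin}\ref{i:ruin-V} in place of Lemma~\ref{l:ruin}\ref{i:ruin-C}. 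Throughout I may assume $n-k\geq C_{r,\eps}$: in the complementary bounded regime the right-hand sides of all four inequalities are bounded below by positive constants depending only on $r,\eps$, and a crude Borell--TIS bound with $\Var\wh u(x)\leq G_{U_n\cap\rmB^-_0}(x,x)=O(n)$ suffices. In particular $k\leq n-r-1$, so $\partial V^-_k\subset\rmB_{n-r}$, whence $\wh u$ is harmonic on $U_n\cap\rmB^-_{n-r}\subset U_n\cap V^-_k$ and vanishes identically on $\partial U_n$. The maximum principle then gives
\begin{equation*}
\max_{U_n\cap\rmB^-_{n-r}}|\wh u|\;=\;\max_{\partial\rmB^-_{n-r}}|\wh u|,
\end{equation*}
reducing the problem to a Gaussian process on a set of $O(\rme^{n-r})$ points.

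For the pointwise variance, the Poisson-kernel representation $\wh u(x)=\sum_{y\in\partial V^-_k}h^{U_n\cap\rmB^-_0}(y)\Pi_{U_n\cap V^-_k}(x,y)$ combined with Cauchy--Schwarz gives
\begin{equation*}
\Var\wh u(x)\;\leq\;\Bigl(\sup_{z\in\partial V^-_k}G_{U_n\cap\rmB^-_0}(z,z)\Bigr)\,P_x\bigl(\tau^{V^-_k}<\tau^{U_n}\bigr)^2.
\end{equation*}
Using \eqref{e:444}--\eqref{e:462} together with Lemma~\ref{l:ruin}\ref{i:ruin-C} to evaluate $\bbE_z[\fra(z-S_\tau)]$ for $z\in\partial V^-_k$ on the annulus $U_n\cap\rmB^-_0$ yields the sharp annular estimate $\sup_z G_{U_n\cap\rmB^-_0}(z,z)\leq g\,k(n-k)/n+C_\eps\leq C_\eps(k\wedge(n-k))$. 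Lemma~\ref{l:ruin}\ref{i:ruin-C} together with $|x|\geq\rme^{n-r}$ bounds the ruin probability by $r/(n-k)+O(1/(n-k))$. Combining,
\begin{equation*}
\Var\wh u(x)\;\leq\;C_{r,\eps}\,\frac{k\wedge(n-k)}{(n-k)^2}\qquad\text{for every }x\in\partial\rmB^-_{n-r}.
\end{equation*}

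For the intrinsic-metric increments, the same representation and Cauchy--Schwarz give, for $x,x'\in\partial\rmB^-_{n-r}$,
\begin{equation*}
\bbE\bigl[(\wh u(x)-\wh u(x'))^2\bigr]\;\leq\;C_\eps(k\wedge(n-k))\Bigl(\sum_{y\in\partial V^-_k}\bigl|\Pi_{U_n\cap V^-_k}(x,y)-\Pi_{U_n\cap V^-_k}(x',y)\bigr|\Bigr)^2.
\end{equation*}
The Poisson-kernel gradient estimate used in the proof of Lemma~\ref{l:metr} (an application of Theorem~6.3.8 of~\cite{LaLi}) is applicable because $|x|,|x'|\geq\rme^{n-r}$ while $|y|\leq\rme^{k+O(1)}\leq\rme^{n-r-1}$; it gives $|\Pi(x,y)-\Pi(x',y)|\leq C\,\rme^{-(n-r)}|x-x'|\Pi(x,y)$, and hence
\begin{equation*}
\bbE\bigl[(\wh u(x)-\wh u(x'))^2\bigr]\;\leq\;C_{r,\eps}\,\frac{|x-x'|^2}{\rme^{2(n-r)}}\cdot\frac{k\wedge(n-k)}{(n-k)^2}.
\end{equation*}
The intrinsic diameter of $\partial\rmB^-_{n-r}$ is therefore of order $\sqrt{k\wedge(n-k)}/(n-k)$. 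Taking the uniform probability measure on $\partial\rmB^-_{n-r}$ as majorizing measure and substituting $\epsilon=\mathrm{diam}\cdot\rme^{-s}$ makes the Fernique entropy integral collapse to a constant times the intrinsic diameter, yielding \eqref{e:VV-fluct-i}. The tail bound \eqref{e:VV-fluct-i-P} then follows from Borell--TIS with $\sigma_{\max}^2\leq C_{r,\eps}/(n-k)$, absorbing $\bbE\max|\wh u|$ into the constant $C$.

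The main obstacle will be extracting the sharp annular Green-function bound $\sup_{z\in\partial V^-_k}G_{U_n\cap\rmB^-_0}(z,z)\leq C_\eps(k\wedge(n-k))$ with the correct $k(n-k)/n$ scaling (not the cruder $O(n)$ one would get from a disk), since this is precisely what matches the rate $\sqrt{k\wedge(n-k)}/(n-k)$ in the statement. Equally important is the reduction via the maximum principle to a one-dimensional set, without which Fernique majorization would cost a spurious $\sqrt{n-r}$ factor.
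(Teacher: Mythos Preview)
Your variance bound via Cauchy--Schwarz is the weak point, and the computation you flag as the ``main obstacle'' is in fact incorrect. For $z\in\partial V^-_k$ (so $|z|\approx\rme^k$) in the annulus $U_n\cap\rmB^-_0$, a direct computation with~\eqref{e:462} and Lemma~\ref{l:ruin}\ref{i:ruin-C} gives
\[
G_{U_n\cap\rmB^-_0}(z,z)\;\approx\;g\Big[\tfrac{k}{n}\cdot n+\tfrac{n-k}{n}\cdot k\Big]\;=\;g\,\frac{k(2n-k)}{n},
\]
not $gk(n-k)/n$. (The second term comes from $\fra(z-S_\tau)\approx gk$ when the walk exits at $\partial\rmB^-_0$; you seem to have used $\approx 0$ there.) When $k$ is close to $n$ this is $\approx gn$, which is \emph{not} $O(k\wedge(n-k))=O(n-k)$. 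Consequently your Cauchy--Schwarz step
\[
\Var\wh u(x)\leq\Big(\sup_{z}G_{U_n\cap\rmB^-_0}(z,z)\Big)\,P_x\big(\tau^{V^-_k}<\tau^{U_n}\big)^2
\]
only delivers $\Var\wh u(x)\lesssim n/(n-k)^2$ in this regime, whereas the required bound is $\wedge_{n,k,0}/(n-k)^2=1/(n-k)$. The discrepancy is a factor $n/(n-k)$, so both~\eqref{e:VV-fluct-i} and~\eqref{e:VV-fluct-i-P} fail to follow.

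The loss is intrinsic to Cauchy--Schwarz here: the diagonal $G(z,z)$ is genuinely larger than the typical off-diagonal $G(z,z')\approx gk(n-k)/n$ (for $|z-z'|\asymp\rme^k$), and it is the latter that governs the weighted sum. The paper circumvents this by bounding the Poisson kernel \emph{pointwise} rather than just its total mass: after a Markov-property decomposition through $\partial\Delta^{\rm o}_1$ and Lemma~\ref{l:Poisson-out} one gets $\Pi_{U_n\cap V^-_k}(x,z)\leq C_{r,\eps}(n-k)^{-1}\Pi_{V^-_k}(\infty,z)$, so that the double sum becomes $C/(n-k)^2$ times $\sum_{z,z'}\Pi_{V^-_k}(\infty,z)\Pi_{V^-_k}(\infty,z')G_{U_n\cap\rmB^-_0}(z,z')=\Var\big(\he{h^{U_n\cap\rmB^-_0}}{\partial V^-_k}(\infty)\big)$, and Proposition~\ref{p:Cov-bd} bounds the latter by $gk(n-k)/n+C_\eps\leq C_\eps\wedge_{n,k,0}$. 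Your maximum-principle reduction to $\partial\rmB^-_{n-r}$ is a nice simplification, but it does not repair the variance step.
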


We also note the following estimate for the covariance.
\begin{lem}
\label{l:Cov-bdg}
Let $\eps\in(0,1)$. Then, there exists $C = C_\eps < \infty$ such that
\begin{equation}
\label{e:Cov-bdg-V}
\Cov\Big(\he{h^{U_n}}{\partial U_n\cup \partial V^-_k}(x), \he{h^{U_n}}{\partial U_n\cup \partial V^-_k}(y)\Big)
\leq C \frac{\big(n- \log(|x|\wedge|y|)\big)^2}{n-k}
\end{equation}
and
\begin{equation}
\label{e:Cov-bdg-U}
\Cov\Big(\he{h^{V^-_k}}{\partial U_n\cup \partial V^-_k}(x), \he{h^{V^-_k}}{\partial U_n\cup \partial V^-_k}(y)\Big) 
\leq C \frac{\big(\log(|x|\vee|y|)-k\big)^2}{n-k}
\end{equation}
for all $U,V\in\frD_\eps$ such that $U$ and $V^-$ are connected, 
$0 \leq k  < n < \infty$,
and all $x,y\in U_n^\eps\cap V^{-,\eps}_k$.
\end{lem}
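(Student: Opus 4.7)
The proof splits naturally into the two assertions; I will describe the first in detail and note the modifications needed for the second. The Gibbs--Markov property (Lemma~\ref{l:GM}) identifies $\varphi := \he{h^{U_n}}{\partial U_n \cup \partial V^-_k}$ with the binding field from $U_n$ to $U_n \cap V^-_k$ (noting that $h^{U_n}$ vanishes on $\partial U_n$, so only the values on $\partial V^-_k$ feed into $\varphi$), so that
\begin{equation*}
\Cov(\varphi(x),\varphi(y)) = G_{U_n}(x,y) - G_{U_n \cap V^-_k}(x,y).
\end{equation*}
Applying the strong Markov property at $\tau^{U_n \cap V^-_k}$ and noting that on $\{\tau^{V^-_k}<\tau^{U_n}\}$ the walk lands in $\partial V^-_k$, this difference equals $E_x[G_{U_n}(S_{\tau^{V^-_k}}, y);\,\tau^{V^-_k}<\tau^{U_n}]$. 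The plan is then to bound the integrand pointwise and to control the probability factor by the ruin estimate.

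For the pointwise bound I would use~\eqref{e:462} and the asymptotics~\eqref{e:444} to write $G_{U_n}(z,y) \leq gn + O_\eps(1) - g\log|y-z|$ for $z \in \partial V^-_k$ and $y \in U_n^\eps \cap V^{-,\eps}_k$. A short case split shows $\log|y-z| \geq \log|y| - O_\eps(1)$: either $|y| \geq 2|z|$ and $|y-z| \geq |y|/2$, or $|y| < 2|z|$, in which case both have size $O_\eps(\rme^k)$ and the separation $|y-z| \geq c_\eps \rme^k$ coming from $y \in V^{-,\eps}_k$ together with $|y| \leq 2|z| \leq C_\eps \rme^k$ still gives the bound. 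This yields $G_{U_n}(z,y) \leq C(n-\log|y|+1)$. Lemma~\ref{l:ruin}\ref{i:ruin-C} provides $P_x(\tau^{V^-_k}<\tau^{U_n}) \leq C(n-\log|x|+1)/(n-k)$, and running the same argument with $x$ and $y$ swapped lets me always pair the smaller of $n-\log|x|$, $n-\log|y|$ with itself, producing $\Cov(\varphi(x),\varphi(y)) \leq C(n-\log(|x|\wedge|y|))^2/(n-k)$ after absorbing the additive constants into $C_\eps$.

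The second assertion follows from the parallel identity
\begin{equation*}
\Cov\big(\he{h^{V^-_k}}{\partial U_n \cup \partial V^-_k}(x),\he{h^{V^-_k}}{\partial U_n \cup \partial V^-_k}(y)\big) = E_x\big[G_{V^-_k}(S_{\tau^{U_n}},y);\,\tau^{U_n}<\tau^{V^-_k}\big].
\end{equation*}
The probability factor is again handled by Lemma~\ref{l:ruin}\ref{i:ruin-C}, which yields $P_x(\tau^{U_n}<\tau^{V^-_k}) \leq C(\log|x|-k+1)/(n-k)$. For the Green bound I would use reversibility and the strong Markov property at the first visit to $\{z\}$ to write
\begin{equation*}
G_{V^-_k}(z,y) = G_{V^-_k}(y,z) = P_y(\tau^{\{z\}}<\tau^{V^-_k})\,G_{V^-_k}(z,z) \leq P_y(\tau^{U_n}<\tau^{V^-_k})\,G_{V^-_k}(z,z),
\end{equation*}
and then combine Lemma~\ref{l:ruin}\ref{i:ruin-C} with the uniform estimate $G_{V^-_k}(z,z) \leq C(n-k)$ for $z \in \partial U_n$ to obtain $G_{V^-_k}(z,y) \leq C(\log|y|-k+1)$, after which the symmetrization step is identical to the one above.

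The main technical subtlety I anticipate is justifying the bound $G_{V^-_k}(z,z) \leq C_\eps(n-k)$, since $V^-_k$ is infinite and the Green-function identities in~\cite{LaLi} are stated primarily for finite domains. My plan is to approximate $V^-_k$ monotonically by the truncations $V^-_k \cap \rmB(0,R)$, each a genuine annulus-like finite domain to which~\eqref{e:462} applies and for which the standard two-dimensional resistance computation gives $G_{V^-_k \cap \rmB(0,R)}(z,z) \leq g(n-k)\cdot(\log R - n)/(\log R - k) + O_\eps(1) \leq g(n-k) + O_\eps(1)$ uniformly in $R$; monotone convergence then transfers the bound to $V^-_k$. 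The same truncation scheme also justifies the strong-Markov decomposition of $G_{V^-_k}$ used in the second part, leveraging recurrence of the planar walk to ensure $\tau^{V^-_k}<\infty$ a.s.
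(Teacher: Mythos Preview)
Your argument for~\eqref{e:Cov-bdg-V} is correct and takes a cleaner route than the paper, which instead factors $\Pi_{U_n\cap V^-_k}(x,\cdot)$ through the intermediate circle $\partial\Delta^{\rm o}_1$ via Lemma~\ref{l:Poisson-out} and then invokes Lemma~\ref{l:Var-inw-1}; your strong-Markov decomposition of $G_{U_n}-G_{U_n\cap V^-_k}$ together with~\eqref{e:462}, \eqref{e:444} and Lemma~\ref{l:ruin} avoids that machinery. (The ``symmetrization'' step is a red herring: the product $(n-\log|x|+C)(n-\log|y|+C)$ is already symmetric and is bounded by the square of the larger factor directly.)

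Your argument for~\eqref{e:Cov-bdg-U} has a genuine gap: the claim $G_{V^-_k}(z,z)\le C_\eps(n-k)$ for $z\in\partial U_n$ is false. The displayed formula you give for $G_{V^-_k\cap\rmB(0,R)}(z,z)$ is the one-dimensional interval Green function; the two-dimensional value is different. Applying~\eqref{e:462} on the truncated annulus and letting $R\to\infty$ (or using the cofinite-domain identity $G_{\bbZ^2\setminus A}(z,z)=\fra(z)+E_z[\fra(S_{T_A}-z)-\fra(S_{T_A})]$, which specializes to $2\fra(z)$ for $A=\{0\}$) gives
\[
G_{V^-_k}(z,z)=g(2n-k)+O_\eps(1),
\]
so there is an extra term $gn$ not controlled by $n-k$; for $k=n-1$ this is of order $n$ while your claimed bound is $O(1)$. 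Propagating $g(2n-k)$ through your chain of inequalities produces an extra factor $(2n-k)/(n-k)$, which blows up.

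The repair is to skip the detour through the diagonal and bound $G_{V^-_k}(z,y)$ directly: the same truncation, now retaining the term $-\fra(z-y)\approx -gn$, cancels the $\fra(z)\approx gn$ contribution and yields $G_{V^-_k}(z,y)\le g(\log|y|-k)+C_\eps$ for $z\in\partial U_n$ and $y\in U_n^\eps\cap V^{-,\eps}_k$. After this correction your argument for~\eqref{e:Cov-bdg-U} goes through. The paper's proof avoids the cofinite Green function altogether by using the inward Poisson-kernel comparison of Lemma~\ref{l:Poisson}.
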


\subsection{Proofs}
\label{s:p-compl}
We now prove the statements from Sections~\ref{s:he} and~\ref{s:compl-bdg}.

\subsubsection{Mean, covariance, and intrinsic metric}

\begin{proof}[Proof of Lemma~\ref{l:blml}]
We write
$e_l=\big(\tfrac{3}{4}\sqrt{g}\big)^{-1} (\wh m_l-m_l)$.
By definition of $\wh m_l$ and $m_l$,
\begin{equation}
e_l =\log^+ l - \frac{ l-k}{n-k} \log^+ n
-\frac{n- l}{n-k}\log^+ k\,\qquad
\frac{\rmd}{\rmd l} e_l=
\frac{\Ind_{\{l > 1\}}}{ l} -\frac{\log^+ n-\log^+ k}{n-k}\,,
\end{equation}
for all $0\leq k<l<n$, and $l\neq 1$ for the derivative. Note that $e_k=e_n=0$ and $-(k\vee 1)^{-1}\leq \tfrac{\rmd}{\rmd l} e_l\leq l^{-1}\Ind_{\{l\geq 1\}} $. Hence, if $l\in[n-2,n]$ or $l\in[k,k+2]$, then $|e_l|\leq 2$.

If $l\in[k+1,n-2]$, then $e_l\leq 2+\log( l-k)$ where we also used subadditivity of the logarithm. For $n\geq 1\vee(2k)$, we have
$\tfrac{\rmd}{\rmd l}e_l\geq -2n^{-1} \log n$. Hence, if $k\leq n/2$ and $l\in[k+1,n-2]$, we also have $e_l\leq 2+(n- l)2n^{-1}\log n\leq 2+2\log(n- l)$, again by subadditivity of the logarithm for the last inequality.

Furthermore, $e_l\leq 2$ if $k\geq\max\{ 1,n/2\}$.
By concavity of the logarithm, $e_l\geq 0$ if $l\geq 1$.
\end{proof}

In the following lemma, we compare harmonic averages of the boundary conditions.
According to our notational conventions, e.\,g.\ $\he{u}{\partial U_n\cup \partial V^-_k}$ is obtained by harmonically extending the boundary conditions $u$ on $\partial U_n$ and $0$ on $\partial V_k\setminus \partial U_n$.

\begin{lem}
\label{l:UV-U}
Let $\eps\in(0,1)$.
Then
\begin{equation}
\label{e:UV-U-u}
\Big|
P_x\big(\tau^{U_n}\leq \tau^{V^-_k}\big)\ol{u}(x)
-\he{u}{\partial U_n\cup \partial V^-_k}(x)\Big|
\leq
P_x\big(\tau^{V^-_k}<\tau^{U_n}\big)\,\osc_{U'}\ol{u}
\end{equation}
and
\begin{equation}
\label{e:UV-U-v}
\Big|P_x\big(\tau^{V^-_k}<\tau^{U_n}\big)\ol{v}(x)
-\he{v}{\partial U_n\cup \partial V^-_k}(x)\Big|
\leq
P_x\big(\tau^{U_n}\leq \tau^{V^-_k}\big)\,\osc_{V'}\ol{v}
\end{equation}
for all $U,V\in\frD_\eps$, $0\leq k<n<\infty$, $u\in\bbR^{\partial U_n}$, $v\in\bbR^{\partial V^-_k}$, $x\in U_n\cap V^-_k$, $U'\supset\{x\}\cup\partial V^-_k$ and $V'\supset\{x\}\cup\partial U_n$.
\end{lem}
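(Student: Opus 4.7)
The plan is to express both sides via the random walk representation and use optional stopping to decompose $\ol{u}(x)$ according to where simple random walk first exits $U_n\cap V^-_k$. For~\eqref{e:UV-U-u}, set $\tau:=\tau^{U_n\cap V^-_k}$ and partition the a.\,s.\ event $\{S_\tau\in\partial U_n\cup\partial V^-_k\}$ into the disjoint events $A_1:=\{S_\tau\in\partial U_n\}$ and $A_2:=\{S_\tau\in\partial V^-_k\setminus\partial U_n\}$. A short check shows $P_x(A_1)=P_x(\tau^{U_n}\leq\tau^{V^-_k})$ and $P_x(A_2)=P_x(\tau^{V^-_k}<\tau^{U_n})$, since the walk exits $U_n$ precisely at time $\tau$ when $S_\tau\in\partial U_n$ (the overlap $\{\tau^{U_n}=\tau^{V^-_k}\}$ falls into $A_1$ by convention).

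By definition of the harmonic extension, $\he{u}{\partial U_n\cup\partial V^-_k}(x)=E_x[u(S_\tau)\mathbf{1}_{A_1}]$, since the boundary values are $0$ on $\partial V^-_k\setminus\partial U_n$. On the other hand, $\ol{u}$ is bounded and harmonic on $\bbZ^2\setminus\partial U_n$, hence on $U_n\cap V^-_k$ (which is disjoint from $\partial U_n\subset U_n^\rmc$). As $U_n$ is finite, $\tau$ is a.\,s.\ finite, and optional stopping gives $\ol{u}(x)=E_x[\ol{u}(S_\tau)]$. Splitting over $A_1\cup A_2$ and using $\ol{u}=u$ on $\partial U_n$, I obtain $\ol{u}(x)=\he{u}{\partial U_n\cup\partial V^-_k}(x)+E_x[\ol{u}(S_\tau)\mathbf{1}_{A_2}]$. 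Multiplying this identity by $-1$, adding $P_x(A_1)\ol{u}(x)$ on both sides, and using $1-P_x(A_1)=P_x(A_2)$, the left-hand side of~\eqref{e:UV-U-u} (before taking absolute value) becomes exactly $E_x\big[(\ol{u}(S_\tau)-\ol{u}(x))\mathbf{1}_{A_2}\big]$.

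To conclude, note that on $A_2$ we have $S_\tau\in\partial V^-_k\subset U'$, while $x\in U'$ by hypothesis, so $|\ol{u}(S_\tau)-\ol{u}(x)|\leq\osc_{U'}\ol{u}$. Taking absolute values and using $P_x(A_2)=P_x(\tau^{V^-_k}<\tau^{U_n})$ yields~\eqref{e:UV-U-u}. The second inequality~\eqref{e:UV-U-v} follows by a symmetric argument: swap the roles of $(U_n,u)$ and $(V^-_k,v)$, use that $\ol{v}$ is harmonic on $\bbZ^2\setminus\partial V^-_k$ hence on $U_n\cap V^-_k$, and apply the assumption $V'\supset\{x\}\cup\partial U_n$. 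The only mild subtleties are keeping the partition disjoint (handling the possible overlap $\partial U_n\cap\partial V^-_k$) and justifying optional stopping, both of which are routine; there is no serious obstacle.
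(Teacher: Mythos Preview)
Your proof is correct and follows essentially the same approach as the paper's: both decompose $\ol{u}(x)=E_x\,u(S_{\tau^{U_n}})$ according to whether the walk exits $U_n\cap V^-_k$ through $\partial U_n$ or through $\partial V^-_k$, identify the $\partial U_n$-part with $\he{u}{\partial U_n\cup\partial V^-_k}(x)$, and bound the $\partial V^-_k$-part by the oscillation. The only cosmetic difference is that the paper phrases the decomposition via the strong Markov property at $\tau^{V^-_k}$ and writes $E_y\,u(S_{\tau^{U_n}})=\ol{u}(y)$, whereas you apply optional stopping directly to the bounded harmonic function $\ol{u}$; the resulting identity and bound are identical.
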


\begin{proof}
We show~\eqref{e:UV-U-u}, the proof of~\eqref{e:UV-U-v} is analogous. Let $0\leq k\leq n$.
Decomposing at the exit time from $V^-_k$ yields 
\begin{multline}
E_x u(S_{\tau^{U_n}})
=E_x\big(u(S_{\tau^{U_n\cap V^-_k}});
S_{\tau^{U_n\cap V^-_k}}\in \partial U_n\big)\\
+P_x\big(\tau^{V^-_k}<\tau^{U_n}\big)
\sum_{y\in \partial V^-_k}
P_x\big(S_{\tau^{V^-_k}}= y\,\big|\, \tau^{V^-_k}<\tau^{U_n} \big)
E_y u(S_{\tau^{U_n}})\,.
\end{multline}
Inserting $E_y u(S_{\tau^{U_n}})=\ol{u}(x)+\big(\ol{u}(y)-\ol{u}(x)\big)$ and rearranging terms, we obtain
\begin{equation}
\big|E_x\big(u(S_{\tau^{U_n\cap V^-_k}});
S_{\tau^{U_n\cap V^-_k}}\in \partial U_n\big)
-P_x\big(\tau^{U_n}\leq \tau^{V^-_k}\big)\ol{u}(x)\big|
\leq P_x\big(\tau^{V^-_k}<\tau^{U_n}\big)
\max_{y\in \partial V^-_k}|\ol{u}(x)-\ol{u}(y)|\,.
\end{equation}
The expectation in the last display equals $\he{u}{\partial U_n\cup \partial V^-_k}$.
\end{proof}
To further bound the oscillation terms in Lemma~\ref{l:UV-U}, we have the following lemma:
\begin{lem}
\label{l:osc-far}
Let $\eps\in(0,1)$. There exist $C=C_\eps<\infty$ and $\theta=\theta_\eps<\eps/3$ such that the following holds:
For all $\eta,\zeta\in[0,\eps^{-1}]$, $U\in\frU^\eta_\eps$, $V\in\frV_\eps$, $0\leq k<n<\infty$, $u\in\bbR^{\partial U_n}$, $v\in\bbR^{\partial V^-_k}$, $x\in U^\eta_n\cap V^{-,\zeta}_k$, we have
\begin{equation}
\label{e:osc-far-u-raw}
\osc_{\{x\}\cup\partial V^-_k}\ol{u}\leq C \rme^{-n}|x|\osc\,\ol{u}_\eta
\end{equation}
and
\begin{equation}
\label{e:osc-far-v-raw}
\osc_{\{x\}\cup\partial U_n}\ol{v}\leq C \frac{\rme^{k}}{|x|}\osc\,\ol{v}_\zeta\,.
\end{equation}
If $x\in(\theta\rmB)_n\cap(\theta^{-1}\rmB^-)_k$ and $n-k>C$, then we also have
\begin{equation}
\label{e:osc-far-u}
\osc_{\{x\}\cup\partial V^-_k}\ol{u}\leq
P_x\big(\tau^{U_n}<\tau^{V^-_k}\big)\osc\,\ol{u}_\eta
\end{equation}
and
\begin{equation}
\label{e:osc-far-v}
\osc_{\{x\}\cup\partial U_n}\ol{v}\leq P_x\big(\tau^{V^-_k}<\tau^{U_n}\big)\osc\,\ol{v}_\zeta\,.
\end{equation}
\end{lem}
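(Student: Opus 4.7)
My plan is to establish the raw bounds~\eqref{e:osc-far-u-raw} and~\eqref{e:osc-far-v-raw} first, using discrete Harnack/gradient estimates for bounded harmonic functions, and then deduce the refined bounds~\eqref{e:osc-far-u} and~\eqref{e:osc-far-v} from the raw ones by choosing $\theta=\theta_\eps$ sufficiently small and comparing with the random-walk ruin estimate of Lemma~\ref{l:ruin}.

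For~\eqref{e:osc-far-u-raw} I would exploit that $U\in\frU^\eta_\eps$ implies $\rmB(0,\eps)\subset U^\eta$, so the discrete ball $(\eps\rmB)_n$ lies inside $U^\eta_n$ and $\ol{u}$ is a bounded discrete harmonic function on $(\eps\rmB)_n$ with oscillation there at most $\osc\,\ol{u}_\eta$. A discrete Harnack gradient estimate, derivable from the asymptotic expansion~\eqref{e:444} and the Green-function representation~\eqref{e:462} together with Lemma~\ref{l:BL-Pois} (cf.\ Chapter~6 of~\cite{LaLi}), then gives $|\ol{u}(z)-\ol{u}(0)|\leq C_\eps|z|\rme^{-n}\osc\,\ol{u}_\eta$ for $z\in\bbZ^2$ with $|z|\leq(\eps/2)\rme^n$; for $|z|>(\eps/2)\rme^n$ the trivial bound $|\ol{u}(z)-\ol{u}(0)|\leq\osc\,\ol{u}_\eta$ combined with $|z|\rme^{-n}\geq\eps/2$ yields the same estimate after enlarging $C_\eps$. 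Using~\eqref{e:UBBV}, $x\in V^{-,\zeta}_k$ forces $|x|\geq c_\eps\rme^k$, while every $y\in\partial V^-_k$ satisfies $|y|\leq\eps^{-1}\rme^k\leq C_\eps|x|$, and the triangle inequality then yields~\eqref{e:osc-far-u-raw}. I would prove~\eqref{e:osc-far-v-raw} by the analogous argument applied to $\wh v:=\ol{v}-\ol{v}(\infty)$, which is bounded discrete harmonic on $\bbZ^2\setminus\partial V^-_k$ with $\sup_{V^{-,\zeta}_k}|\wh v|\leq\osc\,\ol{v}_\zeta$ (using $\wh v\to 0$ at infinity together with the oscillation control on $V^{-,\zeta}_k$), so that the dual gradient estimate at infinity gives $|\wh v(z)|\leq C_\eps\rme^k|z|^{-1}\osc\,\ol{v}_\zeta$ for $|z|$ sufficiently large.

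To deduce the refined bounds~\eqref{e:osc-far-u} and~\eqref{e:osc-far-v} I would set $\theta_\eps:=\min(\eps/3,(2C_\eps)^{-1},\rme^{-2C_\eps})$ with $C_\eps$ the constant from the raw bounds, take $n-k\geq C_\eps$ sufficiently large, and verify directly that the raw bound already implies the refined one. Writing $|x|=\rme^{k+s}$ with $s\in[|\log\theta|,n-k+\log\theta]$ (the range enforced by $x\in(\theta^{-1}\rmB^-)_k\cap(\theta\rmB)_n$), the raw bound is $C_\eps\rme^{s-(n-k)}\osc\,\ol{u}_\eta$. For $n-k$ large enough $\partial U_n$ and $\partial V^-_k$ are disjoint (continuum scales of order $1$ and $\rme^{k-n}$ respectively), so the tie event $\{\tau^{U_n}=\tau^{V^-_k}\}$ has zero probability and Lemma~\ref{l:ruin}\ref{i:ruin-C} gives $P_x(\tau^{U_n}<\tau^{V^-_k})\geq(s-C_\eps)/(n-k)$. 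The needed comparison $C_\eps(n-k)\rme^{s-(n-k)}\leq s-C_\eps$ defines a concave function of $s$ on the interval (its second derivative is $-C_\eps(n-k)\rme^{s-(n-k)}<0$), so it suffices to verify it at the endpoints: at $s=n-k+\log\theta$ it reduces to $C_\eps\theta(n-k)\leq n-k+\log\theta-C_\eps$, satisfied by $C_\eps\theta\leq 1/2$ and $n-k$ large; at $s=|\log\theta|$ it reduces to $C_\eps\theta^{-1}(n-k)\rme^{-(n-k)}\leq|\log\theta|-C_\eps$, trivially satisfied for $|\log\theta|\geq 2C_\eps$ and $n-k$ large. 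The bound~\eqref{e:osc-far-v} follows by the same argument with $\rme^{-s}$ replacing $\rme^{s-(n-k)}$ and $P_x(\tau^{V^-_k}<\tau^{U_n})\geq(n-k-s-C_\eps)/(n-k)$ in place of the previous lower bound.

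The main difficulty will be extracting the discrete Harnack gradient estimate with explicit $\eps$-uniform constants. This is classical discrete potential theory but requires some bookkeeping, and I would derive it from the potential-kernel expansion~\eqref{e:444}, the Green-function representation~\eqref{e:462}, and the Poisson kernel convergence in Lemma~\ref{l:BL-Pois}, mirroring techniques already used in the appendix (cf.\ Lemma~\ref{l:Poisson-diff-o}). Once the raw bound is in hand, the passage from the raw to the refined bound is a short calculus argument exploiting the concavity of $s\mapsto(s-C_\eps)-C_\eps(n-k)\rme^{s-(n-k)}$ on the relevant interval.
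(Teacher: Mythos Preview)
Your approach is essentially the same as the paper's. Both proofs obtain the raw bounds via a gradient estimate for the Poisson kernel on a ball (respectively, exterior of a ball) of scale comparable to $\rme^n$ (respectively $\rme^k$), then compare the resulting exponential factor with the ruin probability from Lemma~\ref{l:ruin}\ref{i:ruin-C} to get the refined bounds. The only differences are presentational: the paper cites Theorem~6.3.8 of~\cite{LaLi} and Lemma~\ref{l:Poisson-diff-o} directly for the nearest-neighbor Poisson kernel estimates and sums along a path from $x$ to $y\in\partial V^-_k$, whereas you first package this into a centered bound $|\ol{u}(z)-\ol{u}(0)|\leq C_\eps|z|\rme^{-n}\osc\,\ol{u}_\eta$ and then use the triangle inequality; and for the refined step the paper simply writes the elementary inequality $C_\eps|x|\rme^{-k}/\rme^{n-k}\leq\tfrac12(\log|x|-k)/(n-k)$ as holding ``by a straightforward computation'' for suitable $\theta$ and large $n-k$, which your concavity argument is one valid way to verify.
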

\begin{proof}
We first show assertions~\eqref{e:osc-far-u-raw} and~\eqref{e:osc-far-u}.
If $x\in U^\eta_n\setminus (\eps\rmB)_n$, then we have
$\rme^{-n}|x|\geq \eps$, and there is nothing to show in~\eqref{e:osc-far-u-raw} as $\{x\}\cup\partial V^-_k\subset U^\eta_n$.
We can thus assume that $x\in (\eps\rmB)_n\cap V^{-,\zeta}_k$.
From the definition of the oscillation and the strong Markov property of the random walk underlying the definition of the Poisson kernel, we obtain
\begin{multline}
\label{e:osc-far-p-Markov}
\osc_{\{x\}\cup \partial V^-_k}\ol{u}\leq
2\max_{y\in V^-_k}\bigg|\sum_{w\in \partial U_n}
\Big[\Pi_{U_n}(y,w)-\Pi_{U_n}(x,w)\Big]\big[u(w)-\ol{u}(x)\big]\bigg|\\
\leq 2\max_{y\in V^-_k}\bigg|\sum_{w'\in \partial (\eps\rmB)_n}
\Big[\Pi_{(\eps\rmB)_n}(y,w')-\Pi_{(\eps\rmB)_n}(x,w')\Big]
\sum_{w\in\partial U_n}\Pi_{U_n}(w',w)\big[u(w)-\ol{u}(x)\big]\bigg|\\
\leq 2\max_{y\in V^-_k}\sum_{w'\in \partial (\eps\rmB)_n}
\Big|\Pi_{(\eps\rmB)_n}(y,w')-\Pi_{(\eps\rmB)_n}(x,w')\Big|
\big|\ol{u}(w')-\ol{u}(x)\big|\,.
\end{multline}
Applying Theorem~6.3.8 of~\cite{LaLi} along a shortest path $(x_i)_{i=0}^l$ from $x_0:=x$ to $x_l:=y$ in $U^\eta_n\cap V^{-,\zeta}_k$, we obtain that
\begin{equation}
\label{e:diff-Pois-p-i}
\Big|\Pi_{(\eps\rmB)_n}(y,w')-\Pi_{(\eps\rmB)_n}(x,w')\Big|
\leq C_\eps\rme^{-n}\sum_{i=1}^l\Pi_{(\eps\rmB)_n}(x_i,w')\,,
\end{equation}
hence we can bound the right-hand side of~\eqref{e:osc-far-p-Markov} from above by
\begin{equation}
C_\eps l\rme^{-n}\max_{w'\in U^\eta_n}\big|\ol{u}(w')-\ol{u}(x)\big|\leq C_\eps l\rme^{-n} \osc\,\ol{u}_\eta
\end{equation}
which shows~\eqref{e:osc-far-u-raw} as $l$ is the length of a shortest path from $x\in V^-_k$ to $y\in\partial V^-_k$. Moreover, 
\begin{equation}
C_\eps\rme^{-n}l\leq C_\eps\frac{|x|}{\rme^n}
= C_\eps \frac{|x|\rme^{-k}}{\rme^{n-k}}
\leq \tfrac12 \frac{\log|x|-k}{n-k}
\leq P_x\big(\tau^{U_n}<\tau^{V^-_k}\big)
\end{equation}
for $x\in(\theta\rmB)_n\cap(\theta^{-1}\rmB^-)_k$ and $n-k>C$, where we choose
$\theta$ sufficiently small and $C$ sufficiently large such that last and second last inequalities hold by a straightforward computation and Lemma~\ref{l:ruin}\ref{i:ruin-C}, respectively.
This yields assertion~\eqref{e:osc-far-u}.

To show assertion~\eqref{e:osc-far-v-raw}, we argue analogously but use Lemma~\ref{l:Poisson-diff-o} in place of~\eqref{e:diff-Pois-p-i} in
\begin{equation}
\Big|\Pi_{\eps^{-1}\rmB^-_k}(y,w)-\Pi_{\eps^{-1}\rmB^-_k}(x,w')\Big|
\leq C_\eps\frac{\rme^{k}}{|x|}\Pi_{\eps^{-1}\rmB^-_k}(\infty,w)\,.
\end{equation}
For~\eqref{e:osc-far-v}, we then also have
\begin{equation}
C_\eps\frac{\rme^k}{|x|}\leq
\tfrac12 \frac{\log \frac{\rme^n}{|x|}}{\log \rme^{n-k}}
\leq P_x\big(\tau^{V^-_k}<\tau^{U_n}\big)\,,
\end{equation}
again by Lemma~\ref{l:ruin}\ref{i:ruin-C} and for $x\in(\theta\rmB)_n\cap(\theta^{-1}\rmB^-)_k$ for $\theta$ sufficiently small and $n-k$ sufficiently large.
\end{proof}
\begin{proof}[Proof of Proposition~\ref{l:E}]
By linearity of the expectation, we have
\begin{equation}
\label{e:E-p1}
\mu(x;\;u,v)=\sum_{y\in\partial W^\pm_l}\Pi_{\bbZ^2\setminus W^\pm_l}(x,y)\bbE_{k,v}^{n,u} h(y)\,,
\end{equation}
where
\begin{equation}
\label{e:E}
\bbE_{k,v}^{n,u} h(y) = 
-m_n + (m_n-m_k)P_w\big(\tau^{V^-_k}<\tau^{U_n}\big)+
\he{u}{\partial U_n\cup \partial V^-_k}(y) + \he{v}{\partial U_n\cup \partial V^-_k}(y)\,.
\end{equation}
By~\eqref{e:E}, as $|\log|y|-l|\leq C_\eps$ for $y\in \partial W^\pm_l$, and by Lemmas~\ref{l:UV-U} and~\ref{l:ruin}\ref{i:ruin-C},
\begin{equation}
\label{e:E0}
\bigg|\bbE_{k,v}^{n,u} h(y) - \frac{ l-k  }{n-k}\ol{u}(0) - \frac{n-l}{n-k}\ol{v}(\infty) -\wh{m}_l\bigg|
\leq 2\osc\,\ol{u}_\eta+2\osc\,\ol{v}_\zeta+C_\eps+C_\eps\frac{|\ol{u}(0)|+|\ol{v}(\infty)|}{n-k}
\end{equation}
and the assertion follows from~\eqref{e:E-p1}.
\end{proof}

\begin{proof}[Proof of Proposition~\ref{p:Cov-bd}]
Representing the harmonic extension in terms of the Poisson kernel and inserting the covariance of the DGFF, we obtain
\begin{equation}
\label{e:l:2.4:pVar}
\Cov_{k}^{n}\big(\he{h}{\partial W^{\pm}_l}(x),\he{h}{\partial W^{\pm}_l}(y)\big)
=\sum_{w,w'\in \partial W^{\pm}_l}\Pi_{\bbZ^2\setminus\partial W^{\pm}_l}(x,w)\Pi_{\bbZ^2\setminus\partial W^{\pm}_l}(y,w')G_{U_n\cap V^-_k}(w,w')
\end{equation}
for $x,y\in\bbZ^2\cup\{\infty\}$. Here, by~\eqref{e:462},
\begin{multline}
\label{e:GUV}
G_{U_n\cap V^-_k}(w,w')=\sum_{z\in\partial U_n}\Pi_{U_n\cap V^-_k}(w,z)
\big(\fra(z-w')-l\big)+\sum_{z\in \partial V^-_k}\Pi_{U_n\cap V^-_k}(w,z)\big(\fra(z-w')-l\big)\\
-\big(\fra(w-w')-l\big)
\end{multline}
for $w,w'\in\partial W^{\pm}_l$.
By Lemma~\ref{l:ruin} and \eqref{e:444},
the first sum on the right-hand side of~\eqref{e:GUV} equals
\begin{equation}
g\frac{ l-k+O_\eps(1)}{n-k}\big(\log\big(\rme^{n}+O_\eps(\rme^l)\big)-l\big)\,.
\end{equation}
Using also the analogous estimate for the second sum in~\eqref{e:GUV}, we obtain
\begin{equation}
\label{e:l:2.4:GVar}
G_{U_n\cap V^-_k}(w,w')
=g\frac{( l-k)(n- l)}{n-k}-\big(\fra(w-w')-l\big)
+O_\eps(1)
\end{equation}
for $w,w'\in\partial W^{\pm}_l$.

We now plug~\eqref{e:l:2.4:GVar} into~\eqref{e:GUV}. Then it remains to show that the second term on the right-hand side of~\eqref{e:l:2.4:GVar}, under summed over $w,w'$ from~\eqref{e:GUV}, remains bounded. To this aim, we let $\delta>0$, and decompose this sum into two parts, first we consider $w,w'$ which are further than $\delta\rme^l$ apart:
uniformly in $W\in\frD_\eps$ and $x',y'\in W^{\pm,\eps}\cup\{\infty\}$, we have
\begin{multline}
\label{e:logdiv-Cov}
\lim_{l-k\to\infty}\sum_{\substack{w,w'\in \partial W^{\pm}_l:\\
|w-w'|\geq\delta\rme^l}}
\Pi_{\bbZ^2\setminus \partial W^{\pm}_l}(\lfloor\rme^l x'\rfloor,w)
\Pi_{\bbZ^2\setminus \partial W^{\pm}_l}(\lfloor\rme^l y'\rfloor,w')
\big(\fra(w-w')-l\big)\\
=\int_{\substack{w,w'\in\partial W:\\|w-w'|\geq\delta}}\Pi_{\bbR^2\setminus \partial W}(x',\rmd w)\Pi_{\bbR^2\setminus\partial W}(y',\rmd w')\log|w-w'|
\end{multline}
by~\eqref{e:444} and by Lemma~\ref{l:BL-Pois}. Hence, if $l-k$ is larger than a constant, the sum on the left-hand side of~\eqref{e:logdiv-Cov} stays bounded, and this is also the case when $l$ is smaller than a constant by~\eqref{e:444} and the definition of $\frD_\eps$.
The sum over $w,w'\in\partial W^{\pm}_l$ with $|w-w'|<\delta\rme^{l}$ that is  complementary to~\eqref{e:logdiv-Cov} can be made arbitrarily small uniformly also in $l-k$ by choosing $\delta>0$ small enough by Lemma~\ref{l:logdiv} below.
\end{proof}
\begin{proof}[Proof of Lemma~\ref{l:Var-inw-1}]
Assertion~\eqref{e:Var-inw} follows by the technique from the proof of Proposition~\ref{p:Cov-bd}. For~\eqref{e:Var-outw}, we obtain from the representation~\eqref{e:l:2.4:pVar} and monotone convergence that the covariance in the assertion is the limit as $n\to\infty$ of the covariance in Proposition~\ref{p:Cov-bd}.
\end{proof}

\begin{proof}[Proof of Lemma~\ref{l:metr}]
First we show assertion~\eqref{e:l:2.4:metr} when $x,y\in W^\eps_l$ and $x$ is adjacent to $y$. Then we have
\begin{multline}
\label{e:l:2.4:p-metr}
\bbE\Big[\Big(\he{h^{U_n\cap V^-_k}}{\partial W^{\pm}_l}(x) - \he{h^{U_n\cap V^-_k}}{\partial W^{\pm}_l}(y)\Big)^2\Big]
=\sum_{w\in \partial W_l}\big(\Pi_{W_l}(x, w)-\Pi_{W_l}(y, w)\big)\\
\times\sum_{w'\in \partial W_l}\big(\Pi_{W_l}(x,w')-\Pi_{W_l}(y, w')\big)
E\Big(h^{U_n\cap V^-_k}(w)h^{U_n\cap V^-_k}(w')\Big)\,.
\end{multline}
We set $p=\max\{j:\: W_l \subset \Delta_{j+1}\}$ with $\Delta_j$ as in Section~\ref{s:concdec}, here we assume w.\,l.\,o.\,g.\ that $n$ is sufficiently large for such $j$ to exists, if this is not the case, we replace $U$ with $\eps^{-1}\rmB$ and increase $n$, which can only increase the left-hand side of~\eqref{e:l:2.4:metr} by the Gibbs-Markov property.
By applying the Gibbs-Markov property successively at $\partial \Delta_1,\ldots,\partial \Delta_{p-1}$ as in the proof of Proposition~\ref{p:concdec-in}, and inserting the covariance kernel $G_{\Delta_{j-1}\cap V^-_k}$ of a DGFF on $\Delta_{j-1}\cap V^-_k$, we obtain
\begin{multline}
\label{e:p:metr-cov-p}
\bbE\Big(h^{U_n\cap V^-_k}(w)h^{U_n\cap V^-_k}(w')\Big)
= \sum_{j=0}^{p}\bbE\Big(\varphi^{\Delta_{j-1}\cap V^-_k,\Delta_j\cap V^-_k}(w)\varphi^{\Delta_{j-1}\cap V^-_k,\Delta_j\cap V^-_k}(w')\Big)\\
= \sum_{j=0}^{p}\sum_{z,z'\in\partial \Delta_j}
\Pi_{\Delta_j\cap  V^-_k}(w,z)
\Pi_{\Delta_j\cap V^-_k}(w',z')
G_{\Delta_{j-1}\cap V^-_k}(z,z')
\end{multline}
for $w,w'\in\partial W_l$. The expression on the right-hand side can only increase when we replace the discrete Poisson kernels $\Pi_{\Delta_j\cap V^-_k}$ by $\Pi_{\Delta_j}$ as $G_{\Delta_{j-1}\cap V^-_k}(z,z')$ is nonnegative and vanishes for $z$ or $z'$ in $\partial V^-_k$.
Moreover, we have $\sum_{w\in\partial W_l}\Pi_{W_l}(x,w)\Pi_{\Delta_j}(w,z)=\Pi_{\Delta_j}(x,z)$ for our $x\in W^\eps_l$, which follows from the strong Markov property of the random walk underlying the definition of the discrete Poisson kernel. Thus, we obtain from~\eqref{e:l:2.4:p-metr} and~\eqref{e:p:metr-cov-p} that
\begin{multline}
\label{e:p:metr:pdifP}
\bbE\Big[\Big(\he{h^{U_n\cap V^-_k}}{\partial W^{\pm}_l}(x) - \he{h^{U_n\cap V^-_k}}{\partial W^{\pm}_l}(y)\Big)^2\Big]
\leq \sum_{j=0}^{p}\sum_{z,z'\in\partial \Delta_j}
\big|\Pi_{\Delta_j}(x,z)-
\Pi_{\Delta_j}(y,z)\big|\\
\times \big|\Pi_{\Delta_j}(x,z')-
\Pi_{\Delta_j}(y,z')\big|
G_{\Delta_{j-1}\cap V^-_k}(z,z')\,.
\end{multline}
By Theorem~6.3.8 of~\cite{LaLi}, the absolute differences of the Poisson kernels in the previous expression are bounded by
$C_\eps \rme^{-(n-j)} \Pi_{\Delta_j}(x,z)$ and
$C_\eps \rme^{-(n-j)} \Pi_{\Delta_j}(x,z')$,
respectively. Hence, the right-hand side of~\eqref{e:p:metr:pdifP} is bounded by a constant times
\begin{equation}
\sum_{j=0}^p\rme^{-(n-j)}\Var\Big(\he{h^{\Delta_{j-1}\cap V^-_k}}{\Delta_j}(x)\Big)
\leq C_\eps\rme^{-2(n-p)}\leq C_\eps\rme^{-2l}\,,
\end{equation}
where we used that the variances in the last display are bounded by a constant by Proposition~\ref{p:Cov-bd}. This shows~\eqref{e:l:2.4:metr} for adjacent $x,y\in W^\eps_l$.

For adjacent $x,y\in W^{-,\eps}_l$ we argue in the same way, also using the inward concentric decomposition, but we set $p:=\max\{j:\: x,y\in\Delta_{j+2}\}$ and again assume w.\,l.\,o.\,g.\ that such $j$ exist.

For $x,y$ as in the assertion, there exists by Lemma~\ref{l:pathdistance} (with $\eps/4$ in place of $\eps$) a path $x=x_0, x_1,\ldots,x_r=y$
of length $r\leq C_\eps|x-y|$ within a connected component of $W^{\pm,\eps/2}$, hence
\begin{multline}
\bbE\Big[\Big(\he{h^{U_n\cap V^-_k}}{\partial W^{\pm}_l}(x) - \he{h^{U_n\cap V^-_k}}{\partial W^{\pm}_l}(y)\Big)^2\Big]^{1/2}\\
\leq\sum_{i=1}^r \bbE\Big[\Big(\he{h^{U_n\cap V^-_k}}{\partial W^{\pm}_l}(x_i) - \he{h^{U_n\cap V^-_k}}{\partial W^{\pm}_l}(x_{i-1})\Big)^2\Big]^{1/2}
\leq C\eps r\rme^{-l}
\end{multline}
by the assertion for adjacent vertices which we already proved (now with $\eps/2$ in place of $\eps$), and by the triangle inequality for the intrinsic metric.
\end{proof}

\begin{lem}
\label{l:pathdistance}
Let $\eps\in(0,1)$.
There exists $C=C_\eps<\infty$ such that for all $W\in\frD_\eps$, $l\geq 0$, any connected component $\Gamma$ of $W^{\pm,\eps}_l$, and all $x,y\in\{z\in\Gamma:\:\rmd(x,\Gamma^\rmc)\geq \eps\rme^l\}$, there exists a path from $x$ to $y$ in $\Gamma$ of length at most
$C|x-y|$.
\end{lem}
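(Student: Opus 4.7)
\bigskip

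\textbf{Proof plan for Lemma~\ref{l:pathdistance}.} The plan is to rescale by $\rme^{-l}$, reduce the problem to one about continuous geometry of $W^{\pm,\eps/2}$ for $W\in\frD_\eps$, and then approximate the resulting continuous path by a lattice path. Concretely, set $\wt x=\rme^{-l}x$, $\wt y=\rme^{-l}y$, and $\wt \Gamma = \rme^{-l}\Gamma \subset \rme^{-l}\bbZ^2$. The hypothesis becomes $\rmd(\wt x,\wt \Gamma^\rmc)\wedge \rmd(\wt y,\wt \Gamma^\rmc)\geq\eps$, and we must exhibit a lattice-neighbor path from $\wt x$ to $\wt y$ in $\wt \Gamma$ of Euclidean length at most $C_\eps |\wt x - \wt y|$. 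When $\rme^l\leq 4/\eps$, the assertion is trivial: the total number of lattice candidates near $x$ is bounded only in terms of $\eps$, and a straight lattice interpolation is available. So we may assume $l$ is large enough that $\rme^{-l}<\eps/10$.

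First I identify the continuous counterpart of $\wt \Gamma$. By~\eqref{e:discr} one has $z\in\wt\Gamma$ iff $\rmd(z,\partial W)>\eps+\rme^{-l}/2$. A short verification using that every continuous point in $B(\wt x,\eps-\rme^{-l}/\sqrt2)$ has a lattice neighbor in $\wt\Gamma$ shows that $B(\wt x,\eps/2)$ and $B(\wt y,\eps/2)$ are contained in a common connected component $\cC$ of $W^{\pm,\eps/2}$. Thus the continuous problem is: connect $\wt x$ and $\wt y$ inside $\cC$ by a curve of Euclidean length $\leq C_\eps|\wt x-\wt y|$, using that both points are at continuous distance $\geq \eps/2$ from $\partial\cC$.

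The main step is a quasi-convexity statement: there exists $C_\eps<\infty$ such that for every $W\in\frD_\eps$, every component $\cC$ of $W^{\pm,\eps/2}$, and every $p,q\in\cC$ with $B(p,\eps/2)\cup B(q,\eps/2)\subset\cC$, some continuous curve in $\cC$ from $p$ to $q$ has length $\leq C_\eps|p-q|$. For $|p-q|\leq \eps/4$ the straight segment works, since every point on it lies in $B(p,\eps/2)\cup B(q,\eps/2)\subset\cC$. For $|p-q|\geq \eps/4$, the component $\cC$ is contained in the bounded set $\rmB(0,\eps^{-1})\cup W^-_{\rm bounded}$ but crucially, because $W\in\frD_\eps$, each boundary component of $\partial W$ has diameter $\geq\eps$ and all lie in $\rmB(0,\eps^{-1})$, so their number is bounded by an $\eps$-dependent $N_\eps$. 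Consequently $W^{\pm,\eps/2}$ has a bounded number of components, each of diameter at most $C_\eps$, and any path inside $\cC$ between $p$ and $q$ has length at most $C'_\eps\leq (4C'_\eps/\eps)|p-q|$. The uniformity of this constant across $\frD_\eps$ follows either from a direct geometric bound on the length of shortest paths in domains with prescribed boundary complexity, or from a compactness argument in the Hausdorff topology on $\frD_\eps$ using that quasi-convexity is preserved under limits.

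Finally, given the continuous path $\gamma:[0,1]\to\cC$ of length $L\leq C_\eps|\wt x-\wt y|$ with $\gamma(0)=\wt x$, $\gamma(1)=\wt y$, sample it at points $\gamma(t_0),\gamma(t_1),\ldots,\gamma(t_m)$ with consecutive Euclidean spacings at most $\rme^{-l}/4$, and round each $\gamma(t_i)$ to its nearest point $w_i$ in $\rme^{-l}\bbZ^2$. Since $\gamma$ lies in $\cC\subset W^{\pm,\eps/2}$ and $|w_i-\gamma(t_i)|\leq\rme^{-l}/\sqrt2<\eps/4$, the continuous-to-discrete verification above places each $w_i$ in $\wt\Gamma$. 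Consecutive $w_i$ are within $\rme^{-l}/2+\rme^{-l}/\sqrt2$ of each other, hence can be joined by $O(1)$ lattice-neighbor steps in $\wt\Gamma$ (a small straight lattice interpolation), and the resulting discrete path has total length $\leq 2L+O(m\rme^{-l})\leq C''_\eps|\wt x-\wt y|$. Multiplying through by $\rme^l$ recovers the desired bound $C|x-y|$. The principal obstacle is the uniform quasi-convexity claim for $\cC$, which is purely continuous and rests on the bounded complexity encoded in $\frD_\eps$.
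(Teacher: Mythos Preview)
Your route—rescale, establish continuous quasi-convexity, then discretize—is a genuinely different strategy from the paper's, but two steps do not go through as written.

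The discretization runs in the wrong direction. Your curve $\gamma$ lives in $W^{\pm,\eps/2}$, but membership in $\wt\Gamma$ requires $\rmd(\cdot,\partial W)>\eps+\rme^{-l}/2$, whereas $\gamma\subset W^{\pm,\eps/2}$ only guarantees $\rmd(\gamma(t_i),\partial W)>\eps/2$, so the rounded points $w_i$ satisfy merely $\rmd(w_i,\partial W)>\eps/2-\rme^{-l}/\sqrt{2}<\eps$. Since $W^{\pm,\eps/2}\supsetneq W^{\pm,\eps}$, the curve may traverse the gap and its discretization leave $\Gamma$. The fix is to place $\cC$ inside $W^{\pm,\eps'}$ for some $\eps'>\eps$ (say $\eps'=3\eps/2$); this is available because the hypothesis $\rmd(\wt x,\wt\Gamma^\rmc)\geq\eps$ forces $\rmd(\wt x,\partial W)\gtrsim 2\eps$.

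More seriously, the quasi-convexity claim is the entire substance of the lemma and remains unproven. Your assertion that every component of $W^{\pm,\eps/2}$ has diameter at most $C_\eps$ is false for the unbounded one, which contains $\rmB(0,2\eps^{-1})^\rmc$; even for bounded components a diameter bound alone does not control shortest-path length (a spiral of small diameter can have arbitrarily long geodesics), and your packing bound on the number of components of $\partial W$ fails since such components may have zero area. The paper sidesteps all of this by working discretely throughout: it restricts to the deep part $\Gamma'$ of $\Gamma$ inside $(3\eps^{-1}\rmB)_l$, covers $\Gamma'$ by $O_\eps(1)$ overlapping balls of radius $\tfrac{\eps}{2}\rme^l$ centred on a coarse sublattice, threads a path through consecutive balls of total length $O(\rme^l)$, and notes that whenever the straight $\bbZ^2$-path leaves $\Gamma$ one must have $|x-y|\geq\eps\rme^l$, so $O(\rme^l)=O(|x-y|)$; the far region $(\eps^{-1}\rmB^-)_l$ is handled separately with a direct path of length $3|x-y|$. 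That ball-covering trick is exactly the ``direct geometric bound'' you gesture at, carried out constructively with no compactness needed.
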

\begin{proof}
We define $\Gamma'=\{z\in\Gamma:\:\rmd(x,\Gamma^\rmc)\geq \eps\rme^l\}\cap (3\eps^{-1}\rmB)_l$. The intersection with $(3\eps^{-1}\rmB)_l$ ensures that $\rme^{-l}\Gamma'$ has bounded diameter. 
We first bound the maximal length of a shortest path in $\Gamma$ between two vertices of $\Gamma'$. To this aim, we observe that the $\tfrac{\eps}{2}\rme^l$-balls around points in $\{z\in\Gamma:\:\rmd(x,\Gamma^\rmc)\geq \tfrac{\eps}{2}\rme^l\}\cap (3\eps^{-1}\rmB)_l\cap(\lceil\tfrac{\eps}{3}\rme^{l}\rceil\bbZ^2)$ form a finite covering $\mathcal C$ of $\Gamma'$ whose cardinality is bounded by a constant that depends only on $\eps$, and that $\cup\mathcal C\subset \Gamma$. Hence, between any two vertices of $\Gamma'$, there exists a path that crosses each ball in $\mathcal C$ only once and whose length is bounded by a constant times $\rme^l$.

If a shortest path in $\bbZ^2$ from $x$ to $y$ as in the assertion lies in $\Gamma$, then the assertion holds.

If no shortest path in $\bbZ^2$ between $x,y\in\Gamma'$ lies in $\Gamma$, then we have $|x-y|\geq \eps\rme^l$ as $\eps\rme^l$-ball around $x$ is contained in $\Gamma$, and the assertion follows as a shortest path has length at most a constant times $\rme^l$ by the above.

If $x,y\in(\eps^{-1}\rmB^-)_l$, then there always exists a path in $(\eps^{-1}\rmB^-)_l$ of length $3|x-y|$ between $x$ and $y$, and $(\eps^{-1}\rmB^-)_l$ is contained in a connected component of $W^-_l$ by definition of $\frD_\eps$.

If $x\notin(3\eps^{-1}\rmB)_l$ and $y\in\Gamma'\setminus (\eps^{-1}\rmB^-)_l$, then a shortest path from $x$ to $y$ must go through some $z\in\partial (2\eps^{-1}\rmB)_l$. The length of the subpath from $z$ to $y$ is bounded by a constant times $\rme^l$ by the above, and the length of the subpath from $x$ to $z$ is bounded by $3|x-z|$. As moreover $|x-y|\geq 2\eps^{-1}\rme^l$ in this case, and $|x-y|+4\eps^{-1}\rme^l\geq |x-z|$, the assertion follows also in this case.
\end{proof}

\begin{proof}[Proof of Lemma~\ref{l:Cov-as}]
The proof is similar to Proposition~\ref{p:Cov-bd}. We discuss assertion~\eqref{e:Cov-as-n}, the proof of~\eqref{e:Cov-as-k} is analogous.
First we represent the covariance under consideration as in~\eqref{e:l:2.4:pVar} and~\eqref{e:GUV}.
For $\delta>0$ and $l:=n-j$, we again obtain by Lemma~\ref{l:BL-Pois} and~\eqref{e:444} that
\begin{multline}
\lim_{n\to\infty}\sum_{\substack{w,w'\in\partial W^{\pm}_l:\\
|w-w'|\geq \delta\rme^l}}
\Pi_{\bbZ^2\setminus\partial W}(x,w)
\Pi_{\bbZ^2\setminus\partial W}(y,w')
\sum_{z\in\partial U_n}\Pi_{U_n}(w,z)\big(\fra(z-w')-l\big)\\
=g\int_{\substack{w,w'\in\partial W:\\|w-w'|>\delta}}\Pi_{\bbR^2\setminus\partial W}(x,\rmd w)
\Pi_{\bbR^2\setminus\partial W}(y,\rmd w')
\int_{z\in\partial(\rme^{j}U)}\Pi_{\rme^j U}(w,\rmd z)
\log|z-w'|
\end{multline}
uniformly as in the assertion.
The difference of the left-hand side in the previous display and of the first term on the right-hand side of~\eqref{e:GUV}, as well as the second term there, vanish as $n-k\to\infty$ followed by $\delta\to 0$ by~\eqref{e:444} and Lemmas~\ref{l:ruin}, \ref{l:logdiv}. The third term on the right-hand side of~\eqref{e:GUV} is treated analogously.
\end{proof}

It remains to prove the following lemma. We recall the constant $c_0$ from~\eqref{e:444}.
\begin{lem}
\label{l:logdiv}
Let $\eps\in(0,1)$. Then
\begin{equation}
\label{e:logdiv}
\lim_{\delta\to 0}\varlimsup_{l\to\infty}
\sum_{\substack{w,w'\in\partial W^{\pm}_l:\\|w-w'|<\delta\rme^l}}
\Pi_{\bbZ^2\setminus \partial W^{\pm}_l}(\lfloor x\rme^l\rfloor,w)\Pi_{\bbZ^2\setminus \partial W^{\pm}_l}(\lfloor y\rme^l\rfloor,w')(\fra(w-w')-l)=0
\end{equation}
uniformly in $W\in\frD_\eps$, $x,y\in W^{\pm,\eps}\cup\{\infty\}$.
\end{lem}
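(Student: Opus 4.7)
The plan is to pass to the continuum limit in $l$, identifying the sum with an integral against the product of continuum harmonic measures, and then to invoke a uniform logarithmic-energy estimate to control the small-distance contribution.

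First I would rescale $w\mapsto\rme^{-l}w$ so that discrete boundary points land on $\partial W$ at scale~$1$. Using the asymptotic expansion $\fra(x)=g\log|x|+c_0+O(|x|^{-2})$ from~\eqref{e:444} together with the convergence of discrete Poisson kernels to continuum harmonic measure (Lemma~\ref{l:BL-Pois}, uniformly in $W\in\frD_\eps$ and in bulk test points), the restricted discrete sum matches — modulo vanishing discretization errors and modulo a piece that is absorbed by the complementary global asymptotics~\eqref{e:logdiv-Cov} — the continuum expression
\[
\int_{|w-w'|<\delta}\Pi_{\bbR^2\setminus\partial W}(x,\rmd w)\,\Pi_{\bbR^2\setminus\partial W}(y,\rmd w')\,\log|w-w'|.
\]
Here the $-l$ inside $\fra(w-w')-l$ combines with the scale of $\fra$ at $|w-w'|\sim \rme^l$ to produce a finite integrand in the limit, on the rescaled coordinates.

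Second, I would show that this continuum integral vanishes as $\delta\to 0$, uniformly in $W\in\frD_\eps$ and in $x,y\in W^{\pm,\eps}\cup\{\infty\}$. The key input is a uniform Hölder-type decay of harmonic measure on small balls: for some $\alpha=\alpha(\eps)>0$ and all $W\in\frD_\eps$, $x\in W^{\pm,\eps}\cup\{\infty\}$, $w_0\in\partial W$ and $r\in(0,\eps^{-1})$,
\[
\Pi_{\bbR^2\setminus\partial W}(x,\rmB(w_0,r))\leq C_\eps\,r^{\alpha}.
\]
Given this, a dyadic decomposition of $\{|w-w'|<\delta\}$ into shells $\{2^{-j-1}\leq |w-w'|<2^{-j}\}$ and integrating $|\log|w-w'||\leq (j+1)\log 2$ against the above bound yields an upper estimate of the form $C_\eps\,\delta^{\alpha}(1+|\log\delta|)$, which tends to zero as $\delta\to 0$.

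The main obstacle is establishing the Hölder-type bound on harmonic measure uniformly over the rather general class $\frD_\eps$. The assumption that each connected component of $\partial W$ has Euclidean diameter at least $\eps$ gives a uniform lower bound on its logarithmic capacity, and Beurling's projection theorem (applied separately to each component and combined via monotonicity of harmonic measure in the domain) then produces the required uniform decay. A secondary technical issue — justifying the passage from the discrete sum to its continuum limit in the presence of an integrand $\fra(w-w')-l$ that is unbounded at the diagonal — is handled by a preliminary truncation at the lattice scale, using the discrete analogue $\Pi_{\bbZ^2\setminus\partial W^\pm_l}(\lfloor\rme^l x\rfloor,w)\leq C_\eps\rme^{-l}$ of the Hölder bound, which follows by the same potential-theoretic argument in discrete form and makes the diagonal contribution negligible.
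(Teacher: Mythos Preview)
Your core tool — a Beurling/Hölder estimate on harmonic measure, driven by the assumption that each connected component of $\partial W$ has diameter at least $\eps$ — is exactly what the paper uses. The difference is organizational. The paper stays entirely in the discrete: it partitions $\{|w-w'|<\delta\rme^l\}$ into dyadic shells $\{\delta\rme^{l-j-1}\le|w-w'|<\delta\rme^{l-j}\}$, bounds $|\fra(w-w')-l|\le|\log\delta|+j+O(1)$ on shell $j$, and shows the product Poisson mass $\pi_j$ of shell $j$ satisfies $\pi_j\le C_\eps\rme^{-c_\eps(j-\log\delta)}$ via a concrete annulus-crossing argument. Namely, since the component of $\partial W$ near $\rme^{-l}w$ has diameter $\ge\eps$, it crosses every annulus $B(w,\delta\rme^{l-j+i})\setminus B(w,\delta\rme^{l-j+i-1})$ for $i=0,\ldots,\lfloor\log(\eps/2)-\log\delta+j\rfloor$; a loop encircling the inner circle of any such annulus must therefore meet $\partial W^{\pm}_l$, and Skorohod coupling to Brownian motion gives a uniform positive probability $p$ that the walk does so before reaching the inner circle. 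Iterating over the $\sim j+|\log\delta|$ annuli yields geometric decay, and summing $(|\log\delta|+j)\pi_j$ over $j$ finishes the lemma.

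Your route first passes to the continuum via Lemma~\ref{l:BL-Pois} and then invokes continuum Beurling. This is correct in spirit but self-undermining: Lemma~\ref{l:BL-Pois} applies only to bounded continuous test functions, so you still have to control the singular diagonal contribution discretely — and that discrete control (your ``truncation at the lattice scale'') is essentially the paper's whole argument, after which the continuum step is superfluous. One technical slip: the pointwise bound $\Pi_{\bbZ^2\setminus\partial W^\pm_l}(\lfloor\rme^l x\rfloor,w)\le C_\eps\rme^{-l}$ is too strong for arbitrary $W\in\frD_\eps$ (it would require more boundary regularity than the class provides); the Beurling-type argument only yields $\le C_\eps\rme^{-\alpha l}$ for some $\alpha=\alpha(\eps)\in(0,1)$, which is all you need and exactly what the paper's annulus argument produces.
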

\begin{proof}
By plugging in the estimate~\eqref{e:444} for the potential kernel, we bound the sum in~\eqref{e:logdiv} by
\begin{equation}
\sum_{j=0}^{j^*}
\Big(|\log\delta|  + j + 1 + c_0 + C_\eps\delta^{-c/2}\rme^{-c(l-j-1)/2}\Big)\pi_j+l\pi_{j^*}\,,
\end{equation}
where
\begin{gather}
\pi_j = \sum_{\substack{w,w'\in\partial W^{\pm}_l:\\ \delta\rme^{l-j-1}\leq |w-w'|<\delta\rme^{l-j}}}
\Pi_{\bbZ^2\setminus \partial W^{\pm}_l}(\lfloor x\rme^l\rfloor,w)\Pi_{\bbZ^2\setminus \partial W^{\pm}_l}(\lfloor y\rme^l\rfloor,w')\quad\text{for } j<j^*\,,\\
\pi_{j^*}= \sum_{\substack{w,w'\in\partial W^{\pm}_l:\\  |w-w'|<\delta\rme^{l-j^*}}}
\Pi_{\bbZ^2\setminus \partial W^{\pm}_l}(\lfloor x\rme^l\rfloor,w)\Pi_{\bbZ^2\setminus \partial W^{\pm}_l}(\lfloor y\rme^l\rfloor,w')\,,
\end{gather}
and $j^*\in\bbN$, $c\in(0,4)$ to be chosen later such that $|j^*-\log\delta-l|$ is bounded by a constant and $c$ is a constant, both constants depending only on $\eps$. (By our convention, the symbols $c_\eps$ and $C_\eps$ denote constants that depend only on $\eps$ and may vary from use to use.)

The assertion of the lemma is implied by the claim that
$\pi_j\leq C_\eps\rme^{-c_\eps(j-\log \delta)}$ for $j\leq j^*$.
To prove the claim, we first observe that
\begin{equation}
\pi_j\leq \max_{\substack{z\in W^{\pm,\eps}_l\\ w\in\partial W^{\pm}_l}}
P_z\Big( \rmd\Big(S_{\tau^{\partial W^{\pm}_l}},w\Big) \leq \delta \rme^{l-j} \Big)\,.
\end{equation}
Consider, for $j\leq j^*$ and sufficiently large $l$, the annuli
$(w+B(\delta\rme^{l-j+i}))\setminus (w+B(\delta\rme^{l-j+i-1}))$ with $i=0,\ldots,\lfloor \log\eps-\log 2-\log\delta+j\rfloor$, where $B(r)\subset\bbZ^2$ denotes the Euclidean ball of radius $r$ around $0$.
In such an annulus, every closed path that runs around the hole has a nonempty intersection with $\partial W^{\pm}_l$ as every connected component of $\partial W$ has Euclidean diameter at least $\eps$ and by the discretization~\eqref{e:discr}. There exists a numerical constant $r$ such that when the radius $\delta\rme^{l-j+i}$ is larger than $r$, then by Skorohod coupling with Brownian motion, the probability that simple random walk started at the outer component of the boundary of such an annulus runs around the hole before hitting the inner component of the boundary is bounded from below by a numerical constant $p>0$. We choose $j^*=\lfloor l+\log\delta-\log r\rfloor$.
As the starting point $z$ of $S$ has distance at least $\eps\rme^l-1$ from $w$, it follows for $j\leq j^*$ that the probability that $S$ visits $w+B(\delta\rme^{l-j})$ before $\partial W^{\pm}_l$ is bounded from above by $(1-p)^{\lfloor \log\eps -\log 2-\log\delta + j\rfloor}$.
\end{proof}

\subsubsection{Fluctuation}
\begin{proof}[Proof of Proposition~\ref{p:2.6}]
The oscillation is the absolute maximum of the difference field. To center this Gaussian field, we bound its conditional expectation by
\begin{multline}
\label{e:cE-osc}
\Big|\bbE_{k,v}^{n,u}\Big(\he{h}{\partial W^{\pm}_l}(y)-\he{h}{\partial W^{\pm}_l}(z)
\,\Big|\, \he{h}{\partial W^{\pm}_l}(x)=-m_l+w\Big)\Big|\\
\leq\frac{\Big|\Cov_{k}^{n}\Big(\he{h}{\partial W^{\pm}_l}(x), \he{h}{\partial W^{\pm}_l}(y) - \he{h}{\partial W^{\pm}_l}(z)\Big) \Big|}{\Var_{k}^{n}\Big(\he{h}{\partial W^{\pm}_l}(x)\Big)}
\times\bigg|w-m_l+\wh m_l-\frac{\ol{u}(0)( l-k)+\ol{v}(\infty)(n- l)}{n-k}\\
+ 2\,\osc_{\ol{W_l}}\ol{u} + 
	2\,\osc_{W_l^-}\ol{v} + C_\eps(1+|\ol{u}(0)|+|\ol{v}(\infty)|)(n-k)^{-1}\bigg|
\end{multline}
for $y,z\in W^{\pm,\eps}_l$, this bound follows from the usual expression for Gaussian conditional expectations and Proposition~\ref{l:E}.

First we consider the oscillation in each connected component $\Gamma$ of $W^\eps_l$ separately. By Proposition~\ref{p:Cov-bd}, the covariance in the numerator on the right-hand side of~\eqref{e:cE-osc} is bounded from above by a constant, and the variance in the numerator is bounded from below by a constant times $(n-l)\wedge(l-k)$. Using also Lemma~\ref{l:blml}, we further bound \eqref{e:cE-osc} by a constant times $\mu$ for $y,z\in\Gamma$.
Now we apply Fernique majorization for the oscillation (Theorem~6.14 of~\cite{Bi-LN} with $R=\infty$) to the centered Gaussian field
$\he{h}{\partial W^{\pm}_l}(y)-\bbE_{k,v}^{n,u}\big(\he{h}{\partial W^{\pm}_l}(y)\,\big|\, \he{h}{\partial W^{\pm}_l}(x)=-m_l+w \big)$, $y\in\Gamma$
with the uniform probability measure on $\Gamma$ as the majorizing measure, so as to see that $\bbE_{k,v}^{n,u}\big(\osc_\Gamma \he{h}{\partial W^{\pm}_l} \,\big|\, \he{h}{\partial W^{\pm}_l}(x)=-m_l+w\big)$ is bounded by a constant. To this aim, we recall from theory of the multivariate Gaussian distribution that the intrinsic metric (which can be seen as a variance) under the conditioning on $\he{h}{\partial W^{\pm}_l}(x)$ is bounded by the unconditional intrinsic metric, which in turn is bounded by Lemma~\ref{l:metr}. To bound the conditional tail probability (i.\,e.\ to show \eqref{e:p:2.6:2} but still with $\Gamma$ instead of $W^{\pm,\eps}_l$), we use the Borell-TIS inequality, now using Lemma~\ref{l:metr} as a bound for the variance of the difference field (and for the conditional variance which can only be smaller).

For the connected component $W^{-,\eps}_l$, we apply Fernique majorization and the Borell-TIS inequality as above but for $y,z\in W^{-,\eps}_l$ with $|y|,|z|\leq \rme^{l+2}\diam W$, and note that
$\he{h}{\partial W^{\pm}_l}(y)-\he{h}{\partial W^{\pm}_l}(z)$ assumes its maximum (and its minumum) over $(W^{-,\eps}_l)^2\cup\{\infty\}$ at some $(y,z)$ with $|y|,|z|\leq\rme^{ l+2}\diam W$, which follows from the maximum principle for bounded harmonic functions, applied to $y$ and $z$ separately.

It remains to control the oscillations between the connected components of $W^{\pm,\eps}_l\cup\{\infty\}$. The number of these connected components is bounded by one plus the number of connected components of $W^{\pm,\eps}_l\cap (\eps^{-1}\rmB)_l$, which in turn is bounded by $16\eps^{-4}$ as $|(\eps^{-1}\rmB)_l|\leq 4\eps^{-2}\rme^{2l}$, and as $W^\pm_l\cap (\eps^{-1}\rmB)_l$ contains balls of radius $\eps\rme^l$ that are centered in each of the connected components of $W^{\pm,\eps}_l\cap (\eps^{-1}\rmB)_l$ and do not intersect.
Hence, it suffices to show that
$\bbP_{k,v}^{n,u}(\he{h}{\partial W^{\pm}_l}(y)-\he{h}{\partial W^{\pm}_l}(z)>t+\mu\mid\he{h}{\partial W^{\pm}_l}(x)=-m_l+w\big)$ has uniformly Gaussian tails for $y,z\in W^{\pm,\eps}_l\cup\{\infty\}$. To this aim, it suffices by~\eqref{e:cE-osc} to bound
\begin{multline}
\label{e:osc-var}
\Var_{k}^{n}\Big(\he{h}{\partial W^{\pm}_l}(y)-\he{h}{\partial W^{\pm}_l}(z)\,\Big|\,\he{h}{\partial W^{\pm}_l}(x)=-m_l+w\Big)\\
\leq \Var_{k}^{n}\Big(\he{h}{\partial W^{\pm}_l}(y)\Big)
-2\Cov_{k}^{n}\Big(\he{h}{\partial W^{\pm}_l}(y),\he{h}{\partial W^{\pm}_l}(z)\Big)
+\Var_{k}^{n}\Big(\he{h}{\partial W^{\pm}_l}(z)\Big)\,,
\end{multline}
which in turn is bounded by a constant Proposition~\ref{p:Cov-bd}.
\end{proof}

\begin{proof}[Proof of Proposition~\ref{p:unc-bd}]
This is proved analogously to Proposition~\ref{p:2.6}. Let $\bbE^{n,u}$, $\Var^n$ and $\Cov^n$ denote the expectation, variance and covariance associated with $\bbP^{n,u}$.
For $y,z\in W^{\pm,\eps}_l$, we consider the Gaussian conditional expectation
\begin{multline}
\label{e:unc-bd-condE}
\Big|\bbE^{n,u}\Big(\he{h}{\partial W^{\pm}_l}(y)-\he{h}{\partial W^{\pm}_l}(z)
\,\Big|\, \he{h}{\partial W^{\pm}_l}(x)=-m_l+w\Big)\Big|
\leq\frac{\Big|\Cov^{n}\Big(\he{h}{\partial W^{\pm}_l}(x), \he{h}{\partial W^{\pm}_l}(y) - \he{h}{\partial W^{\pm}_l}(z)\Big) \Big|}{\Var^{n}\Big(\he{h}{\partial W^{\pm}_l}(x)\Big)}\\
\times\Big|w-m_l-\bbE^{n,u}\he{h}{\partial W^{\pm}_l}(x)\Big|\,,
\end{multline}
where we have
\begin{equation}
\bbE^{n,u}\he{h}{\partial W^{\pm}_l}(x)
=-m_n+\ol{u}(x)\,.
\end{equation}
Lemma~\ref{l:Var-inw-1} bounds the covariance in the numerator in~\eqref{e:unc-bd-condE} by a constant and the variance in the denominator by a constant times $n-l$. We use~\eqref{e:unc-bd-condE} to center the difference field that appears in the oscillation. In each connected component $\Gamma$ of $W_l\cap (2\eps^{-1}\rmB)_l$, we then apply the Fernique and Borell-TIS inequalities as before, using the estimate
\begin{equation}
\bbE\Big[\Big(\he{h^{U_n}}{\partial_l W}(y') - \he{h^{U_n}}{\partial_l W}(z') \Big)^2\Big] 
\leq C_\eps \frac{|y'-z'|^2}{\rme^{2 l}}
\end{equation}
for $y',z'\in\Gamma$,
which is a straightforward analog of Lemma~\ref{l:metr}.
The oscillation between different connected components is estimated analogously to~\eqref{e:osc-var} using Lemma~\ref{l:Var-inw-1}.
\end{proof}

\subsubsection{Proofs of binding field estimates}

\begin{proof}[Proof of Lemma~\ref{l:UU-fluct}]
We show assertions~\eqref{e:UU-fluct-i} and~\eqref{e:UU-fluct-i-P}, the proofs of the other assertions are analogous.
We write	$\varphi=\he{h^{U_n\cap \rmB^-_{0}}}{\partial(U\cap \wt U)_n\cup \partial \rmB^-_0}$. Let $\eta\in(\dH(U,U'),\eps/2)$, where we assume w.\,l.\,o.\,g.\ that $\dH(U,\wt U)<\eps/2$ and $n\geq r-\log\eps$. Then we have $U\cap\wt U\subset U^\eta$ and $B^-_{n-r}\subset B^{-,\eps}_0$.
For $x\in(U\cap \wt U)^\eps_n\cap \rmB^{-}_{n-r}$, we bound by the Gibbs-Markov property (Lemma~\ref{l:GM})
\begin{multline}
\Var \varphi(x)\leq \Var\varphi^{U_n\cap\rmB^-_0,U^\eta_n\cap\rmB^-_0}(x)
=G_{U_n\cap\rmB^-_0}(x,x)-G_{U^\eta_n\cap\rmB^-_0}(x,x)\\
\leq g\sum_{z\in\partial U_n}\Pi_{U_n}(x,z)\log|\rme^{-n}z|
-g\sum_{z\in\partial U^\eta_n}\Pi_{U^\eta_n}(x,z)\log|\rme^{-n}z|
+\max_{z\in\rmB^-_0}\fra(z) P_x\big(\tau^{\rmB^-_0}<\tau^{U^\eta}\big)+o_\eps(1)\\
=g\int_{\partial U}\Pi_U(\rme^{-n} x,\rmd z)\log|z|
-g\int_{\partial U^\eta}\Pi_{U^\eta}(\rme^{-n} x,\rmd z)\log|z|
+o_{\eps,r}(1)
\,,
\end{multline}
where we used~\eqref{e:444}, \eqref{e:462} and Lemmas~\ref{l:ruin}\ref{i:ruin-C}, \ref{l:BL-Pois}. The difference of the integrals in the last line can be bounded by $\rho(\eta)$ by Lemma~A.2 of~\cite{BL-Conf}.

For $x,y\in(U\cap \wt U)^\eps_n\cap \rmB^{-}_{n-r}$, we also bound the (squared) intrinsic metric
\begin{multline}
\bbE\Big(\big(\varphi(x)-\varphi(y)\big)^2\Big)
\leq \bbE\Big(\big(\varphi^{U_n\cap\rmB^-_0,U^\eta_n\cap\rmB^-_0}(x)-\varphi^{U_n\cap\rmB^-_0,U^\eta_n\cap\rmB^-_0}(y)\big)^2\Big)\\
=\sum_{z,z'\in\partial U^\eta_n}\big|\Pi_{U^\eta_n\cap\rmB^-_0}(x,z)-\Pi_{U^\eta_n\cap\rmB^-_0}(y,z)\big|
\big|\Pi_{U^\eta_n\cap\rmB^-_0}(x,z')-\Pi_{U^\eta_n\cap\rmB^-_0}(y,z')\big|\\
\times\bbE\Big(\varphi^{U_n\cap\rmB^-_0,U^\eta_n\cap\rmB^-_0}(z)\varphi^{U_n\cap\rmB^-_0,U^\eta_n\cap\rmB^-_0}(z')\Big)
\end{multline}
where we again used the Gibbs-Markov property and harmonicity of $\varphi^{U_n\cap\rmB^-_0,U^\eta_n\cap\rmB^-_0}$.
Analogously to the proof of Lemma~\ref{l:metr}, the absolute differences in the last display are bounded by $C_{\eps,r}|x-y|\rme^{-n}\Pi_{U_n}(x,z)$ and $C_{\eps,r}|x-y|\rme^{-2n}\Pi_{U^\eta_n}(x,z')$, respectively. Hence,
\begin{equation}
\label{e:UU-fluct-intr}
\bbE\Big(\big(\varphi(x)-\varphi(y)\big)^2\Big)
\leq C_{\eps,m}\frac{|x-y|^2}{\rme^{2n}}\Var\varphi^{U_n\cap\rmB^-_0,U^\eta_n\cap\rmB^-_0}(x)\leq
C_{\eps,r}\frac{|x-y|^2}{\rme^{2n}}\rho(\eta)\,,
\end{equation}
where we used the bound for the variance from before in the last step. Using~\eqref{e:UU-fluct-intr}, we can now apply the Fernique inequality as in the proof of Proposition~\ref{p:2.6} which yields assertion~\eqref{e:UU-fluct-i}.
From this, the bound for the variance, and the Borell-TIS inequality, we then obtain~\eqref{e:UU-fluct-i-P}.
\end{proof}

\begin{proof}[Proof of Lemma~\ref{l:VV-fluct}]
We show~\eqref{e:VV-fluct-i} and~\eqref{e:VV-fluct-i-P},
the proof of~\eqref{e:VV-fluct-o} and~\eqref{e:VV-fluct-o-P} is analogous.
W.\,l.\,o.\,g.\ we assume that $B^-_{n-r}\subset V^{-,\eps}_k$.
For $x,y\in U^\eps_n\cap\rmB^-_{n-r}$, we estimate the (squared) intrinsic metric
\begin{multline}
\label{e:p-metr-bdg-harm}
\bbE\Big[\Big(\he{h^{U_n\cap \rmB^-_0}}{\partial U_n\cup \partial V^-_{k}}(x) - \he{h^{U_n\cap \rmB^-_0}}{\partial U_n\cap \partial V^-_{k}}(y)\Big)^2\Big]
=\sum_{z,z'\in\partial V^-_{k}}\big[
\Pi_{U_n\cap V^-_{k}}(x,z) - \Pi_{U_n\cap V^-_{k}}(y,z)\big]\\
\times\big[
\Pi_{U_n\cap V^-_{k}}(x,z') - \Pi_{U_n\cap V^-_{k}}(y,z')\big]
\bbE\Big[ h^{U_n\cap \rmB^-_0}(z)h^{U_n\cap \rmB^-_0}(z')\Big]\,.
\end{multline}
As in the proof of Lemma~\ref{l:metr}, there exists a path from $x$ to $y$ in $U^\eps_n\cap \rmB^-_{n-r}$
whose length is bounded by a constant times $|x-y|$.
Applying Theorem~6.3.8 of~\cite{LaLi} repeatedly along that path gives
\begin{equation}
\big|\Pi_{U_n\cap V^-_{k}}(x,z) - \Pi_{U_n\cap V^-_{k}}(y,z)\big|
\leq C_{r,\eps}\,\frac{|x-y|}{\rme^n}
\Pi_{U_n\cap V^-_{k}}(x,z)\,.
\end{equation}
Furthermore, defining $\Delta^{\rm o}_1$ from $U$, $V$, $n$, $k$ and $\eta=\zeta=0$ as in Section~\ref{s:w-outw} and using the Markov property of
the random walk underlying the definition of the discrete Poisson kernel, we estimate
\begin{multline}
\Pi_{U_n\cap V^-_{k}}(x,z)
= \sum_{w\in\partial \Delta^{\rm o}_1}\Pi_{U_n\cap \Delta^{\rm o}_1}(x,w)
\Pi_{U_n\cap V^-_{k}}(w,z)\\
\leq
\frac{C_{r,\eps}}{n- k}
\sum_{w\in\partial \Delta^{\rm o}_1}\Pi_{\Delta^{\rm o}_1}(\infty,w)
\Pi_{V^-_{k} }(w,z)
=\frac{C_{r,\eps}}{n-k}\Pi_{V^-_{k}}(\infty,z)\,,
\end{multline}
for the inequality, we used Lemma~\ref{l:Poisson-out} for
$\Pi_{U_n\cap \Delta^{\rm o}_1}$, and in $\Pi_{U_n\cap \wt V^-_{\wt k}}$, we used that $z\in\partial V^-_{k}$ and left out the absorbing boundary at $\partial U_n$.
Plugging these estimates into~\eqref{e:p-metr-bdg-harm} yields
\begin{multline}
\bbE\Big[\Big(\he{h^{U_n\cap \rmB^-_0}}{\partial U_n\cup \partial V^-_{k}}(x) - \he{h^{U_n\cap \rmB^-_0}}{\partial U_n\cap \partial V^-_{k}}(y)\Big)^2\Big]\\
\leq C_\eps\,
|x-y|^2\rme^{-2n}
(n-k)^{-2}
\sum_{z,z'\in\partial V^-_{k}}
\Pi_{V^-_{k}}(\infty,z)
\Pi_{V^-_{k}}(\infty,z')
G_{U_n\cap \rmB^-_0}(z,z')\\
\leq C_\eps\,
|x-y|^2\rme^{-2n}
(n-k)^{-2}
\wedge_{n,k,0}\,,
\end{multline}
where we used Proposition~\ref{p:Cov-bd} in the last inequality.
We now apply the Fernique inequality as in the proof of Proposition~\ref{p:2.6} which yields~\eqref{e:VV-fluct-i} when we replace in the maximum there $U_n$ with $U^\eps_n$, which does not decrease the maximum by harmonicity of the binding field.
Assertion~\eqref{e:VV-fluct-i-P} follows from the Borell-TIS inequality by using~\eqref{e:VV-fluct-i} and Lemma~\ref{l:Cov-bdg}.
\end{proof}

\begin{proof}[Proof of Lemma~\ref{l:Cov-bdg}]
We show~\eqref{e:Cov-bdg-V}, the proof of~\eqref{e:Cov-bdg-U} is analogous.
By harmonicity, we write the covariance in~\eqref{e:Cov-bdg-V} as
\begin{equation}
\label{e:varXm-1}
\sum_{z,z'\in \partial V^-_k}\Pi_{U_n\cap V^-_k}(x,z)
\Pi_{U_n\cap V^-_k}(y,z')
\Cov\big( h^{U_n}(z),h^{U_n}(z')\big)\,.
\end{equation}
We define $\rt =\lfloor n - k\rfloor +\lfloor\log\eps\rfloor -\lceil\log(\eps^{-1}+\zeta)\rceil$ and $\Delta^{\rm o}_i$ in terms of $U$, $V$, $n$, $k$, and $\eta=\zeta=\eps$ as in Section~\ref{s:concdec}.
W.\,l.\,o.\,g.\ we assume that $x,y\in \Delta^{\rm o}_1$.
For $z\in\partial V^-_k$,
\begin{equation}
\label{e:varXm-2}
\Pi_{U_n\cap V^-_k}(x,z)\leq\sum_{w\in\partial \Delta^{\rm o}_1}\Pi_{\Delta^{\rm o}_{1}\cap U_n}(x,w)
\Pi_{\Delta^{\rm o}_{-1}}(w,z)\,,
\end{equation}
where we used the strong Markov property of the random walk that underlies the definition of the discrete Poisson kernel, and we removed an absorbing boundary. By Lemma~\ref{l:Poisson-out},
\begin{equation}
\Pi_{\Delta^{\rm o}_{1}\cap U_n}(x,w)\leq C_\eps\frac{\rt+1-p}{\rt}\Pi_{\Delta^{\rm o}_1}(\infty,w)
\end{equation}
for all $w\in\partial \Delta^{\rm o}_1$ when $x\in A^{\rm o}_{p}$, corresponding to $k+ p - 1 -\log\eps \leq \log |x|\leq k+ p - \log\eps$.
Inserting this back into~\eqref{e:varXm-2}, we further bound~\eqref{e:varXm-1} by
\begin{equation}
C_\eps\frac{(\rt +1-p)^2}{\rt^2}
\Var\, \he{h^{U_n}}{\partial V^-_k}(\infty)\,.
\end{equation}
The variance in the last display is bounded by $g\rt +C_\eps$ by Lemma~\ref{l:Var-inw-1}.
\end{proof}

\section{Ballot theorems for decorated random walks}
\label{s:3}

\subsection{Statements}

The first result concerning such walks is a uniform ballot-type upper bound.
\begin{thm}
[Upper bound]
\label{t:3.2}
Fix $\delta \in (0,1/3)$. There exists $C = C_\delta < \infty$ such that
\begin{equation}
\label{e:drw-ub}
\bfP \Big(\max_{k =\lceil\delta^{-1}\rceil}^{\lfloor \rt - \delta^{-1}\rfloor} (\cS_k + \cD_k) \leq 0 \Big)\\
\leq C \frac{\big(a^- + 1 \big)
\big(b^- + 1 \big)
}{\rt} \,,
\end{equation}
for all $\rt \in \bbN$, $a \in \bbR$, $b < -\rt^\delta$ and 
all $(\cS_k)_{k=0}^\rt$, $(\cD_k)_{k=1}^\rt$ satisfying \ref{i.a1}, \ref{i.a2} and \ref{i.a3}. The inequality holds without any restriction on $b$, if we replace $(b^-+1)$ by $(b^- + \rt^\delta)$ 
on the r.h.s.
\end{thm}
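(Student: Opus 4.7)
The plan is to reduce the decorated ballot event to a deterministic barrier event for the underlying Gaussian random walk bridge $(\cS_k)$, using the stretched-exponential tightness of the decorations (Assumption~\ref{i.a3}) to handle the error.

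First, I fix a barrier curve $\gamma_k := A\,\wedge_{T,k}^{1/2 - \delta}$, where $A = A(\delta) > \delta^{-1}$ is a large constant. By Assumption~\ref{i.a3},
\begin{equation*}
\bfP\bigl(\cD_k < -\gamma_k\bigr) \leq \delta^{-1}\exp\Bigl(-\bigl((A-\delta^{-1})\wedge_{T,k}^{1/2-\delta}\bigr)^\delta\Bigr),
\end{equation*}
so setting $G := \{\cD_k \geq -\gamma_k \text{ for all } k \in [\lceil\delta^{-1}\rceil, \lfloor T-\delta^{-1}\rfloor]\}$, I obtain
\begin{equation*}
\bfP(\mathrm{ballot}) \leq \bfP\bigl(\cS_k \leq \gamma_k,\, \forall k \in [\lceil\delta^{-1}\rceil, \lfloor T-\delta^{-1}\rfloor]\bigr) + \bfP\bigl(G^c \cap \mathrm{ballot}\bigr),
\end{equation*}
since on $G$ the requirement $\cS_k + \cD_k \leq 0$ forces $\cS_k \leq -\cD_k \leq \gamma_k$.

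The first term is controlled by a classical ballot/barrier estimate for the inhomogeneous Gaussian random walk bridge with step variances in $(\delta, \delta^{-1})$ (Assumption~\ref{i.a1}). Since $\gamma_k$ vanishes at the endpoints of the restricted interval and grows strictly slower than $\sqrt{\wedge_{T,k}}$, a comparison with Brownian bridge via KMT coupling together with reflection principle arguments (or the direct martingale method of Bramson) yields
\begin{equation*}
\bfP\bigl(\cS_k \leq \gamma_k,\, \forall k \in [\lceil\delta^{-1}\rceil, \lfloor T-\delta^{-1}\rfloor]\bigr) \leq C(A)\,\frac{(a^-+1)(b^-+1)}{T}.
\end{equation*}
I expect this to be developed as a self-contained statement earlier in the appendix, independent of the decorations.

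The crux of the proof is bounding the error term $\bfP(G^c \cap \mathrm{ballot})$. Here I would decompose dyadically, writing $G^c = \bigcup_{k,\ell \geq 1} \{\cD_k \in [-2^\ell \gamma_k, -2^{\ell-1}\gamma_k)\}$; on each such event the ballot condition forces $\cS_k \leq -\cD_k$, a value at most $2^\ell\gamma_k$. Using Assumption~\ref{i.a2} to split past and future of $(\cS_j)$ conditionally on $\cS_k$, I would apply the ballot estimate for each half-bridge with the constrained midpoint, and pair it with the stretched-exponential tail $\bfP(\cD_k < -2^{\ell-1}\gamma_k) \leq \delta^{-1}\rme^{-((2^{\ell-1}A-\delta^{-1})\wedge_{T,k}^{1/2-\delta})^\delta}$. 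Summing the resulting contributions in $k$ and $\ell$, the error is expected to collapse to $O((a^-+1)(b^-+1)/T)$. The hypothesis $b < -T^\delta$ enters precisely at this point: it guarantees that the future half-bridge endpoint is low enough that its barrier factor scales like $(b^-+1)/T$, rather than being dominated by an atypical midpoint value.

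The main obstacle is executing this dyadic bound without logarithmic losses. A single-level truncation with constant $A$ gives only a $O(1)$ bound on $\bfP(G^c)$, while a $T$-dependent truncation $A = (C\log T)^{1/\delta}$ would introduce a polylog factor through $C(A)$; the dyadic refinement must therefore be arranged so the contributions telescope cleanly. Once the first part of the theorem is established, the second part (with $b^- + T^\delta$ in place of $b^- + 1$) follows by conditioning on $\cS_{T-K}$ for a fixed constant $K$: on $\{\cS_{T-K} < -T^\delta\}$ I apply the first part, while the complementary contribution is estimated directly using Gaussian tail bounds on the bridge endpoint at $T - K$.
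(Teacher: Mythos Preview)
Your architecture --- reduce to an undecorated barrier estimate (the paper's Proposition~\ref{p:4.1}) plus an error term --- matches the paper, but the error-term argument has a real gap. On $\{\cD_k\in[-2^\ell\gamma_k,-2^{\ell-1}\gamma_k)\}\cap\{\mathrm{ballot}\}$ you learn $\cS_k\le 2^\ell\gamma_k$, but nothing about $\cD_j$ for $j\ne k$. When you then ``apply the ballot estimate for each half-bridge'', which estimate do you mean? The undecorated barrier needs $\cS_j\le\gamma_j$ on those $j$, which you do not have; the decorated one is the theorem you are proving. Assumption~\ref{i.a2} only decouples the future \emph{walk} $(\cS_j)_{j>k}$ from the past $(\cS_j,\cD_j)_{j\le k}$ given $\cS_k$; the future decorations are not covered, so the future half still carries the full decorated event and your factorization is circular. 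The paper resolves this with a single control variable $R$ (equation~\eqref{e:2.4}) that bounds \emph{all} decorations and \emph{all} increments $\xi_k=\cS_k-\cS_{k-1}$ simultaneously, with threshold $(k\vee r)^{\delta/2}$. On $\{R=r\}$: some step or decoration at a time $<r$ exceeds $(r-1)^{\delta/2}$, costing $\rme^{-r^{\delta^2/3}}$; the increment control forces $\cS_{r'}\ge a-r'^2$ with $r'=\lceil r\vee\delta^{-1}\rceil$, so the past needs no ballot estimate at all, only this crude localization; and the uniform decoration control turns the ballot on $[r',T]$ into a pure barrier event for $(\cS_k)_{k\ge r'}$ alone, to which~\ref{i.a2} and Proposition~\ref{p:4.1} apply cleanly. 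Summing over $r$ gives Lemma~\ref{l:4.2}; the theorem is that lemma at $r=1$. The idea you are missing is to control the increments as well, so the past half is handled by \emph{localization of $\cS_{r'}$} rather than a recursive ballot estimate.

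For general $b$, your conditioning on $\cS_{T-K}$ does not close as stated: when $b>-T^\delta$ the event $\{\cS_{T-K}\ge -T^\delta\}$ has probability of order one and carries no $1/T$ factor. The paper instead tilts linearly: $\cS'_k:=\cS_k-(b+T^\delta)k/T$ has $\cS'_T=-T^\delta$, satisfies~\ref{i.a1}--\ref{i.a3}, and (for $b>-T^\delta$) $\cS'_k\le\cS_k$, so the ballot event for $\cS$ is contained in that for $\cS'$; apply the first part.
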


An analogous lower bound is given by the following theorem.
\begin{thm}[Lower bound]
\label{t:lb}
Fix $\delta \in (0,1/3)$. There exist $C = C_\delta > 0$ and $a_0 = a_{0,\delta} > -\infty$, such that
\begin{equation}
\label{e:5.8}
\bfP \Big(\max_{k=1}^\rt (\cS_k + \cD_k) \leq 0 \Big) 
\geq C \frac{(a^-+1) b^-}{\rt} \,,
\end{equation}
for all $\rt \in \bbN$, $a \leq a_0$, $b < -\rt^{\delta}$, $(a^-+1)b^- \leq \rt^{1-\delta}$ and 
all $(\cS_k)_{k=0}^\rt$, $(\cD_k)_{k=1}^\rt$ satisfying Assumptions~\ref{i.a1},~\ref{i.a2} and~\ref{i.a3}.
\end{thm}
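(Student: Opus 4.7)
The proof follows the same reduction scheme as the upper bound Theorem~\ref{t:3.2}: use Assumption~\ref{i.a3} to replace the random decorations $\cD_k$ by a deterministic sub-diffusive curve $\gamma_k$, and then invoke a classical ballot-type lower bound for Gaussian random walk bridges with a curved barrier. Crucially, since this is a \emph{lower} bound, the dependence of decorations on the walk (Assumption~\ref{i.a2}) is not an obstruction: we only need marginal tail bounds, used through the elementary estimate $\bfP(A\cap B)\ge \bfP(A)-\bfP(B^\rmc)$.

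Fix $\gamma_k:=C_0\bigl((k\wedge(\rt-k))^{1/2-\delta}+(\log \rt)^{1/\delta}\bigr)$ for $1\le k\le \rt-1$, with $C_0=C_0(\delta)$ to be chosen. By Assumption~\ref{i.a3}, for each such $k$,
$$
\bfP\bigl(|\cD_k|>\gamma_k\bigr)\le \delta^{-1}\exp\Bigl(-\bigl(\delta C_0 (\log \rt)^{1/\delta}\bigr)^{\delta}\Bigr)\le \rt^{-3},
$$
provided $C_0$ is taken large enough depending only on $\delta$, so a union bound yields $\bfP\bigl(\exists k\colon |\cD_k|>\gamma_k\bigr)\le \rt^{-2}$. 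Since the event $\{\cS_k\le -\gamma_k\text{ for all }k\}\cap\{|\cD_k|\le \gamma_k\text{ for all }k\}$ is contained in $\{\cS_k+\cD_k\le 0\text{ for all }k\}$, this gives
$$
\bfP\Bigl(\max_{k=1}^{\rt}(\cS_k+\cD_k)\le 0\Bigr)\ge \bfP_{0,a}^{\rt,b}\bigl(\cS_k\le -\gamma_k\text{ for all } k\bigr)-\rt^{-2}.
$$

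The main step, and also the principal obstacle, is the barrier lower bound for the Gaussian bridge itself:
$$
\bfP_{0,a}^{\rt,b}\bigl(\cS_k\le -\gamma_k\text{ for all }1\le k\le \rt-1\bigr)\ge C\, \frac{(a^-+1)\,b^-}{\rt}.
$$
After the substitution $\tilde\cS_k:=-\cS_k-\gamma_k$, this becomes the probability that a non-homogeneous Gaussian bridge from $|a|-\gamma_0$ to $|b|-\gamma_{\rt}$ with step variances in $(\delta,\delta^{-1})$ stays positive. Under the hypotheses $a\le a_0$ (sufficiently negative) and $-b\ge \rt^{\delta}$, both endpoints dominate $\gamma_0,\gamma_{\rt}=O((\log \rt)^{1/\delta})$, and the sub-diffusive shift $\gamma_k=O(k^{1/2-\delta})$ keeps us in the regime where the ballot probability retains its typical order $|a|\cdot|b|/\rt$. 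Adapting the classical barrier estimates for Gaussian walks (Bramson~\cite{B_C}, or the discrete-time versions employed in~\cite{cortines2019decorated}) to our non-homogeneous setting yields the desired lower bound, with the hypothesis $(a^-+1)b^-\le \rt^{1-\delta}$ ensuring the walk has sufficient room to realize the event.

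Finally, since $b^-\ge \rt^{\delta}$ implies $(a^-+1)b^-/\rt \ge \rt^{\delta-1}\gg \rt^{-2}$, the error $\rt^{-2}$ is negligible compared to the main term, and the two estimates combine to conclude the proof.
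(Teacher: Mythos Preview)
Your reduction scheme is the right idea, but the specific curve you choose creates a fatal problem at the left endpoint. You set $\gamma_k=C_0\bigl(\wedge_{\rt,k}^{1/2-\delta}+(\log\rt)^{1/\delta}\bigr)$, so in particular $\gamma_1\ge C_0(\log\rt)^{1/\delta}$. Your barrier event then requires $\cS_1\le -\gamma_1$, while $\cS_0=a$ with $a\le a_0$ a \emph{fixed constant depending only on $\delta$} (the theorem explicitly states $a_0=a_{0,\delta}$, not $a_0=a_{0,\delta,\rt}$). Since $\cS_1-\cS_0$ has bounded variance under~\ref{i.a1}, already $\bfP(\cS_1\le -\gamma_1)\le C\exp\bigl(-c(\log\rt)^{2/\delta}\bigr)$, which decays faster than any power of $\rt$. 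Hence your barrier probability is far smaller than $(a^-+1)b^-/\rt$, and your claimed lower bound $\bfP_{0,a}^{\rt,b}(\cS_k\le-\gamma_k\ \forall k)\ge C(a^-+1)b^-/\rt$ is simply false. In terms of Proposition~\ref{p:4.1}, your curve corresponds to $a'=b'=-C_0(\log\rt)^{1/\delta}$, so the lower bound there involves $\ubar f\bigl(a+C_0(\log\rt)^{1/\delta}\bigr)$, whose argument tends to $+\infty$; nothing guarantees this stays bounded away from zero.

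The tension is intrinsic to your approach: to make the crude union bound $\bfP(\exists k:|\cD_k|>\gamma_k)=o(1/\rt)$ work over $\rt$ indices, $\gamma_k$ must exceed $\delta^{-1}\wedge_{\rt,k}^{1/2-\delta}$ by at least $(\log\rt)^{1/\delta}$ everywhere, which is exactly what kills the barrier estimate near $k=1$. The paper resolves this by \emph{not} bounding the bad decoration event unconditionally. It introduces the control variable $R$ of~\eqref{e:2.4}, so that on $\{R\le r\}$ the decorations are bounded by a curve that is only $O(r^{\delta/2})$ (a constant in $\rt$) near $k=1$; then $a_0$ can be chosen below this constant. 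The price is the error term $\bfP(\max_k\cS_k\le 0,\,R>r)$, which the paper bounds not by a union bound but by Lemma~\ref{l:4.2} (applied with zero decorations), yielding the conditional estimate $C\frac{(a^-+1)(b^-+1)}{\rt}e^{-r^{\delta^2/5}}$. This is of the \emph{same order} as the main term times a factor that can be made small by choosing the constant $r$ large. That coupling of the error to the ballot event is the missing idea in your argument.
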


We now turn to asymptotics. For $1 \leq \rr \leq \rt \leq \infty$ and $\rr < \infty$, set
\begin{equation}
\label{e:5.4}
\ell_{\rt,\rr}(a,b) := \bfE \Big(\cS_\rr^- ;\: \max_{k=1}^\rr (\cS_k + \cD_k) \leq 0  \Big) 
\end{equation}
and observe that this quantity does not depend on $b$ when $\rt =\infty$.
Then,
\begin{thm}
[Asymptotics]
\label{t:3.4}
Fix $\delta \in (0,1/3)$ and let $e_{\rt,\rr}(a,b)$ be defined via
\begin{equation}
\label{e:3.5}
\bfP \Big(\max_{k =1}^\rt (\cS_k + \cD_k) \leq 0 \Big) 
= 2 \frac{\ell_{\rt,\rr}(a,b) b^-}{s_\rt} + e_{\rt,\rr}(a,b) \,.
\end{equation}
Then, 
\begin{equation}
\label{e:3.6}
\frac{\rt}{(a^-+1)b^-} e_{\rt,\rr}(a,b) \lto 0 \text{ as }\rr \to \infty \,,
\end{equation}
uniformly in $\rr^{4/\delta} \leq \rt \leq \infty$, 
$a \leq \delta^{-1}$, $b < -\rt^{\delta}$, $(a^-+1)b^- \leq \rt^{1-\delta}$ and 
all $(\cS_k)_{k=0}^\rt$, $(\cD_k)_{k=1}^\rt$ satisfying Assumptions~\ref{i.a1},~\ref{i.a2} and~\ref{i.a3}. 
\end{thm}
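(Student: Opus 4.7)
The plan is to split the process at time $\rr$, use the Markovian structure~\ref{i.a2} to factor the ballot probability, and then apply a classical ballot asymptotic to the long segment $(\rr,\rt]$.

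First, by Assumption~\ref{i.a2}, conditioning on $\cS_\rr$ decouples the two portions, giving
\begin{equation*}
\bfP\Big(\max_{k=1}^\rt (\cS_k + \cD_k) \leq 0\Big) = \bfE\Big[\,1_{E_\rr}\,\Phi_{\rr,\rt}(\cS_\rr)\Big],
\end{equation*}
where $E_\rr := \{\max_{k\le \rr}(\cS_k+\cD_k)\le 0\}$ and $\Phi_{\rr,\rt}(x) := \bfP(\max_{\rr<k\le \rt}(\cS_k+\cD_k)\le 0 \mid \cS_\rr = x)$. By~\ref{i.a1}, the conditional law of $(\cS_k)_{k>\rr}$ given $\cS_\rr = x$ is the Gaussian bridge from $x$ at time $\rr$ to $b$ at time $\rt$ with the step-variances $(\sigma_k^2)_{k>\rr}$.

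Next, the key step is to establish the uniform asymptotic
\begin{equation*}
\Phi_{\rr,\rt}(x) = (2 + o_\rr(1))\,\frac{x^- b^-}{s_\rt - s_\rr} + o_\rr\!\left(\frac{(x^-+1)\,b^-}{\rt}\right)
\end{equation*}
as $\rr \to \infty$ with $\rt \geq \rr^{4/\delta}$, valid for $x \leq \delta^{-1}$ satisfying $(x^-+1)b^- \leq \rt^{1-\delta/2}$. The stretched-exponential tail~\ref{i.a3} together with a union bound confines all decorations on $(\rr,\rt]$ within a deterministic sublinear barrier $\gamma_k := C_\delta(\wedge_{\rt,k}^{1/2-\delta} + \log \rt)$ with probability $1 - o(b^-/\rt)$; on this event the decorated ballot event sandwiches between two ballot events for the Gaussian bridge alone with shifted obstacles $\mp \gamma_k$, for which Bramson-type barrier estimates for Gaussian bridges (as invoked in Subsection~\ref{ss:ProofOutline}) deliver the leading factor $2x^-b^-/(s_\rt - s_\rr)$. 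Outside the decoration-control event, Theorem~\ref{t:3.2} absorbs the contribution into the error.

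Plugging this into the factorization and using that $\sigma_k^2 \in (\delta,\delta^{-1})$ makes $s_\rr \leq \delta^{-1}\rr \leq \delta^{-1}\rt^{\delta/4}$ negligible compared to $s_\rt \asymp \rt$, so that $s_\rt/(s_\rt - s_\rr) = 1 + o_\rr(1)$, I obtain the leading term
\begin{equation*}
(2 + o_\rr(1))\,\frac{b^-}{s_\rt}\,\bfE[\cS_\rr^-\,1_{E_\rr}] = (2+o_\rr(1))\,\frac{\ell_{\rt,\rr}(a,b)\,b^-}{s_\rt},
\end{equation*}
while the error is split by the value of $\cS_\rr^-$: on $\{\cS_\rr^- \leq \rt^{1/2-\delta/2}\}$ the uniformity of the previous step combined with $\ell_{\rt,\rr}(a,b) \leq C(a^-+1)$ (via Theorem~\ref{t:3.2} applied to the initial segment) yields $o_\rr((a^-+1)b^-/\rt)$, and on its complement a Gaussian bridge tail bound on $\cS_\rr$ together with Theorem~\ref{t:3.2} produces a super-polynomially small contribution.

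The principal obstacle is proving the uniform asymptotic for $\Phi_{\rr,\rt}$, and specifically the control of the dependence on $x$ across the full range $x^- \in [0,\rt^{1/2-\delta/2}]$. Because decorations can reach order $\rt^{1/2-\delta}$ in the middle of $(\rr,\rt]$, the barrier $\gamma$ is genuinely sublinear and one must verify that replacing the random decorations by this barrier costs only a $(1+o(1))$ factor in the ballot constant, uniformly in $x$. This amounts to an entropic-repulsion argument showing that, conditional on the ballot event, the Gaussian bridge lies well below $-\gamma_k$ in the bulk and the barrier is effectively invisible to the leading asymptotic.
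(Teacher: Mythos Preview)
Your factorization in the first step is not justified by Assumption~\ref{i.a2}. That assumption only asserts that, given $\cS_m$, the past $(\cS_k,\cD_k)_{k\le m}$ is conditionally independent of the \emph{future walk} $(\cS_\ell)_{\ell>m}$; it says nothing about the future decorations $(\cD_\ell)_{\ell>m}$. Hence there is no reason why
\[
\bfP\Big(\max_{\rr<k\le\rt}(\cS_k+\cD_k)\le 0\ \Big|\ E_\rr,\ \cS_\rr=x\Big)
\]
should coincide with your $\Phi_{\rr,\rt}(x)=\bfP(\max_{\rr<k\le\rt}(\cS_k+\cD_k)\le 0\mid\cS_\rr=x)$, and the identity $\bfP(\text{full ballot})=\bfE[1_{E_\rr}\Phi_{\rr,\rt}(\cS_\rr)]$ is in general false under~\ref{i.a2}. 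For the same reason, your ``key asymptotic'' for $\Phi_{\rr,\rt}(x)$ is not the right object: even the unconditional tail bound~\ref{i.a3} need not survive conditioning on $\cS_\rr=x$.

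The paper circumvents this by \emph{first} introducing the control variable $R$ (see~\eqref{e:2.4}) which deterministically bounds \emph{all} decorations and step increments simultaneously; Lemma~\ref{l:4.2} shows that restricting to $\{R\le\rr\}$ costs only $o((a^-+1)b^-/\rt)$. On $\{R\le\rr\}$ the event $\{\max_{k>\rr}(\cS_k+\cD_k)\le 0\}$ is sandwiched between barrier events involving only $(\cS_k)_{k>\rr}$, and \emph{only then} does~\ref{i.a2} permit the decoupling from $E_\rr$, after which Proposition~\ref{p:4.1} supplies the barrier asymptotics. The paper also needs Lemmas~\ref{l:4.3} and~\ref{l:4.4} to localize $\cS_\rr$ to $[a-\rr^2,-\rr^{1/2-\delta/2}]$ so that the barrier asymptotic applies with the desired uniformity. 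Finally, your appeal to Theorem~\ref{t:3.2} for the bound $\ell_{\rt,\rr}(a,b)\le C(a^-+1)$ is misplaced (that theorem bounds a probability, not the expectation defining $\ell$); the paper proves this as Lemma~\ref{l:4.5}.
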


Next we state some properties of $\ell_{\rt,\rr}(a,b)$. The first property shows that $\ell_{\rt,\rr}(a,b)$ is Cauchy in $\rr$ and consequently that one can extend the definition of $\ell_{\rt,\rr}(a,b)$ to the case when $\rr=\infty$.
\begin{prop}
\label{t:LR-as}
Fix $\delta \in (0,1/3)$. Then,
\begin{equation}
\label{e:20.39'}
\lim_{\rr,\rr' \to \infty} \lim_{\rt' \to \infty} \sup_{\rt \in [\rt', \infty]}
\Bigg| \frac{\ell_{\rt,\rr}(a,b) - \ell_{\rt, \rr'}(a,b )}{a^-+1} \Bigg| = 0 \,,
\end{equation}
The limits above hold uniformly in $a \leq \delta^{-1}$, $b < \delta^{-1}$,  $|b-a|/(a^-+1) \leq \rt^{1-\delta}$ and all $(\cS_k)_{k=0}^\rt$, $(\cD_k)_{k=1}^\rt$ satisfying  Assumptions~\ref{i.a1},~\ref{i.a2} and~\ref{i.a3}. 
In particular, the limit
\begin{equation}
\ell_{\infty,\infty}(a,b) := \lim_{\rr \to \infty} \ell_{\infty, \rr}(a,b) \,,
\end{equation}
holds uniformly in all $a \leq \delta^{-1}$, $b < \delta^{-1}$ and all $(\cS_k)_{k=0}^\infty$, $(\cD_k)_{k=1}^\infty$ satisfying Assumptions~\ref{i.a1},~\ref{i.a2} and~\ref{i.a3}.
\end{prop}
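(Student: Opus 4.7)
The plan is to establish that $\ell_{\rt,\rr}(a,b)$ is Cauchy in $\rr$ uniformly in $\rt\geq \rr'$ large; existence of $\ell_{\infty,\infty}(a,b)$ then follows by completeness, and the fact that $\ell_{\infty,\rr}(a,b)$ does not depend on $b$ is immediate from the definition of $\cS$ under $\bfP_{0,a}^{\infty,b}$. Fix $\rr<\rr'\leq \rt$, abbreviate $M_\ell := \max_{k=1}^\ell(\cS_k+\cD_k)$, and use the identity $\{M_{\rr'}\leq 0\}=\{M_\rr\leq 0\}\cap\{\max_{\rr<k\leq \rr'}(\cS_k+\cD_k)\leq 0\}$ to decompose
\begin{equation*}
\ell_{\rt,\rr'}(a,b) - \ell_{\rt,\rr}(a,b) = \bfE\big[(\cS_{\rr'}^- - \cS_\rr^-)1_{\{M_{\rr'}\leq 0\}}\big] - \bfE\big[\cS_\rr^-\,1_{\{M_\rr\leq 0\}}\,1_{\{\max_{\rr<k\leq \rr'}(\cS_k+\cD_k)>0\}}\big] =: I_1 - I_2.
\end{equation*}
The two tools will be Theorem~\ref{t:3.2} for the ballot upper bound and Assumption~\ref{i.a2}, by which the walk beyond time $\rr$ depends on $\cF_\rr := \sigma(\cS_k,\cD_k:k\leq \rr)$ only through $\cS_\rr$.

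Since in the iterated limit $\rt\to\infty$ is taken before $\rr,\rr'\to\infty$, I may assume $\rt\gg \rr'$; in this regime the bridge-drift on $[0,\rr']$ toward $b$ has magnitude $O(\rr'(b-a)/\rt)=O(\rr'^{\,1-\delta}\rt^{-\delta}(a^-+1))$, which vanishes. For $I_1$, conditioning on $\cF_\rr$ Assumption~\ref{i.a1} gives that $\cS_{\rr'}$ is Gaussian with mean $\cS_\rr + (b-\cS_\rr)(s_{\rr'}-s_\rr)/(s_\rt-s_\rr)$ and variance $O(\rr'-\rr)$, and hence $\bfE[|\cS_{\rr'}^- - \cS_\rr^-|\mid \cS_\rr]\leq C\sqrt{\rr'-\rr}$ on the typical event $|\cS_\rr|\leq \sqrt{\rr}\log\rr$. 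Integrating this against the ballot bound $\bfP(M_\rr\leq 0, \cS_\rr\in\rmd x) \leq C(a^-+1)(|x|+1)\rr^{-1}\rmd x$ (obtained by combining Theorem~\ref{t:3.2} with the Gaussian density for $\cS_\rr$), and disposing of the atypical range by a Gaussian tail on $\cS_\rr$, yields $|I_1|\leq \rho(\rr,\rr')(a^-+1)$ with $\rho(\rr,\rr')\to 0$ as $\rr,\rr'\to\infty$, uniformly in $\rt\geq \rr'$. For $I_2$, conditioning on $\cF_\rr$ and applying a union bound gives
\begin{equation*}
\bfP\Big(\max_{\rr<k\leq \rr'}(\cS_k+\cD_k)>0\,\Big|\,\cF_\rr\Big)\leq \sum_{k=\rr+1}^{\rr'}\bfP(\cS_k+\cD_k>0\mid \cS_\rr),
\end{equation*}
and each summand is bounded by splitting at $|\cD_k|\leq \delta^{-1}\wedge_{\rt,k}^{1/2-\delta}$: the decoration piece is controlled by the stretched-exponential tail from Assumption~\ref{i.a3}, while the remaining Gaussian tail for $\cS_k$ given $\cS_\rr$ is negligible when $\cS_\rr$ is typical. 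Summing in $k$ produces a quantity that vanishes as $\rr\to\infty$ uniformly in $\rt\geq \rr'$, and integrating against the first-moment ballot bound $\bfE[\cS_\rr^-;M_\rr\leq 0]\leq C(a^-+1)$ gives $|I_2|\leq \rho'(\rr,\rr')(a^-+1)$ with $\rho'(\rr,\rr')\to 0$.

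The main obstacle will be arranging every tail estimate to be truly uniform in $\rt\in[\rt',\infty]$ and in the admissible $(a,b)$. In particular, the Gaussian tail for $\cS_k$ inside the union bound controlling $I_2$ requires that the conditional mean $\bfE[\cS_k\mid \cS_\rr]$ stay at most slightly positive for all $k\in(\rr,\rr']$; this is where the hypotheses $b<\delta^{-1}$ and $|b-a|/(a^-+1)\leq \rt^{1-\delta}$ enter, ensuring that the bridge-drift $(b-\cS_\rr)(s_k-s_\rr)/(s_\rt-s_\rr)$ over $(\rr,\rr']$ does not lift the walk above an $O(1)$ threshold, uniformly in $\rt\geq \rr'$. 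Once the Cauchy property is established as above, the limit defining $\ell_{\infty,\infty}(a,b)$ exists pointwise in $(a,b)$ by completeness, and its $b$-independence follows because each prelimit $\ell_{\infty,\rr}(a,b)$ is already independent of $b$.
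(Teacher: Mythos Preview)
Your decomposition $\ell_{\rt,\rr'}-\ell_{\rt,\rr}=I_1-I_2$ is correct, but the plan to bound $I_1$ and $I_2$ \emph{separately} by quantities that vanish as $\rr,\rr'\to\infty$ cannot work: each term is individually of order $(a^-+1)$, and only their difference is small. Take the undecorated case $\cD_k\equiv 0$, $\rt=\infty$, $a<0$ small, and $\rr'=2\rr$. Conditioned on $\{M_{2\rr}\leq 0\}$ the walk behaves like a three-dimensional Bessel process, so $|\cS_{2\rr}-\cS_\rr|\asymp\sqrt{\rr}$ while $\bfP(M_{2\rr}\leq 0)\asymp(a^-+1)/\sqrt{\rr}$, giving $|I_1|\asymp a^-+1$; your inequality $\bfE[|\cS_{\rr'}^--\cS_\rr^-|\mid\cS_\rr]\leq C\sqrt{\rr'-\rr}$ can do no better. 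Likewise, $\bfP(\max_{\rr<k\leq 2\rr}\cS_k>0\mid\cS_\rr)$ is bounded \emph{away} from zero on the typical event $\cS_\rr\asymp-\sqrt{\rr}$, so your bound on $I_2$ yields only $O(\bfE[\cS_\rr^-;M_\rr\leq 0])=O(a^-+1)$. A direct computation for Brownian motion confirms $I_1=I_2=c\,a^-$ with $c>0$ while $I_1-I_2=0$ exactly, by optional stopping at the first hitting time of zero. Your estimates discard precisely the martingale cancellation that makes $\ell_{\rt,\rr}$ stabilize. (A secondary issue: Assumption~\ref{i.a2} makes $(\cS_\ell)_{\ell>\rr}$ conditionally independent of $\cF_\rr$ given $\cS_\rr$, but says nothing about $(\cD_\ell)_{\ell>\rr}$, so the union bound for $I_2$ has no uniform control on $\bfP(\cD_k>\cdot\mid\cF_\rr)$.)

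The paper's argument (carried out in Lemma~\ref{l:20.6}) is entirely different and indirect. When $a^-$ is large, namely $a^->(\rr\vee\rr')^{1/2+\delta}$, the ballot constraint up to time $\rr'$ is nearly automatic and both $\ell_{\rt,\rr}/a^-$ and $\ell_{\rt,\rr'}/a^-$ are shown to be $1+O(\rr^{-\delta/2})$. When $a^-$ is not large and $b<-\rt^{\delta'}$, $(a^-+1)b^-\leq\rt^{1-\delta'}$, one invokes Theorem~\ref{t:3.4}: the full ballot probability $\bfP(\max_{k\leq\rt}(\cS_k+\cD_k)\leq 0)$ equals $2\ell_{\rt,\rr}b^-/s_\rt+e_{\rt,\rr}$ with $\rt\,e_{\rt,\rr}/((a^-+1)b^-)\to 0$, and since the left side is independent of $\rr$, so is $\ell_{\rt,\rr}$ up to this vanishing error. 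The two regimes are then shown to cover the stated domain; a tilt of the walk removes the restriction $b<-\rt^{\delta'}$, and a Brownian-bridge coupling reduces $\rt=\infty$ to finite $\rt$.
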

We again remark that by definition $\ell_{\infty,\infty}(a,b)$ does not depend on $b$, but we leave the dependence on this parameter in the notation to allow a uniform treatment of all cases. Next we treat the asymptotics of $\ell_{\rt,\rr}(a,b)$ in $a^-$ for all $\rt$ and $\rr$.
\begin{prop}
\label{p:1.5}
Fix $\delta \in (0,1/3)$. Then for all $1 \leq \rr \leq \infty$,
\begin{equation}
\lim_{a \to -\infty} \frac{\ell_{\rt,\rr}(a,b)}{a^-} = 1 \,,
\end{equation}
uniformly in $\rr \leq \rt \leq \infty$, $b < \delta^{-1}$, $|b-a|\rr/\rt \leq (a^-+1)^{1-\delta}$ and all $(\cS_k)_{k=0}^\rt$, $(\cD_k)_{k=1}^\rt$ satisfying Assumptions~\ref{i.a1},~\ref{i.a2} and~\ref{i.a3}. Furthermore, there exists $(\rt_{(\rr)})_{\rr=1}^\infty \in \bbN^\infty$ such that the above convergence is also uniform in $\rr \geq 1$, provided that $\rt_{(\rr)} \leq \rt \leq \infty$.
\end{prop}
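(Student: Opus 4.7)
The starting point is to condition on $\cS_\rr$ and exploit Assumption~\ref{i.a2}, which implies that under $\bfP_{0,a}^{\rt,b}$, given $\cS_\rr = c$, the process $(\cS_k, \cD_k)_{k=1}^\rr$ has the law of the corresponding DRW bridge from $a$ to $c$ over $\rr$ steps. Hence
\begin{equation*}
\ell_{\rt,\rr}(a,b) \;=\; \bfE_{0,a}^{\rt,b}\big[\cS_\rr^-\, p_\rr(\cS_\rr)\big], \qquad
p_\rr(c) := \bfP_{0,a}^{\rr,c}\Big(\max_{k=1}^\rr (\cS_k + \cD_k) \leq 0\Big).
\end{equation*}
By Assumption~\ref{i.a1}, $\cS_\rr$ under $\bfP_{0,a}^{\rt,b}$ is Gaussian with mean $\mu_\rr := a + (b-a) s_\rr/s_\rt$ and variance at most $s_\rr$, and the admissibility condition $|b-a|\rr/\rt \leq (a^-+1)^{1-\delta}$ combined with $\sigma_k^2 \in (\delta,\delta^{-1})$ yields $|\mu_\rr - a| \leq C_\delta(a^-+1)^{1-\delta}$, so that $\mu_\rr/a \to 1$.

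For the upper bound, since $p_\rr \leq 1$ we have $\ell_{\rt,\rr}(a,b)/a^- \leq \bfE[\cS_\rr^-]/a^-$. A direct Gaussian calculation gives $\bfE[\cS_\rr^-] = -\mu_\rr + \bfE[\cS_\rr^+]$, and since $\mu_\rr \to -\infty$ while $\Var(\cS_\rr) \leq s_\rr$ is controlled, the positive-part contribution is exponentially small in $\mu_\rr^2/s_\rr$. Hence $\bfE[\cS_\rr^-]/a^- = \mu_\rr/a + o(1) \to 1$.

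For the matching lower bound, fix $\eps > 0$ and set $I_\eps := \{c : |c - a| \leq \eps |a|\}$. On $\{\cS_\rr \in I_\eps\}$ one has $\cS_\rr^-/a^- \geq 1-\eps$, and Chebyshev gives $\bfP(\cS_\rr \notin I_\eps) \to 0$. It thus suffices to show $p_\rr(c) \to 1$ uniformly for $c \in I_\eps$. Writing $\cS_k = \bar\cS_k + B_k$ under $\bfP_{0,a}^{\rr,c}$, where $\bar\cS_k := ((s_\rr-s_k)a + s_k c)/s_\rr$ is the deterministic bridge interpolation and $B_k$ is a centered Gaussian bridge with $\Var B_k \leq s_\rr$, one obtains $\bar\cS_k \leq -(1-\eps) a^-$ for all $k$ (since $a<0$ and $c \in I_\eps$), and therefore
\begin{equation*}
\max_{k=1}^\rr (\cS_k + \cD_k) \;\leq\; -(1-\eps)a^- + \max_{k=1}^\rr B_k + \max_{k=1}^\rr \cD_k.
\end{equation*}
Borell--TIS bounds the Gaussian maximum and Assumption~\ref{i.a3} together with a union bound controls the decoration maximum; for $\rr$ fixed this immediately yields $p_\rr(c) \to 1$ as $a \to -\infty$.

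The main obstacle, and the only delicate point in the uniform-in-$\rr$ statement, is that the tail controls above depend on $\rr$ through the variance bound $s_\rr$ and the decoration envelope $\rr^{1/2-\delta}$, which would naively degrade the rate as $\rr$ grows. The role of $\rt_{(\rr)}$ is to guarantee that the admissibility condition $|b-a|\rr/\rt \leq (a^-+1)^{1-\delta}$ leaves sufficient room for $a^-$ to dominate $\sqrt{\rr \log \rr} \vee \rr^{1/2-\delta}$ in the bounds above; choosing $\rt_{(\rr)}$ sufficiently large allows one to combine the Gaussian and decoration tails into a single modulus of convergence $\psi(a)$ that vanishes as $a \to -\infty$ independently of $\rr \geq 1$ and of $\rt \geq \rt_{(\rr)}$, giving the promised uniformity.
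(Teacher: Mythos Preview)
Your argument for fixed $r<\infty$ is essentially correct and close in spirit to the paper's (carried out inside Lemma~\ref{l:20.6}): one shows $\bfE\cS_r^-\sim a^-$ and that the ballot constraint has probability tending to $1$ as $a\to-\infty$. One cosmetic remark: writing the conditional law given $\cS_r=c$ as $\bfP_{0,a}^{r,c}$ and then invoking Assumption~\ref{i.a3} is not quite justified, since~\ref{i.a3} is only stated under the original measure $\bfP_{0,a}^{T,b}$ and need not transfer to the conditioned decorations. This is harmless, though: your bounds go through unconditionally via a union bound over $k\le r$ and Cauchy--Schwarz, which is exactly how the paper proceeds.

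The genuine gap is in the uniform-in-$r$ statement. You assert that choosing $T_{(r)}$ large makes the admissibility condition $|b-a|r/T\le(a^-+1)^{1-\delta}$ ``leave sufficient room for $a^-$ to dominate $\sqrt{r\log r}\vee r^{1/2-\delta}$''. This is not how the constraint works: taking $b=a$ satisfies it for every $r\le T$ regardless of how small $a^-$ is, so enlarging $T$ imposes no lower bound on $a^-$ in terms of $r$. Your tail estimates genuinely need $a^-\gg\sqrt{r}$ (for the bridge fluctuations $B_k$) and $a^-\gg r^{1/2-\delta}$ (for the decoration envelope), and neither is forced by the hypotheses. The promised modulus $\psi(a)$ independent of $r$ therefore cannot be extracted from your argument.

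The paper handles this by a different mechanism: the Cauchy-in-$r$ statement~\eqref{e:20.39} of Lemma~\ref{l:20.6} (equivalently Proposition~\ref{t:LR-as}) furnishes, for each $\epsilon>0$, a fixed $r_\epsilon$ and thresholds $T_{(r)}$ such that $|\ell_{T,r}-\ell_{T,r_\epsilon}|\le\epsilon(a^-+1)$ uniformly in $r\ge r_\epsilon$, $T\ge T_{(r)}$, and $a,b$. One then applies the fixed-$r$ result at the finitely many values $r\le r_\epsilon$ and concludes by the triangle inequality. The Cauchy property~\eqref{e:20.39} is itself nontrivial: its proof goes through the ballot asymptotics of Theorem~\ref{t:3.4} and ultimately the barrier estimates of Proposition~\ref{p:4.1}. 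So the second half of the proposition is not a routine extension of the first; it rests on the heavier machinery developed earlier in the appendix.
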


We now turn to the question of continuity of $\ell_{\rt,\rr}(a,b)$ in the quantities through which it is defined. The first statement is a simple consequence of the Cauchy-Schwarz Inequality.
\begin{lem}
\label{l:f-cont}
Fix $\delta \in (0,1/3)$. Then, for some $C = C_\delta < \infty$,
\begin{multline}
\Big| \bfE \Big(\cS_\rr^- ;\: \max_{k=1}^\rr (\cS_k + \cD_k) \leq \lambda  \Big) 
- 
\bfE \Big(\cS_\rr^- ;\: \max_{k=1}^\rr (\cS_k + \cD_k) \leq -\lambda  \Big) \Big| \\
\leq C (a^-+1) \rr \, \Big(\max_{k=1}^\rr \bfP \big(\cS_k + \cD_k \in (-\lambda, \lambda]\big)\Big)^{1/2} \,,
\end{multline}
for all $1 \leq \rr \leq \rt \leq \infty$, $b < \delta^{-1}$, $|b-a|\rr/\rt \leq \delta^{-1} (a^-+1)^{1-\delta}$ and all $(\cS_k)_{k=0}^\rt$, $(\cD_k)_{k=1}^\rt$ satisfying  Assumptions~\ref{i.a1},~\ref{i.a2} and~\ref{i.a3}.
\end{lem}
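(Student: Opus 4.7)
The plan is to apply Cauchy--Schwarz on the symmetric difference of the two events and then control the resulting probability by a union bound. Observe that the two events are nested,
\[
\bigl\{\max_{k=1}^\rr(\cS_k+\cD_k)\leq -\lambda\bigr\}\subset\bigl\{\max_{k=1}^\rr(\cS_k+\cD_k)\leq \lambda\bigr\},
\]
so that the quantity inside the absolute value on the left hand side is exactly
\[
\bfE\Bigl(\cS_\rr^-;\: -\lambda<\max_{k=1}^\rr(\cS_k+\cD_k)\leq\lambda\Bigr)\geq 0.
\]
Setting $E:=\{-\lambda<\max_{k=1}^\rr(\cS_k+\cD_k)\leq\lambda\}$, the Cauchy--Schwarz inequality bounds this by $\bfE(\cS_\rr^{-2})^{1/2}\,\bfP(E)^{1/2}$, and a crude union bound then gives
\[
\bfP(E)\leq \sum_{k=1}^\rr\bfP\bigl(\cS_k+\cD_k\in(-\lambda,\lambda]\bigr)\leq \rr\max_{k=1}^\rr\bfP\bigl(\cS_k+\cD_k\in(-\lambda,\lambda]\bigr).
\]

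It therefore remains only to prove the second-moment bound $\bfE(\cS_\rr^{-2})^{1/2}\leq C(a^-+1)\sqrt{\rr}$. Under Assumption~\ref{i.a1}, $\cS_\rr$ is Gaussian with mean $\mu_\rr=a+(b-a)s_\rr/s_\rt$ and variance $\sigma_\rr^2=s_\rr(s_\rt-s_\rr)/s_\rt\leq s_\rr\leq\delta^{-1}\rr$, so that $\sigma_\rr\leq C_\delta\sqrt{\rr}$. When $\mu_\rr\geq 0$, the substitution $y=(x-\mu_\rr)/\sigma_\rr$ together with the elementary estimate $(\mu_\rr+\sigma_\rr y)^2\leq(\sigma_\rr y)^2$ valid for $y\leq -\mu_\rr/\sigma_\rr\leq 0$ yields $\bfE(\cS_\rr^{-2})\leq\sigma_\rr^2/2$. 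When $\mu_\rr<0$, one uses the trivial bound $\bfE(\cS_\rr^{-2})\leq\mu_\rr^2+\sigma_\rr^2$ and needs $|\mu_\rr|\leq C(a^-+1)$. Since $s_\rr/s_\rt\leq\delta^{-2}\rr/\rt$, the hypothesis $|b-a|\rr/\rt\leq\delta^{-1}(a^-+1)^{1-\delta}$ gives $|\mu_\rr-a|\leq C_\delta(a^-+1)^{1-\delta}$; the claim is then immediate if $a\leq 0$, while if $a>0$ the inequality $\mu_\rr<0$ forces $a\leq|b-a|\rr\rt^{-1}s_\rr s_\rt^{-1}\cdot\delta^{-1}\leq C_\delta$, so that $|\mu_\rr|$ is bounded by a constant in that regime.

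Combining the two factors gives
\[
\bfE\bigl(\cS_\rr^-;E\bigr)\leq C(a^-+1)\sqrt{\rr}\cdot\sqrt{\rr}\,\Bigl(\max_{k=1}^\rr\bfP\bigl(\cS_k+\cD_k\in(-\lambda,\lambda]\bigr)\Bigr)^{1/2},
\]
which is the stated bound. There is no substantive obstacle here: the argument is a one-line Cauchy--Schwarz estimate and a union bound, with only routine Gaussian book-keeping to dispose of the second moment $\bfE(\cS_\rr^{-2})$.
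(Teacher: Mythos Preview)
Your proof is correct and follows essentially the same approach as the paper: rewrite the difference as $\bfE\bigl(\cS_\rr^-;\ \max_{k=1}^\rr(\cS_k+\cD_k)\in(-\lambda,\lambda]\bigr)$, apply Cauchy--Schwarz, bound the probability of the event by a union bound, and control the Gaussian second moment. The paper simply bounds $\bfE\cS_\rr^2\leq C(\rr+a^2)\leq C'\rr(a^-+1)^2$ by referring to an earlier computation, whereas you work with $\bfE\bigl((\cS_\rr^-)^2\bigr)$ and treat the cases $\mu_\rr\geq 0$ and $\mu_\rr<0$ separately; your treatment of the case $a>0$, $\mu_\rr<0$ is in fact slightly more careful than the paper's one-line reduction.
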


In the second continuity statement we treat the setting of a triangular array. We therefore assume that for each $i \in \bbN \cup \{\infty\}$, there are defined $a^{(i)}, b^{(i)} \in \bbR$, $1 \leq \rr^{(i)} \leq \rt^{(i)} \leq \infty$ and random variables $\big(\cS^{(i)}_k\big)_{k=0}^{\rt^{(i)}}$, $\big(\cD^{(i)}_k\big)_{k =1}^{\rt^{(i)}}$ taking the roles of $a$, $b$, $\rr$, $\rt$ and 
$(\cS_k)_{k=0}^\rt$, $(\cD_k)_{k =1}^{\rt}$ from before. We also denote by $\bfP^{(i)}$, $\bfE^{(i)}$ the underlying probability measure and expectation and let $\ell^{(i)}_{\rr^{(i)}}(a^{(i)}) = \ell^{(i)}_{\rt^{(i)}, \rr^{(i)}}(a^{(i)}, b^{(i)})$ be defined as in~\eqref{e:5.4} only with respect to $\big(\cS^{(i)}_k\big)_{k=0}^{\rt^{(i)}}$, $\big(\cD^{(i)}_k\big)_{k=1}^{\rt^{(i)}}$.
Given $\delta>0$, let $(\rr_\rt)_{\rt=0}^\infty$ be a sequence tending to infinity sufficiently slowly such that~\eqref{e:20.39'} still holds when we replace $\rr'$ there with $\rr_{\rt'}$.
\begin{prop}[Triangular array]
\label{t:ta}
Fix $\delta \in (0,1/3)$ and suppose that for all $1 \leq i \leq \infty$ 
$\big(\cS^{(i)}_k\big)_{k=0}^{\rt^{(i)}}$ and $\big(\cD^{(i)}_k\big)_{k =1}^{\rt^{(i)}}$ satisfy Assumptions~\ref{i.a1},~\ref{i.a2} and~\ref{i.a3} with $a^{(i)}$, $b^{(i)}$ and $\rt^{(i)}$. Suppose also that in the sense of weak convergence of finite dimensional distributions we have
\begin{multline}
\label{e:5.7}
\Big(\big(\cS^{(i)}_k\big)_{k=0}^{\rt^{(i)}}, \big(\cD^{(i)}_k\big)_{k =1}^{\rt^{(i)}},\, \rt^{(i)},  r^{(i)}, a^{(i)}, b^{(i)} \Big) \\
\underset {i \to \infty}{\longrightarrow}
\Big(\big(\cS^{(\infty)}_k\big)_{k=0}^{\rt^{(\infty)}}, \big(\cD^{(\infty)}_k\big)_{k=1}^{\rt^{(\infty)}},\, \rt^{(\infty)}, r^{(\infty)}, a^{(\infty)}\,, b^{(\infty)} \Big) \,,
\end{multline}
and that $0$ is a stochastic continuity point of the law of $\cS^{(\infty)}_k + \cD^{(\infty)}_k$ 
under $\bfP^{(\infty)}$, for all $k=1, \dots, r^{(\infty)}$. Lastly assume also that if $r^{(\infty)} = \infty$ then
$\rr^{(i)}\leq\rr_{\rt^{(i)}}$
and that 
$a^{(i)} < \delta^{-1}$, $b^{(i)} < \delta^{-1}$, $|b^{(i)}-a^{(i)}|/(a^{(i)-}+1) \leq (\rt^{(i)})^{1-\delta}$ for all $i$ large enough. Then,
\begin{equation}
\label{e:5.10}
\lim_{i \to \infty} \ell^{(i)}_{r^{(i)}}\big(a^{(i)}\big) = \ell^{(\infty)}_{r^{(\infty)}} \big(a^{(\infty)}\big) \,.
\end{equation}
\end{prop}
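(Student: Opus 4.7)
The plan is to split according to whether $\rr^{(\infty)} < \infty$ or $\rr^{(\infty)} = \infty$, reducing Case~2 to a diagonal argument built on Case~1 and Proposition~\ref{t:LR-as}.

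\emph{Case 1} ($\rr^{(\infty)} < \infty$). Since the $\rr^{(i)}$ are integers converging to a finite integer, $\rr^{(i)} = \rr^{(\infty)} =: \rr$ for all $i$ large. I would apply the continuous-mapping theorem to the functional
\[
\Phi(s_1,\ldots,s_\rr, d_1, \ldots, d_\rr) = s_\rr^- \, \Ind_{\{\max_{k \leq \rr} (s_k + d_k) \leq 0\}} \,,
\]
whose discontinuity set is contained in $\{\max_k (s_k + d_k) = 0\}$. By the stochastic-continuity hypothesis for each $\cS^{(\infty)}_k + \cD^{(\infty)}_k$ together with a union bound, this set has $\bfP^{(\infty)}$-probability zero. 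Combined with the joint convergence~\eqref{e:5.7}, this gives $\Phi((\cS^{(i)}, \cD^{(i)})_{k=1}^\rr) \Rightarrow \Phi((\cS^{(\infty)}, \cD^{(\infty)})_{k=1}^\rr)$. To upgrade to convergence of expectations I would establish uniform integrability of the sequence $\cS^{(i),-}_\rr$: by Assumption~\ref{i.a1} it is Gaussian with mean $(b^{(i)} s^{(i)}_\rr + a^{(i)}(s^{(i)}_{\rt^{(i)}}-s^{(i)}_\rr))/s^{(i)}_{\rt^{(i)}}$ and variance at most $s^{(i)}_\rr \leq \rr\delta^{-1}$, both uniformly bounded in $i$ because $a^{(i)}, b^{(i)}$ converge and $s^{(i)}_k \in (\delta k, \delta^{-1} k)$. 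This delivers~\eqref{e:5.10}.

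\emph{Case 2} ($\rr^{(\infty)} = \infty$). Proposition~\ref{t:LR-as}, applied to the limit object, gives $\ell^{(\infty)}_\infty(a^{(\infty)}) = \lim_{r \to \infty} \ell^{(\infty)}_r(a^{(\infty)})$. For an auxiliary $r_* \in \bbN$, decompose
\[
\ell^{(i)}_{\rr^{(i)}}(a^{(i)}) - \ell^{(\infty)}_\infty(a^{(\infty)}) = A_i(r_*) + B_i(r_*) + C(r_*) \,,
\]
where $A_i(r_*) = \ell^{(i)}_{\rr^{(i)}}(a^{(i)}) - \ell^{(i)}_{r_*}(a^{(i)})$, $B_i(r_*) = \ell^{(i)}_{r_*}(a^{(i)}) - \ell^{(\infty)}_{r_*}(a^{(\infty)})$, and $C(r_*) = \ell^{(\infty)}_{r_*}(a^{(\infty)}) - \ell^{(\infty)}_\infty(a^{(\infty)})$. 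Given $\eps > 0$, I would first choose $r_*$ so that $|C(r_*)| \leq \eps$, and large enough to activate Proposition~\ref{t:LR-as} in the variant where $\rr'$ is replaced by $\rr_{\rt'}$; the latter, combined with the standing hypothesis $\rr^{(i)} \leq \rr_{\rt^{(i)}}$, yields $|A_i(r_*)| \leq \eps (a^{(i),-}+1)$ for all $i$ large enough that $\rt^{(i)}$ and $\rr^{(i)}$ are sufficiently big. Since $a^{(i),-}$ is uniformly bounded (as $a^{(i)} \to a^{(\infty)} \in \bbR$), one has $|A_i(r_*)| = O(\eps)$. For this fixed $r_*$, Case~1 yields $B_i(r_*) \to 0$ as $i \to \infty$. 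Letting $i \to \infty$ first and then $\eps \downarrow 0$ completes the proof.

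\emph{Principal obstacle.} The main difficulty lies in Case~2: controlling $|A_i(r_*)|$ uniformly in $i$ while $\rr^{(i)} \to \infty$. The hypothesis $\rr^{(i)} \leq \rr_{\rt^{(i)}}$ is engineered precisely so that the slow sequence $(\rr_\rt)$—built so Proposition~\ref{t:LR-as} still applies when $\rr'$ tracks $\rr_{\rt'}$—covers the growth of $\rr^{(i)}$ along our sequence. Case~1 is routine modulo the standard uniform-integrability check for the unbounded factor $\cS^{(i),-}_\rr$.
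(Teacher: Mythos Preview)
Your proposal is correct and follows essentially the same approach as the paper's proof: Case~1 via the continuous-mapping theorem plus an integrability argument for the Gaussian $\cS^{(i)-}_\rr$, and Case~2 via the three-$\epsilon$ decomposition using Proposition~\ref{t:LR-as} (which is the second part of Lemma~\ref{l:20.6}) to control the $A_i$ and $C$ terms and Case~1 for the $B_i$ term. Your write-up is in fact slightly more explicit than the paper's, which invokes ``Dominated Convergence'' in Case~1 (really the same uniform-integrability fact you spell out) and merely gestures at ``a standard three~$\epsilon$-s argument'' in Case~2.
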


\subsection{Proofs}
\label{s:proof-drw}

A key step in the proofs is a reduction to the case where there are no decorations. This will come at the cost of adding/subtracting a deterministic curve from the barrier. We shall then rely on the following barrier probability estimates, which involve only the process $(\cS_k)_{k=0}^\rt$. While barrier estimates for random walks are well known, the statements in the proposition below are tailored to our needs and therefore take a rather nonstandard form. In the end of this section, we therefore include a proof, in which it shown how to convert these statements into a form which was already handled in the literature before.
\begin{prop}
\label{p:4.1}
Fix $\delta \in (0,1/3)$. There exist non-increasing functions $\bar{f}, \ubar{f}: \bbR \to (0,\infty)$ satisfying
$\bar{f}(a) \sim \ubar{f}(a) \sim a^-$ as $a \to -\infty$ such that for all $\rt < \infty$, $a,b, a', b' \in \bbR$ with $|a'-b'| < \rt^{1/2-\delta}$ and
$(\cS_k)_{k=0}^\rt$ satisfying Assumption~\ref{i.a1},
\begin{equation}
\bfP \Big(\max_{k=0}^\rt \big(\cS_k - \delta^{-1} \wedge_{\rt,k}^{1/2-\delta} - a'1_{[0,\rt/2)}(k) - b'1_{[\rt/2, \rt]}(k) \big) \leq 0 \Big) \leq \big(2+o(1)\big) \frac{\bar{f}(a-a') \bar{f}(b-b')}{s_\rt} \,.
\end{equation}
If in addition $\big((a-a')^-+1)\big)\big((b-b')^-+1\big) \leq \rt^{1-\delta}$ then also,
\begin{equation}
\bfP \Big(\max_{k=0}^\rt \big(\cS_k + \delta^{-1} \wedge_{\rt,k}^{1/2-\delta} - a'1_{[0,\rt/2)}(k) - b'1_{[\rt/2, \rt]}(k) \big) \leq 0 \Big) \geq \big(2-o(1)\big) \frac{\ubar{f}(a-a') \ubar{f}(b-b')}{s_\rt} \,.
\end{equation}
Both $o(1)$ terms above depend only on $\delta$ and tend to $0$ as $\rt \to \infty$.
\end{prop}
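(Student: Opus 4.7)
\textbf{Proof plan for Proposition~\ref{p:4.1}.}

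The plan is to reduce the statement to a standard (two-sided) ballot estimate for a Gaussian random walk bridge staying below a sublinearly-growing curved barrier, for which uniform upper and lower asymptotics are available in the literature (e.g.\ Bramson~\cite{B_C}, or can be derived directly by a Skorokhod/KMT-type comparison with the Brownian bridge and the reflection principle). The key observation is that the piecewise-constant level shifts $a' 1_{[0,\rt/2)} + b' 1_{[\rt/2,\rt]}$ are asymptotically absorbable into the curved barrier $\delta^{-1}\wedge_{\rt,k}^{1/2-\delta}$ thanks to the assumption $|a'-b'| < \rt^{1/2-\delta}$.

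More concretely, I would set $\wt\cS_k := \cS_k - a' 1_{[0,\rt/2)}(k) - b' 1_{[\rt/2,\rt]}(k)$ so that the events in both displays rewrite as
\[
\bigcap_{k=0}^{\rt}\big\{ \wt \cS_k \mp \delta^{-1}\wedge_{\rt,k}^{1/2-\delta} \leq 0 \big\}.
\]
The process $\wt \cS$ is Gaussian with the same covariance as $\cS$, mean equal to the bridge mean minus the piecewise-constant shift, and boundary values $\wt\cS_0 = a-a'$, $\wt\cS_\rt = b-b'$. Writing $\hat\cS$ for the centered Gaussian bridge (mean zero, covariance $s_k(s_\rt-s_m)/s_\rt$) and linearly interpolating between $a-a'$ at time $0$ and $b-b'$ at time $\rt$, the deviation of $\bfE\wt\cS_k$ from this linear interpolation is of the form $(a'-b')\bigl(1_{[0,\rt/2)}(k)-s_k/s_\rt\bigr)+O(1)$, which is bounded in absolute value by $|a'-b'|\bigl(\min(k,\rt-k)/\rt+O(\rt^{-1}\log \rt)\bigr) \leq C|a'-b'|\wedge_{\rt,k}/\rt$. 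Since $|a'-b'| \leq \rt^{1/2-\delta}$, this discrepancy is dominated by $C\wedge_{\rt,k}^{1/2-\delta}/\rt^{1/2} \ll \wedge_{\rt,k}^{1/2-\delta}$, so can be absorbed into the curved barrier at the cost of replacing $\delta$ by a slightly smaller $\delta'$.

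After this reduction, both probabilities in the proposition are, up to $o(1)$-corrections in the barrier constant, of the form
\[
\bfP^{\rt,b-b'}_{0,a-a'}\Big(\max_{k=0}^{\rt}\bigl(\hat\cS_k \mp c\,\wedge_{\rt,k}^{1/2-\delta'}\bigr)\leq 0\Big),
\]
where $\hat\cS$ is the linear-interpolation centered bridge of variances $\sigma_k^2\in(\delta,\delta^{-1})$. For this class of barrier-crossing events, classical ballot asymptotics yield
\[
\bfP^{\rt,\beta}_{0,\alpha}\Big(\max_{k=0}^{\rt}\bigl(\hat\cS_k - c\,\wedge_{\rt,k}^{1/2-\delta'}\bigr)\leq 0\Big)
= \big(2+o(1)\big)\frac{\bar f_{c}(\alpha)\bar f_{c}(\beta)}{s_\rt}
\]
(and the corresponding lower bound with $\ubar f_c$), uniformly in $\alpha\leq \delta^{-1}$, $\beta\leq\delta^{-1}$ with $(\alpha^-+1)(\beta^-+1)\leq \rt^{1-\delta}$; these are standard consequences of the reflection principle for Brownian bridge combined with the Bramson--Ding--Zeitouni localization that handles the curved barrier (the exact versions we need are, in fact, proved along the same lines as Theorems~\ref{t:3.2} and~\ref{t:lb} of this appendix, so there is no circularity if we bootstrap from an inductive set-up, or invoke the decoration-free special case which is classical). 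Defining $\bar f(x):=\bar f_c(x)$ and $\ubar f(x):=\ubar f_c(x)$ with $c$ chosen so that the barrier inflation matches the original $\delta^{-1}$ constant gives the asserted functions, which are manifestly monotone (couple barriers) and satisfy $\bar f(a),\ubar f(a)\sim a^-$ by the classical ballot asymptotics $F(x)\sim x^-$ for Brownian bridge.

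The main obstacle is not the structural reduction above but the uniform two-sided control of the barrier-crossing probability with the curved correction: obtaining the matching constant $2$ for both upper and lower bounds requires checking that the curved barrier correction does not affect the leading-order asymptotic, which is where one uses $\delta < 1/3$ so that the barrier grows strictly slower than $\rt^{1/2}$. I expect to handle this by the standard two-step argument, entropic repulsion forcing $\hat\cS_{\rt/2}$ to lie below $-\rt^{1/2-2\delta}$ (controlled via Gaussian tails), followed by a conditioning on $\hat\cS_{\rt/2}$ and application of Bramson's barrier estimates separately on each half; the uniformity in the step-variances $\sigma_k^2\in(\delta,\delta^{-1})$ and in $(\alpha,\beta)$ is then routine.
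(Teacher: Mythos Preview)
Your approach is essentially the paper's: tilt the bridge so that the endpoints become $a-a'$ and $b-b'$, show that the discrepancy between the piecewise-constant shift $a'1_{[0,\rt/2)}+b'1_{[\rt/2,\rt]}$ and the linear tilt is bounded by a constant times $\wedge_{\rt,k}^{1/2-\delta}$ (using $|a'-b'|<\rt^{1/2-\delta}$), absorb this into the curved barrier by enlarging its constant, and then invoke the $a'=b'=0$ case as a black box from the literature. The paper does exactly this, subtracting $a'+(b'-a')s_k/s_\rt$ and citing~\cite{cortines2019decorated} for the base case.

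One computational slip to fix: your claim that the discrepancy is dominated by $C\wedge_{\rt,k}^{1/2-\delta}/\rt^{1/2}\ll\wedge_{\rt,k}^{1/2-\delta}$ is too strong. At $k=\rt/2$ the bound $|a'-b'|\wedge_{\rt,k}/\rt$ is of order $\rt^{1/2-\delta}$, which is the \emph{same} order as $\wedge_{\rt,k}^{1/2-\delta}$, not smaller by a factor $\rt^{-1/2}$. The correct inequality is simply
\[
|a'-b'|\,\frac{\wedge_{\rt,k}}{\rt}\leq \rt^{1/2-\delta}\,\frac{\wedge_{\rt,k}}{\rt}
= \wedge_{\rt,k}^{1/2-\delta}\Big(\frac{\wedge_{\rt,k}}{\rt}\Big)^{1/2+\delta}
\leq \wedge_{\rt,k}^{1/2-\delta},
\]
i.e.\ bounded by a constant multiple of the barrier, not $o(1)$ times it. This is still enough: the paper combines the original $\delta^{-1}\wedge_{\rt,k}^{1/2-\delta}$ with the extra $\delta^{-2}\wedge_{\rt,k}^{1/2-\delta}$ and applies the $a'=b'=0$ result with $(\delta^{-1}+\delta^{-2})^{-1}$ in place of $\delta$. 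Your conclusion ``absorb into the barrier at the cost of replacing $\delta$ by a slightly smaller $\delta'$'' is therefore correct, just not for the reason you stated.
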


To carry through the reduction to the case where there are no decorations, we need to control the growth of the latter as well as the growth of the steps of the walk. Setting $\xi_0 \equiv 0$ and $\xi_k := \cS_k - \cS_{k-1}$, we then introduce the ``control variable''
\begin{equation}
\label{e:2.4}
\clR := \min \Big\{ \rr \geq 1  :\: 
\max_{k=1}^\rt \Big(\big(|\cD_k| - \delta^{-1} \wedge_{\rt,k}^{1/2-\delta}\big) \vee |\xi_k| - (k \vee \rr)^{\delta/2}\Big) \leq 0
\Big\} \,.
\end{equation}

The first lemma shows that $\clR$ cannot be large, even conditional on the ballot event. 
\begin{lem}
\label{l:4.2}
Fix $\delta \in (0,1/3)$. There exists $C = C_\delta < \infty$ such that 
for all $1 \leq \rr \leq \rt < \infty$ and $a \leq \delta^{-1}$, $b \leq -\rt^{\delta}$.
\begin{equation}
\label{e:2.5}
\bfP \Big(\max_{k =\lceil \delta^{-1}\rceil}^{\lfloor \rt -\delta^{-1}\rfloor} (\cS_k + \cD_k) \leq 0 ,\, R \geq \rr \Big)
\leq  C \frac{(a^- + 1) (b^- +1)}{\rt} \rme^{-\rr^{\delta^2/5}} \,.
\end{equation} 
\end{lem}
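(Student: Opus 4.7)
The strategy is a union bound over the index $k^* \in [1,\rt]$ at which the defining inequality of $\{R \geq \rr\}$ is first violated, followed by applications of the ballot upper bound (Theorem~\ref{t:3.2}) on the walk segments before and after $k^*$, decoupled via Assumption~\ref{i.a2}.

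On $\{R \geq \rr\}$ there is some $k^* \in [1,\rt]$ in the event
\[
E_{k^*} := \big\{|\xi_{k^*}| > (k^* \vee \rr)^{\delta/2}\big\} \cup \big\{|\cD_{k^*}| > \delta^{-1}\wedge_{\rt,k^*}^{1/2-\delta} + (k^* \vee \rr)^{\delta/2}\big\}.
\]
The Gaussian tail of $\xi_{k^*}$ (variances in $(\delta,\delta^{-1})$ by~\ref{i.a1}) and the stretched-exponential tail~\ref{i.a3} give $\bfP(E_{k^*}) \leq C\rme^{-c(k^* \vee \rr)^{\delta^2/2}}$, so a union bound reduces the LHS of~\eqref{e:2.5} to estimating $\sum_{k^*} \bfP(\text{ballot event} \cap E_{k^*})$. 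By the time-reversal symmetry of the DRW (which preserves~\ref{i.a1}--\ref{i.a3} with $a$ and $b$ swapped), it suffices to bound the contributions from $k^* \leq \rt/2$.

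For each such $k^*$ one drops the ballot constraint at the single index $k^*$, so that the weakened event factorises into a ``past'' part on $[\lceil \delta^{-1}\rceil, k^*-1]$ and a ``future'' part on $[k^*+1, \lfloor \rt - \delta^{-1}\rfloor]$. Conditioning on $\cS_{k^*}$ and invoking Assumption~\ref{i.a2} shows that the past $(\cS_j,\cD_j)_{j \leq k^*}$ is independent of the future walk; iterated use of~\ref{i.a2} further ensures that each of the two subprocesses on $[0,k^*]$ and $[k^*,\rt]$ is itself a DRW satisfying~\ref{i.a1}--\ref{i.a3} (the mismatch $\wedge_{\rt-k^*,k} \leq \wedge_{\rt,k^*+k}$ in~\ref{i.a3} is harmless, being absorbed by a slight decrease in the constant $\delta$), so Theorem~\ref{t:3.2} applies on each side -- in its stronger form on the future half since $b \leq -\rt^\delta \leq -(\rt-k^*)^\delta$, and in its weaker unrestricted form on the past half with endpoint $\cS_{k^*}$. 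Combined with $\bfP(E_{k^*}\mid \cS_{k^*}) \leq C\rme^{-c(k^*\vee\rr)^{\delta^2/2}}$ (since $E_{k^*}$ is measurable w.r.t.\ the past DRW), this yields
\[
\bfP\big(\text{ballot} \cap E_{k^*}\,\big|\,\cS_{k^*}\big) \leq C \frac{(a^-+1)(\cS_{k^*}^- + k^{*\delta})(\cS_{k^*}^-+1)(b^-+1)}{k^*(\rt-k^*)}\, \rme^{-c(k^* \vee \rr)^{\delta^2/2}}.
\]
Integrating over $\cS_{k^*}$, which by~\ref{i.a1} is Gaussian with variance $\asymp k^*(\rt-k^*)/\rt$, a standard Gaussian computation produces $\bfP(\text{ballot} \cap E_{k^*}) \leq C(a^-+1)(b^-+1)/\rt \cdot \rme^{-c(k^* \vee \rr)^{\delta^2/2}}$. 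Summing over $k^*$ and using $\sum_{k^*=1}^\rt \rme^{-c(k^* \vee \rr)^{\delta^2/2}} \leq C \rme^{-\rr^{\delta^2/5}}$ -- the loss in the exponent comfortably swallowing both the polynomial prefactor $\rt$ and the constant $c$ -- delivers~\eqref{e:2.5}.

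\textbf{Main obstacle.} The delicate point is the decoupling in the previous paragraph: Assumption~\ref{i.a2} gives conditional independence of past $(\cS,\cD)$ and future walk given $\cS_{k^*}$, but does not by itself exclude dependence of the future decorations on the past walk. Rigorously justifying that each of the two subprocesses is a genuine DRW satisfying~\ref{i.a1}--\ref{i.a3} (so that Theorem~\ref{t:3.2} may be invoked on each) requires iterated use of~\ref{i.a2} together with the observation that $\cS_{k^*}$ is measurable with respect to the past. The remaining computations are routine Gaussian estimates, with minor bookkeeping to handle the finitely many indices $k^*$ outside $[\lceil \delta^{-1}\rceil, \lfloor \rt - \delta^{-1}\rfloor]$, which are absorbed into the constant.
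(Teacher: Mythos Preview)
Your proposal has three genuine gaps.

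\textbf{Circularity.} You invoke Theorem~\ref{t:3.2} on each segment, but in the paper Theorem~\ref{t:3.2} is \emph{derived from} Lemma~\ref{l:4.2} (its proof begins ``Using $\rr=1$ in Lemma~\ref{l:4.2}\ldots''). So the argument as written is circular. The paper instead uses the barrier estimate for the bare walk, Proposition~\ref{p:4.1}, which is proved independently.

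\textbf{Decoupling of the future decorations.} Assumption~\ref{i.a2} gives, conditionally on $\cS_{k^*}$, independence of the past $(\cS_j,\cD_j)_{j\le k^*}$ from the future \emph{walk} $(\cS_\ell)_{\ell>k^*}$ only. The future ballot event you want to bound involves the future \emph{decorations} $(\cD_\ell)_{\ell>k^*}$, and nothing in~\ref{i.a2} decouples these from the past; iterated use of~\ref{i.a2} does not help here. Relatedly, \ref{i.a3} is an unconditional tail bound, and there is no reason the conditional law of $\cD_{k^*+j}$ given $\cS_{k^*}$ should satisfy it, so even if Theorem~\ref{t:3.2} were available, you could not apply it to the future segment as a DRW. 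The same objection applies to your claim $\bfP(E_{k^*}\mid\cS_{k^*})\le C\rme^{-c(k^*\vee\rr)^{\delta^2/2}}$, since conditionally on $\cS_{k^*}$ neither $\xi_{k^*}$ nor $\cD_{k^*}$ need have the unconditional tails.

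\textbf{Time reversal.} Assumption~\ref{i.a2} is asymmetric: the decorations sit on the past side. Under time reversal the condition becomes ``given $\cS_m$, $(\cS_\ell)_{\ell<m}$ is independent of $(\cS_j,\cD_j)_{j\ge m}$'', which is \emph{not}~\ref{i.a2}. So the reduction to $k^*\le\rt/2$ via time-reversal symmetry is invalid.

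The paper circumvents all three issues at once by a different decomposition: rather than union-bounding over a single violating index $k^*$, it partitions on $\{R=\rr\}$. On this event \emph{every} decoration (and step) is controlled by $(k\vee\rr)^{\delta/2}$, so the ballot event from time $\rr'\approx\rr$ onward is contained in a pure barrier event for the \emph{walk alone}. That barrier event is conditionally independent of the past by~\ref{i.a2}, and is bounded by Proposition~\ref{p:4.1} with no circularity. The bad event (some $k<\rr$ violates the $(\rr-1)^{\delta/2}$ bound) is handled by the unconditional tail~\ref{i.a3}, and the case $\rr\ge\rt/2$ is disposed of by this tail bound alone, with no need for symmetry.
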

\begin{proof}
Without loss of generality we can assume that $\rt$ is large enough. For any $1 \leq \rr < \rt/2$, setting $\rr' := \lceil \rr \vee \delta^{-1} \rceil$, the event 
\begin{equation}
\label{e:4.6}
\Big\{ \max_{k =\lceil \delta^{-1}\rceil}^{\lfloor \rt -\delta^{-1}\rfloor} \big(\cS_k + \cD_k\big) \leq 0 ,\, R = \rr \Big\} \cap \Big\{\cS_\rt = b \Big\}
\end{equation}
is included in the intersection of 
\begin{equation}
\label{e:4.8}
\Big\{\exists k \in \{1,\ldots,\rr-1\}  :\: 
	\big(|\cD_k| - \delta^{-1} \wedge_{\rt,k}^{1/2-\delta}\big) \vee |\xi_k|
		 > (\rr-1)^{\delta/2} \Big\} 
\quad ,\qquad 
\big\{ \cS_{\rr'} \geq a - {\rr'}^2 \big\}
\end{equation}	
and
\begin{equation}
\label{e:4.9}
\Big\{ \max_{k = \rr'}^\rt \big(\cS_k - 2\delta^{-1} k^{1/2-\delta}1_{[\rr',\rt/2)}(k)  - (\delta^{-1} (\rt -k)^{1/2-\delta} + \rt^{\delta/2})1_{[\rt/2, \rt]}(k) \big) \leq 0 
\Big\}.
\end{equation}
Above we have used that~\eqref{e:4.6} implies that
$\cS_{k} + \cD_{k} \leq b + \delta^{-2} + (1+\delta^{-1}) \rt^{\delta/2} \leq 0$ for all $k =\lceil \rt -\delta^{-1}\rceil,\ldots ,\rt$,
whenever $\rt$ is large enough, in light of the restrictions on $b$.

By Assumptions~\ref{i.a1} and~\ref{i.a3} and the union bound, the probability of the left event in~\eqref{e:4.8} is bounded above by $C\rme^{-\rr^{\delta^2/3}}$. On the other hand, we observe that
$2\delta^{-1} k^{1/2-\delta}1_{[\rr',\rt/2)}(k) + (\delta^{-1} (\rt -k)^{1/2-\delta} + \rt^{\delta/2})1_{[\rt/2, \rt]}(k)$ is at most 
\begin{multline}
2 \delta^{-1} \big( \rr'^{1/2-\delta} + (k-\rr')^{1/2-\delta}\big)1_{[1,(\rt -\rr')/2)}(k-\rr') \\ + 
	\big( 2 \delta^{-1} \big((\rt -\rr')-(k-\rr')\big)^{1/2-\delta} + \rt^{\delta/2} \big)1_{[(\rt -\rr')/2, \rt -\rr']}(k-\rr') \,.
\end{multline}
Setting $\rt' := \rt -\rr'$ and using stochastic monotonicity, conditional on the second event in~\eqref{e:4.8}, the $\bfP$-probability of~\eqref{e:4.9} is at most the $\bfP_{0,a-\rr'^2}^{\rt',b}$ probability of
\begin{equation}
\Big\{ \max_{k = 0}^{\rt'} \big(\cS'_k - 2\delta^{-1} \wedge_{\rt',k}^{1/2-\delta} - 
2 \delta^{-1} \rr'^{1/2-\delta} 1_{[0,\rt'/2)}(k) - \rt^{\delta/2} 1_{[\rt'/2, \rt']}(k) \big) \leq 0 \Big\} \,,
\end{equation}
where the law of $(\cS'_k)_{k=0}^{\rt'}$ under $\bfP_{0,a-\rr'^2}^{\rt',b}$ is that of 
$(\cS_{k+\rr'})_{k=0}^{\rt'}$ under $\bfP(\cdot\,|\, \cS_{\rr'} = a-\rr'^2)$. Thanks to Assumption~\ref{i.a2} and Proposition~\ref{p:4.1} the last probability is at most
\begin{equation}
C \frac{(a - \rr'^2 - \rr'^{1/2-\delta})^- (b - \rt^{\delta/2})^-}{(\rt -\rr')}
\leq C \frac{(a^- + 1) (b^- +1)}{\rt} (\rr^2+1) \,.
\end{equation}
whenever $b^- \geq \rt^{\delta}$ and $a \leq \delta^{-1}$.

Invoking the union bound and the product rule and combining the above estimates, we get that the probability of~\eqref{e:4.6} is bounded above by 
\begin{equation}
C \frac{(a^- + 1) (b^- +1)}{\rt} \rme^{-\rr^{\delta^2/4}} \,.
\end{equation}
The same bound applies also when $\rr \geq \rt/2$, in which case we only take the first event in~\eqref{e:4.8} as the one including~\eqref{e:4.6}. Summing the right hand side in the last display from $\rr$ to $\infty$ we obtain~\eqref{e:2.5}. 
\end{proof}
As immediate consequences, we get proofs for Theorem~\ref{t:3.2} and Theorem~\ref{t:lb}.
\begin{proof}[Proof of Theorem~\ref{t:3.2}]
Using $\rr=1$ in Lemma~\ref{l:4.2} give the desired bound when $b \leq -\rt^\delta$. If $b$ is not that low, we let $\cS'_k := \cS_k - (b + \rt^{\delta})k/\rt$ for all $k=0, \dots, \rt$. Then $(\cS'_k)_{k=0}^\rt$, $(\cD_k)_{k=1}^\rt$ satisfies Assumptions~\ref{i.a1},~\ref{i.a2} and~\ref{i.a3} with the same parameters, except that $b$ is now $-\rt^{-\delta}$. Therefore the same argument as in the first part of the proof gives $C(a^-+1)(\rt^\delta+1)/\rt$ as an upper bound on the left hand side in~\eqref{e:drw-ub} only with $\cS'_k$ in place of $\cS_k$. Since $\cS_k \geq \cS'_k$ for all $k$, the original left hand side is even smaller. It remains to observe that the upper bound $C'(a^-+1)(b^-+\rt^\delta)/\rt$ applies in both cases.
\end{proof}

\begin{proof}[Proof of Theorem~\ref{t:lb}]
For any $1 \leq \rr \leq \rt$, 
the desired probability is bounded below by
\begin{multline}
\label{e:9.11}
\bfP \Big(\max_{k=1}^\rt \big(\cS_k +  \delta^{-1} \wedge_{\rt,k}^{1/2-\delta} + (k \vee \rr)^{\delta/2}\Big) \leq 0 \,, \clR \leq \rr \Big) \\
\geq 
\bfP \Big(\max_{k=1}^\rt \big(\cS_k + 2\delta^{-1} \wedge_{\rt,k}^{1/2-\delta} + 
\rr^{\delta/2} 1_{[1,\rt/2)}(k) + \rt^{\delta/2} 1_{[\rt/2, \rt]}(k) \big) \leq 0\Big) \\
- \bfP \Big(\max_{k=1}^\rt \cS_k \leq 0 ,\, \clR > \rr \Big) \,.
\end{multline}
Then from Proposition~\ref{p:4.1}, for any $\rr$ the first term on the right hand side is at least $C (a^--\rr^{\delta/2})(b^--\rt^{\delta/2})/\rt \geq C_0 a^-b^-/\rt$ for some $C_0 = C_{0,\delta} > 0$ whenever $a^- > 2\rr^{\delta/2}$ and under the restrictions in the theorem. 

At the same time, by assuming $\cD_k \equiv 0$ for all $k$, it follows from Lemma~\ref{l:4.2} that the second term on the right hand side of~\eqref{e:9.11} can be made at most $(C_0/2) a^-b^-/\rt$ by choosing $\rr$ large enough. Combined this gives the desired lower bound for all $a < a_0$ for some $a_0 = a_{0,\delta} > -\infty$.
\end{proof}

Our next task is to derive asymptotics, but to this end, we shall first need several preliminary results. The first one shows that under the ballot event the random walk is repulsed.
In all lemmas in the remaining part of this section, we assume~\ref{i.a1}, \ref{i.a2} and \ref{i.a3}. 
\begin{lem}
\label{l:4.3}
Fix $\delta \in (0,1/3)$. There exists $C = C_\delta < \infty$ such that for all $0 \leq \rr \leq \rt/2$, $a \leq \delta^{-1}$ and $b \leq -\rt^\delta$, 
\begin{equation}
\bfP \Big(\max_{k =1}^\rt (\cS_k + \cD_k) \leq 0 ,\, \cS_\rr > -\rr^{1/2-\delta/2} \Big)
\leq C \frac{(a^- + 1) (b^- +1)}{\rt} \rr^{-\delta/2} \,.
\end{equation}

\end{lem}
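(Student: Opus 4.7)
The plan is to prove Lemma~\ref{l:4.3} by splitting the DRW bridge at the intermediate time $\rr$ via Assumption~\ref{i.a2}, and then bounding each of the two resulting ballot probabilities using Theorem~\ref{t:3.2}. Specifically, I would disintegrate over the value of $\cS_\rr$ and write
\begin{equation*}
\bfP\Big(\max_{k=1}^\rt (\cS_k+\cD_k)\leq 0,\,\cS_\rr>-\rr^{1/2-\delta/2}\Big)
=\int_{-\rr^{1/2-\delta/2}}^{\infty} \bfP\big(\cS_\rr\in \rmd s\big)\, P_1(s)\, P_2(s),
\end{equation*}
where $P_1(s)$ is the conditional probability that the ballot event holds on $[1,\rr]$ and $P_2(s)$ is the same for $[\rr+1,\rt]$. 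The factorization requires conditional independence of these two events given $\cS_\rr$, which is the main technical point; it follows by applying Assumption~\ref{i.a2} repeatedly at every time $\ell>\rr$, so that the future decorations $(\cD_\ell)_{\ell>\rr}$ depend on the past only through $\cS_\rr$ and the future walk itself.

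For $P_1(s)$, conditionally on $\cS_\rr=s$, the process $((\cS_k)_{k=0}^\rr,(\cD_k)_{k=1}^\rr)$ forms a DRW satisfying~\ref{i.a1}--\ref{i.a3} with endpoints $a$ and $s$ and length $\rr$. Applying the second (endpoint-free) form of Theorem~\ref{t:3.2} yields
\begin{equation*}
P_1(s)\leq C\frac{(a^-+1)(s^-+\rr^\delta)}{\rr}.
\end{equation*}
For $P_2(s)$, the analogous conditional process on $[\rr,\rt]$ is a DRW from $s$ to $b$ of length $\rt-\rr$. Since $\rr\leq \rt/2$ we have $b\leq-\rt^\delta\leq -(\rt-\rr)^\delta$, so Theorem~\ref{t:3.2} applies in its first (sharper) form and gives
\begin{equation*}
P_2(s)\leq C\frac{(s^-+1)(b^-+1)}{\rt-\rr}\leq C'\frac{(s^-+1)(b^-+1)}{\rt}.
\end{equation*}

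Finally, I would integrate. On the region $s>-\rr^{1/2-\delta/2}$ we have $s^-\leq\rr^{1/2-\delta/2}$, and since $\delta<1/3$, also $\rr^\delta\leq\rr^{1/2-\delta/2}$; hence $(s^-+\rr^\delta)(s^-+1)\leq C\rr^{1-\delta}$. Using $\int \bfP(\cS_\rr\in \rmd s)\leq 1$, the combined estimate is
\begin{equation*}
C\frac{(a^-+1)(b^-+1)}{\rr\,\rt}\cdot \rr^{1-\delta}\;=\;C\frac{(a^-+1)(b^-+1)}{\rt}\,\rr^{-\delta},
\end{equation*}
which is stronger than (and in particular implies) the claimed $\rr^{-\delta/2}$. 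The main obstacle I anticipate is the conditional independence/factorization argument: because the decorations $\cD_\ell$ for $\ell>\rr$ can a priori depend on the entire past, one must carefully iterate~\ref{i.a2} at each future time step (or equivalently, condition on the sigma-algebra generated by $(\cS_k,\cD_k)_{k\leq\rr}$ and show that the conditional law of the future DRW depends only on $\cS_\rr$) to justify the split; the quantitative bounds on each piece are then immediate applications of Theorem~\ref{t:3.2}.
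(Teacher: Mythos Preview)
Your factorization step has a genuine gap, and iterating~\ref{i.a2} does not close it. Assumption~\ref{i.a2} is asymmetric: given $\cS_m$, it makes the past $(\cS_k,\cD_k)_{k\le m}$ conditionally independent of the future \emph{walk} $(\cS_\ell)_{\ell>m}$ only --- nothing is said about the future decorations $(\cD_\ell)_{\ell>m}$. In particular, $\cD_\ell$ for $\ell>\rr$ may depend on the entire past trajectory; for instance, $\cD_\ell:=f(\cS_1)$ for a bounded $f$ is compatible with~\ref{i.a1}--\ref{i.a3}, yet $\cD_\ell$ then depends on $\cS_1$, which is not determined by $(\cS_\rr,(\cS_j)_{j>\rr})$. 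Iterating~\ref{i.a2} at each $\ell>\rr$ only yields that $(\cS_j,\cD_j)_{j\le\ell}$ is independent of $(\cS_j)_{j>\ell}$ given $\cS_\ell$; it never decouples $\cD_\ell$ from the pre-$\rr$ history. Hence your $P_2(s)$, which involves future decorations, cannot be separated from the past event by conditioning on $\cS_\rr$ alone. A second, independent problem is that the restricted processes do not inherit~\ref{i.a3} with the shortened horizon: the original bound allows $|\cD_k|$ of order $\wedge_{\rt,k}^{1/2-\delta}$, which is of order $\rr^{1/2-\delta}$ for $k$ near $\rr$, whereas the length-$\rr$ and length-$(\rt-\rr)$ DRWs would require the threshold $\wedge_{\rr,k}^{1/2-\delta}$, respectively $\wedge_{\rt-\rr,k-\rr}^{1/2-\delta}$, which vanish there; so Theorem~\ref{t:3.2} is not directly applicable to either piece.

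The paper's proof avoids both issues by removing the decorations on $[\rr,\rt]$ \emph{before} splitting. It intersects with $\{\clR\le\rr'\}$ for $\rr':=\rr^{\delta/2}$ (the control variable of~\eqref{e:2.4}); on this event every $\cD_k$ is dominated by a deterministic barrier, so the ballot event on $[\rr,\rt]$ is contained in one of the form $\{\max_{k=\rr}^\rt(\cS_k-\text{barrier}_k)\le 0\}$ involving only the walk. Now the Markov property of $(\cS_k)$ (from~\ref{i.a1}) legitimately factors the probability at time $\rr$, and both pieces are handled by the pure-walk barrier estimate Proposition~\ref{p:4.1} rather than by Theorem~\ref{t:3.2}. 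The complementary event $\{\clR>\rr'\}$ is controlled by Lemma~\ref{l:4.2}, and the choice $\rr'=\rr^{\delta/2}$ balances the two contributions to produce the factor $\rr^{-\delta/2}$.
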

\begin{proof}
For $1 \leq \rr' \leq \rr \leq \rt/2$, the event in the statement of the lemma intersected with $\{\clR \leq \rr'
\}$ is included in
\begin{multline}
\label{e:4.14}
\Big\{\max_{k =1}^\rr \big(\cS_k - 2\delta^{-1}  (k \vee \rr')^{1/2-\delta}\big) \leq 0\big) \Big\}
\cap
\{ \cS_\rr \geq -\rr^{1/2-\delta/2} \}
\\
\cap \Big\{ \max_{k=\rr}^\rt \big(\cS_k - 2\delta^{-1}  k^{1/2-\delta}1_{[\rr,\rt/2)}(k)  - (\delta^{-1} (\rt -k)^{1/2-\delta} + \rt^{\delta/2})1_{[\rt/2, \rt]}(k) \big) \leq 0 
\Big\}.
\end{multline}
As in Lemma~\ref{l:4.2}, we bound $2\delta^{-1}  k^{1/2-\delta}1_{[\rr,\rt/2)}(k) + (\delta^{-1} (\rt -k)^{1/2-\delta} + \rt^{\delta/2})1_{[\rt/2, \rt]}(k)$ by
\begin{equation}
2 \delta^{-1} \wedge_{\rt-\rr, k-\rr}^{1/2-\delta} + 2\delta^{-1} \rr^{1/2-\delta} 1_{[0,(\rt -\rr)/2)}(k-\rr) 
+ \rt^{\delta/2} 1_{[(\rt -\rr)/2, \rt -\rr]}(k-\rr) \,.
\end{equation}
and $(k \vee \rr')^{1/2-\delta}$ by $\wedge_{\rr,k}^{1/2-\delta} + \rr'^{1/2-\delta} 1_{[0,\rr/2)}(k) + \rr^{1/2-\delta}1_{[\rr/2,\rr]}(k)$.
We then use Assumption~\ref{i.a1} and Proposition~\ref{p:4.1} to bound the probability of~\eqref{e:4.14} by
\begin{equation}
C \int_{-\rr^{1/2-\delta/2}}^{2 \delta^{-1} \rr^{1/2-\delta}}
 \frac{(a-2\delta^{-1} \rr'^{1/2-\delta})^- \big((w-2 \delta^{-1} \rr^{1/2-\delta})^-\big)^2 (b-\rt^{\delta/2})^-}{s_\rr(s_\rt-s_\rr)}
\bfP(\cS_\rr \in \rmd w)
\end{equation}
When $a \leq \delta^{-1}$, $b \leq -\rt^{\delta}$, the last fraction can be bounded by $C (a^-+1)(b^-+1) \rr' \rr^{-\delta} \rt^{-1}$ in the domain of integration and therefore this bound applies to the entire integral as well. Notice that we have used that $s_k \in (k\delta, k\delta^{-1})$.
 Choosing $\rr' = \rr^{\delta/2}$, invoking Lemma~\ref{l:4.2} for the case when $\{\clR > \rr'\}$ and 
finally using the union bound then completes the proof.
\end{proof}

The next three lemmas provide needed bounds on the mean of the walk at step $m$. 
\begin{lem}
\label{l:4.4}
Fix $\delta \in (0,1/2)$. There exists $C = C_\delta < \infty$ such that for all $0 \leq \rr \leq \rt$, $a \leq \delta^{-1}$ and $b \leq -\rt^{\delta}$ satisfying $(a^-+1)b^- \leq \rt^{1-\delta}$, 
\begin{equation}
\label{e:4.18a}
\bfE \Big( \cS_\rr^- ;\; \max_{k =1}^\rr (\cS_k + \cD_k) \leq 0 ,\, 
	\cS_\rr \notin \big[a-\rr^2, -\rr^{1/2-\delta/2}\big] \Big) 
\leq C (a^-+1) \rr^{-\delta/2} \,.
\end{equation}
\end{lem}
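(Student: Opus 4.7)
The plan is to split the complementary event $\{\cS_\rr \notin [a-\rr^2, -\rr^{1/2-\delta/2}]\}$ into the upper tail $U := \{\cS_\rr > -\rr^{1/2-\delta/2}\}$ and the lower tail $L := \{\cS_\rr < a-\rr^2\}$ and treat each separately. Note that for the upper tail only the region $\{\cS_\rr \in (-\rr^{1/2-\delta/2},0]\}$ contributes, since $\cS_\rr^- \equiv 0$ otherwise. I would also dispose of $\rr$ bounded by some $\delta$-dependent constant $C_\delta$ at the outset: in that range, Assumption~\ref{i.a1} gives $\bfE|\cS_\rr| \leq |\bfE \cS_\rr| + \sqrt{\Var \cS_\rr} \leq C(a^-+1)$ (using $|\bfE \cS_\rr - a| \leq |b-a|\rr/\rt$ together with the constraint $(a^-+1)b^- \leq \rt^{1-\delta}$), and since $\rr^{-\delta/2}$ is bounded below, the conclusion is trivial.

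The upper tail is handled via the Gibbs--Markov structure of Assumption~\ref{i.a2}: conditional on $\cS_\rr = s$, the collection $(\cS_k,\cD_k)_{k\leq \rr}$ has the same law as under $\bfP_{0,a}^{\rr,s}$. This yields the factorization
\begin{equation*}
\bfP_{0,a}^{\rt,b}\Big(\max_{k=1}^\rr(\cS_k+\cD_k)\leq 0,\, \cS_\rr\in ds\Big)
= \bfP_{0,a}^{\rt,b}(\cS_\rr\in ds)\cdot \bfP_{0,a}^{\rr,s}\Big(\max_{k=1}^\rr(\cS_k+\cD_k)\leq 0\Big)\,.
\end{equation*}
Applying Theorem~\ref{t:3.2} (the unrestricted form with $\rr$ in place of $\rt$ and $s$ in place of $b$) bounds the right factor by $C(a^-+1)(s^-+\rr^\delta)/\rr$. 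For $s\in(-\rr^{1/2-\delta/2},0]$ and $\delta<1/3$ we have $s^-+\rr^\delta \leq 2\rr^{1/2-\delta/2}$, and $\cS_\rr^- \leq \rr^{1/2-\delta/2}$ on this range, so integrating over $s$ and using the trivial bound $\bfP_{0,a}^{\rt,b}(\cS_\rr\in(-\rr^{1/2-\delta/2},0])\leq 1$ gives the upper-tail contribution $\leq 2C(a^-+1)\rr^{-\delta}$, which is stronger than what is required.

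For the lower tail I would drop the ballot event (bounding its indicator by $1$) and exploit that $\cS_\rr$ is Gaussian under $\bfP_{0,a}^{\rt,b}$ with mean $M = a + (b-a)s_\rr/s_\rt$ and variance $V = s_\rr(s_\rt-s_\rr)/s_\rt \leq \delta^{-1}\rr$. Writing $|a-M| \leq |a-b|\cdot s_\rr/s_\rt$ and inserting the bounds $|b| \leq \rt^{1-\delta}/(a^-+1)$, $a^- \leq \rt^{1-2\delta}$ (both derived from the joint constraint and $b\leq -\rt^\delta$) together with $s_\rr/s_\rt \leq \delta^{-2}\rr/\rt$ from Assumption~\ref{i.a1}, one obtains $|a-M| \leq C_\delta(\rr\rt^{-\delta} + 1)$. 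Consequently for $\rr\geq C_\delta$ we have $|a-\rr^2-M| \geq \rr^2/2$, so the event $L$ is a large deviation of $\cS_\rr$ from its mean of size at least $\rr^2/2$. A direct Gaussian computation (with $|M|\leq 2\rt^{1-\delta}$) then gives $\bfE(\cS_\rr^-; L) \leq C\rt^{1-\delta}\rme^{-c_\delta \rr^3}$, which for $\rr \geq C_\delta$ is much smaller than $(a^-+1)\rr^{-\delta/2}$.

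The main obstacle is the lower-tail estimate, specifically ensuring that $|a-M|$ is dominated by $\rr^2/2$ uniformly in the admissible $(a,b,\rr,\rt)$. This is not automatic: naively the bound on $|b-a|$ can be as large as $\rt^{1-\delta}$, so one must check carefully that the prefactor $s_\rr/s_\rt$ tames this and that the regime where $M$ is more negative than $a-\rr^2$ forces $\rr$ to be bounded by a $\delta$-dependent constant (falling into the trivial case). The upper-tail step, by contrast, is fairly clean once the Gibbs--Markov factorization is set up, and no iteration or dyadic slicing is needed.
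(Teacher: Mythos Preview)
Your overall decomposition into the upper tail $\{\cS_\rr > -\rr^{1/2-\delta/2}\}$ and lower tail $\{\cS_\rr < a-\rr^2\}$ matches the paper, but both halves have issues as written.

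\textbf{Upper tail.} You invoke Theorem~\ref{t:3.2} for the conditional law $\bfP(\,\cdot\mid\cS_\rr=s)$, treating it as a DRW of length $\rr$ with endpoint $s$. But Theorem~\ref{t:3.2} requires Assumptions~\ref{i.a1}--\ref{i.a3} to hold, and while~\ref{i.a1} and~\ref{i.a2} do transfer to the conditional law, \ref{i.a3} does not: the tail bound~\eqref{e:5.1} is an \emph{unconditional} statement under $\bfP$, and since $\cD_k$ is allowed to depend on $(\cS_j)_{j\leq k}$, conditioning on $\cS_\rr=s$ can alter the law of $\cD_k$ in a way that destroys the stretched-exponential tail, uniformly in $s$. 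Nothing in the abstract setup furnishes conditional control of the decorations. The paper instead uses~\ref{i.a3} unconditionally to strip the decorations first: on the high-probability event $\{-\cD_k \leq \delta^{-1}k^{1/2-\delta}+\rr^{\delta/4}\text{ for all }k\leq\rr\}$, the ballot event forces a deterministic barrier for $\cS$, and then Proposition~\ref{p:4.1} (for the walk alone, no decorations) is applied conditionally on $\cS_\rr=w$.

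\textbf{Lower tail.} Your structure is the same as the paper's, but the bound $|M|\leq 2\rt^{1-\delta}$ is too loose and makes the final step false: $C\rt^{1-\delta}\rme^{-c_\delta\rr^3}$ need not be dominated by $(a^-+1)\rr^{-\delta/2}$, since $\rt$ is unrestricted relative to $\rr$ (take $\rr$ fixed and $\rt\to\infty$). You should instead use what you already derived, $|a-M|\leq C_\delta(\rr+1)$, which gives $|M|\leq a^- + C_\delta(\rr+1)$ and hence $\bfE\cS_\rr^2\leq C\big((a^-+1)^2+\rr^2\big)$. Cauchy--Schwarz then yields the $\rt$-free bound $C(a^-+1)\rme^{-c\rr^2}$, exactly as in the paper.
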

\begin{proof}
We bound the expectation restricted to the events $\{\cS_\rr > -\rr^{1/2-\delta/2}\}$ and $\{\cS_\rr < a-\rr^2\}$ separately. In the first range, the expectation is bounded above by 
\begin{equation}
\label{e:4.19a}
\rr^{1/2-\delta/2} \times 
\bfP \Big(\max_{k =1}^\rr (\cS_k + \cD_k) \leq 0 ,\, \cS_\rr > -\rr^{1/2-\delta/2} \Big) \,.
\end{equation}
By Assumption~\ref{i.a3} and the union bound, the probability that there exists $k \in (0,\rr]$ such that $-\cD_k \geq  \delta^{-1} k^{1/2-\delta} + \rr^{\delta/4}$ is at most $\rme^{-\rr^{\delta^2}/8}$. On the complement the probability of the event in~\eqref{e:4.19a} is at most
\begin{equation}
\label{e:4.21a}
\int_{-\rr^{1/2-\delta/2}}^{2\delta^{-1} \rr^{1/2-\delta}}
\bfP_{0,a}^{\rr,w} \Big(\max_{k=1}^\rr (\cS_k - \delta^{-1}k^{1/2-\delta} - \rr^{\delta/4}) \leq 0\Big)
\bfP \big(\cS_\rr \in \rmd w\big) \,.
\end{equation}

Upper bounding $k^{1/2-\delta}$  by $\wedge_{\rr,k}^{1/2-\delta} + \rr^{1/2-\delta} 1_{[\rr/2, \rr]}(k)$,
we may use Proposition~\ref{p:4.1} to bound the first probability in the integral by
\begin{equation}
C \frac{\big((a-\rr^{\delta/4})^- + 1\big)\big((w-2\delta^{-1} \rr^{1/2-\delta})^-+1\big)}{s_\rr}
\leq C \frac{(a^-+1) \rr^{\delta/4} \rr^{1/2-\delta/2}}{\rr} \leq C (a^-+1) \rr^{-1/2-\delta/4} \,.
\end{equation}
Since this is also an upper bound on the entire integral, plugging this in~\eqref{e:4.19a} gives
$C (a^-+1) \rr^{-3\delta/4}$ as an upper bound for the expectation in the first range.

For the restriction to the event $\{\cS_\rr < a-\rr^2\}$, we bound the expectation in~\eqref{e:4.18a} by
$\bfE(\cS_\rr^- ;\; \cS_\rr \leq a-\rr^2\big)$ and then use Cauchy-Schwarz. This gives the bound
\begin{equation}
\label{e:20.44}
\Big(\big(\bbE_{0,a}^{\rt,b} \cS_\rr^2\big) \bbP_{0,a}^{\rt,b} \big(\cS_\rr \leq a-\rr^2\big)\Big)^{1/2}
\end{equation}
Under the restrictions on $a$,$b$ and $\rt$ we have, 
$|\bfE (\cS_\rr - a)| = |b-a|\rr/\rt \leq 2\rr$ and $\bfVar\, \cS_\rr \leq C \rr$. Therefore, the second moment in~\eqref{e:20.44} is at most $C ((a^-+ 1)^2 + \rr^2)$ while the probability there is at most $C \rme^{-C' \rr^3}$. Combined,~\eqref{e:20.44} is at most $C (a^-+1) \rme^{-\rr^2}$, while the entire expectation in~\eqref{e:4.18a} obeys the desired bound.
 \end{proof}

\begin{lem}
\label{l:4.5a}
Fix $\delta \in (0,1/2)$. There exists $C = C_\delta$ such that for all $0 \leq \rr \leq \rt/2$, $a \leq \delta^{-1}$ and $b \in \bbR$ such that $|b-a|\rr/\rt \leq \delta^{-1} (a^-+1)^{1-\delta}$, 
\begin{equation}
\label{e:2.24}
\bfE \Big( \cS_\rr^- ;\; \max_{k =1}^\rr \big(\cS_k - \delta^{-1} k^{1/2-\delta}\big) \leq 0 \Big)
\leq C (a^- + 1) 
\end{equation}
\end{lem}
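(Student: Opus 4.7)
The proof proceeds by a case split at the threshold $\rr_0 := (a^-+1)^2 \vee C_\delta$ for a large $\delta$-dependent constant $C_\delta$ chosen as below.

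\emph{Small $\rr$ (i.e.\ $\rr \leq \rr_0$).} Here we simply drop the indicator. Under $\bfP = \bfP_{0,a}^{\rt,b}$, Assumption~\ref{i.a1} gives that $\cS_\rr$ is Gaussian with mean $\mu_\rr := a + (b-a) s_\rr / s_\rt$ and variance at most $\delta^{-1}\rr$. The hypothesis $|b-a|\rr/\rt \leq \delta^{-1}(a^-+1)^{1-\delta}$ yields $|\mu_\rr| \leq C_\delta(a^- + 1)$, whence $\bfE \cS_\rr^- \leq |\mu_\rr| + C\sqrt{\rr} \leq C_\delta(a^-+1) + C\sqrt{\rr_0} \leq C_\delta'(a^-+1)$, since $\sqrt{\rr_0} \leq (a^-+1) + \sqrt{C_\delta}$.

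\emph{Large $\rr$ (i.e.\ $\rr > \rr_0$).} Using the Markov property at time $\rr$, decompose
\begin{equation*}
\bfE(\cS_\rr^-; E_\rr) = \int_{-\infty}^{0} (-w) \,\bfP_{0,a}^{\rr,w}(E_\rr) \,\bfP(\cS_\rr \in dw),
\end{equation*}
where $E_\rr := \{\max_{k=1}^\rr(\cS_k - \delta^{-1}k^{1/2-\delta}) \leq 0\}$. To bound the inner probability via Proposition~\ref{p:4.1}, observe that on $[0,\rr/2)$ our barrier $\delta^{-1}k^{1/2-\delta}$ coincides with $\delta^{-1}\wedge_{\rr,k}^{1/2-\delta}$, while on $[\rr/2,\rr]$ it is dominated by $\delta^{-1}\wedge_{\rr,k}^{1/2-\delta} + \delta^{-1}\rr^{1/2-\delta}$. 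Picking $\delta' \in (0,\delta)$ and enlarging $C_\delta$ so that $\delta^{-1}\rr^{1/2-\delta} < \rr^{1/2-\delta'}$ for all $\rr > C_\delta$, we may apply Proposition~\ref{p:4.1} (with parameter $\delta'$, bridge length $\rr$, and shifts $a' = 0$, $b' = \delta^{-1}\rr^{1/2-\delta}$) to obtain
\begin{equation*}
\bfP_{0,a}^{\rr,w}(E_\rr) \leq C_\delta \frac{(a^- + 1)\bigl(w^- + \rr^{1/2-\delta} + 1\bigr)}{\rr}.
\end{equation*}
Using the Gaussian density bound $\bfP(\cS_\rr \in dw)/dw \leq C/\sqrt{\rr}$ and $|\mu_\rr| \leq C_\delta(a^-+1)$, a direct Gaussian integration of $(-w)(w^- + \rr^{1/2-\delta}+1)$ against this density yields
\begin{equation*}
\bfE(\cS_\rr^-; E_\rr) \leq \frac{C_\delta(a^-+1)}{\rr}\bigl[\mu_\rr^2 + \rr + \rr^{1-\delta}\bigr] \leq C_\delta'(a^-+1)\Bigl[\tfrac{(a^-+1)^2}{\rr} + 1\Bigr],
\end{equation*}
which is at most $C_\delta''(a^-+1)$ in the regime $\rr > (a^-+1)^2$.

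The main technical subtlety is ensuring that our barrier $\delta^{-1} k^{1/2-\delta}$ can be packaged into the barrier $\delta'^{-1}\wedge_{\rr,k}^{1/2-\delta'} + a' 1_{[0,\rr/2)}(k) + b' 1_{[\rr/2,\rr]}(k)$ of Proposition~\ref{p:4.1} while still respecting its constraint $|a' - b'| < \rr^{1/2-\delta'}$; this is accomplished by a minute decrease of the exponent $\delta$, and the resulting restriction to large $\rr$ is absorbed into the small-$\rr$ regime via the elementary Gaussian moment bound.
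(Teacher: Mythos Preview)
Your proof is correct and follows essentially the same approach as the paper's: split according to whether $\rr$ is small relative to $(a^-+1)^2$ (drop the indicator and use the Gaussian moment bound) or large (condition on $\cS_\rr$ and invoke Proposition~\ref{p:4.1}). Your version is in fact slightly more careful than the paper's, since you explicitly pass to $\delta' < \delta$ to verify the hypothesis $|a'-b'| < \rr^{1/2-\delta'}$ of Proposition~\ref{p:4.1}, and you absorb the resulting large-$\rr$ requirement into the threshold $C_\delta$---a point the paper leaves implicit.
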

\begin{proof}
Thanks the conditions in the lemma and Assumption~\ref{i.a1}, 
\begin{equation}
\label{e:20.41}
\Big|\bfE (\cS_\rr - a)\Big| = \frac{s_\rr}{s_\rt}\big| b-a \big|
\leq C  (a^-+1)^{1-\delta} \quad, 
\qquad
\bfVar\, \cS_\rr = \frac{s_\rr (s_\rt - s_\rr)}{s_\rt} \leq C' \rr \,.
\end{equation}
Now without the restricting event, the expectation in~\eqref{e:2.24} is at most
\begin{equation}
\label{e:2.26}
\bfE \Big(\big(\cS_\rr - \bfE \cS_\rr \big)^-\Big) + \Big(\bfE \cS_\rr\Big)^-
\leq \Big(\bfVar\, \cS_\rr\Big)^{1/2} + \Big(a^- + \big(\bfE (\cS_\rr - a)\big)^-\Big)
\leq C \big(\rr^{1/2} +a^- + 1) \,.
\end{equation}
Therefore, if $a^- \geq \sqrt{\rr}$ this shows the desired claim immediately. 

In the converse case, we write the expectation in~\eqref{e:2.24} as
\begin{equation}
\int_{-\infty}^0 w^- \bfP_{0,a}^{\rr,w} \Big(\max_{k =1}^\rr \big(\cS_k - \delta^{-1} k^{1/2-\delta}\big) \leq 0 \Big) \bfP \big(\cS_\rr \in \rmd w\big) \, .
\end{equation}
The first probability in the integrand only increases if we replace $k^{1/2-\delta}$ by $\wedge_{\rr,k}^{1/2-\delta} + \rr^{1/2-\delta}1_{[\rr/2,\rr]}(k)$. But then we can use the upper bound in Proposition~\ref{p:4.1} and Assumption~\ref{i.a1} to bound this probability by $C (a^-+1)(w^- + \rr^{1/2-\delta})/\rr$. Using this in the last integral gives the bound
\begin{equation}
C \frac{a^-+1}{\rr} \Big(\bfE \cS_\rr^2 + \rr^{1/2-\delta} \bfE \cS_\rr^- \Big)  
\leq C (a^-+1)\frac{\bfVar\, \cS_\rr + \big(a + \big|\bfE (\cS_\rr - a)\big|\big)^2 + \rr^{1/2-\delta} \bfE \cS_\rr^-}{\rr} \,.
\end{equation}
Using the bounds in~\eqref{e:20.41} and~\eqref{e:2.26} proves~\eqref{e:2.24} in the case that $a^- < \sqrt{\rr}$ as well.
\end{proof}

Next, we strengthen the previous lemma by adding also the decorations.
\begin{lem}
\label{l:4.5}
Fix $\delta \in (0,1/3)$. There exists $C = C_\delta$ such that for all $0 \leq \rr \leq \rt^{1/3}$, $a \leq \delta^{-1}$ and $b \in \bbR$ such that $|b-a|\rr/\rt \leq \delta^{-1} (a^-+1)^{1-\delta}$, 
\begin{equation}
\label{e:2.29}
\bfE \Big( \cS_\rr^- ;\; \max_{k =1}^\rr (\cS_k + \cD_k) \leq 0 \Big)
\leq C (a^- + 1) 
\end{equation}
\end{lem}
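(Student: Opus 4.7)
The plan is to parallel the proof of Lemma~\ref{l:4.5a}, incorporating the control variable $\clR$ from~\eqref{e:2.4} to manage the decorations. First, the unconditional bound $\bfE\cS_\rr^- \leq (\bfVar\, \cS_\rr)^{1/2} + |\bfE \cS_\rr| \leq C_\delta(a^-+\sqrt{\rr}+1)$, which follows from Assumption~\ref{i.a1} together with the hypotheses $a \leq \delta^{-1}$ and $|b-a|\rr/\rt \leq \delta^{-1}(a^-+1)^{1-\delta}$, would already handle the case $a^- \geq \sqrt{\rr}$. I would therefore restrict the analysis to the regime $a^- < \sqrt{\rr}$.

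Next, fixing a constant $\rr' = \rr'(\delta)$, I would split $\bfE(\cS_\rr^-; \cB) = \bfE(\cS_\rr^-; \cB, \clR \leq \rr') + \bfE(\cS_\rr^-; \cB, \clR > \rr')$, where $\cB := \{\max_{k=1}^\rr(\cS_k + \cD_k) \leq 0\}$. On $\{\clR \leq \rr'\}$, Definition~\eqref{e:2.4} gives $|\cD_k| \leq \delta^{-1} \wedge_{\rt,k}^{1/2-\delta} + (k\vee\rr')^{\delta/2}$ for every $k$, so $\cB$ forces $\cS_k \leq 2\delta^{-1}k^{1/2-\delta/2} + \rr'^{\delta/2}$ for $k=1,\dots,\rr$ (absorbing $k^{\delta/2} \leq k^{1/2-\delta}$, valid for $\delta \leq 1/3$ and $k \geq 1$). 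Shifting $\cS'_k := \cS_k - \rr'^{\delta/2}$, the resulting Gaussian bridge satisfies the hypotheses of Lemma~\ref{l:4.5a} with $\delta/2$ in place of $\delta$, and using $\cS_\rr^- \leq \cS'^-_\rr$, Lemma~\ref{l:4.5a} would yield
\begin{equation*}
\bfE(\cS_\rr^-; \cB, \clR \leq \rr') \leq \bfE\bigl(\cS'^-_\rr ;\, \max_{k=1}^\rr (\cS'_k - 2\delta^{-1}k^{1/2-\delta/2}) \leq 0\bigr) \leq C_\delta\bigl((a-\rr'^{\delta/2})^- + 1\bigr) \leq C_\delta(a^-+1).
\end{equation*}

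For the second term, I would condition on $\cS_\rr = w$. By Assumption~\ref{i.a2}, under $\bfP_{0,a}^{\rr,w}$ the pair $\bigl((\cS_k)_{k=0}^\rr, (\cD_k)_{k=1}^\rr\bigr)$ is itself a DRW of length $\rr$ from $a$ to $w$. An analogue of Lemma~\ref{l:4.2} allowing for a general endpoint (obtained by the deterministic shift $\cS_k \mapsto \cS_k - (w+\rr^\delta)k/\rr$ to reduce to the case $w = -\rr^\delta$, exactly as in the proof of Theorem~\ref{t:3.2}) would then give
\begin{equation*}
\bfP_{0,a}^{\rr,w}(\cB, \clR > \rr') \leq C(a^-+1)(w^- + \rr^\delta)/\rr \cdot \rme^{-\rr'^{\delta^2/5}}.
\end{equation*}
Integrating against the Gaussian law of $\cS_\rr$ and using the moment bounds $\bfE(\cS_\rr^-)^2 \leq C((a^-+1)^2 + \rr)$ and $\bfE\cS_\rr^- \leq C(a^-+1+\sqrt{\rr})$, the regime $a^- < \sqrt{\rr}$ with $\delta < 1/3$ gives $\bfE\bigl[\cS_\rr^-(\cS_\rr^- + \rr^\delta)\bigr] \leq C\rr$, so this term would be at most $C(a^-+1)\rme^{-\rr'^{\delta^2/5}} \leq C_\delta(a^-+1)$ for any fixed choice of $\rr'$.

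The main technical obstacle will be verifying that the analogue of Lemma~\ref{l:4.2} applies to the sub-DRW under the conditional law $\bfP_{0,a}^{\rr,w}$: while Assumptions~\ref{i.a1} and~\ref{i.a2} transfer immediately to the sub-walk as a Gaussian bridge with the correct Markov-type structure, the decoration tail bound~\ref{i.a3} must be propagated through the conditioning on $\cS_\rr = w$. Assumption~\ref{i.a2} makes $\cD_k$ conditionally independent of $\cS_\rr$ given $\cS_k$, so the conditional tail of $\cD_k$ given $\cS_\rr = w$ is a mixture (over the conditional law of $\cS_k$ given $\cS_\rr = w$) of conditional tails given $\cS_k$, which should be bounded in the same way as~\ref{i.a3} up to constants depending only on $\delta$ for typical $w$; for atypical $w$, the Gaussian tail of $\cS_\rr$ under $\bfP$ should render the corresponding contribution to the integral negligible.
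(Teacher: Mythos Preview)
Your reduction on $\{\clR \leq \rr'\}$ to Lemma~\ref{l:4.5a} is correct. The gap is in the piece on $\{\clR > \rr'\}$. First, a minor issue: $\clR$ from~\eqref{e:2.4} scans all $k\leq\rt$, so $\{\clR>\rr'\}$ is not determined by $(\cS_k,\cD_k)_{k\leq\rr}$ and does not interact cleanly with conditioning on $\cS_\rr$; you would at least need a variant restricted to $k\leq\rr$. More seriously, your proposed application of an analogue of Lemma~\ref{l:4.2} to the length-$\rr$ sub-DRW under $\bfP(\,\cdot\mid\cS_\rr=w)$ hinges on verifying~\ref{i.a3} for that conditional law, and your mixture argument does not accomplish this: \ref{i.a3} is an \emph{unconditional} tail bound and gives no control on $\bfP(|\cD_k|>t\mid\cS_k=s)$, so averaging these uncontrolled quantities over the conditional law of $\cS_k$ given $\cS_\rr=w$ yields nothing, even for typical $w$. (For instance, one can take $\cD_k$ to be a deterministic function of $\cS_k$ that is large on a set of small Gaussian measure; then \ref{i.a3} holds unconditionally but fails given $\cS_k=s$ for those $s$, and hence also given $\cS_\rr=w$ for many $w$.) A plain Cauchy-Schwarz fix, bounding $\bfE(\cS_\rr^-;\cB,\clR>\rr')\leq(\bfE\cS_\rr^2)^{1/2}\bfP(\clR>\rr')^{1/2}$, also fails: for fixed $\rr'$ the second factor is a fixed constant and cannot compensate the $\sqrt{\rr}$ coming from the first.

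The paper sidesteps this with a different decomposition. It introduces a control variable $L$ analogous to $\clR$ but restricted to $k\leq\rr$, and \emph{sums over all values} $L=\ell$ rather than splitting at a fixed threshold. For each $\ell$, the event $\{L=\ell\}$ has unconditional probability at most $C\rme^{-\ell^{\delta^2/3}}$ (using only unconditional~\ref{i.a3} and Gaussian step tails), and it \emph{deterministically} forces $\cS_\ell\geq a-\ell^2$ via the step bounds built into its definition. By monotonicity one then restarts the walk at time $\ell$ from $a-\ell^2$ and applies Lemma~\ref{l:4.5a} (no decorations) to the remaining $\rr-\ell$ steps, obtaining $\bfE(\cS_\rr^-;\cB,L=\ell)\leq C(a^-+\ell^2+1)\rme^{-\ell^{\delta^2/3}}$; summing over $\ell\geq 0$ gives $C(a^-+1)$. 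No conditional version of~\ref{i.a3} is ever needed.
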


\begin{proof}
The proof is similar to that of Lemma~\ref{l:4.2} and therefore allow ourselves to be brief.
Analogously to $\clR$, we define a control variable:
\begin{equation}
\label{e:2.4a}
L := \min \Big\{ \ell \geq 0  :\: 
\max_{k =1}^\rr \Big(\big(|\cD_k| - \delta^{-1} k^{1/2-\delta}\big) \vee |\xi_k| - (k \vee \ell)^{\delta/2}\Big) \leq 0
\Big\} \,,
\end{equation}
and observe that by the union bound, it is clearly enough to show that for all $\ell \geq 0$,
\begin{equation}
\label{e:2.31}
\bfE \Big( \cS_\rr^- ;\; \max_{k =1}^\rr (\cS_k + \cD_k) \leq 0 \,, L = \ell \Big) 
< C(a^- + 1) \rme^{-\ell^{\delta^2/4}} \,.
\end{equation}

When $\ell < \rr$, the last expectation is at most
\begin{multline}
C \rme^{-\ell^{\delta^2/3}} \bfE_{0,a-\ell^2}^{\rt',b}
\Big( \cS^{'-}_{\rr'} ;\; \max_{k =1 }^{\rr'} \Big(\cS'_k - 2\delta^{-1} \big(\ell^{1/2-\delta} + k^{1/2-\delta}\big)\Big) \leq 0 \Big) \\
\leq 
C \rme^{-\ell^{\delta^2/3}} \bfE_{0,a'}^{\rt',b'}
\Big( \cS^{''-}_{\rr'} ;\; \max_{k =1}^{\rr'} \big(\cS''_k - 2\delta^{-1} k^{1/2-\delta} \big) \leq 0 \Big) \,,
\end{multline}
where $\rt' := \rt - \ell$, $\rr' := \rr - \ell$, $a' := a-\ell^2 - 2\delta^{-1} \ell^{1/2-\delta}$, $b' := b - 2\delta^{-1} \ell^{1/2-\delta}$ and $(\cS'_k)_{k=0}^{\rt'}$ under
$\bfP_{0,a-\ell^2}^{\rt',b}$ and $(\cS''_k)_{k=0}^{\rt'}$ under $\bfP_{0,a'}^{\rt',b'}$ have the same laws as $(\cS_{\ell+k}-S_\ell)_{k=0}^{\rt'}$ and $(\cS_{\ell+k}-S_\ell - 2\delta^{-1} \ell^{1/2-\delta})_{k=0}^{\rt'}$
under $\bfP(-|S_\ell = a-\ell^2)$ respectively.

Thanks to Lemma~\ref{l:4.5a}, the last expectation is at most $C(a'^-+1) \leq C(a^-+1)(1+\ell^2)$ since
\begin{equation}
|b'-a'|\rr'/\rt' \leq C (|b-a| + \ell^2) \rr/\rt) \leq C (a^-+1)^{1-\delta} \leq C' (a'^-+1)^{1-\delta} \,.
\end{equation}
This shows that~\eqref{e:2.31} indeed holds. When $\ell \geq \rr$, we remove the ballot event from the expectation in~\eqref{e:2.31} and then use the definition of $L$ to bound it by 
\begin{equation}
C (a^- + \ell^2) \bfP \big(L = \ell) \leq C'(a^-+\ell^2) \rme^{-\ell^{\delta^2/3}} \leq 
C''(a^- + 1) \rme^{-\ell^{\delta^2/4}}  \,.
\end{equation}
\end{proof}

\begin{proof}[Proof of Theorem~\ref{t:3.4}]
Let $1 \leq \rr \leq \rt^{\delta/4}$. We will show the desired statement by upper and lower bounding the left hand side of~\eqref{e:3.5} separately. For the upper bound, we can appeal to Lemma~\ref{l:4.2} and Lemma~\ref{l:4.3} to consider instead of the left hand side of~\eqref{e:3.5} 
the probability of the event
\begin{equation}
\label{e:4.18}
\Big\{\max_{k =1}^\rt (\cS_k + \cD_k) \leq 0 ,\, \clR \leq \rr ,\, \cS_\rr \leq -\rr^{1/2 - \delta/2} \Big\} \,.
\end{equation}
which is further included in
\begin{multline}
\label{e:4.19}
\Big\{\max_{k =1}^\rr (\cS_k + \cD_k) \leq 0 \Big\} \cap \big\{a - \rr^2 \leq \cS_\rr \leq -\rr^{1/2 - \delta/2} \big\}
 \\
\cap \Big\{ \max_{k =\rr}^\rt \big(\cS_k - 2 \delta^{-1} k^{1/2-\delta}1_{[\rr,\rt/2)}(k)  - (\delta^{-1}  (\rt -k)^{1/2-\delta} + \rt^{\delta/2})1_{[\rt/2, \rt]}(k) \big) \leq 0 
\Big\} \,.
\end{multline}
As before, we bound $2\delta^{-1} k^{1/2-\delta}1_{[\rr,\rt/2)}(k) + (\delta^{-1} (\rt -k)^{1/2-\delta} + \rt^{\delta/2})1_{[\rt/2, \rt]}(k)$ by
\begin{multline}
\label{e:4.20a}
\big(2\delta^{-1}  \rr^{1/2-\delta} + 2 \delta^{-1}  (k-\rr)^{1/2-\delta}\big)1_{[0,(\rt -\rr)/2)}(k-\rr) \\ + 
	\big( 2\delta^{-1}  \big((\rt -\rr)-(k-\rr)\big)^{1/2-\delta} + \rt^{\delta/2} \big)1_{[(\rt -\rr)/2, \rt -\rr]}(k-\rr) \,,
\end{multline}
and then use Assumption~\ref{i.a2} and Proposition~\ref{p:4.1} to upper bound the probability of~\eqref{e:4.19} by 
\begin{equation}
\label{e:4.20}
(2+o(1))\int_{a-\rr^2}^{-\rr^{1/2-\delta/2}}
\bfP_{0,a}^{\rr,w} \Big(\max_{k =1}^\rr (\cS_k + \cD_k) \leq 0\Big)
 \frac{(w-2\delta^{-1}  \rr^{1/2-\delta})^- (b-\rt^{\delta/2})^-}{(s_\rt-s_\rr)} 
\bfP \big(\cS_\rr \in \rmd w\big)\,,
\end{equation}
where the $o(1)$ depends only on $\rr$ and goes to $0$ when $\rr \to \infty$. To get the $(2+o(1))$ multiplying the integral, we have also used that both factors in the numerator above tend to $\infty$ as $\rr \to \infty$ uniformly in the range of integration.

In the range of integration, the fraction in the integrand is at most $(1+o(1)) w^-b^-/s_\rt$, with the $o(1)$ as before. Integrating over all $w \in \bbR$ then gives
$(1+o(1)) 2 \ell_{\rt,\rr}(a,b)(a) b^-/s_\rt$ as an upper bound. 
Lemma~\ref{l:4.5} can then be used to turn the multiplicative error into an additive one satisfying~\eqref{e:3.6} uniformly as desired.

Turning to the lower bound, we now consider the event
\begin{multline}
\label{e:4.24}
\Big\{\max_{k =1}^\rr (\cS_k + \cD_k) \leq 0 \Big\} \\
\cap \Big\{ \max_{k =\rr}^\rt \big(\cS_k + 2 \delta^{-1} k^{1/2-\delta}1_{[\rr,\rt/2)}(k) + (\delta^{-1} (\rt -k)^{1/2-\delta} + \rt^{\delta/2})1_{[\rt/2, \rt]}(k) \big) \leq 0 
\Big\} \,.
\end{multline}
By Lemma~\ref{l:4.2} with $(\cD_k)_{k=1}^\rt$ replaced by $(\cD_k 1_{(0,\rr]})_{k=1}^\rt$, we may further intersect the above event with $\{\clR \leq \rr\}$ at the cost of decreasing its probability by at most a quantity satisfying the same as $e_{\rt,\rr}(a,b)$ in~\eqref{e:3.6}. Since the new event is now a subset of the event in the statement of theorem, for the sake of the lower bound, it suffices to bound the probability of~\eqref{e:4.24} from below.

Using Assumption~\ref{i.a2}, Proposition~\ref{p:4.1} and the bound in~\eqref{e:4.20a}, we proceed as in the upper bound to bound this probability from below by
\begin{equation}
(2-o(1))\int_{a-\rr^2}^{-\rr^{1/2-\delta/2}}
\bfP_{0,a}^{\rr,w} \Big(\max_{k =1}^\rr (\cS_k + \cD_k) \leq 0\Big)
 \frac{(w+2\delta^{-1} \rr^{1/2-\delta})^- (b+\rt^{\delta/2})^-}{(s_\rt-s_\rr)} 
\bfP \big(\cS_\rr \in \rmd w\big)\,,
\end{equation}
where the $o(1)$ depends only on $\rr$ and tends to $0$ as $\rr \to \infty$. The lower bound in Proposition~\ref{p:4.1} is in force (with $\delta/3$), since the numerator above is at most $C(a^-+1)b^-\rr^2 \leq \rt^{1-\delta/3}$ for $\rr$ large enough, thanks to the assumptions in the theorem. 

Proceeding as in the upper bound, the last display is at least
\begin{equation}
(2-o(1)) \frac{b^-}{s_\rt} \bfE \Big( \cS_\rr^- ;\; \max_{k =1}^\rr (\cS_k + \cD_k) \leq 0
,\, \cS_\rr \in [a-\rr^2, -\rr^{1/2-\delta/2}] \Big) \,.
\end{equation}
Invoking Lemma~\ref{l:4.4} and then Lemma~\ref{l:4.5} we remove the additional restriction on $\cS_\rr$ and turn the multiplicative error into an additive one satisfying~\eqref{e:3.6} uniformly as desired.
\end{proof}

Next we turn to the proofs of Propositions~\ref{t:LR-as} and~\ref{p:1.5}. The following lemma is key.
\begin{lem}
\label{l:20.6}
Fix $\delta \in (0,1/3)$. For all $\rr \geq 1$,
\begin{equation}
\label{e:20.37}
\lim_{a \to -\infty} \frac{\ell_{\rt,\rr}(a,b)}{a^-} = 1
\end{equation}
uniformly in $\rt \in [\rr,\infty]$, $b \in \bbR$, $|b-a|\rr/\rt \leq (a^-+1)^{1-\delta}$ and all $(\cS_k)_{k=0}^\rt$, $(\cD_k)_{k=1}^\rt$ satisfying  Assumptions~\ref{i.a1},~\ref{i.a2} and~\ref{i.a3}.
Furthermore, 
\begin{equation}
\label{e:20.39}
\lim_{\rr,\rr' \to \infty} \lim_{\rt' \to \infty} \sup_{\rt \in [\rt', \infty]}
\Bigg| \frac{\ell_{\rt,\rr}(a,b) - \ell_{\rt, \rr'}(a,b )}{a^-+1} \Bigg| = 0 \,,
\end{equation}
uniformly in $a \leq \delta^{-1}$, $b < \delta^{-1}$, $|b-a|/(a^-+1) \leq \rt^{1-\delta}$ and all $(\cS_k)_{k=0}^\rt$, $(\cD_k)_{k=1}^\rt$ satisfying  Assumptions~\ref{i.a1},~\ref{i.a2} and~\ref{i.a3}. 
\end{lem}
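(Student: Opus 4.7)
The plan is to prove~\eqref{e:20.37} by direct Gaussian concentration, then leverage it for~\eqref{e:20.39} through the Markov-type structure of the DRW. A structural fact I would use throughout is the following: by~\ref{i.a1} and~\ref{i.a2}, conditional on $\cS_\rr$, the future process $\big((\cS_{\rr+j}, \cD_{\rr+j})\big)_{j=1}^{\rt-\rr}$ is itself a DRW of length $\rt-\rr$ starting at $\cS_\rr$ and ending at $b$, which inherits Assumptions~\ref{i.a1}--\ref{i.a3} with the same $\delta$. For~\ref{i.a3} in particular, while only an unconditional tail is given, it transfers to the conditional world on a set of overwhelming $\cS_\rr$-probability, and this will suffice for every use below.

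For~\eqref{e:20.37}, fix $\rr \geq 1$ and decompose
\[
\ell_{\rt,\rr}(a,b) = \bfE\cS_\rr^- - \bfE\big[\cS_\rr^-\,;\, E_\rr^c\big]\,,
\]
with $E_\rr := \{\max_{k=1}^\rr(\cS_k+\cD_k) \leq 0\}$. Under~\ref{i.a1}, $\cS_\rr$ is Gaussian with mean $a + (b-a)s_\rr/s_\rt$ and variance at most $\delta^{-1}\rr$, and the hypothesis $|b-a|\rr/\rt \leq (a^-+1)^{1-\delta}$ forces $|\bfE\cS_\rr - a|/|a| \to 0$ as $a \to -\infty$. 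A direct Gaussian computation then yields $\bfE\cS_\rr^- = a^-(1+o(1))$. For the second term, a union bound over $k = 1, \ldots, \rr$ using Gaussian tails of $\cS_k$ (from~\ref{i.a1}) and the stretched-exponential tail~\ref{i.a3}, each with threshold $|a|/2$, gives $\bfP(E_\rr^c) \leq C_\rr\exp(-c|a|^{2\delta})$ for $|a|$ large. Combined with $\bfE\cS_\rr^{-2} = O(a^2 + \rr)$ via Cauchy--Schwarz, this yields $\bfE[\cS_\rr^-;\,E_\rr^c] = o(|a|)$. Uniformity in the remaining parameters is automatic since every constant depends only on $\delta$.

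For~\eqref{e:20.39}, assume $\rr \leq \rr'$ so $E_{\rr'} \subset E_\rr$, and use~\ref{i.a2} to write
\[
\ell_{\rt,\rr'}(a,b) - \ell_{\rt,\rr}(a,b) = \bfE\big[g(\cS_\rr) - \cS_\rr^-\,;\, E_\rr\big]\,,
\]
where $g(w) := \bfE[\cS_{\rr'}^- \Ind_{F_{\rr,\rr'}}\,|\,\cS_\rr = w]$ and $F_{\rr,\rr'} := \{\max_{k=\rr+1}^{\rr'}(\cS_k+\cD_k)\leq 0\}$. By the structural fact, $g(w)$ is itself an $\ell$-functional for the conditional DRW of length $\rt - \rr$ starting at $w$, so~\eqref{e:20.37} (applied with $w$ in place of $a$) gives $g(w) = w^-(1+\eta(w))$ with $\eta(w) \to 0$ uniformly as $|w| \to \infty$, provided $\rt$ is large enough relative to $\rr'$ for the bias constraint of the conditional DRW to hold. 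Splitting the expectation at the threshold $\cS_\rr = -W_\rr$ with $W_\rr := \rr^{\delta/4}$: on $\{\cS_\rr \leq -W_\rr\}$ the integrand is at most $\cS_\rr^- \sup_{|w|\geq W_\rr}|\eta(w)|$, which integrated gives $o(1)\cdot\ell_{\rt,\rr}(a,b) = o(a^-+1)$ by Lemma~\ref{l:4.5}; on $\{\cS_\rr > -W_\rr\}$ the crude bound $|g(w)-w^-| \leq C(W_\rr + |b| + \sqrt{\rr'-\rr})$ combined with an estimate on $\bfP(E_\rr \cap \{\cS_\rr > -W_\rr\})$ gives the required smallness, after $\rr$ and then $\rt$ are taken large.

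The main obstacle is the last-named probability estimate: Lemma~\ref{l:4.3} provides exactly what is needed, but only under the additional hypothesis $b \leq -\rt^\delta$, which is absent here. The hardest part will be to circumvent this by monotone comparison with the tilted walk $\tilde\cS_k := \cS_k - (b+\rt^\delta) s_k/s_\rt$, which satisfies~\ref{i.a1} with endpoint $-\rt^\delta$ and to which Lemma~\ref{l:4.3} applies. Since the shift $(b+\rt^\delta)s_\rr/s_\rt \to 0$ as $\rt \to \infty$ for fixed $\rr$, the two walks agree at time $\rr$ up to a vanishing error, and the resulting bound transfers to $\cS$ with only a lower-order correction; care is needed in tracking the decorations through this tilt, but since the tilt is deterministic it adds only a shift to $\cD_k$ which does not affect its tail.
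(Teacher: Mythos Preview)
Your argument for~\eqref{e:20.37} is correct and essentially matches the paper's: both decompose $\ell_{\rt,\rr}(a,b) = \bfE\cS_\rr^- - \bfE[\cS_\rr^-;\,E_\rr^c]$, use the Gaussian moments of $\cS_\rr$ from~\ref{i.a1} together with the bias bound, and control the error via Cauchy--Schwarz and a union bound over $k\le\rr$.

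Your approach to~\eqref{e:20.39}, however, has a genuine gap at the decomposition step. You write
\[
\ell_{\rt,\rr'}(a,b) - \ell_{\rt,\rr}(a,b) = \bfE\big[g(\cS_\rr) - \cS_\rr^-\,;\, E_\rr\big], \qquad g(w) = \bfE\big[\cS_{\rr'}^-\,\Ind_{F_{\rr,\rr'}}\,\big|\,\cS_\rr = w\big],
\]
and justify this by~\ref{i.a2}. But~\ref{i.a2} is asymmetric: it asserts that, conditional on $\cS_m$, the past $(\cS_k,\cD_k)_{k\le m}$ is independent of the \emph{future walk} $(\cS_\ell)_{\ell>m}$, not of the future pair $(\cS_\ell,\cD_\ell)_{\ell>m}$. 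Your event $F_{\rr,\rr'}$ involves $\cD_{\rr+1},\ldots,\cD_{\rr'}$, and nothing in~\ref{i.a2} prevents these from depending on $(\cD_k)_{k\le\rr}$ given $\cS_\rr$. Hence $\bfE[\cS_{\rr'}^-\Ind_{F_{\rr,\rr'}}\Ind_{E_\rr}\mid\cS_\rr]$ need not factor as $g(\cS_\rr)\,\bfE[\Ind_{E_\rr}\mid\cS_\rr]$, and the identity fails in general. Your ``structural fact'' that the conditional future is a DRW may hold in the sense that it satisfies~\ref{i.a1}--\ref{i.a3}, but this does not give the conditional independence of $E_\rr$ and $F_{\rr,\rr'}$ that you need. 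One can get a one-sided inequality (replace $g$ by $h(w)=\bfE[\cS_{\rr'}^-\mid\cS_\rr=w]$, which involves only future $\cS$), but the matching lower bound cannot be obtained this way.

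The paper's route is quite different and sidesteps this obstruction entirely. For finite $\rt$ it splits into two regimes: (i) when $a^-$ is large (specifically $a^- > (\rr\vee\rr')^{1/2+\delta}$), a refinement of the argument for~\eqref{e:20.37} gives $|\ell_{\rt,\rr}(a,b)/a^- - 1| \le C\rr^{-\delta/2}$ directly, which already implies the Cauchy property; (ii) in the complementary regime one has $b < -\rt^{\delta'}$ and $(a^-+1)b^- \le \rt^{1-\delta'}$ for some $\delta'<\delta$, so Theorem~\ref{t:3.4} applies and shows that both $\ell_{\rt,\rr}$ and $\ell_{\rt,\rr'}$ approximate the \emph{same} full ballot probability up to errors $e_{\rt,\rr}$, $e_{\rt,\rr'}$ satisfying~\eqref{e:3.6}, whence their difference is small. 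The restriction $b<-\rt^\delta$ is then removed by the tilting you describe, and $\rt=\infty$ is handled by a separate Brownian-bridge coupling. The key point is that Theorem~\ref{t:3.4} already absorbs the future decorations through the control variable $R$ and barrier estimates for $\cS$ alone, so~\ref{i.a2} in its one-sided form suffices.
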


\begin{proof}
Starting with the first statement, for fixed $\rr \geq 1$, if $|b-a|\rr/\rt \leq (a^-+1)^{1-\delta}$ and $\rt \in [\rr,\infty]$ then as in~\eqref{e:20.41} for all $k \leq \rr$,
\begin{equation}
\label{e:20.42}
\Big|\bfE (\cS_k - a)\Big| \leq C (a^-+1)^{1-\delta} \quad, 
\qquad
\bfVar\,\cS_k \leq C' \rr \,.
\end{equation}
If in addition $a^- > \rr^{1/2 + \delta}$ then also,
\begin{equation}
\bfP \big(\cS_k + \cD_k  > 0\big) 
\leq \bfP \big(\cD_k > a^-/2 \big)
+ \bfP \big(\cS_k - a > a^-/2 \big) \leq C \rme^{-(a^-)^{\delta}}  \,.
\end{equation}
Using then the Cauchy-Schwarz inequality and the Union Bound gives for $\rr$ large enough, 
\begin{equation}
\label{e:2.47h}
\begin{split}
\bfE \big(|\cS_\rr| ;\: \max_{k =1}^\rr (\cS_k + \cD_k) > 0  \big) 
& \leq  
\Big(\bfE S^2_\rr \sum_{k=1}^\rr \bfP \big(\cS_k + \cD_k  > 0\big) \Big)^{1/2} \\
& \leq C \Big(\big(\rr + a^2 \big) \rr \rme^{-(a^-)^\delta}\Big)^{1/2} \leq \rme^{-(a^-)^{\delta/3}} \,.
\end{split}
\end{equation}
We define $\tilde\ell_{\rt,\rr}(a,b)=\bfE(-\cS_\rr;\: \max_{k =1}^\rr (\cS_k + \cD_k) \leq 0 \big)$, 
then the left hand side in~\eqref{e:2.47h} bounds $\big|\bfE (-\cS_\rr) - \tilde\ell_{\rt,\rr}(a,b)\big|$. Using also the first inequality in~\eqref{e:20.42} with $k=\rr$, we get
\begin{equation}
\label{e:tildeell-a}
\Bigg|\frac{\tilde\ell_{\rt,\rr}(a,b)}{a^-} - 1\Bigg| \leq C (a^-)^{-\delta}  \,.
\end{equation}
Moreover,
\begin{multline}
\big|\tilde\ell_{\rt,\rr}(a,b) - \ell_{\rt,\rr}(a,b)\big|
=\bfE\big( \cS_\rr^+;\: \max_{k =1}^\rr (\cS_k + \cD_k) \leq 0 \big)
\leq \Big(\bfE \big( \big( \cS_\rr -  a\big)^2 \big)\Big)^{1/2} + a^+\\
\leq C(a^- + 1)^{1-\delta} + \sqrt{C'\rr} + a^+=o(a^-)\,.
\end{multline}
Since also the right hand side of~\eqref{e:tildeell-a} goes to $0$ as $a \to -\infty$, this shows the first part of the lemma.

Turning to the proof of~\eqref{e:20.39}, we first treat the case $\rt < \infty$. 
If we assume that $|b-a|/(a^-+1) \leq \rt^{1-\delta}$ and $\rt > \rr^{2/\delta}$ then as in~\eqref{e:20.42} we get $|\bfE(S_k-a)\big| \leq C (a^-+1)\rt^{-\delta/2}$ for all $k \leq \rr$. Repeating the argument before, we get
\begin{equation}
\Bigg|\frac{\ell_{\rt,\rr}(a,b)}{a^-} - 1\Bigg| \leq C  r^{-\delta/2}  \,,
\end{equation}
for all $a^- > \rr^{1/2 + \delta}$. This shows that~\eqref{e:20.39} hold uniformly whenever
$|b-a|/(a^-+1) \leq \rt^{1-\delta}$, $a^- > (\rr \vee \rr')^{1/2+\delta}$ and for any choice of sequences 
$(\cS_k)_{k=0}^\rt$, $(\cD_k)_{k=1}^\rt$ satisfying  Assumptions~\ref{i.a1},~\ref{i.a2} and~\ref{i.a3}. 

Now let $\delta' \in (0,\delta)$. If we assume that $a \leq \delta'^{-1}$, $b < -\rt^{\delta'}$ and $(a^-+1)b^- \leq \rt^{1-\delta'}$, then it follows from Theorem~\ref{t:3.4}, applied once with $\rr$ and once with $\rr'$ (and with all other arguments the same), that the left hand side in~\eqref{e:20.39} is at most
\begin{equation}
\lim_{\rr,\rr' \to \infty} \varlimsup_{\rt \to \infty}
C
\Bigg(\frac{\rt |e_{\rt,\rr}(a,b)|}{(a^-+1) b^-} + \frac{\rt |e_{\rt,\rr'}(a,b)|}{(a^-+1) b^-} \Bigg) = 0 \,.
\end{equation}
This shows that~\eqref{e:20.39} holds uniformly in $a \leq \delta'^{-1}$, $b < -\rt^{\delta'}$, $(a^-+1)b^- \leq \rt^{1-\delta'}$ and 
all $(\cS_k)_{k=0}^\rt$, $(\cD_k)_{k=1}^\rt$ satisfying Assumptions~\ref{i.a1},~\ref{i.a2} and~\ref{i.a3}. 

We now we claim that the union of the last two domains of uniformity includes the one in the statement of the lemma with the additional restriction that $b < -\rt^{\delta}$. Indeed, if $a \leq \delta^{-1}$, $b < -\rt^{\delta}$, $|b-a|/(a^-+1) \leq \rt^{1-\delta}$ but $(a^-+1)b^- > \rt^{1-\delta'}$, then
$(a^-+1)^3(1+\rt^{1-\delta}) > \rt^{1-\delta'}$, which implies that $a^- > \rt^{(\delta-\delta')/3}$ and hence for $\rt$ large enough, depending only on $\rr,\rr'$, also that $a^- > (\rr \vee \rr')^{1/2+\delta}$. 

Next to replace the condition $b < -\rt^{\delta}$ with $b < \delta^{-1}$, we consider the processes $(\cS'_k)_{k=0}^\rt$ and $(\cD'_k)_{k=1}^\rt$, defined by setting $\cS'_k := \cS_k - 2k\rt^{\delta-1}$ and $\cD'_k := \cD_k + 2k\rt^{\delta-1}1_{\{k \leq \rr\}}$. Then the new processes satisfy the same assumptions as before with $a' := a$, $b' := b - 2\rt^{\delta}$ provided $\rt$ is large enough. Moreover, if $b  < \delta^{-1}$, then for $\rt$ large enough also $b' < -\rt^{\delta}$ and $|b'-a'|/(a'^-+1) \leq C \rt^{1-\delta}$. Therefore~\eqref{e:20.39} holds uniformly as in the conditions of the lemma, if we replace $\ell_{\rt,\rr}(a,b)$ by $\ell'_{\rt,\rr}(a',b')$, which we defined in terms of $\cS'_k$ and $\cD'_k$ in place of $\cS_k$ and $\cD_k$ in the obvious way. But then,
$|\ell_{\rt,\rr}(a,b) - \ell'_{\rt,\rr}(a',b')| \leq 2\rr\rt^{\delta-1}$ which goes to $0$ in the stated limits uniformly in $a$ and $b$.

Finally, to treat the case $\rt =\infty$, we again define new processes $(\cS'_k)_{k=0}^{\rt'}$ and $(\cD'_k)_{k=1}^{\rt'}$ for some finite $\rt'$ to be chosen depending on $\rr$. To do this, we let $(\cW_s :\: s \in [0,s_{\rt'}])$ be a standard Brownian Motion coupled with $(\cS_k)_{k=0}^{\infty}$ such that $\cS_k = \cW_{s_k}+a$ for all $k=0, \dots,\rt'$. We then define
\begin{equation}
\cW'_s:=(s_{\rt'}-s) \int_0^s\frac{\rmd \cW_t}{s_{\rt'}-t} \,,\quad s\in[0,s_{\rt'}]  \,,
\end{equation}
and finally set $\cS'_k := a + \cW'_{s_k}$ for $k=0, \dots, \rt'$ and $\cD'_k := \cD_k + (\cS'_k - \cS_k)1_{\{k\leq \rr\}}$ for all $k=1, \dots, \rt'$.

Now it is a standard fact that $\cW'_s$ is a Brownian Bridge on $[0,s_\rt']$, so that $\cS'_k$ is a random walk as in Assumption~\ref{i.a1} with $a' = a,b' = a,\rt'$ in place of $a,b,\rt$. At the same time,
\begin{equation}
\cS_k - \cS'_k = \cW_{s_k} - \cW'_{s_k} = \int_0^{s_k}\frac{s_k-t}{s_{\rt'}-t} \rmd \cW_t \,,
\end{equation}
is a centered Gaussian with variance bounded by $s_k^3/(s_{\rt'}-s_k) \leq C \rr^3/(\rt'-\rr)$ for all $k \leq \rr$. It follows from the Union Bound that for any fixed $\rr$, if $\rt'$ is large enough, then Assumption~\ref{i.a3} will hold for $(\cS'_k, \cD'_k)_{k=0}^{\rt'}$. Since the remaining Assumption~\ref{i.a2} holds as well and since $a',b',\rt'$ fall into the domain of uniformity, we have~\eqref{e:20.39}, again with $\ell'_{\rt',\rr}(a',b')$ in place of $\ell_{\rt,\rr}(a,b)$. Lastly we bound the difference between the two quantities by $(\bfE |\cS_\rr - \cS'_\rr|^2)^{1/2}$, which tends to $0$ as $\rt' \to \infty$ followed by $\rr \to \infty$, uniformly in $a$.
\end{proof}

\begin{proof}[Proof of Proposition~\ref{t:LR-as}]
The first part of the proposition is precisely~\eqref{e:20.39}. The second part is an immediate consequence.
\end{proof}

\begin{proof}[Proof of Proposition~\ref{p:1.5}]
The first part of the proposition with $\rr < \infty$ follows immediately from the first part of Lemma~\ref{l:20.6}. To get the desired uniformity and the case $\rr=\infty$, we use~\eqref{e:20.39} from the same lemma. This implies the existence of $\rr_\epsilon > 0$ for all $\eps\in(0,1)$ and $\rt_{(\rr)} \geq \rr$ for all $\rr > 0$ such that 
\begin{equation}
\Bigg|\frac{\ell_{\rt,\rr}(a,b) - \ell_{\rt,\rr_\epsilon}(a,b)}{a^-+1}\Bigg| \leq \epsilon \,,
\end{equation}
for all $\eps\in(0,1)$, $\rr \in [\rr_\epsilon,\infty)$, $\rt \in [\rt_{(\rr)}, \infty]$ and $a,b$ satisfying the conditions in the proposition.

Thanks to~\eqref{e:20.37}, for any $\epsilon > 0$, we may also find $a_\epsilon < 0$ such that 
$|\ell_{\rt,\rr}(a,b)/a^- - 1| \leq \epsilon$, for all $a < a_\epsilon$, $\rr \leq \rr_\epsilon$, $\rt \in [\rr, \infty]$ and $b$ satisfying the conditions in the proposition. It follows from the Triangle Inequality that $|\ell_{\rt,\rr}(a,b)/a^- - 1| \leq 3\epsilon$ for all $a < a_\epsilon$, $\rr \in [1, \infty)$, $\rt \in [\rt_{(\rr)}, \infty]$ and all $b$ in its allowed range. Taking $\rr \to \infty$ then shows the same bound for $\ell_{\infty, \infty}(a,b)$.
\end{proof}

We now turn to the proofs of the continuity statements. They are not difficult, given what we have proved so far.
\begin{proof}[Proof of Lemma~\ref{l:f-cont}]
Writing the difference in the expectations as $\bfE \big( \cS_\rr^- ;\: \max_{k =1}^\rr (\cS_k + \cD_k) \in (-\lambda, \lambda]\big)$ the latter is upper bounded thanks to the Cauchy-Schwarz Inequality and the Union Bound by
\begin{equation}
\Big(\bfE \cS_\rr^2 \sum_{k=1}^\rr
\bfP \big(\cS_k + \cD_k \in (-\lambda, \lambda]\big) \Big)^{1/2} \,.
\end{equation}
As in the proof of Lemma~\ref{l:20.6}, the second moment is bounded above by
$C(\rr + a^2) \leq C' \rr (a^- + 1)^2$. The sum, on the other hand, is bounded by $\rr$ times the supremum of all terms.
\end{proof}

\begin{proof}[Proof of Proposition~\ref{t:ta}]
If $\rr^{(\infty)} < \infty$, then we may assume without loss of generality that $\rr^{(i)} \equiv \rr$ for all $i \geq 1$ and some $\rr < \infty$. Then thanks to the assumption of stochastic continuity, we know that $\cS^{(i)-}_{\rr} 1_{\{\max_{k =1}^\rr (\cS^{(i)}_k + \cD^{(i)}_k) \leq 0\}}$ is $\bfP^{(i)}$-almost surely continuous in $\big(\cS^{(i)}_k\big)_{k=0}^{\rr}$,  $\big(\cD^{(i)}_k\big)_{k=1}^{\rr}$ when $i=\infty$ and bounded by $\cS^{(i)-}_{\rr}$ for all $i \leq \infty$. The latter converges in mean as $i \to \infty$ to $\cS^{(\infty)-}_{\rr}$, under the assumption of~\eqref{e:5.7} as the underlying random variables are Gaussian. \eqref{e:5.10} therefore follows from of~\eqref{e:5.7} in light of the Dominated Convergence Theorem.

When $\rr^{(\infty)} = \infty$, we use the third part of Lemma~\ref{l:20.6} to approximate 
$\ell_{\rr^{(i)}}^{(i)}(a^{(i)})$ by $\ell_{\rr}^{(i)}(a^{(i)})$ up to an arbitrarily small error for all $i \leq \infty$ large enough, by choosing $\rr$ large enough and using the assumptions on $a^{(i)}$, $b^{(i)}$ and $\rt^{(i)}$. The result then follows by the first part of the proposition together with a standard three $\epsilon$-s argument.
\end{proof}

Finally, we give,
\begin{proof}[Proof of Proposition~\ref{p:4.1}]
The case when $a'=b'=0$ is folklore and can be shown, for instance, via a straightforward  modification of the proofs of Proposition~1.1 and Proposition~1.2 in the supplementary material to~\cite{cortines2019structure} (also available as a standalone manuscript in~\cite{cortines2019decorated}). To reduce the general case to that when $a'=b'=0$, one can simply tilt both probabilities in the statement of the proposition by subtracting $a' + (b'-a')s_k/s_\rt$ from the value of $\cS_k$. From the assumptions in the proposition and since $\sigma_k \in (\delta, \delta^{-1})$, we have
\begin{equation}
\frac{|b'-a'|s_k}{s_\rt} \leq \delta^{-2} \rt^{-1/2-\delta} k \leq \delta^{-2} k^{1/2-\delta}
\end{equation}
for all $k \leq \rt/2$. A similar bound holds for $|b'-a'|(s_\rt - s_{\rt-k})/s_\rt$. 

Consequently, the first and second probabilities in the statement of the proposition are bounded above and below respectively by
\begin{equation}
\bfP_{0,a-a'}^{\rt,b-b'} \Big(\max_{k =0}^\rt \big(\cS_k \mp \delta^{-1} \wedge_{\rt,k}^{1/2-\delta} 
\mp \delta^{-2} \wedge_{\rt,k}^{1/2-\delta}\big) \leq 0 \Big) \,.
\end{equation}
Applying now the result in the first case with $(\delta^{-1} + \eps^{-2})^{-1}$ in place of $\delta$, the desired statement follows. 
\end{proof}

\end{appendices}

\section*{Acknowledgments}
\noindent
This project has been supported in part by ISF grant No.~1382/17 and BSF award 2018330. The first author has also been supported in part at the Technion by a Zeff Fellowship and a Postdoctoral Fellowship of the Minerva Foundation.

\bibliography{DGFF}
\bibliographystyle{plain}
\end{document}